\colorlet{myblue}{black}
\colorlet{lightred}{red!30!white}
\colorlet{lightblue}{blue!30!white}
\colorlet{darkgreen}{green!70!black}
\colorlet{gold}{yellow!70!black!90!red}
\newcommand{\darkgreen}[1]{\textcolor{darkgreen}{#1}}
\newcommand{\gold}[1]{\textcolor{gold}{#1}}
\def\citeform#1{{#1}}
\def\@cite#1#2{{\@citestyle[\citeform{#1}\if@tempswa, #2\fi]}}
\newtheorem{theorem}{Theorem}[section]
\newtheorem*{theorem*}{Theorem}
\newtheorem{lemma}[theorem]{Lemma}
\newtheorem{fact}[theorem]{Fact}
\newtheorem{conjecture}[theorem]{Conjecture}
\newtheorem{question}[theorem]{Question}
\newtheorem{questions}[theorem]{Questions}
\newtheorem{corollary}[theorem]{Corollary}
\newtheorem{proposition}[theorem]{Proposition}
\theoremstyle{definition}
\newtheorem{definition}[theorem]{Definition}
\newtheorem{example}[theorem]{Example}
\newtheorem{observation}[theorem]{Observation}
\newtheorem{Remark}[theorem]{Remark}
\renewcommand \theequation {\@arabic\c@equation}
\renewcommand \thetheorem {\@arabic\c@theorem}
\newcommand*\sref[1]{\hyperref[#1]{\srefaux{#1}}}
\newcommand*\seqref[1]{(\hyperref[#1]{\srefaux{#1}})}
\newcommand{\qp}[2]{({\blue q_{#1}}\vert\textcolor{violet}{p_{#2}})}
\newcommand{\pq}[2]{({\blue p_{#1}}\vert\textcolor{violet}{q_{#2}})}
\newcommand{\pn}[1]{({\blue p_{#1}}\vert\textcolor{violet}{-})}
\newcommand{\qn}[1]{({\blue q_{#1}}\vert\textcolor{violet}{-})}
\newcommand{\np}[1]{({\blue -}\vert\textcolor{violet}{p_{#1}})}
\newcommand{\nq}[1]{({\blue -}\vert\textcolor{violet}{q_{#1}})}
\newcommand{\nn}{({\blue -}\vert\textcolor{violet}{-})}
\DeclareMathOperator{\Mor}{Mor}
\DeclareMathOperator{\lk}{lk} 
\DeclareMathOperator{\m}{m} 
\DeclareMathOperator{\sgn}{sgn} 
\DeclareMathOperator{\Spinc}{Spin^\mathit{c}} 
\DeclareMathOperator{\rr}{r} 
\DeclareMathOperator{\id}{id} 
\DeclareMathOperator{\obj}{obj} 
\DeclareMathOperator{\pmr}{\pm}
\DeclareMathOperator{\pmt}{\textcolor{blue}{\pm}}
\DeclareMathOperator{\mpt}{\textcolor{blue}{\mp}}
\DeclareMathOperator{\CFTd}{CFT^\partial}
\DeclareMathOperator{\CFT}{\widehat{CFT}}
\DeclareMathOperator{\HFT}{\widehat{HFT}}
\DeclareMathOperator{\HF}{\widehat{HF}}
\DeclareMathOperator{\SFH}{SFH}
\DeclareMathOperator{\SFC}{SFC}
\DeclareMathOperator{\HFK}{\widehat{HFK}}
\DeclareMathOperator{\HFL}{\widehat{HFL}}
\DeclareMathOperator{\CFL}{\widehat{CFL}}
\DeclareMathOperator{\CFA}{\widehat{CFA}}
\DeclareMathOperator{\CFD}{\widehat{CFD}}
\DeclareMathOperator{\BSD}{\widehat{BSD}}
\DeclareMathOperator{\BSA}{\widehat{BSA}}
\DeclareMathOperator{\Fuk}{Fuk}
\DeclareMathOperator{\TwFuk}{TwFuk}
\DeclareMathOperator{\pqMod}{pqMod}
\DeclareMathOperator{\Com}{\mathfrak{Com}}
\DeclareMathOperator{\im}{im}
\DeclareMathOperator{\rk}{rk}
\newcommand{\Ad}{\operatorname{\mathcal{A}}^\partial}
\newcommand{\Adop}{\operatorname{\mathcal{A}}^\partial_\text{op}}
\newcommand{\Id}{\operatorname{\mathcal{I}}^\partial}
\newcommand{\ob}{\operatorname{ob}}
\newcommand{\Red}[1]{{\textcolor{red}{#1}}}
\newcommand{\Z}{\operatorname{\mathcal{Z}}}
\newcommand{\A}{\boldsymbol{\alpha}}
\newcommand{\B}{\boldsymbol{\beta}}
\newcommand{\GG}{\boldsymbol{\gamma}}
\newcommand{\D}{\boldsymbol{\delta}}
\newcommand{\Ac}{\boldsymbol{\alpha}^c}
\newcommand{\Aa}{\operatorname{\boldsymbol{\alpha}}^a}
\newcommand{\aaa}{\operatorname{\alpha}^a}
\newcommand{\x}{\boldsymbol{x}}
\newcommand{\y}{\boldsymbol{y}}
\newcommand{\z}{\boldsymbol{z}}
\newcommand{\Tw}{\operatorname{Tw}\!}
\newcommand{\mutw}{\operatorname{\mu}^\textit{tw}}
\def\Mydot{\pst@object{Mydot}}
\def\Mydot@i(#1,#2,#3,#4,#5){%
  \begin@ClosedObj%
	\rput(#2,#3){
	\psdot[linecolor=#4, dotsize=5pt](0,0)
	\uput{0.3}[#1](0,0){\footnotesize #5}
	}
  \end@ClosedObj%
}
\def\wu{\pst@object{wu}}
\def\wu@i#1(#2,#3){%
  \begin@ClosedObj%
	\rput(#2,#3){
	\psline{*-*}(0,0)(1,1)
	\rput[c](0.4,0.5){
	\psframebox[linecolor=white,fillcolor=white, fillstyle=solid,framearc=.5,framesep=0]{\scriptsize #1}}
	}
  \end@ClosedObj%
}
\def\Wu{\pst@object{Wu}}
\def\Wu@i(#1,#2){%
  \begin@ClosedObj%
	\rput(#1,#2){
	\psline{*-*}(0,0)(1,1)
	}
  \end@ClosedObj%
}
\def\wv{\pst@object{wv}}
\def\wv@i#1(#2,#3){%
  \begin@ClosedObj%
	\rput(#2,#3){
	\psline{*-*}(0,0)(1,-1)
	\rput[c](0.4,-0.5){
	\psframebox[linecolor=white,fillcolor=white, fillstyle=solid,framearc=.5,framesep=0]{\scriptsize #1}}
	}
  \end@ClosedObj%
}
\def\Wv{\pst@object{Wv}}
\def\Wv@i(#1,#2){%
  \begin@ClosedObj%
	\rput(#1,#2){
	\psline{*-*}(0,0)(1,-1)
	}
  \end@ClosedObj%
}
\def\wi{\pst@object{wi}}
\def\wi@i#1(#2,#3){%
  \begin@ClosedObj%
	\rput(#2,#3){
	\psline{*-*}(0,0)(1,0)
	\rput(0.36,0){
	\psframebox[linecolor=white,fillcolor=white, fillstyle=solid,framearc=.5,framesep=0]{\scriptsize #1}}
	}
  \end@ClosedObj%
}
\def\Wi{\pst@object{Wi}}
\def\Wi@i(#1,#2){%
  \begin@ClosedObj%
	\rput(#1,#2){
	\psline{*-*}(0,0)(1,0)
	}
  \end@ClosedObj%
}
\renewcommand{\thetheorem}{\arabic{section}.\arabic{theorem}}
\newcommand{\nmathphantom}[1]{\settowidth{\dimen0}{$#1$}\hspace*{-\dimen0}}
\begin{document}
\psset{arrowsize=3pt 2}
\mainmatter

\pdfbookmark[1]{Titlepage}{Titlepage}
\thispagestyle{empty}
\topskip0pt
\vspace*{\fill}
\begin{center}
	\begin{LARGE}
		\textbf{On a Heegaard Floer theory for tangles}\medskip\\
	\end{LARGE}	
	\begin{large}
		\textsc{Claudius Bodo Zibrowius}\medskip\\
		\textsc{PhD thesis}\smallskip\\
	\end{large}	
	\vspace{11pt}
	\includegraphics[width=50pt]{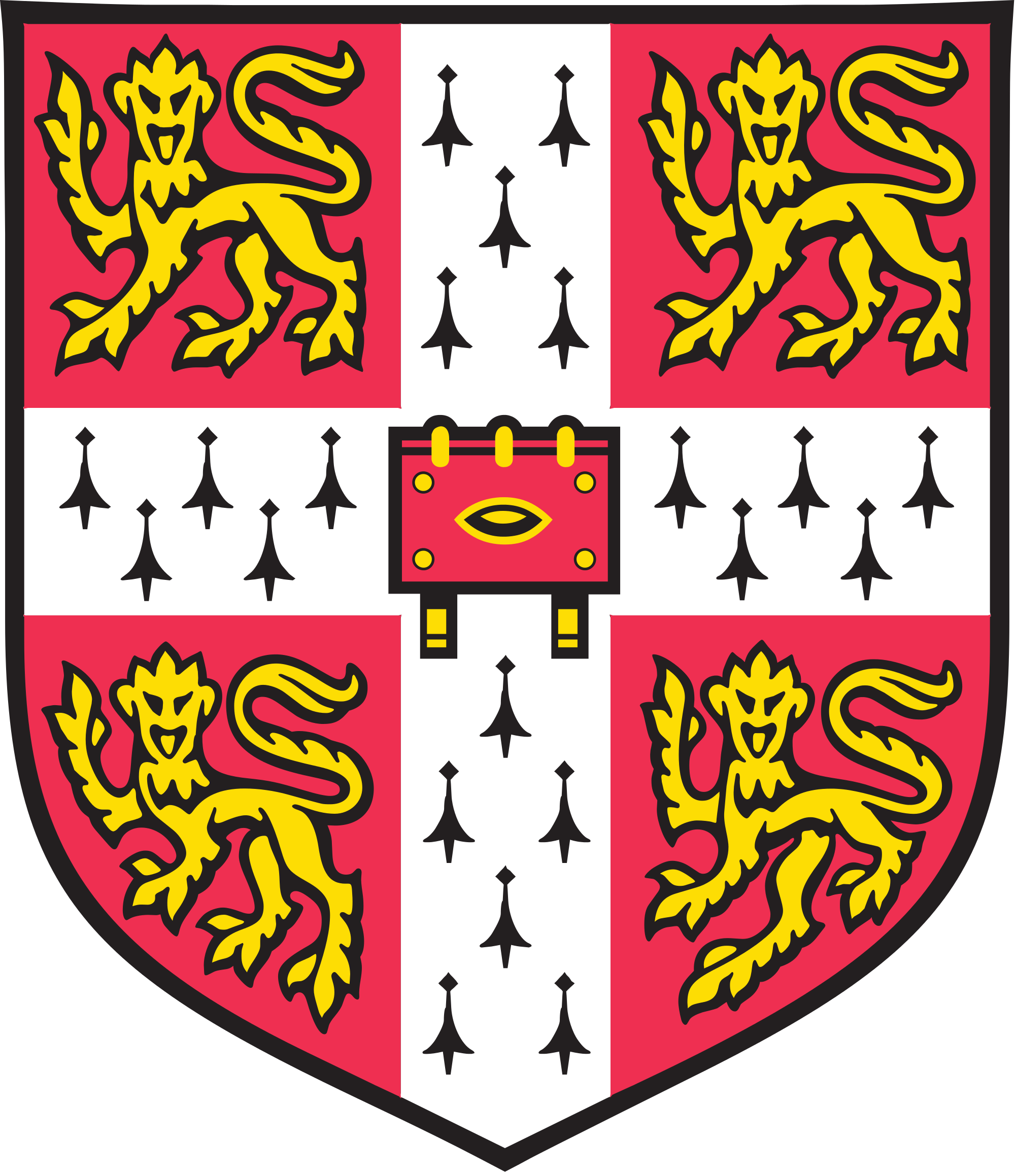}\\
	\vspace{11pt}
\end{center}

\begin{quotation}\small
	\textsc{Abstract.} The purpose of this thesis is to define a ``local'' version of Ozsváth and Szabó's Heegaard Floer homology $\HFL$ for links in the 3-sphere, i.\,e.\ a Heegaard Floer homology $\HFT$ for tangles in the 3-ball. \\
	The decategorification of $\HFL$ is the classical Alexander polynomial for links; likewise, the decategorification of $\HFT$ gives a local version $\nabla_T^s$ of the Alexander polynomial. In the first chapter of this thesis, we give a purely combinatorial definition of this polynomial invariant $\nabla_T^s$ via Kauffman states and Alexander codes and investigate some of its properties. As an application, we \linebreak[3] show that the multivariate Alexander polynomial is mutation invariant.\\
	In the second chapter, we define $\HFT$ in two slightly  different, but equivalent ways: One is via Juhász's sutured Floer homology, the other by imitating the construction of $\HFL$. We then state a glueing theorem in terms of Zarev's bordered sutured Floer homology, which endows $\HFT$ with additional  structure. As an application, we show that any two links related by mutation about a $(2,-3)$-pretzel tangle have the same $\delta$-graded link Floer homology. This result relies on a computer calculation.\\
	In the third and last chapter, we specialise to 4-ended tangles. In this case, we give a reformulation of $\HFT$ with a glueing structure in terms of (what we call) peculiar modules. Together with a glueing theorem, we can easily recover oriented and unoriented skein relations for $\HFL$. 
	Our peculiar modules also enjoy some symmetry relations, which support a conjecture about $\delta$-graded mutation invariance of $\HFL$. However, stronger symmetries would be needed to actually prove this conjecture.
	Finally, we explore the relationship between peculiar modules and twisted complexes in the wrapped Fukaya category of the 4-punctured sphere.\\
	There are four appendices, some of which might be of independent interest: In the first appendix, we describe a general construction of dg categories which unifies all algebraic structures used in this thesis, in particular type~A and type D modules from bordered theory. In the second appendix, we prove a generalised version of Kauffman's clock theorem, which plays a major role for our decategorified invariants. The last two appendices are manuals for two Mathematica programs. The first is a tool for computing the generators of $\HFT$ and the decategorified tangle invariant $\nabla_T^s$. The second allows us to compute bordered sutured Floer homology using nice diagrams. 
\end{quotation}
\vspace*{\fill}

\renewcommand{\contentsname}{Table of Contents}

\pdfbookmark{Table of Contents}{Table of Contents}
\tableofcontents

\chapter*{Introduction}\label{sec:intro}
Let $L$ be a link in the 3-sphere~$S^3$. Consider a closed 3-ball~$B^3\subset S^3$ whose boundary intersects~$L$ transversely. Then $L\cap B^3$ is essentially what we call a tangle, the main protagonist of this thesis. We define a tangle invariant $\HFT$, a Heegaard Floer homology for tangles, and study its properties.\\
Heegaard Floer homology theories were first defined by Ozsváth and Szabó in~2001 \cite{OSHF3mfds}. With an oriented, closed 3-dimensional manifold~$M$, they associated a family of homological invariants, the simplest of which is denoted by~$\widehat{\text{HF}}(M)$. Given an oriented knot in~$M$, Ozsváth and Szabó, and independently Rasmussen, then defined filtrations on the chain complexes which give rise to the respective flavours of knot Floer homology \cite{OSHFK,Jake}. This was later generalised to oriented links in~$S^3$~\cite{OSHFL}. Our tangle Floer homology should be understood as a generalisation of the hat version of link Floer homology $\HFL$ to oriented tangles.
\subsection*{$\HFT$ and an Alexander polynomial for tangles. } Like $\HFL$, our tangle Floer homology is a finitely generated Abelian group which comes with two gradings: a relative homological $\mathbb{Z}$-grading and an Alexander grading, which is an additional relative $\mathbb{Z}$-grading for each component of the tangle. However, unlike $\HFL$, our tangle Floer homology depends on some extra data, a site, associated with a tangle, see definition~\sref{def:site}. For a tangle $T$ with $n$ open strands, there are $\binom{2n}{n-1}$ such sites $s$, and for each of them, we define a bigraded chain complex
$$\CFT(T,s)=\bigoplus_{\substack{h\in\mathbb{Z}~\leftarrow \text{homological grading}\hspace{-2.96cm}\\ a\in\mathbb{Z}^{\vert T\vert}~\leftarrow \text{Alexander  grading}\hspace{-2.7cm}}}\CFT_h(T,s,a),$$
where $\vert T\vert$ denotes the number of components of $T$. 
\begin{theorem}[\sref{thm:HFTiswelldefandinvariant}]
Given a tangle \(T\) and a site \(s\) for \(T\), the bigraded chain homotopy type of \(\CFT(T,s)\) is an invariant of \(T\). We denote its homology by \(\HFT(T,s)\) and call it the \textbf{tangle Floer homology} of \(T\).
\end{theorem}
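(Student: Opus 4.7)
The plan is to follow the well-established template for invariance proofs in Heegaard Floer theory, adapting the arguments of Ozsv\'ath--Szab\'o for $\HFL$ (and more generally Juh\'asz's arguments for sutured Floer homology, which will be particularly convenient given the thesis advertises an equivalence of the two definitions of $\HFT$) to the tangle-with-site setting. Concretely, I would proceed as follows.

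First, set up the Heegaard-diagrammatic calculus: define a \emph{tangle Heegaard diagram} encoding the pair $(T,s)$, where the site $s$ determines boundary data such as the placement of basepoints or the distribution of $\alpha$-arcs on $\partial\Sigma$. Then verify existence, namely that every $(T,s)$ is represented by some such diagram, and reduction, namely that any two diagrams for $(T,s)$ are connected by a finite sequence of elementary Heegaard moves: isotopies (not crossing basepoints), handleslides (among the $\alpha$- and $\beta$-curves, subject to the usual restrictions imposed by the basepoints), and index-one/two stabilisations. This is essentially a Reidemeister-type theorem for tangle Heegaard diagrams; the presence of the boundary $\partial B^3$ and of the extra combinatorial datum $s$ is the main novelty compared with the closed case, but should follow from a standard Morse-theoretic argument on a Heegaard-compatible handle decomposition of $B^3$ relative to $T$.

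Second, at the level of a fixed diagram, show that the differential is well-defined and that $\CFT(T,s)$ with its bigrading depends on the diagram only up to bigraded chain homotopy equivalence. This breaks into the usual checks: independence of the choice of generic path of (almost) complex structures and admissibility data, and invariance under each of the three classes of moves in the previous step. For isotopies and handleslides one uses the standard continuation/triangle maps and the associativity of the holomorphic triangle count; for stabilisations, the nearest-point argument identifying the new complex with the old one up to an acyclic summand. The Alexander and homological gradings must be tracked through each of these maps — here the fact that both gradings are \emph{relative} simplifies matters, as only well-definedness up to overall shift needs to be preserved, and this shift can be pinned down using the multi-pointed basepoint setup inherited from $\HFL$.

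The main obstacle I anticipate is not any single one of these moves in isolation (they are well-trodden) but rather the bookkeeping required to keep the site $s$ and the tangle boundary rigid throughout. In particular, one must ensure that handleslides and isotopies never move $\alpha$- or $\beta$-curves across the boundary parameterisation dictated by $s$, and that the Alexander grading refinement by the components of $T$ behaves correctly when a component has endpoints on $\partial B^3$ as opposed to being closed. The cleanest way to handle this, and the route I would take, is to invoke the identification of $\CFT(T,s)$ with a sutured Floer complex $\SFC$ for an appropriate sutured manifold built from $(B^3,T,s)$: then Juh\'asz's invariance theorem for $\SFH$ yields invariance of the ungraded chain homotopy type for free, and one is reduced to checking that the two gradings on $\CFT$ descend to the sutured model in a well-defined way.
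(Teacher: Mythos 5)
Your proposal matches the paper's argument: the paper likewise quotes the existence and moves theorem for tangle Heegaard diagrams (via Zarev's bordered sutured framework), then, for a fixed site $s$, converts the tangle diagram into a balanced sutured Heegaard diagram (by gluing on a multi-punctured torus completing the $\alpha$-arcs and cutting according to $s$), so that Juh\'asz's invariance for $\SFH$ gives invariance of the chain homotopy type, with the remaining work being exactly the grading bookkeeping you describe (the homological grading by definition, the Alexander grading via the relative $\Spinc$/$H_1$-grading, and the observation that the Heegaard moves preserve both gradings of corresponding generators). So your "cleanest route" is essentially the route the paper takes.
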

In link Floer homology, the ``Alexander'' in ``Alexander grading'' comes from the fact that, given a link $L$ in $S^3$, the graded Euler characteristic of $\HFL(L)$ recovers the Alexander polynomial of $L$, a classical polynomial link invariant, named after its discoverer~\cite{Alexander}. We say link Floer homology \textit{categorifies} the Alexander polynomial. Similarly, for tangles, we obtain polynomial invariants 
$$
\chi(\HFT(T,s))=\sum_{h, a} (-1)^h\operatorname{rk}(\HFT_h(T,s,a))\cdot t_{1\phantom{\vert}}^{a_{1\phantom{\vert}}}\!\!\cdots t_{\vert T\vert}^{a_{\vert T\vert}}\in\mathbb{Z}[t_1^{\pm1},\dots,t_{\vert  T\vert}^{\pm1}],
$$
which are well-defined up to multiplication by a unit. In chapter~\ref{chapter:polynomial}, we give a purely combinatorial definition of a normalised version $\nabla_T^s$ of these polynomial invariants in terms of Kauffman states and Alexander codes and study their properties. 
\subsection*{Mutation. }
A new invariant can already be interesting because of its simplicity or its aesthetic appeal. But its true value should be determined by its capacity to answer questions about existing theory. So the primary purpose of any tangle Floer homology should be to learn more about ``the local nature'' of knot and link Floer homology and, ultimately, of the geometric objects themselves. A prime example of an open question one might hope to address is how link Floer homology behaves under mutation. 
\begin{definition}\label{def:INTROmutation}
Let $L$ be a link. Construct a new link $L'$ by cutting out a 4-ended tangle $\Gamma$ and glueing it back in after a half-rotation, as illustrated below:
$$
\psset{unit=0.25}
\begin{pspicture}(-8.01,-3.01)(8.01,3.01)
\rput(-5,0){
\pscircle[linestyle=dotted](0,0){3}
\psline(3;45)(3;-135)
\psline(3;-45)(3;135)
\pscircle[fillcolor=white,fillstyle=solid](0,0){1.5}
\rput(0,0){$\Gamma$}
}
\rput(0,0){$\longrightarrow$}
\rput(5,0){
\pscircle[linestyle=dotted](0,0){3}
\psline(3;45)(3;-135)
\psline(3;-45)(3;135)
\pscircle[fillcolor=white,fillstyle=solid](0,0){1.5}
\psrotate(0,0){180}{\rput(0,0){$\Gamma$}}
}
\end{pspicture}
$$
We say $L'$ is obtained from $L$ by \textbf{Conway mutation}. We call~$\Gamma$ the \textbf{mutating tangle} and $L'$ a \textbf{mutant} of~$L$. If~$L$ is oriented, we define an orientation on $L'$ such that it agrees with the one on~$L$ outside~$\Gamma$. If this means that we need to reverse the orientation of the two open components of~$\Gamma$, we also reverse the orientation of any closed components of~$\Gamma$; otherwise, we do not change the orientation on~$\Gamma$.
\end{definition}
We know from~\cite{OSmutation} that knot and link Floer homology is, in general, not invariant under mutation. However, we have the following conjecture from \cite[conjecture~1.5]{BaldwinLevine}.
\begin{conjecture}\label{conj:MutInvHFL}
Let \(L\) be a link and let \(L'\) be obtained from \(L\) by Conway mutation. Then \(\HFL(L)\) and \(\HFL(L')\) agree after collapsing the bigrading to a single \(\mathbb{Z}\)-grading, known as the \(\delta\)-grading. In short: \(\delta\)-graded link Floer homology is mutation invariant.
\end{conjecture}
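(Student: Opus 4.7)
The plan is to leverage the tangle decomposition framework developed in this thesis. Given a link $L$ with a mutating 4-ended tangle $\Gamma$, let $T$ denote the complementary 4-ended tangle, so that $L$ decomposes as a glueing $T\cup_{\partial}\Gamma$. By the glueing theorem for peculiar modules from chapter~\ref{chapter:polynomial} onwards (the 4-ended refinement of $\HFT$ introduced in chapter~III), the link Floer homology $\HFL(L)$ is recovered as a pairing of the peculiar modules of $T$ and $\Gamma$, compatible with the $\delta$-grading. The mutant $L'$ corresponds to pairing the same $T$ with the image of $\Gamma$ under a half-rotation, subject to the orientation conventions of definition~\sref{def:INTROmutation}. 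Thus the conjecture reduces to showing that, for every 4-ended tangle $\Gamma$, the peculiar modules of $\Gamma$ and of its mutant $\rho(\Gamma)$ are equivalent in a sense preserved under glueing and strong enough to imply $\delta$-graded isomorphism after pairing.

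First I would sanity-check the approach on the decategorified invariant $\nabla_\Gamma^s$ from chapter~I, confirming the mutation invariance of the multivariate Alexander polynomial already stated in the abstract. Next I would attempt to promote this decategorified symmetry by exploiting the correspondence between peculiar modules and twisted complexes in the wrapped Fukaya category of the 4-punctured sphere developed in chapter~III: the mutation operation is a hyperelliptic involution of this surface that permutes the four punctures, and one would try to realise the required equivalence as the induced $A_\infty$-autoequivalence of the Fukaya category, carefully tracking the Alexander gradings on the open strands and, when those are reversed, on the closed components as well. Combined with the glueing theorem, this would transfer the symmetry from $\Gamma$ to the pairing, yielding $\HFL(L)\simeq\HFL(L')$ after collapse to $\delta$.

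The hard part, as the author already flags in the abstract, is that the symmetries of peculiar modules established in chapter~III are sufficient to \emph{support} the conjecture but not to prove it. A complete argument requires a strictly stronger $\delta$-graded symmetry, the main subtlety being the bookkeeping of the two Alexander gradings on the open strands and their interaction with the orientation reversal prescribed by definition~\sref{def:INTROmutation}: under the rotation, the two punctures on one side of $\Gamma$ get swapped, and the bigrading can only be expected to be preserved \emph{after} collapsing to the $\delta$-grading, so the desired symmetry cannot be lifted to the bigraded level and must be proven in a genuinely $\delta$-graded category. Constructing such a symmetry, either through the Fukaya-categorical picture or via a combinatorial argument on peculiar modules, is the main obstacle and the reason the statement remains conjectural.
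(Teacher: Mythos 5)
This statement is a conjecture in the paper (attributed to Baldwin--Levine), and the paper offers no proof of it: it only provides supporting evidence, namely mutation invariance of the decategorified invariant (theorem~\ref{thm:mutation}), the symmetry relations for $\CFTd$ (propositions~\ref{prop:CFTdAll4sites} and~\ref{prop:CFTdPeculiarRanks}), the glueing theorem, and the single verified case of mutation about the $(2,-3)$-pretzel tangle (theorem~\ref{thm:2m3pt}). Your proposal follows essentially this same route and, like the paper, correctly identifies the missing ingredient --- a sufficiently strong $\delta$-graded symmetry of peculiar modules (conjecture~\ref{conj:MutInvCFTd}) that survives pairing --- so you have not proved the statement, but neither does the paper, and your assessment of where the argument stops is accurate.
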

Let us see what the new invariants tell us on the decategorified level, i.\,e.\ on the level of the Alexander polynomial. 
\begin{theorem}[\sref{thm:mutation}]\label{thm:IntroMutationDECAT}
The multivariate Alexander polynomial is invariant under Conway mutation after identifying the variables corresponding to the two open strands of the mutating tangle.
\end{theorem}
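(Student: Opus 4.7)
The plan is to exploit a glueing formula that expresses the (multivariate) Alexander polynomial of a link as a pairing of the tangle invariants $\nabla^s$ of the two 4-ended tangles obtained by cutting along the boundary sphere of the mutating tangle. Writing $L=T\cup\Gamma$, I would first establish a formula of the shape
$$
\nabla_L \;=\; \sum_{s} \varepsilon(s)\cdot \nabla_\Gamma^{s}\cdot \nabla_T^{s^\vee},
$$
where the sum ranges over the $\binom{4}{1}=4$ sites on the boundary sphere, $s^\vee$ denotes the complementary site on the other side, and $\varepsilon(s)$ absorbs a sign and an Alexander-grading shift determined by the orientations at the four boundary points. Because $\nabla_T^{s^\vee}$ and $\nabla_\Gamma^s$ are defined via Kauffman states and Alexander codes, such a formula should fall out of the observation that a Kauffman state of $L$ restricts to a pair of partial Kauffman states, one on each side of the cutting sphere, whose Alexander-code contributions multiply.

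The second, and main, step is to prove that after identifying the two variables $t_\text{open}$ attached to the open strands of $\Gamma$, the tangle invariant $\nabla_\Gamma^s$ is invariant under the half-rotation $\rho$ defining Conway mutation, in the precise sense that $\nabla_\Gamma^s(t_\text{open},t_1,\dots,t_k)=\nabla_{\rho(\Gamma)}^{\rho(s)}(t_\text{open},t_1^{\pm1},\dots,t_k^{\pm1})$, where the signs of the closed-component variables are dictated by whether the mutation reverses the orientation of the closed components (exactly as prescribed in Definition~\sref{def:INTROmutation}). Combinatorially, $\rho$ permutes the crossings, regions, and sites of $\Gamma$, inducing a bijection on Kauffman states. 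One needs to check that under this bijection the Alexander code labels at each crossing transform in the correct way: the two open-strand labels get interchanged (hence are absorbed by the identification $t_\text{open}=t_\text{open}$), and the closed-component labels transform according to the rule above, which is exactly compensated by the orientation convention in the definition of mutation.

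Assembling these two ingredients, the glueing formula for $L'=T\cup\rho(\Gamma)$ reads
$$
\nabla_{L'}\;=\;\sum_s \varepsilon(s)\cdot \nabla_{\rho(\Gamma)}^{s}\cdot \nabla_T^{s^\vee}\;=\;\sum_s \varepsilon(s)\cdot \nabla_\Gamma^{\rho^{-1}(s)}\cdot \nabla_T^{s^\vee},
$$
after identifying the open-strand variables. Since $\rho$ acts on the four boundary points by a half-rotation that preserves the pairing $s\mapsto s^\vee$ (up to the same orientation data already packaged into $\varepsilon$), reindexing $s\mapsto \rho(s)$ recovers $\nabla_L$ under the same variable identification.

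The main obstacle will be the second step: carefully matching the Kauffman-state bijection induced by $\rho$ with the transformation of the Alexander code entries, for each of the three possible mutation axes, and verifying that all the orientation-induced signs on closed components of $\Gamma$ conspire with the permutation of sites $s\mapsto\rho(s)$ to leave $\varepsilon(s)$ invariant. The bookkeeping for boundary orientation data, together with the need to make sure the global normalisation of $\nabla_T^s$ is consistent on both sides of the glueing, is where the real content lies; by contrast, the glueing formula itself should be essentially formal from the Kauffman-state definition.
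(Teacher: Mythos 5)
Your step 1 is indeed the paper's glueing formula (Proposition~\ref{prop:glueing}) and is, as you say, essentially formal. The gap is in your step 2: the site-by-site relation $\nabla_\Gamma^s=\nabla_{\rho(\Gamma)}^{\rho(s)}$ (open variable untouched, only closed-component variables possibly inverted) is \emph{false} whenever the mutation forces a re-orientation of the open strands. Take $\Gamma$ to be a single positive crossing with both strands coloured $t$ and pointing upwards, and let $\rho$ be the rotation about the horizontal in-plane axis: the rotated-and-re-oriented tangle is the same oriented tangle $\Gamma$ again, while $\rho$ swaps the sites $b$ and $d$, so your relation would force $\nabla_\Gamma^b=\nabla_\Gamma^d$; but $\nabla_\Gamma^b=-t^{-1}$ and $\nabla_\Gamma^d=t$. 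The reason your Kauffman-state bijection cannot deliver the relation is that rotating a diagram (or viewing it from behind) only yields the tautological transport identity relating $\Gamma$ to $\rho(\Gamma)$ carrying the \emph{pushed-forward} orientations; the mutation convention then reverses the open strands, and by Proposition~\ref{prop:reverseorientI}/Corollary~\ref{cor:revorientrevisited} this inverts the open-strand variable as well -- an inversion that is not absorbed by merely setting the two open variables equal. Your final assembly also does not close: the outside tangle $T$ is not rotated, so the coefficients $\nabla_T^{s^\vee}$ keep their sites, and after transporting the inside invariants you are left needing identities among $\nabla_\Gamma^s$ for \emph{different} sites of the same tangle, not a formal reindexing $s\mapsto\rho(s)$ of the sum.

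Those missing identities are exactly where the hypothesis ``identify the two open-strand variables'' does its work, and they are the paper's key lemma: for a 4-ended tangle whose open strands share the colour $t$ one has $\nabla^a=\nabla^a_{\rr}=\nabla^c$, $\nabla^b=\nabla^d_{\rr}$ and $t\nabla^a+\nabla^b+t^{-1}\nabla^c=\nabla^d$ (Proposition~\ref{prop:fourended}, Corollary~\ref{cor:fourendedonecolour}). These are proved not by a state bijection on one diagram but by closing off strands, gluing on auxiliary one-crossing tangles, and invoking properties of the Conway potential function (Theorem~\ref{thm:CPFprops}) together with the orientation-reversal formulas. With these relations in hand, the paper's proof of Theorem~\ref{thm:mutation} is the same assembly you describe -- glueing plus a term-by-term comparison for each of the three mutation axes -- so your overall frame is right; what is missing is precisely the lemma comparing different sites of the mutating tangle, and the statement you propose in its place is incorrect.
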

This result has long been known for the single-variate Alexander polynomial, see for example~\cite[proposition~11]{LickorishMillett}, but I have been unable to find a result for the multivariate polynomial in the literature. The proof of theorem~\ref{thm:IntroMutationDECAT} relies on certain symmetry relations between the invariants $\nabla_T^s$ for 4-ended tangles $T$ and varying sites $s$, see proposition~\sref{prop:fourended}. 
For $\HFT$, we can prove similar symmetry relations which categorify those for $\nabla_T^s$. As conjecture~\ref{conj:MutInvHFL} suggests, in general, they only hold for $\delta$-graded tangle Floer homology, see proposition~\sref{prop:fourendedHFT} and example~\sref{exa:pretzeltangle}. However, these symmetry relations are not sufficient to prove the conjecture. This is because, unlike $\nabla_T^s$, $\HFT$ \emph{alone} is insufficient to state a glueing formula.  
\subsection*{Glueing tangle Floer homologies. }
The main tool for glueing 3-manifolds with boundary in Heegaard Floer homology is bordered Heegaard Floer homology, developed by Lipshitz, Ozsváth and Thurston \cite{LOT}. In \cite{Zarev}, Zarev generalised this theory to sutured manifolds. We interpret our tangle Floer homology $\HFT$ in terms of Zarev's theory to add a glueing structure to $\HFT$. This glueing structure essentially takes the form of extra differentials between the tangle Floer chain complexes $\CFT(T,s)$ for different sites $s$. As an accompaniment to this thesis, we provide the Mathematica package~\cite{BSFH.m} which allows us to compute the bordered sutured invariants for any bordered sutured manifold from nice diagrams, see appendix~\ref{app:manualBSFH} for a documentation of \cite{BSFH.m}. In particular, it allows us to confirm conjecture~\ref{conj:MutInvHFL} for mutation about a particular non-trivial tangle.
\begin{theorem}[\sref{thm:2m3pt}, \sref{exa:HFTdpretzeltangle}]\label{thm:23pretzelintro}
Consider the following \((2,-3)\)-pretzel tangle:
\begin{center}
\psset{unit=0.65}
\begin{pspicture}[showgrid=false](-5.2,-3.1)(3.2,3.1)
\psecurve(-2.5,1.5)(0,2)(0.75,1)(-0.75,-1)(0,-2)(0.97,-2.24)(2,-2)
\psecurve[arrowhead=3pt 1.5]{<-}(2,2)(0.97,2.24)(0,2)(-0.75,1)(0.75,-1)(0,-2)(-2.5,-1.5)(-3.25,0)(-2.5,1.5)(0,2)(0.75,1)
\psecurve[arrowhead=3pt 1.5]{<-}(-6,1.5)(-3.3,1.85)(-2.5,1.5)(-1.85,0)(-2.5,-1.5)(-3.3,-1.85)(-6,-1.5)
\pscircle*[linecolor=white](-2.5,1.5){0.2}

\psecurve(0.75,-1)(0,-2)(-2.5,-1.5)(-3.25,0)(-2.5,1.5)

\pscircle*[linecolor=white](0,2){0.2}
\pscircle*[linecolor=white](0,0){0.2}
\pscircle*[linecolor=white](0,-2){0.2}

\psecurve(0.75,1)(-0.75,-1)(0,-2)(0.97,-2.24)(2,-2)
\psecurve(0,2)(-0.75,1)(0.75,-1)(0,-2)
\psecurve(-2.5,-1.5)(-3.25,0)(-2.5,1.5)(0,2)(0.75,1)(-0.75,-1)

\pscircle*[linecolor=white](-2.5,-1.5){0.2}
\psecurve(-2.5,1.5)(-1.85,0)(-2.5,-1.5)(-3.3,-1.85)(-6,-1.5)
\pscircle[linestyle=dotted](-1,0){3.05}
\end{pspicture}
\end{center}
Two knots or links that are related by mutation of this tangle have the same bigraded knot or link Floer homologies after identifying the Alexander gradings corresponding to the two open strands. If the orientation of one of those two strands is reversed, then their \(\delta\)-graded knot or link Floer homologies agree. 
\end{theorem}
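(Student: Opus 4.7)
The plan is to reduce the claim to a finite calculation about the local invariant of the fixed tangle $\Gamma=$ the $(2,-3)$-pretzel tangle, and then to invoke the glueing theorem so that the mutation statement follows automatically for every link $L$ in which $\Gamma$ sits. Concretely, suppose $L=\Gamma\cup_{\partial} T$ for some complementary 4-ended tangle $T$, and let $L'=\Gamma'\cup_{\partial}T$, where $\Gamma'$ is $\Gamma$ after a half-rotation (so $L'$ is a mutant of $L$). By the glueing theorem for $\HFT$ in terms of Zarev's bordered sutured theory, $\HFL(L)$ is computed by pairing the bordered sutured invariant of $\Gamma$ with that of $T$ (summed over the matching sites $s$, with the Alexander gradings assembling correctly), and similarly for $L'$. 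Hence it suffices to establish a bigraded isomorphism between the invariants of $\Gamma$ and $\Gamma'$ that is compatible with the pairing, under the appropriate identification of Alexander gradings.

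First, I would compute the bordered sutured Floer invariant of $\Gamma$ explicitly using the package \cite{BSFH.m} from a nice diagram; the documentation in appendix~\ref{app:manualBSFH} is designed precisely for this purpose. The output is a finite-dimensional, bigraded chain complex with additional type~A/type~D structure maps between the various site complexes $\CFT(\Gamma,s)$. The mutation half-rotation acts on the set of sites and on Alexander gradings in a controlled way (it fixes the set of sites of $\Gamma$ as an unordered set and swaps the two open-strand Alexander variables $t_i,t_j$), so I would compare the computed invariant of $\Gamma$ with its image under this involution.

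The core step is to verify two symmetry statements at the level of bordered sutured complexes, as opposed to just at the level of the Euler characteristics $\nabla_\Gamma^s$ (where the analogous statement is already guaranteed by proposition~\sref{prop:fourended} and theorem~\sref{thm:IntroMutationDECAT}). Concretely: (i) after identifying the Alexander variables $t_i$ and $t_j$ corresponding to the two open strands, the bigraded bordered sutured invariant of $\Gamma$ is isomorphic to that of $\Gamma'$; (ii) if in addition the orientation of one of the two open strands is reversed (which on the level of Alexander gradings amounts to replacing $t_i$ by $t_i^{-1}$), then the two invariants agree after collapsing the bigrading to the single $\delta$-grading. Both assertions reduce to explicit matrix comparisons performed by the Mathematica package, and once they are verified, the two pairing spectral sequences / chain complexes for $L$ and $L'$ become isomorphic (respectively $\delta$-graded isomorphic), yielding the theorem.

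The main obstacle is computational rather than conceptual: the nice diagram for $\Gamma$ carrying the full bordered sutured structure is large enough that the calculation is infeasible by hand, and one has to trust (and document) the package~\cite{BSFH.m}. A secondary subtlety is the grading bookkeeping: the spin$^c$/Alexander grading conventions on both sides of the glueing must be matched precisely so that an isomorphism of $\Gamma$-invariants really induces an isomorphism of $\HFL$, and in case (ii) one must check that the obstruction to a bigraded identification is exactly the $\delta$-collapse and nothing finer. Once these two points are handled, invoking the glueing theorem closes the argument uniformly in the complementary tangle $T$, proving the result for every link and knot containing $\Gamma$.
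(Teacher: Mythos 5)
Your proposal is essentially the paper's first proof of this theorem: compute the bordered sutured invariant of the $(2,-3)$-pretzel tangle with \cite{BSFH.m} (the paper does this for two boundary parametrisations related by the mutation, which is the same as comparing one invariant with its image under the involution relabelling sites, algebra elements and Alexander gradings), verify the bigraded identification after collapsing the two open-strand Alexander gradings (respectively the $\delta$-graded one after orientation reversal), and conclude via the glueing theorem~\sref{thm:glueingCAT}, noting boundedness and that mutation about the remaining axes reduces to this case. The paper also gives a second, independent proof via the peculiar module $\CFTd$ and its loop symmetry (example~\sref{exa:HFTdpretzeltangle} with theorem~\sref{thm:CFTdGeneralGlueing}), but your route matches the first one.
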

We offer two independent proofs of this result, both of which, however, rely on calculations using the program~\cite{BSFH.m}. The first proof follows from computing the glueing structure for the $(2,-3)$-pretzel tangle in two different ways corresponding to mutation and observing that the two results are homotopy equivalent, see theorem~\sref{thm:2m3pt}. However, this proof does not really tell us \textit{why} they are homotopic. The second proof answers this question more satisfactorily, replacing the previous ad hoc construction by a conceptually more refined approach, see example~\sref{exa:HFTdpretzeltangle} in conjunction with theorem~\sref{thm:CFTdGeneralGlueing}. To explain this, let us discuss bordered invariants in a little more detail.

\subsection*{Bordered theory and Fukaya categories. }
In general, the invariants and the glueing theorems in bordered and also bordered sutured Heegaard Floer homology look rather complicated. With a closed 3-manifold~$M$ split along a (parametrised) closed surface~$F$ into two components~$M_1$ and~$M_2$, one associates a differential graded algebra~$\mathcal{A}(F)$, a so-called type D module~$\CFD(M_1)$ and a type A module~$\CFA(M_2)$ over~$\mathcal{A}(F)$. Then~$\HF(M)$ can be computed as the homology of a special tensor product~$\boxtimes$ of $\CFD(M_1)$ and $\CFA(M_2)$ over $\mathcal{A}(F)$. If one wants to split $M$ into more than two pieces, there are also bimodule invariants of type AA, AD, DA and DD, depending on whether we treat the glueing surfaces of the components as type A or type D sides.\pagebreak[3]\\
Especially the algebra $\mathcal{A}(F)$ can be quite complicated, and in general, it will be so for our tangle Floer homology. 
In \cite{Auroux}, Auroux gave an interpretation of the bordered algebra $\mathcal{A}(F)$ in terms of some partially wrapped Fukaya category and outlined a conjectural reformulation of bordered Heegaard Floer theory in this framework. In~\cite{LOTMor}, Lipshitz, Ozsváth and Thurston gave further evidence for this conjectural relationship by restating their glueing theorem purely in terms of the homology of the space of morphisms between the two type D modules of $M_1$ and $M_2$. Moreover, Rasmussen, Hanselman and Watson very recently gave an elegant reformulation of the glueing theorem for a surprisingly large class of manifolds with torus boundary, which they called loop-type \cite{HRW}. For such manifolds, the type D structure can be interpreted as an object in the Fukaya category of immersed curves on a (punctured) torus. In particular, glueing corresponds to taking Lagrangian intersection homology on the torus. \\
For 4-ended tangles, a similar story seems to be true, which we explore in chapter~\ref{chapter:HFTd}. 

\subsection*{A glueing structure for 4-ended tangles. } 
\begin{theorem}[\sref{defthm:CFTd}]
Given a 4-ended tangle~\(T\), we can endow the tangle Floer homology \(\HFT\) with an additional structure of a curved complex, which we denote by \(\CFTd(T)\). (Roughly speaking, a curved complex is a type~D module for which the differential does not square to 0, see definition~\ref{exa:HighBrowDefcurvedTypeD}.) \(\CFTd(T)\) is a tangle invariant. We call it the \textbf{peculiar module} of \(T\). 
\end{theorem}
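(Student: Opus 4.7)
The plan is to construct $\CFTd(T)$ as a natural repackaging of the bordered sutured Floer invariants of $T$ developed in Chapter~II, and to deduce its invariance from the corresponding bordered sutured invariance theorem. The exterior of a 4-ended tangle $T$ in the 3-ball is a sutured manifold whose boundary is a canonically parametrised 4-punctured sphere. Let $\mathcal{A}$ denote the bordered sutured algebra associated with this parametrisation; its idempotent ring has four primitive idempotents $\iota_s$ indexed by the four sites of a 4-ended tangle, and its non-idempotent generators are elementary chords running between adjacent sites along the boundary.

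First I would define the underlying $\mathcal{A}$-module of $\CFTd(T)$ as
$$ \CFTd(T) = \bigoplus_{s} \iota_s \cdot V_s, $$
where $V_s$ is generated by the intersection points of a bordered sutured Heegaard diagram for $T$ lying in site $s$. The differential is read off the same diagram and combines the internal differentials within each $V_s$, whose homology is by construction $\HFT(T,s)$, with crossover differentials that multiply by a chord in $\mathcal{A}$ and move the generator to another site. This is exactly the data of $\BSD(T)$ repackaged so that the direct sum over sites becomes a single type~D module over $\mathcal{A}$.

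Next I would verify that the total differential $\partial$ satisfies a curved type~D relation $\partial^2 = c \otimes \id$ for a distinguished central element $c \in \mathcal{A}$. In Zarev's theory $\partial^2 = 0$ holds only modulo the algebra relations; for the parametrised 4-punctured sphere one shows that those relations are generated by a single identity involving the element $c$ obtained by running once around the boundary. Thus Zarev's algebra is precisely $\mathcal{A}/(c)$, and the $\mathcal{A}$-lifted differential becomes curved with curvature $c$. Invariance of the chain homotopy type of $\CFTd(T)$ then follows from the invariance of $\BSD(T)$ under bordered sutured Heegaard moves, since $\CFTd(T)$ is obtained from $\BSD(T)$ by a purely algebraic reinterpretation that commutes with chain homotopies.

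The main obstacle will be the algebraic lift of Zarev's algebra to the peculiar algebra $\mathcal{A}$: one must identify a single central generator $c$ whose image accounts for all failures of $\partial^2 = 0$ in the lifted theory, check that the corresponding lift of $\BSD(T)$ is canonically defined from a Heegaard diagram rather than depending on auxiliary choices, and verify that the lifted differential respects the bigrading coming from the Alexander and homological gradings. A secondary technical point is to confirm that the peculiar algebra is rich enough that the curved chain homotopy type of $\CFTd(T)$ retains exactly the same information as $\BSD(T)$, so that nothing is lost in passing from Zarev's framework to the curved formalism.
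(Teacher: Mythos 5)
There is a genuine gap in your proposed route: $\CFTd(T)$ is \emph{not} a purely algebraic repackaging or ``lift'' of a bordered sutured type~D module $\BSD$ of the tangle complement. In Zarev's framework the arc diagram must satisfy homological linear independence (condition~(\ref{eqn:homlinind})), so for a 4-ended tangle one can never parametrise the whole 4-punctured boundary sphere; any bordered sutured structure on $X_T$ necessarily omits some of the arcs, and consequently any $\BSD(X_T)$ only sees the differentials crossing a proper subset of the chords $p_i,q_i$ and only a subset of the sites (this is exactly the limitation discussed around theorem~\sref{thm:glueingCAT}). The extra structure maps of $\CFTd(T)$ therefore require new holomorphic-curve counts (domains with nonzero multiplicity at the $p_i$ or $q_i$), and they are not determined by, nor do their homotopy invariance follow from, Zarev's invariance theorem. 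Relatedly, your mechanism for the curvature is not correct: Zarev's $\partial^2=0$ holds on the nose, not ``modulo algebra relations,'' and his algebra is not $\Ad/(c)$ for a central $c$; the curvature $p^4+q^4$ in the peculiar setting is an analytic phenomenon, arising from $\alpha$-injective boundary degenerations which simply cannot occur in Zarev's counts because provincial domains avoid the boundary.

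For contrast, the paper works directly with peculiar Heegaard diagrams (a closed surface with basepoints $p_i$, $q_i$, $z_j$), deliberately avoiding bordered sutured theory. The relation $\partial^2=p^4+q^4$ is proved by counting ends of moduli spaces of Maslov index~2 curves, using the boundary degeneration results of Ozsv\'ath--Szab\'o (facts~\ref{fact:OSHFL_lemma_5_4} and~\ref{fact:OSHFL_lemma_5_5}): $\beta$-degenerations are excluded by the relation $p_iq_i=0$, $\alpha$-degenerations contribute exactly $p^4+q^4$, and all other ends cancel in pairs. Invariance is then a (nontrivial) adaptation of the original Ozsv\'ath--Szab\'o arguments for isotopies, handleslides (holomorphic triangle maps with $\Theta$-elements, plus an area/$\delta$-grading filtration argument to show the triangle maps are isomorphisms), and stabilisation. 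If you want to salvage your approach, you would have to supply precisely these holomorphic-geometry inputs yourself; they constitute the actual content of the theorem and cannot be outsourced to the invariance of $\BSD$.
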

This enables us to explicitly calculate objects for 4-ended tangles in the (triangulated enlargement of the) fully wrapped Fukaya category $\TwFuk(S^2,4)$ of the 4-punctured sphere, via the $A_\infty$-functor $\mathcal{L}$ in the following theorem.
\begin{theorem}[\sref{thm:TwFukpqModEquivalent}]\label{thm:INTROTwFukpqModEquivalent}
Let \(\pqMod\) be the category of peculiar modules. There exist two non-trivial \(A_\infty\)-functors \vspace*{-0.4cm}
$$\begin{tikzcd}[column sep=2cm]
\pqMod
\arrow[in=175,out=5]{r}{\mathcal{L}}
&
\TwFuk(S^2,4).\nmathphantom{\TwFuk(S^2,4)}\phantom{\pqMod}
\arrow[in=-5,out=185]{l}{\mathcal{M}}
\end{tikzcd}$$
\end{theorem}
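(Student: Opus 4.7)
The plan is to set up the two functors via a Yoneda-type correspondence through a shared $A_\infty$-algebra, in the spirit of Auroux's reformulation of bordered Heegaard Floer theory and of the Rasmussen--Hanselman--Watson framework for loop-type manifolds. First I would isolate the underlying $A_\infty$-algebra $\mathcal{A}_{\mathrm{pq}}$ over which peculiar modules are defined (that is, the algebra governing the curved type~D structures from \sref{defthm:CFTd}) and single out a distinguished collection of Lagrangian arcs $L_1,\dots,L_N$ on the 4-punctured sphere~$S^2\setminus\{4\text{ pts}\}$ associated with the same ``site'' combinatorics that labels the idempotents of $\mathcal{A}_{\mathrm{pq}}$. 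The key algebraic input is then to compute the wrapped Floer cochain algebra $\bigoplus_{i,j} CF^\ast(L_i,L_j)$ and to identify it, as an $A_\infty$-algebra, with $\mathcal{A}_{\mathrm{pq}}$ (possibly after a formal diffeomorphism of generators). This identification is the heart of the matter and should be possible by a direct enumeration of immersed bigons/polygons with boundary on $L_1\cup\cdots\cup L_N$ and matching them term-by-term with the algebra relations underlying peculiar modules.

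With the algebra identification in hand, the definition of $\mathcal{M}$ is the standard one: for a twisted complex of Lagrangians $\mathcal{L}\in\TwFuk(S^2,4)$, set
\[
\mathcal{M}(\mathcal{L}) \;=\; \bigoplus_{i} CF^\ast(L_i,\mathcal{L}),
\]
equipped with its natural right $A_\infty$-module structure over $\mathcal{A}_{\mathrm{pq}}$, and with the curvature term arising from the fact that the disk at each puncture of $S^2\setminus\{4\text{ pts}\}$ contributes a non-trivial holomorphic bigon (this is precisely the source of the failure of $\partial^2=0$ in a peculiar module). Higher $A_\infty$-operations of $\mathcal{M}$ are defined by counting polygons with one boundary component on $\mathcal{L}$ and the others on the $L_i$'s; the $A_\infty$-relations for $\mathcal{M}$ then follow from the one-dimensional boundary analysis of the corresponding moduli spaces in the usual fashion.

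The functor $\mathcal{L}$ in the reverse direction is the geometric realisation: a peculiar module $X$ is presented by generators labelled by sites (hence by the $L_i$) and structure maps encoded in $\mathcal{A}_{\mathrm{pq}}$. Sending the generators of $X$ to the corresponding $L_i$ and converting the differentials of $X$ into morphisms in $\Fuk(S^2,4)$ via the algebra identification of step one produces a twisted complex $\mathcal{L}(X)$; one then checks that the $A_\infty$-structure equations for $X$ translate exactly into the Maurer--Cartan equation for a twisted complex. Non-triviality of both functors is verified by evaluating them on a small test case (e.g., the trivial tangle or the $(2,-3)$-pretzel tangle from theorem~\ref{thm:23pretzelintro}), where the resulting object in $\TwFuk(S^2,4)$, respectively in $\pqMod$, is visibly non-zero.

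The main obstacle I expect is the polygon count for the algebra identification: one must show that the wrapped Floer $A_\infty$-algebra on the chosen arcs is equal on the nose (not merely quasi-isomorphic up to an uncontrolled homotopy) to the algebra structure built into peculiar modules, including all higher products. Away from this, extending the definition of $\mathcal{L}$ to higher morphisms between peculiar modules, and verifying the $A_\infty$-functor relations, should be a relatively formal consequence of the polygon enumeration and of working in the triangulated enlargement $\TwFuk$ where twisted complexes are closed under cones. Note that the theorem does \emph{not} claim that $\mathcal{L}$ and $\mathcal{M}$ are mutually inverse equivalences; this weaker statement frees us from having to prove that the chosen arcs generate $\TwFuk(S^2,4)$, which would otherwise be an additional and delicate step.
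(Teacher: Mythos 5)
There is a genuine gap at the heart of your plan: the ``key algebraic input'' you propose --- identifying the wrapped Floer $A_\infty$-algebra $\bigoplus_{i,j}CF^\ast(L_i,L_j)$ of the distinguished arcs with the algebra $\Ad$ underlying peculiar modules, on the nose --- is false, and the two functor constructions you build on it do not go through as described. The endomorphism algebra of the arcs in $\Fuk(S^2,4)$ is the Abouzaid--Auroux--Efimov--Katzarkov--Orlov model $\mathcal{A}$: its hom-spaces are $k[x_i,y_i]/(x_iy_i)$, $k[x_{i+1}]u_{i,i+1}$, $k[y_{i-1}]v_{i,i-1}$, it carries genuinely non-trivial higher products (in particular $\mu_4(u,u,u,u)=\id=\mu_4(v,v,v,v)$), and it has no curvature. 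The peculiar algebra $\Ad$ is an honest associative quiver algebra with relations $p_iq_i=0=q_ip_i$, no higher products, over which modules are \emph{curved} with curvature $p^4+q^4$. These are not isomorphic $A_\infty$-algebras, so ``modules over the Fukaya algebra $=$ peculiar modules'' cannot be obtained by an algebra identification plus base change. What the actual functors do is a translation rather than an identification: on the $\mathcal{M}$ side one sets $\mathcal{M}(L)=\bigoplus_i\iota_i.\Mor(L_i,L)$ and defines
$$\partial f \;=\; 1\otimes\mutw_1(f)+\sum_{n\geq1}p^{n}\otimes\mutw_{n+1}(f,u,\dots,u)+q^{n}\otimes\mutw_{n+1}(f,v,\dots,v),$$
so that $n$-fold wrapping by $u$ (resp.\ $v$) is recorded by the formal variable $p^n$ (resp.\ $q^n$), and the curvature $p^4+q^4$ is produced exactly by the unit relation $\mu_4(u^{\otimes4})=\id$ together with unitality of $\mu_2$; this is where the failure of $\partial^2=0$ comes from, not from ``a bigon at each puncture''. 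Conversely, your description of $\mathcal{L}$ (``send each generator to its $L_i$ and convert the differentials into morphisms'') does not produce a twisted complex: the labels of a peculiar differential live in $\Ad$, not in $\Mor(L_i,L_j)$, and the curvature $\partial_M^2=p^4+q^4$ obstructs the Maurer--Cartan equation for any finite complex on the $L_i$. The correct construction takes $\mathcal{L}(M)=\bigoplus_j\bigl(\iota_j.\Adop\otimes_{\Id}M\bigr)\otimes L_j$, an \emph{infinite} twisted complex (one shifted copy of $L_j$ for each element of the opposite algebra) with extra $u$- and $v$-arrows between the copies; these unwrap the $p$'s and $q$'s, the $\mu_4(u^{\otimes4})$ and $\mu_4(v^{\otimes4})$ terms cancel the curvature of $M$, and upper-triangularity is ensured by the $\delta$-grading filtration.

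Two smaller points. First, the polygon enumeration you flag as the main obstacle is not re-done in the proof: one imports the explicit combinatorial model $\mathcal{A}$ (with its recursively defined higher products) and the result that it generates and characterises $\TwFuk(S^2,4)$ up to homotopy; the real work is then purely algebraic, namely verifying the curved $\partial^2$-relation and the $A_\infty$-functor relations for the explicit formulas above (for $\mathcal{L}$ this is easy since it is declared to vanish on morphism sequences of length greater than one, $\pqMod$ being a dg category). Second, you do still need the generation statement in a mild form, because it is what lets you replace $\TwFuk(S^2,4)$ by $\Tw\mathcal{A}$ and hence have explicit control of all compositions involving the $L_i$ and the wrapping morphisms; the statement you are freed from is only that $\mathcal{M}$ and $\mathcal{L}$ be mutually inverse.
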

This allows us to interpret the sites of 4-ended tangles explicitly in terms of generators of the Fukaya category  
$\TwFuk(S^2,4)$ via the following proposition.
\begin{proposition}[\sref{prop:CFTsiteFromFukCFTd}]\label{prop:INTROCFTsiteFromFukCFTd}
Let \(T\) be a 4-ended tangle and \(s\) a site of~\(T\). Then, there exists a generator \(L_s\) of \(\TwFuk(S^2,4)\) such that \(\CFT(T,s)\) is bigraded chain homotopic to the Lagrangian intersection chain complex
$$\left(\Mor\left(L_s,\mathcal{L}\left(\CFTd(T)\right)\right),\mutw_1\right),$$
where \(\mutw_1\) denotes the differential on morphism spaces in \(\TwFuk(S^2,4)\).
\end{proposition}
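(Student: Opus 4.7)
My plan is to construct $L_s$ explicitly as the image under $\mathcal{L}$ of a canonical ``test'' peculiar module $P_s$ and then reduce the proposition to the compatibility of $\mathcal{L}$ with morphism complexes. A site $s$ is in bijection with a distinguished idempotent in the algebra underlying $\pqMod$, and one expects a canonical rank-one peculiar module $P_s$ supported on this idempotent, whose morphism complex into $\CFTd(T)$ within $\pqMod$ precisely carves out the $s$-summand of $\CFTd(T)$ together with its internal differential, giving back $\CFT(T,s)$. Geometrically, once $\mathcal{L}$ is applied, $P_s$ should become quasi-isomorphic to a single Lagrangian arc $L_s$ in $S^2 \setminus \{4\text{ pts}\}$ that joins two of the punctures according to the matching encoded by $s$; taking this arc as $L_s$ defines the generator of the proposition.

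With $L_s$ so defined, the proof splits into two steps. First, identify
$$\Mor_{\pqMod}(P_s,\CFTd(T))$$
with $\CFT(T,s)$ as bigraded chain complexes. This is essentially built into the construction of the peculiar module in definition--theorem~\ref{defthm:CFTd}: the underlying vector space of $\CFTd(T)$ is a direct sum of the $\CFT(T,s')$ over all sites $s'$, together with internal differentials from each $\CFT(T,s')$ and additional ``connecting'' arrows between different sites, and projecting onto a single idempotent produces $\CFT(T,s)$ with its original differential. Second, identify this $\pqMod$-morphism complex with
$$\left(\Mor\left(L_s,\mathcal{L}\left(\CFTd(T)\right)\right),\mutw_1\right),$$
using that $\mathcal{L}$, being an $A_\infty$-functor, induces a chain map on morphism complexes and is quasi-fully-faithful on the generating arcs so that $\mathcal{L}(P_s) \simeq L_s$.

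The main obstacle lies in this second step: showing that $\mathcal{L}$ preserves the relevant morphism complex up to quasi-isomorphism. This is not automatic from $\mathcal{L}$ merely being an $A_\infty$-functor; it requires either that the companion functor $\mathcal{M}$ from theorem~\ref{thm:INTROTwFukpqModEquivalent} be genuinely quasi-inverse to $\mathcal{L}$ on the relevant sub\-cate\-gory, or a direct comparison of the $A_\infty$-structure on morphism spaces in $\pqMod$ with the twisted Fukaya $\mutw_1$ operation on the Lagrangian intersection chain complex, in the spirit of the morphism-space reformulation of the pairing theorem in~\cite{LOTMor}. Once these identifications are in place, the bigrading statement is automatic: the homological grading is pulled back from the Maslov index on Lagrangian intersection Floer theory, and the Alexander grading descends from the relative Spin$^{c}$-refinement on $\CFTd(T)$, both of which $\mathcal{L}$ is constructed to respect.
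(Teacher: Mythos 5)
Your plan breaks down at the very first step: the ``test'' object $P_s$ you want does not exist in $\pqMod$. A module with a single generator in the idempotent $\iota_s$ and vanishing (or any finite, site-preserving) structure map cannot satisfy the curvature relation $\partial^2=(p^4+q^4)\otimes 1$, so it is not a peculiar module; the paper points this out explicitly right after the construction of $\mathcal{M}$ and $\mathcal{L}$, where $(\langle a\rangle,0)$ is noted not to be an object of $\pqMod$. The honest $\pqMod$-side counterpart of the Fukaya generator is the infinite ``wrapped arc'' module $\mathcal{M}(L_s)$ of figure~\ref{fig:ML1}, not a rank-one object. Moreover, even granting some valid test object, your first step is not ``built in'': a morphism $P_s\rightarrow\CFTd(T)$ is an $\Id$-linear map into $\Ad\otimes_{\Id}\CFTd(T)$, so the morphism space is $\bigoplus_j \iota_s.\Ad.\iota_j\otimes\iota_j.\CFTd(T)$, which is far larger than $\iota_s.\CFTd(T)$; recovering $\CFT(T,s)$ (namely $\iota_s.\CFTd(T)$ with all $p_i,q_i$ set to zero) from it already requires a cancellation argument of exactly the kind you are hoping to avoid.

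Your second step is circular in the context of this paper: quasi-full-faithfulness of $\mathcal{L}$ on the generators (equivalently, $\mathcal{M}$ being quasi-inverse) is precisely conjecture~\ref{conj:TwFukpqModEquivalent}, which is open, and the proposition is meant to be proved independently as evidence toward that conjecture. The paper's proof avoids all of this by taking $L_s$ to be one of the four basic generators $L_1,\dots,L_4$ of $\mathcal{A}$ and computing directly: $\Mor\bigl(L_1,\mathcal{L}(\CFTd(T))\bigr)=\bigoplus_j \iota_j.\Adop\otimes_{\Id}\CFTd(T)\otimes\Mor(L_1,L_j)$ contains a distinguished subspace $Z\cong\iota_1.\CFTd(T)$ on which $\mutw_1$ restricts to exactly the differential of $\CFT(T,a)$; the complement is split as $Y_1\oplus Y_2$, the component $\mutw_1\vert_{Y_1\rightarrow Y_2}$ is inverted by a geometric series that converges because the $\delta$-grading on $\Mor(L_1,L_j)$ is bounded below, and the cancellation lemma~\ref{lem:AbstractCancellation} then collapses the complex onto $Z$. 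To repair your argument you would either have to carry out such an explicit identification and cancellation yourself, or first prove the faithfulness statement you are invoking --- which is exactly the missing content.
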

We expect that one can extend the proof of the result above to show the following.
\begin{conjecture}[\sref{conj:TwFukpqModEquivalent}]
The functors \(\mathcal{M}\) and \(\mathcal{L}\) from theorem~\ref{thm:INTROTwFukpqModEquivalent} define an equivalence of \(A_\infty\)-categories.
\end{conjecture}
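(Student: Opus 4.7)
The plan is to prove the equivalence by reducing to a comparison on a finite set of generators and then invoking the formal properties of twisted complexes. Both sides are triangulated enlargements of a small full subcategory: $\TwFuk(S^2,4)$ is by definition the twisted-complex completion of the Fukaya category of the $4$-punctured sphere, and every peculiar module decomposes, via its idempotent ring of sites, as an iterated cone on the elementary peculiar modules $P_s$. So it suffices to show that $\mathcal{L}$ and $\mathcal{M}$ restrict to inverse quasi-equivalences between the full subcategory on $\{P_s\}_s$ and the full subcategory on $\{L_s\}_s$, and that each side is generated (in the twisted-complex sense) by these respective objects.

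First I would compute the $A_\infty$-endomorphism algebra of $\bigoplus_s P_s$ in $\pqMod$ directly from the definition of the peculiar algebra $\mathcal{A}$, and compare it to the Fukaya algebra $\mathrm{End}(\bigoplus_s L_s)$ by explicit count of immersed polygons on $(S^2,4)$, along the lines of Abouzaid's or Haiden--Katzarkov--Kontsevich's treatment of partially wrapped Fukaya categories of punctured surfaces. This is the central computational task: it produces the required quasi-isomorphism of $A_\infty$-algebras and establishes quasi-full-faithfulness on generators.

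For the extension to arbitrary objects, proposition~\ref{prop:INTROCFTsiteFromFukCFTd} already does half the work, namely the case $M = P_s$: it identifies $\Mor_{\TwFuk}(L_s,\mathcal{L}(N))$ with the tangle Floer complex $\CFT(T,s)$, and the latter is precisely $\Mor_{\pqMod}(P_s,N)$ by the definition of $\CFTd(T)$ as a curved type D module. Writing an arbitrary $M \in \pqMod$ as a twisted complex on the $P_s$ and using that $\Mor_{\pqMod}(-,N)$ and $\Mor_{\TwFuk}(\mathcal{L}(-),\mathcal{L}(N))$ both convert twisted complexes into iterated mapping cones (since $\mathcal{L}$ is an $A_\infty$-functor) then propagates the quasi-isomorphism from $M = P_s$ to all $M$. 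Essential surjectivity follows because the $L_s$ generate $\TwFuk(S^2,4)$ and every such twisted complex is visibly of the form $\mathcal{L}(M)$ for the corresponding twisted complex $M$ on the $P_s$.

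The main obstacle is the matching of higher $A_\infty$-operations on the endomorphism algebra of the generators, together with the compatibility of curvatures. The chain-level identification of morphism spaces is straightforward from proposition~\ref{prop:INTROCFTsiteFromFukCFTd}, but checking that $\mu_k$ for $k\geq 3$ defined by holomorphic polygon counts in $S^2\setminus\{4\text{ pts}\}$ agrees with the combinatorially defined $\mu_k$ on peculiar modules requires either a direct Floer-theoretic computation or an appeal to a formality/rigidity statement for the relevant $A_\infty$-algebra. A secondary subtlety is that peculiar modules are \emph{curved}, so one has to interpret the curvature of $\CFTd(T)$ as a central element of the Fukaya algebra coming from the boundary circles of the punctured sphere, and verify that $\mathcal{L}$ and $\mathcal{M}$ preserve this curvature data. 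Once these ingredients are in place, the formal twisted-complex argument completes the equivalence.
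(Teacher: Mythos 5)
First, a point of context: the statement you are proving is stated in the paper only as a \emph{conjecture}. The paper offers proposition~\sref{prop:CFTsiteFromFukCFTd} as a ``first step'' and explicitly says that the full equivalence should follow from a functorial, $A_\infty$-level treatment of the cancellation arguments used there; no proof is given. So your proposal is not competing with an existing argument, but it does not close the gap either, for the following concrete reason. Your reduction to generators rests on ``elementary peculiar modules $P_s$'' attached to the sites, but no such objects exist in $\pqMod$: peculiar modules are \emph{curved} type~D structures with curvature $a_c=p^4+q^4$, and a rank-one module $\iota_s\Id$ with vanishing structure map has curvature $0$ (the paper even remarks that $(\langle a\rangle,0)$ is not a peculiar module). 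Consequently (a) an arbitrary peculiar module cannot be exhibited as an iterated cone on the $P_s$ inside $\pqMod$; (b) the space $\Mor(P_s,N)$ for $N$ of curvature $p^4+q^4$ is not even a chain complex, since $D^2=a_c\otimes(-)\neq 0$ when source and target curvatures disagree, so the asserted identification $\Mor_{\pqMod}(P_s,N)\cong\CFT(T,s)$ has no meaning (and even as a vector space one would get $\iota_s.\Ad\otimes_{\Id}N$, not $\iota_s.N$); and (c) the ``central computational task,'' comparing $\operatorname{End}(\bigoplus_s P_s)$ with the Fukaya algebra of $\bigoplus_s L_s$, cannot be set up on the $\pqMod$ side at all. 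The actual relationship is Koszul-dual in flavour: the curvature of $\Ad$-modules corresponds to the relations $\mu_4(u,u,u,u)=\operatorname{id}=\mu_4(v,v,v,v)$ in the $A_\infty$-algebra $\mathcal{A}$, which is exactly why the paper's functor $\mathcal{L}$ produces \emph{infinite} twisted complexes rather than finite cones on site generators. Treating the curvature as a ``secondary subtlety'' inverts the true difficulty.

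The same issue undermines your essential-surjectivity step. The underlying object of $\mathcal{L}(M)$ is always $\bigoplus_j(\iota_j.\Adop\otimes_{\Id}M)\otimes L_j$, with infinitely many copies of each $L_j$; a finite twisted complex such as $L_s$ itself is not literally of this form, so nothing is ``visible.'' What is actually needed is a pair of natural homotopy equivalences $\mathcal{M}(\mathcal{L}(M))\simeq M$ and $\mathcal{L}(\mathcal{M}(L))\simeq L$, and the only available technique is the kind of explicit infinite cancellation carried out in the proof of proposition~\sref{prop:CFTsiteFromFukCFTd} (where the inverse homotopy equivalence is built as a convergent geometric series using the $\delta$-grading filtration), upgraded to be functorial and compatible with higher compositions. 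That upgrade is precisely the missing ingredient the paper identifies, and your proposal does not supply it; as written, the argument would fail at the first step, when the would-be generators of $\pqMod$ are introduced.
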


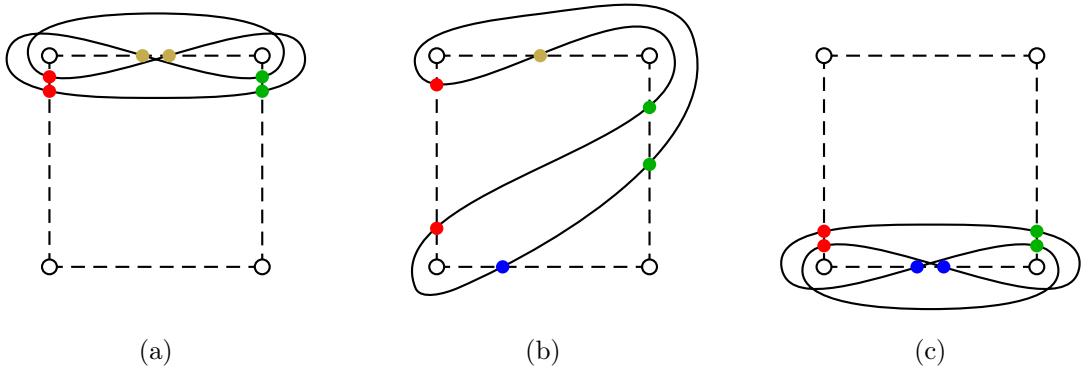
\begin{figure}[t]
\psset{unit=1.4}
\begin{subfigure}[b]{0.3\textwidth}\centering
\begin{pspicture}(-1.5,-1.5)(1.5,1.5)
\psecurve(1.2,1)(0,1.4)(-1.2,1)(1.4,1)(0,0.6)(-1.4,1)(1.2,1)(0,1.4)(-1.2,1)

\psline[linestyle=dashed](1,1)(1,-1)
\psline[linestyle=dashed](1,-1)(-1,-1)
\psline[linestyle=dashed](-1,-1)(-1,1)
\psline[linestyle=dashed](-1,1)(1,1)

\psset{dotsize=5pt}

\psdot[linecolor=red](-1,0.8)
\psdot[linecolor=red](-1,0.663)

\psdot[linecolor=gold](0.125,1)
\psdot[linecolor=gold](-0.125,1)

\psdot[linecolor=darkgreen](1,0.8)
\psdot[linecolor=darkgreen](1,0.663)

\pscircle[fillstyle=solid, fillcolor=white](1,1){0.08}
\pscircle[fillstyle=solid, fillcolor=white](-1,1){0.08}
\pscircle[fillstyle=solid, fillcolor=white](1,-1){0.08}
\pscircle[fillstyle=solid, fillcolor=white](-1,-1){0.08}

\end{pspicture}
\caption{}\label{fig:INTROloopbottom}
\end{subfigure}
\quad
\begin{subfigure}[b]{0.3\textwidth}\centering
\begin{pspicture}(-1.5,-1.5)(1.5,1.5)

\psecurve(-0.8,-1.2)(-1.2,-1.2)(1.2,1.1)(-1.2,0.9)(0,1.4)(1.4,1.2)(-0.8,-1.2)(-1.2,-1.2)(1.2,1.1)

\psline[linestyle=dashed](1,1)(1,-1)
\psline[linestyle=dashed](1,-1)(-1,-1)
\psline[linestyle=dashed](-1,-1)(-1,1)
\psline[linestyle=dashed](-1,1)(1,1)

\psset{dotsize=5pt}

\psdot[linecolor=red](-1,0.724)
\psdot[linecolor=red](-1,-0.635)

\psdot[linecolor=gold](-0.027,1)

\psdot[linecolor=darkgreen](1,0.51)
\psdot[linecolor=darkgreen](1,-0.03)

\psdot[linecolor=blue](-0.38,-1)

\pscircle[fillstyle=solid, fillcolor=white](1,1){0.08}
\pscircle[fillstyle=solid, fillcolor=white](-1,1){0.08}
\pscircle[fillstyle=solid, fillcolor=white](1,-1){0.08}
\pscircle[fillstyle=solid, fillcolor=white](-1,-1){0.08}

\end{pspicture}
\caption{}\label{fig:INTROloopmiddle}
\end{subfigure}
\quad
\begin{subfigure}[b]{0.3\textwidth}\centering
\begin{pspicture}(-1.5,-1.5)(1.5,1.5)

\psrotate(0,0){180}{
\psecurve(1.2,1)(0,1.4)(-1.2,1)(1.4,1)(0,0.6)(-1.4,1)(1.2,1)(0,1.4)(-1.2,1)

\psline[linestyle=dashed](1,1)(1,-1)
\psline[linestyle=dashed](1,-1)(-1,-1)
\psline[linestyle=dashed](-1,-1)(-1,1)
\psline[linestyle=dashed](-1,1)(1,1)

\psset{dotsize=5pt}

\psdot[linecolor=darkgreen](-1,0.8)
\psdot[linecolor=darkgreen](-1,0.663)

\psdot[linecolor=blue](0.125,1)
\psdot[linecolor=blue](-0.125,1)

\psdot[linecolor=red](1,0.8)
\psdot[linecolor=red](1,0.663)

\pscircle[fillstyle=solid, fillcolor=white](1,1){0.08}
\pscircle[fillstyle=solid, fillcolor=white](-1,1){0.08}
\pscircle[fillstyle=solid, fillcolor=white](1,-1){0.08}
\pscircle[fillstyle=solid, fillcolor=white](-1,-1){0.08}
}
\end{pspicture}
\caption{}\label{fig:INTROlooptop}
\end{subfigure}
\caption{The three loops on the 4-punctured sphere respresenting $\CFTd$ for the $(2,-3)$-pretzel tangle of theorem~\ref{thm:23pretzelintro}. The intersection points of the loops with the dashed arcs connecting the four punctures calculate the underlying non-glueable tangle Floer homology $\HFT$. The splitting of $\HFT$ according to the four different sites is indicated by the colouring of the intersection points.}\label{fig:INTROmutationexamplefinalresult}
\end{figure}
For rational tangles, but also for more complicated ones like the $(2,-3)$-pretzel tangle, the peculiar modules are loop-type in a similar sense to \cite{HRW}: the tangle Floer homology of these tangles is represented by a collection of loops in the 4-punctured 2-sphere, see figure~\ref{fig:INTROmutationexamplefinalresult} and examples~\sref{exa:CFTdRatTang} and~\sref{exa:HFTdpretzeltangle}. 
Furthermore, computations suggest that we can indeed compute link Floer homology as the Lagrangian intersection homology of such immersed curves. In fact, we have the following glueing result.

\begin{theorem}[\sref{thm:CFTdGlueingTrivial} and \sref{thm:CFTdGeneralGlueing}]\label{thm:INTROglueing}
Let \(T_1\) and \(T_2\) be two 4-ended tangles and \(L\) the link obtained by glueing them together according to the following picture.
$$
\psset{unit=0.6}
\begin{pspicture}(-2.5,-1.6)(2.5,1.6)
\pscurve(-1.5,0)(-0.7,1)(2.3,1)(1.5,0)
\pscircle*[linecolor=white](0,1.2){0.35}
\pscurve(-1.5,0)(-2.3,1)(0.7,1)(1.5,0)

\pscurve(-1.5,0)(-0.7,-1)(2.3,-1)(1.5,0)
\pscircle*[linecolor=white](0,-1.2){0.35}
\pscurve(-1.5,0)(-2.3,-1)(0.7,-1)(1.5,0)

\pscircle*[linecolor=white](-1.5,0){1}
\pscircle(-1.5,0){1}
\rput[c](-1.5,0){$T_1$}

\pscircle*[linecolor=white](1.5,0){1}
\pscircle(1.5,0){1}
\rput[c](1.5,0){$T_2$}
\end{pspicture}
$$
Then there exists a certain type~\(AA\) bimodule \(\mathcal{P}\) such that \(\CFL(L)\) is equal to 
$$
\CFTd(T_1)\boxtimes\mathcal{P}\boxtimes \CFTd(T_2)
$$
up to at most three stabilisations, i.\,e.\ tensoring with a certain 2-dimensional vector space. Furthermore, for loop-type~\(\CFTd(T_1)\) and trivial~\(T_2\), \(\CFL(L)\) agrees with the Lagrangian intersection Floer homology of the loop of~\(T_1\) with that of the trivial tangle, up to at most a single stabilisation.
\end{theorem}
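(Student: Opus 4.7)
The plan is to derive the first statement as a corollary of Zarev's gluing theorem in bordered sutured Floer homology, suitably adapted to the curved setting of peculiar modules, and then to unwind this tensor product concretely in the loop-type case.

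First I would fix a bordered sutured Heegaard diagram for a collar neighbourhood of the sphere along which $T_1$ and $T_2$ are glued. This produces a type~$AA$ bimodule $\mathcal{P}$ over the two copies of the peculiar algebra sitting on either side of the gluing surface; the point is that this algebra is precisely the one controlling $\CFTd$, and the curvature element on one side equals (up to sign) that on the other, so the curvatures of $\CFTd(T_1)$ and $\CFTd(T_2)$ cancel when tensored through $\mathcal{P}$ and the triple box tensor product is an honest chain complex. Zarev's pairing theorem, applied to the bordered sutured manifolds underlying $T_1$ and $T_2$, then identifies $\CFTd(T_1)\boxtimes\mathcal{P}\boxtimes\CFTd(T_2)$ with the sutured Floer complex of the glued manifold. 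The discrepancy between this sutured Floer complex and $\CFL(L)$ is controlled by how many basepoints are ``orphaned'' by the gluing: each pair of adjacent sutures on the gluing sphere that does not become a distinguished $w$/$z$-basepoint of $L$ contributes one stabilisation factor of $\mathbb{F}^2$. A case analysis on how the four endpoints of $T_1$ close up with the four endpoints of $T_2$ bounds this by three, giving the first assertion.

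For the second, loop-type statement, I would first observe that a trivial $4$-ended tangle $T_2$ admits an especially simple peculiar module, represented by a single embedded arc (equivalently, a single loop) on the $4$-punctured sphere, together with only ``elementary'' $D$-operations corresponding to the standard generators of the peculiar algebra. Under the box tensor product $\CFTd(T_1)\boxtimes\mathcal{P}\boxtimes\CFTd(T_2)$, the type $A$ operations coming from $\mathcal{P}\boxtimes\CFTd(T_2)$ act on a generator of $\CFTd(T_1)$ by following the loop representing $T_2$ around the sphere. If $\CFTd(T_1)$ is loop-type in the sense of chapter~\ref{chapter:HFTd}, then a generator of the triple tensor product is precisely an intersection point of the two systems of loops on $S^2\setminus\{4\text{ points}\}$, and the differential counts the bigons between them. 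The combinatorics of this bigon count matches, on the nose, the definition of the Lagrangian intersection Floer complex $\mathit{CF}(\mathcal{L}(T_1),\mathcal{L}(T_2))$ in $\TwFuk(S^2,4)$ via the functor $\mathcal{L}$ of theorem~\ref{thm:INTROTwFukpqModEquivalent}; the argument is parallel to the torus case treated in~\cite{HRW}. A careful accounting of basepoints in this specific configuration shows that at most one stabilisation is required.

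The main obstacle, I expect, is bookkeeping the curvatures and the basepoints together. On the curvature side, one must verify that the specific diagram chosen for $\mathcal{P}$ really does realise the involutive identification of the two copies of the peculiar algebra in a way compatible with the curvature element; this likely requires explicitly computing $\mathcal{P}$ from a nice diagram (for which the program~\cite{BSFH.m} is perfectly suited) and checking the $A_\infty$-relations that ensure $\partial^2=0$ on the triple box tensor product. On the basepoint side, the sharp bound of three stabilisations in the general case, and of one in the loop-type case, requires a careful enumeration of how sutures on the joining sphere interact with the orientations and endpoints of $T_1$ and $T_2$; this is where the oriented vs.\ unoriented hypothesis on the gluing will enter. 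Once these two verifications are in hand, the rest of the argument is a direct application of the bordered sutured pairing theorem and, in the loop-type case, the combinatorial identification of $\boxtimes$ with Lagrangian intersection.
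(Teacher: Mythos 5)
Your high-level strategy is the same as the paper's: compute a type~AA bimodule $\mathcal{P}$ for a (thickened) gluing sphere from a bordered sutured Heegaard diagram (by computer, via \cite{BSFH.m}), invoke Zarev's pairing theorem, count superfluous meridional sutures to get the stabilisation bound, and in the trivial-tangle case identify the pairing with a convex-bigon count on the 4-punctured sphere. However, two points in your write-up are genuine gaps rather than bookkeeping.

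First, the ``curvature cancellation'' mechanism you propose is not what happens and, as stated, is not an argument. The curvature of a peculiar module is $p^4+q^4$ on \emph{each} side, and nothing in the triple box tensor product makes the two copies cancel against one another (over $\mathbb{F}_2$ there are no signs to exploit anyway). What the paper does instead is pass to quotient algebras: the bimodule $\mathcal{P}$, after cancellation and homotopies, is strictly unital over quotients of $\Ad$ in which $p_1=q_1=0$ on one gluing surface and $p_3=q_3=0$ on the other (in the closing-off case, the quotient $\Ad_{12}$). Under these quotients the curvature term dies, each $\CFTd(T_i)$ becomes an honest type~D structure over the quotient algebra, and only then does Zarev's pairing theorem apply. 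Your plan needs this quotient step (or an equivalent verification that $\mathcal{P}$ annihilates every full cycle around a puncture on each side separately); ``the curvatures cancel through $\mathcal{P}$'' does not substitute for it.

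Second, and more substantively, you treat $\CFTd(T_i)$ as if it were already a bordered sutured invariant to which Zarev's theorem can be applied directly. It is not: $\CFTd$ is defined from a peculiar Heegaard diagram by counting holomorphic curves with nonzero multiplicities at the boundary basepoints $p_i,q_i$, outside Zarev's framework. The paper therefore has to identify the quotiented $\CFTd(T)$ with the genuine bordered sutured type~D module $\mathcal{D}(T)$ of the tangle complement. This is done by choosing Heegaard diagrams in which the silly $\alpha$-arcs miss the $\beta$-curves, so the generators match, and then matching the relevant moduli spaces via the lemma on domains differing only in a small region of multiplicity one (the analogue of Hanselman's argument, requiring a suitable choice of complex structure). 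Without this identification, Zarev's pairing theorem says nothing about $\CFTd(T_1)\boxtimes\mathcal{P}\boxtimes\CFTd(T_2)$, so this comparison is the missing core of your proposed proof of the first statement. Your sketch of the loop-type case is closer to the paper's: there the paper computes the type~A structure $\mathcal{C}(s)$ for closing off a tangle by hand and interprets it as a Hamiltonian push-off of the trivial-tangle loop, with differentials counting convex bigons; your bigon-matching claim is essentially this, but note the paper only establishes the Fukaya-theoretic interpretation in this special case (the general statement remains conjectural), so ``matches on the nose'' for arbitrary pairings overstates what is proved.
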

The computation of the type AA bimodule~$\mathcal{P}$ is done using~\cite{BSFH.m}. The second statement follows from computing a type A structure, which can be done by hand, or alternatively, by simplifying $\mathcal{P}\boxtimes \CFTd(T_2)$. We expect the second statement to generalise to pairings of arbitrary (loop-type) tangles. Unfortunately, however, the bimodule~$\mathcal{P}$ looks rather complicated and not like the one to be expected from the Fukaya category. So there remains work to be done, see conjectures~\sref{conj:CFTdBetterGlueing} and~\sref{conj:GlueingCFTdFUK}.\pagebreak[3]\\
Nonetheless, the mere existence of a glueing theorem for $\CFTd$ allows us to infer properties of link Floer homology. For example, the peculiar invariant of the $(2,-3)$-pretzel tangle has an intrinsic symmetry (see figure~\ref{fig:INTROmutationexamplefinalresult}), which gives us the second proof of theorem~\ref{thm:23pretzelintro}. We expect other 2-stranded pretzel tangles to have the same kind of symmetry; however, a slightly more careful analysis of holomorphic curves is needed to determine the structure maps.
\begin{conjecture}
Theorem~\ref{thm:23pretzelintro} generalises to any 2-stranded pretzel-tangle.
\end{conjecture}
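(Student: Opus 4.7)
The plan is to promote the computer calculation used for the $(2,-3)$--pretzel tangle to a conceptual, geometric argument: show that the peculiar module $\CFTd(T_{2,n})$ of any 2--stranded pretzel tangle is loop-type, that the underlying collection of immersed curves on the 4--punctured sphere is invariant (as a multicurve, up to homotopy) under the $\pi$--rotation defining mutation, and then invoke the loop-type glueing part of theorem~\ref{thm:INTROglueing} to deduce the desired invariance of $\HFL$.

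To carry this out I would first set up a recursive description. A $(2,n)$--pretzel tangle is obtained from a $(2,n{-}1)$--pretzel tangle (and ultimately from a rational tangle, whose peculiar module is computed in example~\sref{exa:CFTdRatTang}) by adding a single crossing in one of the two bands. On the level of peculiar modules, adding such a crossing should correspond to tensoring with the peculiar module of an elementary twist tangle. Since the twist tangle acts on the 4--punctured sphere by a Dehn twist about one of the curves separating two punctures from the other two, a loop-type input is sent to a loop-type output, and I can track the homotopy classes of the resulting curves explicitly. Iterating this procedure gives an inductive construction of $\CFTd(T_{2,n})$ as a collection of immersed loops.

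The symmetry step is then essentially geometric. The tangle $T_{2,n}$ has a manifest $\pi$--rotation symmetry as a diagram in the 3--ball, which preserves each of the four endpoints setwise and acts as a self-homeomorphism of the 4--punctured sphere realising Conway mutation. Because the inductive construction above is natural with respect to such symmetries of the tangle, the resulting multicurve is invariant under this rotation, as already visualised for the $(2,-3)$--case in figure~\ref{fig:INTROmutationexamplefinalresult}. The Lagrangian intersection picture then forces the bigraded (resp.\ $\delta$--graded, if orientations are reversed) link Floer homology of any link containing $T_{2,n}$ to coincide with that of its mutant, via theorem~\ref{thm:INTROglueing}.

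The main obstacle, flagged in the paragraph preceding the conjecture, will be determining the structure maps of $\CFTd$ after each twist precisely enough to establish the loop-type property: abstractly one has a curved type~$D$ module, and one must rule out higher differentials that would prevent the answer from collapsing to an immersed multicurve. Concretely this amounts to a careful count of holomorphic discs in the Heegaard diagrams for the elementary twist, or equivalently to verifying that the functor $\mathcal{L}$ of theorem~\ref{thm:INTROTwFukpqModEquivalent} intertwines the algebraic twist operation on $\pqMod$ with the geometric Dehn twist on $\TwFuk(S^2,4)$. Once this is done, the inductive argument and the rotational symmetry are essentially formal, and the conjecture will follow.
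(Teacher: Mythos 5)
This statement is left as a conjecture in the paper: no proof is given, and the text immediately before it explains exactly why -- ``a slightly more careful analysis of holomorphic curves is needed to determine the structure maps.'' Your proposal does not close that gap; it defers it. The entire content of the conjecture is the claim that $\CFTd$ of a general $2$-stranded pretzel tangle is loop-type and that the resulting curves carry the same rotational symmetry as in figure~\ref{fig:INTROmutationexamplefinalresult}; your plan acknowledges in its last paragraph that this is ``the main obstacle'' and postpones it, so what remains is a strategy outline rather than a proof. Moreover, the recursive step you propose -- producing $\CFTd$ of the $(2,n)$-pretzel tangle from the $(2,n{-}1)$ case by ``tensoring with the peculiar module of an elementary twist'' -- uses a tool the paper does not provide: there is no twisting bimodule or composition functor on $\pqMod$ here. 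Theorem~\sref{thm:CFTdGeneralGlueing} only returns the (stabilised) link Floer complex of a closed-up pairing, not a peculiar module of a composite tangle, and the compatibility of an algebraic twist with a geometric Dehn twist under $\mathcal{L}$ is itself tied to the open conjecture~\sref{conj:TwFukpqModEquivalent}. The skein triangle~\sref{thm:nTwistSkeinRelation} and remark~\sref{rem:singularcrossings} point in this direction, but only up to unremoved tensor factors, which is precisely where the argument would need new input.

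The symmetry step is also flawed as stated. The tangle $T_{2,n}$ does \emph{not} have a ``manifest $\pi$-rotation symmetry realising Conway mutation'': rotating the diagram about the $y$- or $z$-axis interchanges the two twist regions and produces the $(n,2)$-pretzel tangle with a different boundary matching, which is not isotopic rel boundary to $T_{2,n}$ (if it were, mutation about this tangle would never change the link and theorem~\ref{thm:23pretzelintro} would be vacuous). In the $(2,-3)$ case the rotational symmetry is an emergent property of the \emph{invariant}, observed only after the explicit computation in example~\sref{exa:HFTdpretzeltangle}; it cannot be deduced from ``naturality'' of a construction with respect to a symmetry the tangle does not possess. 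So the two steps you call ``essentially formal'' -- loop-type structure and symmetry of the curves -- are exactly the unproven holomorphic-curve content of the conjecture, and the proposal as written would not establish it.
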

\pagebreak

For general 4-ended tangles, we are only able to prove slightly weaker symmetry relations for $\CFTd$. They support the mutation conjecture, see propositions \sref{prop:CFTdAll4sites} and~\sref{prop:CFTdPeculiarRanks}, but they do not seem to be sufficient to prove it. \\
As another application of the existence of a glueing theorem, we show that peculiar modules detect rational tangles:
\begin{theorem}[\sref{thm:CFTdDetectsRatTan}]
A 4-ended tangle \(T\) is rational iff \(\CFTd(T)\) is homotopic to a single loop that corresponds to an embedded loop on the 4-punctured sphere.
\end{theorem}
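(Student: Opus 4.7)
The plan is to reduce both directions to the classification of essential embedded simple closed curves on the 4-punctured sphere by their slope $p/q\in\mathbb{Q}\cup\{\infty\}$, which matches the slope classification of rational tangles. The glueing theorem~\sref{thm:CFTdGeneralGlueing}, together with the Ozsváth--Szabó unknot-detection theorem for link Floer homology, will be the bridge translating the algebraic condition on $\CFTd(T)$ into a topological statement about $T$.

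For the forward direction, example~\sref{exa:CFTdRatTang} already computes $\CFTd$ of the two trivial tangles as embedded loops of slope $0$ and $\infty$. Every rational tangle is built from a trivial tangle by a sequence of elementary twists; on the 4-punctured sphere these twists correspond to Dehn twists about arcs between adjacent punctures, and Dehn twists preserve embeddedness. It therefore suffices to show that adding a crossing at one end of a tangle acts on its peculiar module as the corresponding Dehn twist on loops. I would establish this by applying~\sref{thm:CFTdGeneralGlueing} to the glueing of $T$ with the elementary twist tangle, whose own peculiar module is loop-type and can be computed explicitly.

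For the converse, suppose $\CFTd(T)$ is homotopic to a single embedded loop $L$, of slope $p/q\in\mathbb{Q}\cup\{\infty\}$. Let $T_{p/q}$ be the rational tangle of this slope, so that by the forward direction $\CFTd(T_{p/q})\simeq L\simeq\CFTd(T)$, and pick a rational tangle $T''$ whose loop meets $L$ in a single point, i.\,e.\ such that $T_{p/q}\cup T''$ is the unknot. The loop-type statement of~\sref{thm:CFTdGeneralGlueing} then gives, up to at most a single stabilisation,
$$\CFL(T\cup T'')\;\simeq\;\CFL(T_{p/q}\cup T'')\;=\;\HF(S^3).$$
By unknot detection, $T\cup T''$ is the unknot, and the classical fact that any 4-ended tangle which completes to the unknot via a rational tangle must itself be rational then forces $T$ to be rational, as required.

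The main obstacle is controlling the stabilisation ambiguity in~\sref{thm:CFTdGeneralGlueing}: rank-minimality of $\CFL(T\cup T'')$ can be destroyed if spurious $2$-dimensional summands are introduced, so one must remain in the loop-type regime to invoke the sharper single-stabilisation bound. Should this not suffice, the fallback is to pair $T$ with an entire family of rational tangles $T''$ simultaneously and match the knot Floer homologies of all of the resulting closures of $T$ against those of the corresponding closures of $T_{p/q}$, using the rational-tangle calculus to deduce $T\cong T_{p/q}$ from these matching closures.
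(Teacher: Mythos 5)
There is a genuine gap, and it sits at the very last step of your converse. The ``classical fact'' you invoke --- that a 4-ended tangle which completes to the unknot via a rational tangle must itself be rational --- is false. Passing to double branched covers, $T\cup T''=\text{unknot}$ with $T''$ rational only says that $\Sigma(T)\cup(\text{solid torus})=S^3$, i.\,e.\ that $\Sigma(T)$ is \emph{some} knot exterior; it does not force $\Sigma(T)$ to be a solid torus, which is what rationality of $T$ requires. Concretely, take a non-trivial strongly invertible knot $K$ (e.\,g.\ the trefoil): the quotient of its exterior by the strong inversion is a 2-string tangle $T_K$ in $B^3$ whose double branched cover is $E(K)$, hence $T_K$ is not rational, yet the quotient of the invariant tubular neighbourhood of $K$ is a rational tangle and $T_K\cup(\text{rational tangle})$ is the unknot (this is exactly the Montesinos trick). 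So from $\CFL(T\cup T'')\simeq\widehat{\operatorname{HF}}(S^3)$ and unknot detection you can conclude that $T\cup T''$ is the unknot, but not that $T$ is rational; and the weaker true statement (at least one side of a tangle decomposition of the unknot is rational) is useless here because the rational side may simply be your chosen $T''$. Your fallback --- matching the link Floer homologies of a whole family of rational closures --- does not repair this as stated, since $\HFK$ does not detect knots and no ``rational-tangle calculus'' argument is supplied. The paper's proof avoids the problem by gluing $T$ to its own mirror $\m(T)$, using Thurston-norm detection to identify the result as the \emph{2-component unlink}, and then invoking lemma~\sref{lem:RatTanDet}, whose conclusion ``one of the two sides is rational'' is decisive precisely because both sides are $T$ up to mirroring.

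Two smaller points. First, your forward direction misuses theorem~\sref{thm:CFTdGeneralGlueing}: that theorem pairs two 4-ended tangles into a \emph{link} and outputs $\CFL$ of the closure; it does not produce the peculiar module of the composite tangle obtained by adding a twist, so it cannot establish the ``twisting acts as a Dehn twist on loops'' step (no such twisting bimodule is proved in the paper). This is unnecessary anyway: example~\sref{exa:CFTdRatTang} already treats \emph{all} rational tangles directly, since every rational tangle has a genus-0 Heegaard diagram with a single $\beta$-curve, and that $\beta$-curve is the embedded loop. Second, before assigning your embedded loop a slope you must rule out that it is null-homotopic or peripheral (encircling one puncture); the paper does this via observation~\sref{obs:AlexGradingOfCFTdLoops}, which forces exactly two punctures on each side, and some such argument is needed in your setup as well.
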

Furthermore, we can easily reprove the existence of an unoriented skein exact sequence \cite{Manolescu}, see theorem~\sref{thm:ResolutionExactTriangle}. Similarly, we obtain the following slight generalisation of Ozsváth and Szabó's oriented skein exact sequence \cite{OSHFK}.

\begin{theorem}[\sref{thm:nTwistSkeinRelation}, see also \sref{rem:nTwistSkeinRelation}]\label{thm:INTROnTwistSkeinRelation}
Let \(T_n\) be the positive \(n\)-twist tangle, \(T_{-n}\) the negative \(n\)-twist tangle and \(T_0\) the trivial tangle, see figure~\ref{fig:INTROOrientedSkeinRelation}. Then there is an exact triangle:
$$\begin{tikzcd}[row sep=0.9cm, column sep=-0.5cm]
\CFTd(T_{-n})
\arrow{rr}
& &
\CFTd(T_{n})
\arrow{dl}
\\
&
\CFTd(T_{0})\otimes V
\arrow{lu}
\end{tikzcd}$$
where \(V\) is some 2-dimensional vector space. If the tangles are oriented and coloured consistently, one obtains (bi)graded versions of this triangle. 
Furthermore, it gives rise to an exact triangle relating the (appropriately stabilised) link Floer homologies of links that differ in these three tangles.
\end{theorem}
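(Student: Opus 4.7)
The plan is to prove the triangle first at the level of peculiar modules by computing $\CFTd$ of the three tangles explicitly and then deduce the link Floer statement by glueing. Since $T_n$, $T_{-n}$ and $T_0$ are all rational, Theorem \sref{thm:CFTdDetectsRatTan} tells us that their peculiar modules are loop-type and are represented by embedded loops on the 4-punctured sphere. I would compute these loops either directly from a Heegaard diagram adapted to each twist tangle, or (more cheaply) inductively, by starting with the explicitly known loop for $T_0$ and using the glueing theorem to add one crossing at a time; the outcome should be three loops which, away from a small neighbourhood of the two strands being twisted, coincide, and which in that neighbourhood exhibit the expected $n$ positive, $n$ negative, or no half-twists.

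Having the three loops in hand, the maps in the triangle can be written down generator-by-generator. The intersection points of the $T_n$- and $T_{-n}$-loops with the $T_0$-loop in the twisting region provide the natural morphisms $\CFTd(T_{-n})\to\CFTd(T_n)$ and $\CFTd(T_n)\to\CFTd(T_0)\otimes V$, with the stabilising factor $V$ accounting for the two-to-one discrepancy between the $T_0$-loop and each twist loop near the relevant punctures. Exactness then reduces to identifying the mapping cone of one of these maps with (a shift of) the third peculiar module. This is a local calculation in the 4-punctured sphere: the three loops fit into a standard surgery/Dehn-twist exact triangle in the wrapped Fukaya category, which can be transported to peculiar modules via the functor $\mathcal{M}$ of Theorem \sref{thm:INTROTwFukpqModEquivalent}. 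Alternatively, one can cancel generators directly in the mapping cone of curved type D structures without invoking any Fukaya-categorical machinery; for rational tangles the simplifications are completely algorithmic.

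For the oriented, bigraded refinement, I would orient and colour the three tangles consistently and then compute the Alexander and Maslov grading of each generator of each loop; the key point is that the chain maps constructed above are grading-homogeneous with a shift that matches the one in the classical Ozsv\'ath--Szab\'o oriented skein triangle, and the $V$ factor contributes precisely the shift predicted by stabilisation. The link Floer statement then follows by boxtensoring the triangle with $\mathcal{P}\boxtimes\CFTd(T')$ for the complementary tangle $T'$ and invoking Theorem \sref{thm:INTROglueing}, using the fact that $\boxtimes$ preserves exact triangles. The main obstacle will be the bookkeeping: matching the absolute grading conventions between the peculiar triangle and the classical skein triangle and, once that is settled, verifying that exactness survives the tensor product with the somewhat complicated pairing bimodule $\mathcal{P}$ — though, since $\mathcal{P}$ is a fixed finite object, this last step should be formal once the grading shifts are pinned down.
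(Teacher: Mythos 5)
Your main line of attack is essentially the paper's proof of theorem \sref{thm:nTwistSkeinRelation}: compute $\CFTd(T_{n})$ and $\CFTd(T_{-n})$ explicitly as loops coming from genus-$0$ Heegaard diagrams, write down an explicit morphism between them, cancel the identity components in its mapping cone to identify the cone with two copies of $\CFTd(T_0)$, i.e.\ $\CFTd(T_0)\otimes V$, and then deduce the link Floer triangle by tensoring with $\mathcal{P}\boxtimes\CFTd(T')$ via theorem \sref{thm:CFTdGeneralGlueing}, using that $\boxtimes$ of a cone is a cone. The grading discussion also matches: the connecting map preserves the Alexander grading and shifts $\delta$ by $+1$, and $V$ is supported in degrees $\delta^0 t^{\pm n}$.

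Two of the auxiliary routes you offer would not go through as stated, so be explicit that the "direct cancellation" branch is the one carrying the proof. First, transporting a Dehn-twist/surgery exact triangle from $\TwFuk(S^2,4)$ to peculiar modules via $\mathcal{M}$ is not available here as a proof: theorem \sref{thm:TwFukpqModEquivalent} only constructs the functors, and the needed statement that $\mathcal{M}(\mathcal{L}(\CFTd(T)))$ recovers $\CFTd(T)$ (equivalently, that $\mathcal{M}$ and $\mathcal{L}$ form an equivalence) is exactly conjecture \sref{conj:TwFukpqModEquivalent}; only the weaker proposition \sref{prop:CFTsiteFromFukCFTd} is proved. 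Second, the "cheaper" inductive computation of the twist-tangle loops by "using the glueing theorem to add one crossing at a time" is not supported by the paper's tools: theorem \sref{thm:CFTdGeneralGlueing} glues two 4-ended tangles into a \emph{closed link} (with extra stabilisations), it does not compose tangles, and no crossing-addition (type DA) bimodule is constructed in the thesis. This is harmless for the theorem, since the direct computation from a one-$\beta$-curve diagram (your first option; cf.\ example \sref{exa:CFTdRatTang} and proposition \sref{prop:BSDrattangle}) is immediate for $T_{\pm n}$, and that is exactly how the paper obtains the two loops before cancelling.
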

\begin{figure}[t]
\centering
\psset{unit=0.35}
\begin{subfigure}[b]{0.25\textwidth}\centering
$n\left\{\raisebox{-0.85cm}{
\begin{pspicture}(-1.1,-3.1)(1.1,3.1)
\psecurve(1,5)(-1,3)(1,1)(-1,-1)
\pscircle*[linecolor=white](0,2){0.3}
\psecurve(-1,5)(1,3)(-1,1)(1,-1)
\rput(0,0.3){$\vdots$}
\psecurve(-1,-5)(1,-3)(-1,-1)(1,1)
\pscircle*[linecolor=white](0,-2){0.3}
\psecurve(1,-5)(-1,-3)(1,-1)(-1,1)
\end{pspicture}}\right.\quad
$
\caption{$T_{n}$}\label{fig:INTROOrientedSkeinRelationTn}
\end{subfigure}
\quad
\begin{subfigure}[b]{0.25\textwidth}\centering
$n\left\{\raisebox{-0.85cm}{
\begin{pspicture}(-1.1,-3.1)(1.1,3.1)
\psecurve(-1,5)(1,3)(-1,1)(1,-1)
\pscircle*[linecolor=white](0,2){0.3}
\psecurve(1,5)(-1,3)(1,1)(-1,-1)
\rput(0,0.3){$\vdots$}
\psecurve(1,-5)(-1,-3)(1,-1)(-1,1)
\pscircle*[linecolor=white](0,-2){0.3}
\psecurve(-1,-5)(1,-3)(-1,-1)(1,1)
\end{pspicture}}\right.\quad
$
\caption{$T_{-n}$}\label{fig:INTROOrientedSkeinRelationTmn}
\end{subfigure}
\quad
\begin{subfigure}[b]{0.25\textwidth}\centering
\begin{pspicture}(-5,-3.5)(5,3.5)
\psecurve(-2,6)(-1,3)(-1,-3)(-2,-6)
\psecurve(2,6)(1,3)(1,-3)(2,-6)
\end{pspicture}
\caption{$T_0$}\label{fig:INTROOrientedSkeinRelationT0}
\end{subfigure}
\caption{The basic tangles for the skein exact sequence from theorem~\ref{thm:INTROnTwistSkeinRelation}}\label{fig:INTROOrientedSkeinRelation}
\end{figure}
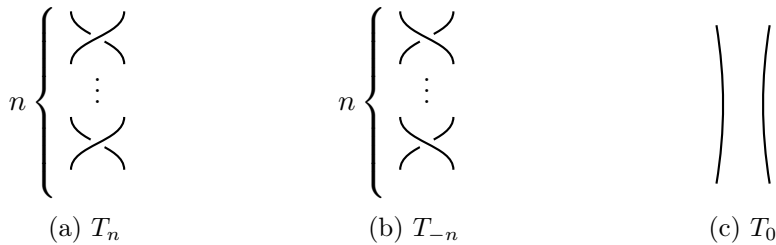

\subsection*{Parallels to Khovanov homology. }
Our peculiar invariants $\CFTd$ are also interesting from another, perhaps more philosophical perspective, namely the relationship between link Floer homology and Khovanov homology. Khovanov homology is another homology theory for knots and links, first defined by Khovanov in 1999 \cite{Khovanov}. It categorifies the Jones polynomial in the same way that $\HFL$ categorifies the Alexander polynomial. Although the two theories are defined and computed in very different ways, they look quite similar from a formal point of view, see for example \cite{JakeComparisonKhHFK}.

\begin{figure}[t]
\centering
\psset{unit=0.9}
\psset{xunit=1.5,nodesep=2pt,nrot=:U}
\begin{pspicture}(-5,-3.4)(5,3.6)
\rput(1,1){\rnode{A}{Khovanov homology}}
\rput(-3,1){\rnode{B}{knot Floer homology}}
\rput(-3,-3){\rnode{C}{Alexander polynomial}}
\rput(1,-3){\rnode{D}{Jones polynomial}}
\rput(3,3){\rnode{E}{\Centerstack{Bar-Natan's Khovanov\\ homology for tangles}}}
\rput(-1,3){\rnode{F}{?}}
\rput(-1,-1){\rnode{G}{?}}
\rput(3,-1){\rnode{H}{\Centerstack{Jones polynomial \\ for tangles}}}

\ncline[linestyle=dotted,dotsep=1pt]{F}{G}
\ncline[linestyle=dotted,dotsep=1pt]{<->}{G}{H}
\ncline[linestyle=dotted,dotsep=1pt]{->}{C}{G}
\ncline{E}{H}\ncput*{categorifies}
\ncline{->}{D}{H}
\ncline{<->}{A}{B}
\ncline{B}{C}\ncput*{categorifies}
\ncline{<->}{C}{D}
\ncline{A}{D}\ncput*{categorifies}
\ncline{->}{A}{E}
\ncline{->}{B}{F}
\ncline{<->}{E}{F}
\end{pspicture}
\caption{Another perspective on $\nabla_T^s$, $\HFT$ and especially $\CFTd$}\label{fig:introcontext}
\end{figure}
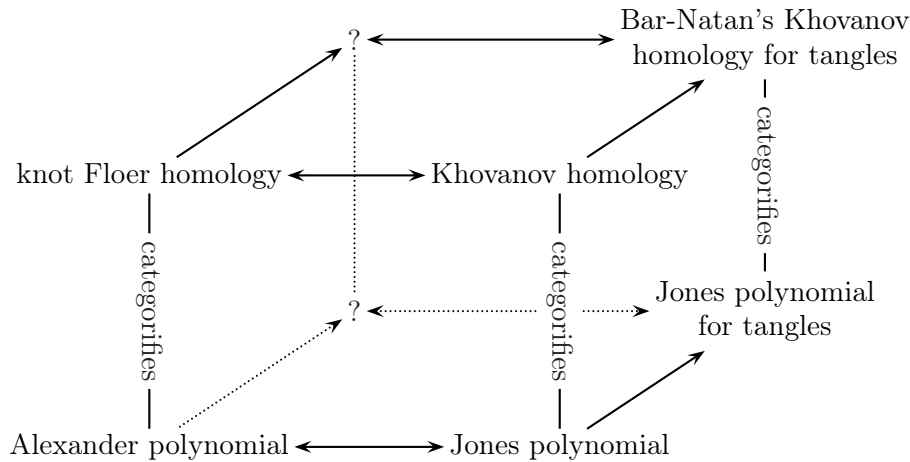

\noindent
In \cite{BarNatanKhT}, Bar-Natan gave an elegant generalisation of Khovanov homology to tangles. With a tangle diagram, he associated an (up to homotopy) invariant chain complex over a certain category which essentially (that is, up to grading) consists of only finitely many objects and morphisms. For example, for 4-ended tangles, there are just two objects and at most four morphisms in each hom-set. For $\CFTd$, we similarly get two candidates for such basic objects, and we sketch how to write the peculiar module of any tangle as a chain complex in these basic objects, up to a large tensor factor, see remark~\sref{rem:singularcrossings}. For the $(2,-3)$-pretzel tangle, this tensor factor can be removed.
\begin{questions}
Given any oriented 4-ended tangle \(T\), can we write \(\CFTd(T)\) as a chain complex in two basic objects? If so, can we describe the chain maps?
\end{questions}
An affirmative answer to these two questions would not only be aesthetically pleasing. The basic objects are symmetric under mutation, so if the chain maps between the basic complexes also have this symmetry, one might be able to prove the mutation conjecture. In Khovanov homology, such an approach has been successful: Khovanov homology with $\mathbb{Z}/2$-coefficients is known to be mutation invariant, see \cite{WehrliKhMutInv}.

\subsection*{Similar work by other people. }
There are several other groups of people working on similar ideas to those described in this thesis. \\
In 2014, Petkova and Vértesi defined a combinatorial tangle Floer homology using grid diagrams and ideas from bordered Floer homology \cite{cHFT}. They use a more general definition of tangles, namely those with a ``top'' and a ``bottom'', i.\,e.\ braids with caps and cups. In \cite{DecatCTFH}, they and Ellis show that the decategorification of their invariant agrees with Sartori's generalisation of the Alexander polynomials to top-bottom-tangles via representations of $U_q(\mathfrak{gl}(1\vert 1))$ \cite{Sartori14}. Thus, Petkova and Vértesi's theory fits very nicely into $\mathfrak{sl}_n$-homology theories arising from Khovanov homology. \\
Very recently, Ozsváth and Szabó developed a completely algebraically defined knot homology theory, which they conjecture to be equivalent to knot Floer homology \cite{OSKauffmanStates}. Like Petkova and Vértesi, they cut up a knot diagram into elementary pieces, so they automatically obtain tangle invariants, too. This theory seems to be frightfully powerful: from a computational point of view, since they can compute their homology from diagrams with over 50 crossings; but also from a more theoretical point of view, since their theory includes the hat- as well as the more sophisticated ``$-$''-version of knot Floer homology without reference to holomorphic curves or grid diagrams. Interestingly, the generators in their theory correspond to Kauffman states like in ours. \pagebreak\\
In \cite{HHK13,HHK15}, Hedden, Herald and Kirk study the Lagrangian intersection homology of immersed curves on a 4-punctured sphere (the ``pillowcase'') in the context of instanton knot Floer homology, a computationally more difficult knot Floer homology due to Kronheimer and Mrowka, which also categorifies the Alexander polynomial and is conjecturally closely related to $\HFK$ \cite{KM}. The curve they associate with the trivial tangle does not agree with ours, but it looks very similar to the curve we associate with a singular crossing, see proposition~\sref{prop:singularcrossing} and remark~\sref{rem:singularcrossings}.\\
Finally, I want to mention recent work of Lambert-Cole on conjecture~\ref{conj:MutInvHFL}. In \cite{LambertCole1}, he shows that the  bigraded knot Floer homologies of a mutant knot pair obtained by introducing a sufficiently large number of twists into a given (positive) mutant knot pair agree. This follows from a certain stabilisation property of knot Floer homology with respect to twisting that he proves in the same paper. In \cite{LambertCole2}, he investigates conjecture~\ref{conj:MutInvHFL} from a more axiomatic point of view, using basepoint maps and Manolescu's unoriented skein exact triangle as basic ingredients. He is able to confirm conjecture~\ref{conj:MutInvHFL} for mutating tangles that can be closed to an unlink by a rational tangle. In particular, this result encompasses the $\delta$-graded part of theorem~\ref{thm:23pretzelintro}.

\subsection*{Outline. }
The thesis is split into three chapters, following not only a logical, but incidentally also a roughly chronological order.\\
The first chapter is purely concerned with the decategorified story, the combinatorial definition of the polynomial tangle invariants~$\nabla_T^s$ and their properties. The chapter can be seen as a playground for testing ideas for chapters~\ref{chapter:categorification} and~\ref{chapter:HFTd}. In fact, some properties of $\nabla_{T}^s$ are immediate consequences of their categorified counterparts. However, other questions, most prominently the one concerning mutation invariance, are still unresolved in the categorified setting, whilst being relatively easy to answer for the decategorified invariants. On a first read, one can skip all sections of this chapter except the very first.\\
In chapter~\ref{chapter:categorification}, we define the tangle Floer homology~$\HFT$; first, via sutured Floer homology, then in more detail via Heegaard diagrams for tangles, imitating the definition of link Floer homology. We then interpret our invariant in terms of Zarev's bordered sutured theory.\\
In the third and final chapter, we specialise to 4-ended tangles and repackage the glueing structure in this special case into the peculiar invariant~$\CFTd$. We investigate some of its properties and discuss several applications and open questions.\\
There are four appendices. In appendix~\ref{appendix:AlgStructFromGDCats}, we describe the algebraic structures appearing in this thesis from an abstract category-theoretic point of view and derive useful tools for working with them, which form the basis of all our computations. In the second appendix, we give a proof of the generalised clock theorem, which is essential for studying the polynomial invariants~$\nabla_T^s$ in chapter~\ref{chapter:polynomial}.
The last two appendices are documentations for the programs~\cite{APT.m} and~\cite{BSFH.m}.

\subsection*{Acknowledgements}
First and foremost, I would like to thank my supervisor Jake Rasmussen for his generous support throughout the entire time of my PhD and before, during Part III. I consider myself very fortunate to have been his student.\pagebreak\\
My PhD was funded by an EPSRC scholarship covering tuition fees and a DPMMS grant for maintenance, for which I thank the then Head of Department Martin Hyland. I also gratefully acknowledge a research studentship from the Cambridge Philosophical Society for Michaelmas Term 2016.\\
I thank my examiners Ivan Smith and András Juhász for many valuable comments on and corrections to the soft-bound version of this thesis that I prepared for my viva. I also thank Mohammed Abouzaid, Guillem Cazassus, Celeste Damiani, Artem Kotelskiy, Peter Lambert-Cole, Adam Levine, Ina Petkova, Vera Vértesi and my brother Marcus Zibrowius for helpful conversations. My special thanks go to Liam Watson for his interest in my work, his support and the opportunity to speak in Glasgow twice.\\
I am very grateful to my PhD brothers Tom Brown, Tom Gillespie and Paul Wedrich, and my fellow PhD students Nina Friedrich and Christian Lund for their company and friendship. I would also like to thank Senja Barthel, Fyodor Gainullin, Tom Hockenhull and Marco Marengon for organising yearly student conferences at Imperial College London, all of which were terrific.\\
I thank Johnny Nicholson for helpful comments on an earlier draft of chapter~\ref{chapter:polynomial} and for sharing his computations with me during an undergraduate summer research project in 2016.\\
I am indebted to Tom Brown, Nina Friedrich, Paul Wedrich, Marcus Zibrowius and, especially, my father for their proof-reading services. 
\bigskip\\
No line of this thesis would have been written without the love and support of my brother and parents. This thesis is dedicated to them.

\begin{center}
	\vspace{35pt}
	\includegraphics[width=6cm]{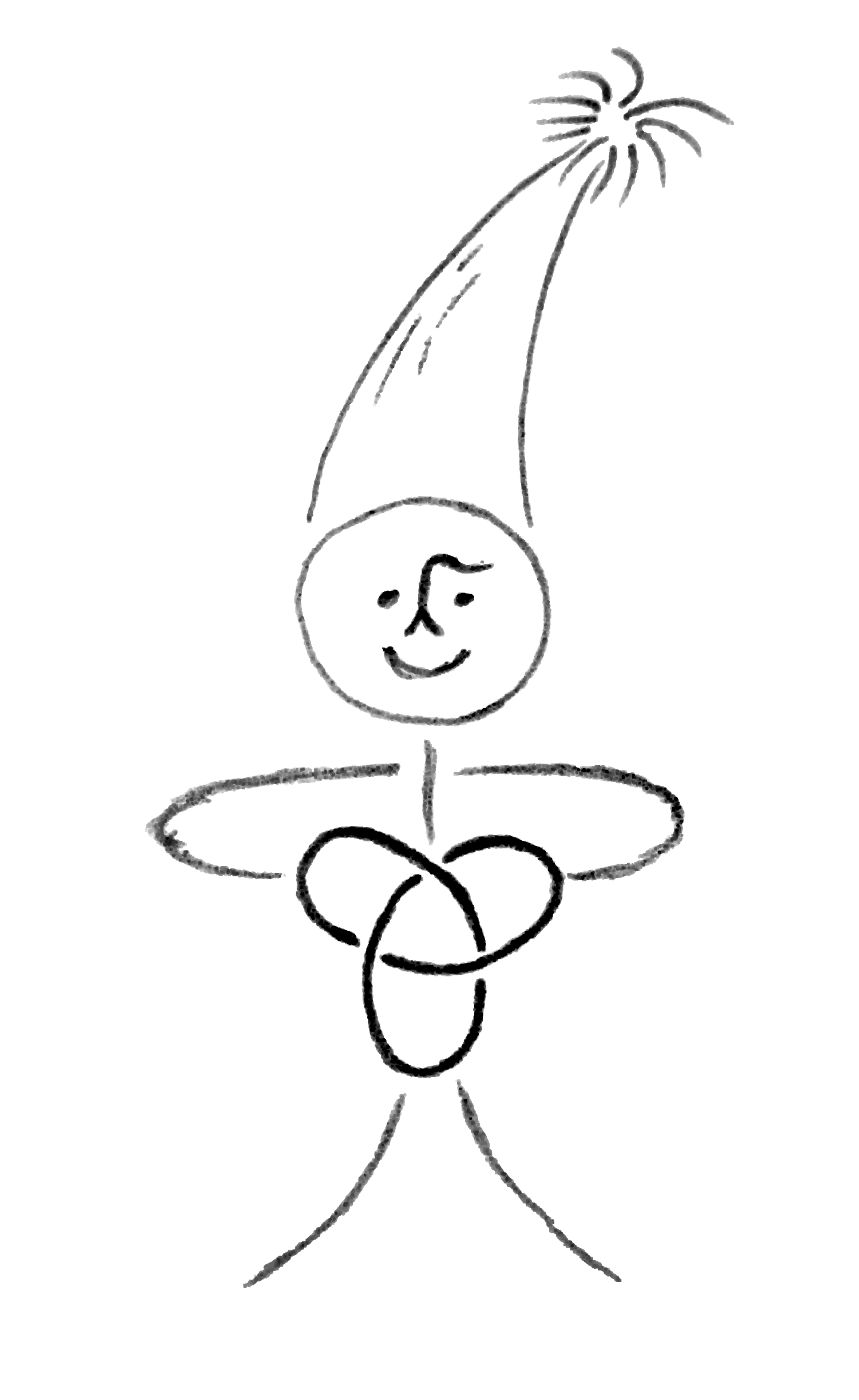}\\
\end{center}
\vspace*{\fill}

\chapter{Alexander polynomials for tangles}\label{chapter:polynomial}

In the first chapter, we define and study the polynomial invariant $\nabla_T^s$, a generalisation of the Alexander polynomial of knots and links  to tangles. The Alexander polynomial is a classical knot and link invariant which takes the form of a Laurent polynomial in the same number of variables as there are link components \cite{Alexander}. We will use its normalised form, which is also known as the Conway potential function and denoted by~$\nabla_L$, see for example Hartley's monograph \cite{hartley}. This polynomial invariant can be defined and interpreted in many different ways, depending on one's preferred point of view. For example, this might be Fox calculus, elementary ideals, Reidemeister torsion or skein theory.\\
We start from Kauffman's combinatorial definition of the Conway potential function for knots and links. In section \ref{sec:basicdefinitions}, we adapt this definition to tangles. In general, this gives us a finite set of Laurent polynomials $\nabla_T^s$ associated with an oriented tangle diagram~$T$. This finite set of invariants is indexed by some additional input data for tangles, which we call the sites of $T$ (see definition \ref{def:basic}). In theorems \ref{thm:nablaisaninvariant} and \ref{thm:twoended}, we show the following basic result.
\begin{theorem*}
For each site \(s\) of an oriented tangle \(T\), \(\nabla_T^s\) is an invariant of \(T\). Furthermore, if \(T\) represents a link or a knot \(L\), there is exactly one site \(s\), namely \(s=\emptyset\), and \(\nabla^\emptyset_T\) is equal to the Conway potential function \(\nabla_L\) up to a certain factor.
\end{theorem*}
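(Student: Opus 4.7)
The plan is to split the statement into two independent parts: (a) invariance of $\nabla_T^s$ under the moves that generate equivalence of oriented tangle diagrams, and (b) the identification of $\nabla_T^\emptyset$ with the Conway potential function when $T$ closes up to a link. Since $\nabla_T^s$ is defined combinatorially as a Kauffman state sum weighted by an Alexander code, both parts reduce to manipulating state sums.

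For part (a), I would follow the classical Kauffman strategy for knots, adapted to tangles. The first step is to check that $\nabla_T^s$ does not depend on the auxiliary choices needed to form the state sum (for instance, the position of the distinguished marker/region in the diagram that anchors the Kauffman states). This is exactly where the generalised clock theorem of the second appendix enters: it says that any two Kauffman states of a tangle diagram compatible with a given site $s$ can be connected by a sequence of clock moves, and a direct local computation shows that each clock move preserves the contribution to the state sum after the Alexander code is taken into account. Once choice-independence is in hand, the second step is to check behaviour under the tangle Reidemeister moves RI, RII, RIII applied in the interior of $T$. For each local move one enumerates the states supported in the affected region, groups them according to their restriction to the complement, and verifies that the weighted sum in the affected region is left unchanged (up to the normalising factor built into $\nabla_T^s$); the clock theorem again lets one assume that the states chosen on either side of the move are in the simplest position in the local tangle. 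The main obstacle I expect is bookkeeping: unlike in the closed case, Kauffman states for tangles occupy only a subset of the regions determined by the site $s$, so one must be careful that the state correspondence used to pair up states across a Reidemeister move respects $s$, and that the Alexander code contributions from the new crossings compensate exactly, including the orientation signs along open strands.

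For part (b), when $T$ has no open endpoints it is a link diagram, and a site is by definition an assignment of markers to certain boundary regions; since there are no boundary regions to mark, there is a unique site $s=\emptyset$. In this case the Kauffman states and the Alexander code reduce literally to those in Kauffman's original formulation of the state sum for the (single- or multi-variate) Alexander polynomial. The remaining task is therefore to match normalisations: Kauffman's sum computes the Alexander polynomial up to a monomial and a sign depending on the choice of reference region and orientation data, while $\nabla_L$ is the Conway potential function with its own normalisation convention (as in Hartley). The comparison is a direct identification of the two factors, which I would carry out by evaluating both sides on a standard diagram of the unknot and of a Hopf link to pin down the unit ambiguity.

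Altogether, the main technical work is packaged into the generalised clock theorem, so the bulk of the proof is a careful but essentially routine verification of the Reidemeister and choice-independence moves; the only genuinely new issue compared to the closed case is keeping track of how the site $s$ constrains states near the boundary of the tangle, which is where I would expect subtle sign and normalisation corrections to appear.
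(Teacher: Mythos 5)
Your part (a) is essentially the paper's route -- by lemma~\ref{lem:RMmovesconnectdiagrams} one reduces to checking RM~I--III locally, site by site (the paper does RM~I and II by hand and defers RM~III to a computer check) -- but your preliminary ``choice-independence'' step is misguided. The state sum defining $\nabla_T^s$ involves no auxiliary anchor: the site already prescribes which open regions are occupied, and one simply sums over \emph{all} generalised Kauffman states with that site. Moreover, your claim that each clock (transposition) move preserves a state's contribution is false: a transposition move shifts the exponent of one colour by $\pm 2$ -- this is exactly how lemma~\ref{lem:Powerdistance2} is proved -- so the clock theorem cannot be used that way; in the paper it enters only later (gradings modulo~2, the determinant/geometric interpretation), not in the invariance proof. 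Also, there is no ``normalising factor built into $\nabla_T^s$'' to carry along: the Alexander code of figure~\ref{figAlexCodesForNabla} is chosen precisely so that the raw sum is invariant on the nose (observation~\ref{ObsAlexanderCode}).

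Part (b) contains the genuine gap. First, the setting is not a closed diagram: in theorem~\ref{thm:twoended} the tangle $T$ is $2$-ended (one open strand of colour $c$) and represents $L$ by joining its two ends; the unique site is $\emptyset$ because a $2n$-ended tangle has $(n-1)$-element sites and here $n=1$. Correspondingly, the ``certain factor'' is $(c-c^{-1})$, a non-unit polynomial reflecting the missing marker of the open strand, not a monomial normalisation discrepancy. Second, and more seriously, the Conway potential function is a \emph{specific} normalisation of the multivariate Alexander polynomial, which is otherwise only defined up to a sign and a monomial \emph{depending on the link}; evaluating both sides on an unknot and a Hopf link therefore cannot pin down the identification for all links, so your matching step does not establish $\nabla_L=(c-c^{-1})^{-1}\nabla^\emptyset_T$. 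What is needed -- and what the paper does -- is to verify that $(c-c^{-1})^{-1}\nabla^\emptyset_T$ satisfies a set of axioms characterising the Conway potential function (the paper uses Jiang's axioms, checked in the accompanying notebook); alternatively one would have to track the symmetry/normalisation properties of the state sum directly, in the style of Hartley, which is substantially more than a two-example calibration.
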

The definition of $\nabla_T^s$ is a very straightforward generalisation of Kauffman's construction. However, apart from a short discussion in \cite{ouka}, I am unaware of any reference in the literature where this invariant has been studied. So this will be object of the remaining part of this chapter. \\
We show that the invariants $\nabla_T^s$ satisfy a glueing formula (proposition~\ref{prop:glueing}) which generalises the connected sum formula for the knot and link case. In section \ref{sec:basicpropertiesofnabla}, we derive some further properties of our tangle invariants. As we will see, many properties of the Conway potential function for knots and links generalise. In particular, they are well-behaved under orientation reversal of the tangle strands and taking mirror images. In section \ref{sec:4endedandmutation}, we study $\nabla_T^s$ for 4-ended tangles and prove symmetry relations between different sites $s$. They imply that $\nabla^s_T$ and in particular also the \textit{multivariate} Alexander polynomial of links is mutation invariant, for a precise statement, see theorem~\sref{thm:mutation}. In section \ref{sec:geometricinterpretation}, we interpret the invariants geometrically in terms of the first homology of the maximal Abelian cover of the tangle complement relative to certain subspaces of its boundary. Because this geometric generalisation of the Alexander polynomial looks even more natural than the one using Kauffman states (but with the slight drawback of being unnormalised), this seems to be a good starting point for any comparison with other constructions of Alexander polynomials for tangles, of which there are many 
\cite{%
Archibald,
Polyak,
Bigelow,
Kennedy,
Florens,
Sartori14,
Damiani}.

\section{\texorpdfstring{The tangle invariant $\nabla_T^s$}{The tangle invariant ∇}}\label{sec:basicdefinitions}

First of all, we define what we mean by a tangle. Our definition is based on Conway's notion of tangles, see for example \cite[section~2.3]{Adams}.

\begin{definition}\label{def:tangle}
A \textbf{tangle} $T$ is an embedding of a disjoint union of intervals and circles into the closed 3-ball $B^3$,
$$T: \left(\coprod I \amalg \coprod S^1,\partial\right)\hookrightarrow \left(B^3,{\red S^1}\subset \partial B^3\right),$$
such that the endpoints of the intervals lie on a fixed circle ${\red S^1}$ on the boundary of~$B^3$, together with a labelling of the arcs ${\red S^1}\smallsetminus \im(T)$ by some index set $\{a,b,c,\dots\}$.
We consider tangles up to ambient isotopy which keeps track of the labelling of the arcs. If the number of intervals is $n$, we call a tangle \textbf{$\boldsymbol{2n}$-ended}. The images of the intervals are called \textbf{open components}, the images of the circles are called \textbf{closed components}. We often label these tangle components by variables $p,q,r,\dots$ or $t_1,t_2,\dots$, which we call the \textbf{colours} of~$T$. 
In analogy to link diagrams, we define a \textbf{tangle diagram} $D$ to be an immersion of intervals and circles into the closed 2-disc, 
$$D: \left(\coprod I \amalg \coprod S^1,\partial\right)\looparrowright \left(D^2,\partial D^2\right),$$
whose image is a graph with 1- and 4-valent vertices only, together with under/over information at each 4-valent vertex and a labelling of the arcs $\partial D^2\smallsetminus D(\partial I)$ by some index set $\{a,b,c,\dots\}$. Just as in the case of links, we consider tangle diagrams up to ambient isotopy and the usual Reidemeister moves, see for example~\cite{Lickorish}. Connected components of the complement of the image of $D$ are called \textbf{regions}. Those regions that meet $\partial D^2$ are called \textbf{open}, the others are called \textbf{closed}. We call a diagram \textbf{connected} if the intersection of each open region with $\partial D^2$ is connected. Unless specified otherwise, diagrams are assumed to be connected and have at least one crossing.
\end{definition}
\begin{Remark}
Regard $D^2$ as the intersection of $B^3$ with the plane $\{z=0\}$. Given a tangle diagram $D$, we can remove the singularities of the immersion $D$ by pushing the two components at each singularity into $\{z>0\}$ and $\{z<0\}$, according to the under/over information. Conversely, given a tangle $T$, we can choose an embedded disc $D^2$ bounding the fixed circle ${\red S^1}$. Then, just as in the case of links, a generic projection of $B^3$ onto this disc gives rise to a well-defined tangle diagram. So in the following, we use ``tangles'' and ``tangle diagrams'' synonymously, unless it is clear from the context that we do not.
\end{Remark} 
\begin{lemma}\label{lem:RMmovesconnectdiagrams}
Let \(T_1\) and \(T_2\) be two oriented connected tangle diagrams that represent the same tangle. Then there is a sequence of Reidemeister moves that connects \(T_1\) to \(T_2\) via connected diagrams.
\end{lemma}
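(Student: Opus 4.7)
The plan is to start from an arbitrary sequence of Reidemeister moves $T_1 = D_0, D_1, \dots, D_n = T_2$, which exists by the usual extension of Reidemeister's theorem to tangle diagrams, and then modify it so that every intermediate diagram is connected, without changing the endpoints.

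First I would check which Reidemeister moves can actually destroy connectedness, by examining how each affects the faces of the planar graph $D_i \cup \partial D^2$. The moves R1 and R3 are local rearrangements that only reshuffle a single face or a triangle of faces and hence do not alter how any open region meets $\partial D^2$. An R2 move that \emph{introduces} two new crossings only further subdivides an existing face and cannot merge two distinct arcs of $\partial D^2$ into a single open region. The only move that can turn a connected diagram into a disconnected one is thus an R2 \emph{removing} a pair of crossings which formed the unique bridge between two boundary arcs of some open region.

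For each such problematic R2 in the sequence, I would insert a replacement bridge in advance. Let $A$ and $B$ denote the two pieces of the current (connected) diagram that are about to be separated by the R2. Choose a generic simple arc $\gamma \subset D^2$ from a point on some strand of $A$ to a point on some strand of $B$, and along $\gamma$ perform a finger R2 move that pushes a small piece of $A$'s strand twice across $B$'s strand; the resulting two crossings form a new bigon joining $A$ and $B$ at a fresh location, so the original R2 can now be executed without disconnecting the diagram. I would carry this extra bridge through the rest of the sequence, placed generically enough to avoid any subsequent move, and then remove it at the very end by a final R2 once $T_2$ has been reached. This final removal is harmless, because $T_2$ itself is connected.

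The main technical obstacle I anticipate is constructing the finger move when $\gamma$ must cross several other strands of the diagram on its way from $A$ to $B$. The remedy is to subdivide $\gamma$ into sub-arcs each of which crosses only one strand, and to perform the corresponding R2 finger moves one strand at a time, verifying at every step that adding crossings can only preserve (or improve) connectedness, as noted above. Iterating this procedure for every disconnecting R2 in the original sequence yields the desired sequence of Reidemeister moves from $T_1$ to $T_2$ passing only through connected diagrams.
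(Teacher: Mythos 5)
Your classification of the dangerous moves is correct: Reidemeister I and III and crossing-creating R2 moves never merge two open regions, so only an R2 removal whose two side regions are distinct and both meet $\partial D^2$ can destroy connectedness. The genuine gap is the sentence ``I would carry this extra bridge through the rest of the sequence, placed generically enough to avoid any subsequent move.'' The remaining moves are a \emph{fixed} finite list of local modifications at prescribed places, not a generic family, and your bridge is forced to run from $A$ to $B$ through the interior of the disc, exactly where those moves live; nothing guarantees an arc $\gamma$ avoiding all of their supports exists. Worse, the bridge is attached to specific strand segments of $A$ and $B$, and later moves may act on precisely those segments (delete them by an R1/R2 removal, or sweep them across the diagram), dragging the bridge into the support of a later move; at that point the prescribed move cannot be performed until the bridge is shifted out of the way, which costs further R2/R3 moves, and partially retracting the finger is itself an R2 removal that must be checked not to disconnect the diagram. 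So the inductive step ``insert the bridge, then run the rest of the sequence unchanged'' does not go through as written; repairing it requires a real additional argument, not genericity.

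The natural repair also shows how this compares with the paper's proof. Every Reidemeister move is supported in a compact disc in the interior of $D^2$, so a sufficiently thin collar of $\partial D^2$ meets none of the finitely many supports; an auxiliary finger routed inside such a collar is therefore never disturbed by the sequence. Once you place your bridge there, you may as well do it once and for all: at the start, push one open strand by R2 moves once around the whole diagram inside the collar, run the original sequence (whose intermediate diagrams are then automatically connected, since the region structure along $\partial D^2$ is governed by the encircling strand and never touched), and retract the finger at the end. That single global fix is exactly the paper's argument, and it sidesteps the interaction problem that your per-move patches would have to solve separately for each disconnecting R2 removal. Your endgame, incidentally, is fine even if several bridges cross each other and need more than one move to remove, since every diagram occurring there contains the connected $T_2$ with fingers attached; the real issue is only the middle of the sequence.
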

\begin{proof}
By the previous remark, we can always find a sequence of Reidemeister moves connecting the two diagrams. To ensure that all diagrams are connected, we pick one open strand near the boundary and pull it once around the whole diagram, using Reidemeister II moves before we go along the sequence of Reidemeister moves and undo the first step once we have arrived at $T_2$.
\end{proof}
\begin{definition}
\textbf{Rational tangles} are 4-ended tangles without any closed components obtained from the 4-ended tangle in figure \ref{figsub:rattangle0a} by adding twists to the top and to the right.
\begin{figure}[t]
\centering
\psset{unit=0.5}
\begin{subfigure}[b]{0.18\textwidth}\centering
\begin{pspicture}(-2,-2)(2,2)
\rput(0,-0.9){
\pscircle[linestyle=dotted](0,0.9){2}
\psecurve(1.2,-1.9)(0.9,-0.9)(0.9,2.7)(1.2,3.7)
\psecurve(-1.2,-1.9)(-0.9,-0.9)(-0.9,2.7)(-1.2,3.7)
}
\rput(1.45;180){$a$}
\rput(1.45;-90){$b$}
\rput(1.45;360){$c$}
\rput(1.45;450){$d$}
\end{pspicture}
\caption{}
\label{figsub:rattangle0a}
\end{subfigure}
~
\begin{subfigure}[b]{0.18\textwidth}\centering
\begin{pspicture}(-2,-2)(2,2)
\rput(0,-0.9){
\pscircle[linestyle=dotted](0,0.9){2}
\psline(-0.9,-0.9)(0.9,0.9)(-0.9,2.7)
\pscircle*[linecolor=white](0,1.8){0.3}
\pscircle*[linecolor=white](0,0){0.3}
\psline(0.9,-0.9)(-0.9,0.9)(0.9,2.7)
}
\rput(1.45;180){$a$}
\rput(1.5;-90){$b$}
\rput(1.45;360){$c$}
\rput(1.5;450){$d$}
\end{pspicture}
\caption{}
\label{figsub:rattangle0b}
\end{subfigure}
~
\begin{subfigure}[b]{0.18\textwidth}\centering
\begin{pspicture}(-2,-2)(2,2)
\rput(0,-0.9){
\pscircle[linestyle=dotted](0,0.9){2}

\psline(0.9,0.9)(-0.9,2.7)
\pscircle*[linecolor=white](0,1.8){0.3}
\psline(0.9,-0.9)(-0.9,0.9)(0.9,2.7)
\pscircle*[linecolor=white](0,0){0.3}
\psline(-0.9,-0.9)(0.9,0.9)(0.5,1.3)
}
\rput(1.45;180){$a$}
\rput(1.5;-90){$b$}
\rput(1.45;360){$c$}
\rput(1.5;450){$d$}
\end{pspicture}
\caption{}
\label{figsub:rattangle2}
\end{subfigure}
~
\begin{subfigure}[b]{0.18\textwidth}\centering
\begin{pspicture}(-2,-2)(2,2)
\rput(-0.8,-0.65){
\pscircle[linestyle=dotted](0.8,0.65){2}
\psline(0.35,0.35)(0.65,0.65)(-0.65,1.95)
\pscircle*[linecolor=white](0,1.3){0.3}
\psline(2.65,1.35)(0.65,-0.65)(-0.65,0.65)(0.65,1.95)
\pscircle*[linecolor=white](0,0){0.3}
\psline(-0.65,-0.65)(0.65,0.65)
\pscircle*[linecolor=white](1.95,0.65){0.3}
\psline(0.35,1.65)(0.65,1.95)(2.65,-0.05)
}
\rput(1.55;200){$a$}
\rput(1.45;-70){$b$}
\rput(1.6;360){$c$}
\rput(1.45;430){$d$}
\end{pspicture}
\caption{}\label{figsub:rattangle21}
\end{subfigure}
~
\begin{subfigure}[b]{0.18\textwidth}\centering
\begin{pspicture}(-2,-2)(2,2)
\psrotate(0,0){90}{\rput(0,-0.9){
\pscircle[linestyle=dotted](0,0.9){2}
\psecurve(1.2,-1.9)(0.9,-0.9)(0.9,2.7)(1.2,3.7)
\psecurve(-1.2,-1.9)(-0.9,-0.9)(-0.9,2.7)(-1.2,3.7)
}}
\rput(1.45;180){$a$}
\rput(1.45;-90){$b$}
\rput(1.45;360){$c$}
\rput(1.45;450){$d$}
\end{pspicture}
\caption{}
\label{figsub:rattangle0e}
\end{subfigure}
\caption{Some diagrams of rational tangles. (a) and (b) represent the same tangle, but only (b) is a connected diagram. (c) and (d) show some more complicated rational tangles. (e) does not represent the same tangle as (a), since the labelling is different.}\label{fig:rattangles}
\end{figure}
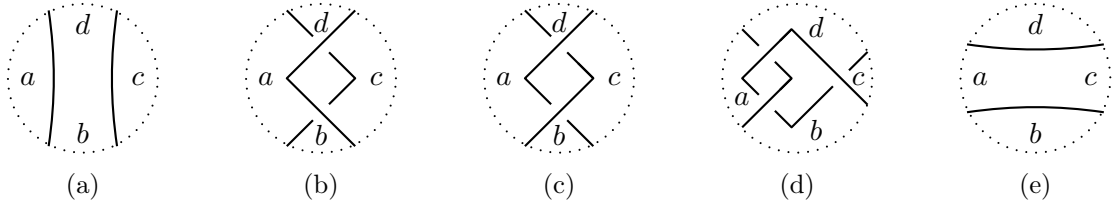
\end{definition}
\begin{Remark}
One might wonder why we have defined tangles the way we have. Alternatively, we could have simply allowed only those isotopies that fix the \textit{whole} boundary sphere. However, such a definition is too rigid, since it differentiates between far too many tangles that are essentially the same. In the other extreme, allowing just \textit{any} isotopies would mean that we do not distinguish between tangles that are related by some twists of the tangle ends. As a consequence, all rational tangles would, for example, be the same, see figure~\ref{fig:rattangles}. The introduction of a fixed circle on the boundary sphere in definition \ref{def:tangle} mediates between the two extremes. It can be viewed as a parametrisation of the punctured sphere $\partial B^3\smallsetminus\partial T$. We explore this point of view in section~\ref{sec:geometricinterpretation}, in particular, see proposition~\sref{prop:anyBisfine}. It will also play a major role in chapters~\ref{chapter:categorification} and~\ref{chapter:HFTd}.
\end{Remark}

\subsection*{Alexander polynomials of knots and links. }
Next, let us recall how the Alexander polynomial of knots and links can be computed using Kauffman states and Alexander codes following~\cite{Kauffman}. Given a diagram of a 2-ended tangle (whose closure represents a knot or link), a Kauffman state is an assignment of a marker~$\bullet$ to one of the four regions at each crossing such that each closed region is occupied by exactly one marker. 
One then applies the Alexander codes to the Kauffman states, i.\,e.\ one labels the markers by the monomials specified by the Alexander codes, as shown in figure~\ref{Kauffknot}. To get the multivariate Alexander polynomial, one just multiplies these labels, takes the sum over all Kauffman states and finally multiplies everything by some normalisation factor.\\
When trying to apply this well-known algorithm to the general case of a $2n$-ended tangle, one encounters the following problem: There are, say, $m$ crossings in the diagram, so by an Euler characteristic argument, there are at least $(m+n+1)$ regions. (We have exactly $(m+n+1)$ regions iff all regions are simply connected. Otherwise, we have a split component and so the Alexander polynomial should be zero.) Thus, there are at least $(n+1)$ regions more than there are markers, but the number of open regions in a connected diagram is $2n$. This motivates the following two definitions.
\begin{definition}\label{def:site} 
A \textbf{site} of a $2n$-ended tangle $T$ is a choice of an $(n-1)$-element subset of the set of arcs ${\red S^1}\smallsetminus \im(T)$. For connected tangle diagrams, this is equivalent to choosing $(n-1)$ open regions. The set of all sites of a tangle $T$ is denoted by $\mathbb{S}=\mathbb{S}(T)$.
\end{definition}

\begin{figure}[t]
	\centering
	\begin{subfigure}[b]{0.3\textwidth}\centering
		\psset{unit=1}
		\begin{pspicture}(-1.3,-1.3)(1.3,1.3)
		\psline[linecolor=violet]{->}(0.9,-0.9)(-0.9,0.9)
		\pscircle*[linecolor=white](0,0){0.3}
		\psline[linecolor=darkgreen]{->}(-0.9,-0.9)(0.9,0.9)
		\uput{0.5}[90](0,0){$\textcolor{violet}{u}$}
		\uput{0.4}[180](0,0){$\textcolor{violet}{u}^{-1}$}
		\uput{0.5}[-90](0,0){$-\textcolor{violet}{u}^{-1}$}
		\uput{0.7}[0](0,0){$\textcolor{violet}{u}$}
		\end{pspicture}
		\caption{The Alexander code for a positive crossing from \cite[figure~33]{Kauffman}. There is a similar one for a negative crossing. $\textcolor{violet}{u}$ is the colour of the under-strand.}
		\label{AlexCode}
	\end{subfigure}
	\quad
	\begin{subfigure}[b]{0.3\textwidth}\centering
		\psset{unit=0.59, linewidth=1.1pt}
		\begin{pspicture}[showgrid=false](-4.2,-3.1)(3.2,3.1)
		\psecurve(-2,2)(0,2)(0.75,1)(-0.75,-1)(0,-2)(0.97,-2.24)(2,-2)
		\psecurve(2,2)(0.97,2.24)(0,2)(-0.75,1)(0.75,-1)(0,-2)(-2,-2)(-3,0)(-2,2)(0,2)(0.75,1)
		
		\psecurve[linecolor=lightgray](0,-2)(0.97,-2.24)(2,-2)(2.8,-1)(2.8,1)(2,2)(0.97,2.24)(0,2)
		
		\pscircle*[linecolor=white](0,2){0.2}
		\pscircle*[linecolor=white](0,0){0.2}
		\pscircle*[linecolor=white](0,-2){0.2}
		
		\psecurve(0,2)(0.75,1)(-0.75,-1)(0,-2)
		\psecurve(2,2)(0.97,2.24)(0,2)(-0.75,1)(0.75,-1)
		\psecurve(-0.75,1)(0.75,-1)(0,-2)(-2,-2)(-3,0)
		
		\psline{->}(0.75,1.1)(0.75,1.2)
		\psline{->}(-0.75,1.11)(-0.75,1.2)
		\psline{->}(-0.75,-0.9)(-0.75,-0.8)
		\psline{->}(0.75,-0.9)(0.75,-0.8)
		
		\pscircle[linestyle=dotted](-1,0){3.05}
		
		\psline{->}(0.8,2.235)(0.9,2.25)
		\psline{->}(-0.8,2.235)(-0.9,2.25)

		\psdot(0,1.65)
		\psdot(-0.5,0)
		\psdot(0,-1.65)
		
		\uput{0.2}[-90](0,1.65){\scriptsize $-t^{-1}$}
		\uput{0.2}[180](-0.5,0){$t^{-1}$}
		\uput{0.2}[90](0,-1.65){$t$}
		
		\end{pspicture}
		\caption{A Kauffman state for a 2-ended tangle, coloured by~$t$. The closure is indicated by the grey arc.}
		\label{Kauffknot}
	\end{subfigure}
	\quad
	\begin{subfigure}[b]{0.3\textwidth}\centering
		\psset{unit=0.59, linewidth=1.1pt}
		\begin{pspicture}[showgrid=false](-4.5,-3.1)(2.5,3.1)
		
		\psecurve[linecolor=violet](-2.5,-1.5)(0,-2)(0.75,-1)(-0.75,1)(0,2)(0.97,2.24)
		\psecurve[linecolor=violet]{<-}(2,-2)(0.97,-2.24)(0,-2)(-0.75,-1)(0.75,1)(0,2)(-2.5,1.5)(-3.25,0)(-2.5,-1.5)(0,-2)(0.75,-1)
		
		\psecurve[linecolor=darkgreen]{->}(-6,-1.5)(-3.3,-1.85)(-2.5,-1.5)(-1.85,0)(-2.5,1.5)(-3.3,1.85)(-6,1.5)
		\psline[linecolor=darkgreen]{->}(-1.85,0.05)(-1.85,0.15)
		\pscircle*[linecolor=white](-2.5,1.5){0.2}
		\psecurve[linecolor=violet](0.75,1)(0,2)(-2.5,1.5)(-3.25,0)(-2.5,-1.5)
		\psecurve[linecolor=violet](0.75,-1)(0,-2)(-2.5,-1.5)(-3.25,0)(-2.5,1.5)
		
		\pscircle*[linecolor=white](0,2){0.2}
		\pscircle*[linecolor=white](0,0){0.2}
		\pscircle*[linecolor=white](0,-2){0.2}
		
		\psecurve[linecolor=violet](0.75,-1)(-0.75,1)(0,2)(0.97,2.24)(2,2)
		\psecurve[linecolor=violet](0,-2)(-0.75,-1)(0.75,1)(0,2)
		\psecurve[linecolor=violet](-2.5,1.5)(-3.25,0)(-2.5,-1.5)(0,-2)(0.75,-1)(-0.75,1)
		
		\pscircle*[linecolor=white](-2.5,-1.5){0.2}
		\psecurve[linecolor=darkgreen](-2.5,1.5)(-1.85,0)(-2.5,-1.5)(-3.3,-1.85)(-6,-1.5)

		\pscircle[linestyle=dotted](-1,0){3.05}
		
		
		
		\psline[linecolor=violet]{->}(0.75,-1.1)(0.75,-1.2)
		\psline[linecolor=violet]{->}(-0.75,-1.1)(-0.75,-1.2)
		\psline[linecolor=violet]{->}(-0.75,0.9)(-0.75,0.8)
		\psline[linecolor=violet]{->}(0.75,0.9)(0.75,0.8)

		\psline[linecolor=violet]{->}(-3.25,0.05)(-3.25,0.15)
		
		\psline[linecolor=violet]{<-}(-1.5,-2.015)(-1.4,-2.04)
		\psline[linecolor=violet]{->}(-1.5,2.015)(-1.4,2.05)
		
		\uput{0.2}[45](0.97,2.24){$\textcolor{violet}{p}$}
		\uput{0.2}[-45](0.97,-2.24){$\textcolor{violet}{p}$}
		\uput{0.2}[135](-3.3,1.85){$\textcolor{darkgreen}{q}$}
		\uput{0.2}[-135](-3.3,-1.85){$\textcolor{darkgreen}{q}$}
		
		\psdot(0,1.65)
		\psdot(0.5,0)
		\psdot(0,-1.65)
		
		\psdot(-2.5,-1)
		\psdot(-1.9,1.4)
		
		\uput{0.2}[90](0,-1.65){\footnotesize $-\textcolor{violet}{p}^{-1}$}
		\uput{0.2}[-90](0,1.65){$\textcolor{violet}{p}$}
		\uput{0.2}[0](0.5,0){$\textcolor{violet}{p}^{-1}$}
		\uput{0.2}[90](-2.5,-1){$\textcolor{violet}{p}$}
		\uput{0.2}[0](-1.9,1.4){$\textcolor{darkgreen}{q}$}
		\end{pspicture}
		\caption{A Kauffman state for a 4-ended tangle with two components coloured by $\textcolor{violet}{p}$ and $\textcolor{darkgreen}{q}$, respectively.}\label{Kaufftangle}
	\end{subfigure}
	\caption{Applying Alexander codes to Kauffman states}
\end{figure}
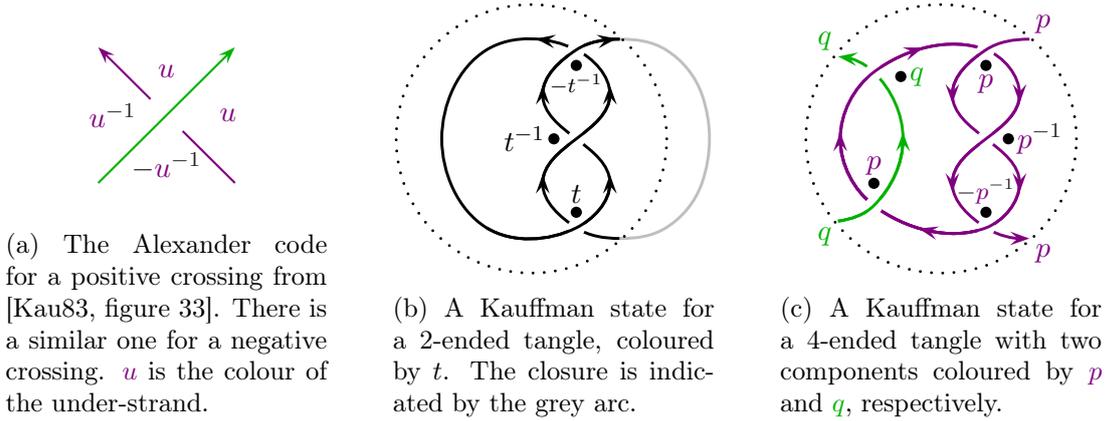

\begin{Remark}
	Later, we will give a more geometric interpretation of sites, see proposition~\sref{prop:anyBisfine}. In the categorified invariants of chapters~\ref{chapter:categorification} and~\ref{chapter:HFTd}, sites correspond to idempotents of the glueing algebra. 
\end{Remark}
\begin{definition}\label{def:basic} 
Let $T$ be a (connected) diagram of an oriented $2n$-ended tangle. A \textbf{generalised Kauffman state} of $T$ is an assignment of a marker to one of the four regions at each crossing such that each closed region is occupied by exactly one marker, with the additional condition that there be at most one marker in each open region. Furthermore,
\begin{itemize}
\item denote the set of all generalised Kauffman states of $T$ by $\mathbb{K}=\mathbb{K}(T)$.
\item For $x\in\mathbb{K}$, let $s(x)$ be the set of open regions that are occupied by a marker of $x$. (Note that $s(x)$ has $(n-1)$ elements, so $s(x)\in\mathbb{S}$.)
\item For each site $s\in\mathbb{S}$, let $\mathbb{K}(T,s):=\{x\in \mathbb{K}(T)\vert s(x)=s\}$.
\item For $x\in\mathbb{K}$, let $l(x)$ be the labelling of the markers of $x$ according to the Alexander codes in figure~\ref{figAlexCodesForNabla}. Moreover, let $c(x)$ be the product of the labels $l(x)$.
\end{itemize}
Then for each site $s\in\mathbb{S}$, let $$\hat{\nabla}_T^s:=\sum_{x\in\mathbb{K}(T,s)}c(x).$$
Furthermore, let $\nabla_T^s$ denote the function $\hat{\nabla}_T^s$ evaluated at $h=-1$. We call $\nabla_T^s$ the \textbf{Alexander polynomial of~$T$ at the site~$s$}.
\end{definition}
\begin{Remark}
The variable $h$ stands for ``homological grading''. In chapter \ref{chapter:categorification}, we will generalise the hat version of knot and link Floer homology to tangles. The generators of these homology groups will correspond to the generalised Kauffman states above.
\end{Remark}
\begin{observation}\label{ObsAlexanderCode}
In the Alexander codes of figure~\ref{figAlexCodesForNabla}, the exponents of $u$ in the two regions left of an under-strand are $-\tfrac{1}{2}$, and $+\tfrac{1}{2}$ in the regions on its right. For over-strands, it is the other way round. This Alexander code has the advantage over the one in figure \ref{AlexCode} that we do not need to multiply $\nabla^s_T$ by a normalisation factor to turn it into a tangle invariant.
\end{observation}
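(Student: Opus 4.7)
The first sentence of the observation is a direct description of figure~\ref{figAlexCodesForNabla} and requires only inspection of the figure. The substantive claim is the comparison with Kauffman's original code of figure~\ref{AlexCode} and the assertion that no normalisation factor is needed to turn $\nabla_T^s$ into an invariant. My plan is to reduce this to the already-known invariance of Kauffman's normalised state sum by comparing the two codes locally.

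First, I compare the two codes crossing by crossing. At each crossing $v$ with under-strand of colour $u_v$, the four labels displayed in figure~\ref{figAlexCodesForNabla} differ from those in figure~\ref{AlexCode} by multiplication by a uniform power $u_v^{\delta_v}$, where $\delta_v\in\tfrac{1}{2}\mathbb{Z}$ depends only on the sign of the crossing (and on whether the relevant region sits to the left or right of the under-strand, but the sum of these local shifts over the four regions of a crossing is independent of which three are marked). Consequently, for every generalised Kauffman state $x\in\mathbb{K}(T,s)$ one has
$$c_{\text{new}}(x) \;=\; \Bigl(\textstyle\prod_v u_v^{\delta_v}\Bigr)\, c_{\text{Kauffman}}(x),$$
with the prefactor independent of $x$. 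Summing over $x\in\mathbb{K}(T,s)$ shows that $\nabla_T^s$ agrees with the corresponding Kauffman state sum times the global monomial $\prod_v u_v^{\delta_v}$.

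A direct computation then identifies $\prod_v u_v^{\delta_v}$ with the inverse of the writhe-type normalisation factor that Kauffman's original code requires in order to descend to a Reidemeister invariant, so the two corrections cancel. To make this rigorous one must check the Reidemeister moves. Moves R2 and R3 preserve the writhe, and their invariance follows directly from Kauffman's analysis transferred through the local rescaling above. The heart of the matter is R1: a curl changes the writhe by $\pm 1$, and the single new crossing it introduces contributes two local Kauffman states whose combined weight under the symmetrised code must be trivial, whereas under the old code it would introduce a factor of $u^{\pm 1/2}$. The main obstacle is the careful sign and half-exponent bookkeeping on R1, since the symmetrisation shifts the labels through half-integer powers; the detailed verification is carried out in the proof of theorem~\ref{thm:nablaisaninvariant}.
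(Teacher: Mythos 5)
There is a genuine gap, and it sits exactly where your argument does its real work. Your claim that at each crossing the labels of figure~\ref{figAlexCodesForNabla} differ from those of figure~\ref{AlexCode} by a \emph{uniform} factor $u_v^{\delta_v}$ is false: comparing the two codes region by region at a positive crossing, the ratios are $o^{1/2}u^{-1/2}$ (top), $o^{1/2}u^{1/2}$ (left), $o^{-1/2}u^{1/2}$ (bottom) and $o^{-1/2}u^{-1/2}$ (right), so the ratio depends on \emph{which} region the marker occupies, and it also drags in the over-strand colour, which is absent from the old code altogether. Since a Kauffman state marks exactly one region per crossing (your parenthetical about ``which three are marked'' misreads the state structure), the quotient $c_{\text{new}}(x)/c_{\text{Kauffman}}(x)$ is a product of region-dependent factors and is not manifestly independent of $x$. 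Making such a comparison of two codes state-independent (within a fixed site) is precisely the kind of statement that in this paper requires the transposition-move/clock-theorem argument — see lemma~\ref{lem:AlexCodematching}, where an analogous code comparison is proved only up to a unit and only after the substitution $t\mapsto t^2$, via theorem~\ref{geclockt}. Your proposal supplies no such argument. A second obstruction is that Kauffman's normalised invariance, to which you want to reduce, is only available for knots and links (2-ended tangles); for a general $2n$-ended tangle with its family of sites there is no ``Kauffman normalised state sum'' to compare against, so the reduction does not even have a target.

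Finally, note that your argument ends by deferring the RM~I check to the proof of theorem~\ref{thm:nablaisaninvariant}; but that theorem \emph{is} the paper's justification of the observation — it verifies RM~I and RM~II directly on the new code, site by site (the two states created by an RM~II move contribute $p$ and $hp$ and cancel at $h=-1$, and the forced marker at an RM~I curl carries label $1$), with RM~III checked by computation. So the intended proof is simply inspection of the figure for the first sentence plus this direct Reidemeister verification for the second; routing the argument through Kauffman's normalisation is both unnecessary and, as set up here, not correct.
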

\begin{theorem} \label{thm:nablaisaninvariant}
For two oriented tangle diagrams \(T_1\) and \(T_2\) representing the same tangle and \(s\in\mathbb{S}(T_1)=\mathbb{S}(T_2)\), we have \(\nabla^s_{T_1}=\nabla^s_{T_2}\). So \(\nabla_T^s\) is a tangle invariant.
\end{theorem}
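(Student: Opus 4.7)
The plan follows Kauffman's strategy for knots and links \cite{Kauffman}, adapted to our setting. By lemma \ref{lem:RMmovesconnectdiagrams}, it suffices to prove invariance under each of the three Reidemeister moves RI, RII, RIII through connected tangle diagrams. A key observation is that every Reidemeister move can be performed in a small disc $D$ in the interior of the diagram, disjoint from the boundary circle; consequently, no open region is created or destroyed, and the set of sites $\mathbb{S}$ is unchanged. The task therefore reduces to exhibiting, for each move, a weight-preserving bijection between Kauffman states on the two diagrams that agree outside $D$. The site $s(x)$ is then automatically preserved, because it depends only on markers outside $D$.

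For RI, the kink introduces a single new crossing and a new monogon region. This monogon is closed and must contain exactly one marker, and since no marker at any other crossing can be placed inside it, the marker at the new crossing is forced to sit there. This produces the required bijection between Kauffman states on the two sides. The content of observation \ref{ObsAlexanderCode} is precisely that the half-integer Alexander codes of figure \ref{figAlexCodesForNabla} are calibrated so that the label of this forced monogon marker evaluates to $1$ at $h=-1$; hence the bijection is weight-preserving, with no global normalisation factor needed, which is the advantage of our code over the classical one of figure \ref{AlexCode}.

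For RII and RIII, one argues analogously by a local bijection. In RII a new bigon region appears and must contain exactly one marker, which can sit at either of the two new crossings; a short enumeration shows that the two resulting local contributions collapse to the original weight, thanks to the opposite signs of the two crossings. For RIII, one partitions Kauffman states on each side according to the placement of markers at the three crossings inside $D$ and verifies that the weighted sum over compatible local configurations is the same on both sides. This is Kauffman's classical verification, transported verbatim to our setting, since the three moves only interact with closed regions.

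The main obstacle is the combinatorial case analysis for RIII, which has to be carried out for every orientation and sign pattern of the three strands meeting in $D$. These checks are however purely local and finite in number, and the tangle boundary contributes no extra difficulty precisely because $D$ is disjoint from it. Once the three Reidemeister invariances are established, $\hat{\nabla}_T^s$ is an invariant of the oriented tangle $T$, and so is its specialisation $\nabla_T^s$ at $h=-1$, proving the theorem.
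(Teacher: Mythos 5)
Your overall route is the same as the paper's: reduce to Reidemeister moves through connected diagrams via lemma~\ref{lem:RMmovesconnectdiagrams}, then verify each move by a local computation, site by site. However, two of your structural claims are wrong as stated, and they obscure exactly the point of the argument. First, the problem does \emph{not} reduce to a weight-preserving bijection of Kauffman states. For RM~II the two-crossing side has strictly more states: besides those matching the simplified diagram, there are two families in which the marker of the second new crossing sits in the region to the left (resp.\ right) of the bigon; these contribute $p$ and $hp$ (resp.\ $q$ and $hq$) and cancel only after the substitution $h=-1$. In particular $\hat{\nabla}^s_T$ is not a Reidemeister invariant, so no such bijection can exist; your RM~II paragraph implicitly concedes this, but the bijection framing, taken literally, proves a false statement. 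Second, the claim that the site is automatically preserved ``because it depends only on markers outside $D$'' is false: the regions abutting the disc $D$ may be \emph{open} regions of the diagram, and markers at the new crossings can occupy them. These are precisely the states that must cancel in the RM~II check, and their presence is why the local verification has to be carried out separately for each site of the local tangle (equivalently, for each pattern of external occupation of the adjacent regions), implicitly using the glueing behaviour later made explicit in proposition~\ref{prop:glueing}. For the same reason, ``the moves only interact with closed regions'' is untrue, and Kauffman's closed-case analysis does not transport verbatim: the tangle setting adds the possibility that adjacent open regions are unoccupied, and these extra cases must be enumerated.

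Once these points are repaired, your plan coincides with the paper's proof, with one practical difference: the paper reduces the orientation cases of RM~I and RM~II to propositions~\ref{prop:mirrortangle} and~\ref{prop:reverseorientI} and discharges the full RM~III case analysis by a computer check, whereas you propose carrying out the finite hand enumeration over all orientations and sign patterns. That is legitimate, but it is where the actual work lies, so ``purely local and finite in number'' should not be mistaken for already done.
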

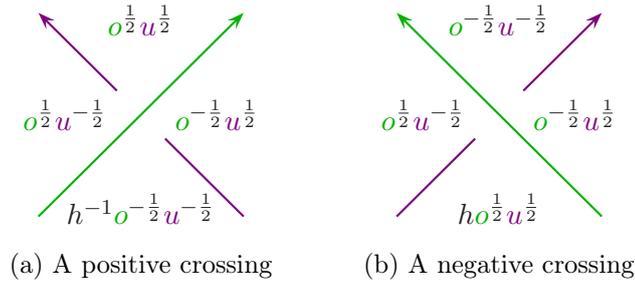
\begin{figure}[t]
	\centering
	\psset{unit=1.5}
	\begin{subfigure}[b]{0.3\textwidth}\centering
		\begin{pspicture}(-1.05,-1.05)(1.05,1.05)
		\psline[linecolor=violet]{->}(0.9,-0.9)(-0.9,0.9)
		\pscircle*[linecolor=white](0,0){0.3}
		\psline[linecolor=darkgreen]{->}(-0.9,-0.9)(0.9,0.9)
		\uput{0.7}[90](0,0){$\textcolor{darkgreen}{o}^{\frac{1}{2}} \textcolor{violet}{u}^{\frac{1}{2}}$}
		\uput{0.3}[180](0,0){$\textcolor{darkgreen}{o}^{\frac{1}{2}} \textcolor{violet}{u}^{-\frac{1}{2}}$}
		\uput{0.7}[-90](0,0){$h^{-1}\textcolor{darkgreen}{o}^{-\frac{1}{2}} \textcolor{violet}{u}^{-\frac{1}{2}}$}
		\uput{0.3}[0](0,0){$\textcolor{darkgreen}{o}^{-\frac{1}{2}} \textcolor{violet}{u}^{\frac{1}{2}}$}
		\end{pspicture}
		\caption{A positive crossing}
	\end{subfigure}
	\begin{subfigure}[b]{0.3\textwidth}\centering
		\begin{pspicture}(-1.05,-1.05)(1.05,1.05)
		\psline[linecolor=violet]{->}(-0.9,-0.9)(0.9,0.9)
		\pscircle*[linecolor=white](0,0){0.3}
		
		\psline[linecolor=darkgreen]{->}(0.9,-0.9)(-0.9,0.9)
		\uput{0.7}[90](0,0){$\textcolor{darkgreen}{o}^{-\frac{1}{2}} \textcolor{violet}{u}^{-\frac{1}{2}}$}
		\uput{0.3}[180](0,0){$\textcolor{darkgreen}{o}^{\frac{1}{2}} \textcolor{violet}{u}^{-\frac{1}{2}}$}
		\uput{0.7}[-90](0,0){$h\textcolor{darkgreen}{o}^{\frac{1}{2}} \textcolor{violet}{u}^{\frac{1}{2}}$}
		\uput{0.3}[0](0,0){$\textcolor{darkgreen}{o}^{-\frac{1}{2}} \textcolor{violet}{u}^{\frac{1}{2}}$}
		\end{pspicture}
		\caption{A negative crossing}
	\end{subfigure}
	\caption{The Alexander codes for definition \ref{def:basic}. The variable $\textcolor{darkgreen}{o}$ is the colour of the over-strand and $\textcolor{violet}{u}$ the colour of the under-strand.}\label{figAlexCodesForNabla}
\end{figure} 
\begin{proof}
By lemma \ref{lem:RMmovesconnectdiagrams}, we just need to check that the polynomials are invariant under the Reidemeister moves RM~I--III. We can check this locally, so the proof becomes exactly the same as for the usual knot and link case: We verify the theorem for the basic diagrams that appear in the Reidemeister moves, for each site separately. We only do this for RM~I and~II here; for RM~III, we refer the reader to the Mathematica notebook \cite{APT.nb} which uses the package~\cite{APT.m} for calculating $\nabla_T^s$ for any connected tangle diagram $T$ and site~$s$; see also appendix~\ref{app:manualAPT} for the corresponding manual.\\
RM~I looks as follows:
$$
\psset{unit=0.2}
\raisebox{-0.5cm}{
\begin{pspicture}(-2,-3)(2,3)
\psarc(-3,0){4.2426406871193}{-45}{45}
\end{pspicture}
}
\hspace{-0.7cm}
\tikzset{commutative diagrams/column sep/my size/.initial=1.5cm}
\begin{tikzcd}[column sep=my size]    
\phantom{X}\arrow{r}{\text{RM I}}&\arrow{l}\phantom{X}
\end{tikzcd}
\hspace{-0.7cm}
\raisebox{-0.5cm}{
\begin{pspicture}(-3,-3)(4,3)
\psline(-3,-3)(1,1)
\pscircle*[linecolor=white](0,0){0.3}
\psline(1,-1)(-3,3)
\psarcn(2,0){1.4142135623731}{135}{-135}
\end{pspicture}
}\medskip
$$
The enclosed region on the right only has one crossing. Hence, the corresponding marker has to sit in that region in every Kauffman state. For both orientations, the labelling of this marker is 1, so we might as well remove this crossing. The same holds if we reverse the crossing; we can either check this directly, or apply proposition~\ref{prop:mirrortangle}. \\
For RM~II, we only check one orientation; again, for the others, we can either check this separately or simply apply proposition~\ref{prop:reverseorientI}.
\begin{center}
\psset{unit=0.5}
\raisebox{-1.15cm}{
\begin{pspicture}(-1.5,-1.6)(1.5,3.5)

\psecurve[linecolor=darkgreen]{<-}(-1.8,3.6)(-0.9,2.7)(-0.3,0.45)(-0.9,-0.9)(-1.8,-1.8)
\psecurve[linecolor=violet]{<-}(1.8,3.6)(0.9,2.7)(0.3,0.45)(0.9,-0.9)(1.8,-1.8)

\rput(0,1.8){
\uput{1.35}[135](0,0){$\textcolor{darkgreen}{p}$}
\uput{1.35}[45](0,0){$\textcolor{violet}{q}$}}
\uput{1.35}[-135](0,0){$\textcolor{darkgreen}{p}$}
\uput{1.35}[-45](0,0){$\textcolor{violet}{q}$}

\end{pspicture}
}
\hspace{-1cm}
$
\tikzset{commutative diagrams/column sep/my size/.initial=1.5cm}
\begin{tikzcd}[column sep=my size]    
\phantom{X}\arrow{r}{\text{RM II}}&\arrow{l}\phantom{X}
\end{tikzcd}
$
\hspace{-1cm}
\raisebox{-1.15cm}{
\begin{pspicture}(-1.5,-1.6)(1.5,3.5)
\rput(0,1.8){
\psline[linecolor=violet]{->}(-0.9,-0.9)(0.9,0.9)
\pscircle*[linecolor=white](0,0){0.3}
\psline[linecolor=darkgreen]{->}(0.9,-0.9)(-0.9,0.9)
}
\psline[linecolor=violet]{->}(0.9,-0.9)(-0.9,0.9)
\pscircle*[linecolor=white](0,0){0.3}
\psline[linecolor=darkgreen]{->}(-0.9,-0.9)(0.9,0.9)

\rput(0,1.8){
\uput{1.35}[135](0,0){$\textcolor{darkgreen}{p}$}
\uput{1.35}[45](0,0){$\textcolor{violet}{q}$}}
\uput{1.35}[-135](0,0){$\textcolor{darkgreen}{p}$}
\uput{1.35}[-45](0,0){$\textcolor{violet}{q}$}

\end{pspicture}
}
\end{center}
In the diagram on the right, there are exactly two Kauffman states that occupy the open region on the left; they contribute $p$ and $hp$, so after setting $h=-1$, they cancel. The same is true for the open region on the right; the contribution there is $q$ and $hq$. Finally, for each of the open regions at the top and the bottom, there is exactly one Kauffman state and it contributes 1.
\end{proof}

We have chosen the letter $\nabla$ for a reason:

\begin{theorem}\label{thm:twoended}
Let \(T\) be a diagram of an oriented $2$-ended tangle representing a link~\(L\). Note that in this case there is only one site of~\(T\), namely the empty set~\(\emptyset\). Let the colour of the open component be~\(c\). Then the Conway potential function \(\nabla_L\) is equal to
$$\frac{1}{c-c^{-1}}\nabla^\emptyset_T.$$
\end{theorem}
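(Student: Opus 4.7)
The plan is to reduce the claim to Kauffman's original state-sum formula for the Conway potential function. First I would close the tangle $T$ to the link $L$ by adding a short arc $\gamma$ in $\partial B^3$ joining the two endpoints; this does not add any crossings, and the two open regions of $T$ become exactly the two regions of the closed link diagram adjacent to $\gamma$. Since a site of a $2$-ended tangle must consist of $n-1 = 0$ open regions, the only site is $s = \emptyset$, and the generalised Kauffman states in $\mathbb{K}(T,\emptyset)$ are in natural bijection with Kauffman's classical states of $L$ relative to the edge $\gamma$, namely those states placing no marker on either side of $\gamma$ and a single marker in each other region.

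Next I would compare the two labellings. The Alexander code of figure~\ref{figAlexCodesForNabla} (evaluated at $h=-1$) and Kauffman's classical code from figure~\ref{AlexCode}, suitably extended to the multivariate setting, agree at each crossing up to a local monomial factor depending only on the colours of the two strands meeting there and on which of the four regions the marker occupies. The key observation is that when these local factors are multiplied over all markers of a state, the resulting ratio is \emph{state-independent}: it reduces to a global monomial determined by the writhe of the diagram and the crossing colours, essentially because every edge contributes symmetrically to the two regions on either side of it, and the exponents $\pm\tfrac{1}{2}$ on the code telescope across each strand. This is precisely the content of observation~\ref{ObsAlexanderCode}, which asserts that the symmetric code eliminates the usual normalisation factor needed in Kauffman's formulation.

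The most delicate step is isolating the remaining factor $(c - c^{-1})^{-1}$. I would extract it by verifying that the state sum $\nabla^\emptyset$ on $2$-ended tangles satisfies the skein relation
$$\nabla^\emptyset_{T_+} - \nabla^\emptyset_{T_-} = (c - c^{-1})\,\nabla^\emptyset_{T_0}$$
at a crossing of same-colour strands on the open component, by direct computation from the Alexander codes in figure~\ref{figAlexCodesForNabla}. Combined with the base case of the trivial $2$-ended tangle (where $\nabla^\emptyset_T = 1$ and $L$ is the unknot, for which Hartley's Conway potential function evaluates to $1/(c-c^{-1})$), the characterisation of the Conway potential function by its value on the unknot and the skein relation would then conclude the proof. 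The main obstacle in either approach is the careful multivariate bookkeeping: the orientation sign conventions, the treatment of crossings between strands of distinct colours, and the contribution of the two empty regions flanking $\gamma$ all need to be checked against Hartley's axiomatic definition of $\nabla_L$, and in the skein-theoretic approach one must also confirm the base case on a diagram containing crossings (using invariance under Reidemeister~I to reduce to the trivial diagram).
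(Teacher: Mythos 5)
Your final step contains a genuine gap: the multivariate Conway potential function is \emph{not} characterised by its value on the unknot together with the one-colour skein relation. That characterisation is valid for the single-variable Conway polynomial, but here $\nabla_L$ is multivariate (cf.\ remark~\ref{rem:conwaypotfunctionandHFL}), and the skein relation you propose to verify (the one of lemma~\ref{onecolourskein}) only applies at crossings where both strands carry the same colour, i.\,e.\ at self-crossings of a single component. Such crossing changes preserve all pairwise linking numbers, so they cannot reduce an arbitrary coloured link to the unknot --- already the Hopf link is out of reach --- and hence the induction you envisage does not close. Any axiomatic characterisation of the multivariate $\nabla_L$ needs a substantially larger list of axioms (values on more basic links, behaviour at crossings of strands of distinct colours, doubling-type relations), and this is exactly the route the paper takes: its proof consists of checking that $\frac{1}{c-c^{-1}}\nabla^\emptyset_T$ satisfies the full list of axioms of \cite{jiang14}, with the verification carried out in the notebook \cite{APT.nb}.

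Your first approach (matching the code of figure~\ref{figAlexCodesForNabla} against Kauffman's code from figure~\ref{AlexCode}) has a related shortcoming: Kauffman's state sum determines the Alexander polynomial only up to multiplication by a unit, whereas the theorem asserts an exact identity with the normalised potential function. Pinning down the precise sign and monomial is the whole content of the statement, so a state-independent ``global monomial'' discrepancy, which you are happy to absorb, is exactly what cannot be absorbed here; observation~\ref{ObsAlexanderCode} guarantees invariance of $\nabla^\emptyset_T$ without renormalisation but does not by itself identify the normalisation with Hartley's or Jiang's. So either you track the normalisation exactly in the multivariate setting (delicate, and not supplied by the telescoping argument you sketch), or you fall back on an axiomatic characterisation --- in which case you need the full axiom set, not just the skein relation and the unknot value.
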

\begin{proof}
Verify that $\nabla:=\frac{1}{c-c^{-1}}\nabla^\emptyset_T$ satisfies the axioms in \cite{jiang14}, see \cite{APT.nb}.
\end{proof}

\begin{Remark}\label{rem:conwaypotfunctionandHFL}
Recall that the Conway potential function of an $n$-component oriented link $L$
is a rational function $\nabla_L(t_1,\dots,t_n)$
which is related to the multivariate Alexander polynomial $\Delta_L$ in the following way: (see for example \cite{hartley} or \cite{jiang14})
$$
\nabla_L(t_1,\dots,t_n)=
\begin{cases}
\displaystyle\frac{\Delta_L(t_1^2)}{t_1-t_1^{-1}}, & \text{if $n=1$};
\\
\Delta_L(t_1^2,\dots,t_n^2), & \text{if $n>1$}.
\end{cases}
$$
Hence, using the notation of the theorem above
$$
\nabla^\emptyset_T(t_1,\dots,t_n)=
\begin{cases}
\Delta_L(t_1^2), & \text{if $n=1$};
\\
(c-c^{-1}) \Delta_L(t_1^2,\dots,t_n^2), & \text{if $n>1$}.
\end{cases}
$$
We also note that $\nabla^\emptyset_T$ of a 2-ended tangle $T$, multiplied by a factor of $(c-c^{-1})$ for each \textit{closed} component of $T$, is equal to the Euler characteristic of Ozsváth and Szabó's link Floer homology from~\cite{OSHFL}.
\end{Remark}

We collect some properties of the Conway potential function in the following theorem.

\begin{theorem}\label{thm:CPFprops}
{\normalfont\cite[propositions 5.6, 5.5, 5.7 and 5.3]{hartley}}
The Conway potential function of an oriented link \(L\) satisfies the following properties:
\begin{enumerate}[(i)]
\item If \(\m(L)\) denotes the mirror image of \(L\), then \label{CPFpropmirror}
$$\nabla_{\m(L)}(t_1,\dots,t_r)=(-1)^{r-1}\cdot\nabla_L(t_1,\dots,t_r).$$
\item $\nabla_L(t_1,\dots,t_r)=(-1)^{r}\cdot\nabla_L(t_1^{-1},\dots,t_r^{-1}).$\label{CPFpropssymmetry}
\item If \(\rr(L,t_1)\) is obtained from \(L\) by reversing the orientation of the first strand, then \label{CPFpropRevOneStrand}
$$\nabla_{\rr(L,t_1)}(t_1,\dots,t_r)=-\nabla_L(t_1^{-1},t_2,\dots,t_r).
$$
\item If \(L_1,\dots,L_r\) denote the link components of \(L\), then \(\nabla_L(1,t_2,\dots,t_r)\) is equal to \label{CPFoneequalone}\vspace{\abovedisplayskip}\\
\textcolor{white}{\qedsymbol}\hfill$(t_2^{\lk(L_1,L_2)}\cdots t_r^{\lk(L_1,L_r)}-t_2^{-\lk(L_1,L_2)}\cdots t_r^{-\lk(L_1,L_r)})\nabla_{L\smallsetminus L_1}(t_2,\dots,t_r).$\hfill\qedsymbol

\end{enumerate}
\end{theorem}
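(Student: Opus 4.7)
The plan is to verify each of (i)--(iv) axiomatically, reducing to the Kauffman-state expression of Theorem \ref{thm:twoended} whenever it is convenient. By \cite{jiang14}, the Conway potential function is uniquely characterised among rational functions on oriented links by its values on the unknot and unlinks together with a crossing-change (skein) axiom; thus, for each of (i)--(iii), one can verify that the right-hand side satisfies the very same axioms as $\nabla_L$, which forces equality. This mirrors the strategy that established Theorem \ref{thm:twoended} itself. Alternatively, one may pick a 2-ended diagram $T$ representing $L$ and construct an explicit bijection between the generalised Kauffman states of $T$ and those of the modified diagram $T'$ (the mirror, orientation-reversal, etc.) that appears on the right-hand side; this reduces each identity to a local check on the Alexander codes of Figure \ref{figAlexCodesForNabla}.

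For (i), mirroring swaps positive and negative crossings. Comparing the two codes of Figure \ref{figAlexCodesForNabla}, at each crossing the factor $h$ moves from one region to the diagonally opposite one, while the remaining three monomials in $\textcolor{darkgreen}{o}^{\pm 1/2}\textcolor{violet}{u}^{\pm 1/2}$ stay the same. The identity-on-regions bijection $\mathbb{K}(T,\emptyset)\to\mathbb{K}(T',\emptyset)$ therefore multiplies the state contribution by a power of $h$ equal to (signed) writhe-type quantity; after $h=-1$, a parity argument converts this into the global sign $(-1)^{r-1}$, the parity of components accounting for the single overall unit of normalisation removed in Theorem \ref{thm:twoended}.

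For (ii), reversing every strand is achieved by rotating the diagram through $180^\circ$, which preserves the sign of every crossing and the set of Kauffman states. Each local Alexander-code label is invariant under simultaneously swapping the two regions about a crossing and replacing $t_i\mapsto t_i^{-1}$, and the global sign $(-1)^r$ again comes from Theorem \ref{thm:twoended}. Property (iii) then follows by combining (ii) with the observation that reversing only the first strand switches the sign of precisely those crossings involving that strand; a bijection of Kauffman states together with the local check $t_1\mapsto t_1^{-1}$ yields the single overall minus sign. The main obstacle is (iv): the Torres-style specialisation $t_1=1$ does not follow naively from a Kauffman-state bijection. The cleanest route is via the homological description of $\nabla_L$ as a Reidemeister torsion (or via the Fox-calculus presentation of the Alexander module): setting $t_1=1$ collapses the generators dual to the meridians of $L_1$, and a Mayer--Vietoris / block-matrix computation on the infinite cyclic cover isolates the first row as contributing the linking-number factor $\prod_{j\geq 2}(t_j^{\lk(L_1,L_j)}-t_j^{-\lk(L_1,L_j)})$, with the remaining minor computing $\nabla_{L\smallsetminus L_1}$. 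This geometric argument is exactly the one we shall return to in Section \ref{sec:geometricinterpretation}, and it is also essentially the proof given in \cite{hartley}; we therefore defer the details to that reference.
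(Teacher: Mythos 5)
The paper does not prove this theorem at all: it is quoted verbatim from Hartley's monograph (propositions 5.3, 5.5, 5.6, 5.7 of \cite{hartley}), so any argument you give has to stand on its own. As written, yours does not, and the problem is concentrated in part (\ref{CPFpropssymmetry}). Your proof of (ii) rests on the claim that ``reversing every strand is achieved by rotating the diagram through $180^\circ$''; no planar (or ambient) rotation reverses the orientation of a link -- non-invertible knots and links exist -- and in any case (ii) is not a statement about orientation reversal but about the substitution $t_i\mapsto t_i^{-1}$. What a Kauffman-state bijection actually gives you is the analogue of proposition~\ref{prop:mirrortangle} (each local code is replaced by its reciprocal in all variables, including $h$; it is not the case that only the $h$-factor moves while the other monomials are fixed), and at $h=-1$ this yields $\nabla_{\m(L)}(t_1,\dots,t_r)=\nabla_L(t_1^{-1},\dots,t_r^{-1})$ (up to the unknot-factor sign from theorem~\ref{thm:twoended}). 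Converting this into the asserted $(-1)^{r-1}\nabla_L(t_1,\dots,t_r)$ is \emph{exactly} property (ii) again, so your (i) secretly presupposes (ii), and your (iii) is explicitly derived from (ii); the gap therefore propagates through all three parts. That (ii) cannot be disposed of by a naive local/state-sum argument is also signalled by the paper itself: it is singled out in section~\ref{sec:basicpropertiesofnabla} as the one property of theorem~\ref{thm:CPFprops} that does \emph{not} generalise to tangles, whereas a purely local argument would be expected to.

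Two repairs are available but neither is carried out in your sketch. Either execute the axiomatic route honestly: using the characterisation in \cite{jiang14}, verify that each candidate right-hand side (e.g.\ $(-1)^{r-1}\nabla_{\m(L)}$, $(-1)^r\nabla_L(t_1^{-1},\dots,t_r^{-1})$, $-\nabla_{\rr(L,t_1)}$ with $t_1\mapsto t_1^{-1}$) satisfies the same initial conditions and skein/doubling axioms as $\nabla_L$ -- this requires checking how the skein relation transforms under mirroring and variable inversion, which you do not do -- or fall back on Hartley's own (torsion-theoretic) proofs, as the paper does. Note also that (iii) alone \emph{can} be obtained within the paper's Kauffman-state framework without (ii), via proposition~\ref{prop:reverseorientI} together with proposition~\ref{propexponentsmodtwo} (corollary~\ref{cor:revorientrevisited}, closed-component case), so reorganising your argument to prove (iii) first and then deduce nothing further from it would at least salvage that part; (i) and (ii), however, remain unproved as written. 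Deferring (iv) to \cite{hartley} is consistent with what the paper does.
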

Often, we want to glue tangle diagrams together along some parts of their boundary to obtain a new tangle diagram. We can easily compute the polynomial invariant of the new tangle from the invariants of the two glued components. (In fact, we have implicitly used the fact that $\nabla_T^s$ behaves well under glueing in the proof of theorem~\ref{thm:nablaisaninvariant} already.) The following result generalises the connected sum formula for knots and links.

\begin{proposition}[splitting/glueing formula]\label{prop:glueing}
Let \(T_1\) and \(T_2\) be two oriented tangles obtained by splitting an oriented tangle diagram \(T\) along some arc that does not meet any crossings, for example like so:
$$
\psset{unit=0.4}
\begin{pspicture}(-8.01,-3.1)(8.01,3.1)
\psellipse[linestyle=dotted](0,0)(5,3)

\psline(-2,0)(2,0)
\psline[linestyle=dotted](0,3)(0,-3)

\rput(-2,0){
\psline(0,0)(-3,0)
\psline(0,0)(2.6;125)
\psline(0,0)(2.6;-125)
}

\rput(2,0){
\psline(0,0)(3,0)
\psline(0,0)(2.6;55)
\psline(0,0)(2.6;-55)
}

\psecurve(-2,0)(-1.5,1)(1.5,1)(2,0)
\psecurve(-2,0)(-1.5,-1)(1.5,-1)(2,0)

\pscircle[fillcolor=white,fillstyle=solid](-2,0){1.5}
\pscircle[fillcolor=white,fillstyle=solid](2,0){1.5}
\rput(-6.5,0){$T=$}
\rput(-2,0){$T_1$}
\rput(2,0){$T_2$}
\end{pspicture}
$$
Let \(t_{1,1},\dots, t_{1,r_1}\), \(t_{2,1},\dots, t_{2,r_2}\) and \(t_{1},\dots, t_{r}\) be the colours of \(T_1\), \(T_2\) and \(T\), respectively. Glueing \(T_1\) and \(T_2\) back together induces an identification of these colours which gives rise to two homomorphisms 
$$\iota_i:\mathbb{Z}[t^{\pm 1/2}_{i,1},\dots, t^{\pm 1/2}_{i,r_i}]\rightarrow \mathbb{Z}[t^{\pm 1/2}_{1},\dots, t^{\pm 1/2}_{r}],\qquad i=1,2.$$
Then
$$\hat{\nabla}_T^s=\sum_{\substack{s_1\cap s_2=\emptyset\\ (s_1\cup s_2)\cap O(T)=s}}\iota_1(\hat{\nabla}_{T_1}^{s_1})\cdot \iota_2(\hat{\nabla}_{T_2}^{s_2}),$$
where \(O(T)\) denotes the set of open regions in the diagram \(T\).
\end{proposition}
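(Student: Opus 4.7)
The strategy is a straightforward bijective argument: establish a correspondence between generalised Kauffman states of $T$ (with site $s$) on one side, and pairs of Kauffman states of $T_1$ and $T_2$ (with compatible sites $s_1, s_2$) on the other, and then verify that the label contributions factor correctly under this correspondence.

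First I would analyse the regions. Since the splitting arc avoids all crossings, every crossing of $T$ belongs unambiguously to either $T_1$ or $T_2$. Every region of $T_1 \sqcup T_2$ arises by cutting a region $R$ of $T$ along the arc: if $R$ does not meet the arc, it persists as a single region of $T_i$; if it does, it is cut into one piece on each side, each of which becomes an \emph{open} region of the respective $T_i$ (since the arc is part of the new boundary). A key observation is that closed regions of $T_i$ are precisely those closed regions of $T$ that lie entirely in $T_i$, while open regions of $T_i$ arise either from open regions of $T$ (possibly cut), or as one half of a cut closed region of $T$. I would package this as a canonical pairing of the open regions of $T_1$ with those of $T_2$ that meet the splitting arc.

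Next I would define the restriction and gluing maps. Given $x \in \mathbb{K}(T,s)$, restrict to markers on crossings of $T_i$ to obtain a candidate $x_i$. Each closed region of $T_i$ (coming from a closed region of $T$ inside $T_i$) receives exactly one marker; each open region of $T_i$ receives at most one marker by the corresponding property of $x$. Thus $x_i \in \mathbb{K}(T_i, s_i)$ for some site $s_i$. Conversely, any pair $(x_1, x_2)$ assembles to a marker assignment on $T$. This assembly is a valid Kauffman state with site $s$ precisely when (i) each cut closed region of $T$ receives exactly one marker in total across $T_1$ and $T_2$ — which is the condition that paired open regions of $T_1$ and $T_2$ do not both lie in $s_1 \cup s_2$, and for the remaining cut closed regions exactly one does; and (ii) each open region of $T$ receives the correct marker count determined by $s$. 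Encoding paired open regions via the natural identification, condition (i) is $s_1\cap s_2 = \emptyset$ (together with the completion forced by the closed regions of $T$), and condition (ii) is $(s_1 \cup s_2) \cap O(T) = s$. The two maps are mutual inverses.

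Finally I would verify that the polynomial label is multiplicative. The Alexander code in figure~\ref{figAlexCodesForNabla} is purely local: the label $l(x)(c)$ at a crossing $c$ depends only on the position of the marker relative to $c$ and the colours of the strands at $c$. Both data are preserved under the split, so $c(x) = \iota_1(c(x_1)) \cdot \iota_2(c(x_2))$ after reidentifying the colours via $\iota_1,\iota_2$. Summing over $x \in \mathbb{K}(T,s)$ and reorganising via the bijection yields the claimed formula. The main subtlety — the ``hard part'' — is keeping careful track of the region bookkeeping in step two, namely matching up cut regions and confirming that the site conditions in the sum correspond exactly to the marker-per-region constraint after regluing; once this is set up cleanly, the remaining steps are essentially formal.
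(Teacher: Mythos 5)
Your argument is correct and is essentially the paper's own proof, which simply observes that the sum in the formula ranges over exactly those marker assignments on $T_1\sqcup T_2$ that reassemble to generalised Kauffman states of $T$ with site $s$, the label being multiplicative because the Alexander code is local to each crossing. You have merely spelled out the region/site bookkeeping and the bijection that the paper leaves implicit.
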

\begin{proof}
This follows immediately from definition \ref{def:basic}, noting that the sum is over exactly those marker assignments that define generalised Kauffman states for $T$.
\end{proof}

Before moving on to the next section to study some properties of our tangle invariant, we state a generalisation of Kauffman's clock theorem \cite[theorem~2.5]{Kauffman} to tangles. For this we recall the following definition from \cite[figure~5]{Kauffman}. (Note that the theorem is false if we use the stronger definition given in the introduction of the same monograph.)

\begin{definition}
Suppose in a tangle diagram, there are two crossings which have two regions in common. 
%
Suppose further that in a Kauffman state $x$, the markers of the two crossings lie in these two regions. Then there also exists a Kauffman state $y$ obtained from $x$ by moving each of the markers of the two crossings in exactly the opposite region. We say that we go from $x$ to $y$ by a \textbf{transposition move}. A transposition move is called \textbf{clockwise} if the markers move clockwise around each crossing, as illustrated below:
\begin{center}\psset{unit=0.7}
\begin{pspicture}(-9,-1.3)(9,1.3)
\rput(-5,0){
\pscustom*[linecolor=lightgray]{
\psline(-2,0)(-2,1)
\psecurve(-4,1.3)(-2,1)(0,1.3)(2,1)(4,1.3)
\psline(2,1)(2,0)
\psline(2,0)(-2,0)
}
\pscustom*[linecolor=gray]{
\psline(-2,0)(-2,-1)
\psecurve(-4,-1.3)(-2,-1)(0,-1.3)(2,-1)(4,-1.3)
\psline(2,-1)(2,0)
\psline(2,0)(-2,0)
}
\psecurve*[linecolor=white](1,0)(0,0.8)(-1,0)(0,-0.8)(1,0)(0,0.8)(-1,0)

\psline(-2,1)(-2,-1)
\psline(2,1)(2,-1)
\psline(-3,0)(-1,0)
\psline[linestyle=dotted](-1,0)(1,0)
\psline(1,0)(3,0)
\uput{0.4}[45](-2,0){\psdot[dotsize=5pt](0,0)}
\uput{0.4}[-135](2,0){\psdot[dotsize=5pt](0,0)}
\psarc(-2,0){0.4}{-40}{45}
\uput{0.4}[-50](-2,-0.07){\rput{154}(0,0){\psline[arrowsize=2pt 3]{->}(0,-0.1)(0,0)}
}
\psarc(2,0){0.4}{140}{225}
\uput{0.4}[130](2,0.07){\rput{154}(0,0){\psline[arrowsize=2pt 3]{->}(0,0.1)(0,0)}
}}

\rput[c](0,0){$x~\longrightarrow~y$}




\rput(5,0){
\pscustom*[linecolor=lightgray]{
\psline(-2,0)(-2,1)
\psecurve(-4,1.3)(-2,1)(0,1.3)(2,1)(4,1.3)
\psline(2,1)(2,0)
\psline(2,0)(-2,0)
}
\pscustom*[linecolor=gray]{
\psline(-2,0)(-2,-1)
\psecurve(-4,-1.3)(-2,-1)(0,-1.3)(2,-1)(4,-1.3)
\psline(2,-1)(2,0)
\psline(2,0)(-2,0)
}
\psecurve*[linecolor=white](1,0)(0,0.8)(-1,0)(0,-0.8)(1,0)(0,0.8)(-1,0)

\psline(-2,1)(-2,-1)
\psline(2,1)(2,-1)
\psline(-3,0)(-1,0)
\psline[linestyle=dotted](-1,0)(1,0)
\psline(1,0)(3,0)
\uput{0.4}[-45](-2,0){\psdot[dotsize=5pt](0,0)}
\uput{0.4}[135](2,0){\psdot[dotsize=5pt](0,0)}
}
\end{pspicture}
\end{center}
The reverse is called a \textbf{anticlockwise} move. In appendix~\ref{proofgct}, we prove the following result.
\end{definition}

\begin{theorem}[generalised clock theorem]\label{geclockt}
Let \(T\) be a (not necessarily connected) tangle diagram and \(s\in\mathbb{S}(T)\). Then \(\mathbb{K}(T,s)\) is a finite lattice under the relation $$x>y \Leftrightarrow \exists \text{ sequence of clockwise transposition move from }x\text{ to }y.$$
\end{theorem}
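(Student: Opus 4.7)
The plan is to generalise Kauffman's proof of the original clock theorem, treating the site $s$ as additional ``boundary data'' which reduces the tangle case to a setup closely parallel to the link case. My preferred approach is to encode generalised Kauffman states with fixed site $s$ as perfect matchings of an auxiliary planar bipartite graph $G(T,s)$. The vertex classes of $G(T,s)$ are the crossings of $T$ on one side and the \emph{required} regions on the other side, where a region is required if it is closed or if it is one of the $n-1$ open regions belonging to $s$. Edges connect each crossing to the adjacent required regions. An Euler characteristic count (essentially the one given just before definition~\ref{def:site}) shows that the two vertex classes have the same cardinality, and by construction $\mathbb{K}(T,s)$ is in bijection with the set of perfect matchings of $G(T,s)$.

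Under this bijection, a transposition move corresponds to flipping a matching along a $4$-cycle of $G(T,s)$, and a clockwise transposition move corresponds to such a flip with the orientation induced by the planar embedding of $G(T,s)$. The plan is then to invoke the standard fact that the set of perfect matchings of a planar bipartite graph, together with oriented $4$-cycle flips, forms a finite distributive lattice; one proves this by constructing a $\mathbb{Z}$-valued height function that strictly decreases under each clockwise flip, and taking the meet and join of two matchings to be the pointwise minimum and maximum of their heights.

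The two main steps are therefore (i) constructing $G(T,s)$ together with the bijection and the correspondence between transposition moves and $4$-cycle flips, and (ii) checking that the planar embedding of $G(T,s)$ inherited from the embedding of $T$ induces exactly the clockwise convention of the excerpt. Once both are in place, the theorem reduces to the matching lattice theorem. The hardest part is expected to be step (i) in the disconnected case permitted by the statement: here the planar embedding of $T$ is not unique up to ambient isotopy of the $2$-sphere, so the local notion of ``clockwise'' requires care. However, since a transposition move involves only two crossings and the two regions they share, the orientation should be determined by this local picture alone, independent of the global embedding, which should resolve the ambiguity. A separate technical concern is to verify that the construction of the height function is compatible with connected components, which should follow from applying the matching lattice theorem componentwise and taking the product of the resulting lattices.
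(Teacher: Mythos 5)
Your encoding of $\mathbb{K}(T,s)$ as perfect matchings of the bipartite crossing--region graph $G(T,s)$ is fine (when the state set is non-empty), but the step where you invoke the ``standard fact'' about matchings of planar bipartite graphs is a genuine gap, and it is exactly where the difficulty of the theorem lives. The classical height-function/distributive-lattice theory for matchings of plane bipartite graphs (Thurston, Propp, Zhang--Zhang's $Z$-transformation lattices) takes as its elementary moves rotations around \emph{faces} of the graph, not arbitrary oriented $4$-cycle flips. In $G(T,s)$ you have deleted the $n+1$ open regions not in $s$ (and the graph sits in a disc), so its faces are in general not quadrilaterals: a face rotation moves the markers at \emph{all} crossings around a deleted region simultaneously and is not a transposition move; conversely, a transposition move flips along a $4$-cycle which in general encloses other crossings and edges and hence does not bound a face. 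What the height function genuinely gives you is a potential that is monotone under clockwise flips, i.e.\ antisymmetry and finiteness of descending chains -- this would be an alternative to proposition~\ref{noninfmov}. But to get meets and joins for the order \emph{generated by transposition moves} you must show that this order coincides with the height order, equivalently that from every state one can reach, by clockwise transposition moves alone, a common (unique) clocked state. That connectivity-type statement is the heart of the clock theorem; in the paper it is proposition~\ref{clockunique}, proved via the Kauffman-state/D-forest correspondence (lemma~\ref{CorrespondenceKauffForest}) and an induction on crossings that hinges on locating a forced marker at a special pair of adjacent roots through an Euler-characteristic argument (proposition~\ref{whatssospecial}, lemma~\ref{EulerCharArgTreeNotOnBoundary}). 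Your proposal never engages with this, and the remark preceding the definition of transposition moves -- the theorem is \emph{false} for the more restrictive, edge-adjacent notion of transposition -- shows that the precise move set cannot be black-boxed into a generic matching-lattice theorem.

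Two smaller points. First, the Euler-characteristic count showing the two vertex classes of $G(T,s)$ are equinumerous is only valid for connected diagrams all of whose regions are simply connected; otherwise $\mathbb{K}(T,s)=\emptyset$ (harmless for the theorem, but your claim as stated is wrong, and the paper's proof deals with disconnectedness not via orientation issues but by splitting the diagram along open regions inside the uniqueness argument -- ``clockwise'' is unambiguous anyway, since tangle diagrams live in $D^2$). Second, distributivity is an unproved strengthening that the theorem does not claim and that you would also have to justify for this move set. For comparison, the paper's route is: reduce to universes; show no infinite sequence of clockwise moves (a Gilmer--Litherland-style argument); show uniqueness of the clocked state per non-empty $\mathbb{K}(T,s)$ by the forest correspondence and the induction just described; conclude join-semilattice, and then lattice by applying the same to the mirror diagram, which swaps clockwise and anticlockwise moves. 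If you want to salvage your approach, the missing ingredient is precisely a proof that every non-clocked state admits a clockwise transposition move (or that height-comparable states are transposition-connected); that is where the real work is.
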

\section{\texorpdfstring{Basic properties of $\nabla_T^s$}{Basic properties of ∇}}\label{sec:basicpropertiesofnabla}

In this section, we collect and prove some properties of the tangle invariants $\nabla_T^s$, guided by theorem \ref{thm:CPFprops}. We will see that with the exception of the symmetry property (\ref{CPFpropssymmetry}), the results generalise to the tangle case. Our generalisation of property (\ref{CPFoneequalone}) leaves room for improvement because it only applies to closed components. For open components, some better understanding of the relations between sites would probably be helpful. \\

The first proposition below corresponds to part (\ref{CPFpropmirror}) combined with (\ref{CPFpropssymmetry}) of theorem~\ref{thm:CPFprops}. 
\begin{proposition}\label{prop:mirrortangle}
Let \(T\) be an oriented tangle and \(\m(T)\) its mirror image. Then for all \(s\in\mathbb{S}(T)\), 
$$\hat{\nabla}^{s}_{\m(T)}(t_1,\dots,t_r,h)=\hat{\nabla}^{s}_T(t_1^{-1},\dots ,t_r^{-1}, h^{-1}).$$
\end{proposition}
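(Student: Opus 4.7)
The plan is to argue crossing-by-crossing that the Alexander-code labels transform in the prescribed way under mirroring, and then to assemble the result. Since the underlying $2$-dimensional projection, orientations and boundary labelling of $\m(T)$ coincide with those of $T$---only the over/under information at each crossing is reversed---the sets of generalised Kauffman states and of sites are the same for both tangles: $\mathbb{K}(T,s)=\mathbb{K}(\m(T),s)$ for every $s\in\mathbb{S}(T)$. It therefore suffices, for each $x\in\mathbb{K}(T,s)$, to compare the local labels contributing to $c(x)$ crossing by crossing.

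At a single crossing, mirroring has two effects: it swaps the roles of over- and under-strand, and it consequently switches the sign of the crossing. Using Observation~\ref{ObsAlexanderCode}, I would verify that both effects together negate every nontrivial exponent at every marker position of Figure~\ref{figAlexCodesForNabla}. For a colour variable $t_i$ labelling one of the strands, the exponent of $t_i$ at a given region is $\pm\tfrac{1}{2}$, with its sign determined by whether the region lies to the left or to the right of the $t_i$-strand and whether this strand is the over- or the under-strand; since the $2$-dimensional picture (and hence also the notions of ``left'' and ``right'') is unchanged but over and under are interchanged, this exponent flips. The variable $h$ appears only at the ``both incoming'' region of each crossing, as $h^{-1}$ at a positive crossing and as $h^{+1}$ at a negative one, so switching the crossing sign also sends $h$ to $h^{-1}$. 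Together these two observations show that the local label at every marker of $\m(T)$ equals the corresponding local label of $T$ after the substitution $t_i\mapsto t_i^{-1}$, $h\mapsto h^{-1}$.

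Multiplying over all markers of $x$ and then summing over $x\in\mathbb{K}(T,s)$ immediately yields the identity $\hat{\nabla}^s_{\m(T)}(t_1,\dots,t_r,h)=\hat{\nabla}^s_T(t_1^{-1},\dots,t_r^{-1},h^{-1})$. The only place to be careful is the bookkeeping at a crossing whose two strands carry distinct colours $o$ and $u$: mirroring also swaps the names ``over-colour'' and ``under-colour'', so the substitution $t_i\mapsto t_i^{-1}$ must be interpreted after this relabelling. Since both the swap $(o,u)\mapsto(u,o)$ and the substitution act independently on the $o$- and $u$-factors of each local monomial, this causes no difficulty; the verification reduces to inspecting the four regions of the positive crossing against the four regions of the negative crossing in Figure~\ref{figAlexCodesForNabla}, which is the main (but routine) obstacle.
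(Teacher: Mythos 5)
Your proposal is correct and follows essentially the same route as the paper, which simply observes that the two Alexander codes of figure~\ref{figAlexCodesForNabla} are exchanged (up to the mirror/crossing-change identification) after taking reciprocals of all variables, including $h$. Your crossing-by-crossing verification, including the relabelling of over- and under-colours and the placement of the $h$-factor, is just a more detailed write-up of that one-line observation.
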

\begin{proof}
Observe that the two Alexander codes in figure~\ref{figAlexCodesForNabla} are mirror images of one another after taking the reciprocals of all variables. 
\end{proof}
\begin{definition}\label{def:linkingnumber}
We define the \textbf{linking number} $\lk_T(p,q)$ for two components $p$ and $q$ of a tangle $T$ to be 
$$\lk_T(p,q):=\tfrac{1}{2}(\#\{\text{positive crossings of $p$ and $q$}\}-\#\{\text{negative crossings of $p$ and $q$}\}).$$
For a tangle with a component $t_j$, we also define 
$$\lk_T(t_j):=\sum\lk_T(t_i,t_j),$$ 
where the sum is over all components $t_i\neq t_j$. We sometimes omit the subscript when there is no risk of ambiguity.
\end{definition}
\begin{Remark} Note that for two-component links, $\lk(p,q)$ coincides with the usual linking number. Also, linking numbers are invariants of tangles.
\end{Remark}

\begin{lemma}\label{lem:Powerdistance2}
Given a tangle diagram \(T\), the powers of any colour in two Kauffman states of the same site \(s\in\mathbb{S}(T)\) differ by a multiple of 2. Furthermore, the exponent of a colour \(t\) in \(\hat{\nabla}_T^s\) is an integer iff \(\lk(t)\) is an integer.
\end{lemma}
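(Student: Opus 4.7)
I would treat the two assertions separately.

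For the second assertion, the approach is a direct computation from the Alexander codes (figure~\ref{figAlexCodesForNabla}). At each crossing, the $t$-exponent of the label of a marker is $\pm\tfrac12$ exactly when $t$ occurs as one (and only one) of the two strand colours, and it is an integer otherwise. Letting $K_t$ denote the number of such ``single-$t$'' crossings, the $t$-exponent of $c(x)$ has the same residue modulo $1$ for every Kauffman state $x$: it is $0$ if $K_t$ is even and $\tfrac12$ if $K_t$ is odd. Since $K_t = P_t + N_t$ while $\lk(t) = \tfrac12(P_t - N_t)$ by definition~\ref{def:linkingnumber}, the parity of $K_t$ coincides with the integrality of $\lk(t)$, and the claim for $\hat\nabla_T^s$ follows (all of its summands $c(x)$ share the same $t$-residue modulo $1$, so the exponents of $t$ in $\hat\nabla_T^s$ are either all in $\mathbb{Z}$ or all in $\tfrac12+\mathbb{Z}$).

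For the first assertion, the natural tool is the generalised clock theorem~\ref{geclockt}, which reduces the statement to showing that a single transposition move changes each colour's exponent by an even integer. A transposition move swaps the markers at two crossings $c_1, c_2$ sharing two common regions $R_1, R_2$. Geometrically, $R_1$ and $R_2$ are either adjacent at each of $c_1, c_2$ (a ``clasp'': they are separated by a single common arc $\alpha$), or diagonally opposite at each (a bigon: they are separated by two arcs $\alpha_1, \alpha_2$). Inspection of the Alexander codes shows that moving a marker at a crossing to an adjacent region across a strand of colour $t'$ changes the $t'$-exponent by $\pm 1$ and fixes all other colours' exponents. In the clasp case, this applies at each of $c_1, c_2$ for the single colour $t_\alpha$; in the bigon case, each diagonal move decomposes into two adjacent moves across $t_{\alpha_1}$ and $t_{\alpha_2}$. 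Either way, every affected colour receives contributions of $\pm 1$ at both $c_1$ and $c_2$, summing to $\pm 1\pm 1\in\{-2,0,+2\}$; all other colours are untouched.

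The main subtlety is organising the geometric case distinction cleanly and verifying that these are indeed the only configurations in which a transposition move can occur. Once that is in place, the local Alexander-code computations are routine, and nothing more than careful bookkeeping of the $\pm 1$ contributions is required.
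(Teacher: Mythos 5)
Your proposal is correct and takes essentially the same route as the paper: the first assertion via the generalised clock theorem together with the observation that a transposition move only changes the exponent of the colour of the common strand, by $0$ or $\pm 2$, and the second assertion via the parity count of the half-integer contributions at crossings, which is exactly what the paper's ``follows directly from the definition of the linking number'' amounts to. The only difference is your extra ``bigon'' case, which is harmless but not needed, since the paper's transposition move is by definition the pictured configuration in which each marker simply crosses the common strand.
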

\begin{proof}
By the generalised clock theorem (theorem \ref{geclockt}), any two Kauffman states with site $s$ are connected by a sequence of transposition moves. It is easy to see that two states connected by a single transposition move have either the same Alexander grading or the exponents of one colour (the one corresponding to the horizontal strand) changes by $\pm2$. 
The second statement follows directly from the definition of the linking number.
\end{proof}

The next proposition corresponds to part (\ref{CPFpropRevOneStrand}) of theorem~\ref{thm:CPFprops}. 
\begin{proposition}\label{prop:reverseorientI}
Let \(T\) be an oriented \(r\)-component tangle. If \(\rr(T,t_1)\) denotes the same tangle \(T\) with the orientation of the first strand reversed, then for all sites \(s\in\mathbb{S}(T)\), we have 
$$\hat{\nabla}^s_{\rr(T,t_1)}(t_1,\dots,t_r)=h^{\lk_T(t_1)} \hat{\nabla}^s_{T}(h^{-1}t_1^{-1},t_2,\dots,t_r).$$
\end{proposition}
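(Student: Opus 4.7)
The plan is to exploit the fact that reversing the orientation of $t_1$ does not alter the underlying unoriented diagram, so the set of generalised Kauffman states $\mathbb{K}(T,s)=\mathbb{K}(\rr(T,t_1),s)$ is unchanged; only the Alexander label $l(x)$ of each state can change. Thus for a fixed $x\in\mathbb{K}(T,s)$ I would compare its label contributions crossing by crossing in $T$ versus $\rr(T,t_1)$, and show that the combined local changes multiply to the state-independent factor $h^{\lk_T(t_1)}$ once the substitution $t_1\mapsto h^{-1}t_1^{-1}$ is applied to the original labels.

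To carry out this local comparison, I would sort the crossings of $T$ into three groups. Crossings not involving $t_1$ are clearly unaffected. At a self-crossing of $t_1$, both strands at the crossing are reversed simultaneously, so the crossing sign is preserved; however, the ``left'' and ``right'' sides swap for each strand, and the distinguished region carrying the $h^{\pm 1}$-factor relocates to the opposite region. A direct inspection of figure~\ref{figAlexCodesForNabla} using observation~\ref{ObsAlexanderCode} shows that at each of the four regions the new label is obtained from the old by the substitution $t_1\mapsto h^{-1}t_1^{-1}$; the migration of the $h$-factor is exactly what upgrades $t_1\mapsto t_1^{-1}$ to $t_1\mapsto h^{-1}t_1^{-1}$. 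At a crossing of $t_1$ with a different strand $t_k$, the crossing sign flips, the left/right sides swap for $t_1$ only, and the $h$-factor moves to a new region. A four-sub-case check (over versus under, positive versus negative) reveals that at each of the four regions the new label equals the substituted old label multiplied by $h^{\epsilon/2}$, where $\epsilon=+1$ if the crossing was originally positive and $\epsilon=-1$ if it was originally negative.

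The decisive observation is that this correction factor $h^{\epsilon/2}$ is the same at all four regions of a given mixed crossing, so it does not depend on where the marker of $x$ is placed. Hence the total change in $c(x)$ factors as a state-independent constant,
$$\frac{c_{\rr(T,t_1)}(x)}{c_T(x)\big\vert_{t_1\mapsto h^{-1}t_1^{-1}}}=\prod_{\text{crossings of }t_1\text{ with other strands}}h^{\epsilon/2}=h^{(n_+-n_-)/2}=h^{\lk_T(t_1)},$$
where $n_\pm$ denotes the number of positive, resp.\ negative, crossings between $t_1$ and the other strands, and the last equality is just definition~\ref{def:linkingnumber}. Summing over $x\in\mathbb{K}(T,s)=\mathbb{K}(\rr(T,t_1),s)$ then yields the claimed identity. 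The only real obstacle is the bookkeeping of the four mixed sub-cases together with the self-crossing case; once one notices that the correction factor is constant across the four regions of each crossing, no knowledge of the specific marker position is needed and the argument becomes essentially automatic.
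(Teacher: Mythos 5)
Your proposal is correct and follows essentially the same route as the paper: a crossing-by-crossing comparison using observation~\ref{ObsAlexanderCode}, noting that self-crossings of $t_1$ and crossings not involving $t_1$ transform by pure substitution, while each mixed crossing contributes a region-independent factor $h^{\pm\frac{1}{2}}$ according to its sign, and these multiply to $h^{\lk_T(t_1)}$ by definition~\ref{def:linkingnumber}. Your explicit remark that the correction factor is the same at all four regions of a mixed crossing is exactly the (implicit) content of the paper's statement that the substituted code equals the reversed-orientation code times $h^{\mp\frac{1}{2}}$.
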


\begin{proof}
This is easily seen by considering crossings separately: Modulo sign, the statement follows from observation~\ref{ObsAlexanderCode}. For the correct sign, note that after substituting $h^{-1}t_1^{-1}$ into the Alexander code of a positive (negative) crossing involving $t_1$ and some different colour, we obtain the Alexander code of the crossing with the orientation of the $t_1$-strand reversed multiplied by $h^{-\frac{1}{2}}$ (respectively $h^{\frac{1}{2}}$). For crossings involving only $t_1$, no additional factor is necessary, and for crossings not involving $t_1$ at all, there is nothing to show.
\end{proof}
Note that one has to be careful when setting $h=-1$ in the proposition above, because the $\hat{\nabla}^s_T$ are Laurent polynomials with half-integer powers. The same applies to corollary~\ref{coralloorientsrev} below. In corollary~\ref{cor:revorientrevisited}, we give a formula that is more convenient when working with $\nabla_T^s$ instead of $\hat{\nabla}_T^s$. 
\begin{corollary}\label{coralloorientsrev}
Let \(T\) be an oriented \(r\)-component tangle. If \(\rr(T)\) denotes the same tangle \(T\) with the orientation of all strands reversed, then for all sites \(s\in\mathbb{S}(T)\), we have $$\hat{\nabla}^s_{\rr(T)}(t_1,\dots,t_r)=\hat{\nabla}^s_{T}(h^{-1}t_1^{-1},\dots,h^{-1}t_r^{-1}).$$
\end{corollary}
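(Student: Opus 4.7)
The plan is to derive the corollary by iteratively applying proposition~\ref{prop:reverseorientI}, reversing one strand at a time, and showing that the accumulated factors of $h$ cancel out via the symmetry of the linking number.

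More explicitly, let $T_0:=T$ and inductively let $T_{k}:=\rr(T_{k-1},t_k)$, so that $T_r=\rr(T)$. Proposition~\ref{prop:reverseorientI}, applied to $T_k$ with the strand colour $t_{k+1}$, gives
$$\hat{\nabla}^s_{T_{k+1}}(t_1,\dots,t_r)=h^{\lk_{T_k}(t_{k+1})}\,\hat{\nabla}^s_{T_k}(t_1,\dots,t_k,h^{-1}t_{k+1}^{-1},t_{k+2},\dots,t_r).$$
Since substitution commutes with the previous applications of the proposition, a straightforward induction on $k$ yields
$$\hat{\nabla}^s_{T_r}(t_1,\dots,t_r)=h^{E}\,\hat{\nabla}^s_{T}(h^{-1}t_1^{-1},\dots,h^{-1}t_r^{-1}),\qquad E=\sum_{k=0}^{r-1}\lk_{T_k}(t_{k+1}).$$

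The remaining task is to show $E=0$. I would use that reversing the orientation of a single strand flips the sign of every crossing it forms with any \emph{other} strand, so crossings between two reversed strands have their signs preserved while crossings between a reversed and a non-reversed strand change sign. Thus, in $T_k$ (where strands $t_1,\dots,t_k$ are reversed and $t_{k+1}$ is not),
$$\lk_{T_k}(t_{k+1})=-\sum_{i=1}^{k}\lk_T(t_i,t_{k+1})+\sum_{i=k+2}^{r}\lk_T(t_i,t_{k+1}).$$
Summing over $k=0,\dots,r-1$ and using the symmetry $\lk_T(t_i,t_j)=\lk_T(t_j,t_i)$, the two double sums become $-\sum_{i<j}\lk_T(t_i,t_j)$ and $\sum_{i<j}\lk_T(t_i,t_j)$ respectively, so they cancel and $E=0$.

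The main obstacle is purely bookkeeping: one must be careful that the linking numbers appearing in the $h$-exponent at step $k$ are taken in $T_k$, not in $T$, and track how each previous strand reversal flips signs. Once this is done correctly, the cancellation falls out immediately from the symmetry of $\lk$, and no further analysis of Kauffman states is required.
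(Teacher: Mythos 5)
Your proposal is correct and follows essentially the same route as the paper: the paper's proof also reverses the strands one at a time via proposition~\ref{prop:reverseorientI} and observes that each term $\lk_T(t_i,t_j)$ appears twice in the exponent of $h$ with opposite signs, since by the second occurrence one of the two strands has already been reversed. Your version merely spells out this bookkeeping explicitly, which is fine.
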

\begin{proof}
We successively reverse the orientation of all strands, noting that each term $\lk_T(t_i,t_j)$ appears twice in the exponent of $h$, but with different signs, because the second time it appears, the orientation of one strand has been reversed.
\end{proof}
\begin{corollary}\label{cor:recoverlinkcase}
Let \(\rr(\cdot)\) denote the function which substitutes \(-t^{-1}\) for each colour~\(t\). Then, for an oriented link \(L\), we have the symmetry relation
$$\nabla_{L}=\nabla_{\rr(L)}=\rr(\nabla_{L}).$$
\end{corollary}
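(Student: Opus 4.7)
The plan is to split the symmetry relation into its two equalities and prove each one separately, combining Corollary~\ref{coralloorientsrev} with the classical symmetries of the Conway potential function recorded in Theorem~\ref{thm:CPFprops}.

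For the second equality $\nabla_{\rr(L)}=\rr(\nabla_L)$, I would simply specialize Corollary~\ref{coralloorientsrev} by setting $h=-1$. Under this specialization the substitution $t_i\mapsto h^{-1}t_i^{-1}$ becomes $t_i\mapsto -t_i^{-1}$, which is by definition the operator $\rr$. Hence, for any oriented tangle $T$ and any site $s$,
$$\nabla^s_{\rr(T)}(t_1,\dots,t_r)=\nabla^s_T(-t_1^{-1},\dots,-t_r^{-1})=\rr(\nabla^s_T),$$
and restricting attention to the case where $T$ is a link $L$ (so that $s=\emptyset$ is the only site) yields $\nabla_{\rr(L)}=\rr(\nabla_L)$.

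For the first equality $\nabla_L=\nabla_{\rr(L)}$, I would iteratively reverse the orientations of the components of $L$, applying Theorem~\ref{thm:CPFprops}(iii) once per component. After $r$ such applications one obtains
$$\nabla_{\rr(L)}(t_1,\dots,t_r)=(-1)^r\,\nabla_L(t_1^{-1},\dots,t_r^{-1}),$$
and Theorem~\ref{thm:CPFprops}(ii) then identifies the right-hand side with $\nabla_L(t_1,\dots,t_r)$.

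I do not expect any serious obstacle. The only delicate point is bookkeeping the normalization: the statement is phrased in terms of the Conway potential function $\nabla_L$, while Corollary~\ref{coralloorientsrev} pertains to $\nabla^\emptyset_L$ viewed as a $0$-ended tangle (or, via Theorem~\ref{thm:twoended}, as $\tfrac{1}{c-c^{-1}}\nabla^\emptyset_T$ for a $2$-ended tangle $T$ representing $L$). Since the relating factors are themselves invariant under $\rr$ and under full orientation reversal, this translation is automatic, and the corollary is in essence a repackaging of Corollary~\ref{coralloorientsrev} combined with the classical identity $\nabla_{\rr(L)}=\nabla_L$.
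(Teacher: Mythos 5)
Your proposal is correct and follows essentially the same route as the paper: the paper also obtains $\nabla_L=\nabla_{\rr(L)}$ from Theorem~\ref{thm:CPFprops}(ii) and (iii), and $\nabla_{\rr(L)}=\rr(\nabla_L)$ from Theorem~\ref{thm:twoended} together with Corollary~\ref{coralloorientsrev} specialised at $h=-1$. Your remark on the normalisation factor being $\rr$-invariant is a fine (if implicit in the paper) bookkeeping point, and integrality of the exponents for links ensures the $h=-1$ specialisation is unambiguous.
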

\begin{proof}
The first equality is theorem \ref{thm:CPFprops} (ii) and (iii), the second follows from theorem~\ref{thm:twoended} and the corollary above with $h=-1$.
\end{proof}
\begin{lemma}[one-colour skein relation]\label{onecolourskein}
Let \(T_+\), \(T_-\) and \(T_\circ\) denote the tangles 
\raisebox{-3pt}{\psset{unit=0.2}\begin{pspicture}(-1.05,-1.45)(1.05,1.05)
\psset{arrowsize=1.5pt 2}
\psline{->}(0.9,-0.9)(-0.9,0.9)
\pscircle*[linecolor=white](0,0){0.3}
\psline{->}(-0.9,-0.9)(0.9,0.9)
\end{pspicture}}, 
\raisebox{-3pt}{\psset{unit=0.2}\begin{pspicture}(-1.05,-1.45)(1.05,1.05)
\psset{arrowsize=1.5pt 2}
\psline{->}(-0.9,-0.9)(0.9,0.9)
\pscircle*[linecolor=white](0,0){0.3}
\psline{->}(0.9,-0.9)(-0.9,0.9)
\end{pspicture}} and
\raisebox{-3pt}{\psset{unit=0.2}\begin{pspicture}(-1.05,-1.45)(1.05,1.05)
\psset{arrowsize=1.5pt 2}
\psline{->}(-0.9,-0.9)(0.9,0.9)
\psline{->}(0.9,-0.9)(-0.9,0.9)
\pscircle*[linecolor=white](0,0){0.5}
\psarc(0.7071,0){0.5}{135}{225}
\psarc(-0.7071,0){0.5}{-45}{45}
\end{pspicture}}
respectively. Then for all sites \(s\),
$$\nabla_{T_+}^s(t,t)-\nabla_{T_-}^s(t,t)=(t-t^{-1})\cdot \nabla_{T_\circ}^s(t,t).$$
Thus, the single-variate polynomial tangle invariant \(\nabla_T^s(t,\dots,t)\) satisfies the same skein relation as the Alexander polynomial. 
\end{lemma}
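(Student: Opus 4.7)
The plan is to prove this identity locally at the distinguished crossing. The diagrams $T_+$, $T_-$, $T_\circ$ agree outside a small disk $D$ containing the crossing, so a Kauffman state of any of them is determined by its markers at the crossings outside $D$ together with, in the case of $T_\pm$, the choice of local region $R_N$, $R_W$, $R_S$, or $R_E$ occupied by the marker at the distinguished crossing.

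First I would tabulate the Alexander labels at the distinguished crossing after setting both colours to $t$ and specialising $h=-1$: for $T_+$ these are $t,\,1,\,-t^{-1},\,1$ in $R_N, R_W, R_S, R_E$ respectively; for $T_-$ they are $t^{-1},\,1,\,-t,\,1$. The $W$- and $E$-labels coincide between $T_+$ and $T_-$, so Kauffman states whose distinguished marker lies in $R_W$ or $R_E$ pair up one-to-one (sharing identical markers outside $D$) and cancel in the difference $\nabla_{T_+}^s(t,t)-\nabla_{T_-}^s(t,t)$. For states with distinguished marker in $R_N$ or $R_S$ the analogous pairing extracts a local factor $t-t^{-1}$ in both cases (from $t - t^{-1}$ at $R_N$ and $-t^{-1}-(-t)$ at $R_S$), yielding
$$\nabla_{T_+}^s(t,t)-\nabla_{T_-}^s(t,t)=(t-t^{-1})\cdot\Sigma,$$
where $\Sigma$ sums the product of the remaining Alexander labels over Kauffman states of $T_+$ (equivalently $T_-$) whose distinguished marker lies in $R_N\cup R_S$.

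The remaining task is to identify $\Sigma$ with $\nabla_{T_\circ}^s(t,t)$ via a label-preserving bijection. The oriented resolution merges $R_N$ and $R_S$ into a single region $R_{NS}$, leaves $R_W$ and $R_E$ unchanged, and removes the distinguished crossing. Given a $T_+$-state contributing to $\Sigma$, deleting the distinguished marker leaves an assignment of markers to precisely the crossings of $T_\circ$; the marker-count condition at $R_{NS}$ in $T_\circ$ is forced by the corresponding conditions at $R_N$ and $R_S$ in the original state. The construction is inverted by reading off from the marker-count at $R_{NS}$ in a $T_\circ$-state whether to place the distinguished marker of the corresponding $T_+$-state in $R_N$ or $R_S$. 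Because the crossings outside $D$ and their region incidences are identical across the three diagrams, the products of non-distinguished Alexander labels agree under this bijection.

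The main obstacle is the case analysis underpinning the bijection in the final step: the four regions $R_N, R_S, R_W, R_E$ may globally coincide with each other or with other regions of the diagram, may be closed or open, and may or may not belong to the site~$s$, each configuration imposing a different marker-count constraint that must be matched against the corresponding constraint in $T_\circ$. A minor related subtlety is that the oriented resolution may turn a connected diagram into a disconnected one when $R_N$ and $R_S$ are both open and their boundary arcs on $\partial D^2$ are non-adjacent; in this case the diagram can first be modified by pulling an open strand once around the whole diagram, as in the proof of lemma~\ref{lem:RMmovesconnectdiagrams}.
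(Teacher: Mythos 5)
Your local bookkeeping is correct (the specialised codes at the distinguished crossing are indeed $t,1,-t^{-1},1$ and $t^{-1},1,-t,1$, so the $W$/$E$ states cancel and the $N$/$S$ states produce the factor $t-t^{-1}$), and in the generic situation where $R_N\neq R_S$ your marker bijection with the oriented resolution does identify $\Sigma$ with the state sum of the resolved diagram, with labels and sites matching; the paper itself offers no proof beyond ``straightforward'', so there is nothing to compare against. Your connectedness fix is also the right move, but note it is needed whenever $R_N$ and $R_S$ are \emph{both} open, irrespective of adjacency of their boundary arcs: the merged region then meets $\partial D^2$ in two arcs, so the resolved diagram violates the convention under which $\nabla^s$ and its invariance (theorem~\ref{thm:nablaisaninvariant}) are set up.

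The genuine gap sits in the branch you defer to ``case analysis'': the case $R_N=R_S$. This occurs exactly when the distinguished crossing is a self-crossing admitting a closed curve $\gamma$ through the crossing and otherwise disjoint from the diagram (simplest instance: a kink), i.e.\ precisely when the oriented resolution splits off closed components, and it is squarely inside the lemma's intended scope, since the lemma is applied in the paper at self-crossings (corollary~\ref{corknotpmoneequalone}, proposition~\ref{propsetpmone}). In this branch your bijection does not exist: a region/marker count inside the disc bounded by $\gamma$ shows that the $m_1+1$ closed regions there can only be filled by the $m_1$ interior crossings together with the distinguished marker placed in the one of $R_E$, $R_W$ lying inside that disc, so \emph{no} Kauffman state has its distinguished marker in $R_N\cup R_S$; thus $\Sigma=0$ (and the left-hand side vanishes) by a forcing argument, not by ``reading off the marker-count at $R_{NS}$''. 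On the other side, the resolved diagram is disconnected, admits no Kauffman states at all, and is not a legitimate diagram for computing $\nabla^s_{T_\circ}$, so one must prove separately that $\nabla^s_{T_\circ}=0$ for a tangle with a split closed component -- a fact not available off the shelf in the paper (invariance is only established through connected diagrams), though it can be extracted, e.g., by sliding the split component over an adjacent strand as in lemma~\ref{lem:RMmovesconnectdiagrams} and cancelling states at $h=-1$ as in the RM~II computation. Until this branch is treated by such arguments, the proof as proposed is incomplete.
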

\begin{proof}
Straightforward.
\end{proof}
\begin{corollary}\label{corknotpmoneequalone}
Let \(T\) be a 2-ended tangle representing a knot. Then \(\nabla_{T}^s(\pm 1)=1\).
\end{corollary}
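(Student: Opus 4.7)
The plan is to reduce the statement to the classical normalisation property of the Alexander polynomial via the correspondence established earlier in the chapter. Since $T$ is a 2-ended tangle representing a knot, it has a single open component and no closed ones, so $T$ is coloured by a single variable (call it $t$) and the only site is $s = \emptyset$.

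Combining theorem~\ref{thm:twoended} with the $n=1$ case of remark~\ref{rem:conwaypotfunctionandHFL}, I would first establish the identity $\nabla_T^\emptyset(t) = \Delta_L(t^2)$, where $L$ is the knot obtained by closing $T$ and $\Delta_L$ denotes the symmetric Alexander polynomial. Evaluating at $t = \pm 1$ then yields $\nabla_T^\emptyset(\pm 1) = \Delta_L(1)$, and the classical normalisation $\Delta_K(1) = 1$ for any knot $K$ finishes the proof.

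A more self-contained alternative would be to use the one-colour skein relation (lemma~\ref{onecolourskein}): specialising to a single colour $t$ and substituting $t = \pm 1$ makes the factor $(t - t^{-1})$ on the right-hand side vanish, so $\nabla^\emptyset(\pm 1)$ is invariant under crossing changes. Since any knot can be unknotted by a finite sequence of crossing changes, the problem reduces to computing $\nabla^\emptyset(\pm 1)$ for the unknot, which follows from an immediate Kauffman-state count on, say, a diagram with a single Reidemeister~I curl.

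I expect no real obstacle: both arguments amount to a one-line deduction from results already in the excerpt. The only minor point of care is in the second approach, where one must ensure the base-case unknot diagram satisfies the running connectedness and at-least-one-crossing assumptions; this can be arranged using the RM~I invariance already verified inside the proof of theorem~\ref{thm:nablaisaninvariant}. The first approach avoids this issue entirely and is therefore the cleanest path to write down.
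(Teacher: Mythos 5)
Your second argument is precisely the paper's proof: change crossings to reach a diagram $T'$ of the unknot, note that by the one-colour skein relation (lemma~\ref{onecolourskein}) the factor $(t-t^{-1})$ vanishes at $t=\pm 1$ so crossing changes do not affect $\nabla^s(\pm 1)$, and conclude since $\nabla^s_{T'}\equiv 1$ for the unknot. Your preferred first argument is a genuinely different (and correct) route: via theorem~\ref{thm:twoended} and the $n=1$ case of remark~\ref{rem:conwaypotfunctionandHFL} one gets $\nabla^\emptyset_T(t)=\Delta_L(t^2)$, so both evaluations at $t=\pm 1$ reduce to $\Delta_L(1)$. The only caveat is that the normalisation $\Delta_K(1)=1$ for the Conway-normalised Alexander polynomial is a classical fact not stated or proved anywhere in the paper (theorem~\ref{thm:CPFprops}(\ref{CPFoneequalone}) only covers links of at least two components, and for a knot one cannot simply set $t=1$ in $\nabla_L(t)=\Delta_L(t^2)/(t-t^{-1})$ because of the vanishing denominator), so this route imports an external input whose standard proof is essentially the skein/unknotting argument the paper uses internally. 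In short: the first approach is cleaner if one is happy to cite the classical normalisation; the paper's (your second) approach is self-contained within the chapter, modulo the small bookkeeping you already flag about keeping the unknot base diagram connected with at least one crossing, which is handled by invariance (theorem~\ref{thm:nablaisaninvariant}).
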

\begin{proof}
Let $T'$ be the diagram obtained from $T$ by changing some crossings such that $T'$ represents the unknot. Then, by lemma~\ref{onecolourskein}, $\nabla_{T}^s(\pm 1)=\nabla_{T'}^s(\pm 1)$, and $\nabla_{T'}^s(t)\equiv 1$.
\end{proof}

The next proposition corresponds to part (\ref{CPFoneequalone}) of theorem~\ref{thm:CPFprops}.
\begin{proposition}\label{propsetpmone}
Let \(T\) be a tangle with a closed component \(K_1\). Then for all sites \(s\in\mathbb{S}\), \(\nabla^s_{T}(\pmr 1,t_2,\dots,t_r)\) is equal to
$$(\pmr1)^{\lk(t_1)+1}(t_2^{\lk(t_1,t_2)}\cdots t_r^{\lk(t_1,t_r)}-t_2^{-\lk(t_1,t_2)}\cdots t_r^{-\lk(t_1,t_r)})\cdot\nabla^s_{T\smallsetminus K_1}(t_2,\dots,t_r).$$
\end{proposition}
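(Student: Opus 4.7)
The plan is to reduce to a simple standard form for $K_1$ via crossing changes and Reidemeister moves, then perform a direct combinatorial computation on the Kauffman-state sum.

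At any self-crossing of $K_1$, both strands carry the single colour $t_1$, so by Lemma~\ref{onecolourskein} we have $\nabla_{T_+}^s - \nabla_{T_-}^s = (t_1 - t_1^{-1})\nabla_{T_\circ}^s$, which vanishes at $t_1 = \pm 1$. Hence $\nabla_T^s(\pm 1, t_2, \ldots, t_r)$ is invariant under crossing changes at self-crossings of $K_1$, and we may assume $K_1$ has none; that is, $K_1$ is an embedded circle in the diagram. If $K_1$ additionally has no crossings with $T \setminus K_1$, then $K_1$ bounds a closed crossing-free region, so $\mathbb{K}(T,s) = \emptyset$ and $\nabla_T^s = 0$; the right-hand side of the proposition also vanishes since all $\lk(t_1, t_j) = 0$. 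Otherwise, using Reidemeister II and III moves (with Lemma~\ref{lem:RMmovesconnectdiagrams} to preserve connectedness throughout), we bring $K_1$ into a standard form: a simple circle encircling a bundle of straight arcs from $T \setminus K_1$, with over/under data matching the linking numbers $\lk(t_1, t_j)$.

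In this standard form, consulting Figure~\ref{figAlexCodesForNabla} and setting $t_1 = \pm 1$ shows that at each $K_1$-crossing $c_i$ with partner of colour $t_{j(i)}$, the two Alexander-code labels on a fixed side of $K_1$ become equal up to sign, while the two labels on opposite sides of $K_1$ carry opposite exponents of $t_{j(i)}$. Each Kauffman state $x \in \mathbb{K}(T, s)$ then decomposes as a pair $(y, I)$, where $y$ is the restriction to non-$K_1$-crossings and $I \subseteq \{c_1, \ldots, c_k\}$ records, at each $K_1$-crossing, on which side of $K_1$ the marker lies. In the standard form, the region-marking constraint decouples: $y$ must be a state in $\mathbb{K}(T \setminus K_1, s)$ while $I$ is independent. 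Writing $c(x) = c(y)\cdot\prod_i c_i(I)$ and summing over $I$ for fixed $y$ at $t_1 = \pm 1$ collapses to the ``meridianal'' factor $(\pm 1)^{\lk(t_1) + 1}\bigl(\prod_j t_j^{\lk(t_1, t_j)} - \prod_j t_j^{-\lk(t_1, t_j)}\bigr)$, where the sign $(\pm 1)^{\lk(t_1)+1}$ collects the $h^{-1}$-factors at negative $K_1$-crossings (with $h = -1$) together with the alternating sign between the two sides. Summing over $y$ then produces the factor $\nabla_{T \setminus K_1}^s(t_2, \ldots, t_r)$, as required.

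The main obstacle is the combinatorial bookkeeping in the previous paragraph: rigorously establishing the bijection $\mathbb{K}(T, s) \leftrightarrow \mathbb{K}(T \setminus K_1, s) \times 2^{\{c_1, \ldots, c_k\}}$ in the standard form (i.e.\ checking that the closed-region-marking constraint really does decouple), and verifying that the half-integer exponents of $t_1$ at individual crossings (which by Lemma~\ref{lem:Powerdistance2} combine to integer exponents in $\nabla_T^s$) together with the $h^{-1}$-factors of Figure~\ref{figAlexCodesForNabla} conspire to produce precisely the prefactor $(\pm 1)^{\lk(t_1)+1}$ and no further corrections.
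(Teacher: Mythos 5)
Your opening reduction --- using Lemma~\ref{onecolourskein} to make crossing changes at $t_1$-$t_1$ self-crossings invisible at $t_1=\pm1$, so that $K_1$ may be assumed embedded --- is exactly the paper's first step. The next step, however, is a genuine gap: once $K_1$ has no self-crossings you only permit yourself Reidemeister II and III moves, and these cannot produce your ``standard form''. Isotopy preserves the tangle, and a Whitehead-type clasp gives a strand with $\lk(t_1,t_j)=0$ that nevertheless meets $K_1$ in at least four crossings in \emph{every} diagram, so no sequence of R-moves yields a circle encircling straight arcs ``with over/under data matching the linking numbers''; nor may you change mixed $t_1$-$t_j$ crossings for free, since Lemma~\ref{onecolourskein} only applies when both strands carry the same colour. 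The paper avoids this by continuing to use the freedom to change $t_1$-$t_1$ crossings (equivalently, to \emph{homotope} rather than isotope $K_1$ in the complement of the other strands): that is what justifies putting $K_1$ into the position of a round circle interacting with the other strands only through full-twist clasp boxes $T_{\pm}$, each contributing $\pm1$ to a linking number. Your argument needs this extra freedom; R2/R3 moves alone do not suffice.

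The combinatorial core is also incorrect as stated. The claimed decomposition of a Kauffman state as a pair $(y,I)$ with $I\subseteq\{c_1,\dots,c_k\}$ chosen independently fails already for a single strand passing once through the circle (the Hopf pattern): the two closed regions inside $K_1$ are each adjacent to both $K_1$-crossings, so the marker choices there are coupled and one finds $2$ states rather than $1\cdot 2^2=4$; in general the closed regions cut out by $K_1$ tie all the $K_1$-crossings together, and the restriction of a state to the remaining crossings is not a state of $T\smallsetminus K_1$ in any region-by-region sense, since deleting $K_1$ merges regions. So the state sum does not factor term by term, and the prefactor is not a product over independent crossings. The paper's computation is structurally different at this point: it evaluates the four local invariants of a clasp box, observes that the site facing away from the circle evaluates to $0$ at $t_1=\pm1$, deduces that a surviving state of the annular neighbourhood is determined by the choice of \emph{one} distinguished box together with one of two markers in its left region (so the local count is linear, not exponential, in the number of boxes), and then obtains the meridional factor from a telescoping cancellation over the boxes, the exterior contributing $\nabla^s_{T\smallsetminus K_1}$ as in Proposition~\ref{prop:glueing}. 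You correctly flag the bijection as the main obstacle, but both it and the reduction to standard form are the substance of the proof rather than bookkeeping, and as written both steps fail.
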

\begin{proof}
First we apply lemma~\ref{onecolourskein} to $t_1$-$t_1$-crossings in $T$. 
Setting $t_1=\pmr 1$ gives $$\nabla_{T}^s(\pmr 1,t_2,\dots,t_r)=\nabla_{T'}^s(\pmr 1,t_2,\dots,t_r),$$
where $T'$ denotes the tangle obtained by swapping over- and under-strands at some $t_1$-$t_1$-crossing. We can therefore assume without loss of generality that the $t_1$-component is the unknot, in particular that there are no $t_1$-$t_1$-crossings and near the $t_1$-component, the diagram looks as follows:
\begin{center}
\raisebox{-2.4cm}{\psset{unit=1.1}
\begin{pspicture}(-3,-2.75)(3,2.7)
\psarc(0,0){2}{110}{70}
\psarc[linestyle=dotted](0,0){2}{70}{110}
\uput{2}[40](0,0){\rput{40}(0,0){
\psarc(2,0){2}{150}{210}
\psarc[linestyle=dotted](2,0){2}{130}{150}
\psarc[linestyle=dotted](2,0){2}{210}{230}
\pscustom[fillstyle=solid,fillcolor=white]{\pspolygon(0.3,-0.6)(0.3,0.6)(-0.3,0.6)(-0.3,-0.6)}}\rput{40}(0,0){$T_{\pmt}$}}
\uput{2}[-25](0,0){\rput{-25}(0,0){
\psarc(2,0){2}{150}{210}
\psarc[linestyle=dotted](2,0){2}{130}{150}
\psarc[linestyle=dotted](2,0){2}{210}{230}
\pscustom[fillstyle=solid,fillcolor=white]{\pspolygon(0.3,-0.6)(0.3,0.6)(-0.3,0.6)(-0.3,-0.6)}}\rput{-25}(0,0){$T_{\pmt}$}}
\uput{2}[-90](0,0){\rput{-90}(0,0){
\psarc(2,0){2}{150}{210}
\psarc[linestyle=dotted](2,0){2}{130}{150}
\psarc[linestyle=dotted](2,0){2}{210}{230}
\pscustom[fillstyle=solid,fillcolor=white]{\pspolygon(0.3,-0.6)(0.3,0.6)(-0.3,0.6)(-0.3,-0.6)}}\rput{-90}(0,0){$T_{\pmt}$}}
\uput{2}[-155](0,0){\rput{-155}(0,0){
\psarc(2,0){2}{150}{210}
\psarc[linestyle=dotted](2,0){2}{130}{150}
\psarc[linestyle=dotted](2,0){2}{210}{230}
\pscustom[fillstyle=solid,fillcolor=white]{\pspolygon(0.3,-0.6)(0.3,0.6)(-0.3,0.6)(-0.3,-0.6)}}\rput{-155}(0,0){$T_{\pmt}$}}
\uput{2}[-220](0,0){\rput{-220}(0,0){
\psarc(2,0){2}{150}{210}
\psarc[linestyle=dotted](2,0){2}{130}{150}
\psarc[linestyle=dotted](2,0){2}{210}{230}
\pscustom[fillstyle=solid,fillcolor=white]{\pspolygon(0.3,-0.6)(0.3,0.6)(-0.3,0.6)(-0.3,-0.6)}}\rput{-220}(0,0){$T_{\pmt}$}}
\end{pspicture}}
where 
\raisebox{-0.91cm}{\begin{pspicture}(-0.35,-1)(0.35,1)
\pscustom[fillstyle=solid,fillcolor=white]{\pspolygon(0.3,-0.6)(0.3,0.6)(-0.3,0.6)(-0.3,-0.6)}\rput(0,0){$T_{\textcolor{blue}{+}}$}
\end{pspicture}}
$=$
\raisebox{-1.34cm}{\psset{unit=0.5}
\begin{pspicture}(-1,-2)(1,4)
\rput(0,1.8){
\psline{->}(0.9,-0.9)(-0.9,0.9)
\pscircle*[linecolor=white](0,0){0.3}
\psline{->}(-0.9,-0.9)(0.9,0.9)
}

\psline{->}(0.9,-0.9)(-0.9,0.9)
\pscircle*[linecolor=white](0,0){0.3}
\psline{->}(-0.9,-0.9)(0.9,0.9)

\rput(-0.6,3.2){$t_1$}
\rput(0.6,3.2){$t_i$}
\end{pspicture}}
and 
\raisebox{-0.91cm}{\begin{pspicture}(-0.35,-1)(0.35,1)
\pscustom[fillstyle=solid,fillcolor=white]{\pspolygon(0.3,-0.6)(0.3,0.6)(-0.3,0.6)(-0.3,-0.6)}\rput(0,0){$T_{\textcolor{blue}{-}}$}
\end{pspicture}}
$=$
\raisebox{-1.34cm}{\psset{unit=0.5}
\begin{pspicture}(-1,-2)(1,4)
\rput(0,1.8){
\psline{->}(0.9,-0.9)(-0.9,0.9)
\pscircle*[linecolor=white](0,0){0.3}
\psline{<-}(-0.9,-0.9)(0.9,0.9)
}

\psline{<-}(0.9,-0.9)(-0.9,0.9)
\pscircle*[linecolor=white](0,0){0.3}
\psline{->}(-0.9,-0.9)(0.9,0.9)

\rput(-0.6,3.2){$t_1$}
\rput(0.6,3.2){$t_i$}
\end{pspicture}} for $i\neq 1$.
\end{center}
\pagebreak[3]

\noindent
The following table gives the values of $\nabla^s_{T_{\pmt}}$, where $l, b, r, t$ denote the regions on the left, bottom, right and top of the diagrams $T_{\pmt}$, respectively.
\begin{center}
\begin{tabular}{ccc}
$s$ & $T_{\textcolor{blue}{+}}$ & $T_{\textcolor{blue}{-}}$\\ 
\hline
\rule{0pt}{14pt}$l$ & $(t_i-t_i^{-1})$ & $-(t_i-t_i^{-1})$ \\ 
$b$ & $t_1^{-1} t_i^{-1}$ & $t_1^{-1} t_i$ \\ 
$r$ & $(t_1-t_1^{-1})$ & $-(t_1-t_1^{-1})$ \\ 
$t$ & $t_1 t_i$ & $t_1 t_i^{-1}$ \\ 
\end{tabular} 
so after setting $t_1=\pm1$:
\begin{tabular}{ccc}
$s$ & $T_{\textcolor{blue}{+}}$ & $T_{\textcolor{blue}{-}}$ \\ 
\hline 
\rule{0pt}{14pt}$l$ & $(t_i-t_i^{-1})$ & $-(t_i-t_i^{-1})$ \\ 
$b$ & $\pmr t_i^{-1}$ & $\pmr t_i$ \\ 
$r$ & $0$ & $0$ \\ 
$t$ & $\pmr t_i$ & $\pmr t_i^{-1}$ \\ 
\end{tabular} 
\end{center}
Let the number of rectangular boxes $T_{\pmt}$ be denoted by $n$. The fact that $\nabla^{r}_{T_{\pmt}}(1,t_i)=0$ means that if we consider the picture above as a $2n$-ended tangle $T''$, there are at most $n$ sites such that the polynomial invariant becomes non-zero after substituting $t_1=\pm1$. Each of these sites is specified by the region which is not occupied by a marker and which is not a right region of any box $T_{\pmt}$. Fix such a region $u$. Then each Kauffman state of $T''$ is determined by a box and a marker in the left region of this box. For each box, there are two such Kauffman states, namely those that contribute to $\nabla^{l}$ of that box.\\ 
We introduce the following notation: We number the boxes in anticlockwise direction, starting from the fixed unoccupied region $u$. Let  $\alpha_j^{i\pmt}$ denote the number of $T_{\pmt}$s involving the $i^\text{th}$ strand in anticlockwise direction from $u$ to the $j^{\text{th}}$ box $T_j$. Similarly, let $\beta_j^{i\pmt}$ denote the number of $T_{\pmt}$s involving the $i^\text{th}$ strand in clockwise direction from $u$ to $T_j$. Let $i_j\neq1$ denote the index of the strand involved in $T_j$. The contribution of the boxes $T_{\pmt}\neq T_j$ to the labelling of the two Kauffman states corresponding to $T_j$ is
\begin{center}
\begin{tabular}{ccc}
$\alpha_j^{i\textcolor{blue}{+}}$&$\rightarrow$& $\pmr t_i$,\\
$\alpha_j^{i\textcolor{blue}{-}}$&$\rightarrow$& $\pmr t_i^{-1}$,\\
$\beta_j^{i\textcolor{blue}{+}}$&$\rightarrow$& $\pmr t_i^{-1}$,\\
$\beta_j^{i\textcolor{blue}{-}}$&$\rightarrow$& $\pmr t_i$,
\end{tabular}
\end{center}
so that the contribution of the two Kauffman states to $\nabla_{T''}$ of the site determined by $u$ is 
$$\pmt(t_{i_j}-t_{i_j}^{-1})\cdot \prod_{i\neq 1} (\pm t_i)^{(\alpha_j^{i\textcolor{blue}{+}}+\beta_j^{i\textcolor{blue}{-}})-(\alpha_j^{i\textcolor{blue}{-}}+\beta_j^{i\textcolor{blue}{+}})}.$$
Note that by definition
\begin{equation*}
\lk(t_1,t_i)= 
\begin{cases}
\alpha_j^{i\textcolor{blue}{+}}-\alpha_j^{i\textcolor{blue}{-}}+\beta_j^{i\textcolor{blue}{+}}-\beta_j^{i\textcolor{blue}{-}} &\text{ if $i\neq1, i_j$,}\\
\alpha_j^{i\textcolor{blue}{+}}-\alpha_j^{i\textcolor{blue}{-}}+\beta_j^{i\textcolor{blue}{+}}-\beta_j^{i\textcolor{blue}{-}}+1  &\text{ if $i=i_j$ and $T_j=T_{\textcolor{blue}{+}}$,}\\
\alpha_j^{i\textcolor{blue}{+}}-\alpha_j^{i\textcolor{blue}{-}}+\beta_j^{i\textcolor{blue}{+}}-\beta_j^{i\textcolor{blue}{-}}-1  &\text{ if $i=i_j$ and $T_j=T_{\textcolor{blue}{-}}$.}\\
\end{cases}
\end{equation*}
So the expression above becomes
\begin{gather*}
\pmt(t_{i_j}-t_{i_j}^{-1})\cdot (\pmr t_{i_j})^{\lk(t_1,t_{i_j})-2(\beta_{j}^{i_j+}-\beta_{j}^{i_j-})\mpt 1}\cdot \prod_{i\neq 1,i_j} (\pmr t_i)^{\lk(t_1,t_{i})-2(\beta_{j}^{i\textcolor{blue}{+}}-\beta_{j}^{i\textcolor{blue}{-}})}\\
=\pmr \left((\pmr t_{i_j})^{\lk(t_1,t_{i_j})-2(\beta_{j}^{i_j+}-\beta_{j}^{i_j-})}-(\pmr t_{i_j})^{\lk(t_1,t_{i_j})-2(\beta_{j}^{i_j+}-\beta_{j}^{i_j-}\pmt1)}\right)\\
\phantom{\pmt(t_{i_j}-t_{i_j}^{-1})\cdot (\pmr t_{i_j})^{\lk(t_1,t_{i_j})-2(\beta_{j}^{i_j+}-\beta_{j}^{i_j-})\mpt 1}}\cdot \prod_{i\neq 1,i_j} (\pmr t_i)^{\lk(t_1,t_{i})-2(\beta_{j}^{i\textcolor{blue}{+}}-\beta_{j}^{i\textcolor{blue}{-}})}.
\end{gather*}
Now, consider two consecutive boxes, say the $j^\text{th}$ and the $(j+1)^\text{st}$ box. Then
$$\beta_{j}^{i_j+}-\beta_{j}^{i_j-}=\beta_{j+1}^{i_j+}-\beta_{j+1}^{i_j-}\pmt1.$$
and for all other colours, the exponents remain the same. So we see that most terms cancel and the only surviving ones are the second one of the first box and the first one of the last box. But
\begin{equation*}
\beta_{1}^{i\textcolor{blue}{+}}-\beta_{1}^{i\textcolor{blue}{-}}= 
\begin{cases}
\lk(t_1,t_i) &\text{ if $i\neq1, i_1$,}\\
\lk(t_1,t_{i_1})\mpt1  &\text{ if $i=i_1$,}\\
\end{cases}
\end{equation*}
and\vspace{\abovedisplayshortskip}\\
\textcolor{white}{\qedsymbol}\hfill$\beta_{n}^{i\textcolor{blue}{+}}=\beta_{n}^{i\textcolor{blue}{-}}=0 \quad\text{for all $i\neq1$}.$\hfill\qedhere
\end{proof}

\begin{proposition}\label{propexponentsmodtwo}
Given a tangle \(T\), we can compute the powers modulo 2 appearing in the labellings of Kauffman states as follows. If \(t\) is the colour of a closed component in~\(T\), its exponents are equal to \(\lk(t)+1\mod 2\). If it is the colour of an open component, its exponents are equal to 
$$\lk(t)+1+\tfrac{1}{2}(s_i-s_o+r_o-r_u-1)\equiv\lk(t)+1+s_i+r_u\mod 2,$$
where, if we follow the outgoing \(t\)-strand in anticlockwise direction along the boundary of the disc to the other end, \(s_i\) is the number of ingoing strands, \(s_o\) is the number of outgoing strands, \(r_o\) is the number of occupied regions and \(r_u\) is the number of unoccupied regions that we meet along the way.
\end{proposition}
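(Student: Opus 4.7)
The plan is to exploit Lemma~\ref{lem:Powerdistance2}: within a fixed site $s$, the $t$-exponents of any two Kauffman states differ by a multiple of~$2$, so the exponent modulo~$2$ is a site invariant. It therefore suffices to fix one convenient $x \in \mathbb{K}(T,s)$ per site and compute $\alpha_t(x) := \sum_c \epsilon_t(c, r(c))$, where the sum is over crossings at which~$t$ appears.

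By the Alexander codes (figure~\ref{figAlexCodesForNabla}) and Observation~\ref{ObsAlexanderCode}, each summand equals $\pm\tfrac12$ when $t$ appears exactly once at~$c$, with sign determined purely by whether the marker lies left or right of the $t$-strand and whether $t$ is over or under; self-crossings of~$t$ contribute $0$ or $\pm1$. Grouping crossings by the other colour $t'$ involved, the contribution of all $t$-$t'$ crossings reduces to $\lk_T(t,t')$ plus an integer counting markers lying on one specified side of~$t$; self-crossings add a further integer parity. Summing over $t'$ yields
\[
\alpha_t(x)\equiv \lk_T(t)+M(x)\pmod 2,
\]
where $M(x)\in\mathbb{Z}/2\mathbb{Z}$ counts markers of~$x$ on a chosen side of the $t$-component.

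The remaining task is to identify $M(x)$ with the site data. When $t$ is closed, the component locally separates the disc, and a Jordan-curve argument using the fact that every closed region holds exactly one marker forces $M(x)\equiv 1\pmod 2$, giving the first assertion. When $t$ is open, the $t$-strand together with the anticlockwise arc $\beta$ of $\partial D^2$ from its outgoing to its ingoing end bounds a sub-disc $\Delta$; walking along $\beta$, each ingoing (but not outgoing) endpoint of another strand flips the parity of the marker count in $\Delta$, and so does each unoccupied (but not occupied) open region along $\beta$. This accounting produces $M(x)\equiv 1+s_i+r_u\pmod 2$, and the identity $r_o+r_u=s_i+s_o+1$ converts this into the equivalent form $1+\tfrac12(s_i-s_o+r_o-r_u-1)$ stated in the proposition. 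The main obstacle is verifying the reduction $\alpha_t(x)\equiv\lk_T(t)+M(x)\pmod 2$ at $t$-self-crossings, where both $t$-strands contribute independently and the two left/right choices can be correlated; a safer, more computational alternative is induction on the number of crossings via the splitting formula (Proposition~\ref{prop:glueing}), peeling off one crossing at a time and tracking the updates to $\lk_T(t)$ and to the four counts $s_i,s_o,r_o,r_u$.
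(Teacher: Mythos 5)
There is a genuine gap, and it sits exactly at the congruence your whole argument hangs on, namely $\alpha_t(x)\equiv \lk_T(t)+M(x)\pmod 2$ with $M(x)$ a count of markers on one fixed side of the $t$-component. By observation~\ref{ObsAlexanderCode}, the contribution of a crossing to the $t$-exponent is $+\tfrac12$ precisely when ($t$ is the over-strand and the marker lies to its left) or ($t$ is the under-strand and the marker lies to its right), and $-\tfrac12$ otherwise; so the exponent is governed by the mixed count ``left-where-over plus right-where-under'', not by a one-sided marker count, and relating it to $\lk_T(t)$ forces you to track the crossing signs, which depend on the orientation of the \emph{other} strand. A concrete counterexample to your congruence: take one open strand and a closed $t$-coloured unknot clasping it with $\lk(t)=1$ (circle over at one crossing, under at the other). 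Both Kauffman states have $t$-exponent $0$ (consistent with the proposition, since $\lk(t)+1\equiv 0$), but both markers necessarily sit in the two closed regions inside the circle, i.e.\ on one side of $t$, so $\lk_T(t)+M(x)$ is $1+2\equiv 1$ or $1+0\equiv 1$ for either choice of side -- never $0$. Note this example has no $t$-self-crossings, so the obstacle you flag at the end is not the real one; the reduction already fails at $t$--$t'$ crossings. Your closed-case ``Jordan-curve'' step proves (correctly) that the number of markers inside an embedded closed $t$-curve at its crossings is $k+1$ when it has $2k$ crossings, but that is a statement about a one-sided count and does not yield the needed parity $\lk(t)+1$ without the sign bookkeeping above. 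The fallback you mention, induction via proposition~\ref{prop:glueing}, is not a repair as stated: the glueing formula controls $\hat\nabla$ as a sum over compatible sites, not the exponent of an individual Kauffman state, and ``peeling off a crossing'' changes which regions are open and how $s_i,s_o,r_o,r_u$ are read off, none of which is tracked.

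For contrast, the paper avoids this local sign analysis altogether: for closed components it combines proposition~\ref{propsetpmone} (evaluation at $t=\pm1$, where the crossing-by-crossing bookkeeping is done once in a normalised diagram) with lemma~\ref{lem:Powerdistance2} and the site-moving lemmas~\ref{noclosedthennonzero}, \ref{lotsofKauffmanstates} and~\ref{neighbouringsites}, and reduces vanishing linking numbers to the generic case by crossing changes and Reidemeister~II moves; for an open component it closes the $t$-strand by adding twists (or an R-II doubled strand when the relevant region is unoccupied), applies the closed case, and converts back via $\lk_{\text{new}}(t)=\lk_T(t)+\tfrac12(s_i-s_o)$ together with $s_i+s_o=r_o+r_u-1$. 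If you want to salvage your direct computation, you must prove the correct local identity -- that modulo $2$ the exponent equals $\lk_T(t)$ plus the number of negative $t$-crossings plus the mixed count described above, and then establish the parity of that combination -- which is substantially more delicate than the one-sided marker count you assumed.
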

\begin{Remark}
It is easy to see that the formula is symmetric in the sense that it does not matter if we go clock- or anticlockwise. Nonetheless, it would be nice to find a more natural interpretation of this formula. Again, a better understanding of the relationship between sites would probably be useful for this. 
\end{Remark}
\begin{corollary}[reversing orientations revisited]\label{cor:revorientrevisited}
Let \(T\) be an oriented \(r\)-component tangle. If \(\rr(T,t_1)\) denotes the same tangle \(T\) with the orientation of the first component \(K_1\) reversed, then for all sites \(s\in\mathbb{S}(T)\), we have 
$$\nabla^s_{\rr(T,t_1)}(t_1,\dots,t_r)=
\begin{cases}
-\nabla^s_{T}(t_1^{-1},t_2,\dots,t_r)& \text{$K_1$ closed,}\\
(-1)^{2\lk_T(t_1)+1+s_i+r_u} \nabla^s_{T}(t_1^{-1},t_2,\dots,t_r)& \text{$K_1$ open,}\\
\end{cases}
$$
where we use the same notation as in the previous proposition. Similarly, if \(\rr(T)\) denotes the same tangle \(T\) with the orientation of all strands reversed, then for all sites \(s\in\mathbb{S}(T)\), we have $$\nabla^s_{\rr(T)}(t_1,\dots,t_r)=(-1)^{r+c+\sum s_i+\sum r_u}\nabla^s_{T}(t_1^{-1},\dots,t_r^{-1}),$$
where \(c\equiv\sum\lk(t_i)\mod 2\) is the number of interlinked pairs of endpoints of the same colour on the boundary of the disc.
\end{corollary}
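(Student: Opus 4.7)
The statement reduces to sign-tracking in proposition~\ref{prop:reverseorientI}, its full-reversal refinement corollary~\ref{coralloorientsrev}, and the parity information of proposition~\ref{propexponentsmodtwo}, all specialised at $h=-1$.

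For the single-strand formula, apply proposition~\ref{prop:reverseorientI}:
$$\hat{\nabla}^s_{\rr(T,t_1)}(t_1,\ldots,t_r) = h^{\lk_T(t_1)}\,\hat{\nabla}^s_T(h^{-1}t_1^{-1},t_2,\ldots,t_r).$$
Each monomial $h^m t_1^a \prod_{i\geq 2} t_i^{a_i}$ of $\hat{\nabla}^s_T$ contributes $h^{m-a+\lk_T(t_1)} t_1^{-a} \prod_{i\geq 2} t_i^{a_i}$ on the right. By proposition~\ref{propexponentsmodtwo}, the exponent $a$ has a fixed parity mod $2$ that depends only on $\lk(t_1)$ and, in the open case, on the site data $s_i,r_u$; in particular $a-\lk_T(t_1)\in\mathbb{Z}$, so the power of $h$ is an integer and the substitution $h=-1$ is legal. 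The resulting sign $(-1)^{\lk_T(t_1)-a}$ is a constant independent of the Kauffman state, so it can be pulled out of the sum and compared with $\nabla^s_T(t_1^{-1},t_2,\ldots,t_r)=\sum(-1)^m t_1^{-a}\prod_{i\geq 2} t_i^{a_i}$ to obtain the claimed proportionality constant. The factor $(-1)^{2\lk_T(t_1)}$ in the open case is the bookkeeping correction that arises when $\lk_T(t_1)$ is a half-integer and the formal factor $h^{\lk_T(t_1)}$ must be coupled with the half-integer power $t_1^{-a}$ before setting $h=-1$.

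For the full-reversal formula, specialise $h=-1$ directly in corollary~\ref{coralloorientsrev}: each monomial $h^m\prod_i t_i^{a_i}$ of $\hat{\nabla}^s_T$ becomes $(-1)^{m-\sum_i a_i}\prod_i t_i^{-a_i}$. Summing the parities provided by proposition~\ref{propexponentsmodtwo} over all components gives
$$\textstyle \sum_i a_i \;\equiv\; \sum_i\lk(t_i)+r+\sum_i s_i+\sum_i r_u\pmod{2},$$
with $s_i,r_u$ set to zero for closed components. The relation $\sum_i\lk(t_i)=2\sum_{i<j}\lk(t_i,t_j)$ identifies $\sum_i\lk(t_i)\pmod 2$ with the parity of the number of crossings between strands of distinct colours; a planarity argument on the boundary circle then equates this with the number $c$ of interlinked equal-colour endpoint pairs on $\partial B^3$, yielding the sign $(-1)^{r+c+\sum s_i+\sum r_u}$.

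The most delicate step is the sign in the open-component single-strand case, where the coupling between the half-integer factor $h^{\lk_T(t_1)}$ and the half-integer $t_1$-exponents must be resolved consistently with the chosen branch of $t_1^{1/2}$. The identification of $c$ with the endpoint-interlinking count is purely topological and amounts to noting that parity of the total number of crossings between distinct components is detected by the cyclic ordering of the endpoints on the boundary.
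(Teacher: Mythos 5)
Your overall route is exactly the paper's: combine proposition~\ref{prop:reverseorientI} (respectively corollary~\ref{coralloorientsrev}) with the parity statement of proposition~\ref{propexponentsmodtwo} and specialise at $h=-1$. Your treatment of the full-reversal formula is sound: the ratio of corresponding monomials is $(-1)^{\sum_i a_i}$, which is well defined because every Kauffman-state label has integer total exponent, and summing the congruences of proposition~\ref{propexponentsmodtwo} over all components (together with $\sum_i\lk(t_i)=2\sum_{i<j}\lk(t_i,t_j)$ and the definition of $c$) gives the stated exponent. The closed-component case of the first formula is also fine, since there $a$ and $\lk_T(t_1)$ are integers and $a\equiv\lk_T(t_1)+1\pmod 2$.

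The gap is precisely the step you yourself flag as delicate and then wave through: the factor $(-1)^{2\lk_T(t_1)}$ in the open case. From your own display, the ratio of a monomial of $\hat{\nabla}^s_{\rr(T,t_1)}$ at $h=-1$ to the corresponding monomial of $\nabla^s_T(t_1^{-1},t_2,\dots,t_r)$ is $(-1)^{\lk_T(t_1)-a}$, an integer power by lemma~\ref{lem:Powerdistance2}; inserting $a\equiv\lk_T(t_1)+1+s_i+r_u\pmod 2$ (a congruence whose two sides differ by an even integer) yields $(-1)^{1+s_i+r_u}$ and nothing more. The extra factor $(-1)^{2\lk_T(t_1)}$ would appear only if one replaced $(-1)^{-a}$ by $(-1)^{a}$, which is not legitimate when $a$ is a genuine half-integer, and it cannot come from ``the chosen branch of $t_1^{1/2}$'': no branch is ever chosen, since everything lives in $\mathbb{Z}[h^{\pm1},t_1^{\pm1/2},\dots,t_r^{\pm1/2}]$ and only integer powers of $h$ are evaluated. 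So, as written, your argument proves the open-case formula with exponent $1+s_i+r_u$, not $2\lk_T(t_1)+1+s_i+r_u$; the two agree only when $2\lk_T(t_1)$ is even. You need either an honest derivation of the extra factor from the conventions of figure~\ref{figAlexCodesForNabla} (for instance by checking whether $s_i,r_u$ are to be read off the reversed diagram), or to settle the bookkeeping on a test case: for a single positive crossing with $t_1$ the under-strand one has $2\lk_T(t_1)=1$, and comparing the two one-crossing Alexander codes directly gives the sign $(-1)^{1+s_i+r_u}$ for every site, so the discrepancy with the stated exponent must be resolved rather than asserted away. Until that sign is pinned down, the open case is not proved.
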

\begin{proof}
The first part follows directly from proposition~\ref{prop:reverseorientI} and \ref{propexponentsmodtwo}. For the second part, we apply proposition~\ref{propexponentsmodtwo} to corollary~\ref{coralloorientsrev}. 
\end{proof}
Before we come to the proof of proposition \ref{propexponentsmodtwo}, we do some preparation first.
\begin{lemma}\label{noclosedthennonzero}
Let \(T\) be a tangle with no closed components. Then there exists a site \(s\in\mathbb{S}(T)\) such that \(\nabla_T^s\not\equiv0\). 
\end{lemma}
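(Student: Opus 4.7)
The strategy is induction on the number $n$ of open strands of $T$, with the single-colour skein relation (lemma~\ref{onecolourskein}) as the main tool.

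The base case $n = 1$ is essentially theorem~\ref{thm:twoended}: such a $T$ is a $2$-ended tangle representing a knot $K$, and $\nabla_T^\emptyset$ equals $(c - c^{-1})$ times the Conway potential function of $K$, which is nonzero because the Alexander polynomial of any knot is nonzero.

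For the inductive step with $n \geq 2$, I would first argue that any connected diagram of $T$ must contain a crossing between two distinct open components. Were every crossing a self-crossing, the planar images of the components would be pairwise disjoint arcs with endpoints on $\partial D$, forcing some open region to intersect the boundary in more than one arc and thereby contradicting the connectedness hypothesis. Pick such a cross-component crossing $c$ and let $T_\circ$ denote its oriented smoothing. Since $c$ joins distinct components, $T_\circ$ has $n - 1$ open strands and again no closed components, so by the inductive hypothesis there exists a site $s_\circ$ with $\nabla_{T_\circ}^{s_\circ} \not\equiv 0$.

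Applying lemma~\ref{onecolourskein} at $c$ (after identifying the two colours $t_1, t_2$ of the strands meeting there to a common variable $t$) gives
\[
\nabla_{T_+}^{s_\circ}(t, t, t_3, \dots) - \nabla_{T_-}^{s_\circ}(t, t, t_3, \dots) = (t - t^{-1})\,\nabla_{T_\circ}^{s_\circ}(t, t_3, \dots),
\]
whose right-hand side is nontrivial. Hence at least one of $\nabla_{T_\pm}^{s_\circ}$ is a nontrivial multivariate polynomial, and since $T$ coincides with $T_+$ or $T_-$ according to the sign of $c$, we are done whenever $T$ happens to be the nonvanishing one.

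The principal obstacle is the residual case in which $T$ coincides with the vanishing side of the skein triple. I expect this to be handled by strengthening the inductive hypothesis to apply uniformly to all crossing-flipped variants of $T_\circ$, or by iterating the argument over the available cross-component crossings of $T$ (of which there is always at least one by the connectedness argument above) until a choice is found for which $T$ itself is the nonvanishing side. Alternatively, one may sharpen the skein step by tracking the dependence on the $h$-variable of $\hat{\nabla}$ before specialising to $h = -1$, using lemma~\ref{lem:Powerdistance2} to keep separate contributions from $T_+$ and $T_-$.
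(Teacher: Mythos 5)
Your induction does not get off the ground: the claim that the oriented smoothing $T_\circ$ at a crossing between two \emph{distinct} open strands has $n-1$ open strands is false. Resolving such a crossing in the oriented way simply reconnects the two arcs into two new arcs (endpoint $a_1$ joins to $b_2$, and $b_1$ to $a_2$), so $T_\circ$ again has $n$ open strands; it is the smoothing of a \emph{self}-crossing that changes the component count, and that produces a closed component, which leaves the class of tangles your statement covers. So the quantity you induct on does not decrease. Even setting this aside, the step you yourself flag is a genuine gap, not a loose end: the skein relation of lemma~\ref{onecolourskein} only shows that at least one of $T_+$, $T_-$ has nonvanishing polynomial, and nothing forces $T$ to be that one; your proposed remedies (iterating over crossings, or tracking the $h$-variable via lemma~\ref{lem:Powerdistance2}) are not carried out and it is not clear they can be. There is also a smaller issue: the inductive hypothesis gives $\nabla_{T_\circ}^{s_\circ}\not\equiv 0$ as a multivariate polynomial, while the skein relation needs nonvanishing \emph{after} identifying the two colours at the crossing, which does not follow (identification of variables can kill a nonzero Laurent polynomial).

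For comparison, the paper's argument avoids crossing changes entirely: extend the diagram of $T$ to a knot diagram by successively joining pairs of tangle ends with opposite orientations until only two ends remain. By the glueing formula (proposition~\ref{prop:glueing}), $\nabla^{\emptyset}$ of the resulting 2-ended tangle is a linear combination of the $\nabla_T^s$ over the sites $s$, and it is nonzero because a knot satisfies $\nabla^{\emptyset}(\pm1)=1$ (corollary~\ref{corknotpmoneequalone}). Hence not all $\nabla_T^s$ can vanish. If you want to salvage a skein-theoretic proof, you would need an induction scheme in which the resolved tangle genuinely becomes simpler (e.g.\ on crossing number with control over both $T_+$ and $T_-$), but the closure argument is both shorter and free of the case analysis.
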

\begin{proof}
We extend the diagram $T$ to one of a knot by successively connecting two ends of strands with different orientations and colours until only two ends are left. (Note that this process might introduce some more crossings.) Then $\nabla^s$ of this knot will be a linear combination of the $\nabla_T^s$. Since it is also non-zero (say, by corollary~\ref{corknotpmoneequalone}), $\nabla_T^s$ cannot be identically zero for all $s$.
\end{proof}
\begin{lemma}\label{lem:Kauffmanstatesexist}
Let \(T\) be a (connected) tangle diagram (with at least one crossing). Then there exists a site \(s\) such that \(\mathbb{K}(T,s)\) is non-empty.
\end{lemma}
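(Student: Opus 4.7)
The plan is to reduce this to the classical existence of Kauffman states for connected link diagrams by closing \(T\) up. First, I pick any non-crossing perfect matching of the \(2n\) endpoints \(p_1,\dots,p_{2n}\) on \(\partial D^2\)—for instance, pair consecutive points \((p_{2k-1},p_{2k})\); such a matching always exists by a standard Catalan-type induction—and draw the \(n\) matching arcs in \(S^2\smallsetminus D^2\) parallel to \(\partial D^2\), so that no new crossings are introduced. The result is a planar link diagram \(L\) on \(S^2\) with the same \(m\ge 1\) crossings as \(T\), which is connected because \(T\) is.

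Next, I invoke the classical fact that every connected link diagram with at least one crossing admits a Kauffman state in the sense of Kauffman's \emph{Formal Knot Theory}: an assignment of a marker to one of the four regions at each crossing such that every region of \(L\) except two prescribed adjacent ``starred'' regions carries exactly one marker. (A standard proof is via the bijection between Kauffman states and spanning trees of the Tait graph of \(L\), and a spanning tree exists because the Tait graph is connected.) I choose the two starred regions to be those flanking a fixed closure arc \(e\); one is then the small ``pocket'' bounded by \(e\) and the corresponding arc of \(\partial D^2\), and the other is the big outer region of \(S^2\smallsetminus D^2\). Both of these meet \(\partial D^2\), so neither is a closed region of \(T\).

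Finally, I translate a Kauffman state \(\kappa\) of \(L\) back to \(T\). Each closed region of \(T\) is unaffected by the closure, so it occurs as a region of \(L\) and receives exactly one marker from \(\kappa\). Each region of \(L\) that meets \(\partial D^2\) is a union of some open regions of \(T\) glued across \(\partial D^2\) to outer pockets and/or the outer region; such a region carries at most one marker of \(\kappa\), and since every marker sits at a crossing of \(T\) (hence inside \(D^2\)), any marker present lies in exactly one of the constituent open regions of \(T\). Thus every open region of \(T\) contains at most one marker, and setting \(s\) to be the set of open regions that actually carry a marker yields a valid element of \(\mathbb{K}(T,s)\).

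The main obstacle is the region bookkeeping in this last step, namely guaranteeing that neither of the two prescribed unmarked regions of \(L\) is a closed region of \(T\). This is exactly what the choice of starred edge on a closure arc \(e\) secures, because closure arcs lie entirely outside \(D^2\) and are thus disjoint from every closed region of \(T\); everything else is a routine transfer of the marker assignment from \(L\) to \(T\).
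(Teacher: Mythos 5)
Your argument is correct and is essentially the paper's proof: the paper likewise closes up the tangle into a link diagram (leaving two strands open, which amounts to the same thing as closing all ends and starring the two adjacent regions along one closure arc), invokes Kauffman's classical existence result for states of the resulting diagram, and restricts the state back to the tangle. The only difference is cosmetic — you close all \(2n\) endpoints and make the choice of starred regions and the marker bookkeeping explicit, which the paper leaves implicit.
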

\begin{proof}
As in the previous proof, we close all but two strands of the diagram in some fashion. This new diagram represents a link and, by \cite[lemma~2.2, theorem~2.4]{Kauffman}, it has a Kauffman state. Hence its restriction to the original tangle diagram is also a Kauffman state for some site $s$.
\end{proof}
\begin{lemma}\label{neighbouringsites}
Given a tangle \(T\), let \(x\) and \(x'\) be two Kauffman states whose sites \(s\) and~\(s'\) differ in exactly two adjacent open regions, separated by a $t$-coloured strand.  Then the exponent of \(t\) in the labelling of \(x\) and \(x'\) differs by 1 modulo 2. Exponents of other colours agree modulo 2. 
\end{lemma}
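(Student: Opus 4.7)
The plan is to reduce the claim to a single, explicit local computation. By lemma~\ref{lem:Powerdistance2}, the exponent modulo~$2$ of any colour in the label $c(x)$ of a Kauffman state depends only on its site. Hence it suffices to exhibit one well-chosen pair $(\tilde{x},\tilde{x}')\in\mathbb{K}(T,s)\times\mathbb{K}(T,s')$ for which the $t$-exponent changes by exactly $\pm 1$ while all other colour exponents agree. If either of $\mathbb{K}(T,s)$ or $\mathbb{K}(T,s')$ is empty the statement is vacuous, so we assume both are non-empty.

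Let $e$ denote the arc of the $t$-strand separating $R$ from $R'$ and let $v_1,v_2$ be its endpoints. At least one of $v_1,v_2$ is a crossing; if not, $e$ would be an arc with both endpoints on $\partial D^2$ and no crossings on it, in which case we first perform a Reidemeister~I move on $e$ to introduce a crossing (allowed by theorem~\ref{thm:nablaisaninvariant}), so without loss of generality $v_1$ is a crossing.

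The key step is then to build $\tilde{x}$ and $\tilde{x}'$ that agree at every crossing except $v_1$, with the marker at $v_1$ located in $R$ in $\tilde{x}$ and in $R'$ in $\tilde{x}'$. Given such a pair, the conclusion is immediate from the Alexander codes of figure~\ref{figAlexCodesForNabla} (equivalently, observation~\ref{ObsAlexanderCode}): moving a single marker across the $t$-strand at $v_1$ changes the exponent of $t$ by exactly $\pm 1$ and leaves the exponent of the colour of the other strand at $v_1$ unchanged, because $R$ and $R'$ lie on the same side of that other strand at $v_1$. All other crossings contribute identical factors to $c(\tilde{x})$ and $c(\tilde{x}')$, so the total $t$-exponent changes by $\pm 1$ and all other colour exponents agree exactly.

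The main obstacle is the existence of such a pair, which amounts to finding a state in $\mathbb{K}(T,s)$ whose marker in region $R$ happens to sit at the crossing $v_1$. I would establish this using the generalised clock theorem~\ref{geclockt}: starting from any $x\in\mathbb{K}(T,s)$ (non-empty by assumption), apply a sequence of transposition moves within $\mathbb{K}(T,s)$ to chase the $R$-marker to $v_1$; a case analysis on which of the other three regions at $v_1$ lie in $s\cap s'$ determines whether the chase is local. In the symmetric situation where the chase fails in $\mathbb{K}(T,s)$, we attempt the analogous construction inside $\mathbb{K}(T,s')$ to move its $R'$-marker to $v_1$ and read the pair $(\tilde{x},\tilde{x}')$ in the other direction. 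For the (rare) configurations where neither chase succeeds directly, one can enlarge the diagram by a Reidemeister~II pair on $e$, apply the lemma in the modified diagram, and then transfer the statement back using the splitting/glueing formula of proposition~\ref{prop:glueing}.
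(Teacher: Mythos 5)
Your reduction via lemma~\ref{lem:Powerdistance2} and your local computation at the crossing $v_1$ are exactly the paper's ``boundary move'': moving a single marker across the $t$-strand at the crossing closest to the boundary changes the exponent of $t$ by $\pm1$ and no other colour exponent, by observation~\ref{ObsAlexanderCode}. So your skeleton is the paper's. The genuine gap is in the existence step, which is precisely the hard part of the lemma. You give no argument that some state of $\mathbb{K}(T,s)$ has its $R$-marker at $v_1$: the generalised clock theorem~\ref{geclockt} only says that all states of a fixed site are connected by transposition moves, not that any prescribed marker position is realised, and in general it is not -- for instance, if one of the quadrants at $v_1$ is a monogon created by a kink, every Kauffman state is forced to put its $v_1$-marker there, so the ``chase'' fails for both $s$ and $s'$, and your unspecified ``case analysis'' cannot rescue it.

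Everything therefore rests on your fallback, and as written it does not go through. ``Apply the lemma in the modified diagram'' begs the question unless you show that after the Reidemeister~II move the direct construction always succeeds; this is what the paper's proof does: each state of the original diagram extends to a state of the enlarged diagram with the \emph{same} labelling (the two extra markers sit at the tops of the new crossings and their labels cancel), and at one of the new crossings a boundary move is then always available. Moreover, the transfer back cannot be done with proposition~\ref{prop:glueing}: that is an identity of the polynomials $\hat{\nabla}$, summed over complementary sites, not a statement comparing labellings of individual Kauffman states in two different diagrams. The correct transfer is state-level: a Reidemeister~II move only enlarges the set of Kauffman states, with corresponding states having identical labellings (cf.\ lemma~\ref{lem:lotsofKauffmanstates}), after which one concludes with lemma~\ref{lem:Powerdistance2} applied in the enlarged diagram. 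The same criticism applies to your Reidemeister~I step: theorem~\ref{thm:nablaisaninvariant} is a statement about $\nabla_T^s$, whereas what you need is that the forced marker at the new crossing has label $1$, so that labellings of corresponding states agree.
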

\begin{proof}
As in the proof of lemma~\ref{lem:Powerdistance2}, we would like to have some simple move that we can perform to get from one site to another and that affects the labels in a predictable way and then appeal to some connectedness result. Here, the basic move is the following, we call it the boundary move:
\begin{center}
\begin{pspicture}(-7,-1.2)(7,1)

\rput(-3,0){
\psline[linestyle=dotted,linewidth=1.5pt](-1.5,-1)(-1,-1)
\psline[linestyle=dotted,linewidth=1.5pt](1.5,-1)(1,-1)
\pscustom[linewidth=1.5pt]{\psline[linewidth=1.5pt](-1,-1)(0,-1)
\psline[linewidth=1.5pt](0,-1)(1,-1)}

\psline[linestyle=dotted](0,0.5)(0,1)
\psline[linestyle=dotted](-1,0)(-0.5,0)
\psline[linestyle=dotted](0.5,0)(1,0)
\psline(0,-1)(0,0.5)
\psline(-0.5,0)(0.5,0)

\uput{0.4}[-135](0,0){\psdot[dotsize=6pt](0,0)}
\psarc(0,0){0.4}{225}{-50}
\uput{0.4}[-45](0.05,0){\rput{116}(0,0){\psline{->}(0,0.1)(0,0)}
}

\rput[cr](-2,-1){$\partial D^2$}}

\rput(0,0){$\longleftrightarrow$}

\rput(3,0){
\psline[linestyle=dotted,linewidth=1.5pt](-1.5,-1)(-1,-1)
\psline[linestyle=dotted,linewidth=1.5pt](1.5,-1)(1,-1)
\pscustom[linewidth=1.5pt]{\psline[linewidth=1.5pt](-1,-1)(0,-1)
\psline[linewidth=1.5pt](0,-1)(1,-1)}
\psline[linestyle=dotted](0,0.5)(0,1)
\psline[linestyle=dotted](-1,0)(-0.5,0)
\psline[linestyle=dotted](0.5,0)(1,0)
\psline(0,-1)(0,0.5)
\psline(-0.5,0)(0.5,0)

\uput{0.4}[-45](0,0){\psdot[dotsize=6pt](0,0)}
\psarc(0,0){0.4}{230}{-45}
\uput{0.4}[-135](-0.05,0){\rput{64}(0,0){\psline{->}(0,-0.1)(0,0)}
}

\rput[cl](2,-1){$\partial D^2$}}
\end{pspicture}
\end{center}
Consider two Kauffman states which are related by a single boundary move. The labellings of these two Kauffman states differ by the factor $t^{\pm 1}$. \\
However, such a boundary move might not always be possible. To fix this, we first modify the diagram slightly, namely we perform a Reidemeister II move as follows:
\begin{center}
\begin{pspicture}(-7,-1.2)(7,3)

\rput(-3,0){
\psline[linestyle=dotted,linewidth=1.5pt](-2.5,-1)(-2,-1)
\psline[linestyle=dotted,linewidth=1.5pt](2.5,-1)(2,-1)
\pscustom[linewidth=1.5pt]{\psline[linewidth=1.5pt](-2,-1)(0,-1)
\psline[linewidth=1.5pt](0,-1)(2,-1)}

\psecurve(-1.1,-1.4)(-1.2,-1)(-1.1,-0.6)(-0.5,0.2)(-1.1,0.9)(-1.2,1.2)
\psecurve(1.1,-1.4)(1.2,-1)(1.1,-0.6)(0.5,0.2)(1.1,0.9)(1.2,1.2)

\psecurve[linestyle=dotted](0.45,0.2)(-1.1,0.9)(-1.2,1.2)(-1.7,2.1)(-1.7,7.3)
\psecurve[linestyle=dotted](-0.45,0.2)(1.1,0.9)(1.2,1.2)(1.7,2.1)(1.7,7.3)

\psecurve[linestyle=dotted](-1.4,3.3)(-0.7,2.5)(0,2)(2.2,1.3)(3,1.26)
\psecurve[linestyle=dotted](1.4,3.3)(0.7,2.5)(0,2)(-2.2,1.3)(-3,1.26)
\uput{0.4}[-90](0,2){\psdot[dotsize=6pt](0,0)}

\rput[cr](-3,-1){$\partial D^2$}}

\rput(0,0){$\longrightarrow$}

\rput(3,0){
\psecurve(1.1,-1.4)(1.2,-1)(1.1,-0.6)(-0.45,0.2)(1.1,0.9)(1.2,1.2)
\psecurve[linewidth=4pt,linecolor=white](-1.1,-1.4)(-1.2,-1)(-1.1,-0.6)(0.45,0.2)(-1.1,0.9)(-1.2,1.2)
\psecurve(-1.1,-1.4)(-1.2,-1)(-1.1,-0.6)(0.45,0.2)(-1.1,0.9)(-1.2,1.2)

\psline[linestyle=dotted,linewidth=1.5pt](-2.5,-1)(-2,-1)
\psline[linestyle=dotted,linewidth=1.5pt](2.5,-1)(2,-1)
\pscustom[linewidth=1.5pt]{\psline[linewidth=1.5pt](-2,-1)(0,-1)
\psline[linewidth=1.5pt](0,-1)(2,-1)}

\psecurve[linestyle=dotted](0.45,0.2)(-1.1,0.9)(-1.2,1.2)(-1.7,2.1)(-1.7,7.3)
\psecurve[linestyle=dotted](-0.45,0.2)(1.1,0.9)(1.2,1.2)(1.7,2.1)(1.7,7.3)

\psecurve[linestyle=dotted](-1.4,3.3)(-0.7,2.5)(0,2)(2.2,1.3)(3,1.26)
\psecurve[linestyle=dotted](1.4,3.3)(0.7,2.5)(0,2)(-2.2,1.3)(-3,1.26)

\uput{0.4}[-90](0,2){\psdot[dotsize=6pt](0,0)}

\uput{0.4}[-90](0,-0.03){\psdot[dotsize=6pt](0,0)}

\uput{0.4}[-90](0.6,0.3){\pscircle(0,0){3pt}}

\psecurve[arrows=->](-0.5,0.2)(0.1,-0.43)(0.57,-0.16)(0.65,0.23)

\uput{0.4}[-90](0,0.7){\psdot[dotsize=6pt](0,0)}

\rput[cr](3,-1){$\partial D^2$}}
\end{pspicture}
\end{center}
The labelling of the new Kauffman state corresponding to $x$ as shown in the right picture above is the same as the labelling of $x$ itself. The same is true for the labelling of $x'$. (The additional two markers are then on the ``top'' of the new crossings.) We can now perform a boundary move as indicated by the arrow in the picture on the right. By lemma~\ref{lem:Powerdistance2}, the powers of each colour in the labellings of this Kauffman state and $x'$ agree modulo 2.
\end{proof}
\begin{lemma}\label{lotsofKauffmanstates}
For any (not necessarily connected) tangle diagram \(T\), we can find an equivalent diagram \(T'\) such that for any site \(s\in\mathbb{S}(T)=\mathbb{S}(T')\), \(T'\) has a Kauffman state. Furthermore, the labelling of any Kauffman state of \(T\) agrees with the labelling of any Kauffman state in \(T'\) belonging to the same site, considering any exponents modulo~2.
\end{lemma}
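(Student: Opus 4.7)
The plan is to construct $T'$ from $T$ by performing a Reidemeister II move near each point where a strand of the tangle meets the boundary circle, exactly in the spirit of the construction used in the proof of lemma~\ref{neighbouringsites}. Each such RMII creates a small bigon between a strand and a nearby incident strand; the bigon is a new closed region and contributes two new crossings whose markers have additional flexibility beyond that of $T$.

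If $T$ admits no Kauffman state at any site the claim is vacuous, so I would start with any Kauffman state $x_{0}$ of $T$ at some site $s_{0}$; when $T$ is connected such a state exists by lemma~\ref{lem:Kauffmanstatesexist}, and in general one handles the connected components one at a time. I would extend $x_{0}$ to a state $x'_{0}$ of $T'$ by placing, at each inserted bigon, one marker inside the bigon and the second marker in the unique ``neutral'' outside slot that lies in neither of the two open regions flanking the inserted RMII. The key point is that, in each inserted bigon, the second marker can alternatively be placed in either of the two adjacent open regions on the boundary; reassigning it from its neutral position to one of these two regions moves a marker between two adjacent open regions without touching the rest of the diagram. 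Since any two sites of a $2n$-ended diagram are connected by a sequence of such single-swap moves between adjacent open regions, starting from $x'_{0}$ and iterating these local reassignments produces a Kauffman state of $T'$ at any prescribed site~$s$.

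For the mod-$2$ labelling claim, I would analyze the contribution of each inserted RMII separately, using the Alexander codes in figure~\ref{figAlexCodesForNabla}. The two new crossings have opposite signs, so in the neutral configuration the product of their two markers simplifies to a factor involving only a power of $h$: all fractional $t_{i}$-powers from the two Alexander codes cancel, so the $t_{i}$-exponents of $x'_{0}$ and $x_{0}$ agree on the nose. When one performs a single swap as above, the contribution of the two relevant markers changes by $t^{\pm 1}$, where $t$ is the colour of the strand separating the two open regions in question; this is exactly the change detected in $T$ by lemma~\ref{neighbouringsites}. Combined with lemma~\ref{lem:Powerdistance2}, which guarantees that any two Kauffman states at the same site have $t_{i}$-exponents differing by multiples of $2$, this yields the desired mod-$2$ agreement between any Kauffman state of $T$ at site $s$ and any Kauffman state of $T'$ at site $s$.

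The main obstacle I expect is the bookkeeping required to ensure that the many inserted RMIIs near the boundary do not interfere with one another. Each bigon interacts locally with the two open regions it separates, but one must check that all the local shift operations can be performed independently to realize an arbitrary target site, and that the mod-$2$ cancellation in the neutral configuration survives globally when the bigons are chained around the boundary circle. This amounts to a careful but essentially diagrammatic case check on the Alexander codes of the newly inserted crossings, handled uniformly by observing that each bigon contributes a pair of markers on crossings of opposite sign.
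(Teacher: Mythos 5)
Your proposal is correct and follows essentially the same route as the paper: repeatedly insert Reidemeister~II moves near the boundary exactly as in the proof of lemma~\ref{neighbouringsites}, note that the corresponding Kauffman states have unchanged labellings while the enlarged set of states realises every site, and deduce the mod-2 statement by combining this with lemma~\ref{lem:Powerdistance2} (and the swap analysis of lemma~\ref{neighbouringsites}). The only slip is the claim that the statement is vacuous when $T$ has no Kauffman state: the first assertion, that $T'$ has a state at \emph{every} site, still has content in that case, and it is handled as in the paper by first modifying the diagram to be connected so that lemma~\ref{lem:Kauffmanstatesexist} applies.
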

\begin{proof}
If there is no Kauffman state in $T$, we can modify the diagram such that the hypotheses of lemma~\ref{lem:Kauffmanstatesexist} are satisfied. So we may assume without loss of generality that there exists a Kauffman state in $T$ for some site. We now repeatedly apply the same method as in the proof above, noting that a Reidemeister II move merely enlarges the set of Kauffman states and the labellings of corresponding Kauffman states agree.
\end{proof}
\begin{proof}[Proof of proposition~\ref{propexponentsmodtwo}]
Let $s$ be the site of a Kauffman state $x$. We consider closed components first. Suppose that all linking numbers are non-zero. Then the claim follows from proposition~\ref{propsetpmone} and lemma~\ref{lem:Powerdistance2} if $\nabla^s$ of the tangle $T$ with all closed components removed is not identically zero. By lemma~\ref{noclosedthennonzero}, we can find at least one site $s'$ of this tangle, such that for any Kauffman state in $\mathbb{K}(T,s')$, the claim is true. By lemma~\ref{lotsofKauffmanstates}, we may assume that there is a Kauffman state for each site. Then by lemma~\ref{neighbouringsites}, we know that in particular the exponents of the colours of closed components agree for all sites. \\
The general case, where linking numbers may be zero, can be reduced to the first case. For this, we modify the diagram as follows: Consider two strands whose mutual linking number is zero. If they meet at a crossing, we can reverse the over- and under-strands at one such crossing. This changes both the linking number and the exponents of both colours by $\pm1$. If two components have no crossing in common, we perform some Reidemeister II moves to pull one strand across the other to get one. Note that this does not affect the labelling of the Kauffman states (as in the previous proof). This finishes the proof for closed components.\\
For open components, the idea is to add twists at the outgoing strand until it bounds the same open region as the incoming strand, so that for some sites, we can close this component and apply the result above. We use the convention that the $t$-strand always goes over its left neighbour. We can then distinguish the cases shown in figure~\ref{fig:twistcasesforprop}, depending on the orientation of the other strand involved and on whether the open region between these two strands is occupied by a marker or not.

\begin{figure}[t]
\psset{unit=1}
\begin{pspicture}(-7,-2.75)(7,1.4)
\rput(-5.1,0){
\pscustom{
\psline(-0.9,-2.1)(-0.9,-0.9)
\psline{->}(-0.9,-0.9)(0.9,0.9)}
\pscircle*[linecolor=white](0,0){0.3}
\pscustom{
\psline(0.9,-2.1)(0.9,-0.9)
\psline{->}(0.9,-0.9)(-0.9,0.9)}
\psecurve[linecolor=gray](-3.5,-2)(-1.3,-1.5)(1.3,-1.5)(3.5,-2)
\rput(1.4,-1.2){$\partial D^2$}
\uput{0.6}[90](0,0){\psdot[dotsize=6pt](0,0)}
\uput{1}[90](0,0){\rput(0,0){$t^{-\frac{1}{2}}$}}
\rput(0,-1.8){occupied}
\rput(-1.1,1.1){$t$}
\rput(0,-2.5){negative crossing}}

\rput(-1.7,0){
\pscustom{
\psline{->}(0.9,0.9)(-0.9,-0.9)
\psline(-0.9,-0.9)(-0.9,-2.1)}
\pscircle*[linecolor=white](0,0){0.3}
\pscustom{
\psline(0.9,-2.1)(0.9,-0.9)
\psline{->}(0.9,-0.9)(-0.9,0.9)}
\psecurve[linecolor=gray](-3.5,-2)(-1.3,-1.5)(1.3,-1.5)(3.5,-2)
\rput(1.4,-1.2){$\partial D^2$}
\uput{0.6}[90](0,0){\psdot[dotsize=6pt](0,0)}
\uput{1}[90](0,0){\rput(0,0){$t^{-\frac{1}{2}}$}}
\rput(0,-1.8){occupied}
\rput(-1.1,1.1){$t$}
\rput(0,-2.5){positive crossing}}

\rput(1.7,0){
\pscustom{
\psline(-0.9,-2.1)(-0.9,-0.9)
\psline{->}(-0.9,-0.9)(0.9,0.9)}
\pscircle*[linecolor=white](0,0){0.3}
\pscustom{
\psline(0.9,-2.1)(0.9,-0.9)
\psline{->}(0.9,-0.9)(-0.9,0.9)}
\psecurve[linecolor=gray](-3.5,-2)(-1.3,-1.5)(1.3,-1.5)(3.5,-2)
\rput(1.4,-1.2){$\partial D^2$}
\uput{0.6}[-90](0,0){\psdot[dotsize=6pt](0,0)}
\uput{1}[-90](0,0){\rput(0,0){$t^{\frac{1}{2}}$}}
\rput(0,-1.8){unocc.}
\rput(-1.1,1.1){$t$}
\rput(0,-2.5){negative crossing}}

\rput(5.1,0){
\pscustom{
\psline{->}(0.9,0.9)(-0.9,-0.9)
\psline(-0.9,-0.9)(-0.9,-2.1)}
\pscircle*[linecolor=white](0,0){0.3}
\pscustom{
\psline(0.9,-2.1)(0.9,-0.9)
\psline{->}(0.9,-0.9)(-0.9,0.9)}
\psecurve[linecolor=gray](-3.5,-2)(-1.3,-1.5)(1.3,-1.5)(3.5,-2)
\rput(1.4,-1.2){$\partial D^2$}
\uput{0.6}[-90](0,0){\psdot[dotsize=6pt](0,0)}
\uput{1}[-90](0,0){\rput(0,0){$t^{\frac{1}{2}}$}}
\rput(0,-1.8){unocc.}
\rput(-1.1,1.1){$t$}
\rput(0,-2.5){positive crossing}}

\end{pspicture}
\caption{Four cases distinguished in the proof of proposition~\ref{propexponentsmodtwo} }\label{fig:twistcasesforprop}
\end{figure}
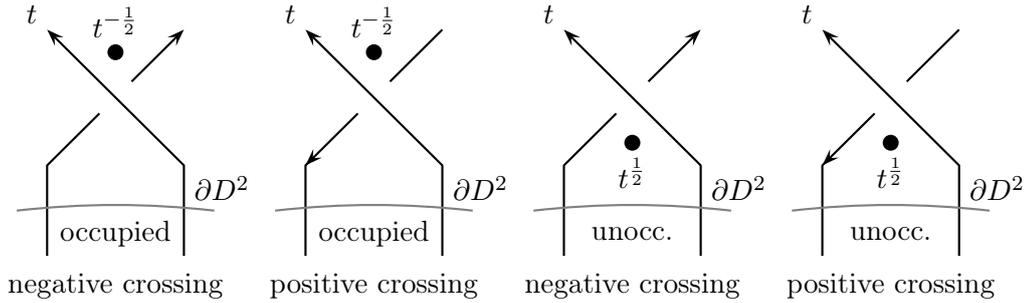
\noindent
Now, suppose the open region on the left of the incoming $t$-strand is occupied. Then we can close the $t$-component and apply the result above. The exponent of $t$ in the original diagram is equal to $\lk_\text{new}(t)+1+\tfrac{1}{2}((r_o-1)-r_u)$, where $\lk_\text{new}(t)$ denotes the linking number of the closed $t$-component in the new diagram. Suppose the open region on the left of the incoming $t$-strand is unoccupied, so we cannot close the $t$-component. One way to solve this problem is by ``pushing'' a marker of an open region into this region, using lemmas~\ref{lotsofKauffmanstates} and~\ref{neighbouringsites}. However, this does not work for $2$-ended tangles where we do not have any markers to ``push around''. 
Although one can, of course, look at this case separately, we give an alternative argument that works in general: We still close the $t$-component, but instead of glueing a single strand to the diagram, we attach a diagram that consists of two parallel strands that are twisted by a Reidemeister II move. By considering $\nabla$ for the attached two-crossing diagram, we see that the exponent of $t$ in the original Kauffman state is given by $\lk_\text{new}(t)+1+\tfrac{1}{2}(r_o-(r_u-1))+1\mod 2$, i.e. the same formula as in the first case.\\
So it remains to calculate $\lk_\text{new}(t)$. We have
$\lk_\text{new}(t)=\lk_T(t)+\tfrac{1}{2}(s_i-s_o)$. Substituting this into the formula above gives the first formula in the statement of the proposition. The equivalence to the second one is seen by substituting the obvious identity
$s_i+s_o=r_o+r_u-1$ into the first formula (and adding $2r_u$).
\end{proof}

\section{4-ended tangles and mutation invariance}\label{sec:4endedandmutation}

\begin{proposition}\label{prop:fourended}
Let \(T\) be a 4-ended tangle. Then the endpoints of the same strands are either neighbours on \(\partial D^2\) (case I) or not (case II). We consider the following orientations for these two cases:
$$
\begin{pspicture}(-2.6,-1.5)(2.6,1.05)
\rput(-1.5,0){
\SpecialCoor
\psline[linecolor=violet]{->}(0.6;45)(1;45)
\psline[linecolor=violet]{<-}(0.6;-45)(1;-45)

\psline[linecolor=darkgreen]{->}(0.6;135)(1;135)
\psline[linecolor=darkgreen]{<-}(0.6;-135)(1;-135)

\uput{0.6}[0](0,0){$c$}
\uput{0.6}[90](0,0){$d$}
\uput{0.6}[180](0,0){$a$}
\uput{0.6}[270](0,0){$b$}

\uput{1.1}[135](0,0){$\textcolor{darkgreen}{p}$}
\uput{1.1}[-135](0,0){$\textcolor{darkgreen}{p}$}
\uput{1.1}[45](0,0){$\textcolor{violet}{q}$}
\uput{1.1}[-45](0,0){$\textcolor{violet}{q}$}
\pscircle[linestyle=dotted](0,0){1}

\rput(0,-1.3){\textit{case I}}
}

\rput(1.5,0){
\SpecialCoor
\psline[linecolor=violet]{->}(0.6;45)(1;45)
\psline[linecolor=violet]{<-}(0.6;-135)(1;-135)

\psline[linecolor=darkgreen]{->}(0.6;135)(1;135)
\psline[linecolor=darkgreen]{<-}(0.6;-45)(1;-45)

\uput{0.6}[0](0,0){$c$}
\uput{0.6}[90](0,0){$d$}
\uput{0.6}[180](0,0){$a$}
\uput{0.6}[270](0,0){$b$}

\uput{1.1}[135](0,0){$\textcolor{darkgreen}{p}$}
\uput{1.1}[-45](0,0){$\textcolor{darkgreen}{p}$}
\uput{1.1}[45](0,0){$\textcolor{violet}{q}$}
\uput{1.1}[-135](0,0){$\textcolor{violet}{q}$}
\pscircle[linestyle=dotted](0,0){1}
\rput(0,-1.3){\textit{case II}}
}

\end{pspicture}
$$
Then in both cases, 
\begin{equation}
\nabla_T^{b}=\nabla_{\rr(T)}^{d}.\label{eqn:bd}\tag{b-d}
\end{equation}
Furthermore, in case I
\begin{align}
(p-p^{-1})\cdot\nabla_T^{a}&=(q-q^{-1})\cdot\nabla_T^{c}, \tag{I a-c}\label{eqn:Iac}\\
\nabla_T^{c}&=\nabla_{\rr(T)}^{c} \text{ and }\tag{I c-c}\label{eqn:Icc}\\
(pq-p^{-1}q^{-1})\cdot\nabla_T^{c}&=
(p-p^{-1})\left(\nabla_T^{d}-\nabla_{\rr(T)}^{d}\right)\tag{I c-d}\label{eqn:Icd}
\end{align}
and in case II
\begin{align}
\nabla_T^{a}&=\nabla_{\rr(T)}^{c},\tag{II a-c}\label{eqn:IIac}\\
(pq-p^{-1}q^{-1}) \cdot\nabla_T^{c}
&= 
(p-p^{-1})\cdot\nabla_T^{d}-(q-q^{-1})\cdot\nabla_{\rr(T)}^{d}\text{ and }\tag{II c-d}\label{eqn:IIcd}\\
(p^{-1}q-pq^{-1}) \cdot\nabla_T^{d}
&= (q-q^{-1})\cdot\nabla_T^{a}-(p-p^{-1})\cdot\nabla_{\rr(T)}^{a}.\tag{II d-a}\label{eqn:IIda}
\end{align}
For orientations different from the above, we get similar relations, which we can easily compute using proposition~\ref{prop:reverseorientI} and \ref{propexponentsmodtwo}. 
\end{proposition}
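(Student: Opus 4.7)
My approach is to derive each identity from the glueing formula (Proposition~\ref{prop:glueing}) together with symmetries of the Conway potential function (Theorem~\ref{thm:CPFprops}) and the orientation-reversal formulae (Corollary~\ref{cor:revorientrevisited}, Proposition~\ref{prop:reverseorientI}). For each identity I would close $T$ by glueing in a suitable simple tangle — a trivial arc or a single crossing — and then compute the invariant of the resulting diagram in two different ways, equating the two expressions.

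For the rotational identities \eqref{eqn:bd}, \eqref{eqn:Icc} and \eqref{eqn:IIac}, I would use the $\pi$-rotation of the $3$-ball about a horizontal axis through the plane of the diagram. In case~I this rotation swaps sites $b$ and $d$ while fixing the set $\{a,c\}$; in case~II it swaps $a$ and $c$ while fixing $\{b,d\}$. Because it preserves the handedness of crossings but reverses the orientation of every strand, it induces a bijection between Kauffman states of $T$ at one site and Kauffman states of $\rr(T)$ at the rotated site. Combined with the orientation-reversal bookkeeping of Corollary~\ref{cor:revorientrevisited}, this yields each of the three identities immediately.

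For the single-colour identity \eqref{eqn:Iac}, I would close both $p$-endpoints via a trivial external arc, turning $T$ into a $2$-ended tangle that represents a link $L$ with an extra closed $p$-component. The glueing formula together with Theorem~\ref{thm:twoended} expresses the Conway potential function of $L$ as a scalar multiple of $(p-p^{-1})\nabla_T^a$, because only the Kauffman states of $T$ with marker in region $a$ extend across the closing arc. Performing the analogous closure of the $q$-endpoints produces $(q-q^{-1})\nabla_T^c$ as a computation of the very same Conway potential function, and the identity follows. The cross-colour identities \eqref{eqn:Icd}, \eqref{eqn:IIcd}, and \eqref{eqn:IIda}, containing the factor $(pq\mp p^{-1}q^{-1})$, are the most delicate; for these I would glue in a single oriented crossing along one of the boundary arcs, expand $\nabla$ of the closure via Proposition~\ref{prop:glueing} into a sum over the four possible positions of the marker at the new crossing, and then compare with the analogous computation after reversing the orientation of one of the two strands of $T$. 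The difference of the two expressions cancels the symmetric terms, and the factor $(pq-p^{-1}q^{-1})$ arises from the algebraic identity $pq-p^{-1}q^{-1}=(p-p^{-1})q + p^{-1}(q-q^{-1})$ applied to the Alexander-code weights of the inserted crossing.

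The main obstacle is the bookkeeping of the signs and half-integer exponent shifts coming from Corollary~\ref{cor:revorientrevisited}: for open components the sign $(-1)^{s_i+r_u}$ depends on the combinatorial data of $T$ and behaves differently in cases~I and~II. Showing that these factors combine in each case into exactly the clean shapes $(pq-p^{-1}q^{-1})$ and $(p^{-1}q-pq^{-1})$ stated in the proposition is the heart of the computation. The final sentence of the proposition — that other orientations follow from Propositions~\ref{prop:reverseorientI} and~\ref{propexponentsmodtwo} — confirms that the stated orientations are the canonical ones under which this bookkeeping is cleanest, so I would prove each identity only for the orientations given and then translate to other orientations via those two propositions.
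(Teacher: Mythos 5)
Your argument for \eqref{eqn:Iac} is essentially the paper's (close the $p$-ends and the $q$-ends, observe both closures represent the same link, and apply theorem~\ref{thm:twoended}), but the announced proof of the three ``rotational'' identities \eqref{eqn:bd}, \eqref{eqn:Icc} and \eqref{eqn:IIac} has a genuine gap. Rotating the 3-ball by $\pi$ about an in-plane axis does not relate $T$ to $\rr(T)$: it relates $T$ to its Conway \emph{mutant} $T'$ (with orientations adjusted as in the definition of mutation). What the rotation gives is only the tautology $\nabla_T^{b}=\nabla_{T'}^{d}$; there is no induced bijection between Kauffman states of a diagram of $T$ and of a diagram of $\rr(T)$ unless $T$ happens to be symmetric under that rotation. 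In fact, the relations \eqref{eqn:bd}, \eqref{eqn:Icc}, \eqref{eqn:IIac} are exactly the non-trivial input which, combined with that tautology, yields mutation invariance (theorem~\ref{thm:mutation}); if they followed ``immediately'' from the rotation, the mutation theorem would be circular. These identities require a genuine computation: in the paper, \eqref{eqn:Icc} comes from closing the $q$-ends to get a link $L$ with $\nabla_L=\nabla_T^{c}/(p-p^{-1})$ and invoking the symmetry $\nabla_L=\nabla_{\rr(L)}=\rr(\nabla_L)$ of corollary~\ref{cor:recoverlinkcase}, while \eqref{eqn:bd} and \eqref{eqn:Icd} come from glueing a small two-crossing clasp tangle $U$ (whose four invariants are computed explicitly) to the top versus the bottom of $T$, observing that both closures give the same link, expanding via proposition~\ref{prop:glueing} and applying the same link-level symmetry; case~II is then reduced to case~I by glueing on a single twist, and \eqref{eqn:IIda} follows from \eqref{eqn:IIcd} by reversing the $p$-strand and rotating the configuration, using propositions~\ref{prop:reverseorientI} and~\ref{propexponentsmodtwo}.

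Two further points. First, for the cross-colour identities your plan to ``compare with the analogous computation after reversing the orientation of one of the two strands of $T$'' does not obviously produce the $\nabla_{\rr(T)}$ terms, which involve reversing \emph{all} strands of $T$; the mechanism that actually generates them is the substitution symmetry of the Conway potential of the \emph{closed} link, translated back to $T$ via corollary~\ref{coralloorientsrev}/corollary~\ref{cor:revorientrevisited} — this is also where the sign and half-integer bookkeeping you worry about is absorbed, so no separate case-by-case analysis of $(-1)^{s_i+r_u}$ is needed. Second, a small slip: rotation about the horizontal in-plane axis swaps the sites $b$ and $d$ and fixes $a$ and $c$ in \emph{both} cases I and II; the swap of $a$ and $c$ corresponds to the vertical in-plane axis. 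This does not affect the main issue above, but it signals that the site bookkeeping in your rotational argument needs care even once the argument is replaced by a correct one.
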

\pagebreak[3]

\begin{corollary}\label{cor:fourendedonecolour}
Let $T$ be a 4-ended tangle. We distinguish between the following two cases:
$$
\begin{pspicture}(-4.05,-1.5)(4.05,1.05)
\rput(1.5,0){
\SpecialCoor
\psline{<-}(0.6;45)(1;45)
\psline{->}(0.6;-45)(1;-45)

\psline{->}(0.6;135)(1;135)
\psline{<-}(0.6;-135)(1;-135)

\uput{1.1}[135](0,0){$t$}
\uput{1.1}[-135](0,0){$t$}
\uput{1.1}[45](0,0){$t$}
\uput{1.1}[-45](0,0){$t$}

\uput{0.6}[0](0,0){$c$}
\uput{0.6}[90](0,0){$d$}
\uput{0.6}[180](0,0){$a$}
\uput{0.6}[270](0,0){$b$}

\pscircle[linestyle=dotted](0,0){1}

\rput(0,-1.3){\textit{case ii}}
}

\rput(-1.5,0){
\SpecialCoor
\psline{->}(0.6;45)(1;45)
\psline{<-}(0.6;-135)(1;-135)

\psline{->}(0.6;135)(1;135)
\psline{<-}(0.6;-45)(1;-45)

\uput{1.1}[135](0,0){$t$}
\uput{1.1}[-135](0,0){$t$}
\uput{1.1}[45](0,0){$t$}
\uput{1.1}[-45](0,0){$t$}

\uput{0.6}[0](0,0){$c$}
\uput{0.6}[90](0,0){$d$}
\uput{0.6}[180](0,0){$a$}
\uput{0.6}[270](0,0){$b$}
\pscircle[linestyle=dotted](0,0){1}
\rput(0,-1.3){\textit{case i}}
}
\end{pspicture}
$$
We write \(\nabla^{a}=\nabla^{a}_T(t,t,t_1,\dots,t_{r-2})\), \(\nabla^{a}_{\rr}=\nabla^{a}_{\rr(T)}(t,t,t_1,\dots,t_{r-2})\) and similarly for \(b\), \(c\) and~\(d\). Then in both cases, we have \(\nabla^{c}=\nabla^{a}=\nabla^{a}_{\rr}\), \(\nabla^{b}=\nabla^{d}_{\rr}\) and \((t+t^{-1})\nabla^{c}=\nabla^{d}-\nabla^{d}_{\rr}\), so
\begin{equation*}
t\nabla^{a}+\nabla^{b}+t^{-1}\nabla^{c}=\nabla^{d}.
\end{equation*}
In case ii, we additionally get \(\nabla^{b}=\nabla^{d}\).
\end{corollary}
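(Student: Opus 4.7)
The plan is to derive all identities by specialising Proposition~\ref{prop:fourended} to the single-colour case, handling the two orientation configurations in turn.

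\textbf{Case i.} Both top endpoints are outgoing and both bottom endpoints are incoming, so the two strands are paired either vertically (endpoints $135 \leftrightarrow -135$ and $45 \leftrightarrow -45$, i.e.\ case I of Proposition~\ref{prop:fourended}) or diagonally (endpoints $135 \leftrightarrow -45$ and $45 \leftrightarrow -135$, i.e.\ case II). In the vertical subcase, setting $p = q = t$ in the relations (b-d), (I a-c), (I c-c), (I c-d) directly yields $\nabla^b = \nabla^d_\rr$, $\nabla^a = \nabla^c$, and $(t+t^{-1})\nabla^c = \nabla^d - \nabla^d_\rr$; combining $\nabla^a = \nabla^c$ with $\nabla^c = \nabla^c_\rr$ (which is (I c-c) applied to $T$) then gives $\nabla^a = \nabla^a_\rr$. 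In the diagonal subcase the key observation is that the coefficient $p^{-1}q - pq^{-1}$ in (II d-a) vanishes at $p = q$, forcing $\nabla^a = \nabla^a_\rr$; this combined with (II a-c) gives $\nabla^a = \nabla^c$, while (II c-d) and (b-d) supply the remaining identities. The combined formula $t\nabla^a + \nabla^b + t^{-1}\nabla^c = \nabla^d$ follows by eliminating $\nabla^a$, $\nabla^c$ and $\nabla^d_\rr$ using the four preceding identities.

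\textbf{Case ii.} The right strand is now oriented oppositely to case i. Write $T^{*}$ for the tangle obtained from $T$ by reversing the right strand, so that $T^{*}$ falls into case i and satisfies the identities just established. To transfer these back to $T$, temporarily colour the strands by two variables $p$ and $q$ and apply Corollary~\ref{cor:revorientrevisited} in the form
$$
\nabla^s_{\rr(T,q)}(p, q) \;=\; (-1)^{2\lk(q) + 1 + s_i + r_u}\, \nabla^s_T(p, q^{-1})
$$
to each of the four sites $s \in \{a, b, c, d\}$, then specialise $p = q = t$. The resulting formulas reproduce (A)--(D) of the corollary once the sign corrections, which depend only on $s_i$ and $r_u$ at the site, are checked to cancel in matching pairs across the relations of Proposition~\ref{prop:fourended}.

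The extra identity $\nabla^b = \nabla^d$ in case~ii reflects the fact that, in the case-ii configuration, the arcs $b$ and $d$ are configurationally indistinguishable after reversing the right strand: relation (b-d) applied to $T^{*}$ reads $\nabla^b_{T^{*}} = \nabla^d_{\rr(T^{*})}$, and the translation via Corollary~\ref{cor:revorientrevisited} with $p = q = t$ converts this into $\nabla^b_T = \nabla^d_T$. Combined with $\nabla^b = \nabla^d_\rr$ from (C), this forces $\nabla^d = \nabla^d_\rr$, and inserting this into (D) gives $\nabla^c = 0$ and hence $\nabla^a = 0$ in case~ii.

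The main obstacle is the sign bookkeeping in case~ii: for each of the four sites one must compute $s_i + r_u \bmod 2$ and verify that the resulting signs in Corollary~\ref{cor:revorientrevisited} conspire to give exactly the identities claimed. Given the left--right and top--bottom symmetry of the boundary configuration this is routine case analysis, but it is the only step that does not simply read off from Proposition~\ref{prop:fourended}.
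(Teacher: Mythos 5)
Your case i argument is essentially correct, and it takes a somewhat different route from the paper: there, the identity $\nabla^{a}=\nabla^{c}$ is obtained by closing the tangle at $a$ or at $c$ (both closures give the same link, so theorem~\ref{thm:twoended} applies), $\nabla^{a}=\nabla^{a}_{\rr}$ comes from the symmetries of the Conway potential function (theorem~\ref{thm:CPFprops}, parts (ii) and (iii)), and proposition~\ref{prop:fourended} is only invoked for the $b$--$d$ and $c$--$d$ relations. Your specialisation of the relations of proposition~\ref{prop:fourended} at $p=q=t$ achieves the same in case i, modulo the routine point you gloss over: to apply those relations to $\rr(T)$ you must first rotate the picture by $180^{\circ}$ (which swaps $a\leftrightarrow c$ and $b\leftrightarrow d$), since $\rr(T)$ does not carry the orientation for which the proposition is stated.

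The case ii argument, however, has a genuine gap. First, ``the right strand'' only exists in the vertical-connectivity subcase of case ii; if the two strands run horizontally, no single strand reversal produces a case i tangle (one obtains a $90^{\circ}$-rotated configuration and must also permute the sites). More seriously, the transfer via corollary~\ref{cor:revorientrevisited} is not mere sign bookkeeping: reversing a strand replaces its colour by (minus) its inverse, so once you set both colours of the case ii tangle $T$ equal to $t$, the case i identities for $T^{*}$ are being evaluated at the mixed point where one variable is $t$ and the other is $-t^{-1}$, not at $p=q=t$. At that point the coefficient $pq-p^{-1}q^{-1}$ of the $c$--$d$ relation vanishes, so the transferred relation degenerates to $\nabla^{d}=\nabla^{d}_{\rr}$ rather than reproducing $(t+t^{-1})\nabla^{c}=\nabla^{d}-\nabla^{d}_{\rr}$; the list of identities cannot simply be carried over verbatim. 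Your own endgame makes the failure visible: you conclude $\nabla^{a}=\nabla^{c}=0$ for every case ii tangle, which is false. For instance, reverse one strand of the two-crossing twist tangle $U$ used in the proof of proposition~\ref{prop:fourended}: combining the table given there with proposition~\ref{prop:reverseorientI}, this case ii tangle has $\nabla^{a}=\nabla^{c}=t^{-1}-t\neq 0$ while $\nabla^{b}=\nabla^{d}=\nabla^{d}_{\rr}=1$, so the chain of identities you derive in case ii breaks precisely at the transfer step (and the example shows that in case ii the $c$--$d$ relation indeed degenerates to $\nabla^{d}=\nabla^{d}_{\rr}$). The paper sidesteps all of this: in case ii it proves $\nabla^{a}=\nabla^{c}$, and the extra identity $\nabla^{b}=\nabla^{d}$, directly by the closing argument (closing at $a$/$c$, respectively at $b$/$d$, is orientation-consistent in case ii and yields the same link in either region), and it gets $\nabla^{a}=\nabla^{a}_{\rr}$ from the Conway-potential symmetries, so no single-strand reversal and no sign analysis is ever needed.
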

\begin{proof}
The first identity is seen by closing $a$ or $c$. The second follows from the properties of the Conway potential function, theorem \ref{thm:CPFprops}(\ref{CPFpropssymmetry}) and (\ref{CPFpropRevOneStrand}). The third and fourth follow from proposition \ref{prop:fourended}. The last relation is seen again by closing $b$ or $d$.
\end{proof}

\begin{figure}[t]
\centering
\psset{unit=0.6, yunit=0.9}
\begin{pspicture}[showgrid=false](-4,-3.7)(2.6,3.7)

\rput(-0.7,0){
\pscircle[linestyle=dotted](0,0){3.3}
\rput(-2.8,0){$a$}
\rput(0,-2.8){$b$}
\rput(2.8,0){$c$}
\rput(0,2.8){$d$}
}

\psecurve[linecolor=darkgreen](-3,3.4)(-2.5,3)(-2,2)(-2,-2)(-2.5,-3)(-3,-3.4)

\pscircle*[linecolor=white](-2,2){0.2}
\pscircle*[linecolor=white](-2,-2){0.2}

\psecurve[linecolor=violet](-2,2)(0,2)(0.75,1)(-0.75,-1)(0,-2)(1.1,-3)(1.3,-5)

\psecurve[linecolor=violet](0,2)(-0.75,1)(0.75,-1)(0,-2)(-2,-2)(-3,0)(-2,2)(0,2)(0.75,1)

\pscircle*[linecolor=white](0,2){0.2}
\pscircle*[linecolor=white](0,0){0.2}
\pscircle*[linecolor=white](0,-2){0.2}

\psecurve[linecolor=violet](0,2)(0.75,1)(-0.75,-1)(0,-2)
\psecurve[linecolor=violet](1.3,5)(1.1,3)(-0.75,1)(0.75,-1)
\psecurve[linecolor=violet](-0.75,1)(0.75,-1)(0,-2)(-2,-2)(-3,0)

\end{pspicture}
\caption{The trefoil knot with an additional strand, illustrating remark~\ref{rem:nobetter4endedrelation}}\label{fig:nobetter4endedrelation}
\end{figure}
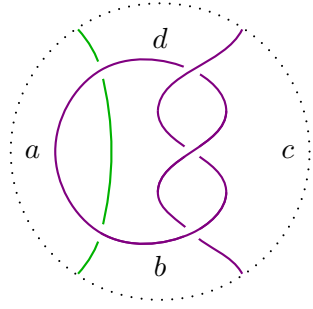
\begin{Remark}\label{rem:nobetter4endedrelation}
Proposition~\ref{prop:fourended} tells us that there is basically only one piece of information in the polynomial Alexander invariant of 4-ended tangles. One might ask whether all four invariants in case I contain the same information like in case II. However, this is not the case: For any knot $K$, we can consider the two-component tangle diagram obtained from $K$ by cutting it once to get one strand of the tangle, adding an unknotted strand to it and performing a Reidemeister II move to get rid of any multiple regions on the boundary, like in figure~\ref{fig:nobetter4endedrelation} for $K=\text{trefoil knot}$. 
Then both $a$ and $c$ are zero because closing either of these regions results in a link diagram with two unlinked components. However, for matching orientations of the strands, $b$ and $d$ both give the Alexander polynomial $\Delta_K$ of the knot $K$.
\end{Remark}
\begin{proof}
Relation \eqref{eqn:Iac} follows from theorem~\ref{thm:twoended} and the fact that the two diagrams obtained by closing either the $p$-component or the $q$-component both represent the same link. Next, using the notation from corollary \ref{cor:recoverlinkcase} and the corollary itself, we obtain
$$\rr\left(\frac{\nabla_T^{c}}{p-p^{-1}}\right)=\frac{\nabla_T^{c}}{p-p^{-1}}.$$
so we immediately get \eqref{eqn:Icc}. \pagebreak\\
Next, we compute the invariants for the following tangle~$U$:

\begin{center}
\psset{unit=0.7}
\begin{pspicture}(-8.5,-1.4)(8.5,3.2)
\rput[c](5.7,0.9){\parbox{4.5cm}{$\nabla_U^{A}= (q-q^{-1})$\\
$\nabla_U^{B}= p^{-1}q^{-1}$\\
$\nabla_U^{C}= (p-p^{-1})$\\
$\nabla_U^{D}= pq$}}

\rput(0,0){
\rput(0,1.8){
\psline[linecolor=darkgreen]{->}(0.9,-0.9)(-0.9,0.9)
\pscircle*[linecolor=white](0,0){0.3}
\psline[linecolor=violet]{->}(-0.9,-0.9)(0.9,0.9)
\uput{0.6}[90](0,0){$D$}
}
\psecurve[linestyle=dotted](-0.9,-0.9)(0.9,-0.9)(0.9,2.7)(-0.9,2.7)(-0.9,-0.9)(0.9,-0.9)(0.9,2.7)

\psline[linecolor=violet]{->}(0.9,-0.9)(-0.9,0.9)
\pscircle*[linecolor=white](0,0){0.3}
\psline[linecolor=darkgreen]{->}(-0.9,-0.9)(0.9,0.9)
\uput{0.6}[-90](0,0){$B$}

\rput(-1.2,0.9){$A$}
\rput(1.2,0.9){$C$}

\rput(0,1.8){
\uput{1.35}[135](0,0){$\textcolor{darkgreen}{p}$}
\uput{1.35}[45](0,0){$\textcolor{violet}{q}$}}
\uput{1.35}[-135](0,0){$\textcolor{darkgreen}{p}$}
\uput{1.35}[-45](0,0){$\textcolor{violet}{q}$}
}
\end{pspicture}
\end{center}
We observe that the diagrams obtained by glueing the above diagram either to the top or to the bottom  of $T$ and closing the $q$-component represent the same link. In the first case, the corresponding polynomial is given by $\nabla_U^{C}\cdot\nabla_T^{d}+\nabla_U^{B}\cdot\nabla_T^{c}$
and the second, by $\nabla_U^{C}\cdot\nabla_T^{b}+\nabla_U^{D}\cdot\nabla_T^{c}$. Again, corollary \ref{cor:recoverlinkcase} implies
\begin{equation}\label{eqncomb}
\rr\left(\frac{\nabla_U^{C}\,\nabla_T^{d}+\nabla_U^{B}\,\nabla_T^{c}}{p-p^{-1}}\right)=\frac{\nabla_U^{C}\,\nabla_T^{d}+\nabla_U^{B}\,\nabla_T^{c}}{p-p^{-1}}=\frac{\nabla_U^{C}\,\nabla_T^{b}+\nabla_U^{D}\,\nabla_T^{c}}{p-p^{-1}},
\end{equation}
After some simplification, we get
\begin{eqnarray*}
\rr(\nabla_U^{C})\cdot\nabla_{\rr(T)}^{d}+\rr(\nabla_U^{B})\cdot\nabla_{\rr(T)}^{c}&=&\nabla_U^{C}\cdot\nabla_T^{b}+\nabla_U^{D}\cdot\nabla_T^{c}
\end{eqnarray*}
Finally, using $\rr(\nabla_U^{B})=\nabla_U^{D}$ and $\rr(\nabla_U^{C})=\nabla_U^{C}$, we obtain the desired identities \eqref{eqn:bd} for case I. For \eqref{eqn:Icd}, we just use the first half of equations (\ref{eqncomb}) and, after simplification, we get
$$\rr(\nabla_U^{C})\cdot\nabla_{\rr(T)}^{d}+\rr(\nabla_U^{B})\cdot\nabla_{\rr(T)}^{c}=\nabla_U^{C}\cdot\nabla_T^{d}+\nabla_U^{B}\cdot\nabla_T^{c}.$$
Substituting $\nabla_{\rr(T)}^{c}$, $\rr(\nabla_U^{B})$ and $\rr(\nabla_U^{C})$ gives the desired identity.\\
By a similar method, we can derive the relations for case II. We first show the second relation. For this, we apply the corresponding statement from case I to the diagram obtained by glueing the following positive twist to the bottom of $T$:
$$
\begin{pspicture}(-1.45,-1.35)(1.45,1.35)
\psline[linecolor=violet]{->}(0.9,-0.9)(-0.9,0.9)
\pscircle*[linecolor=white](0,0){0.3}
\psline[linecolor=darkgreen]{->}(-0.9,-0.9)(0.9,0.9)
\uput{0.6}[90](0,0){$\textcolor{darkgreen}{p}^{\frac{1}{2}} \textcolor{violet}{q}^{\frac{1}{2}}$}
\uput{0.3}[180](0,0){$\textcolor{darkgreen}{p}^{\frac{1}{2}} \textcolor{violet}{q}^{-\frac{1}{2}}$}
\uput{0.75}[-90](0,0){$-\textcolor{darkgreen}{p}^{-\frac{1}{2}} \textcolor{violet}{q}^{-\frac{1}{2}}$\,\,}
\uput{0.3}[0](0,0){$\textcolor{darkgreen}{p}^{-\frac{1}{2}} \textcolor{violet}{q}^{\frac{1}{2}}$}
\end{pspicture}
$$
Then, we have 
$$-p^{-\frac{1}{2}}q^{-\frac{1}{2}}\nabla_T^{b}=\rr\left(p^{\frac{1}{2}}q^{\frac{1}{2}}\nabla_T^{d}\right),$$
which is \eqref{eqn:bd}. \eqref{eqn:IIcd} follows similarly; we have
 $$(pq-p^{-1}q^{-1})\cdot\left(p^{\frac{1}{2}}q^{\frac{1}{2}}\nabla_T^{c}+p^{-\frac{1}{2}}q^{\frac{1}{2}}\nabla_T^{b}\right)=
(p-p^{-1})\left(p^{\frac{1}{2}}q^{\frac{1}{2}}\nabla_T^{d}-\rr\left(p^{\frac{1}{2}}q^{\frac{1}{2}}\nabla_T^{d}\right)\right).$$
Substituting $\nabla_T^{b}$ yields
\begin{eqnarray*}
(pq-p^{-1}q^{-1})\nabla_T^{c}&=&
(p-p^{-1})\nabla_T^{d}+(q^{-1}-p^{-2}q^{-1})\cdot\nabla_{\rr(T)}^{d}-(q-p^{-2}q^{-1})\cdot\nabla_{\rr(T)}^{d}\\
&=&
(p-p^{-1})\nabla_T^{d}-(q-q^{-1})\nabla_{\rr(T)}^{d}.
\end{eqnarray*}
For \eqref{eqn:IIac}, we compare the two diagrams obtained by glueing a single positive twist either to the top or to the bottom of $T$ and closing the $q$-strand. \pagebreak[3]\\
We obtain 
$$\rr\left(\frac{-p^{-\frac{1}{2}}q^{-\frac{1}{2}}\nabla_T^{a}+p^{\frac{1}{2}}q^{-\frac{1}{2}}\nabla_T^{d}}{p-p^{-1}}\right)=\frac{p^{\frac{1}{2}}q^{\frac{1}{2}}\nabla_T^{c}+p^{-\frac{1}{2}}q^{\frac{1}{2}}\nabla_T^{b}}{p-p^{-1}},$$
This simplifies to
$$p^{\frac{1}{2}}q^{\frac{1}{2}}\nabla_{\rr(T)}^{a}+p^{-\frac{1}{2}}q^{\frac{1}{2}}\nabla_{\rr(T)}^{d}=p^{\frac{1}{2}}q^{\frac{1}{2}}\nabla_T^{c}+p^{-\frac{1}{2}}q^{\frac{1}{2}}\nabla_T^{b}$$
Using \eqref{eqn:bd}, the result follows. Finally, relation \eqref{eqn:IIda} is obtained from the previous one using proposition~\ref{prop:reverseorientI} and \ref{propexponentsmodtwo}. We reverse the $p$-strand, so we get
\begin{eqnarray*}
(pq^{-1}-p^{-1}q) \cdot \nabla_{\rr(T,p)}^{c}
&=&
(p-p^{-1})\cdot \nabla_{\rr(T,p)}^{d}-(q-q^{-1})\cdot\nabla_{\rr(\rr(T),p)}^{d}.
\end{eqnarray*}
We observe that $r(T,p)$ also belongs to case II; however, to get the same configuration as in the proposition, we have to rotate it by $90^\circ$ anticlockwise and switch $p$ and $q$. Doing this for the identity above gives us the required result.
\end{proof}
\begin{figure}[b]
\centering
\psset{unit=0.45}
\begin{pspicture}[showgrid=false](-7,-4.3)(7,5.4)

\psline[linestyle=dotted,linewidth=2pt,arrows=->](-4.5,0)(4.5,0)
\psline[linestyle=dotted,linewidth=2pt,arrows=->](0,-4.5)(0,4.5)

{\psset{linewidth=1pt,linecolor=gray}
\psline(1.2,1.2)(3.3,3.3)
\psline(-1.2,-1.2)(-3.3,-3.3)
\psline(1.2,-1.2)(3.3,-3.3)
\psline(-1.2,1.2)(-3.3,3.3)
}

\pscircle[linestyle=dotted](0,0){3}

\rput[l](4.6,0){$x$-axis}
\rput[b](0,4.6){$y$-axis}

\rput[br](-0.3,0.3){\psframebox[framesep=0pt,linecolor=white, fillstyle=solid,fillcolor=white]{$z$-axis}}

\rput(-0.8,-2){$T_\circ$}

\rput(-3.3,-1){$A$}
\rput(1,-3.3){$B$}
\rput(3.3,1){$C$}
\rput(-1,3.3){$D$}

\psdot[dotsize=11pt](0,0)
\psdot[dotsize=9pt,linecolor=white](0,0)
\psdot[dotsize=6pt](0,0)
\end{pspicture}
\caption{The three mutation axes}\label{fig:MutationAxes}
\end{figure}
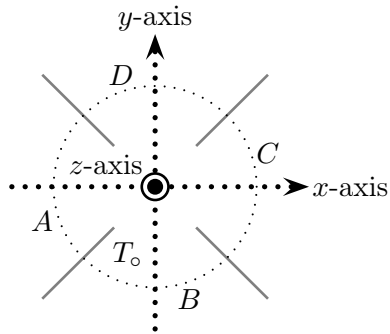
\begin{definition}
Let $T$ be a tangle and $T_\circ$ a 4-ended tangle obtained by intersecting $T$ with a closed 3-ball $B^3$. We may assume that all four tangle ends of $T_\circ$ lie equally spaced on a great circle on $\partial B^3$. 
Let $T'$ be the tangle obtained from~$T$ by rotation of $T_\circ$ by $\pi$ about one of the three axes that switch pairs of endpoints of~$T_\circ$ as shown in figure~\ref{fig:MutationAxes}. We say $T'$ is obtained from $T$ by \textbf{mutation} or $T'$ is a \textbf{mutant} of $T$. $T_\circ$ is called the \textbf{mutating tangle}. 
If $T$ is oriented, we choose an orientation of $T'$ that agrees with the one for $T$ outside of $B^3$. If this means that we need to reverse the orientation of the two open components of $T_\circ$ then we also reverse the orientation of all other components of~$T_\circ$ in $T'$; otherwise we do not change any orientation.
\end{definition}
\begin{Remark}
The definition above is equivalent to the one given in the introduction for links (definition~\ref{def:INTROmutation}). This can be easily seen by twisting the ends of a mutating tangle, see remark~\sref{Rem:mutationconj}.
\end{Remark}
\begin{theorem}\label{thm:mutation}
Let \(T\) be an oriented tangle and \(T'\) a mutant of \(T\). Suppose the colours of the two open strands of the mutating tangle agree. Then for all sites \(s\in\mathbb{S}(T)=\mathbb{S}(T')\),
$$\nabla_T^s=\nabla_{T'}^s.$$
\end{theorem}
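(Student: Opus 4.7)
The strategy is to split $T$ along the boundary sphere of $B^3$ into the mutating tangle $T_\circ$ and its complement $T_\text{ext} := T \setminus T_\circ$, and then reduce the statement to the symmetries of $\nabla_{T_\circ}^{s_\circ}$ coming from corollary~\ref{cor:fourendedonecolour}. The hypothesis that the two open strands of $T_\circ$ have the same colour is exactly what lets us apply that corollary with $p = q = t$.

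Step~1: By the splitting formula (proposition~\ref{prop:glueing}),
$$\hat{\nabla}_T^s \;=\; \sum_{(s_\circ,\, s_\text{ext})} \iota_\circ(\hat{\nabla}_{T_\circ}^{s_\circ}) \cdot \iota_\text{ext}(\hat{\nabla}_{T_\text{ext}}^{s_\text{ext}}),$$
where the sum is over pairs with $s_\circ \cap s_\text{ext} = \emptyset$ and $(s_\circ \cup s_\text{ext}) \cap O(T) = s$. Writing the analogous formula for $T'$ with $T_\circ'$ in place of $T_\circ$, I observe that the $T_\text{ext}$-factors are identical in both expansions because $T$ and $T'$ agree outside $B^3$. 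So it suffices to show that after identifying the two open colours of $T_\circ$ as a single variable $t$, each term $\iota_\circ(\hat{\nabla}_{T_\circ'}^{s_\circ})$ equals the corresponding term coming from some $\sigma(s_\circ)$ for $T_\circ$, where $\sigma$ is the permutation of boundary arcs induced by the rotation.

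Step~2: Each of the three mutation axes acts on the four boundary arcs $\{a,b,c,d\}$ of $T_\circ$ (labelled as in figure~\ref{fig:MutationAxes}) by one of the three non-trivial involutions in the Klein four-group: the $z$-axis rotation swaps $a\leftrightarrow c$ and $b\leftrightarrow d$, while the $x$- and $y$-axis rotations swap $a\leftrightarrow b,\,c\leftrightarrow d$ and $a\leftrightarrow d,\,b\leftrightarrow c$ respectively. This extends to a permutation $\sigma$ of $\mathbb{S}(T_\circ)$. Unwinding the mutation definition, $T_\circ'$ considered with its inherited orientation is either $T_\circ$ rotated (when the rotation preserves the orientations of the two open strands) or $\rr(T_\circ)$ rotated (when it reverses them and we then reverse all components). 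So $\hat{\nabla}_{T_\circ'}^{s_\circ}$ equals either $\hat{\nabla}_{T_\circ}^{\sigma(s_\circ)}$ or $\hat{\nabla}_{\rr(T_\circ)}^{\sigma(s_\circ)}$, the two cases being dictated purely by the axis and the orientation configuration (case~I vs.\ case~II of proposition~\ref{prop:fourended}).

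Step~3: Now I set $p = q = t$ and apply corollary~\ref{cor:fourendedonecolour}. The identities
$$\nabla_{T_\circ}^{a} \;=\; \nabla_{T_\circ}^{c} \;=\; \nabla_{\rr(T_\circ)}^{a}, \qquad \nabla_{T_\circ}^{b} \;=\; \nabla_{\rr(T_\circ)}^{d},$$
together with $\nabla_{T_\circ}^{b} = \nabla_{T_\circ}^{d}$ in case~ii, cover exactly the six possible pairings $(\sigma(s_\circ),\, s_\circ)$ with an optional $\rr$: for any of the three rotation axes and either of the two orientation configurations, the required equality $\nabla_{T_\circ'}^{s_\circ} = \nabla_{T_\circ}^{\sigma(s_\circ)}$ after colour identification is one of these relations or follows by composing two of them. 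Substituting back into the splitting formula of Step~1, the sum for $\nabla_{T'}^s$ is obtained from that for $\nabla_T^s$ merely by reindexing the sites of $T_\circ$ via~$\sigma$, yielding $\nabla_T^s = \nabla_{T'}^s$.

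The main obstacle is purely bookkeeping: correctly matching the permutation $\sigma$ with the orientation-reversal convention in the definition of mutation, and verifying that in each of the six (axis, orientation) cases the required equality is one of the identities of corollary~\ref{cor:fourendedonecolour}. A case-by-case table, handled uniformly via proposition~\ref{prop:reverseorientI} and proposition~\ref{propexponentsmodtwo} to absorb the signs coming from orientation reversal, completes the argument.
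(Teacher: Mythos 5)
Your overall route is the paper's: split $T$ into the mutating tangle and its exterior via the glueing formula (proposition~\ref{prop:glueing}), note that the exterior contributions $A,B,C,D$ are unchanged by mutation, and reduce everything to the one-colour symmetries of corollary~\ref{cor:fourendedonecolour}. However, Step~2 misidentifies the permutations of the boundary arcs, and this breaks Step~3 as written. A $\pi$-rotation about one of the three mutation axes of figure~\ref{fig:MutationAxes} can never exchange \emph{adjacent} arcs: the $x$-axis passes through the midpoints of the arcs $a$ and $c$, so it fixes $a$ and $c$ and swaps $b\leftrightarrow d$; the $y$-axis fixes $b$ and $d$ and swaps $a\leftrightarrow c$; only the $z$-axis gives the double transposition $(a\,c)(b\,d)$. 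Your claimed actions $(a\,b)(c\,d)$ and $(a\,d)(b\,c)$ are not realised by any mutation, and with them Step~3 would require identities of the form $\nabla^{a}=\nabla^{b}$ (possibly with $\rr$), which corollary~\ref{cor:fourendedonecolour} does not provide and which are false in general: for the tangle of remark~\ref{rem:nobetter4endedrelation} (a knot $K$ cut open with an unknotted strand added), one has $\nabla^{a}=\nabla^{c}=0$ while $\nabla^{b}$ is the Alexander polynomial of $K$, even after identifying the colours. So the claim that the corollary's identities ``cover exactly the six possible pairings'' fails for the permutations you wrote down.

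Once the permutations are corrected, your argument becomes precisely the paper's proof: for the $x$-axis one must reverse orientations in both orientation configurations and use $\nabla^{a}=\nabla^{a}_{\rr}$, $\nabla^{c}=\nabla^{c}_{\rr}$, $\nabla^{b}=\nabla^{d}_{\rr}$, $\nabla^{d}=\nabla^{b}_{\rr}$; for the $y$-axis one uses $\nabla^{a}=\nabla^{c}$ in case~i, and in case~ii (where orientations must be reversed) additionally $\nabla^{b}=\nabla^{d}=\nabla^{d}_{\rr}$; the $z$-axis case then follows because rotation about the $z$-axis is the composition of the other two. So the gap is a concrete but fixable bookkeeping error in the geometric input, not a flaw in the strategy.
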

\begin{corollary}
The multivariate Alexander polynomial is mutation invariant, provided that the open strands of the mutating tangle have the same colour.
\hfill$\square$
 \end{corollary}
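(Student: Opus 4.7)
The plan is to use the splitting formula (Proposition~\ref{prop:glueing}) to localise the equality $\nabla_T^s = \nabla_{T'}^s$ to the mutating tangle, and then to deduce that local statement from the one-colour symmetries of Corollary~\ref{cor:fourendedonecolour}. Concretely, one would write $T = T_\circ \cup T_e$ and $T' = T'_\circ \cup T_e$, where $T_\circ$ and $T'_\circ$ are the mutating tangle and its mutant and $T_e$ is the common exterior. The hypothesis that the two open strands of $T_\circ$ share a colour $t$ ensures that the colour-identifications in the splitting formula are the same for $T$ and $T'$. After setting $h=-1$, Proposition~\ref{prop:glueing} thus expresses both $\nabla_T^s$ and $\nabla_{T'}^s$ as the same linear combination
$$\nabla_T^s \;=\; \sum_{s_\circ \in \{a,b,c,d\}}\nabla_{T_\circ}^{s_\circ}\cdot F(s_\circ,s)$$
(and the same formula with $T'_\circ$ in place of $T_\circ$), where $F(s_\circ,s)$ depends only on $T_e$. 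It therefore suffices to prove $\nabla_{T_\circ}^{s_\circ} = \nabla_{T'_\circ}^{s_\circ}$ for every $s_\circ$.

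Next, each of the three mutation axes is realised by a geometric rotation $R$ of $B^3$ that permutes the boundary arcs by an involution $\sigma\in\operatorname{Sym}\{a,b,c,d\}$, and Definition~\ref{def:INTROmutation} prescribes a global orientation reversal precisely when $R$ fails to preserve the inward/outward tangent pattern at the four endpoints of $T_\circ$. A direct check of tangent vectors yields $\sigma_z = (a\,c)(b\,d)$, $\sigma_x = (b\,d)$, $\sigma_y = (a\,c)$; of the three rotations, the $z$-axis requires no reversal only when $T_\circ$ has the orientation pattern of case~ii of Corollary~\ref{cor:fourendedonecolour}, the $x$-axis always requires a reversal (in both cases~i and~ii), and the $y$-axis requires no reversal only when $T_\circ$ is in case~i. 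In every combination the orientation configuration of $T'_\circ$ coincides with that of $T_\circ$, and one has
$$\nabla_{T'_\circ}^{s_\circ} \;=\; \nabla_{\epsilon(T_\circ)}^{\sigma(s_\circ)},$$
where $\epsilon$ is either $\rr$ or the identity according to the reversal rule above.

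Finally, the identities $\nabla^a = \nabla^c = \nabla^a_{\rr} = \nabla^c_{\rr}$, $\nabla^b = \nabla^d_{\rr}$ (hence also $\nabla^b_{\rr} = \nabla^d$), and, in case~ii, the extra relation $\nabla^b = \nabla^d = \nabla^d_{\rr}$, supplied by Corollary~\ref{cor:fourendedonecolour} together with its transforms obtained by replacing $T_\circ$ with $\rr(T_\circ)$, verify the required family of equalities $\nabla_{T_\circ}^{s_\circ} = \nabla_{\epsilon(T_\circ)}^{\sigma(s_\circ)}$ in every (axis, $s_\circ$)-combination. Orientation configurations of $T_\circ$ that fall outside cases~i and~ii of the corollary are handled by the analogous relations derived directly from Proposition~\ref{prop:fourended} via the orientation-reversal rule of Proposition~\ref{prop:reverseorientI}. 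The main obstacle is the orientation bookkeeping in the second step: for each axis and each orientation pattern of $T_\circ$, one must determine whether the mutation convention forces a global reversal, and confirm that the resulting algebraic identity is among those covered by (or readily reducible to) those of Corollary~\ref{cor:fourendedonecolour}.
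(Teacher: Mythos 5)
Your proposal is correct and follows essentially the same route as the paper: the paper's proof of Theorem 3.17 likewise writes $\nabla_T^s=A\nabla^{a}+B\nabla^{b}+C\nabla^{c}+D\nabla^{d}$ (the glueing decomposition you invoke via Proposition 1.15) and then checks each mutation axis against the relations of Corollary 1.20, with the same orientation-reversal bookkeeping you describe. The only cosmetic difference is that you treat the $z$-axis directly via $\sigma_z=(a\,c)(b\,d)$, whereas the paper disposes of it as the composition of the $x$- and $y$-rotations.
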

\begin{Remark}
It is quite easy to find a counterexample for a potential symmetry relation $\nabla_{\rr(T,t)}^s=\pm\nabla_{T}^s,$ where $t$ is a closed strand. For example, let
$$
T=
\psset{unit=0.37}
\raisebox{-1.4cm}{
\begin{pspicture}(-4,-4)(4,4)

\pscircle[linestyle=dotted](0,0){4}

\psecurve[linecolor=darkgreen,arrows=<-](6;135)(4;135)(-2,1)(-1.85,0)(-2,-1)
\psecurve[linecolor=darkgreen,arrows=->](2,1)(1.85,0)(2,-1)(4;-45)(6;-45)
\psecurve(-2,1)(0,1.8)(2,1)(2.4,0)(2,-1)(0,-1.8)(-2,-1)(-2.4,0)(-2,1)(0,1.8)(2,1)(2.4,0)(2,-1)

\pscircle*[linecolor=white](2,1){0.4}
\pscircle*[linecolor=white](2,-1){0.4}
\pscircle*[linecolor=white](-2,-1){0.4}
\pscircle*[linecolor=white](-2,1){0.4}

\psecurve[linecolor=darkgreen](-2,1)(-1.85,0)(-2,-1)(4;-135)(6;-135)
\psecurve[linecolor=darkgreen](6;45)(4;45)(2,1)(1.85,0)(2,-1)
\psecurve{->}(2,1)(2.4,0)(2,-1)(0,-1.8)(-2,-1)
\psecurve{->}(-2,-1)(-2.4,0)(-2,1)(0,1.8)(2,1)

\rput[c](-3,0){$a$}
\rput[c](0,-3){$b$}
\rput[c](3,0){$c$}
\rput[c](0,3){$d$}

\rput[c](4.6;135){$\textcolor{darkgreen}{p}$}
\rput[c](4.6;-45){$\textcolor{darkgreen}{p}$}
\rput[c](0,1){$r$}
\end{pspicture}}
$$
Then $\nabla_T^b=\textcolor{darkgreen}{p}^{2} r  - (r-r^{-1})- \textcolor{darkgreen}{p}^{-2} r^{-1}$. So in general, the theorem and its corollary above become false if we do not reverse the orientation of \textit{all} strands in the tangle $T_\circ$ when the orientation of the open strands needs to be reversed for a mutation.
\end{Remark}

\begin{proof}[Proof of theorem \ref{thm:mutation}]
We consider the same two cases as in corollary \ref{cor:fourendedonecolour}, and also use its notation. Denote by $A$, $B$, $C$ and $D$ the Alexander polynomials of the corresponding counterparts of the sites $a$, $b$, $c$ and $d$ in $T\smallsetminus T_\circ$, such that
$$\nabla_T^{s}=A\nabla^{a}+B\nabla^{b}+C\nabla^{c}+D\nabla^{d}.$$  
If we rotate about the $x$-axis, we have to reverse orientations in both cases of corollary~\ref{cor:fourendedonecolour}, so
$$\nabla_{T'}^s=A\nabla^{a}_{\rr}+B\nabla^{d}_{\rr}+C\nabla^{c}_{\rr}+D\nabla^{b}_{\rr}=A\nabla^{a}+B\nabla^{b}+C\nabla^{c}+D\nabla^{d}=\nabla_T^{s}.$$
Next, let us consider rotations about the $y$-axis. In case i, we do not need to reverse orientations. We have
$$\nabla_{T'}^s=A\nabla^{c}+B\nabla^{b}+C\nabla^{a}+D\nabla^{d}=A\nabla^{a}+B\nabla^{b}+C\nabla^{c}+D\nabla^{d}=\nabla_T^{s}.$$
In case ii, we need to reverse orientations:
$$\nabla_{T'}^s=A\nabla^{c}_{\rr}+B\nabla^{b}_{\rr}+C\nabla^{a}_{\rr}+D\nabla^{d}_{\rr}
=A\nabla^{a}+B\nabla^{b}+C\nabla^{c}+D\nabla^{d}=\nabla_T^{s}$$
Finally, note that rotation about the $z$-axis is the same as rotation about both the $x$- and the $y$-axis (in any order), so we are done.
\end{proof}
\section{\texorpdfstring{Geometric interpretation of $\nabla_T^s$}{Geometric interpretation of ∇}}\label{sec:geometricinterpretation}

Recall that the classical Alexander polynomial of a knot or link $L$ can be defined as follows: Let $\tilde{X}_L$ denote the maximal Abelian cover of the link complement $X_L=S^3\smallsetminus L$. We have an action of $H_1(X_L)$ on $\tilde{X}_L$. Thus, we can regard $H_1(\tilde{X}_L)$ as a module over the group ring of $H_1(X_L)$. Then, the Alexander polynomial is the determinant of any square presentation matrix of $H_1(\tilde{X}_L)$.\\
In this section, we will give a similar geometric interpretation of our polynomial tangle invariants. But first, we do some basic calculations.
\begin{lemma}
Let \(T\) be a tangle in \(B^3\) with \(n\) open and \(m\) closed components. Then \(H_1(B^3\smallsetminus T) \linebreak[2]\cong \mathbb{Z}^{n+m}\) is freely generated by the meridians of the tangle components and \(H_2(B^3\smallsetminus T)\cong \mathbb{Z}^{m}\) is freely generated by the boundaries of tubular neighbourhoods of the closed tangle components.
\end{lemma}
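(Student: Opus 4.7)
The plan is to work with the tangle exterior $X := B^3 \setminus \mathring{N}(T)$, where $N(T)$ is a tubular neighbourhood of $T$ chosen to meet $\partial B^3$ in small disks around each endpoint. Since $T$ is properly embedded and a product neighbourhood exists, $X$ is a deformation retract of $B^3 \setminus T$, so it suffices to compute $H_*(X)$. Because $B^3$ is contractible, the long exact sequence of the pair $(B^3, X)$ collapses to isomorphisms
$$\partial \colon H_{i+1}(B^3, X) \xrightarrow{\cong} H_i(X) \qquad \text{for } i \geq 1,$$
reducing the problem to a computation of $H_*(B^3, X)$ together with an identification of the boundary maps.

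By excision and homotopy invariance, $H_*(B^3, X) \cong H_*(N(T), \partial N(T) \cap \mathring{B}^3)$, which splits as a direct sum indexed by the components of $T$. For each of the $n$ open components the pair is $(I \times D^2, I \times S^1) \simeq (D^2, S^1)$, contributing $\mathbb{Z}$ in degree $2$, generated by a meridional disk. For each of the $m$ closed components the pair $(S^1 \times D^2, S^1 \times S^1)$ contributes $\mathbb{Z}$ in degree $2$ (meridional disk) and $\mathbb{Z}$ in degree $3$ (fundamental class of the solid torus); this can be checked either directly from the long exact sequence of the pair or from Poincaré--Lefschetz duality $H_i(M, \partial M) \cong H^{3-i}(M)$. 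Summing up,
$$H_3(B^3, X) \cong \mathbb{Z}^{m}, \qquad H_2(B^3, X) \cong \mathbb{Z}^{n+m},$$
with all other relative groups vanishing.

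I would then identify the images of the boundary map geometrically: $\partial$ sends each meridional-disk class to its boundary circle, which is precisely the meridian of the corresponding tangle component; and it sends each solid-torus fundamental class to its boundary torus $\partial N(K_j)$. Combining this with the isomorphisms above yields $H_1(X) \cong \mathbb{Z}^{n+m}$ freely generated by the $n+m$ meridians and $H_2(X) \cong \mathbb{Z}^{m}$ freely generated by the $m$ boundary tori of the closed components. Pulling back along the homotopy equivalence $B^3 \setminus T \simeq X$ gives the lemma.

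No step is genuinely difficult; the argument is a standard application of excision and the pair sequence. The only point that needs care is the set-up of $N(T)$ near the boundary $\partial B^3$, so that the excision $H_*(B^3, X) \cong H_*(N(T), \partial N(T) \cap \mathring{B}^3)$ is legitimate and the ``closed'' part of $\partial N$ (the cylinders for open strands and the tori for closed strands) is isolated correctly; this is routine once one uses a collar of $\partial B^3$ transverse to $T$.
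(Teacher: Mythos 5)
Your argument is correct, but it takes a slightly different route from the paper. The paper applies the Mayer--Vietoris sequence to the decomposition $B^3=(B^3\smallsetminus T)\cup\nu(T)$: since $B^3$ has trivial homology in positive degrees, $H_\ast(\partial\nu(T))\cong H_\ast(B^3\smallsetminus T)\oplus H_\ast(\nu(T))$ for $\ast>0$, where $\partial\nu(T)\simeq(\amalg_n S^1)\amalg(\amalg_m T^2)$ and $\nu(T)\simeq(\amalg_n B^3)\amalg(\amalg_m S^1\times D^2)$; the generators are then read off by noting that the longitude of each torus maps to the generator of $H_1$ of the corresponding solid torus, so the meridians span the complementary summand, and the torus classes account for $H_2$. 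You instead use the long exact sequence of the pair $(B^3,X)$ for the exterior $X$, excise down to $(N(T),\mathrm{Fr}\,N(T))$ componentwise, compute the relative groups $(D^2,S^1)$ and $(S^1\times D^2,T^2)$, and identify generators through the boundary isomorphism $\partial\colon H_{i+1}(B^3,X)\cong H_i(X)$. The two computations are essentially dual to one another: the Mayer--Vietoris version identifies $H_\ast(B^3\smallsetminus T)$ directly as a summand of the homology of the frontier, which makes the statement about meridians versus longitudes transparent, whereas your version isolates each component in relative homology first, so no bookkeeping about which boundary classes survive is needed, and the geometric meaning of the generators (meridional disks bounding meridians, solid-torus classes bounding the peripheral tori) comes for free from $\partial$. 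Your caveats about setting up $N(T)$ along $\partial B^3$ and the homotopy equivalence $B^3\smallsetminus T\simeq X$ are exactly the right points to flag, and both are routine for a properly embedded tangle.
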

\begin{proof}
The Mayer-Vietoris sequence for the decomposition $B^3=(B^3\smallsetminus T)\cup \nu(T)$, $\nu(T)$ being a tubular neighbourhood of $T$, gives isomorphisms 
$$H_\ast((\amalg_n S^1)\amalg(\amalg_m T^2))\cong H_\ast(\partial\nu(T))\rightarrow H_\ast(B^3\smallsetminus T)\oplus H_\ast(\nu(T)) \text{\quad for }\ast>0.$$
Note that $\nu(T)\simeq (\amalg_n B^3)\amalg(\amalg_m S^1\times D^2)$  and the longitude of any torus $T^2$ goes to the generator of the corresponding $H_1(S^1\times D^2)$.
\end{proof}
Next, we explicitly calculate the cellular chain complex of the tangle complement $X_T=B^3\smallsetminus T$ from a fixed tangle diagram $T$ by considering the following handle decomposition: We start with two 0-handles, one sitting below and the other above the diagram. For each region, take a 1-handle from one 0-handle to the other. To simplify arguments below, we orient these 1-handles as follows: Choose a chequerboard colouring of the diagram and orient the 1-handles corresponding to one colour in one direction and the others in the opposite direction. Finally, for each crossing, attach a 2-handle to the 1-handlebody as illustrated in figure \ref{fig:attachingtwohandles}, where $a$, $b$, $c$ and $d$ denote 1-handles. 
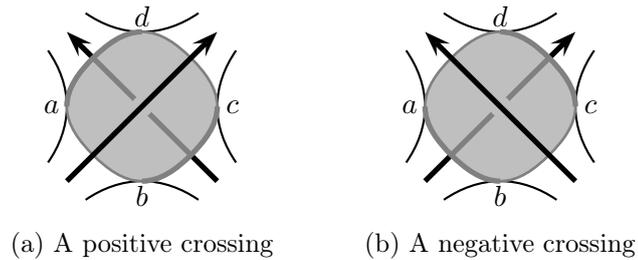
\begin{figure}[t]
	\psset{unit=0.7}
\centering
\begin{subfigure}[b]{0.3\textwidth}\centering
\begin{pspicture}(-3,-2)(3,2)
\rput{45}(0,0){

\psrotate(0,0){0}{\psecurve(0.6,3)(0.5,2)(1,1)(2,0.5)(3,0.6)}
\psrotate(0,0){90}{\psecurve(0.6,3)(0.5,2)(1,1)(2,0.5)(3,0.6)}
\psrotate(0,0){180}{\psecurve(0.6,3)(0.5,2)(1,1)(2,0.5)(3,0.6)}
\psrotate(0,0){-90}{\psecurve(0.6,3)(0.5,2)(1,1)(2,0.5)(3,0.6)}

\psline[linewidth=2pt]{->}(0,-2)(0,2)
\pscurve*[linecolor=lightgray,linewidth=2pt](0,1.3)(1,1)(1.3,0)(1,-1)(0,-1.3)(-1,-1)(-1.3,0)(-1,1)(0,1.3)
\psline[linewidth=2pt, linecolor=gray](0,-1.3)(0,1.3)
\pscircle*[linecolor=lightgray](0,0){0.2}

\psrotate(0,0){90}{\psecurve[linecolor=gray,linewidth=2pt](0,1.3)(1,1)(1.3,0)(1,-1)(0,-1.3)}
\psrotate(0,0){-90}{\psecurve[linecolor=gray,linewidth=2pt](0,1.3)(1,1)(1.3,0)(1,-1)(0,-1.3)}
\psrotate(0,0){0}{\psecurve[linecolor=gray,linewidth=1pt](0,1.3)(1,1)(1.3,0)(1,-1)(0,-1.3)}
\psrotate(0,0){180}{\psecurve[linecolor=gray,linewidth=1pt](0,1.3)(1,1)(1.3,0)(1,-1)(0,-1.3)}

\psline[linewidth=2pt]{->}(-2,0)(2,0)
}
\rput[c](-1.7,0){$a$}
\rput[c](0,-1.7){$b$}
\rput[c](1.7,0){$c$}
\rput[c](0,1.7){$d$}
\end{pspicture}
\caption{A positive crossing}
\end{subfigure}
\begin{subfigure}[b]{0.3\textwidth}\centering
\begin{pspicture}(-3,-2)(3,2)
\rput{-45}(0,0){

\psrotate(0,0){0}{\psecurve(0.6,3)(0.5,2)(1,1)(2,0.5)(3,0.6)}
\psrotate(0,0){90}{\psecurve(0.6,3)(0.5,2)(1,1)(2,0.5)(3,0.6)}
\psrotate(0,0){180}{\psecurve(0.6,3)(0.5,2)(1,1)(2,0.5)(3,0.6)}
\psrotate(0,0){-90}{\psecurve(0.6,3)(0.5,2)(1,1)(2,0.5)(3,0.6)}

\psline[linewidth=2pt, arrow=5 2pt]{->}(0,-2)(0,2)
\pscurve*[linecolor=lightgray,linewidth=2pt](0,1.3)(1,1)(1.3,0)(1,-1)(0,-1.3)(-1,-1)(-1.3,0)(-1,1)(0,1.3)
\psline[linewidth=2pt, linecolor=gray](0,-1.3)(0,1.3)
\pscircle*[linecolor=lightgray](0,0){0.2}

\psrotate(0,0){90}{\psecurve[linecolor=gray,linewidth=2pt](0,1.3)(1,1)(1.3,0)(1,-1)(0,-1.3)}
\psrotate(0,0){-90}{\psecurve[linecolor=gray,linewidth=2pt](0,1.3)(1,1)(1.3,0)(1,-1)(0,-1.3)}
\psrotate(0,0){0}{\psecurve[linecolor=gray,linewidth=1pt](0,1.3)(1,1)(1.3,0)(1,-1)(0,-1.3)}
\psrotate(0,0){180}{\psecurve[linecolor=gray,linewidth=1pt](0,1.3)(1,1)(1.3,0)(1,-1)(0,-1.3)}

\psline[linewidth=2pt, arrow=5 2pt]{<-}(-2,0)(2,0)
}
\rput[c](-1.7,0){$a$}
\rput[c](0,-1.7){$b$}
\rput[c](1.7,0){$c$}
\rput[c](0,1.7){$d$}
\end{pspicture}
\caption{A negative crossing}
\end{subfigure}
\caption{Attaching 2-handles at crossings}\label{fig:attachingtwohandles}
\end{figure}
Up to an overall sign, the attaching map is then given by $a+b+c+d$ in both cases. This gives rise to the following cellular chain complex of $X_T$:
\begin{equation}\label{cellccxofX}
\begin{tikzcd}  
\mathbb{Z}^c\arrow{r}{A} & \mathbb{Z}^{c+n+1}\arrow{r}&\mathbb{Z}^2,
\end{tikzcd}
\end{equation}
where $c$ is the number of crossings, $c+n+1$ the number of regions in the diagram and $A$ is the $(c+n+1)\times c$ matrix determined by the rules above. (Here, we assume that the diagram is connected and has at least one crossing.)
Let $R=\mathbb{Z}H_1(X_T)$, the group ring of $H_1(X_T)$. Then the cellular chain complex of the maximal Abelian cover is
\begin{equation}\label{cellccxoftildeX}
\begin{tikzcd}  
R^c\arrow{r}{\tilde{A}} & R^{c+n+1}\arrow{r} & R^2.
\end{tikzcd}
\end{equation}
The attaching maps of the 1-handles, which determine the matrix $\tilde{A}$, are given by $a+o^{-1}b+o^{-1}c+d$ for positive crossings and $a+b+o^{-1}c+o^{-1}d$ for negative crossings up to multiplication by a unit in $R$, where $o$ is the homology class of the meridian of the over-strand. Also note that we have used the right-hand rule to determine the orientation of these meridians. 
Each site $s$ of a tangle diagram gives rise to a subhandlebody of $X_T$ which we can regard as a subspace $B_s$ of $\partial B^3\smallsetminus \partial T$:
We associate with $s$ the subhandlebody of $X_T$ consisting of the two 0-handles and all 1-handles corresponding to those regions not in~$s$, 
i.\,e.\ unoccupied open regions. The cellular chain complex of $B_s$ is
$$\begin{tikzcd}  
\mathbb{Z}^{n+1}\arrow{r}&\mathbb{Z}^2,
\end{tikzcd}$$
which we can consider as a subcomplex of (\ref{cellccxofX}). Similarly, we can consider the preimage $\tilde{B}_s$ of $B_s$ in $\tilde{X}_T$ and regard its cellular chain complex as a subcomplex of (\ref{cellccxoftildeX}). Then the quotients of these chain complexes calculate $H_\ast(X_T,B_s)$ and $H_\ast(\tilde{X}_T,\tilde{B}_s)$, respectively, and they are given by\vspace*{-0.3cm}
$$
\begin{tikzcd}  
\mathbb{Z}^{c}\arrow{r}{A_s} & \mathbb{Z}^c
\end{tikzcd}
\text{ and }
\begin{tikzcd}  
 R^{c}\arrow{r}{\tilde{A}_s} &R^c,
\end{tikzcd}
$$
where $A_s$ and $\tilde{A}_s$ denote the matrices obtained from $A$ and $\tilde{A}$ by deleting those rows corresponding to $s$, respectively. Note that $H_1(\tilde{X}_T,\tilde{B}_s)$, considered as an $R$-module, is an invariant of $(\tilde{B}_s\subset\tilde{X}_T)$, and $\tilde{A}_s$ is just a presentation matrix of $H_1(\tilde{X}_T,\tilde{B}_s)$. 

\begin{proposition}\label{prop:geometricisthesame}
Let \(s\) be a site of a connected tangle diagram \(T\). Let \(\tilde{A}_s\) be a presentation matrix of \(H_1(\tilde{X}_T,\tilde{B}_s)\) as in the construction above. Then
$$\det\tilde{A}_s(t_1^2,\dots,t_r^2)~\dot{=}~\nabla_T^s(t_1,\dots,t_r),$$
where \(\dot{=}\) denotes equality up to multiplication by a unit.
\end{proposition}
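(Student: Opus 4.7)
The strategy is to expand $\det \tilde{A}_s$ via the Leibniz formula and match terms with generalised Kauffman states. The matrix $\tilde{A}_s$ is $c \times c$, with rows indexed by the crossings~$\mathbb{X}$ of~$T$ and columns indexed by the $c$ regions outside $B_s$, namely the closed regions together with the $(n-1)$ open regions belonging to~$s$. The entry $(\tilde{A}_s)_{x,r}$ is non-zero only when $r$ is one of the four regions incident to the crossing~$x$, in which case it equals a single unit in $R=\mathbb{Z}H_1(X_T)$ that can be read off from the description of the attaching maps given just before \seqref{cellccxoftildeX}: up to an overall sign, these are $a + o^{-1}b + o^{-1}c + d$ at a positive crossing and $a + b + o^{-1}c + o^{-1}d$ at a negative one, where $a,b,c,d$ are the four adjacent regions (in the sense of Figure~\ref{fig:attachingtwohandles}) and $o$ denotes the meridian of the over-strand.

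Applying the Leibniz formula,
\[
\det \tilde{A}_s = \sum_{\sigma\colon \mathbb{X}\to \mathbb{R}} \sgn(\sigma) \prod_{x\in \mathbb{X}} (\tilde{A}_s)_{x,\sigma(x)},
\]
the non-vanishing terms correspond precisely to those bijections~$\sigma$ that send each crossing to one of its four incident regions. By definition~\ref{def:basic}, such a bijection is exactly a generalised Kauffman state $x\in\mathbb{K}(T,s)$: each closed region is hit once, each open region of $s$ is hit once, and no other open region is hit. Hence Kauffman states are in bijection with the non-zero monomials of the Leibniz expansion, and the polynomial $\det \tilde{A}_s$ decomposes as a sum indexed by $\mathbb{K}(T,s)$, just like $\hat{\nabla}_T^s$.

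Next I match individual monomials. Performing a row operation to divide the row of a positive crossing by $o^{1/2}$ turns its coefficients into $(o^{1/2}, o^{-1/2}, o^{-1/2}, o^{1/2})$, and similarly at a negative crossing. A subsequent column operation, multiplying each column by an appropriate half-power of the under-strand meridian of any one of its adjacent crossings, produces the remaining factors $u^{\pm 1/2}$ on the two sides of an under-strand. After the substitution $t_i \mapsto t_i^2$, the coefficients of $\tilde{A}_s$ are transformed, up to a unit in the ring of Laurent monomials, into exactly the entries of the Alexander code of Figure~\ref{figAlexCodesForNabla} with $h=-1$. Since row and column operations by diagonal matrices with unit entries change the determinant only by a unit of~$R$, this produces the required equality up to a unit.

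The one subtlety — and the main obstacle — is reconciling the signs~$\sgn(\sigma)$ coming from the Leibniz formula with the signs produced by setting $h = -1$ in the Alexander code (one factor of $h^{\mp 1}$ per crossing whose marker sits in the bottom region). Locally, each transposition move of theorem~\ref{geclockt} swaps two markers and simultaneously alters $\sgn(\sigma)$ by a transposition; a case analysis on the four local configurations at the two crossings involved shows that both sign conventions change in the same way, so that the ratio between them is constant on each connected component of $\mathbb{K}(T,s)$ under transpositions. By the generalised clock theorem, $\mathbb{K}(T,s)$ is connected under such moves, so this ratio is a single global sign, which is absorbed into the ``up to a unit'' ambiguity. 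Combining these three paragraphs yields $\det\tilde{A}_s(t_1^2,\ldots,t_r^2) \dot{=} \nabla_T^s(t_1,\ldots,t_r)$.
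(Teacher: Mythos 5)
Your overall skeleton — Leibniz expansion of $\det\tilde{A}_s$ matching terms with generalised Kauffman states, then the generalised clock theorem to compare signs — is the same as the paper's, and your final paragraph on signs is essentially the paper's sign argument. The genuine gap is in your second paragraph, where you claim the Dehn-presentation entries can be converted into the Alexander code of figure~\ref{figAlexCodesForNabla} by row and column scalings. The row scaling by $o^{\pm1/2}$ is fine and produces exactly the alternative code of figure~\ref{fig:altAlexCodes}, which involves only the over-strand. But the remaining discrepancy at the entry for crossing $x$ and region $r$ is a factor $u_x^{\pm1/2}$, where $u_x$ is the colour of the under-strand \emph{at that crossing} and the sign depends on which side of that under-strand $r$ lies. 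This factor varies among the crossings adjacent to a fixed region (different under-colours, different sides), so no single column scaling can supply it; and "up to a unit in the ring of Laurent monomials" entry-by-entry is useless for a determinant computation, since multiplying different entries by different units changes the determinant uncontrollably — one needs each entire row or column multiplied by one unit. If such a factorization always existed, the paper's lemma~\ref{lem:AlexCodematching} would be trivial linear algebra, which it is not.

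What is actually needed, and what the paper proves as lemma~\ref{lem:AlexCodematching}, is the weaker state-sum-level statement: after the substitution $t_i\mapsto t_i^2$, the product of labels of a Kauffman state in the over-strand-only code and in the code of definition~\ref{def:basic} differ by a unit that is \emph{independent of the state}. This is again proved via transposition moves and theorem~\ref{geclockt}: under a single transposition the exponent of the horizontal strand's colour changes by $\pm1$ in one code and by $\pm2$ in the other, with matching signs, so the discrepancy is constant on the (connected) set $\mathbb{K}(T,s)$. In other words, the clock theorem must be invoked twice — once for the monomials and once for the signs — and your proof only supplies the second half. A smaller omission: your identification of non-zero Leibniz terms with states, and the claim that each entry is a single unit, require the four regions at every crossing to be pairwise distinct (otherwise an entry is a sum of two labels); the paper arranges this by a harmless isotopy at the start of its proof, using that both sides are tangle invariants. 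With lemma~\ref{lem:AlexCodematching} (or an equivalent direct computation of the state-independent unit) inserted in place of the column operations, and this WLOG added, your argument becomes the paper's proof.
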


\begin{observation}\label{obs:closedcomponentsgivefactor}
Let us verify this statement for knots and links. For a 2-ended tangle $T$ representing a knot or link $L$, define $X_L:=B^3\smallsetminus T\cong S^3\smallsetminus L$. Hence the first homology groups of the maximal Abelian covers are the same as $H_1(X_L)$-modules. $B_s$ is homotopic to the meridian of the open component, hence for knots, $\tilde{B}_s$ is homotopic to the real line, so $H_1(\tilde{X}_L,\tilde{B}_s)=H_1(\tilde{X}_L)$. The argument for links is slightly more complicated. This is to be expected because by remark \ref{rem:conwaypotfunctionandHFL}, we should see an additional factor $(c-1)$, where $c$ is the colour of the single open strand.\\
Let us consider a slightly more general situation, where we have an arbitrary tangle $T$ with a closed component $c$ and we want to compare $H_1(\tilde{X}_T,\tilde{B}_s)$ with $H_1(\tilde{X}_T,\tilde{B}_s\amalg \tilde{M}_c)$, where $M_c$ is a meridian of $c$. Given a diagram of $T$, consider the following handle-decomposition of $X_T$: Start with three 0-handles, one below the diagram on the boundary of the closed 3-ball ($e_{-}$), one above ($e_{+}$) and one on $M_c$ ($e_c$). Then add a 1-handle for each open region, connecting the two 0-handles above and below, a 1-handle $m_c$ along $M_c$ and a 1-handle $j$ joining $e_c$ to $e_{-}$, say. Finally, add a 2-handle along $m_j$, $j$ and the two 1-handles corresponding to the two open regions on either side of $M_c$ and additional 2-handles at the crossings as before. The cell decomposition near the meridian $M_c$ is illustrated in figure~\ref{fig:celldecompMeridian}. Let $R:=H_1(X_T)$. For the maximal Abelian cover, we obtain the following chain complex:
$$
\begin{tikzcd}  
R^a\arrow{r}{A} &R^{a+n}\oplus Rm_c\oplus Rj\arrow{r}{B} & Re_{-}\oplus Re_{+}\oplus Re_c.
\end{tikzcd}
$$
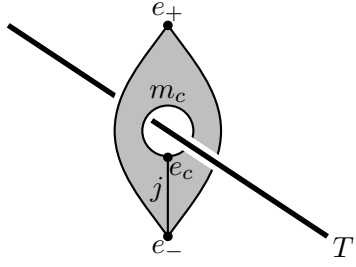
\begin{figure}[t]
	\centering
	\psset{unit=0.7}
	\begin{pspicture}(-3.5,-2.5)(3.5,2.5)
	\psline[linewidth=2pt](0,0)(-3,2)
	\psecurve[linewidth=5pt,linecolor=white](1,3)(0,2)(-1,0)(0,-2)(1,-3)
	\pscustom[fillstyle=solid, fillcolor=lightgray]{%
		\psecurve(1,3)(0,2)(-1,0)(0,-2)(1,-3)
		\psecurve(-1,-3)(0,-2)(1,0)(0,2)(-1,3)
	}
	\pscircle[fillstyle=solid, fillcolor=white](0,0){0.5}
	\psline[linewidth=6pt,linecolor=white](0,0)(3,-2)
	\psline[linewidth=2pt](-0.3,0.2)(3,-2)
	\rput[c](3.3,-2.2){$T$}
	\psline(0,-2)(0,-0.5)
	
	\psdots(0,2)(0,-2)(0,-0.5)
	
	\rput[c](0.25,-0.75){$e_c$}
	\rput[c](0,0.7){$m_c$}
	\rput[c](-0.2,-1.1){$j$}
	\rput[c](0,2.25){$e_+$}
	\rput[c](0,-2.25){$e_-$}
	\end{pspicture}
	\caption{The cell decomposition near the meridian $M_c$ used in observation~\ref{obs:closedcomponentsgivefactor}}\label{fig:celldecompMeridian}
\end{figure}
\pagebreak

\noindent
The matrix $B$ looks follows:
$$
\left(
\begin{matrix}
 \ast & 0 & 1 \\
 \ast & 0 & 0 \\
 0 & (c-1) & -1
\end{matrix}
\right).
$$
On the one hand, to get a presentation matrix $A_{B_s\amalg M_c}$ for $H_1(\tilde{X}_T,\tilde{B}_s\amalg \tilde{M}_c)$, we can simply take the quotient of this complex by 
$$
\begin{tikzcd}  
 R^{n+1}\oplus Rm_c  \arrow{r}& Re_{-}\oplus Re_{+}\oplus Re_c,
\end{tikzcd}
$$
where $R^{n+1}\subset R^{a+n}$ is generated by those 1-handles corresponding to the site $s$; so we just need to delete certain rows in $A$. On the other hand, if we quotient only by a subcomplex of the previous one, namely
$$
\begin{tikzcd}  
R^{n+1}\arrow{r}& Re_{-}\oplus Re_{+},
\end{tikzcd}
$$
then the kernel of the right-hand map in the quotient complex
\begin{equation*}
\begin{tikzcd}  
R^a\arrow{r}{A_q} &R^{a+n}/R^{n+1}\oplus Rm_c\oplus Rj\arrow{r}{B_q}&Re_c
\end{tikzcd}
\end{equation*}
is $R^{a+n}/R^{n+1}\oplus R(m_c+(c-1)j)$. By doing a simple change of basis that sends $m_c$ to $m_c+(c-1)j$ and fixes all other basis vectors, we see that the presentation matrix of $H_1(\tilde{X}_T,\tilde{B}_s)$ is obtained from $A_q$ by deleting the row for $j$. Now the fact that $B_qA_q=0$ implies that the row for $m_c$ in $A_q$ multiplied by $(c-1)$ is equal to the row for $j$ in $A_q$. Thus, by linearity of the determinant with respect to rows, we obtain
$$\det(A_{B_s\amalg M_c})=(c-1)\det(A_{B_s}).$$
We will use this argument later in the proof of theorem \sref{thm:HFTfromSFH}.
\end{observation}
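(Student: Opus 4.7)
The plan is to compute the cellular chain complex of the maximal Abelian cover $\tilde{X}_T$ from a handle decomposition that makes the meridian $M_c$ of the closed component $c$ explicit, and then to compare the two presentation matrices obtained by quotienting by $\tilde{B}_s$ versus $\tilde{B}_s\amalg \tilde{M}_c$. The crucial feature is that lifting the meridian 1-handle $m_c$ to $\tilde{X}_T$ yields a boundary map with an explicit factor of $(c-1)$ coming from the covering translation by the meridian class, and this factor propagates directly to the determinant.

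First I would refine the handle decomposition of $X_T$ used before the proposition: keep the two original 0-handles $e_-,e_+$ (below and above the diagram), add a third 0-handle $e_c$ sitting on $M_c$, take the usual open-region 1-handles joining $e_-$ and $e_+$, and adjoin two extra 1-handles --- $m_c$ running once around $M_c$ based at $e_c$, and $j$ joining $e_c$ to $e_-$. The 2-handles are those at each crossing as before, plus one ``meridian disc'' 2-handle whose attaching circle runs along $m_c$, then $j$, then across the two 1-handles bordering $M_c$, then back along $-j$. Passing to the maximal Abelian cover, the covering translation by the meridian class yields $\partial m_c=(c-1)e_c$ and $\partial j=e_- - e_c$, producing the $3\times 2$ block of $B$ displayed in the observation.

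Next I would write down the two quotient complexes. Quotienting by the subcomplex with $R^{n+1}\oplus R m_c$ in degree $1$ (site region 1-handles together with $m_c$) yields the square presentation matrix $A_{B_s\amalg M_c}$ of $H_1(\tilde{X}_T,\tilde{B}_s\amalg \tilde{M}_c)$. Quotienting only by $R^{n+1}$ retains both $m_c$ and $j$ and leaves the rectangular complex
$$R^a \xrightarrow{A_q} R^{a+n}/R^{n+1}\oplus R m_c \oplus R j \xrightarrow{B_q} R e_c,$$
whose kernel of $B_q$ on the new generators is spanned by $m_c+(c-1)j$. Since $B_q\circ A_q=0$ forces every column of $A_q$ to lie in $\ker B_q$, and since $e_c$ receives contributions \emph{only} from $m_c$ (coefficient $c-1$) and $j$ (coefficient $-1$), the $j$-entry of each column must equal $(c-1)$ times its $m_c$-entry; equivalently, the $j$-row of $A_q$ equals $(c-1)$ times the $m_c$-row. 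After the change of basis $m_c'\mapsto m_c+(c-1)j$ the $j$-row becomes identically zero, and deleting it yields a square presentation matrix $A_{B_s}$ of $H_1(\tilde{X}_T,\tilde{B}_s)$. The two square matrices $A_{B_s\amalg M_c}$ and $A_{B_s}$ share all rows except one: where $A_{B_s}$ carries the $m_c$-row, $A_{B_s\amalg M_c}$ carries the original $j$-row, which is $(c-1)$ times the $m_c$-row. Multilinearity of the determinant in rows then gives $\det(A_{B_s\amalg M_c})=(c-1)\det(A_{B_s})$.

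The main obstacle is arranging the handle decomposition so that \emph{only} the meridian disc 2-handle contributes to the $e_c$-coordinate of $B$; otherwise the kernel description above acquires extra terms and the sharp proportionality between the $j$- and $m_c$-rows can fail. This is why $e_c$ is placed on $M_c$ and joined to $e_-$ by the single 1-handle $j$, with no crossing 2-handle touching $e_c$. Once this isolation of $e_c$ is verified, the remaining content of the argument is linear algebra: reading off $\ker B_q$, the basis change $m_c'=m_c+(c-1)j$, and multilinearity of the determinant.
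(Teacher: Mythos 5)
Your argument is correct and follows essentially the same route as the paper: the same handle decomposition with the extra 0-handle $e_c$ and 1-handles $m_c$, $j$, the same two quotient complexes, the observation from $B_qA_q=0$ that the $j$-row of $A_q$ is $(c-1)$ times the $m_c$-row, the change of basis $m_c\mapsto m_c+(c-1)j$, and multilinearity of the determinant. Your spelling-out of why only $\ker B_q$ matters and how the two square matrices differ in a single row is just a more explicit rendering of the paper's own reasoning.
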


The proof of proposition \ref{prop:geometricisthesame} follows from basically the same argument as the corresponding statement for the classical Alexander polynomial, see \cite[proposition~3.1]{Kauffman}. The crucial point is that the signs in the definition of the determinant of a matrix and the signs coming from $h=-1$ in the Alexander codes correspond to one another. In order to see this, we make use of the generalised clock theorem again. We also need the following lemma to match the Alexander codes -- and again, the generalised clock theorem is the key.

\begin{figure}[b]
\centering
\psset{unit=1.0}
\begin{subfigure}[b]{0.3\textwidth}\centering
\begin{pspicture}(-1.55,-1.05)(1.55,1.05)
\psline[linecolor=violet]{->}(0.9,-0.9)(-0.9,0.9)
\pscircle*[linecolor=white](0,0){0.3}
\psline[linecolor=darkgreen]{->}(-0.9,-0.9)(0.9,0.9)
\uput{0.5}[90](0,0){$\textcolor{darkgreen}{o}^{\frac{1}{2}}$}
\uput{0.5}[180](0,0){$\textcolor{darkgreen}{o}^{\frac{1}{2}}$}
\uput{0.5}[-90](0,0){$-\textcolor{darkgreen}{o}^{-\frac{1}{2}}$}
\uput{0.5}[0](0,0){$\textcolor{darkgreen}{o}^{-\frac{1}{2}}$}
\end{pspicture}
\caption{A positive crossing}
\end{subfigure}
\begin{subfigure}[b]{0.3\textwidth}\centering
\begin{pspicture}(-1.55,-1.05)(1.55,1.05)
\psline[linecolor=violet]{->}(-0.9,-0.9)(0.9,0.9)
\pscircle*[linecolor=white](0,0){0.3}
\psline[linecolor=darkgreen]{->}(0.9,-0.9)(-0.9,0.9)
\uput{0.5}[90](0,0){$\textcolor{darkgreen}{o}^{-\frac{1}{2}}$}
\uput{0.5}[180](0,0){$\textcolor{darkgreen}{o}^{\frac{1}{2}}$}
\uput{0.5}[-90](0,0){$-\textcolor{darkgreen}{o}^{\frac{1}{2}}$}
\uput{0.5}[0](0,0){$\textcolor{darkgreen}{o}^{-\frac{1}{2}}$}
\end{pspicture}
\caption{A negative crossing}
\end{subfigure}
\caption{The alternative Alexander codes from lemma~\ref{lem:AlexCodematching}. The variable $\textcolor{darkgreen}{o}$ is the colour of the over-strand.}\label{fig:altAlexCodes}
\end{figure}
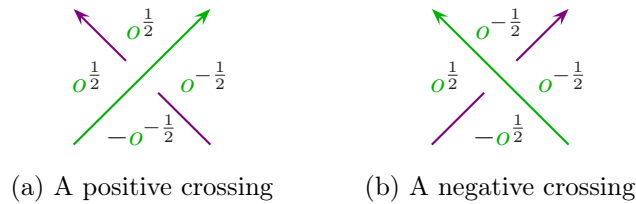

\begin{lemma}\label{lem:AlexCodematching}
In definition~\ref{def:basic}, we can replace the Alexander codes from figure~\ref{figAlexCodesForNabla} by those in figure~\ref{fig:altAlexCodes} to obtain new polynomials \(\nabla_{T,\textnormal{new}}^s\).
Then for all tangle diagrams \(T\) and sites~\(s\) 
$$\nabla_{T,\textnormal{new}}^s(t_1^2,\dots,t_r^2)~\dot{=}~\nabla_{T}^s(t_1,\dots,t_r).$$
\end{lemma}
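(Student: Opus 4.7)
The plan is to compare $\nabla_T^s(t)$ and $\nabla_{T,\text{new}}^s(t^2)$ term by term, show that for each Kauffman state the ratio of the two contributions is a single Laurent monomial in the colour variables, and then use the generalised clock theorem (theorem \ref{geclockt}) to argue that this monomial is the same for every state in $\mathbb{K}(T,s)$.

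First I would make a purely local comparison of the two Alexander codes. At every crossing, the old label of figure~\ref{figAlexCodesForNabla} equals the new label of figure~\ref{fig:altAlexCodes} after the substitution $t_i\mapsto t_i^2$, multiplied by a monomial of the form $o^{\pm 1/2}u^{\pm 1/2}$ depending only on which of the four regions is occupied. Crucially, the signs at all four regions match between the two codes (the minus signs live in the same position in both), so no sign discrepancy is introduced. Writing $c_{\text{new}}(x)$ for the contribution of a Kauffman state $x$ computed with the new codes, it follows that
$$c(x)(t)=M(x)\cdot c_{\text{new}}(x)(t^2),$$
where $M(x)$ is a Laurent monomial in the colours of $T$ determined entirely by the positions of the markers of $x$. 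If $M(x)$ is proved to be independent of the choice of $x\in\mathbb{K}(T,s)$, summing over all Kauffman states yields $\nabla_T^s(t)\,\dot{=}\,\nabla_{T,\text{new}}^s(t^2)$.

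To prove constancy of $M(x)$ on $\mathbb{K}(T,s)$ I would invoke theorem \ref{geclockt}: any two states in $\mathbb{K}(T,s)$ are connected by a sequence of transposition moves, so it suffices to verify $M(x)=M(y)$ whenever $x$ and $y$ differ by a single transposition move. Such a move acts at two crossings that share two regions, swapping the markers of the two crossings between these regions. At each of the two crossings, the change of the local monomial in the ratio is a factor of the form $c^{\pm 1}$ for the colours $c$ of the strands at that crossing, and the required check is that these changes cancel between the two crossings, for every colour simultaneously.

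This local case analysis is the main obstacle. It is elementary but requires enumerating the configurations: each of the two crossings may be positive or negative, the four strand-arms entering the configuration may have various orientations and colour assignments, and the pair of shared regions may sit in several positions relative to the over- and under-strands. The geometric input that forces cancellation in every case is that the two shared regions lie on consistent sides of the strands that connect the two crossings, so the $\pm 1/2$ exponents contributed at the two crossings differ in sign and telescope to zero for each colour. In spirit this is entirely analogous to the local Reidemeister-II verification in the proof of theorem \ref{thm:nablaisaninvariant} and to the parity arguments already exploited in lemma \ref{lem:Powerdistance2} and proposition \ref{propexponentsmodtwo}.
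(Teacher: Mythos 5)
Your proposal is correct and follows essentially the same route as the paper: both rest on observation~\ref{ObsAlexanderCode} and the generalised clock theorem~\ref{geclockt}, reducing everything to invariance under a single transposition move --- you phrase this as constancy of the per-state monomial ratio $M(x)$, whereas the paper compares how the two labellings change under the move, which is the same check. The case analysis you leave open does go through, because the ratio of the two codes contributes, at each crossing and for each of its two strands, a factor $c^{-1/2}$ or $c^{+1/2}$ according to whether the marker lies to the left or to the right of that oriented strand, \emph{irrespective} of over/under, so the two changes along the horizontal strand of a transposition move always cancel (and the vertical strands' factors are unaffected since both shared regions lie on the same side of each vertical strand).
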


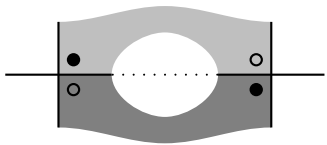
\begin{figure}[b]
\centering
\psset{unit=0.7}
\begin{pspicture}(-8,-1.3)(8,1.3)
\pscustom*[linecolor=lightgray]{
\psline(-2,0)(-2,1)
\psecurve(-4,1.3)(-2,1)(0,1.3)(2,1)(4,1.3)
\psline(2,1)(2,0)
\psline(2,0)(-2,0)
}
\pscustom*[linecolor=gray]{
\psline(-2,0)(-2,-1)
\psecurve(-4,-1.3)(-2,-1)(0,-1.3)(2,-1)(4,-1.3)
\psline(2,-1)(2,0)
\psline(2,0)(-2,0)
}
\psecurve*[linecolor=white](1,0)(0,0.8)(-1,0)(0,-0.8)(1,0)(0,0.8)(-1,0)

\psline(-2,1)(-2,-1)
\psline(2,1)(2,-1)
\psline(-3,0)(-1,0)
\psline[linestyle=dotted](-1,0)(1,0)
\psline(1,0)(3,0)
\uput{0.4}[45](-2,0){\pscircle*(0,0){2.5pt}}
\uput{0.4}[-135](2,0){\pscircle*(0,0){2.5pt}}

\uput{0.4}[-45](-2,0){\pscircle(0,0){2.5pt}}
\uput{0.4}[135](2,0){\pscircle(0,0){2.5pt}}
\end{pspicture}
\caption{A transposition move of Kauffman states}\label{fig:TrapoAltAlexCodes}
\end{figure}

\begin{proof}
Using observation~\ref{ObsAlexanderCode}, we consider the effect of a transposition move on the labelling of Kauffman states.
Let the colour of the horizontal strand in figure~\ref{fig:TrapoAltAlexCodes} be~$t$. Note that any other colours are not affected by a transposition move. Now, if the two vertical strands go either both over or both under the horizontal strand, the two Kauffman states will have the same labelling for both codes. In the other case, the exponent of $t$ will change by $\pm1$ for the code above, but by $\pm2$ for the original code. It is not hard to see that the signs are the same. Now apply the generalised clock theorem.
\end{proof}

\begin{definition}
We define the \textbf{sign \(\sgn(x)\) of a Kauffman state \(x\)} of a connected tangle diagram to be the sign of \(c(x)\) with \(h=-1\), see definition~\ref{def:basic}. 
\end{definition}
\begin{proof}[Proof of proposition~\ref{prop:geometricisthesame}]
Without loss of generality we may assume that at any crossing of the diagram, all four regions are pairwise distinct. For, once we have shown the above statement for this restricted case, it also holds true for any connected diagram, since both sides are invariants of $T$ up to isotopy. \\
We can regard the Kauffman states as a region-crossing assignment, so we can fix a map $P:\mathbb{K}(T,s)\rightarrow S_n$. If $x$ and $y$ are two Kauffman states in $\mathbb{K}(T,s), s\in\mathbb{S}(T)$, that are related by a transposition move, $x$ and $y$ have opposite signs. On the other hand, $P(x)$ and $P(y)$ also have opposite signs as elements of the permutation group. By the generalised clock theorem, we know that any two Kauffman states in $\mathbb{K}(T,s)$ are connected by a sequence of transposition moves. Hence $\sgn=\pm\sgn\circ P$. The proposition now follows from the definition of the determinant and lemma~\ref{lem:AlexCodematching}.
\end{proof}

\begin{corollary}
Let \(s\) be a site of a tangle \(T\). Then 
$$\nabla_T^s(1,\dots,1)=0 \Leftrightarrow H_2(B^3\smallsetminus T,B_s)\neq 0.$$
\end{corollary}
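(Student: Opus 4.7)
The plan is to combine proposition~\ref{prop:geometricisthesame} with a direct analysis of the relative chain complex of $(X_T,B_s)$ obtained at the end of the section.

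First, by proposition~\ref{prop:geometricisthesame}, we have
$$\nabla_T^s(t_1,\dots,t_r)~\dot{=}~\det\tilde{A}_s(t_1^2,\dots,t_r^2),$$
where $\tilde{A}_s$ is the $c\times c$ matrix over $R=\mathbb{Z}H_1(X_T)$ constructed from the handle decomposition. Units in the Laurent ring $\mathbb{Z}[t_1^{\pm1},\dots,t_r^{\pm1}]$ are monomials $\pm t_1^{a_1}\cdots t_r^{a_r}$, which all evaluate to $\pm1$ at $t_1=\cdots=t_r=1$; hence the vanishing of $\nabla_T^s(1,\dots,1)$ is equivalent to the vanishing of $\det\tilde{A}_s(1,\dots,1)$.

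Second, I would observe that evaluation at $t_i=1$ on the coefficient ring $R$ is precisely the augmentation $\varepsilon\colon R\to\mathbb{Z}$, and that applying $\varepsilon$ entrywise to the attaching-map matrix $\tilde{A}$ recovers the integer matrix $A$ of~\seqref{cellccxofX}. Deleting the rows corresponding to the site $s$ commutes with this evaluation, so $\tilde{A}_s(1,\dots,1)=A_s$, the square matrix appearing in the relative cellular chain complex
$$\begin{tikzcd}
0\arrow{r}& \mathbb{Z}^{c}\arrow{r}{A_s} & \mathbb{Z}^c\arrow{r}&0
\end{tikzcd}$$
which computes $H_\ast(X_T,B_s)$. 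Hence $H_2(X_T,B_s)=\ker A_s$ and $H_1(X_T,B_s)=\coker A_s$.

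Third, for any square matrix over $\mathbb{Z}$, having zero determinant is equivalent to having nontrivial kernel. Thus
$$\nabla_T^s(1,\dots,1)=0\iff \det A_s=0\iff \ker A_s\neq0\iff H_2(X_T,B_s)\neq0,$$
which is what was claimed. The only potential subtlety is confirming that evaluation at $t_i=1$ really corresponds to the augmentation on the $R$-module presentation (so that no spurious determinantal factors appear from the transition $\tilde{X}_T\leadsto X_T$); this follows directly from the explicit description of $\tilde{A}$ in terms of meridian classes given above~\seqref{cellccxoftildeX}, which become trivial under augmentation.
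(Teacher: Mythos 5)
Your proposal is correct and follows essentially the same route as the paper, whose entire proof is the observation that the integer matrix $A$ of \eqref{cellccxofX} is obtained from $\tilde{A}$ in \eqref{cellccxoftildeX} by setting all colours equal to $1$; you have simply spelled out the remaining steps (units evaluating to $\pm1$, $H_2(X_T,B_s)=\ker A_s$, and $\det A_s=0\iff\ker A_s\neq0$ over $\mathbb{Z}$) that the paper leaves implicit.
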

\begin{proof}
Observe that the matrix $A$ in \eqref{cellccxofX}  is obtained from the matrix $\tilde{A}$ in \eqref{cellccxoftildeX} by setting all colours equal to 1.
\end{proof}
\begin{Remark}
Given two sites $s$ and $s'$, $B_s$ can be obtained from $B_{s'}$ by applying some element of the mapping class group of $\partial B^3\smallsetminus \partial T$. This means that if we know $\nabla^s_T$ for a fixed site $s$ and all tangles $T$, we also know $\nabla^{s'}_T$ for all sites $s'$, up to normalisation. But why should we restrict ourselves to those subspaces $B$ of the punctured sphere $\partial B^3\smallsetminus \partial T$ that come from sites?
In principle, we can consider $H_1(\tilde{X}_L,\tilde{B})$ for any such $B$ and define $\nabla^B_T$ to be the generator of the smallest principal ideal containing the first elementary ideal of $H_1(\tilde{X}_L,\tilde{B})$. This gives rise to infinitely many polynomial invariants $\nabla^B_T$ for any given tangle $T$. However, if for example $H_1(\tilde{X}_L,\tilde{B})$ has a presentation matrix with more columns than rows, i.\,e.\ more generators than relations, then $\nabla^B_T$ will be zero. This is for example always the case if $\chi(B)>(1-n)=\chi(B^s)$. \pagebreak[3]\\
The above considerations give rise to the following proposition.
\end{Remark}
\begin{proposition}\label{prop:anyBisfine}
Let \(B\) be an essentially embedded subsurface of a \(2n\)-punctured sphere, i.\,e.\ the map \(\pi_1(B)\rightarrow\pi_1(\partial B^3\smallsetminus \partial T)\) induced by the inclusion is injective. Suppose further that \(\chi(B)=1-n\). Then there exists an element \(g\) in the mapping class group of \(\partial B^3\smallsetminus \partial T\) such that \(g(B)\) is homotopic to \(B_s\) for some site \(s\). 
\end{proposition}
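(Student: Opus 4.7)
The plan is to identify $B$ up to the mapping class group purely from its Euler characteristic and the combinatorics of how it separates the punctures, and then to exhibit an explicit site $s$ whose $B_s$ carries the same data.

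First I would show that $B$ is a planar subsurface with exactly $n+1$ boundary components. Since the $2n$-punctured sphere $\Sigma := \partial B^3 \smallsetminus \partial T$ sits inside $S^2$, every simple closed curve in $\Sigma$ separates by the Jordan curve theorem; hence every simple closed curve in $B \subset \Sigma$ also separates $B$, so $B$ has genus zero. Combined with $\chi(B) = 1 - n$, this forces $B$ to have exactly $n+1$ boundary circles, and its complement in $\Sigma$ is a disjoint union of $n+1$ punctured open disks. The essentially embedded hypothesis prevents any boundary component of $B$ from being null-homotopic in $\Sigma$, so each of these $n+1$ complementary disks contains at least one puncture. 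Thus $B$ determines an unordered partition $2n = k_1 + k_2 + \cdots + k_{n+1}$ with every $k_i \geq 1$.

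Next I would verify that the same numerical data is realised by some $B_s$. Unpacking the handle description preceding the proposition, the $n+1$ boundary circles of $B_s$ are in bijection with the maximal runs of consecutive punctures on $S^1$ joined only by occupied arcs: a run of $k$ punctures contributes one boundary circle enclosing exactly those $k$ punctures in the complementary disk. Given a partition $(k_1, \ldots, k_{n+1})$, one therefore just places the $n-1$ occupied arcs so that consecutive unoccupied arcs trap $k_1, k_2, \ldots, k_{n+1}$ punctures between them in the cyclic ordering; the arithmetic $\sum(k_i - 1) = 2n - (n+1) = n - 1$ ensures that the total number of occupied arcs is correct.

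The main obstacle will be the final step: upgrading the equality of combinatorial data into an actual element $g$ of the mapping class group with $g(B)$ isotopic to $B_s$. For this I would invoke the change-of-coordinates principle for surface mapping class groups (see, e.g., Farb--Margalit), which says that two essential multicurves in a surface lie in the same mapping-class-group orbit whenever their complementary pieces carry matching topological types and matching gluing patterns. The mapping class group of $\Sigma$ surjects onto the symmetric group on its punctures, which gives enough freedom to realise any cyclic arrangement of the block sizes~$k_i$. Applying the principle to $\partial B$ and $\partial B_s$ then yields a $g$ sending $\partial B$ to a curve system isotopic to $\partial B_s$ in a way that matches complementary pieces, and since $B$ (respectively $B_s$) is distinguished among those pieces as the unique one of Euler characteristic $1 - n$, $g(B)$ is isotopic, hence homotopic, to $B_s$.
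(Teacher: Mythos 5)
Your argument is correct and follows essentially the same route as the paper: both reduce $B$ to the partition of the $2n$ punctures among the $n+1$ complementary discs (each non-empty by essentialness), observe that every such partition is realised by $B_s$ for a suitable site $s$, and then use transitivity of the mapping class group on such configurations --- the paper normalises $B$ via a wedge-of-petals spine where you invoke the change-of-coordinates principle. One small repair to your last step: $B$ need not be the \emph{unique} complementary piece of Euler characteristic $1-n$ (a complementary disc containing $n$ punctures also has $\chi=1-n$), so distinguish it instead as the unique piece containing no punctures, equivalently the one with $n+1$ boundary circles.
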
 
 
\begin{proof}
We fix some cellular structure on $X_T$ with a single 0-cell such that $B$ deformation retracts onto a 1-dimensional sub-cell-complex. Its 1-cells cannot be interleaved, so by sliding their endpoints along each other, we can arrange that they are attached to the 0-cell like petals. Since $B$ is essentially embedded, $(\partial B^3\smallsetminus \partial T)\smallsetminus B$ is a disjoint union of $(n+1)$ discs, each of which contains at least one puncture. Hence, we can apply an element of the mapping class group of the punctured sphere to obtain a ``standard'' surface in a punctured sphere which only depends on the partition of the punctures induced by $B$. Since any such partition can be achieved by $B_s$ for some site~$s$, the result follows.
\end{proof} 
\begin{questions}\label{que:MCGactionAndMore}
In the context of the proposition above, there are some open questions that we would like to answer; in particular:
\begin{itemize}
\item Are there any general relations between the polynomials \(\nabla_T^s\) for different sites \(s\), in particular those that cannot be twisted into one another by applying some element of the mapping class group?
\item Can we compute the action of the mapping class group of \(\partial B^3\smallsetminus \partial T\) on~\(\nabla_T^s\)?
\item What is the best way to normalise \(\nabla_{T,new}^s\)?
\item Can we describe higher elementary ideals of \(H_1(\tilde{X}_T,\tilde{B^s})\)? What about \(\nabla_T^B\) for the case \(\chi(B)<1-n\)?
\item Is there a geometric interpretation of the glueing formula from proposition \ref{prop:glueing}, for example via some Mayer-Vietoris argument?
\end{itemize}
\end{questions}

\subsection*{Comparison with other definitions of Alexander polynomials for tangles. }
In the introduction to this chapter, we mentioned several other generalisations of the Alexander polynomial to tangles. In the remainder of this section, we try to compare our invariant to some of the other definitions. I hope to come back to this at some point in the future to make some of those vague statements below more precise.

\subsection*{Archibald's invariant.}
In \cite{Archibald}, Archibald defines her polynomial Alexander invariant(s) for tangles directly via Alexander matrices. However, she uses a different cellular decomposition of the tangle complement to calculate the Alexander matrix; ours comes from the Dehn representation of the fundamental group of the complement, hers from the Wirtinger representation. For the latter, one fixes a single 0-cell above a tangle digram and adds a loop for each connected arc in the diagram, as illustrated below.
\begin{center}
\psset{unit=0.5, linewidth=1.1pt}
\begin{pspicture}(-4.5,-3.35)(3.5,3.35)
\psecurve[linewidth=0.7pt,linecolor=blue](4,-1)(3,0)(0.5,2)(0.5,2.5)(3,0)(4,-1)
\psecurve[linewidth=0.7pt,linecolor=blue](4,0)(3,0)(0.25,0.7)(0.75,1)(3,0)(4,-0.5)
\psecurve[linewidth=0.7pt,linecolor=blue](4,0)(3,0)(0.25,-0.7)(0.75,-1)(3,0)(4,0.5)
\psecurve[linewidth=0.7pt,linecolor=blue](4,1)(3,0)(0.5,-2)(0.5,-2.5)(3,0)(4,1)
\pscircle*[linecolor=white](0.4,0.34){0.2}
\pscircle*[linecolor=white](0.4,-0.34){0.2}
\pscircle*[linecolor=white](0.47,2.16){0.2}
\pscircle*[linecolor=white](0.47,-2.16){0.2}
\psecurve(-2,2)(0,2)(0.75,1)(-0.75,-1)(0,-2)(1,-2.2)(2,-2)
\psecurve{<-}(2,2)(1,2.2)(0,2)(-0.75,1)(0.75,-1)(0,-2)(-2,-2)(-3,0)(-2,2)(0,2)(0.75,1)
\pscircle*[linecolor=white](0,2){0.2}
\pscircle*[linecolor=white](0,0){0.2}
\pscircle*[linecolor=white](0,-2){0.2}
\psecurve(0,2)(0.75,1)(-0.75,-1)(0,-2)
\psecurve{<-}(2,2)(1,2.2)(0,2)(-0.75,1)(0.75,-1)
\psecurve(-0.75,1)(0.75,-1)(0,-2)(-2,-2)(-3,0)
\pscircle[linestyle=dotted](-1,0){3.05}
%
%
\pscircle*[linecolor=white](0.75,-1){0.13}
\pscircle*[linecolor=white](0.75,1){0.13}
\psecurve[linewidth=0.7pt,linecolor=blue](3,0)(0.25,0.7)(0.75,1)(3,0)(4,-0.5)
\psecurve[linewidth=0.7pt,linecolor=blue](3,0)(0.25,-0.7)(0.75,-1)(3,0)(4,0.5)

\psdot[linecolor=blue](3,0)
\end{pspicture}
\end{center}
These loops generate the fundamental group of the tangle complement. One then adds 2-cells, one for each crossing, which gives the relations between the generators. The columns of the Alexander matrix correspond to crossings as in our construction, but the rows correspond to arcs. Archibald then considers square matrices obtained by deleting some rows that correspond to arcs that meet the boundary, which we call ``marked'' arcs. A simple counting argument shows that we need to delete exactly $n$ rows (if there are no closed arcs). \\
Let us assume for simplicity that every arc meets the boundary at most once. (We can always find such a diagram for a given tangle.) Then, those square matrices define a presentation of $H_1(\tilde{X}_T,\tilde{B})$, where $B$ is the subspace of the boundary given by the 0-cell and those 1-cells corresponding to marked arcs.
This subspace $B$ does not necessarily have to be homotopic to one that comes from a site in our construction. However, after applying some element of the mapping class group of the boundary, it is, like in the last step of the proof of proposition~\ref{prop:anyBisfine}. Thus, we can calculate Archibald's invariant from our invariants and vice versa, up to normalisation.

\subsection*{Diagrammatic invariants. } Using the skein relation for the single-variate Alexander polynomial and the convention that any diagram with an unknot component is zero, one can reduce any given tangle to a linear combination of ``simpler'' tangles. For a suitable (minimal) choice of such ``elementary'' tangles, this also gives an invariant. (This can be easily seen by adapting the arguments from~\cite{LickorishMillett}.) Since $\nabla_T^s$ also satisfies the skein relation, we can compute it by substituting the elementary tangles by their polynomials~$\nabla_T^s$. Computations suggest that one can also go the other direction. Since Bigelow's \cite{Bigelow} as well as Polyak's \cite{Polyak} invariants also satisfy the skein relation, this would imply that  all of these polynomials contain basically the same information.\\
For the multivariate version, the same approach does not work. Computations suggest that $\nabla_T^s$ and Kennedy's multivariate version \cite{Kennedy} of Bigelow's invariant are closely related, but I have been unable to make this relationship precise. If we restrict ourselves to 4-ended tangles, we saw in proposition~\ref{prop:fourended} that there is basically only one piece of information in $\nabla_T^s$. One can show that Kennedy's invariant, consisting (\textit{a priori}) of nine different polynomials, contains at most two different pieces of information. It would be interesting to know if one can make this result as strong as for $\nabla_T^s$. 

\subsection*{Sartori's invariant. } In \cite{Sartori14}, Sartori interpreted the Alexander polynomial in terms of the representation theory of $\mathfrak{gl}(1\vert 1)$.  I assume that one can interpret the tangle Floer homology defined in the next chapter as a special case of Petkova and Vértesi's more general combinatorial tangle Floer homology \cite{cHFT}, namely their one-sided case. Since their construction can be seen as a categorification of Sartori's invariant \cite{DecatCTFH}, this would then imply a relationship between $\nabla_T^s$ and Sartori's one-sided invariant, taking the form of a map from a $U_q(\mathfrak{gl}(1\vert 1))$-representation $V$ to $\mathbb{C}(q)$. Under this map, $\nabla_T^s(q)$ should probably be the image of some element in $V$ that corresponds to the site $s$ in Petkova and Vértesi's construction. 

\chapter{A Heegaard Floer homology for tangles}\label{chapter:categorification}

In this chapter, we categorify the polynomial invariants $\nabla_T^s$ from the first chapter. 
We start in section~\ref{sec:HFTviaSFH} by defining a tangle Floer homology~$\HFT$ in terms of Juhász's sutured Floer homology $\SFH$ \cite{Juhasz}. 
Using our geometric interpretation of the polynomial tangle invariants~$\nabla_T^s$ from section~\ref{sec:geometricinterpretation} and a description of the decategorification of $\SFH$ due to Friedl, Juhász and Rasmussen \cite{DecatSFH}, we show that $\HFT$ categorifies~$\nabla_T^s$. 
In section~\ref{sec:HDsForTangles}, we give an independent, but equivalent construction of~$\HFT$ from Heegaard diagrams for tangles. The main advantage over the first definition via sutured Floer homology is that we naturally get relative gradings for all sites simultaneously. For the sutured approach, this would require a means of comparing $\Spinc$-structures for different sets of sutures, see remark~\ref{rem:absolutegradings}.\\
In order to obtain a glueing theorem for~$\HFT$ that categorifies the glueing formula for~$\nabla_T^s$ (proposition~\sref{prop:glueing}), we need to add some extra structure to~$\HFT$. 
This is done in section~\ref{sec:TFHviaBSFH}, using Zarev's bordered sutured Floer theory. Note that in the first two sections, we are working over the coefficient ring $\mathbb{Z}$, whereas in the third, we restrict to $\mathbb{Z}/2$-coefficients.

\section{A categorification via sutured Heegaard Floer theory}\label{sec:HFTviaSFH}

In \cite{Juhasz}, Juhász generalised the hat version of Heegaard Floer homology of closed three manifolds and links to balanced sutured manifolds, certain manifolds with boundary together with some extra structure on the boundary. He used this sutured Floer homology, denoted by $\SFH$, to give short proofs of a number of known results, e.\,g.\ that link Floer homology detects the Thurston norm and fibredness. Juhász also proved a surface decomposition formula, which says that $\SFH$ behaves very nicely under splitting a balanced sutured manifold along certain embedded surfaces. For all basic definitions and properties of $\SFH$, we refer the reader to Juhász's original papers \cite{Juhasz,SurfaceDecomposition,polytope} and Altman's introductory article~\cite{Altman}.\\
In this section, we give a quick definition of a tangle Floer homology $\HFT$ in terms of $\SFH$ and show that its Euler characteristic agrees with the polynomial invariant~$\nabla_T^s$. Then, we use a version of Juhász's surface decomposition formula to prove symmetry relations for~$\HFT$.
\begin{definition}
	With an $r$-component tangle $T$ and a site $s$ of $T$, we associate a sutured 3-manifold $X_T^s$ defined as follows: The underlying 3-manifold with boundary is $X_T=B^3\smallsetminus \nu(T)$, the complement of a tubular neighbourhood of $T$ in $B^3$. The sutures on $\partial X_T^s$ are obtained by placing two oppositely oriented meridional circles around closed components of the tangle and meridional circles around the ends of the open components and performing surgery along the arcs in $s$, see figure~\sref{fig:HFTviaSFHsutmfd}. We orient the sutures such that one component of $R_-$ is contained in the boundary of the 3-ball.
\end{definition}
\begin{figure}[tb]
\centering
\psset{unit=0.22}
\begin{pspicture}(-10,-10)(10,10)
\pscircle(0,0){10}
\psarc[linecolor=red](0,0){7}{45}{135}
\psarc[linecolor=red,linestyle=dotted](0,0){7}{142}{215}
\psarc[linecolor=red,linestyle=dotted](0,0){7}{235}{305}
\psarc[linecolor=red,linestyle=dotted](0,0){7}{325}{398}

\pscircle[linecolor=lightgray](0,0){4}

\psline[linecolor=white,linewidth=4pt](3;45)(6.3;45)
\psline[linecolor=white,linewidth=4pt](3;135)(6.3;135)
\psline[linecolor=white,linewidth=4pt](3;-135)(6.3;-135)
\psline[linecolor=white,linewidth=4pt](3;-45)(6.3;-45)

\psline[linecolor=gray](3;45)(6.3;45)
\psline[linecolor=gray](3;135)(6.3;135)
\psline[linecolor=gray](3;-135)(6.3;-135)
\psline[linecolor=gray](3;-45)(6.3;-45)

\rput{-135}(7;-135){\psellipse[linecolor=darkgreen](0,0)(0.5,1)}
\rput{-45}(7;-45){\psellipse[linecolor=darkgreen](0,0)(0.5,1)}

\pscustom[linecolor=darkgreen]{
\psarc(0,0){6.5}{45}{135}
\psecurve(7;130)(6.5;135)(7;140)(7.5;135)(7;130)
\psarcn(0,0){7.5}{135}{45}
\psecurve(7;50)(7.5;45)(7;40)(6.5;45)(7;50)
}

\rput(-5.5,0){$a$}
\rput(0,-5.5){$b$}
\rput(5.5,0){$c$}
\rput(0,5){$d$}

\rput(0,0){\textcolor{darkgray}{$T$}}

\end{pspicture}
\caption{The set of sutures (\textcolor{darkgreen}{green} curves) on $X_T^s$ for a 4-ended tangle $T$ and $s=d$. Any closed components of $T$ get two meridional sutures as in the case of knots and links.}\label{fig:HFTviaSFHsutmfd}
\end{figure}
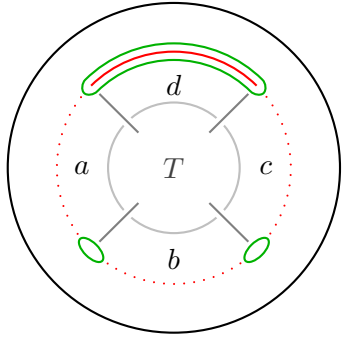
\begin{theorem}\label{thm:HFTfromSFH}
	The sutured Floer chain complex \(\SFC(X_T^s)\) is an invariant of the tangle~\(T\) and the site \(s\) up to chain homotopy equivalence. Furthermore
	\begin{equation}\label{eqn:decatSFTagreeswithnabla}
	\chi(\SFC(X_T^s))(t_1^2,\dots,t_r^2)~\dot{=}~\prod(c-c^{-1}) \cdot\nabla_T^s(t_1,\dots,t_r),
	\end{equation}
	where \(\dot{=}\) denotes equality up to multiplication by a unit and the product on the right is over all closed components of~\(T\) and their colours~\(c\). For a 2-ended tangle~\(T\), \(\SFC(X_T^{\emptyset})\) agrees with \(\CFL(L)\), where \(L\) denotes the link obtained by joining the two open ends of~\(T\).
\end{theorem}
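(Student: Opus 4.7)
The proof breaks into the three claims. Invariance is by far the easiest: it reduces to Juhász's theorem that the chain homotopy type of $\SFC$ depends only on the underlying balanced sutured manifold \cite{Juhasz}. The content of the first claim is therefore just to verify that $X_T^s$ is a balanced sutured manifold depending only on $(T,s)$. Balancedness is immediate from the local picture in figure~\sref{fig:HFTviaSFHsutmfd}, and the ambient isotopy class of $T$ together with the parametrisation of $\partial B^3\smallsetminus\partial T$ and the combinatorial datum $s$ (a choice of $(n-1)$ arcs) determines $X_T^s$ up to diffeomorphism of sutured manifolds.

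The identification with $\CFL$ for a 2-ended tangle is also almost tautological. When $n=1$, the only site is $s=\emptyset$, so no arc surgery is performed, and the sutures on $\partial X_T^\emptyset$ are exactly one meridional circle around each open endpoint plus two oppositely oriented meridians around each closed component. Glueing the two open strands into a link $L$, these are precisely the standard meridional sutures on the link exterior used to define $\CFL(L)$ in the sutured framework, cf.\ \cite{Juhasz}.

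The bulk of the work is the decategorification formula \eqref{eqn:decatSFTagreeswithnabla}. The plan is to apply the result of Friedl--Juhász--Rasmussen \cite{DecatSFH}, which identifies $\chi(\SFC(M,\gamma))$, as an element of $\mathbb{Z}[H_1(M)]$ up to a unit, with the maximal Abelian torsion of the pair $(M,R_-)$; concretely, it can be read off as the order of a presentation matrix of $H_1(\widetilde{M},\widetilde{R_-})$ over the group ring, where the tilde denotes the maximal Abelian cover. I would then show that, up to homotopy, $R_-\subset\partial X_T^s$ is exactly the subsurface $B_s$ of section~\ref{sec:geometricinterpretation} together with one additional meridional circle around each closed component of~$T$. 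With this identification in hand, proposition~\sref{prop:geometricisthesame} gives $\det\tilde{A}_s(t_1^2,\dots,t_r^2)\dot=\nabla_T^s(t_1,\dots,t_r)$, while observation~\sref{obs:closedcomponentsgivefactor} accounts for each additional meridional circle around a closed component by a factor of $(c-1)$ at the level of the $t\mapsto t$ variables, i.e.\ a factor of $(c-c^{-1})$ after the substitution $t_i\mapsto t_i^2$ and absorbing a unit. Multiplying these contributions yields the right-hand side of \eqref{eqn:decatSFTagreeswithnabla}.

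The main obstacle will be matching conventions cleanly across the three computations. Concretely: one must check that the $R_-$ coming from the sutured structure really is the homotopy type I am claiming (which requires keeping track of the orientation convention and the effect of the arc surgeries along $s$); one must ensure that the presentation matrix used by \cite{DecatSFH} agrees with the cellular presentation matrix $\tilde A_s$ of section~\ref{sec:geometricinterpretation} up to the elementary row/column operations that change the torsion only by a unit in $\mathbb{Z}[H_1]$; and finally one must confirm that the substitution $t_i\mapsto t_i^2$, which arises because $H_1(X_T^s)$ is generated by meridians whose squares correspond to the loop classes underlying our variables $t_i$, matches the variable conventions of both the Kauffman-state definition of $\nabla_T^s$ and the Friedl--Juhász--Rasmussen torsion. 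Once these bookkeeping issues are reconciled, the identification of the Euler characteristic with $\prod(c-c^{-1})\cdot\nabla_T^s$ drops out.
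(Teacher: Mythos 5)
Your proposal is correct and takes essentially the same route as the paper: invariance and the 2-ended case reduce to Juhász (after the balancedness check), and the decategorification formula follows from the Friedl--Juhász--Rasmussen Euler characteristic formula for $\SFH$ combined with proposition~\sref{prop:geometricisthesame}, with the identification of $R_-$ with $B_s$ plus one meridian per closed component and observation~\sref{obs:closedcomponentsgivefactor} supplying the factor $(c-c^{-1})$ after the substitution $t_i\mapsto t_i^2$.
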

\begin{definition}\label{def:HFTfromSFH}
	We denote the chain complex $\SFC(X_T^s)$ by $\CFT(T,s)$ and its homology by $\HFT(T,s)$, the \textbf{tangle Floer homology} of $T$ at $s$. 
\end{definition}
\begin{definition}
The sutured Floer homology of any sutured manifold $(M,\gamma)$ comes with two gradings: a grading by relative $\Spinc$-structures of $(M,\gamma)$ and, for each such $\Spinc$-structure, a relative \textbf{homological $\mathbb{Z}/2$-grading}. $\Spinc$-structures of $(M,\gamma)$ form an affine space of $H_1(M)$. So if $M=X_T^s$ for an $r$-component tangle $T$, an orientation on~$T$ induces a relative $\mathbb{Z}^{r}$-grading, which we call the \textbf{Alexander grading}. The sutured Floer chain complex of any $(M,\gamma)$ splits along $\Spinc$-structures. For our tangle Floer homology this means that we can write
$$\CFT(T,s)=\bigoplus_{a\in\mathbb{Z}^{r}}\CFT(T,s,a),$$
where $\CFT(T,s,a)$ denotes the component of $\CFT(T,s)$ in a fixed Alexander grading~$a$.
\end{definition}
\begin{Remark}
In the next section, when we study Heegaard diagrams of tangles, we see how to lift the homological $\mathbb{Z}/2$-grading to a relative $\mathbb{Z}$-grading and how to define the Alexander grading for all sites \emph{simultaneously}. To achieve this purely in terms of sutured Floer homology, one would have to relate relative $\Spinc$-gradings corresponding to different sets of sutures, see remark~\ref{rem:absolutegradings}. 
\end{Remark}
\begin{proof}[Proof of theorem \sref{thm:HFTfromSFH}]
We first check that $X_T^s$ is balanced. Say $T$ has $n$ open components and without loss of generality, we may assume that there are no closed components. The site $s$ consists of $(n-1)$ open regions, so there are $(n-1)$ arcs which we have performed surgery along. Hence, $R_-$ is a sphere with $(n+1)$ punctures, so it has Euler characteristic $(1-n)$. Each annulus around an open tangle component contributes 0 to the Euler characteristic, but each surgery decreases the Euler characteristic by 1. \\
Obviously, $X_T^s$ is an invariant of $T$, and so is its sutured Floer homology. Therefore, it only remains to check the identity (\sref{eqn:decatSFTagreeswithnabla}). For this, we use the explicit formula from \cite[proposition~5.1]{DecatSFH} for the graded Euler characteristic $\chi(\SFH(M,\gamma))$ of a sutured manifold $(M,\gamma)$: Consider the pair $(M,R_-)$ and its maximal Abelian cover $(\tilde{M},\tilde{R}_-)$. Let $A$ be a square presentation matrix of $H_1(\tilde{M},\tilde{R}_-)$ as a $H_1(M,R_-)$-module. Then $\chi(\SFH(M,\gamma))$ is equal to $\det(A)$. We essentially did all the work in the previous section. First, suppose $T$ does not have a closed component. Then, using the notation from proposition \sref{prop:geometricisthesame}, $H_1(X_T^s,R_-)\cong H_1(B^3\smallsetminus T,B_s)$ and the same holds for the maximal Abelian covers, so we are done by the same proposition. For the general case, we get a factor $(c-c^{-1})$ for each closed component by observation \sref{obs:closedcomponentsgivefactor}, noting that we need to take the square of all variables to match the convention used in the definition of $\nabla_{T}^s$.\\
Finally, specialising to 2-ended tangles $T$ representing a link $L$, we observe that $X_T^\emptyset$ is simply the complement of a tubular neighbourhood of $L$ in $S^3$ with two meridional sutures on each boundary component, so $\SFH(X_T^\emptyset)$ agrees with the ordinary link Floer homology $\HFL(L)$ by \cite[proposition~9.2]{Juhasz}.
\end{proof}
The following results can be regarded as the categorification of their counterparts in sections~\ref{sec:basicpropertiesofnabla} and~\ref{sec:4endedandmutation}.
\begin{proposition}\label{prop:HFTmirror}
Let \(\m(T)\) denote the mirror image of a tangle \(T\). Let \(\widehat{\operatorname{CFT}^\ast\!\!}\,\,(T,s)\) denote the dual chain complex of \(\CFT(T,s)\), with the usual convention that all gradings are reversed. Then 
$$\CFT(\m(T),s)~\dot{\cong}~\widehat{\operatorname{CFT}^\ast\!\!}\,\,(T,s),$$
where \(\dot{\cong}\) denotes graded chain homotopy equivalence up to an overall shift of the gradings.
\end{proposition}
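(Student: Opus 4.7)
The plan is to realise \(\m(T)\) as the image of \(T\) under an orientation-reversing diffeomorphism of \(B^3\) and then appeal to the duality for sutured Floer homology under orientation reversal.

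First I would fix a plane \(P\subset B^3\) containing a diagram of \(T\) and let \(\rho\colon B^3\to B^3\) be reflection in \(P\). By construction, \(\rho(T)=\m(T)\). Since \(\rho\) restricts to the identity on the fixed circle \(S^1\subset\partial B^3\), it preserves the labelling of the arcs \(S^1\smallsetminus\partial T\) and hence canonically identifies sites of \(T\) with sites of \(\m(T)\). Moreover, \(\rho\) is orientation-reversing on \(B^3\) and carries the meridional circles and surgered arcs used to build \(X_T^s\) to the corresponding collection for \(X_{\m(T)}^s\). Consequently, \(\rho\) induces a diffeomorphism of sutured manifolds
\[
(X_T^s,\gamma_T^s)\;\xrightarrow{\ \cong\ }\;-(X_{\m(T)}^s,\gamma_{\m(T)}^s),
\]
where \(-(M,\gamma)\) denotes the simultaneous reversal of the orientation of the 3-manifold and of the sutures.

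Next I would invoke the orientation-reversal duality for sutured Floer homology: for any balanced sutured manifold \((M,\gamma)\) there is a canonical chain homotopy equivalence
\[
\SFC(-M,-\gamma)\;\simeq\;\SFC(M,\gamma)^{\ast},
\]
with the relative homological \(\mathbb{Z}/2\)-grading and the relative \(\Spinc\)-grading both negated. This is the sutured analogue of the mirror identity for link Floer homology and can be proved directly from sutured Heegaard diagrams, since reversing the orientation of \(M\) corresponds to swapping the roles of the \(\A\)- and \(\B\)-curves, which produces the dual chain complex up to an overall grading shift. Combining this with the identification of sutured manifolds above gives the desired
\[
\CFT(\m(T),s)\;=\;\SFC(X_{\m(T)}^s)\;\simeq\;\SFC(X_T^s)^{\ast}\;=\;\widehat{\operatorname{CFT}^\ast\!\!}\,\,(T,s).
\]
The negation of the Alexander grading is automatic: the Alexander grading is encoded in relative \(\Spinc\)-structures, which form an affine space over \(H_1(X_T)\), and \(\rho\) acts by \(-1\) on the homology classes carried by the oriented tangle components. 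At the decategorified level this recovers precisely the identity of proposition~\ref{prop:mirrortangle}.

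The main obstacle I anticipate is purely bookkeeping, not conceptual. One must check that the duality isomorphism intertwines the relative \(\mathbb{Z}/2\)-homological grading and the Alexander multigrading with the right sign conventions, and pin down the overall grading shift hidden in \(\dot{\cong}\); concretely, this amounts to comparing the Maslov and Alexander gradings of a distinguished generator on a sutured Heegaard diagram for \(X_T^s\) with those of the corresponding generator on the mirrored diagram. All of this is routine once the duality theorem for \(\SFC\) under orientation reversal is in hand.
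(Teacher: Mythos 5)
Your proposal is correct and follows essentially the same route as the paper: the paper's proof simply combines theorem~\ref{thm:HFTfromSFH} (identifying \(\CFT(T,s)\) with \(\SFC(X_T^s)\)) with the orientation-reversal duality for sutured Floer complexes from Friedl--Juh\'asz--Rasmussen, which is exactly the reflection-plus-\(\SFC\)-duality argument you spell out. Your additional bookkeeping on sites and the Alexander grading is consistent with the paper's conventions and with proposition~\ref{prop:mirrortangle}.
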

\begin{proof}
This follows from the previous theorem and \cite[proposition~2.14]{DecatSFH}.
\end{proof}
\begin{proposition}\label{prop:HFTreverseorientI}
Let \(T\) be an oriented \(r\)-component tangle and \(s\) a site of \(T\). If \(\rr(T,t_1)\) denotes the same tangle \(T\) with the orientation of the first strand reversed, then for all Alexander gradings \(a=(a_1,\dots,a_r)\in\mathbb{Z}^r\),
$$\CFT(\rr(T,t_1),s,a)~\dot{\cong}~\CFT(T,s,(-a_1,a_2,\dots,a_r)).$$
Similarly, if \(\rr(T)\) denotes the tangle \(T\) with the orientation of all strands reversed, then 
$$\CFT(\rr(T),s,a)~\dot{\cong}~\CFT(T,s,-a).$$
\end{proposition}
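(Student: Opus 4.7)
The plan is to exploit the fact that the sutured manifold $(X_T^s,\gamma)$ depends only on the underlying unoriented tangle $T$ and on the site $s$, which is itself an unoriented piece of data (a subset of the arcs $S^1\smallsetminus\operatorname{im}(T)\subset\partial B^3$). Consequently the chain complex $\SFC(X_T^s)=\CFT(T,s)$, together with its splitting along relative $\Spinc$-structures and its relative homological $\mathbb{Z}/2$-grading, is completely unaffected by reversing orientations on $T$. The orientation of $T$ enters the theory only through the identification of the Alexander $\mathbb{Z}^r$-grading with the space of relative $\Spinc$-structures, so the whole proof reduces to tracking this identification.

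Recall from the first lemma of Section~\ref{sec:geometricinterpretation} that $H_1(X_T)\cong\mathbb{Z}^r$ is freely generated by the meridians $\mu_1,\dots,\mu_r$ of the $r$ tangle components. Relative $\Spinc$-structures on $X_T^s$ form an affine space over $H^2(X_T^s,\partial X_T^s)\cong H_1(X_T)$. An orientation of the $i$-th component orients $\mu_i$ via the right-hand rule and thereby fixes the isomorphism $H_1(X_T)\cong\mathbb{Z}^r$; the Alexander grading is (up to an overall additive constant) the composition of this isomorphism with the affine bijection $\Spinc(X_T^s)\to H_1(X_T)$ obtained after choosing a base $\Spinc$-structure. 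Reversing the orientation of the first strand reverses the orientation of $\mu_1$, hence negates the first basis vector of $\mathbb{Z}^r$, and therefore negates the first coordinate of the Alexander grading while leaving everything else unchanged. This gives the first claimed isomorphism.

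The second identity follows by iterating the first statement strand by strand, since the orientations of different strands contribute independently to the $\mathbb{Z}^r$-grading. The main point requiring care, which I would address but do not expect to be serious, is that the Alexander grading is a priori only \emph{relative} and that passing to an absolute $\mathbb{Z}^r$-grading requires a choice of base $\Spinc$-structure which could itself depend on the orientation of $T$; however, any such discrepancy is an overall translation of the grading and is therefore absorbed by the $\dot{\cong}$ notation introduced in Proposition~\ref{prop:HFTmirror}. In short, the core of the argument is the elementary observation that orientations of $T$ only affect how $\Spinc$-structures on $X_T^s$ are \emph{labelled}, not the sutured manifold itself or its sutured Floer chain complex.
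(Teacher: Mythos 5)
Your argument is correct and is essentially the paper's own proof, just spelled out in more detail: the paper likewise observes that the orientation of a tangle component only amounts to a choice of orientation of its meridian, hence only relabels the relative $\Spinc$-/Alexander grading (negating the corresponding coordinate) without affecting the sutured manifold $X_T^s$, its chain complex, or the relative homological grading. Your extra care about the grading being merely relative and absorbed by $\dot{\cong}$ is exactly the right (implicit) point.
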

\begin{proof}
The orientation of a tangle component is just a choice of an orientation of its meridian. This does not affect the relative homological grading.
\end{proof}
\begin{proposition}\label{prop:fourendedHFT}
Let \(T\) be a 4-ended tangle. We distinguish between the same two cases as in proposition~\sref{prop:fourended}:
$$
\begin{pspicture}(-2.6,-1.5)(2.6,1.05)
\rput(-1.5,0){
\SpecialCoor
\psline[linecolor=violet]{->}(0.6;45)(1;45)
\psline[linecolor=violet]{<-}(0.6;-45)(1;-45)

\psline[linecolor=darkgreen]{->}(0.6;135)(1;135)
\psline[linecolor=darkgreen]{<-}(0.6;-135)(1;-135)

\uput{0.6}[0](0,0){$c$}
\uput{0.6}[90](0,0){$d$}
\uput{0.6}[180](0,0){$a$}
\uput{0.6}[270](0,0){$b$}

\uput{1.1}[135](0,0){$\textcolor{darkgreen}{p}$}
\uput{1.1}[-135](0,0){$\textcolor{darkgreen}{p}$}
\uput{1.1}[45](0,0){$\textcolor{violet}{q}$}
\uput{1.1}[-45](0,0){$\textcolor{violet}{q}$}
\pscircle[linestyle=dotted](0,0){1}

\rput(0,-1.3){\textit{case I}}
}

\rput(1.5,0){
\SpecialCoor
\psline[linecolor=violet]{->}(0.6;45)(1;45)
\psline[linecolor=violet]{<-}(0.6;-135)(1;-135)

\psline[linecolor=darkgreen]{->}(0.6;135)(1;135)
\psline[linecolor=darkgreen]{<-}(0.6;-45)(1;-45)

\uput{0.6}[0](0,0){$c$}
\uput{0.6}[90](0,0){$d$}
\uput{0.6}[180](0,0){$a$}
\uput{0.6}[270](0,0){$b$}

\uput{1.1}[135](0,0){$\textcolor{darkgreen}{p}$}
\uput{1.1}[-45](0,0){$\textcolor{darkgreen}{p}$}
\uput{1.1}[45](0,0){$\textcolor{violet}{q}$}
\uput{1.1}[-135](0,0){$\textcolor{violet}{q}$}
\pscircle[linestyle=dotted](0,0){1}
\rput(0,-1.3){\textit{case II}}
}

\end{pspicture}
$$
In both cases, 
\begin{equation}
\CFT(T,b)~\dot{\cong}~\CFT(\rr(T),d).\label{eqn:BD}\tag{B-D}
\end{equation}
In case I, we also have
\begin{equation}
V_p\otimes\CFT(T,a)~\dot{\cong}~V_q\otimes\CFT(\rr(T),c), \tag{A-C}\label{eqn:AC}
\end{equation}
where \(V_t\) denotes a 2-dimensional vector space supported in consecutive Alexander and homological gradings.
In case II (and in case I with \(p=q\)), the second identity holds if we drop the tensor factors \(V_p\) and \(V_q\).
\end{proposition}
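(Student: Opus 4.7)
The plan is to work with the sutured Floer homology description of $\CFT$ from Definition~\sref{def:HFTfromSFH} and to exhibit explicit diffeomorphisms of sutured manifolds, after an auxiliary suture stabilisation where necessary, on each side of the claimed equivalence. This categorifies the corresponding arguments for Proposition~\sref{prop:fourended}.

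For \seqref{eqn:BD}, my first step is to consider the diffeomorphism of $B^3$ given by a $180^\circ$ rotation about the axis perpendicular to the plane of the tangle diagram and passing through its centre. This sends the arc $b$ to the arc $d$ and reverses the orientation of every tangle strand as drawn in the projection. In case~II each strand is sent to itself (with its two endpoints exchanged), so the resulting tangle is literally $\rr(T)$. In case~I the rotation exchanges the two strands; relabelling so that the strand on the left retains the colour $p$ (a relabelling that does not affect the bigraded chain homotopy type of $\CFT$, only the naming of the Alexander gradings) we again obtain $\rr(T)$. In either case one gets a diffeomorphism of sutured manifolds $X_T^b \cong X_{\rr(T)}^d$, which yields $\CFT(T, b) \dot\cong \CFT(\rr(T), d)$.

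For \seqref{eqn:AC} in case~I, the plan is to categorify the decategorified argument for \seqref{eqn:Iac} by comparing both sides with a common link Floer complex. Attach to $T$ an exterior arc inside $B^3$ passing through the region $a$ and joining the two $p$-endpoints; the result is a two-ended tangle $T^p$ with one closed $p$-component and one open $q$-strand. By Theorem~\sref{thm:HFTfromSFH}, $\CFT(T^p, \emptyset) \dot\cong \CFL(L)$, where $L$ is the link obtained from $T^p$ by closing the remaining open strand. At the level of sutured manifolds, $X_{T^p}^{\emptyset}$ is obtained from $X_T^a$ by replacing the pair of meridional sutures around the $p$-endpoints (together with the surgery arc at $a$ that already joins them) by a pair of oppositely oriented parallel meridional sutures on the new closed $p$-component. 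Such a stabilisation tensors $\SFH$ with a two-dimensional summand supported in consecutive Alexander and homological gradings, which is precisely the factor $V_p$. The parallel construction applied to $\rr(T)$ with closing arc through region $c$ gives $V_q \otimes \CFT(\rr(T), c) \dot\cong \CFL(L')$, and the argument concludes once one verifies that $L = L'$ as oriented links.

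The main obstacle will be the orientation bookkeeping needed to establish $L = L'$: it is exactly this step that forces the appearance of $\rr(T)$ rather than $T$ on the right-hand side of \seqref{eqn:AC}, and it is where the interaction between the orientation of the closing arc and the $\rr$ operation becomes delicate (Propositions~\sref{prop:HFTmirror} and \sref{prop:HFTreverseorientI} should provide the needed tools). The special cases in which the tensor factors can be dropped -- case~II in general and case~I when $p = q$ -- correspond to configurations in which the closing arc through $a$ (respectively $c$) merges the two tangle strands into a single oriented component rather than creating a new closed one, so no auxiliary stabilisation is needed; in these cases the identification $\CFT(T, a) \dot\cong \CFT(\rr(T), c)$ can instead be obtained directly by a rotation argument parallel to the one used for \seqref{eqn:BD}.
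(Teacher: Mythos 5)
Your argument for (B-D) does not do what you claim. The $180^\circ$ rotation about the axis perpendicular to the plane of the diagram carries $T$ to its \emph{mutant} (rotation about the $z$-axis in the sense of figure~\ref{fig:MutationAxes}), not to $\rr(T)$: the image of a strand is the rotated arc, which is isotopic rel the labelled boundary to the original strand only when $T$ happens to be symmetric under that rotation, so the assertion ``each strand is sent to itself'' fails for a general $T$. (If the rotated tangle were always $\rr(T)$, the mutation questions studied in this thesis would be vacuous.) What the rotation honestly gives is a diffeomorphism $X_T^b\cong X_{T'}^d$ with $T'$ the mutant of $T$, which is not relation (B-D). The paper's proof leaves the tangle fixed and compares the two sutured structures on the \emph{same} complement: pushing the meridional sutures of the open components through the tangle shows that the sutures for site $b$ agree with those for site $d$ with all orientations reversed, so \cite[proposition~2.14]{DecatSFH} identifies the two sutured Floer homologies up to reversing the relative $\Spinc$ (hence Alexander) grading, and proposition~\ref{prop:HFTreverseorientI} converts that grading reversal into the appearance of $\rr(T)$.

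For (A-C) in case I, the central claim of your argument --- that $X_{T^p}^\emptyset$ is obtained from $X_T^a$ by a mere suture stabilisation, so that $\CFL(L)\cong V_p\otimes\CFT(T,a)$ --- is false. Drilling out the closing arc changes the underlying manifold (its boundary becomes two tori instead of a genus-two surface), and the link Floer homology of the closure is not computed from the single summand $\CFT(T,a)$; this is precisely why the glueing results of chapter~\ref{chapter:HFTd} require the full peculiar module rather than one site. Concretely, for the trivial case-I tangle one has $\HFT(T,a)=0$ (the sutured manifold $X_T^a$ is not taut; equivalently, the peculiar module in figure~\ref{fig:CFTdForSomeRatTangles} has no generators in sites $a$ or $c$), while the closure $L$ is the two-component unlink with $\HFL(L)\neq 0$, so your claimed isomorphism cannot hold. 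The paper's proof instead applies Juh\'asz's surface decomposition formula to $X_T^c$ itself: decomposing along the boundary of a neighbourhood $N$ of $R_-$ together with the component of $R_+$ corresponding to the $q$-strand gives $\HFT(T,c)\cong\SFH(X_T')\otimes V_q$, where $X_T'$ is the tangle complement equipped with a single suture separating the $p$-ends from the $q$-ends, and similarly $\HFT(T,a)\cong\SFH(X_T'')\otimes V_p$ with the roles of $R_\pm$ interchanged; one then concludes with the same duality-plus-orientation-reversal step as in (B-D). Your closing remark, that case II and case I with $p=q$ follow from a rotation argument parallel to your (B-D) step, inherits the same mutation flaw; in the paper these cases follow from the suture comparison argument just described.
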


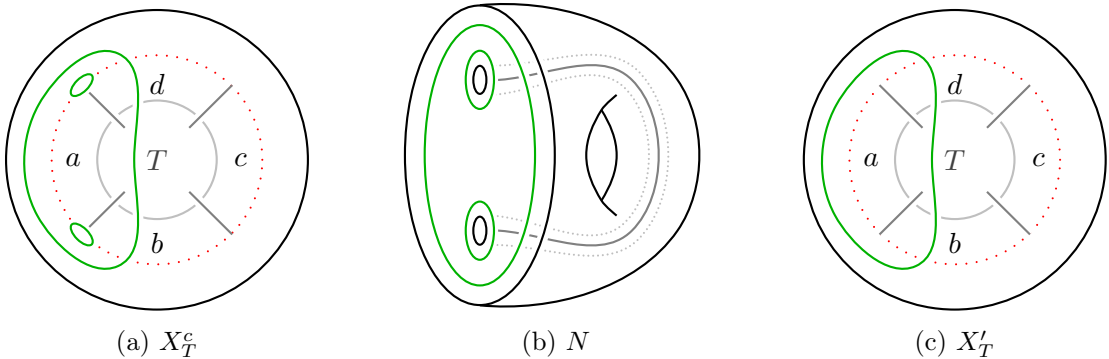
\begin{figure}[b]
\centering
\begin{subfigure}[b]{0.31\textwidth}
\centering
\psset{unit=0.2}
\begin{pspicture}(-10,-10)(10,10)
\pscircle(0,0){10}
\pscircle[linecolor=red,linestyle=dotted](0,0){7}

\pscircle[linecolor=lightgray](0,0){4}

\psline[linecolor=white,linewidth=4pt](3;45)(6.3;45)
\psline[linecolor=white,linewidth=4pt](3;135)(6.3;135)
\psline[linecolor=white,linewidth=4pt](3;-135)(6.3;-135)
\psline[linecolor=white,linewidth=4pt](3;-45)(6.3;-45)

\psline[linecolor=gray](3;45)(7;45)
\psline[linecolor=gray](3;135)(6.3;135)
\psline[linecolor=gray](3;-135)(6.3;-135)
\psline[linecolor=gray](3;-45)(7;-45)

\rput{-135}(7;-135){\psellipse[linecolor=darkgreen,fillcolor=white,fillstyle=solid](0,0)(0.5,1)}
\rput{135}(7;135){\psellipse[linecolor=darkgreen,fillcolor=white,fillstyle=solid](0,0)(0.5,1)}

\psecurve[linecolor=white,linewidth=4pt](7.5;110)(1.5;180)(7.5;-110)(8.7;-155)
(8.6;155)(7.5;110)(1.5;180)(7.5;-110)
\psecurve[linecolor=darkgreen](7.5;110)(1.5;180)(7.5;-110)(8.7;-155)
(8.6;155)(7.5;110)(1.5;180)(7.5;-110)

\rput(-5.5,0){$a$}
\rput(0,-5.5){$b$}
\rput(5.5,0){$c$}
\rput(0,5){$d$}

\rput(0,0){\textcolor{darkgray}{$T$}}

\end{pspicture}
\caption{$X_T^c$}\label{fig:SurfaceDecompBefore}
\end{subfigure}
\quad
\begin{subfigure}[b]{0.31\textwidth}
\centering
\psset{unit=0.2}
\begin{pspicture}(-10.3,-10.3)(10.3,10.3)

\psecurve[linecolor=gray](-10,6)(-5,5)(5,5)(5,-5)(-5,-5)(-10,-6)

\psellipse*[linecolor=white,linewidth=4pt](-5,5)(0.5,1)
\psellipse*[linecolor=white,linewidth=4pt](-5,-5)(0.5,1)

\psecurve[linecolor=lightgray,linestyle=dotted,dotsep=1pt](-10,5)(-5,4)(4.5,4.5)(4.5,-4.5)(-5,-4)(-10,-5)
\psecurve[linecolor=lightgray,linestyle=dotted,dotsep=1pt](-10,7)(-5,6)(5.5,5.5)(5.5,-5.5)(-5,-6)(-10,-7)

\psellipse[linecolor=white,linewidth=4pt](-5,0)(5.3,10.3)

\psecurve(-13,5)(-5,9.95)(8,5)(8,-5)(-5,-9.95)(-13,-5)
\psellipse(-5,0)(5,10)
\rput(-2,0){
\psecurve(4,4)(5,3)(6,0)(5,-3)(4,-4)
\pscurve(6,4)(5,3)(4,0)(5,-3)(6,-4)
}

\psellipse[linecolor=white,linewidth=4pt](-5,0)(4,9)
\psellipse[linecolor=darkgreen](-5,0)(3.7,8.7)

\psellipse[linecolor=white,linewidth=4pt](-5,5)(1.2,2.2)
\psellipse[linecolor=white,linewidth=4pt](-5,-5)(1.2,2.2)
\psellipse[linecolor=darkgreen,fillcolor=white,fillstyle=solid](-5,5)(1,2)
\psellipse[linecolor=darkgreen,fillcolor=white,fillstyle=solid](-5,-5)(1,2)

\psellipse(-5,5)(0.5,1)
\psellipse(-5,-5)(0.5,1)

\end{pspicture}
\caption{$N$}\label{fig:SurfaceDecompN}
\end{subfigure}
\quad
\begin{subfigure}[b]{0.31\textwidth}
\centering
\psset{unit=0.2}
\begin{pspicture}(-10,-10)(10,10)
\pscircle(0,0){10}
\pscircle[linecolor=red,linestyle=dotted](0,0){7}

\pscircle[linecolor=lightgray](0,0){4}

\psline[linecolor=white,linewidth=4pt](3;45)(7;45)
\psline[linecolor=white,linewidth=4pt](3;135)(7;135)
\psline[linecolor=white,linewidth=4pt](3;-135)(7;-135)
\psline[linecolor=white,linewidth=4pt](3;-45)(7;-45)

\psline[linecolor=gray](3;45)(7;45)
\psline[linecolor=gray](3;135)(7;135)
\psline[linecolor=gray](3;-135)(7;-135)
\psline[linecolor=gray](3;-45)(7;-45)


\psecurve[linecolor=white,linewidth=4pt](7.5;110)(1.5;180)(7.5;-110)(8.7;-155)
(8.6;155)(7.5;110)(1.5;180)(7.5;-110)
\psecurve[linecolor=darkgreen](7.5;110)(1.5;180)(7.5;-110)(8.7;-155)
(8.6;155)(7.5;110)(1.5;180)(7.5;-110)

\rput(-5.5,0){$a$}
\rput(0,-5.5){$b$}
\rput(5.5,0){$c$}
\rput(0,5){$d$}

\rput(0,0){\textcolor{darkgray}{$T$}}

\end{pspicture}
\caption{$X_T^\prime$}\label{fig:SurfaceDecompX}
\end{subfigure}
\caption{The surface decomposition used in the proof of proposition~\ref{prop:fourendedHFT}}\label{fig:SurfaceDecomp}
\end{figure}

\begin{proof}
Let us consider relation \seqref{eqn:BD} first. The underlying sutured manifolds are the same after switching the roles of $R_-$ and $R_+$ on one side. This can be easily seen by pushing the two meridional sutures of the open tangle components through the tangle. Then, by \cite[proposition~2.14]{DecatSFH}, the sutured Floer homologies are identical, except that the $\Spinc$-gradings are opposite to each other. Now apply proposition~\sref{prop:HFTreverseorientI}.\\
In case~II, \seqref{eqn:AC} (without the tensor factors) follows from the same arguments. 
In case~I, relation~\seqref{eqn:AC} is an exercise in applying the surface decomposition formula for sutured Floer homology, see figure~\ref{fig:SurfaceDecomp}. Consider $X_T^c$. Let $N$ be a tubular neighbourhood of the union of~$R_-$ and the component of~$R_+$ corresponding to the $q$-strand. Let $S$ be the surface obtained as the intersection of~$N$ with the closure of $X_T^\prime:=X_T^c\smallsetminus N$. Note that $X_T^\prime$ is diffeomorphic to~$X_T$. We turn it into a balanced sutured manifold by adding a single suture on the boundary of the 3-ball, separating the $p$-ends from the $q$-ends. We get a decomposition
$$X_T^c\rightsquigarrow^S X_T^\prime\cup N,$$
which satisfies the conditions of \cite[proposition~8.6]{SurfaceDecomposition}. Thus,
$$\SFH(X_T^c)=\SFH(X_T^\prime)\otimes \SFH(N).$$
It is now straightforward to calculate $\SFH(N)$ which gives $V_q$. Thus,
$$\HFT(T,c)=\SFH(X_T^\prime)\otimes V_q$$
and similarly
$$\HFT(T,a)=\SFH(X_T^{\prime\prime})\otimes V_p,$$
where $X_T^{\prime\prime}$ agrees with $X_T^\prime$, except that the roles of $R_-$ and $R_+$ are interchanged. 
To get relation~\seqref{eqn:AC}, we now argue just as before.
\end{proof}

\section{Heegaard diagrams for tangles}\label{sec:HDsForTangles}

Throughout this section, let $T$ be an oriented tangle with $n$ open and $m$ closed components and $s$~a~site of~$T$. The complement of a tubular neighbourhood of the tangle in the closed 3-ball is denoted by $X_T$. In the following, we adapt the Heegaard Floer construction for knots and links \cite{Jake,OSHFK,OSHFL} to tangles, which gives us another, but equivalent definition of $\HFT$ to the one defined in the previous section. 

\begin{definition}\label{def:HDsfortangles}
A \textbf{Heegaard diagram for a tangle} consists of the following data:
\begin{itemize}
\item An oriented surface $\Sigma_g$ of genus $g$ with $2(n+m)$ boundary components, denoted by $\Z$, which are partitioned into $(n+m)$ pairs,
\item a set $\Ac$ of $(g+m)$ pairwise disjoint circles $\alpha_1,\dots, \alpha_{g+m}$ on $\Sigma_g$,
\item a set $\Aa$ of $2n$ pairwise disjoint arcs $\aaa_1,\dots,\aaa_{2n}$ on $\Sigma_g$ which are disjoint from~$\Ac$ and whose endpoints lie on  $\Z$, and
\item a set $\B$ of $(g+m+n-1)$ pairwise disjoint circles $\beta_1,\dots, \beta_{g+m+n-1}$ on $\Sigma_g$.
\end{itemize}
We write $\A:=\Ac\cup\Aa$ and impose the following conditions on the data above:
\begin{itemize}
\item Contracting all boundary components turns $\Aa$ into a single circle. So in particular, on each component of $\Z$, there are either no or exactly two endpoints of~$\Aa$.
\item The surface $S_{\Ac}(\Sigma_g)$ obtained by surgery along the curves in $\Ac$ is a disjoint union of $m$ annuli, each of whose boundary is a pair in $\Z$, and a 2-sphere with $2n$ boundary components. We denote the set of circles in $\Z$ which meet the $\alpha$-curves by $\Z^\alpha$.
\item The surface $S_{\B}(\Sigma_g)$ obtained by surgery along the curves in $\B$ is a disjoint union of $(n+m)$ annuli, each of whose boundary is a pair in $\Z$. 
\end{itemize}
\end{definition}
\begin{Remark}\label{rem:conventions1}
We can recover the tangle complement $X_T$ from this data by attaching 2-handles to $\Sigma_g\times [0,1]$ along $\Ac\times\{0\}$ and $\B\times\{1\}$; the $\alpha$-arcs then correspond to $\red S^1$ from definition~\sref{def:tangle}. Conversely, we can pick a self-indexing Morse function $f$ on $B^3$ which is identical to $\frac{3}{2}$ on a neighbourhood of $\red S^1$ and which has a single minimum and a single maximum on each closed component of $T$ and a single minimum on each open tangle component. Then, the Heegaard surface is equal to $f^{-1}(\frac{3}{2})$ modulo punctures at the tangle ends; the $\alpha$- and $\beta$-circles are the loci of points on this surface flowing from/to the index~1 and~2 critical points, respectively. 
Our \textbf{convention on the orientation of the Heegaard surface} is that its normal vector field (using the right-hand rule) points in the positive direction of the Morse function, i.\,e.\ in the direction of the $\beta$-curves. However, we usually draw the Heegaard surfaces such that this normal vector field points into the plane.
\end{Remark}
\begin{Remark}\label{Rem:TanglesAsSpecialBSMflds}
Given a tangle $T$, we can endow $X_T$ with the structure of a bordered sutured manifold as follows: Each closed component gets two oppositely oriented meridional circles and around each tangle end, we have a single suture such that the boundary of $B^3$ minus a neighbourhood of the tangle ends lies in $R_-$. Furthermore, the arcs ${\red S^1}\smallsetminus \partial T$ together with small neighbourhoods of the endpoints on $\mathcal{Z}$ constitute the arc diagram. Similarly, Heegaard diagrams for tangles can be viewed as bordered sutured Heegaard diagrams, see section~\ref{sec:TFHviaBSFH}, in particular definition~\ref{def:HDforBorderedSuturedMfdls} and figure~\ref{fig:BDHDoriginal}.
\end{Remark}

\begin{lemma}\label{lem:HeegaardMoves}
Every tangle \(T\) has a Heegaard diagram. Moreover, any two diagrams for the same tangle can be obtained from one another by a sequence of the following moves:
\begin{itemize}
\item an isotopy of an \(\alpha\)- or \(\beta\)-circle or an isotopy of an \(\alpha\)-arc relative to its endpoints,
\item a handleslide of a \(\beta\)-circle over another \(\beta\)-circle,
\item a handleslide of an \(\alpha\)-curve over an \(\alpha\)-circle and
\item stabilisation.
\end{itemize} 
\end{lemma}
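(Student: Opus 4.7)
My plan is to mimic the classical proof for links (see~\cite{Jake} or \cite{OSHFK}), paying extra attention to the $\alpha$-arcs that record how the tangle meets the Heegaard surface. For existence, I would pick a self-indexing Morse function $f\colon B^3\to[0,3]$ as sketched in Remark~\ref{rem:conventions1}: require $f$ to be $\tfrac{3}{2}$ on a neighbourhood of the distinguished circle $\red S^1$, to have a single minimum on each open tangle component, and a single minimum and maximum on each closed component, together with the usual index~$0$ and~$3$ critical points on $B^3\smallsetminus\nu(T)$ needed to complete a Morse function on the ball. Take $\Sigma_g:=f^{-1}(\tfrac{3}{2})\smallsetminus\nu(T)$ as Heegaard surface, with $\Ac$ and $\B$ the loci of points flowing from and to the index~$1$ and~$2$ critical points in $X_T$, and $\Aa$ the intersection of the portions of $T$ below level~$\tfrac{3}{2}$ with $\Sigma_g$ (pushed to the surface). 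A direct count, using the fact that the open strands contribute only a minimum each whereas the closed strands contribute a canceling min/max pair, verifies all the combinatorial conditions in Definition~\ref{def:HDsfortangles}; in particular the $\alpha$-arcs contract to a single circle because the open strands together with the fixed circle $\red S^1$ form an unknot after collapsing the boundary punctures.

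For the move set, I would follow Cerf theory exactly as in the closed $3$-manifold case. Two tangles being ambient isotopic means their exteriors with the data $(B^3,{\red S^1},T)$ are diffeomorphic, so two Heegaard diagrams for the same tangle arise from two admissible Morse functions that can be joined by a generic $1$-parameter family $f_t$ of functions that remain admissible (in particular leaving the tangle Morse-trivial in the prescribed way). In such a generic family the only singular events are (i) tangencies of flow lines between critical points of the same index, (ii) birth/death of a canceling index~$1$/$2$ pair (kept away from $T$), and (iii) generic isotopy of the level set. These translate on the diagram side into handleslides of $\Ac$ over $\Ac$ and of $\B$ over $\B$, stabilisations, and isotopies of $\Ac\cup\B$, respectively. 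The $\alpha$-arcs move along by isotopy as the Heegaard level set deforms.

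The delicate point, and what I expect to be the main technical obstacle, is to verify that one never needs a handleslide involving an $\alpha$-arc as the slid-over curve: such a move would change the combinatorial type of the tangle, since an $\alpha$-arc records the trace of a tangle strand. This is ensured by our choice of Morse function: the $\alpha$-arcs correspond to gradient flow lines emanating from the distinguished minima on the strands themselves, not from index~$1$ critical points of $f|_{X_T}$, so they cannot play the role of an ``attaching circle'' in a tangency event. I would make this rigorous by arguing that in the generic family, one can keep $T$ (and hence the minima on $T$) fixed, so the only tangencies among descending disks occur between pairs of descending disks of index~$1$ critical points of $f_t|_{X_T}$, giving handleslides of $\Ac$-curves over $\Ac$-\emph{circles} only. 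Handleslides of $\alpha$-arcs over $\alpha$-circles are legitimate and are included in the move list; handleslides of arcs over arcs are avoided.

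Finally I would invoke a connectedness-of-admissible-families argument: the space of admissible Morse functions compatible with a fixed $(B^3,{\red S^1},T)$ is path-connected, and generic $1$-parameter families realise only the events just listed. Combined with the observation that stabilisations can always be performed in the interior of $\Sigma_g$ (away from $\Z$ and $\Aa$), this produces the required finite sequence of moves between any two Heegaard diagrams for $T$.
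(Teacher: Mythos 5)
Your overall strategy---redoing the Morse/Cerf-theoretic argument directly---is the standard route underlying the result the paper actually invokes: in the text this lemma is proved in one line, by viewing the tangle complement with its parametrised boundary as a bordered sutured manifold (remark~\ref{Rem:TanglesAsSpecialBSMflds}) and quoting \cite[proposition~4.5]{Zarev}, with existence also following from the explicit diagrams of example~\ref{exa:HDforonecrossing}. A self-contained proof along your lines is in principle possible (it is essentially how the cited proposition is proved), but as written your sketch fails at exactly the delicate point you single out. The $\alpha$-arcs of a tangle Heegaard diagram are not traces of the tangle strands pushed into the level surface, nor gradient flow lines emanating from the minima on the strands: by remarks~\ref{rem:conventions1} and~\ref{Rem:TanglesAsSpecialBSMflds} they are the arcs of the fixed circle ${\red S^1}\subset\partial B^3$ from definition~\ref{def:tangle}, cut open at the $2n$ tangle ends; equivalently, they form the embedded graph $G(\mathcal{Z})$ of the arc diagram parametrising the boundary, sitting in $R_-$.

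This misidentification undercuts both halves of your argument. In the existence step, the verification of the conditions of definition~\ref{def:HDsfortangles} is not the one you give: the $2n$ arcs close up to a single circle after contracting the boundary components simply because they are the complementary arcs of the $2n$ punctures in ${\red S^1}$, not because ``the open strands together with ${\red S^1}$ form an unknot''. More seriously, in the uniqueness step your reason for excluding handleslides over $\alpha$-arcs---that the arcs come from minima on $T$ rather than from index~$1$ critical points---does not describe the arcs in this setup and so cannot rule out the forbidden moves. The correct argument has to be carried out \emph{relative to the fixed boundary data}: one keeps the sutures and the embedded graph $G(\mathcal{Z})$ fixed, proves connectedness of the space of Morse functions compatible with this bordered sutured structure rel boundary, and then checks that the only boundary-interacting events are isotopies of arcs rel endpoints and slides of $\alpha$-curves over $\alpha$-\emph{circles}. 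That relative statement is precisely the nontrivial content of Zarev's proposition~4.5, which your final ``connectedness-of-admissible-families'' paragraph asserts but does not supply; as it stands, the key step is replaced by an appeal to a picture of the arcs that is incorrect for this construction.
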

\begin{proof}
In view of remark~\ref{Rem:TanglesAsSpecialBSMflds}, this is a special case of \cite[proposition~4.5]{Zarev}. The first part also follows from the next example.
\end{proof}
\begin{figure}[b]
\centering
\begin{subfigure}[b]{0.31\textwidth}
\centering
\psset{unit=0.2}
\begin{pspicture}(-10.3,-10.3)(10.3,10.3)
\psline(6;-45)(6;135)
\pscircle*[linecolor=white](0,0){0.7}
\psline(6;-135)(6;45)
\pscircle[linestyle=dotted](0,0){6}
\end{pspicture}
\vspace*{9pt}
\caption{A single crossing tangle}\label{fig:1crossingTforHD}
\end{subfigure}
\quad
\begin{subfigure}[b]{0.31\textwidth}
\centering
\psset{unit=0.2}
\begin{pspicture}(-10.3,-10.3)(10.3,10.3)
\psrotate(0,0){90}{

\psecurve[linecolor=blue](8,-0.2)(7,0)(2,2)(0,7)(-0.2,8)
\psecurve[linecolor=blue](-4,4)(0,7)(-8.3,8.3)(-7,0)(-4,4)
\psecurve[linecolor=blue](-8,0.2)(-7,0)(-2,-2)(0,-7)(0.2,-8)
\psecurve[linecolor=blue](4,-4)(0,-7)(8.3,-8.3)(7,0)(4,-4)

\psarc[linecolor=red](0,0){7}{0}{360}

\rput(7;45){\pscircle*[linecolor=white]{1}\pscircle[linecolor=darkgreen]{1}}
\rput(7;135){\pscircle*[linecolor=white]{1}\pscircle[linecolor=darkgreen]{1}}
\rput(7;225){\pscircle*[linecolor=white]{1}\pscircle[linecolor=darkgreen]{1}}
\rput(7;315){\pscircle*[linecolor=white]{1}\pscircle[linecolor=darkgreen]{1}}

}
\psdot(0,7)
\psdot(7,0)
\psdot(0,-7)
\psdot(-7,0)

\rput[b](5.5;-145){\textcolor{red}{$\Ac$}}
\rput[b](2;-45){\textcolor{blue}{$\B$}}
\rput[br](8;135){$\Z$}

\rput[br](0,7.5){$D$}
\rput[tl](7.5,0){$C$}
\rput[tl](0,-7.5){$B$}
\rput[br](-7.5,0){$A$}
\end{pspicture}
\caption{A Heegaard diagram for the tangle on the left}\label{fig:HDfor1crossing}
\end{subfigure}
\quad
\begin{subfigure}[b]{0.31\textwidth}
\centering
\psset{unit=0.2}
\begin{pspicture}(-5.1,-10.1)(5.1,10.1)
\psellipticarc[linestyle=dotted,dotsep=1pt,linecolor=darkgreen](0,-8)(5,2){0}{180}
\psellipticarc[linestyle=dotted,dotsep=1pt,linecolor=blue](0,0)(5,2){0}{180}
\psline[linecolor=red](4,-9.2)(4,6.8)
\psline[linecolor=red](-4,-9.2)(-4,6.8)
\psellipticarc[linecolor=darkgreen](0,-8)(5,2){180}{0}
\psline(4.93,-8)(4.93,8)
\psline(-4.93,8)(-4.93,-8)
\psellipse[linecolor=darkgreen](0,8)(5,2)
\psellipticarc[linecolor=blue](0,0)(5,2){180}{0}

\psellipse[linecolor=darkgreen](0,0)(0.5,0.6)

\psellipse[linecolor=darkgreen](0,-4)(0.5,0.6)

\psellipse[linecolor=red](0,-2)(2,5)
\psdot(1.98,-1.82)
\psdot(-1.98,-1.82)
\psdot(4,-1.2)
\psdot(-4,-1.2)
\end{pspicture}
\caption{A ladybug, see also \cite[figure~8]{BaldwinLevine}}\label{fig:ladybug}
\end{subfigure}
\caption{Two building blocks of tangle Heegaard diagrams for example~\ref{exa:HDforonecrossing}}\label{fig:HDBuildingBlocks}
\end{figure}
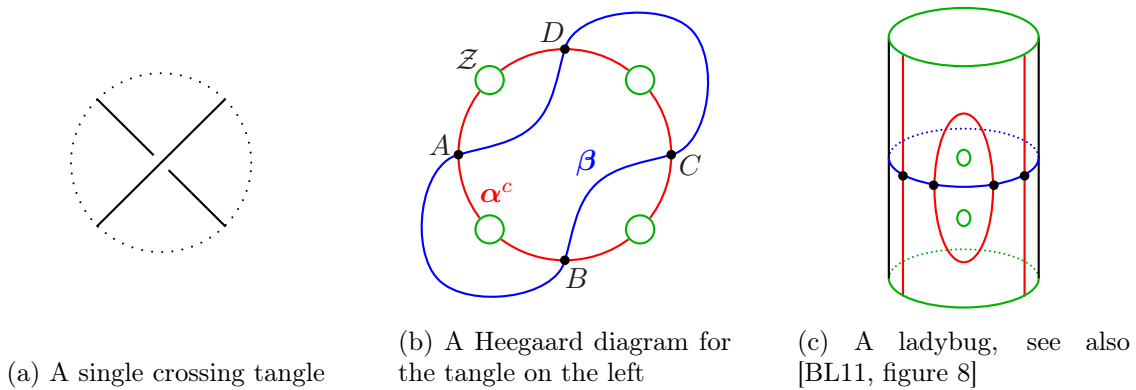
\begin{example}\label{exa:HDforonecrossing}
For a 1-crossing tangle, we draw the Heegaard diagram shown in figure~\ref{fig:HDfor1crossing}. From this, we can obtain a Heegaard diagram for any tangle without closed components as follows: We cut a tangle diagram of a given tangle up into 4-ended tangles with a single crossing each. Then, for each such component, we can use our Heegaard diagram from figure~\ref{fig:HDfor1crossing} and then glue these copies together along $\Z$ according to the tangle diagram. For tangles with closed components, we can do the same except that into each closed component, we insert a copy of the ``ladybug'' from figure~\ref{fig:ladybug}.
\end{example}
\begin{figure}[t]
\centering
\psset{unit=0.2}
\begin{subfigure}[b]{0.6\textwidth}
\centering
\begin{pspicture}(-2,-10.3)(34,10.3)
\psscalebox{1 -1}{
\psecurve[linestyle=dotted](32,3)(0,3)(0,-3)(32,-3)(32,3)(0,3)(0,-3)

\psecurve(-8,3)(0,-3)(8,3)(16,-3)
\psecurve(8,3)(16,-3)(24,3)(32,-3)

\psecurve(0,3)(8,-3)(16,3)(24,-3)
\psecurve(16,3)(24,-3)(32,3)(40,-3)

\pscircle*[linecolor=white](4,0){0.7}
\pscircle*[linecolor=white](12,0){0.7}
\pscircle*[linecolor=white](20,0){0.7}
\pscircle*[linecolor=white](28,0){0.7}

\psecurve(0,-3)(8,3)(16,-3)(24,3)
\psecurve(16,-3)(24,3)(32,-3)(40,3)

\psecurve(-8,-3)(0,3)(8,-3)(16,3)
\psecurve(8,-3)(16,3)(24,-3)(32,3)
}
\end{pspicture}
\caption{A rational tangle obtained from the 1-crossing tangle in figure~\ref{fig:1crossingTforHD} by three Dehn twists on the right}\label{fig:tangleforHDforRatTan}
\end{subfigure}
\quad
\begin{subfigure}[b]{0.33\textwidth}
\centering
\begin{pspicture}(-10.3,-10.3)(10.3,10.3)
\psscalebox{1 -1}{
\psarc[linecolor=red](0,0){8}{0}{360}

\SpecialCoor
\rput(8;45){\pscircle*[linecolor=white]{1}\pscircle[linecolor=darkgreen]{1}}
\rput(8;135){\pscircle*[linecolor=white]{1}\pscircle[linecolor=darkgreen]{1}}
\rput(8;225){\pscircle*[linecolor=white]{1}\pscircle[linecolor=darkgreen]{1}}
\rput(8;315){\pscircle*[linecolor=white]{1}\pscircle[linecolor=darkgreen]{1}}


\psecurve[linecolor=blue]%
(10;130)(10;140)(8;155)%
(8;-95)(11;-45)%
(10.5;45)(8;80)%
(8;-70)(10;-45)%
(9.5;45)(8;60)%
(9.5;-45)(8;-60)%
(8;70)(10;45)%
(10.5;-45)(8;-80)%
(5.5;-100)%
(8;120)(10;130)(10;140)(8;155)%

\psdot(8;155)
\psdot(8;120)
\psdot(8;80)
\psdot(8;70)
\psdot(8;60)
\psdot(8;-28)
\psdot(8;-60)
\psdot(8;-70)
\psdot(8;-80)
\psdot(8;-95)
}
\end{pspicture}
\caption{A genus 0 Heegaard diagram for the tangle in (a)}\label{fig:HDforHDforRatTan}
\end{subfigure}
\caption{Rational tangles have genus 0 Heegaard diagrams.}\label{fig:HDforRatTan}
\end{figure}
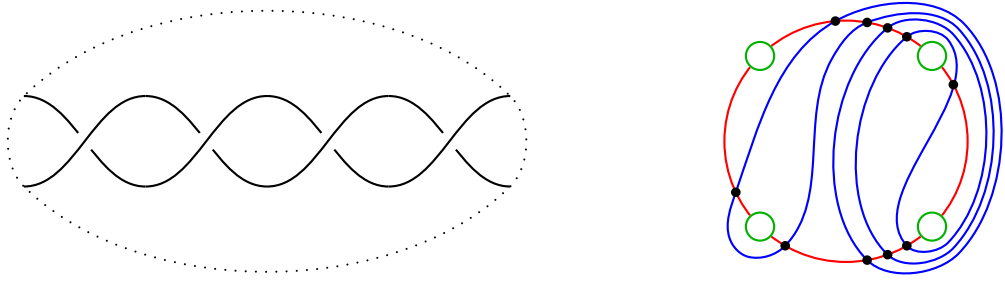
\begin{example}
For rational tangles, we can draw Heegaard diagrams on genus 0 surfaces. As illustrated in figure~\ref{fig:HDforRatTan}, this can be seen by performing Dehn twists on the Heegaard diagram for the 1-crossing tangle in figure~\ref{fig:HDfor1crossing}. In fact, a 4-ended tangle is rational iff it has a genus~0 Heegaard diagram. Indeed, a genus 0 Heegaard diagram for a 4-ended tangle has no $\alpha$-circles and just a single $\beta$-circle. By definition, we know that performing surgery along this $\beta$-circle gives us two cylinders, so it separates two punctures from the other two. 
\end{example}
In the following, let $\mathcal{H}=\mathcal{H}_T=(\Sigma_g,\Z,\A,\B)$ be a Heegaard diagram for $T$.
 
\begin{definition}\label{def:HDtoCFTbasics}
Let $\mathbb{T}=\mathbb{T}_\mathcal{H}$ be the set of tuples $\x=(x_1,\dots,x_{g+m+n-1})$ of points $x_1,\dots,x_{g+m+n-1}\in\A\cap\B$ such that there is exactly one point $x_i$ on each $\alpha$- and $\beta$-circle, and at most one point on each $\alpha$-arc. $\mathbb{T}$ will be the generating set of the chain module defined later on, so we call the elements in $\mathbb{T}$ \textbf{generators}.\\
Following the notation from definition~\sref{def:basic}, a \textbf{site} $s\in\mathbb{S}(T)$ corresponds to an $(n-1)$-element subset of $\Aa$. With each generator $\x\in\mathbb{T}$, we associate the site $s(\x)$ consisting of all those $\alpha$-arcs that are occupied by an intersection point in $\x$. We denote the set of all generators corresponding to a given site~$s$ by $\mathbb{T}^s$. Thus, we obtain a partition $$\mathbb{T}=\coprod_{s\in\mathbb{S}}\mathbb{T}^s.$$
We define $D$ to be the free Abelian group generated by the connected components of $\Sigma_g\smallsetminus(\A\cup\B\cup\Z )$, which we call \textbf{regions}. In other words,
$$D=D_{\mathcal{H}}:=H_2(\Sigma_g,\A\cup\B\cup\Z).$$
Elements of this group are called \textbf{domains}. Given two points $\x,\y\in\mathbb{T}$, we define $\pi_2(\x,\y)$ to be the subset of those domains $\phi$ which satisfy 
$$d(d\phi\cap\B)=\x-\y.$$
We call elements in $\pi_2(\x,\x)$ \textbf{periodic domains}. Note that this does not depend on the choice of $\x\in\mathbb{T}$.
Furthermore, let 
$$\pi^{\partial}_2(\x,\y):=\{\phi\in\pi_2(\x,\y)\vert\Z\cap\,\phi=\emptyset\}.$$
A Heegaard diagram is called \textbf{admissible} if 
every non-zero periodic domain in $\pi^{\partial}_2(\x,\y)$ has positive and negative multiplicities. 
\end{definition}
\begin{lemma}\label{lem:HeegaardMovesAdmissibility}
Every tangle diagram can be made admissible by isotopies of \(\B\). Furthermore, any two such diagrams for the same tangle can be transformed into one another by a sequence of Heegaard moves from lemma~\ref{lem:HeegaardMoves} through admissible diagrams.
\end{lemma}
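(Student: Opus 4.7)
The plan is to adapt the standard winding argument of Ozsv\'ath--Szab\'o \cite{OSHF3mfds} to our setting. A key preliminary observation is that elements of $\pi_2^\partial(\x,\x)$ cannot have $\alpha$-arcs in their boundary: every $\alpha$-arc has its endpoints on $\mathcal{Z}$, and any region of $\Sigma_g\smallsetminus(\A\cup\B\cup\Z)$ adjacent to $\mathcal{Z}$ has multiplicity zero in a domain avoiding $\mathcal{Z}$. Hence the boundary of such a periodic domain is a $\mathbb{Z}$-linear combination of the $\alpha$-circles in $\Ac$ and the $\beta$-circles in $\B$, and the lattice of relevant periodic domains is a finitely generated subgroup of $H_2(\Sigma_g\smallsetminus\mathcal{Z},\Ac\cup\B)$.

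First I would fix a basis $P_1,\dots,P_k$ of this lattice. For each $P_i$, I would perform an isotopy of a $\beta$-circle --- a finger move along a suitably chosen embedded arc $\gamma_i$ in $\Sigma_g\smallsetminus\mathcal{Z}$ transverse to $\A$ --- that introduces a new pair of intersection points in adjacent regions carrying opposite multiplicities in $P_i$, thereby forcing $P_i$ to have both positive and negative local multiplicities. The arc $\gamma_i$ can be taken inside the support of $P_i$, close to an $\alpha$-circle appearing in $\partial P_i$ but not appearing in $\partial P_j$ with nonzero multiplicity (or, failing that, one chooses $\gamma_i$ short enough that the induced change in the multiplicity function of each earlier $P_j$ is negligibly small). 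Iterating this procedure produces an admissible Heegaard diagram.

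For the second claim, given two admissible diagrams $\mathcal{H}$ and $\mathcal{H}'$ connected by a sequence of moves as in \sref{lem:HeegaardMoves}, I would interleave further windings around each move so that every intermediate diagram is admissible. Isotopies of the curves and handleslides preserve the isomorphism class of the lattice of periodic domains, and stabilisation adds only a new canceling $\alpha$--$\beta$ pair with no contribution to periodic domains; so after each move, admissibility can be re-established by applying the winding construction of the previous paragraph in a small collar of $\Sigma_g$ disjoint from the region where the move took place.

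The main obstacle will be the bookkeeping in the iterative winding: one must check that the finger move chosen to witness admissibility of $P_i$ does not destroy the witnesses previously arranged for $P_j$, $j<i$. This is handled by nesting the supports of successive finger moves inside sufficiently thin neighbourhoods, a technique standard in the Heegaard Floer literature. Alternatively, since tangle Heegaard diagrams form a special class of bordered sutured Heegaard diagrams (\sref{Rem:TanglesAsSpecialBSMflds}), the lemma can also be deduced directly from the corresponding admissibility theorem of Zarev \cite{Zarev}.
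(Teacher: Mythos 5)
Your closing sentence is in fact the paper's entire proof: via remark~\ref{Rem:TanglesAsSpecialBSMflds} a tangle Heegaard diagram is a bordered sutured Heegaard diagram, and the lemma is quoted as a special case of \cite[proposition~4.11 and corollary~4.12]{Zarev}; the only point the paper adds is that Zarev does not include the regions adjacent to basepoints in his $\pi_2$, so his $\pi_2$ coincides with our $\pi_2^\partial$ and the distinction between periodic and provincial periodic domains disappears here. Your main argument --- a direct winding argument in the style of \cite{OSHF3mfds} --- is therefore a genuinely different, more self-contained route, and in this setting it can be streamlined: by lemma~\ref{lem:noadmissibilityissues}, $\pi_2^\partial(\x,\x)\cong\mathbb{Z}^m$ with an explicit basis given by the differences of the two annuli associated with each closed tangle component, so diagrams of tangles without closed components are already admissible, only finitely many explicit domains need to acquire both signs, one winding of a $\beta$-curve per closed component suffices, and your preliminary observation that these periodic domains have no $\alpha$-arc boundary is immediate from that description (your own argument for it also needs the remark that the $\alpha$-part of the boundary is a cycle, which uses $d(d\phi\cap\B)=0$ together with the vanishing of the domain near $\Z$). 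So no nesting bookkeeping for the finger moves is really required for the first statement.

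The thin spot is the second statement. Saying that after each move ``admissibility can be re-established by winding in a collar disjoint from the region where the move took place'' does not yet produce a sequence of moves \emph{through admissible diagrams}: the diagram obtained immediately after a Heegaard move, before re-winding, may be inadmissible, and the re-winding isotopy itself passes through intermediate diagrams whose admissibility must be controlled. One has to argue either that the winding can be installed so that every move in the given sequence preserves the two-signedness of the (explicit) periodic domains, or run the more careful interpolation argument of Ozsv\'ath--Szab\'o \cite{OSHF3mfds}, which is what Zarev's corollary~4.12 encapsulates. Since you flag exactly this bookkeeping issue and offer the Zarev citation as a fallback --- which is the paper's proof verbatim --- the proposal is acceptable, but as written the direct argument for the second half is incomplete.
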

\begin{proof}
In light of remark~\ref{Rem:TanglesAsSpecialBSMflds}, this is a special case of \cite[proposition~4.11 and corollary~4.12]{Zarev}. Note that our terminology differs slightly from Zarev's: He does not include regions near basepoints in $\pi_2(\x,\y)$, so in our special case, his definition of $\pi_2(\x,\y)$ coincides with our $\pi^{\partial}_2(\x,\y)$. Thus, the distinction in his terminology between periodic and provincial periodic domains becomes irrelevant. 
\end{proof}

\begin{lemma}\label{lem:noadmissibilityissues}
There is an isomorphism \(\pi_2(\x,\x)\cong H_2(X_T,\Z\cup\Aa;\mathbb{Z})\cong\mathbb{Z}^{n+2m+1}\). Furthermore, \(\pi_2^\partial(\x,\x)=\mathbb{Z}^{m}\); in particular, any Heegaard diagram for a tangle without closed components is admissible.
\end{lemma}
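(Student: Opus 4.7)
The plan is to identify $\pi_2(\x,\x)$ with the relative homology group $H_2(X_T, \Z \cup \Aa; \mathbb{Z})$ via a standard Heegaard-theoretic ``cap-off'' construction, then compute both groups by a long exact sequence. Realise $X_T$ as $\Sigma_g \times [0,1]$ with 2-handles attached along $\Ac \times \{0\}$ and $\B \times \{1\}$, so that $\Aa$ and $\Z$ lie on $\partial X_T$. For $\phi \in \pi_2(\x,\x)$, the hypothesis $d(d\phi\cap\B)=0$ combined with $\partial\partial\phi=0$ forces $\partial\phi = \sum_i a_i\alpha_i + \sum_j b_j\beta_j + \gamma$, where $\gamma$ is a 1-chain on $\Aa\cup\Z$: indeed $(\partial\phi)|_\B$ must be a cycle in $\B$, and by cancellation at $\A\cap\B$ intersections so must $(\partial\phi)|_\Ac$, as both $\B$ and $\Ac$ are disjoint from $\Z$. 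Capping off each $\alpha_i$ and $\beta_j$ by the core of the corresponding 2-handle produces a relative 2-cycle $\widetilde\phi$ in $(X_T, \Z \cup \Aa)$; standard transversality (putting any relative 2-cycle into Heegaard position, that is, onto $\Sigma_g$ together with the 2-handle cores) provides the inverse, so $\phi \mapsto [\widetilde\phi]$ is an isomorphism.

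Next, I will compute $H_2(X_T,\Z\cup\Aa)$ from the long exact sequence of the pair, using $H_2(X_T)\cong\mathbb{Z}^m$ and $H_1(X_T)\cong\mathbb{Z}^{n+m}$ from the lemma at the start of Section~\ref{sec:HFTviaSFH}. The space $\Z\cup\Aa$ splits into $2m$ isolated meridional circles (around the closed tangle components) together with a connected piece formed by the remaining $2n$ circles of $\Z$ joined by the $2n$ arcs of $\Aa$ in a cycle -- this cyclic structure is precisely the condition that $\Aa$ becomes a single circle upon collapsing $\Z$. A direct CW/Euler-characteristic count gives $H_1(\Z\cup\Aa)\cong\mathbb{Z}^{2n+2m+1}$. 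Each circle of $\Z$ represents a meridian of its tangle component, and the two circles belonging to the same component represent the same class up to sign, so the induced map $H_1(\Z\cup\Aa)\to H_1(X_T)$ is already surjective on the circle generators. Since $\Z\cup\Aa$ is 1-dimensional, $H_2(\Z\cup\Aa)=0$, and the exact sequence reduces to
\[
0 \to H_2(X_T) \to H_2(X_T,\Z\cup\Aa) \to \ker\bigl(H_1(\Z\cup\Aa)\to H_1(X_T)\bigr) \to 0,
\]
giving $\rk H_2(X_T,\Z\cup\Aa) = m + (2n+2m+1)-(n+m) = n+2m+1$, as claimed.

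For $\pi_2^\partial(\x,\x)$, the constraint $\Z\cap\phi=\emptyset$ forces $\phi$ to have zero multiplicity in every $\Z$-adjacent region, so $\phi$ can be represented by a 2-chain in $\Sigma_g\smallsetminus\Z$ with boundary only on $\A\cup\B$. The same cap-off construction then lands in $H_2(X_T,\Aa)$, and since $\Aa$ is a disjoint union of contractible arcs we have $H_1(\Aa)=H_2(\Aa)=0$, so the long exact sequence of $(X_T,\Aa)$ collapses to $H_2(X_T,\Aa)\cong H_2(X_T)\cong\mathbb{Z}^m$. In particular, when $m=0$ there are no non-zero provincial periodic domains, so admissibility is automatic for tangles without closed components.

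The bulk of the argument is formal once the first identification is established, and the main technical point lies there: one must carefully verify that the cap-off map is well-defined and that its inverse can be produced by transversality in the presence of the $\alpha$-arcs and the open boundary $\Z$. This is classical in the closed and link sutured settings, and in our framework I expect it to follow routinely from the bordered sutured viewpoint of Remark~\ref{Rem:TanglesAsSpecialBSMflds}, but it is the step that requires genuine care rather than mere bookkeeping.
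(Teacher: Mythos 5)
Your computation of $\pi_2(\x,\x)$ is correct but follows a genuinely different route from the paper. The paper identifies $\pi_2(\x,\x)\cong H_2(X_T,\Z\cup\Aa)$ in the same way you do (capping off the $\alpha$- and $\beta$-circles with the 2-handle cores), but then simply exhibits explicit generators -- one annulus per open component, two per closed component, and two discs (front and back), subject to the single relation that the sum of the annuli equals the sum of the discs -- and reads off both statements from this description by a direct coefficient analysis. You instead compute the rank from the long exact sequence of the pair $(X_T,\Z\cup\Aa)$, feeding in $H_2(X_T)\cong\mathbb{Z}^m$, $H_1(X_T)\cong\mathbb{Z}^{n+m}$ (from the lemma in section~\ref{sec:geometricinterpretation}, not section~\ref{sec:HFTviaSFH}) and the Euler-characteristic count $H_1(\Z\cup\Aa)\cong\mathbb{Z}^{2n+2m+1}$, together with surjectivity of $H_1(\Z\cup\Aa)\to H_1(X_T)$ via meridians. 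This is a clean and correct derivation of the count (you should add the one-line remark that the quotient is free, so the extension splits and the group is indeed free abelian, and when splitting off the $\Ac$-part of $\partial\phi$ you need $\Ac\cap\Aa=\emptyset$ as well as $\Ac\cap\Z=\emptyset$); its only cost is that it does not produce the explicit basic periodic domains, which the paper re-uses later in the proofs of lemmas~\ref{lem:AlexFconstant} and~\ref{lem:homgradingconstant}.

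The second half has a gap: you assert that the cap-off gives $\pi_2^\partial(\x,\x)\cong H_2(X_T,\Aa)\cong H_2(X_T)\cong\mathbb{Z}^m$, but what you have actually argued only gives an injection. Injectivity is inherited from the first identification, and that alone suffices for the admissibility statement when $m=0$; however, the lemma claims equality, and surjectivity -- every class of $H_2(X_T)$ is realised by a periodic domain with \emph{zero multiplicity in all $\Z$-adjacent regions} -- does not follow verbatim from your first-part transversality argument, which only puts relative cycles for $(X_T,\Z\cup\Aa)$ into Heegaard position with no control near $\Z$. One fix is the paper's: the differences of the two annuli of each closed component are visibly provincial and map to the peripheral tori generating $H_2(X_T)$. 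Alternatively, one can argue algebraically: since $\Z\cup\Aa$ is $1$-dimensional, a nullhomologous $1$-cycle there is zero as a chain, so a periodic domain whose relative class lies in the kernel of the connecting map $H_2(X_T,\Z\cup\Aa)\to H_1(\Z\cup\Aa)$ has vanishing boundary chain on $\Z\cup\Aa$; and because $\Z=\partial\Sigma_g$, the coefficient of each $\Z$-segment in $\partial\phi$ equals the multiplicity of the unique adjacent region, so vanishing $\Z$-boundary is equivalent to the domain being provincial. With either argument in place, your identification $\pi_2^\partial(\x,\x)\cong H_2(X_T)\cong\mathbb{Z}^m$ is complete.
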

\begin{proof}
By attaching 2-handles to the $\alpha$- and $\beta$-circles we see that
$$\pi_2(\x,\x)\cong H_2(X_T,\Z\cup\Aa ;\mathbb{Z}).$$ 
This group is generated by annuli, one for each open and two for each closed component, and two discs, one at the front and the other at the back, the only relation being that the sum of the annuli is equal to the sum of the discs. This proves the first statement.\\
The second follows from the first: Let us write a given periodic domain in~$\pi_2^\partial(\x,\x)$ as a linear combination of the basic periodic domains described above such that the coefficient of one of the two discs, say, is zero. Then the coefficients of the two annuli of each closed component must be opposite and all other coefficients must be zero. In fact, the differences of the two annuli for each closed component form a basis of $\pi_2^\partial(\x,\x)$.
\end{proof}
\begin{lemma}\label{lem:pixynonempty}
\(\pi_2(\x,\y)\) is non-empty for all pairs \((\x,\y)\in\mathbb{T}^2\).
 \end{lemma}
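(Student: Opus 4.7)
The key observation is that domains $\phi$ counted by $\pi_2(\x,\y)$ are allowed to meet $\Z$, so the boundary of the Heegaard surface provides enough flexibility to connect any two generators, independently of which sites they belong to. My plan is to reduce to the case where $\x$ and $\y$ differ in a single coordinate and then to construct $\phi$ explicitly in that case.

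For the reduction, I note that $\pi_2$ is additive under composition: if $\phi \in \pi_2(\x,\w)$ and $\psi \in \pi_2(\w,\y)$, then $\phi + \psi \in \pi_2(\x,\y)$. Hence it suffices to connect any two generators in $\mathbb{T}$ by a sequence of ``elementary'' moves that change one intersection point at a time within the same $\beta$-circle. Since each $\beta$-circle carries exactly one point of any generator, a coordinate-wise matching between $\x$ and $\y$ exists; performing swaps in a suitable order avoids temporarily placing two points on the same $\alpha$-arc, which is always possible by a short graph-theoretic argument.

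In the elementary case, $\x$ and $\y$ agree in all coordinates except the $i$-th, with $x_i$ and $y_i$ both lying on a common $\beta$-circle $\beta_l$. I choose a sub-arc $b$ of $\beta_l$ from $y_i$ to $x_i$ and define $\phi$ as the signed sum of regions on one side of $b$, propagating outward and stopping whenever an $\alpha$-curve or a component of $\Z$ is encountered. Since $\Sigma_g$ has non-empty boundary, this propagation terminates. The sign changes across every $\beta$-arc crossed, so the contributions to $d\phi \cap \B$ cancel away from $b$, leaving $d\phi \cap \B = b$, and hence $d(d\phi\cap\B) = \partial b = x_i - y_i = \x - \y$, as required.

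The main obstacle is verifying that this geometric propagation is well defined, i.e.\ that the cancellations of $\B$-arcs really do occur. Equivalently, one must show that $[b]=0$ in $H_1(\Sigma_g,\A\cup\Z)$. This reduces, via the long exact sequence of the pair, to the vanishing $H_2(\Sigma_g)=0$ (which holds because $\Sigma_g$ has non-empty boundary) together with the Heegaard-diagram conditions of Definition~\ref{def:HDsfortangles}: surgery on $\A$ yields a disjoint union of annuli and a sphere whose boundary components all lie in $\Z$, so that $\A\cup\Z$ already spans enough of $H_1(\Sigma_g)$ to trivialize any sub-arc of a $\beta$-circle with endpoints on $\A$.
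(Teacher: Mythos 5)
Your overall strategy has two genuine gaps, and they sit exactly where the paper's homological argument does its work. First, the reduction to single-coordinate moves along a $\beta$-circle does not go through. If the two generators occupy the $\alpha$-circles via different matchings of $\beta$-circles to $\alpha$-curves (which happens as soon as $g+m\geq 2$), then no ordering of swaps keeps the intermediate tuples in $\mathbb{T}$: after one swap some $\alpha$-circle is occupied twice and another not at all. Your ``graph-theoretic'' fix only addresses the $\alpha$-arc multiplicity condition, not this. Worse, even dropping the requirement that intermediates be generators, the coordinate-wise differences $x_i-y_i$ can join points lying on \emph{different} components of $\A\cup\Z$ (e.g.\ $x_i$ on one $\alpha$-circle, $y_i$ on another, or on an arc), and then \emph{no} domain $\phi_i$ with $d(d\phi_i\cap\B)=x_i-y_i$ exists at all, since the $(\A\cup\Z)$-part of $\partial\phi_i$ would need boundary $y_i-x_i$. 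The non-emptiness of $\pi_2(\x,\y)$ is only true for the 0-chain $\x-\y$ treated globally, using 1-chains on the $\alpha$-circles to shuffle endpoints between different $\alpha$-curves -- which is possible precisely because each $\alpha$-circle carries exactly one point of each of $\x$ and $\y$. This is the first step of the paper's proof and cannot be decoupled coordinate by coordinate.

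Second, your key claim in the elementary case, that $[b]=0$ in $H_1(\Sigma_g,\A\cup\Z)$ for any sub-arc $b$ of a $\beta$-circle, is both unjustified and stronger than what is true or needed. The correct statement allows the $\B$-part of $\partial\phi$ to contain full $\beta$-circles in addition to $b$ (the condition $d(d\phi\cap\B)=\x-\y$ is insensitive to adding whole $\beta$-circles), and its proof is not a consequence of $H_2(\Sigma_g)=0$ together with the surgery condition on $\A$: the vanishing of $H_2(\Sigma_g)$ only gives injectivity of $D\to H_1(\A\cup\B\cup\Z)$, which is irrelevant here, and the surgery condition alone does not make the image of $H_1(\A\cup\Z)\to H_1(\Sigma_g)$ large enough. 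What actually kills the obstruction is that the components of $\Z$ represent the meridians, which generate $H_1(X_T)\cong\mathbb{Z}^{n+m}$, so that after adding $\Z$-cycles the class becomes trivial in $H_1(X_T)=H_1(\Sigma_g)/\langle\Ac,\B\rangle$ and can then be corrected by full $\alpha$- and $\beta$-circles to become nullhomologous in $\Sigma_g$, whence it bounds a domain. This is exactly the paper's proof: build a 1-chain in $C_1(\B)$ with the right endpoints, complete it to a cycle in $\A\cup\B\cup\Z$ using $\Ac$-chains and $\Z$-arcs, then add $\Z$-, $\alpha$- and $\beta$-cycles. Your ``propagation'' picture can be salvaged only by importing these ingredients, at which point it reduces to the paper's argument.
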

\begin{proof}
For any pair $(\x,\y)\in\mathbb{T}^2$, there exists a 1-cycle $\gamma$ in $C_1(\A\cup\B\cup\Z)$ such that $d(\gamma\cap\B)=\x-\y$. Indeed: First, we choose a 1-chain on $C_1(\B)$ with this property. Then, since there is exactly one intersection point on every $\alpha$-circle for each $\x$ and $\y$, we can add 1-chains in $C_1(\Ac)$ such that the boundary of the new 1-chain lies on the $\alpha$-arcs only. But since $\gamma$ is allowed to have $\Z$-components, we can get rid of these intersection points, too, and obtain our cycle $\gamma$.\\
Next, we can add $\Z$-cycles to $\gamma$ such that the resulting 1-cycle is 0 in $H_1(X_T)$. Adding $\alpha$- and $\beta$-cycles gives us another 1-cycle $\gamma'$ which is 0 in $H_1(\Sigma_g)$ and also satisfies $d(\gamma'\cap\B)=\y-\x$. So we are done.
\end{proof}
Our next goal is to define a relative Alexander grading on generators. We do this by counting $\Z$-components of domains which connect two generators. 
\begin{definition}\label{def:AlexGradingOnDomains}
By definition, the endpoints of the $\alpha$-arcs divide each circle in $\Z^\alpha$ into two components, which we suggestively call front and back component. Let $\Sigma_g^f$, $(\A\cup\B\cup\Z)^f$ and $\A^f$ denote the spaces obtained from $\Sigma_g$, $(\A\cup\B\cup\Z)$ and $\A$ respectively by contracting the back component of each circle in $\Z^\alpha$ to a point. 
Note that $\B\cap\Z=\emptyset$, $\Ac\cap\Z=\emptyset$ and that the images of $\alpha$-arcs become a single circle in $\A^f$. Let $f:(\A\cup\B\cup\Z)\rightarrow(\A\cup\B\cup\Z)^f$ be the quotient map and $\partial: D\rightarrow H_1(\A\cup\B\cup\Z)$ the boundary map of the long exact sequence of the pair $(\Sigma_g,\A\cup\B\cup\Z)$. Now, consider the following diagram, where $\iota$, $\iota^f$, $i$ and $j$ are induced by inclusions, and $\text{pr}_2$ is the projection onto the second summand:
$$
\begin{tikzcd}[column sep=0.63cm,row sep=1.2cm]    
D\arrow{r}{\partial} &H_1(\A\cup\B\cup\Z) \arrow[color=white]{dr}[description]{\quad\textcolor{black}{\square}}
\arrow{r}{\iota}\arrow{d}{f} & H_1(\Sigma_g)\arrow[color=white,near start]{drr}[description]{\textcolor{black}{\square}}\arrow{d}{\cong}\arrow{rr}{j}& &H_1(\Sigma_g)/\langle\Ac,\B\rangle&\hspace{-0.74cm}\cong H_1(X_T)\\
& H_1((\A\cup\B\cup\Z)^f) \arrow{r}{\iota^f}\arrow[swap]{d}{\cong}& H_1(\Sigma_g^f)\arrow[swap,bend right=5]{urr}{j^{f}}\arrow[color=white, very near start]{drrr}[description]{\textcolor{black}{\square}}&&\phantom{X}\\
& H_1(\A^f\cup\B)\oplus H_1(\Z^f)\arrow{r}{\text{pr}_2}& H_1(\Z^f)\arrow[bend right=35,swap,start anchor=east,end anchor=south]{uurr}{c=j^f\circ i}\arrow{u}{i}&&&\phantom{X}
\end{tikzcd}
$$
Note that the orientation of~$T$ induces an orientation of the meridians using the right-hand rule, which gives rise to a canonical identification $H_1(X_T)\cong \mathbb{Z}^{n+m}$. We define 
$$A^f:D\rightarrow H_1(X_T)\cong\mathbb{Z}^{n+m}$$
as the composition $c\circ pr_2\circ f\circ\partial$. 
Similarly, by contracting the front components of $\Z^\alpha$, we obtain a homomorphism
$$A^b:D\rightarrow \mathbb{Z}^{n+m}.$$
\end{definition}
\begin{lemma}\label{lem:AlexFconstant}
\(A^f\) and \(A^b\) are constant on \(\pi_2(\x,\y)\) for all pairs \((\x,\y)\in\mathbb{T}^2\).
\end{lemma}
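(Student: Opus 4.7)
The plan is to reduce the claim to the vanishing of $A^f$ on the subgroup $\pi_2(\x,\x)\subset D$ of periodic domains. Since $A^f$ is $\mathbb{Z}$-linear and the difference of any two elements of $\pi_2(\x,\y)$ lies in $\pi_2(\x,\x)$, this reduction is immediate: $A^f$ is constant on $\pi_2(\x,\y)$ if and only if $A^f|_{\pi_2(\x,\x)}=0$.

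To verify the vanishing, I would invoke lemma~\ref{lem:noadmissibilityissues}, which identifies $\pi_2(\x,\x)$ with $H_2(X_T,\Z\cup\Aa)$ and provides an explicit generating set: an annulus $A_i$ for each open tangle component~$i$, a pair of annuli $A_j^{\pm}$ for each closed component~$j$, and two ``hemisphere'' discs $D_{\text{front}},D_{\text{back}}$ coming from the front and back halves of $\partial B^3$. The key is to interpret $A^f$ geometrically: it extracts the $\Z$-component of the boundary of a domain, projects into $H_1(\Z^f)$ (so that each circle of $\Z^\alpha$ becomes a loop after its back arc is contracted), and then maps each loop to the corresponding meridian class in $H_1(X_T)$.

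For the annular generators the verification is straightforward: the $\Z$-boundary of each annulus consists of two parallel meridional circles around the same tangle component, appearing with opposite orientations as the boundary of the annulus, so they represent the same meridian class with opposite signs in $H_1(X_T)$ and cancel. The main obstacle I anticipate is the two hemisphere discs, whose $\Z$-boundary is a sum of the $2n$ meridional $\Z^\alpha$-circles at the open tangle endpoints lying on the corresponding hemisphere (closed components contribute nothing since they are interior to $B^3$). The key observation needed to close the argument is that, with respect to the orientation of each open component, one endpoint acts as ``incoming'' and the other as ``outgoing'' through $\partial B^3$; with the consistent orientation induced on the inner boundary circles of the hemisphere, the two corresponding $\Z^\alpha$-circles then represent opposite meridian classes $\pm m_i$ and cancel pairwise. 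Summing contributions over all open components yields $A^f(D_{\text{front}})=A^f(D_{\text{back}})=0$. The argument for $A^b$ is identical after interchanging the roles of ``front'' and ``back'' throughout.
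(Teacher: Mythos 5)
Your proposal is correct and follows essentially the same route as the paper: reduce constancy on $\pi_2(\x,\y)$ to vanishing of $A^f$, $A^b$ on periodic domains, then check this on the explicit generators from lemma~\ref{lem:noadmissibilityissues} (the annuli, whose two $\Z$-boundary circles are oppositely oriented meridians of the same component, and the front/back discs). The paper states the disc case in one terse sentence, while you spell out the pairwise cancellation of the meridians at the incoming and outgoing ends of each open strand (and note that the other disc contributes nothing since its $\Z$-boundary is contracted), which is exactly the intended justification.
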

\begin{proof}
It suffices to show that $A^f$ and $A^b$ vanish on periodic domains. But this is obvious from the description of the periodic domains in (the proof of) lemma \ref{lem:noadmissibilityissues}. The annuli have cancelling $\Z$-components of the corresponding tangle component and the $\Z$-components of the two discs are the sums of all front/back $\Z$-components.
\end{proof}

Combining lemmas \ref{lem:pixynonempty} and \ref{lem:AlexFconstant} enables us to define a relative grading on $\mathbb{T}$.
\begin{definition} \label{def:AlexgradingonGenerators}
The homomorphism $A=A^f+A^b: D\rightarrow \mathbb{Z}^{n+m}$ induces a relative grading $A:\mathbb{T}\rightarrow \mathbb{Z}^{n+m}$ by setting $$A(\y)-A(\x)=A(\phi)\quad\text{for } \phi\in\pi_1(\x,\y).$$ 
We call $A$ the \textbf{Alexander grading}. Let $A^r$ be the composition of $A$ with the map $\mathbb{Z}^{n+m}\rightarrow\mathbb{Z}$ that adds all components. (``$r$'' stands for ``reduced''.)
We often introduce shifts by half-integers to achieve a certain symmetry which mimics the normalisation of~$\nabla_T^s$; thus the Alexander grading appears to be a relative $(\frac{1}{2}\mathbb{Z})^{n+m}$-grading. 
\end{definition}

\begin{lemma}\label{lem:pidxynonempty}
 \(\pi^\partial_2(\x,\y)\) is non-empty for all pairs \((\x,\y)\in\mathbb{T}^2\), iff \(\x\) and \(\y\) are in the same Alexander grading and belong to the same site.
 \end{lemma}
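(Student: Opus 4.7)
The plan is to prove the two implications separately.

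For $(\Rightarrow)$, suppose $\phi \in \pi^\partial_2(\x,\y)$; by definition $\phi$ has zero multiplicity on every region bordering $\Z$. Since $A^f$ and $A^b$ depend only on the $\Z$-component of $\partial\phi$ (Definition~\ref{def:AlexGradingOnDomains}), both vanish on $\phi$, so $A(\x) = A(\y)$. For the site equality, I would fix an arbitrary $\alpha$-arc $\aaa$ with endpoints $q_0, q_1 \in \Z$ and observe that, because $\phi$ has zero multiplicity on every $\Z$-adjacent region and $\partial\partial\phi = 0$ at $q_0$ and $q_1$, the multiplicity of $\partial\phi$ restricted to $\aaa$ must vanish at both endpoints. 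This multiplicity jumps by $\y_p - \x_p$ at each $p \in \aaa \cap \B$, so integrating along $\aaa$ gives $|\y \cap \aaa| - |\x \cap \aaa| = 0$; hence $\aaa$ is occupied by $\x$ iff it is occupied by $\y$, and since $\aaa$ was arbitrary, $s(\x) = s(\y)$.

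For $(\Leftarrow)$, suppose $s(\x) = s(\y)$ and $A(\x) = A(\y)$. By Lemma~\ref{lem:pixynonempty}, pick $\phi_0 \in \pi_2(\x,\y)$; the hypothesis gives $A(\phi_0) = 0$. My plan is to subtract a periodic domain $P \in \pi_2(\x,\x)$ so that $\phi := \phi_0 - P$ has vanishing multiplicity on every $\Z$-adjacent region; this reduces the problem to showing that the vector of $\Z$-multiplicities of $\phi_0$ lies in the image $V$ of the $\Z$-multiplicity map on $\pi_2(\x,\x)$. Using the explicit generators of $\pi_2(\x,\x) \cong \mathbb{Z}^{n+2m+1}$ from the proof of Lemma~\ref{lem:noadmissibilityissues} (the $n$ annuli for open components, the $2m$ annuli for closed components, and two ``front''/``back'' discs modulo one relation), I would characterise $V$ as the subgroup of $\Z$-multiplicity patterns cut out by (i) a local $\alpha$-arc compatibility condition at every endpoint -- the mirror of the forward-direction argument applied at both endpoints of each arc -- and (ii) a global constraint equivalent to $A = 0$ up to the span of the disc generators. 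Condition (i) holds for $\phi_0$ precisely because $s(\x) = s(\y)$ forces the net jump along each $\alpha$-arc to vanish, and (ii) is exactly $A(\phi_0) = 0$.

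The main obstacle will be the concrete identification of $V$ inside the ambient $\Z$-multiplicity lattice. This requires explicitly computing the $\Z$-multiplicity pattern of each basic periodic domain from Lemma~\ref{lem:noadmissibilityissues}, together with a rank count confirming that conditions (i) and (ii) cut out a subgroup of the expected rank $n+m+1 = \operatorname{rank}\pi_2(\x,\x) - \operatorname{rank}\pi^\partial_2(\x,\x)$.
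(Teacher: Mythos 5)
Your argument is correct in substance, but it takes a genuinely different route from the paper's. For the backward direction the paper just refines the proof of lemma~\ref{lem:pixynonempty}: since $\x$ and $\y$ occupy the same $\alpha$-arcs, one can build a $1$-cycle $\gamma$ in $C_1(\A\cup\B)$ (no $\Z$-segments needed) with $d(\gamma\cap\B)=\x-\y$; equality of the Alexander gradings says exactly that $\gamma$ is null-homologous in $H_1(X_T)$, so after adding $\alpha$- and $\beta$-cycles it bounds in $\Sigma_g$, and the bounding $2$-chain is the desired domain avoiding $\Z$. You instead start from an arbitrary $\phi_0\in\pi_2(\x,\y)$ and correct it by a periodic domain, which forces you to identify the image $V$ of the boundary-multiplicity map on $\pi_2(\x,\x)$ explicitly. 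That does work: the annuli have multiplicity one along both circles of their pair, and the two discs have multiplicity one along all front, respectively all back, segments of $\Z^\alpha$; from this one checks that $V$ consists precisely of those multiplicity vectors whose front-minus-back difference is the same at all $2n$ circles of $\Z^\alpha$ and whose front-plus-back sums (respectively, the multiplicities at the two circles of a closed component) agree across each pair. Your condition (i) — the jump argument, chained along the $\alpha$-arcs, which become a single circle after contracting $\Z$ — gives the first family of equations, and $A(\phi_0)=0$ gives the second, so the vector of $\phi_0$ lies in $V$ and the subtraction succeeds. The paper's argument is shorter and stays at the level of $H_1$ of the tangle complement; yours is more computational but makes explicit how the two hypotheses are dual to the periodic domains of lemma~\ref{lem:noadmissibilityissues}. (Your forward direction is fine, and in fact spells out what the paper dismisses as clear.)

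Two caveats on the step you flag as the main obstacle. First, a rank count is not enough: you need the subgroup cut out by (i) and (ii) to equal $V$ over $\mathbb{Z}$, not merely to contain $V$ with finite index, because $P$ must be an honest integral periodic domain. This is true, but you should verify it by writing down integral coefficients for the generators (for instance, take the coefficient of the front disc to be the multiplicity at one chosen front segment, and solve for the annulus coefficients and the back disc from there); no rank comparison gives this for free. Second, the arc-by-arc jump relation only identifies the front-minus-back differences at the two ends of an arc up to a sign coming from orientation conventions; you need the globally coherent front/back labelling of the segments of $\Z^\alpha$ (equivalently, the fact that the $\alpha$-arcs together with all front segments bound the front disc) to conclude that all these differences are literally equal — equality only up to sign would not characterise $V$. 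With these two points made precise, your plan goes through.
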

\begin{proof}
The only-if part is clear. The opposite direction follows from a refinement of the proof of \ref{lem:pixynonempty}: We can now get a 1-cycle~$\gamma$ in $C_1(\A\cup\B)$ such that $d(\gamma\cap\B)=\x-\y$, because the generators belong to the same site. This 1-cycle is already zero in $H_1(X_T)$, since the generators are in the same Alexander grading. Then we might have to add $\alpha$- and $\beta$-cycles as before and we are done.
\end{proof} 

To define the differentials in our chain complexes, we count the number of points in 1-dimensional moduli spaces of holomorphic curves modulo an $\mathbb{R}$-action. The formal dimension of these moduli spaces is called the \textbf{Maslov index}. It can be computed combinatorially, as shown in \cite[corollary~4.10]{LipshitzCyl}. We take this combinatorial formula as a definition.
\begin{definition}\label{def:homgrading}
Let $\phi\in\pi_2(\x,\y)$ for some $\x,\y\in\mathbb{T}$. We define the Maslov index by
$$
\mu(\phi)=e(\phi)+m_{\x}(\phi)+m_{\y}(\phi),
$$
where $e(\phi)$ is the Euler measure of $\phi$ and $m_{\x}(\phi)$ and $m_{\y}(\phi)$ are the multiplicities of $\phi$ at $\x$ and $\y$, respectively. More explicitly, given a region $\psi$ of the Heegaard diagram, let $m_\psi(\phi)$ denote the coefficient of $\psi$ in $\phi$. Then 
$$e(\phi)=\sum_{\text{regions~} \psi}m_\psi(\phi)(\chi(\psi)-\tfrac{1}{4}\#\{\text{acute corners of }\psi\}+\tfrac{1}{4}\#\{\text{obtuse corners of }\psi\}).$$
Furthermore, for any $\x\in\mathbb{T}$, let
$$m_{\x}(\phi)=\sum_{x_i\in\x}m_{x_i}(\phi),$$ 
where $m_x(\phi)$ is the average of the $m_{\psi_i}(\phi)$ in the four quadrants $\psi_1,\dots,\psi_4$ at $x$. 
\end{definition}
\begin{lemma}\label{lem:muisadditive}
Given \(\phi\in\pi_2(\x,\y)\) and \(\psi\in\pi_2(\y,\z)\), \(\mu(\phi)+\mu(\psi)=\mu(\phi+\psi)\).
\end{lemma}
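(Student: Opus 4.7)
Since the Maslov index is defined by $\mu(\phi) = e(\phi) + m_{\x}(\phi) + m_{\y}(\phi)$, additivity will be established by analysing each of the three ingredients. The Euler measure $e$ is a $\mathbb{Z}$-linear functional on $D$ (a weighted sum of regional coefficients), so $e(\phi+\psi) = e(\phi) + e(\psi)$. Likewise, for any fixed tuple $\w$ of intersection points, the point measure $m_{\w}(\cdot) = \sum_{w_i \in \w} m_{w_i}(\cdot)$ is $\mathbb{Z}$-linear in the domain, since each $m_{w_i}$ is the average of four regional coefficients. Substituting these linearity properties into the claimed identity and cancelling the terms $e(\phi) + e(\psi)$, $m_{\x}(\phi)$ and $m_{\z}(\psi)$ that appear on both sides, the statement reduces to the single combinatorial identity
\[
m_{\y}(\phi) + m_{\y}(\psi) \;=\; m_{\x}(\psi) + m_{\z}(\phi).
\]

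To prove this identity, the plan is to work point-by-point at each $p \in \A \cap \B$, examining its contribution to each of the four terms. At any $p$ which lies in none of $\x, \y, \z$, the quadrant multiplicities of $\phi$ and $\psi$ do not appear on either side. At any $p$ which does lie in one or more of $\x, \y, \z$, the boundary relations $\partial(\partial\phi \cap \B) = \x - \y$ and $\partial(\partial\psi \cap \B) = \y - \z$ force specific sign patterns on adjacent quadrant multiplicities: at a corner of a domain, two pairs of adjacent quadrants have multiplicities differing by $\pm 1$, with signs determined by the incoming/outgoing direction of the $\alpha$- and $\beta$-boundary arcs. A short case analysis, split according to which of $\x, \y, \z$ contain $p$, then yields the required local cancellation.

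This is essentially the same computation that Lipshitz carries out in his cylindrical reformulation of Heegaard Floer homology \cite{LipshitzCyl}, and it transfers unchanged to our setting: the combinatorial formulas for $e$ and $m_{\w}$ depend only on the local quadrant structure at $\A \cap \B$, so the presence of $\alpha$-arcs, boundary punctures, and the distinction between $\A^c$ and $\A^a$ plays no role.

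I expect the only real obstacle to be notational: keeping track of sign conventions and degeneracies when $p$ lies simultaneously in two or three of the tuples $\x, \y, \z$ (which is possible since distinct generators may share coordinates). Once one sets up the sign bookkeeping carefully, the local identity at each $p$ is a mechanical check and the additivity follows by summing over all $p \in \A \cap \B$.
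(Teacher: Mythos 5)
Your reduction is correct and coincides with the paper's: additivity of the Euler measure and linearity of the point measures in the domain reduce the lemma to the identity $m_{\y}(\phi)+m_{\y}(\psi)=m_{\x}(\psi)+m_{\z}(\phi)$. The gap is in how you propose to prove this identity. It is not a local statement that can be checked point-by-point at each $p\in\A\cap\B$ and then summed: the two sides involve the multiplicity of $\phi$ at the points of $\z$ and of $\psi$ at the points of $\x$, and at such a point the respective domain need not have a corner at all, so its four quadrant multiplicities there are not pinned down by any corner relation coming from $d(d\phi\cap\B)=\x-\y$ or $d(d\psi\cap\B)=\y-\z$ -- they depend on the global shape of the domain. (The degeneracies you worry about, shared coordinates among $\x,\y,\z$, are not the real difficulty.) The identity is genuinely global: the paper, following \cite[theorem~3.1]{Sarkar06}, writes $m_{\z}(\phi)-m_{\y}(\phi)=\partial\phi\cdot\partial_{\B}(\psi)$ and $m_{\y}(\psi)-m_{\x}(\psi)=\partial\psi\cdot\partial_{\B}(\phi)$ (average intersection numbers of boundary $1$-chains), so the desired identity becomes $\partial\psi\cdot\partial_{\B}(\phi)+\partial_{\B}(\psi)\cdot\partial\phi=0$; this is then proved by observing $\B\cap\Z=\emptyset$, rewriting the pairing as $\partial_{\A\cup\B}(\psi)\cdot\partial_{\A\cup\B}(\phi)$, contracting the boundary components of $\Sigma_g$, and using that the intersection number of two null-homologous $1$-cycles on a closed surface vanishes. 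Some argument of this intersection-theoretic (or otherwise global) nature is unavoidable; a case analysis of quadrant signs at corners cannot see the terms $m_{\x}(\psi)$ and $m_{\z}(\phi)$ at all.

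Two further points. First, the appeal to Lipshitz is misplaced: in the cylindrical reformulation additivity of $\mu$ is automatic because $\mu$ there is an analytic (Fredholm) index, whereas in this paper the combinatorial formula is taken as the \emph{definition} of $\mu$, so additivity must be established combinatorially -- which is exactly what the Sarkar-style argument supplies. Second, your claim that the $\alpha$-arcs and boundary punctures ``play no role'' fails at precisely the one step where the tangle setting differs from the closed case: the lemma is stated for $\pi_2(\x,\y)$, not $\pi_2^{\partial}(\x,\y)$, so the domains may have boundary along $\Z$, and one needs $\B\cap\Z=\emptyset$ together with the contraction of the boundary components to reduce to the closed-surface vanishing statement.
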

\begin{proof}
This follows from basically the same arguments as \cite[theorems~3.1 and~3.3]{Sarkar06}. We give some details nonetheless. First of all, note that the Euler measure is additive. Hence, all we need to show is that 
$$m_{\x}(\phi)+m_{\y}(\phi)+m_{\y}(\psi)+m_{\z}(\psi)=m_{\x}(\phi+\psi)+m_{\z}(\phi+\psi).$$
This simplifies to 
$$m_{\y}(\phi)+m_{\y}(\psi)=m_{\x}(\psi)+m_{\z}(\phi).$$
Theorem 3.1 from \cite{Sarkar06} for $n=i=2$, $\eta^1=\A$ and $\eta^2=\B$ gives us
\begin{align*}
m_{\z}(\phi)-m_{\y}(\phi)&=\partial\phi \cdot\partial_{\B}(\psi)\quad\text{and similarly}\\
m_{\y}(\psi)-m_{\x}(\psi)&=\partial\psi \cdot\partial_{\B}(\phi),
\end{align*}
where the product $\cdot$ denotes the ``average'' intersection number from \cite{Sarkar06}. 
So we need to see that 
$$\partial\psi \cdot\partial_{\B}(\phi)+\partial_{\B}(\psi)\cdot\partial\phi=0. $$
The boundaries of the domains lie in $\A\cup\B\cup\Z$. However, $\B\cap\Z=\emptyset$, so the left-hand side equals $\partial_{\A}(\psi) \cdot\partial_{\B}(\phi)+\partial_{\B}(\psi)\cdot\partial_{\A}(\phi)=\partial_{\A\cup\B}(\psi) \cdot\partial_{\A\cup\B}(\phi)$. To see that this is zero, we modify the Heegaard surface by contracting all boundary components. Then the left-hand side is equal to $\partial(\psi) \cdot\partial(\phi)$, and this is indeed zero.
\end{proof}

\begin{lemma}\label{lem:homgradingconstant}
\(\mu\) is constant on \(\pi_2(\x,\y)\) for all pairs \((\x,\y)\in\mathbb{T}^2\). 
\end{lemma}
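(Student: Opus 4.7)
By Lemma \ref{lem:muisadditive}, the statement is equivalent to showing that $\mu(P) = 0$ for every periodic domain $P \in \pi_2(\x,\x)$: given $\phi, \phi' \in \pi_2(\x,\y)$, the difference $\phi - \phi'$ lies in $\pi_2(\y,\y)$, and additivity gives $\mu(\phi) - \mu(\phi') = \mu(\phi - \phi')$. So I would reduce the problem immediately to the vanishing of $\mu$ on periodic domains.

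The next step is to check this vanishing on a convenient set of generators of $\pi_2(\x,\x)$. By Lemma \ref{lem:noadmissibilityissues}, the explicit generators produced there are: an annulus $A_i$ for each open component of $T$ (running between the two meridional sutures at its endpoints), two annuli $A_j^{\pm}$ for each closed component of $T$, and two ``hemispheres'' $D^f, D^b$ whose boundaries are the front and back halves of the $\mathcal{Z}$-circles and which together sum to the class of the whole Heegaard surface. I would check $\mu = 0$ on each of these basic domains separately.

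The annuli $A_i$ and $A_j^\pm$ are the easy case. Each can be represented by a collar neighbourhood of a tangle component, chosen small enough to be disjoint from every intersection point in $\x$; thus $m_\x(A) = 0$, and since the domain is an annulus without corners we also have $e(A) = 0$, giving $\mu(A) = 0$ immediately. The hemispheres $D^f, D^b$ are the main obstacle: their Euler measures involve contributions from every corner of the diagram, and the multiplicities $m_\x(D^f), m_\x(D^b)$ are generically nonzero. The strategy here is a bookkeeping argument: the two hemispheres satisfy $D^f + D^b = [\Sigma_g]$ (the full Heegaard surface with multiplicity $1$), and the symmetry between front and back of $\Sigma_g$ (coming from contracting the front or back halves of each $\mathcal{Z}^\alpha$-circle as in Definition \ref{def:AlexGradingOnDomains}) should force $\mu(D^f) = \mu(D^b)$, so that it suffices to compute $\mu([\Sigma_g])$. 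For the latter, $e([\Sigma_g])$ reduces to $\chi(\Sigma_g)$ plus a quarter-integer sum over all corners at $\alpha \cap \beta$-intersections, while $m_\x([\Sigma_g])$ is simply the number of components of $\x$; counting intersection points on each $\alpha$- and $\beta$-circle and applying the combinatorial identity $\chi(\Sigma_g) = 2 - 2g - 2(n+m)$ should make the two contributions cancel.

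The hardest part will be the corner-counting for $[\Sigma_g]$, which has to be carried out carefully with the conventions for $\mathcal{Z}$-boundary behaviour built into Definition \ref{def:homgrading}. Should this direct calculation prove unwieldy, an alternative is to invoke Zarev's framework: by Remark \ref{Rem:TanglesAsSpecialBSMflds}, a Heegaard diagram for a tangle is a special case of a bordered sutured Heegaard diagram, and in that context the Maslov index is known to descend to a well-defined function on homotopy classes \cite{Zarev}. This would give the lemma at once, at the cost of deferring the detailed combinatorics to the bordered sutured setting.
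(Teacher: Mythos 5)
Your reduction is exactly the paper's: additivity (lemma~\ref{lem:muisadditive}) reduces the claim to the vanishing of $\mu$ on the elementary periodic domains from lemma~\ref{lem:noadmissibilityissues}. But the execution of both cases has genuine gaps. For the annuli, you cannot ``represent'' the class by a thin collar disjoint from the intersection points: a periodic domain is a fixed $\mathbb{Z}$-linear combination of the regions of the given diagram, and the elementary annulus domain is the union of regions on one side of (or between) certain $\alpha$- and $\beta$-circles. Its boundary contains those circles, each of which carries exactly one point of $\x$, so $m_{\x}$ is typically nonzero and $e$ is typically negative. For instance, in the one-crossing diagram of figure~\ref{fig:HDfor1crossing} the ``annulus'' domain is a pair of pants with $e=-1$ and $2m_{\x}=1$; the whole content of the lemma is precisely this cancellation (each $\alpha$- or $\beta$-circle lowers the Euler measure by $2$ or $1$ relative to the abstract annulus, and the unique point of $\x$ on it contributes $1$ or $\tfrac12$ to $m_{\x}$), which your argument skips by asserting $e=0=m_{\x}$.

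The disc case is also not established. The identity $D^f+D^b=[\Sigma_g]$ fails as soon as $T$ has closed components (then $[\Sigma_g]$ is the sum of the two discs \emph{and} the $\alpha$-side annuli), and the claimed front/back symmetry forcing $\mu(D^f)=\mu(D^b)$ is unjustified: nothing in the diagram exchanges the two sides of the $\alpha$-arcs, since the $\beta$-curves and the genus are distributed arbitrarily between them. Even granting $\mu([\Sigma_g])=0$, without the symmetry you only get $\mu(D^f)=-\mu(D^b)$. The paper instead computes each disc directly: it has $4n$ corners, so $e=1-n$, while the $(n-1)$ occupied $\alpha$-arcs lie on its boundary and contribute $(n-1)$ to $2m_{\x}$, with the $\alpha$- and $\beta$-circle contributions cancelling as in the annulus case. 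Finally, the fallback via Zarev does not close the gap as stated: $\mu$ is \emph{defined} here by the combinatorial formula of definition~\ref{def:homgrading}, so one would still have to identify it with Zarev's grading, which is essentially the same corner bookkeeping you are trying to avoid.
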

\begin{proof}
Applying the previous lemma to $\z=\y$, we see that all we need to show is that
$$\mu(\phi)=e(\phi)+2m_{\y}(\phi)$$
vanishes for all periodic domains $\phi\in\pi_2(\y,\y)$. In fact, it suffices to show this for every elementary periodic domain $\phi$ from the proof of lemma \ref{lem:noadmissibilityissues}. Consider $\phi$ as a subsurface of $\Sigma_g\times\{0\}$ or $\Sigma_g\times\{1\}$, depending on whether it corresponds to an annulus or disc from performing surgery along $\alpha$-circles or $\beta$-circles. The annulus or the disc is obtained from $\phi$ by performing surgery along the respective circles or attaching single discs to them. The former reduces the Euler measure of $\phi$ by~2, the latter by~1. For an annulus, this contribution is cancelled by the term $2m_{\y}(\phi)$, since every $\alpha$- and $\beta$-circle is occupied by exactly one intersection point of $\y$. So in this case, we have $\mu(\phi)=\mu(\text{annulus})=0$. For a disc, we also need to take into account that $\y$ occupies $(n-1)$ $\alpha$-arcs. In this case, $2m_{\y}(\phi)$ contributes $(n-1)$. On the other hand, the disc has $4n$ corners, so $e(\text{disc})=1-n$.
\end{proof}
Combining lemmas \ref{lem:muisadditive} and \ref{lem:homgradingconstant}, we can now define a relative grading on generators induced by the Maslov index $\mu$, just as for the Alexander grading.
\begin{definition}
The \textbf{$\delta$-grading} on generators is a relative $\tfrac{1}{2}\mathbb{Z}$-grading defined by
$$\delta(\y)-\delta(\x)=\mu(\phi), \quad\text{where }\phi\in\pi_2(\x,\y).$$
We also define a relative $\mathbb{Z}$-grading, the \textbf{homological grading}, by
$$h(\y)-h(\x)=h(\phi):=\tfrac{1}{2}A^r(\phi)-\mu(\phi), \quad\text{where }\phi\in\pi_2(\x,\y).$$
In short,
$$h=\tfrac{1}{2}A^r-\delta.$$
\end{definition}
\begin{lemma}
	The homological grading is well-defined.
\end{lemma}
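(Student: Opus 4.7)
The plan is to show that the right-hand side $\tfrac{1}{2}A^r(\phi)-\mu(\phi)$ depends only on $\x$ and $\y$, and takes integer values, so that it indeed defines a relative $\mathbb{Z}$-grading on $\mathbb{T}$.

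First I would reduce the independence of the choice of $\phi$ to a statement about periodic domains: given $\phi,\phi'\in\pi_2(\x,\y)$, their difference $\psi=\phi-\phi'$ lies in $\pi_2(\x,\x)$. By the additivity established in Lemma~\ref{lem:muisadditive} (combined with $\mu(0)=0$ and the computation already carried out in the proof of Lemma~\ref{lem:homgradingconstant}) the Maslov index vanishes on every periodic domain, and by Lemma~\ref{lem:AlexFconstant} the same is true for $A^r$, since $A^r$ is a linear combination of $A^f$ and $A^b$ which were shown there to be constant on $\pi_2(\x,\y)$. Therefore
$$h(\phi)-h(\phi')=\tfrac{1}{2}A^r(\psi)-\mu(\psi)=0,$$
which gives constancy on $\pi_2(\x,\y)$.

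Second, I would verify integrality, i.e.\ that $\tfrac{1}{2}A^r(\phi)-\mu(\phi)\in\mathbb{Z}$ for every $\phi\in\pi_2(\x,\y)$. Because the first step has already shown the value is independent of $\phi$, and because given any other pair $(\x',\y')$ in the same site we can compose path-domains $\x\to\x'\to\y'\to\y$, it suffices to verify the parity condition on a generating set of elementary domains (bigons and rectangles) in one fixed admissible Heegaard diagram, for instance the one-crossing diagrams and the ladybug from Example~\ref{exa:HDforonecrossing}. On these one can compute $\mu$ directly from Definition~\ref{def:homgrading} and $A^r$ from Definition~\ref{def:AlexgradingonGenerators}, and observe case by case that $A^r(\phi)$ is an even integer whenever $\mu(\phi)\in\mathbb{Z}$ and an odd integer whenever $\mu(\phi)\in\tfrac{1}{2}+\mathbb{Z}$.

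The main obstacle is the second step: matching the half-integer contributions to $\mu$ coming from the corners at $\x$ and $\y$ with the $\Z$-boundary contributions to $A^r$. A cleaner, diagram-independent route would be to argue that both $A^r(\phi)\bmod 2$ and $2\mu(\phi)\bmod 2$ are determined purely by the parity of the number of $\Z^\alpha$-arcs traversed by $\partial\phi$, so that the invariance under Heegaard moves (Lemma~\ref{lem:HeegaardMoves}) then propagates the integrality verified on elementary pieces to arbitrary tangles. I would prefer this second route, as it avoids the combinatorial case check.
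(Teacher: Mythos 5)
Your first step (constancy on $\pi_2(\x,\y)$) is fine but is not really the content of the lemma: constancy of $\mu$ and of $A$ are already lemmas~\ref{lem:homgradingconstant} and~\ref{lem:AlexFconstant}. The actual claim is integrality, i.e.\ $2\mu(\phi)\equiv A^r(\phi)\bmod 2$ for every $\phi\in\pi_2(\x,\y)$, and here your proposal has a genuine gap. In your first route, elements of $\pi_2(\x,\y)$ are arbitrary $2$-chains subject to a boundary condition; they do not in general decompose into bigons and rectangles, so there is no ``generating set of elementary domains'' on which a case check would settle the parity for all connecting domains. Moreover, both of your routes lean on transporting the statement between Heegaard diagrams via lemma~\ref{lem:HeegaardMoves}; but that lemma only provides the moves --- that the relative Alexander and $\delta$-gradings of corresponding generators are preserved under them is established only later, in the proof of theorem~\ref{thm:HFTiswelldefandinvariant}, and for handleslides the correspondence goes through triangle maps rather than a bijection of generators. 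So the propagation step is not available at this point. Finally, your ``cleaner, diagram-independent route'' simply asserts the key fact --- that $2\mu(\phi)\bmod 2$ is determined by the multiplicities of $\phi$ along $\Z$ --- which is exactly what has to be proved; identifying this as ``the main obstacle'' is correct, but the obstacle is left standing.

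The paper supplies precisely this missing ingredient, with no case analysis and no appeal to Heegaard moves: contract all boundary components of the Heegaard surface (as in the proof of lemma~\ref{lem:muisadditive}) to obtain a closed surface. By the alternative formula for the Maslov index in~\cite[section~2]{Sarkar06}, $\mu$ is an integer on a closed Heegaard diagram, and comparing the index before and after closing up shows that the half-integer part of $\mu(\phi)$ is governed by the $\Z$-multiplicities of $\phi$, giving $2\mu(\phi)\equiv A(\phi)\bmod 2$; since $A^r(\phi)$ is an integer, $h(\phi)=\tfrac{1}{2}A^r(\phi)-\mu(\phi)\in\mathbb{Z}$ follows at once, and combined with your (correct) constancy step the relative $\mathbb{Z}$-grading is well-defined. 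If you want to keep your outline, replace the elementary-domain check and the Heegaard-move propagation by this closing-up comparison, or by an equivalent direct computation of the half-integer part of $\mu$ in terms of the multiplicities along $\Z$.
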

\begin{proof}
The homological grading is \textit{a priori} a relative $\frac{1}{2}\mathbb{Z}$-grading. However, it is clear from the alternative formula for $\mu$ from~\cite[section~2]{Sarkar06}, that $\mu$ is an integer for a closed Heegaard surface. In our case, we can easily obtain a closed surface by contracting the boundary components as in the proof of lemma~\ref{lem:muisadditive}. Thus $2\mu(\phi)\equiv A(\phi)\mod 2$ for any $\phi\in\pi_2(\x,\y)$. 
\end{proof}
\begin{Remark}\label{rem:conventions2}
When comparing these gradings with those in link Floer homology, note that we are using a Heegaard surface with punctures instead of marked points. For two-ended tangles, our conventions agree with those in \cite[section~3.1, equations~(3.2)--(3.4)]{BaldwinLevine}, noting that tangle ends that point into the 3-ball are represented by $X$s and outgoing ones by $O$s. Although we have three different gradings, we call their union the \textbf{bigrading}, since any one of them is determined by the other two.
\end{Remark}

\begin{definition}\label{def:HFTiswelldefandinvariant}
Let $T$ be a tangle and $\mathcal{H}_T$ a Heegaard diagram for $T$. We define a chain module $\CFT(\mathcal{H}_T)$ as follows. The underlying $\mathbb{Z}$-module is freely generated by the elements in~$\mathbb{T}$. It carries three gradings, the Alexander grading and the homological grading and the $\delta$-grading, induced by the gradings on generators. We write $\CFT(\mathcal{H}_T,s)$ for the submodule generated by those elements in $\mathbb{T}^s$ and $\CFT(\mathcal{H}_T,s,a)$ for the submodule generated by those elements in $\mathbb{T}^s$ of Alexander grading $a\in\mathbb{Z}^{n+m}$. The differential on $\CFT(\mathcal{H}_T)$ is given by
$$\partial \x=\sum_{\y\in \mathbb{T}}\sum_{\substack{\phi\in\pi^{\partial}_2(\x,\y)\\ \mu(\phi)=1}}\#\widehat{\mathcal{M}}(\phi)\cdot\y,$$
where $\#\widehat{\mathcal{M}}(\phi)$ denotes the signed count of holomorphic curves associated with $\phi$, see for example \cite{LipshitzCyl}. Note that the sum is over domains in $\pi^{\partial}_2(\x,\y)$, i.\,e.\ those that avoid~$\Z$. Since there are no such domains between generators in distinct Alexander gradings or sites, the chain module $\CFT(\mathcal{H}_T)$ admits a splitting into summands $\CFT(\mathcal{H}_T,s,a)$. 
\end{definition}
\begin{theorem}\label{thm:HFTiswelldefandinvariant}
	The differential on \(\CFT(T,s)\) is well-defined. Furthermore, the bigraded chain homotopy type of \((\CFT(\mathcal{H}_T,s),\partial)\) is an invariant of the tangle \(T\) and it agrees with \(\CFT(T,s)\) from definition~\sref{def:HFTfromSFH}.
\end{theorem}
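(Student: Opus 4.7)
The plan is to prove the three claims in the statement in sequence: well-definedness of the differential, invariance under Heegaard moves, and the identification with the sutured Floer definition. Throughout I would fix an admissible Heegaard diagram $\mathcal{H}_T$, which exists by lemma~\ref{lem:HeegaardMovesAdmissibility}.

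For well-definedness, there are two issues to settle. First, the sum defining $\partial\x$ must be finite. Since generators in different sites or different Alexander gradings are not connected by any domain in $\pi^\partial_2$ (lemma~\ref{lem:pidxynonempty}), and admissibility of $\mathcal{H}_T$ forces the number of non-negative $\phi\in\pi^\partial_2(\x,\y)$ with fixed multiplicity at each region to be finite by a standard area-function argument, this works exactly as in the closed case. Second, I need $\partial^2=0$. This follows by the usual ends-of-moduli-spaces argument: one compactifies the $\mu=2$ moduli spaces and shows that their boundary consists of pairs of index-$1$ curves, whose signed count yields $\partial^2$. The only subtlety compared to \cite{OSHF3mfds,LipshitzCyl} is the presence of the $\alpha$-arcs and the boundary circles $\Z$; however, because our differentials count only domains in $\pi^\partial_2$ (i.\,e.\ avoiding $\Z$), no boundary degeneration can reach $\Z$, and ends involving an $\alpha$-arc correspond geometrically to generic breakings, so the standard argument goes through verbatim. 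Finally, that $\partial$ preserves site and Alexander grading is built into the definition, and that it drops homological grading by exactly $1$ follows from $\mu(\phi)=1$ together with $A^r(\phi)=0$ for $\phi\in\pi^\partial_2$.

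For invariance, by lemma~\ref{lem:HeegaardMoves} combined with lemma~\ref{lem:HeegaardMovesAdmissibility} it is enough to check that each of isotopy, handleslide, and stabilisation through admissible diagrams induces a bigraded chain homotopy equivalence on $\CFT(\mathcal{H}_T,s)$. For each move, one uses the model continuation/triangle maps from \cite{OSHFK,LipshitzCyl}: isotopies give Hamiltonian continuation maps, handleslides give counts of holomorphic triangles with a model handleslide diagram, and stabilisation adds a canceling pair. Because the moves of lemma~\ref{lem:HeegaardMoves} never involve the $\alpha$-arcs $\Aa$ or the boundary $\Z$, the ``extra'' data in our Heegaard diagrams is irrelevant at this stage and the standard arguments apply without modification. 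The gradings are preserved by these maps since the relevant triangle domains have $\mu=0$ and $A=0$. Here I would also spell out once and for all that one can refine any sequence of Heegaard moves to pass through admissible diagrams only, which is exactly the content of lemma~\ref{lem:HeegaardMovesAdmissibility}.

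The main obstacle is the third statement, the identification of $(\CFT(\mathcal{H}_T,s),\partial)$ with the sutured Floer complex $\SFC(X_T^s)$ of definition~\sref{def:HFTfromSFH}. The strategy is to turn the tangle Heegaard diagram $\mathcal{H}_T$, together with a choice of site $s$, into a sutured Heegaard diagram $\mathcal{H}_T^s$ for the sutured manifold $X_T^s$ as follows. By remark~\ref{Rem:TanglesAsSpecialBSMflds}, $\mathcal{H}_T$ is naturally a bordered sutured Heegaard diagram. Passing from the bordered sutured picture to the site $s$ amounts to closing off each of the $(n-1)$ $\alpha$-arcs belonging to $s$ into an $\alpha$-circle by attaching a collar along the corresponding arc in $\mathcal{Z}$, and deleting the $(n+1)$ remaining $\alpha$-arcs together with their boundary components from the Heegaard surface (so that the leftover boundary circles become sutures of $R_-$). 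One then checks: (i) the resulting $\mathcal{H}_T^s$ is a balanced sutured Heegaard diagram for $X_T^s$ in the sense of \cite{Juhasz}; (ii) its generators are exactly the tuples in $\mathbb{T}^s$, since a point on a newly closed $\alpha$-circle is forced to lie on the arc that was in $s$; (iii) admissibility of $\mathcal{H}_T$ implies admissibility of $\mathcal{H}_T^s$; and (iv) the domains in $\pi^\partial_2(\x,\y)$ for $\x,\y\in\mathbb{T}^s$ correspond bijectively and with matching Maslov index to domains in the sutured diagram $\mathcal{H}_T^s$, because the collars added to close the arcs carry no $\alpha\beta$-intersections and contribute trivially to both the Euler measure and the point multiplicities. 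Thus the holomorphic-curve counts agree, giving $\CFT(\mathcal{H}_T,s)\simeq\SFC(\mathcal{H}_T^s)$; combined with invariance of $\SFC$ this yields the desired identification. The main technical point to nail down is step (iv)—matching domains and their Maslov indices across the two pictures—which I would verify by working through the local model near each closed-off arc, where the change reduces to adding a boundary-parallel bigon that contributes $0$ to both $e$ and $m_\x$.
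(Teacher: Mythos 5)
Your overall architecture is close to the paper's: the heart of both arguments is turning the tangle Heegaard diagram plus a choice of site $s$ into a genuine sutured Heegaard diagram for $X_T^s$ and matching generators and boundary-avoiding domains, so that the complex for site $s$ \emph{is} a sutured Floer complex. (The paper does this by gluing on a $4n$-punctured torus that closes \emph{all} $2n$ arcs against new $\beta$-meridians and then, for the arcs in $s$, deleting the meridian and cutting the surface; you close only the $(n-1)$ arcs of $s$ by bands and discard the rest, which is a reasonable variant, though your phrase ``deleting \dots together with their boundary components'' should be read as forgetting the parametrisation only -- the boundary circles must survive as sutures, as your parenthetical indicates.) The genuine gap is in your invariance step. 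You justify it by asserting that the moves of lemma~\ref{lem:HeegaardMoves} ``never involve the $\alpha$-arcs $\Aa$ or the boundary $\Z$'', but that is false: the list explicitly contains isotopies of $\alpha$-arcs (rel endpoints) and handleslides of $\alpha$-curves -- arcs included -- over $\alpha$-circles. So you cannot conclude that the closed-case continuation and triangle arguments ``apply without modification''; handleslides involving arcs require a separate argument (compare the care taken with exactly this point in the invariance proof for $\CFTd$ in chapter~III, where the $\Theta$-element and a filtration argument are needed), and your one-line claim that ``the relevant triangle domains have $\mu=0$ and $A=0$'' does not substitute for it.

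The repair is essentially what the paper does: once you have the identification $\CFT(\mathcal{H}_T,s)\cong\SFC(X_T^s)$ from your third step, per-site well-definedness of $\partial$ and per-site invariance come for free from Juh\'asz's theory, so your first two sections need not (and should not) be run as independent closed-case arguments. What then still has to be proved -- and what the paper singles out as the only remaining content -- is that the bigraded structure is an invariant for \emph{all sites simultaneously}: the Alexander and $\delta$-gradings are defined on the whole tangle diagram, while the sutured identification only controls relative gradings within one site, so one must check directly that each Heegaard move of lemma~\ref{lem:HeegaardMoves} (including the arc moves) preserves the relative Alexander and $\delta$-gradings of corresponding generators across sites. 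Your proposal does not isolate this step; the grading remark you make lives inside the flawed direct-invariance argument, so as written the cross-site statement of the theorem is not established.
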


Furthermore, we can now easily re-prove the following result.

\begin{lemma}\label{lem:Eulercharagreeswithnabla}
The graded Euler characteristic of the chain module \(\CFT(\mathcal{H}_T,s)\) coincides with the polynomial invariant \(\nabla^s_T\) up to normalisation and additional factors for closed components as in theorem~\sref{thm:HFTfromSFH}.
\end{lemma}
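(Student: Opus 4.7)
The plan is to compute $\chi(\CFT(\mathcal{H}_T,s))$ directly for a concrete Heegaard diagram and compare with the Kauffman-state definition of $\nabla^s_T$ from definition~\sref{def:basic}. Fix a tangle diagram for $T$ and build $\mathcal{H}_T$ by glueing together the one-crossing blocks of figure~\ref{fig:HDfor1crossing} (one block per crossing) and the ladybug of figure~\ref{fig:ladybug} (one per closed component) along the $\Z$-circles, as in example~\ref{exa:HDforonecrossing}. In each one-crossing block, the unique $\beta$-circle meets the four $\alpha$-arcs near the crossing in exactly four intersection points, one in each of the four regions at the crossing. Consequently, a generator $\x\in\mathbb{T}$ selects one intersection point per crossing block, i.\,e.\ one marker per crossing, subject to the condition that at most one marker sit in each open region; in other words, generators in $\mathbb{T}^s$ are in canonical bijection with generalised Kauffman states in $\mathbb{K}(T,s)$.

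Next, I would check that under this bijection the relative Alexander grading of a generator coincides (up to an overall shift, i.\,e.\ a unit) with the Alexander label $l(x)$ from figure~\ref{figAlexCodesForNabla}, and the parity of the homological grading coincides with the sign of $c(x)$ at $h=-1$. Both are local computations at a single crossing: for two generators that differ only at one crossing, there is a canonical bigon or rectangular domain $\phi$ connecting them, and one computes $A(\phi)$ from definition~\ref{def:AlexgradingonGenerators} and $\mu(\phi)$ from definition~\ref{def:homgrading}. The resulting differences in $A$ reproduce the exponents of $o^{\pm 1/2} u^{\pm 1/2}$ of the two Alexander codes, while the Maslov index $\mu(\phi)$, combined with the formula $h=\tfrac{1}{2}A^r-\delta$, reproduces the placement of the factor $h^{\pm 1}$ in exactly one of the four quadrants at each crossing. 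Summing $(-1)^{h(\x)}\cdot t^{A(\x)}$ over $\x\in\mathbb{T}^s$ then yields $\nabla_T^s$ up to a unit, precisely as in the definition of $\nabla^s_T$.

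It remains to account for the closed components of $T$. For each ladybug insertion, the local configuration has two intersection points on a single $\beta$-circle contained inside the ladybug, related by a bigon domain whose $\Z$-multiplicity is $\pm 1$ in the two circles of $\Z$ encircling that closed component. A direct calculation shows that these two states contribute with opposite homological parities and Alexander gradings differing by~$1$ in the colour~$c$ of that component, yielding the factor $(c-c^{-1})$ in the graded Euler characteristic, in agreement with the prefactor in theorem~\sref{thm:HFTfromSFH}. Multiplying over all closed components gives the claimed identity $\chi(\CFT(\mathcal{H}_T,s))\,\dot{=}\,\prod(c-c^{-1})\cdot \nabla_T^s$.

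The main obstacle is purely bookkeeping: since both the homological and Alexander gradings are only relative, and $\nabla^s_T$ is only defined up to a unit, one must align conventions carefully (orientation of meridians, shift conventions in remark~\ref{rem:conventions1} and~\ref{rem:conventions2}, and the sign conventions of $\#\widehat{\mathcal{M}}$). Once a single generator is matched with a single Kauffman state, global consistency is forced by the local computations above. As a shortcut, one can bypass the direct combinatorial argument by appealing to theorem~\sref{thm:HFTiswelldefandinvariant} to identify $\CFT(\mathcal{H}_T,s)$ up to chain homotopy with $\SFC(X_T^s)$ and then invoking equation~\sref{eqn:decatSFTagreeswithnabla} from theorem~\sref{thm:HFTfromSFH}; this is presumably what the word ``easily'' refers to.
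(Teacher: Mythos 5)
Your proposal is correct and follows essentially the same route as the paper: build the Heegaard diagram by glueing the one-crossing blocks and ladybugs, identify generators with generalised Kauffman states, match the local bigraded contributions at each crossing with the Alexander codes by additivity, and extract the extra closed-component factor from the two ladybug generators (the paper also records the shortcut through sutured Floer homology in the proof of the invariance theorem rather than here). One small bookkeeping correction: with the paper's convention \(A=A^f+A^b\), a domain wrapping a boundary circle of a closed component counts in both \(A^f\) and \(A^b\), so the two ladybug generators differ by \(2\) (not \(1\)) in that colour's Alexander grading while their \(\delta\)-gradings agree, which is exactly what produces the factor \((c-c^{-1})\) up to a unit.
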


\begin{Remark}\label{rem:absolutegradings}
All three gradings on $\HFT$ are relative. One can probably fix an absolute Alexander grading which agrees with the natural normalisation of the polynomial tangle invariants. For example, as long as $\nabla_T^s\not\equiv0$, we could simply ask for the Euler characteristic of $\CFT(\mathcal{H}_T,s)$ to agree with $\nabla_T^s$. For tangles in more general 3-manifolds, one could also use an absolute $\Spinc$-grading, or more explicitly, non-vanishing vector fields, see \cite{absolutegrading1} and \cite{absolutegrading2}. For 4-ended tangles, we can cheat like we usually do when computing gradings on $\HFK$: We simply use the symmetry relations for opposite sites (proposition~\ref{prop:fourendedHFT}) to fix an absolute Alexander grading such that the graded Euler characteristic agrees with $\nabla_T^s$ up to multiplication by $(\pm1)$.
\end{Remark}
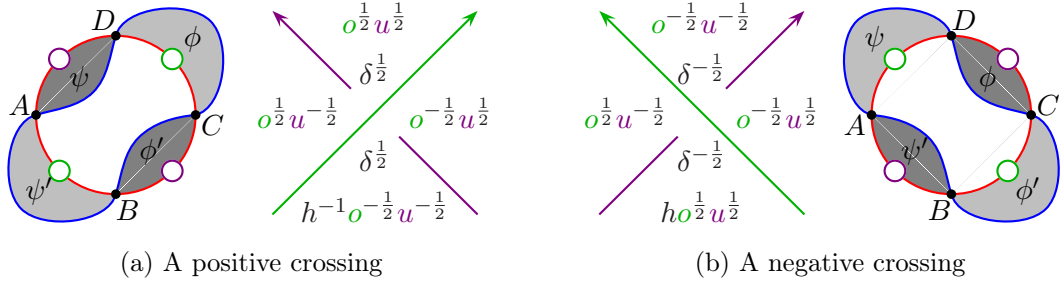
\begin{figure}[t]
\centering
\begin{subfigure}[b]{0.49\textwidth}\centering
{\psset{unit=0.15}
\begin{pspicture}(-10.3,-10.3)(10.3,10.3)

\pscustom*[linecolor=lightgray,linewidth=0pt]{\psecurve(4,4)(7,0)(8.3,8.3)(0,7)(4,4)}
\pscustom*[linecolor=white,linewidth=1pt]{
\psarc(0,0){7}{0}{45}
\psarc(0,0){7}{45}{90}
\psarc(7,7){7}{180}{-90}}

\pscustom*[linecolor=gray,linewidth=0pt]{
\psarc(0,0){7}{-90}{-45}
\psarc(0,0){7}{-45}{0}
\psecurve(-0.2,-8)(0,-7)(2,-2)(7,0)(8,-0.2)}

\pscustom*[linecolor=gray,linewidth=0pt]{
\psarc(0,0){7}{90}{135}
\psarc(0,0){7}{135}{180}
\psecurve(0.2,8)(0,7)(-2,2)(-7,0)(-8,-0.2)}

\pscustom*[linecolor=lightgray,linewidth=0pt]{\psecurve(-4,-4)(-7,0)(-8.3,-8.3)(0,-7)(-4,-4)}
\pscustom*[linecolor=white,linewidth=1pt]{
\psarc(0,0){7}{180}{-90}
\psarc(-7,-7){7}{0}{90}}


\psecurve[linecolor=blue](-0.2,-8)(0,-7)(2,-2)(7,0)(8,-0.2)
\psecurve[linecolor=blue](4,4)(7,0)(8.3,8.3)(0,7)(4,4)
\psecurve[linecolor=blue](0.2,8)(0,7)(-2,2)(-7,0)(-8,-0.2)
\psecurve[linecolor=blue](-4,-4)(-7,0)(-8.3,-8.3)(0,-7)(-4,-4)

\psarc[linecolor=red](0,0){7}{0}{360}

\SpecialCoor
\rput(7;45){\pscircle*[linecolor=white]{1}\pscircle[linecolor=darkgreen]{1}}
\rput(7;135){\pscircle*[linecolor=white]{1}\pscircle[linecolor=violet]{1}}
\rput(7;225){\pscircle*[linecolor=white]{1}\pscircle[linecolor=darkgreen]{1}}
\rput(7;315){\pscircle*[linecolor=white]{1}\pscircle[linecolor=violet]{1}

}
\psdot(0,7)
\psdot(7,0)
\psdot(0,-7)
\psdot(-7,0)

\SpecialCoor
\rput(9.5;45){$\phi$}
\rput(4.5;135){$\psi$}
\rput(4.5;-45){$\phi'$}
\rput(9.5;-135){$\psi'$}

\rput[br](0,7.5){$D$}
\rput[tl](7.5,0){$C$}
\rput[tl](0,-7.5){$B$}
\rput[br](-7.5,0){$A$}
\end{pspicture}
}
{\psset{unit=1.5}
\begin{pspicture}(-1.05,-1.05)(1.05,1.05)
\psline[linecolor=violet]{->}(0.9,-0.9)(-0.9,0.9)
\pscircle*[linecolor=white](0,0){0.3}
\psline[linecolor=darkgreen]{->}(-0.9,-0.9)(0.9,0.9)
\uput{0.7}[90](0,0){$\textcolor{darkgreen}{o}^{\frac{1}{2}} \textcolor{violet}{u}^{\frac{1}{2}}$}
\uput{0.3}[180](0,0){$\textcolor{darkgreen}{o}^{\frac{1}{2}} \textcolor{violet}{u}^{-\frac{1}{2}}$}
\uput{0.7}[-90](0,0){$h^{-1}\textcolor{darkgreen}{o}^{-\frac{1}{2}} \textcolor{violet}{u}^{-\frac{1}{2}}$}
\uput{0.3}[0](0,0){$\textcolor{darkgreen}{o}^{-\frac{1}{2}} \textcolor{violet}{u}^{\frac{1}{2}}$}
\uput{0.25}[90](0,0){$\delta^{\frac{1}{2}}$}
\uput{0.25}[-90](0,0){$\delta^{\frac{1}{2}}$}
\end{pspicture}
}
\caption{A positive crossing}
\end{subfigure}
\begin{subfigure}[b]{0.49\textwidth}\centering
{\psset{unit=1.5}
\begin{pspicture}(-1.05,-1.05)(1.05,1.05)
\psline[linecolor=violet]{->}(-0.9,-0.9)(0.9,0.9)
\pscircle*[linecolor=white](0,0){0.3}

\psline[linecolor=darkgreen]{->}(0.9,-0.9)(-0.9,0.9)
\uput{0.7}[90](0,0){$\textcolor{darkgreen}{o}^{-\frac{1}{2}} \textcolor{violet}{u}^{-\frac{1}{2}}$}
\uput{0.3}[180](0,0){$\textcolor{darkgreen}{o}^{\frac{1}{2}} \textcolor{violet}{u}^{-\frac{1}{2}}$}
\uput{0.7}[-90](0,0){$h\textcolor{darkgreen}{o}^{\frac{1}{2}} \textcolor{violet}{u}^{\frac{1}{2}}$}
\uput{0.3}[0](0,0){$\textcolor{darkgreen}{o}^{-\frac{1}{2}} \textcolor{violet}{u}^{\frac{1}{2}}$}
\uput{0.25}[90](0,0){$\delta^{-\frac{1}{2}}$}
\uput{0.25}[-90](0,0){$\delta^{-\frac{1}{2}}$}
\end{pspicture}}
{\psset{unit=0.15}
\begin{pspicture}(-10.3,-10.3)(10.3,10.3)

\pscustom*[linecolor=lightgray,linewidth=0pt]{\psecurve(-4,4)(-7,0)(-8.3,8.3)(0,7)(-4,4)}
\pscustom*[linecolor=white,linewidth=1pt]{
\psarcn(0,0){7}{180}{135}
\psarcn(0,0){7}{135}{90}
\psarc(-7,7){7}{-90}{0}}

\pscustom*[linecolor=gray,linewidth=0pt]{
\psarcn(0,0){7}{-90}{-135}
\psarcn(0,0){7}{-135}{-180}
\psecurve(0.2,-8)(0,-7)(-2,-2)(-7,0)(-8,-0.2)}

\pscustom*[linecolor=gray,linewidth=0pt]{
\psarcn(0,0){7}{90}{45}
\psarcn(0,0){7}{45}{0}
\psecurve(-0.2,8)(0,7)(2,2)(7,0)(8,-0.2)}

\pscustom*[linecolor=lightgray,linewidth=0pt]{\psecurve(4,-4)(7,0)(8.3,-8.3)(0,-7)(4,-4)}
\pscustom*[linecolor=white,linewidth=1pt]{
\psarcn(0,0){7}{0}{-45}
\psarcn(0,0){7}{-45}{-90}
\psarc(7,-7){7}{90}{180}}


\psecurve[linecolor=blue](0.2,-8)(0,-7)(-2,-2)(-7,0)(-8,-0.2)
\psecurve[linecolor=blue](-4,4)(-7,0)(-8.3,8.3)(0,7)(-4,4)
\psecurve[linecolor=blue](-0.2,8)(0,7)(2,2)(7,0)(8,-0.2)
\psecurve[linecolor=blue](4,-4)(7,0)(8.3,-8.3)(0,-7)(4,-4)

\psarc[linecolor=red](0,0){7}{0}{360}

\SpecialCoor
\rput(7;45){\pscircle*[linecolor=white]{1}\pscircle[linecolor=violet]{1}}
\rput(7;135){\pscircle*[linecolor=white]{1}\pscircle[linecolor=darkgreen]{1}}
\rput(7;225){\pscircle*[linecolor=white]{1}\pscircle[linecolor=violet]{1}}
\rput(7;315){\pscircle*[linecolor=white]{1}\pscircle[linecolor=darkgreen]{1}

}
\psdot(0,7)
\psdot(7,0)
\psdot(0,-7)
\psdot(-7,0)

\SpecialCoor
\rput(4.5;45){$\phi$}
\rput(9.5;135){$\psi$}
\rput(9.5;-45){$\phi'$}
\rput(4.5;-135){$\psi'$}

\rput[bl](0,7.5){$D$}
\rput[bl](7.5,0){$C$}
\rput[tr](0,-7.5){$B$}
\rput[tr](-7.5,0){$A$}

\end{pspicture}
}
\caption{A negative crossing}
\end{subfigure}
\caption{The gradings of the generators of the two 1-crossing tangles. The over-strand is coloured by $\textcolor{darkgreen}{o}$ and the under-strand by $\textcolor{violet}{u}$. Compare this to the Alexander codes from figure~\ref{figAlexCodesForNabla}. Our conventions agree with~\cite[figure~5 and~6]{OSrescube} and \cite[figure~10]{BaldwinLevine}.}\label{fig:AlexCodesForOneCrossingsWithDelta}
\end{figure}
\begin{proof}[Proof of lemma~\ref{lem:Eulercharagreeswithnabla}]
We first calculate the gradings of the generators for the 1-crossing diagrams, see figure~\ref{fig:AlexCodesForOneCrossingsWithDelta}.
In each case, we have four connecting domains $\psi$, $\phi$, $\psi'$ and~$\phi'$. The $\delta$-grading of all these domains is $+\frac{1}{2}$. This gives us the correct $\delta$-grading on generators, noting that the normal vector field of the Heegaard diagram, determined by the right-hand rule, points into the plane. (For example, for the positive crossing, $\psi$ is in~$\pi_2(A,D)$ and the $\delta$-grading increases along this domain by~$+\frac{1}{2}$.) Using the right-hand rule convention from definition~\ref{def:AlexGradingOnDomains}, we similarly obtain the correct Alexander gradings. This determines the homological grading. \pagebreak[3]\\
For a general tangle, we consider the Heegaard diagram induced by a tangle diagram as discussed in example~\ref{exa:HDforonecrossing}. Then, additivity of the gradings shows that the Alexander grading of a generator in the whole diagram is the sum of the gradings in the local diagrams at the crossings. For each closed component, we need to insert a ladybug into the Heegaard diagram, see figure~\ref{fig:ladybug}; this multiplies the number of generators by two, since there are two intersection points of the $\alpha$-circle in a ladybug. It is straightforward to compute the grading difference between corresponding generators of the two intersection points: the $\delta$-gradings agree and the Alexander gradings differ by~2. Hence, we get an extra factor $(t-t^{-1})$ in the decategorified invariant.
\end{proof}

\begin{Remark}
For tangles without closed components, the Mathematica program \cite{APT.m} explicitly computes the generators of the categorified tangle invariant from a standard Heegaard diagram as in the proof above. For tangles with closed components, we need to add the factor $(t-t^{-1})$ for each closed component. 
\end{Remark}

\begin{proof}[Proof of theorem \ref{thm:HFTiswelldefandinvariant}]
The theorem follows from the usual arguments in Heegaard Floer homology. In fact, the identification of every component $\CFT(\mathcal{H}_T,s)$ of $\CFT(\mathcal{H}_T)$ with $\CFT(T,s)$ implies most of the result. The main idea is to modify our Heegaard diagram~$\mathcal{H}_T$ in the following way, as illustrated in figure~\ref{fig:relationHFTandSFH}. \\
Let us consider a 2-torus with a fixed longitude and $2n$ disjoint meridians. Puncture the torus $2n$ times along the longitude such that any two meridians are no longer homotopic. We consider the remaining segments of the longitude as $\alpha$-arcs and the meridians as $\beta$-circles. Note that each $\beta$-circle intersects exactly one $\alpha$-arc in a single point and there are exactly $2n$ connected components in their complement on the punctured torus. We place a puncture in each of these components. Finally, we attach the (now $4n$-punctured) torus to $\Sigma$ in such a way that each $\alpha$-arc in the torus closes an $\alpha$-arc in the Heegaard surface $\Sigma_g$ for our tangle. This gives us a sutured Heegaard diagram $\overline{\mathcal{H}}$ consisting of a $2n$-punctured surface $\overline{\Sigma}$ with $(g+2n)$ $\alpha$-circles and $(g+2n+n-1)$ $\beta$-circles. However, $\overline{\mathcal{H}}$ is not balanced unless $n=1$. \\
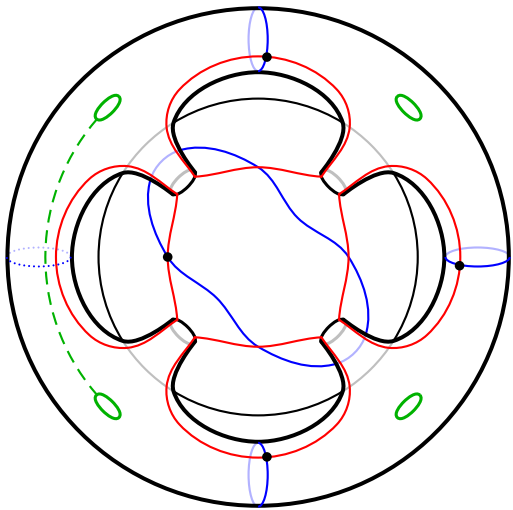
\begin{figure}[t]
\centering
\psset{unit=0.35}
\begin{pspicture}(-9.55,-9.55)(9.55,9.55)

\psecurve[linecolor=blue](5;-36)(5;-54)(3.4;-90)(-2.2;45)(3.4;180)(5;144)(5;126)
\psecurve[linecolor=blue](5;144)(5;126)(3.4;90)(2.1;45)(3.4;0)(5;-36)(5;-54)
\psecurve[linecolor=blue!30!white](3.4;180)(5;144)(5;126)(3.4;90)
\psecurve[linecolor=blue!30!white](3.4;0)(5;-36)(5;-54)(3.4;-90)


\rput{0}(0,0){\psarc(0,0){6}{-32}{32}}
\rput{90}(0,0){\psarc(0,0){6}{-32}{32}}
\rput{-90}(0,0){\psarc(0,0){6}{-32}{32}}
\rput{180}(0,0){\psarc(0,0){6}{-32}{32}}

\rput{45}(0,0){\psarc[linecolor=lightgray](0,0){6}{-13}{13}}
\rput{-45}(0,0){\psarc[linecolor=lightgray](0,0){6}{-13}{13}}
\rput{135}(0,0){\psarc[linecolor=lightgray](0,0){6}{-13}{13}}
\rput{-135}(0,0){\psarc[linecolor=lightgray](0,0){6}{-13}{13}}

\rput{45}(4;45){
\psecurve[linewidth=1.15pt,linecolor=lightgray](-0.25,0)(0,-0.6)(0.25,0)(0,0.6)(-0.25,0)
\psecurve[linewidth=1.15pt](0.25,0)(0,0.6)(-0.25,0)(0,-0.6)(0.25,0)
}
\rput{-45}(4;135){
\psecurve[linewidth=1.15pt](-0.25,0)(0,-0.6)(0.25,0)(0,0.6)(-0.25,0)
\psecurve[linewidth=1.15pt,linecolor=lightgray](0.25,0)(0,0.6)(-0.25,0)(0,-0.6)(0.25,0)}
\rput{45}(4;-135){
\psecurve[linewidth=1.15pt](-0.25,0)(0,-0.6)(0.25,0)(0,0.6)(-0.25,0)
\psecurve[linewidth=1.15pt,linecolor=lightgray](0.25,0)(0,0.6)(-0.25,0)(0,-0.6)(0.25,0)}
\rput{-45}(4;-45){
\psecurve[linewidth=1.15pt,linecolor=lightgray](-0.25,0)(0,-0.6)(0.25,0)(0,0.6)(-0.25,0)
\psecurve[linewidth=1.15pt](0.25,0)(0,0.6)(-0.25,0)(0,-0.6)(0.25,0)}

\psellipticarc[linecolor=blue!30!white](8.25,0)(1.25,0.4){0}{180}
\rput{90}(0,0){\psellipticarc[linecolor=blue!30!white](8.25,0)(1.25,0.4){0}{180}}
\rput{180}(0,0){\psellipticarc[linecolor=blue!30!white,linestyle=dotted,dotsep=1pt](8.25,0)(1.25,0.4){180}{360}} 
\rput{270}(0,0){\psellipticarc[linecolor=blue!30!white](8.25,0)(1.25,0.4){180}{360}}


\pscurve[linewidth=1.5pt,linecap=1](3.98;-36.5)(6;-32)(7;0)(6;32)(3.98;36.5)
\pscurve[linewidth=1.5pt,linecap=1](3.98;53.5)(6;58)(7;90)(6;122)(3.98;126.5)
\pscurve[linewidth=1.5pt,linecap=1](3.98;143.5)(6;148)(7;180)(6;212)(3.98;216.5)
\pscurve[linewidth=1.5pt,linecap=1](3.98;233.5)(6;238)(7;270)(6;302)(3.98;306.5)
\pscircle[linewidth=1.5pt](0,0){9.5}

\rput{0}(0,0){\pscustom[linecolor=red]{
\psecurve(5,-50)(3.86;-38)(3.4;0)(3.86;38)(5,50)
\psecurve(3;38)(3.86;38)(6;35)(7;25)(7.6;0)(7;-25)(6;-35)(3.86;-38)(3;-38)}
\psdots[linecolor=red,dotsize=1pt](3.86;-38)(3.86;38)
}
\rput{90}(0,0){\pscustom[linecolor=red]{
\psecurve(5,-50)(3.86;-38)(3.4;0)(3.86;38)(5,50)
\psecurve(3;38)(3.86;38)(6;35)(7;25)(7.6;0)(7;-25)(6;-35)(3.86;-38)(3;-38)}
\psdots[linecolor=red,dotsize=1pt](3.86;-38)(3.86;38)
}
\rput{180}(0,0){\pscustom[linecolor=red]{
\psecurve(5,-50)(3.86;-38)(3.4;0)(3.86;38)(5,50)
\psecurve(3;38)(3.86;38)(6;35)(7;25)(7.6;0)(7;-25)(6;-35)(3.86;-38)(3;-38)}
\psdots[linecolor=red,dotsize=1pt](3.86;-38)(3.86;38)
}
\rput{-90}(0,0){\pscustom[linecolor=red]{
\psecurve(5,-50)(3.86;-38)(3.4;0)(3.86;38)(5,50)
\psecurve(3;38)(3.86;38)(6;35)(7;25)(7.6;0)(7;-25)(6;-35)(3.86;-38)(3;-38)}
\psdots[linecolor=red,dotsize=1pt](3.86;-38)(3.86;38)
}

\rput{45}(8;45){
\psecurve[linecolor=darkgreen,linewidth=1.15pt](0.25,0)(0,0.6)(-0.25,0)(0,-0.6)(0.25,0)(0,0.6)(-0.25,0)}
\rput{-45}(8;135){
\psecurve[linecolor=darkgreen,linewidth=1.15pt](0.25,0)(0,0.6)(-0.25,0)(0,-0.6)(0.25,0)(0,0.6)(-0.25,0)}
\rput{45}(8;-135){
\psecurve[linecolor=darkgreen,linewidth=1.15pt](0.25,0)(0,0.6)(-0.25,0)(0,-0.6)(0.25,0)(0,0.6)(-0.25,0)}
\rput{-45}(8;-45){
\psecurve[linecolor=darkgreen,linewidth=1.15pt](0.25,0)(0,0.6)(-0.25,0)(0,-0.6)(0.25,0)(0,0.6)(-0.25,0)}

\psellipticarc[linecolor=blue](8.25,0)(1.25,0.4){180}{360}
\rput{90}(0,0){\psellipticarc[linecolor=blue](8.25,0)(1.25,0.4){180}{360}}
\rput{180}(0,0){\psellipticarc[linecolor=blue,linestyle=dotted,dotsep=1pt](8.25,0)(1.25,0.4){0}{180}}
\rput{270}(0,0){\psellipticarc[linecolor=blue](8.25,0)(1.25,0.4){0}{180}}

\psdot(3.4;180)
\psdot(7.58;87.5)
\psdot(7.58;-2.5)
\psdot(7.58;-87.5)

\psarc[linecolor=darkgreen,linestyle=dashed](0,0){8}{139}{-139}
\end{pspicture}
\caption{From a Heegaard diagram for a tangle to one for a sutured manifold, illustrating the proof of theorem~\ref{thm:HFTiswelldefandinvariant}. This example shows the Heegaard diagram for the 1-crossing tangle.}\label{fig:relationHFTandSFH}
\end{figure}

\noindent
So, let us fix a site $s$. By definition, $s$ is a set of $\alpha$-arcs in $\mathcal{H}$ that are occupied by generators in $\mathbb{T}^s$. An $\alpha$-arc in $\mathcal{H}$ corresponds to an $\alpha$-arc in the punctured torus which in turn corresponds to the $\beta$-circle that it intersects. Thus, a site $s$ gives rise to a collection of $\beta$-circles on the punctured torus. For each such circle $\beta_i$, we pick a path $\gamma_i$ between the two adjacent punctures (the dashed line in figure~\ref{fig:relationHFTandSFH}) which intersects no $\alpha$-circle and no $\beta$-circle except $\beta_i$. We delete $\beta_i$ and cut the surface along $\gamma_i$. This gives us a new sutured Heegaard diagram which \emph{is} balanced. Let us denote it by $\overline{\mathcal{H}}^s$.\\
Now observe that generators in $\mathbb{T}^s$ correspond to generators in $\overline{\mathcal{H}}^s$ and that domains in~$\mathcal{H}$ that avoid $\Z$ correspond to domains in $\overline{\mathcal{H}}^s$ that avoid the boundary. Furthermore, if we started with an admissible Heegaard diagram $\mathcal{H}$, then $\overline{\mathcal{H}}^s$ is also admissible. Hence, the sutured Floer homology $\SFH(\overline{\mathcal{H}}^s)$ is well-defined and identical to $\HFT(T,s)$. Furthermore, if we fix a site $s$, the homological grading on $\CFT(\mathcal{H}_T,s)$ agrees (by definition) with the one on $\HFT(T)$. Similarly, the Alexander grading agrees with the relative $H_1(X_T)$-grading on~$\HFT(T)$ induces by the $\Spinc$-grading (see for example~\cite[definition~4.6]{Juhasz}).
It now only remains to check that $\CFT(T)$ is an invariant as a relatively graded complex for all sites simultaneously. However, this follows from the observation that the Heegaard moves from lemma~\ref{lem:HeegaardMoves} do not change the Alexander nor the $\delta$-grading of the generators that correspond to one another under these moves. 
\end{proof}

\begin{figure}[t]
\centering
\begin{subfigure}[b]{0.33\textwidth}\centering
\psset{unit=0.64, linewidth=1.1pt}
\begin{pspicture}[showgrid=false](-4.2,-3.1)(2.2,3.1)
\psecurve[linecolor=violet]{->}(-2.5,1.5)(0,2)(0.75,1)(-0.75,-1)(0,-2)(0.97,-2.24)(2,-2)
\psecurve[linecolor=violet](2,2)(0.97,2.24)(0,2)(-0.75,1)(0.75,-1)(0,-2)(-2.5,-1.5)(-3.25,0)(-2.5,1.5)(0,2)(0.75,1)
\psecurve[linecolor=darkgreen]{<-}(-6,1.5)(-3.3,1.85)(-2.5,1.5)(-1.85,0)(-2.5,-1.5)(-3.3,-1.85)(-6,-1.5)
\pscircle*[linecolor=white](-2.5,1.5){0.2}

\psecurve[linecolor=violet](0.75,-1)(0,-2)(-2.5,-1.5)(-3.25,0)(-2.5,1.5)

\pscircle*[linecolor=white](0,2){0.2}
\pscircle*[linecolor=white](0,0){0.2}
\pscircle*[linecolor=white](0,-2){0.2}

\psecurve[linecolor=violet]{->}(0.75,1)(-0.75,-1)(0,-2)(0.97,-2.24)(2,-2)
\psecurve[linecolor=violet](0,2)(-0.75,1)(0.75,-1)(0,-2)
\psecurve[linecolor=violet](-2.5,-1.5)(-3.25,0)(-2.5,1.5)(0,2)(0.75,1)(-0.75,-1)

\pscircle*[linecolor=white](-2.5,-1.5){0.2}
\psecurve[linecolor=darkgreen](-2.5,1.5)(-1.85,0)(-2.5,-1.5)(-3.3,-1.85)(-6,-1.5)

\psline[linecolor=violet]{->}(-3.25,-0.1)(-3.25,0.1)
\pscircle[linestyle=dotted](-1,0){3.05}

\uput{0.2}[45](0.97,2.24){$\textcolor{violet}{q}$}
\uput{0.2}[-45](0.97,-2.24){$\textcolor{violet}{q}$}
\uput{0.2}[135](-3.3,1.85){$\textcolor{darkgreen}{p}$}
\uput{0.2}[-135](-3.3,-1.85){$\textcolor{darkgreen}{p}$}

\uput{2.5}[180](-1,0){$a$}
\uput{2.3}[-90](-1,0){$b$}
\uput{2.1}[0](-1,0){$c$}
\uput{2.3}[90](-1,0){$d$}

\end{pspicture}
\caption{The $(2,-3)$-pretzel tangle}\label{fig:mutationpretzeltangleT}
\end{subfigure}
\quad
\begin{subfigure}[b]{0.62\textwidth}\centering
\psset{unit=0.2}
\begin{pspicture}(-22,-11)(22,11)
\rput(-12,0){\psrotate(0,0){-90}{

\pscustom[fillstyle=solid,fillcolor=lightgray]{\psecurve[linewidth=0pt](10.5;45)(8;70)(8;-10)(10;-45)
\psarc[linewidth=0pt](0,0){8}{-10}{70}
}

\pscustom[fillstyle=solid,fillcolor=gray]{\psecurve[linewidth=0pt](8;-70)(8;120)(10;130)(10;140)(8;155)(8;-95)
\psarcn[linewidth=0pt](0,0){8}{155}{120}
}

\psarc[linecolor=red](0,0){8}{0}{360}

\SpecialCoor
\rput(8;45){\pscircle*[linecolor=white]{1}\pscircle[linecolor=violet]{1}}
\rput(8;135){\pscircle*[linecolor=white]{1}\pscircle[linecolor=violet]{1}}
\rput(8;225){\pscircle*[linecolor=white]{1}\pscircle[linecolor=darkgreen]{1}}
\rput(8;315){\pscircle*[linecolor=white]{1}\pscircle[linecolor=darkgreen]{1}}


\psecurve[linecolor=blue]%
(10;130)(10;140)(8;155)%
(8;-95)(11;-45)%
(10.5;45)(8;70)%
(8;-10)%
(10;-45)(8;-70)%
(8;120)(10;130)(10;140)(8;155)%

\psdot(8;155)%
\psdot(8;120)%
\psdot(8;70)%
\psdot(8;-10)%
\psdot(8;-70)%
\psdot(8;-95)%
}
\rput[b](8.7;65){$d$}
\rput[l](8;30){~$x_1$}
\rput[l](8;-20){~$x_2$}
\rput[b](7.3;-100){$b$}
\rput[l](8;-160){~$a_2$}
\rput[l](8;-185){~$a_1$}
}

\rput(12,0){\psrotate(0,0){-90}{

\pscustom[fillstyle=solid,fillcolor=lightgray]{\psecurve[linewidth=0pt](9.5;45)(8;20)(8;-60)(9.5;-45)
\psarc[linewidth=0pt](0,0){8}{-60}{20}
}

\pscustom[fillstyle=solid,fillcolor=gray]{\psecurve[linewidth=0pt](5.5;100)(8;-120)(10;-130)(10;-140)(8;-155)(8;95)
\psarc[linewidth=0pt](0,0){8}{-155}{-120}
}

\psarc[linecolor=red](0,0){8}{0}{360}

\SpecialCoor
\rput(8;45){\pscircle*[linecolor=white]{1}\pscircle[linecolor=violet]{1}}
\rput(8;135){\pscircle*[linecolor=white]{1}\pscircle[linecolor=violet]{1}}
\rput(8;225){\pscircle*[linecolor=white]{1}\pscircle[linecolor=violet]{1}}
\rput(8;315){\pscircle*[linecolor=white]{1}\pscircle[linecolor=violet]{1}}


\psecurve[linecolor=blue]%
(10;-130)(10;-140)(8;-155)%
(8;95)(11;45)%
(10.5;-45)(8;-80)%
(8;60)
(9.5;45)(8;20)
(8;-60)(9.5;-45)(10;45)
(8;75)%
(5.5;100)%
(8;-120)(10;-130)(10;-140)(8;-155)%

\psdot(8;-155)%
\psdot(8;-120)%
\psdot(8;-80)%
\psdot(8;-60)%
\psdot(8;20)%
\psdot(8;60)%
\psdot(8;75)%
\psdot(8;95)%
}
\rput[b](8.7;-245){$d'$}
\rput[r](8;-210){$y_1$~}
\rput[r](8;-170){$y_2$~}
\rput[b](7;-154){$y_3$}
\rput[b](7.3;-70){$b'$}
\rput[r](8;-29){$c_3$~}
\rput[r](8;-15){$c_2$~}
\rput[l](8;5){~$c_1$}
}

\psline[linestyle=dashed](-6.25,5.65685424949236)(6.25,5.65685424949236)

\psline[linestyle=dashed](-6.25,-5.65685424949236)(6.25,-5.65685424949236)

\end{pspicture}
\caption{A Heegaard diagram for the tangle on the left}
\label{fig:mutationpretzeltangleHD}
\end{subfigure}
\\
\vspace*{11pt}
\begin{subfigure}[b]{0.95\textwidth}\centering
\begin{tabular}{c|c|c|c}

\textbf{site a} & \textbf{site b} & \textbf{site c} & \textbf{site d} \\ 
$a_1y_1: \textcolor{darkgreen}{p}^{0} \textcolor{violet}{q}^{+3} \delta^{-\frac{1}{2}} $
&
$by_1: \textcolor{darkgreen}{p}^{-1} \textcolor{violet}{q}^{+1} \delta^{0} $ 
&
$x_1c_1: \textcolor{darkgreen}{p}^{+1} \textcolor{violet}{q}^{+2} \delta^{-\frac{1}{2}} $ 
&
\cancel{$dy_1: \textcolor{darkgreen}{p}^{-1} \textcolor{violet}{q}^{-3} \delta^{0}$} 
\\ 
$a_1y_2: \textcolor{darkgreen}{p}^{0} \textcolor{violet}{q}^{+1} \delta^{-\frac{1}{2}} $ 
&
$by_2: \textcolor{darkgreen}{p}^{-1} \textcolor{violet}{q}^{-1} \delta^{0} $ 
& 
$x_1c_2: \textcolor{darkgreen}{p}^{+1} \textcolor{violet}{q}^{0} \delta^{-\frac{1}{2}} $ 
& 
$dy_2: \textcolor{darkgreen}{p}^{+1} \textcolor{violet}{q}^{+1} \delta^{0} $ 
\\ 
$a_1y_3: \textcolor{darkgreen}{p}^{0} \textcolor{violet}{q}^{-1} \delta^{-\frac{1}{2}} $ 
& 
\cancel{$by_3: \textcolor{darkgreen}{p}^{-1} \textcolor{violet}{q}^{-3} \delta^{0}$} 
& 
$x_1c_3: \textcolor{darkgreen}{p}^{+1} \textcolor{violet}{q}^{-2} \delta^{-\frac{1}{2}} $ 
& 
$dy_3: \textcolor{darkgreen}{p}^{+1} \textcolor{violet}{q}^{-1} \delta^{0} $ 
\\ 
$a_2y_1: \textcolor{darkgreen}{p}^{0} \textcolor{violet}{q}^{+1} \delta^{-\frac{1}{2}} $ 
& 
$x_1b': \textcolor{darkgreen}{p}^{+1} \textcolor{violet}{q}^{-3} \delta^{-1} $  
& 
$x_2c_1: \textcolor{darkgreen}{p}^{-1} \textcolor{violet}{q}^{+2} \delta^{-\frac{1}{2}} $ 
& 
\cancel{$x_1d': \textcolor{darkgreen}{p}^{+1} \textcolor{violet}{q}^{+3} \delta^{-1}$} 
\\ 
$a_2y_2: \textcolor{darkgreen}{p}^{0} \textcolor{violet}{q}^{-1} \delta^{-\frac{1}{2}} $ 
& 
\cancel{$x_2b': \textcolor{darkgreen}{p}^{-1} \textcolor{violet}{q}^{-3} \delta^{-1}$}  
& 
$x_2c_2: \textcolor{darkgreen}{p}^{-1} \textcolor{violet}{q}^{0} \delta^{-\frac{1}{2}} $ 
& 
$x_2d': \textcolor{darkgreen}{p}^{-1} \textcolor{violet}{q}^{+3} \delta^{-1} $   
\\ 
$a_2y_3: \textcolor{darkgreen}{p}^{0} \textcolor{violet}{q}^{-3} \delta^{-\frac{1}{2}} $ 
& 
& 
$x_2c_3: \textcolor{darkgreen}{p}^{-1} \textcolor{violet}{q}^{-2} \delta^{-\frac{1}{2}} $ 
&
\\ 
\end{tabular} 
\caption{A table of the generators of $\HFT$ for the pretzel tangle above and their gradings. The generators that can be cancelled are crossed out.}\label{fig:mutationpretzeltangleTresult}
\end{subfigure}
\caption{The calculation of $\HFT$ for the $(2,-3)$-pretzel, see example~\ref{exa:pretzeltangle}}\label{fig:mutationexample}
\end{figure}

\begin{example}[the $(2,-3)$-pretzel tangle]\label{exa:pretzeltangle}
In figure~\ref{fig:mutationexample}, we compute our tangle Floer homology for the $(2,-3)$-pretzel tangle. The shaded regions in the Heegaard diagram in figure~\ref{fig:mutationpretzeltangleHD} show the only two domains that contribute to the differential. It is interesting to note that if we set $t:=\textcolor{darkgreen}{p}=\textcolor{violet}{q}$, then the result for the sites $a$ and $c$ are the same and for the sites $b$ and $d$ are the same after reversing the orientation $t\leftrightarrow t^{-1}$. For invariance under mutation by rotating the tangle by $\pi$ in the plane, we need however $b=d$, which is only true for the $\delta$-graded invariant; see theorem~\ref{thm:2m3pt} and example~\sref{exa:HFTdpretzeltangle}. 
\end{example}

\section{Glueing via bordered sutured Floer theory}\label{sec:TFHviaBSFH}

The tangle Floer homology defined in the previous section does not satisfy any glueing formula because we only record domains away from the boundary. In this subsection, we use bordered sutured Floer theory, developed by Zarev in \cite{Zarev}, to add such a glueing structure to the invariant. We will assume some familiarity with \cite{Zarev} and only give a short review of the basic geometric objects. In particular, we will not give a complete definition of Zarev's invariants.

\begin{definition}
A \textbf{sutured surface} is a quadruple $(F, \Lambda, S_{+}, S_{-})$, where $F$ is a surface with boundary and no closed components and $\Lambda$ is a set of finitely many points on $\partial F$ that partition $\partial F$ into two subsets $S_+$ and $S_-$. If 
$$\pi_0(\Lambda)\rightarrow \pi_0(\partial F)$$
is surjective, we call a surface non-degenerate, otherwise degenerate. 
\end{definition}
\begin{definition}
An \textbf{arc diagram} $\mathcal{Z}$ is a triple $(Z, \mathbf{a},M)$, where $Z$ is a set of oriented line segments, $\mathbf{a}$ an even number of points on $Z$ and $M$ a matching of points in $\mathbf{a}$. The \textbf{graph $G(\mathcal{Z})$ of an arc diagram} $\mathcal{Z}$ is the graph obtained from the line segments $Z$ by adding  an edge between matched points in $\mathbf{a}$. A \textbf{parametrisation of a sutured surface} $F$ is an embedding of a graph of an arc diagram $\mathcal{Z}$ into $F$ such that the line segments~$Z$ are mapped onto $S_+$ and the image $G(\mathcal{Z})$ is a deformation retraction of $F$.
\end{definition}
\begin{definition}
A \textbf{bordered sutured manifold} is a quintuple $(Y,\Gamma, F,\mathcal{Z},\phi)$, where $Y$ is a sutured manifold with sutures $\Gamma$, $F$ is contained in the closure of $R_-$ and 
$$(F,\partial(\Gamma\cap\partial F), \Gamma\cap\partial F,R_-\cap\partial F)$$
 is a sutured surface parametrised by the arc diagram $\mathcal{Z}$ via an embedding $\phi:G(\mathcal{Z})\hookrightarrow F$ such that 
\begin{equation}\label{eqn:homlinind}
\pi_0(\Gamma\smallsetminus F)\rightarrow \pi_0(\partial Y\smallsetminus F)
\end{equation}
is surjective. 
\end{definition}
\begin{Remark}
Condition (\ref{eqn:homlinind}) is called \textbf{homological linear independence}. If we drop this condition, Zarev's invariants fail to be well-defined in general. Note that unlike Zarev, we allow the sutured surfaces of bordered sutured manifolds to be degenerate. This allows us to consider more general bordered sutured manifolds. If we restrict to non-degenerate sutured surfaces, homological linear independence is automatically satisfied, see \cite[proposition~3.6]{Zarev}. 
\end{Remark}

\subsection*{Bordered sutured invariants and glueing. }
Given a bordered sutured manifold~$Y$, Zarev defines two different invariants: a so-called type~A structure $\BSA(Y)$ and a type~D structure $\BSD(Y)$. The former is an $A_\infty$-module over an algebra $\mathcal{A}(\mathcal{Z})$ associated with the arc diagram parametrising the sutured surface on $\partial Y$; the latter is a type~D module over this algebra. For a discussion of type A and D structures as algebraic objects, we refer the reader to appendix~\ref{appendix:AlgStructFromGDCats}, in particular examples~\ref{exa:HighBrowDefTypeDoverI} and~\ref{exa:HighBrowDefTypeAoverI}. \\
Given a (balanced) sutured manifold $(Y,\Gamma)$ and a surface $F$ in $Y$ that has no closed components and splits $Y$ into two components $Y_1$ and $Y_2$ such that every component of $F$ intersects $\Gamma$ non-trivially, we can endow $Y_1$ and $Y_2$ with the structure of bordered sutured manifolds by choosing a parametrisation of $F$ by an arc diagram. Then by \cite[theorem~1]{Zarev}, the sutured Floer complex $\SFC$ can computed as a certain tensor product $\boxtimes$ between the type~A and type~D structures: 
\begin{equation}\label{eqn:BSpairing}
\SFC(Y)=\BSA(Y_1)\boxtimes \BSD(Y_2).
\end{equation}
Again, for details on the box tensor product $\boxtimes$, see appendix~\ref{appendix:AlgStructFromGDCats}, in particular definition~\ref{def:PairingTypeDandA}. Unsurprisingly, the two structures $\BSA(Y_1)$ and $\BSD(Y_2)$ are defined in terms of Heegaard diagrams for bordered sutured manifolds.
\begin{definition}\label{def:HDforBorderedSuturedMfdls}
A \textbf{Heegaard diagram of a bordered sutured manifold} is obtained from a Heegaard diagram of the underlying sutured manifold by adding the graph of the arc diagram to it. To be more precise, consider a Heegaard diagram of the underlying sutured manifold. Then we can embed the graph $G(\mathcal{Z})$ into $R_-$ in such a way that it misses the 2-handles corresponding to the $\alpha$-curves, simply by sliding them off those 2-handles. This gives us an embedding of $G(\mathcal{Z})$ into the Heegaard surface such that its image does not intersect the $\alpha$-curves.  We view the images of the edges connecting points in $\mathbf{a}$ as $\alpha$-arcs. The image of $Z$ lies on the boundary of the Heegaard diagram, 
i.\,e.\ the sutures, which we usually draw in \textcolor{darkgreen}{green}. We put a marked point, a \textbf{basepoint}, in every open component of the boundary minus the image of $Z$. 
\end{definition}

\subsection*{Bordered sutured manifolds for tangles. } 
We can view Heegaard diagrams for tangles (definition~\ref{def:HDsfortangles}) as bordered sutured Heegaard diagrams, as explained in remark~\ref{Rem:TanglesAsSpecialBSMflds}. In this case, each of the line segments $Z$ of the arc diagrams have only one marked point in $\mathbf{a}$, so the glueing surface is just a union of discs. However, when we pair two tangle complements together to obtain a link complement, we need to glue along the whole boundary of the 3-balls (minus the tangle ends), so we need to change the arc diagrams slightly. Depending on whether we want to compute a type~A or type~D invariant, we use slightly different bordered sutured structures on the tangle complements.

\begin{definition} \label{def:TypeAlphaAlphaBetaEnds}
Consider the local picture \ref{fig:BDHDoriginal} around a tangle Heegaard diagram from definition~\ref{def:HDsfortangles}. Note that we have added the two basepoints (the two dashes) on the original suture.
A \textbf{type~$\boldsymbol{\alpha}$ end} is obtained by removing one basepoint and adding an $\alpha$-arc on the opposite side as shown in figure~\ref{fig:BDHDtypeI}, which we call a \textbf{silly arc}. A \textbf{type~$\boldsymbol{\alpha\beta}$ end} is obtained from a type~$\alpha$ end by capping off the suture by a $\beta$-circle and puncturing the region enclosed by the silly $\alpha$-arc and the new $\beta$-circle, see figure~\ref{fig:BDHDtypeII}. 
\end{definition}

\begin{figure}[t]
\psset{unit=0.07}
\centering
{\psset{unit=0.8}
\begin{subfigure}{0.31\textwidth}\centering
\begin{pspicture}(-40,-30)(40,30)
\psframe(-40,-30)(40,30)
\psline[linecolor=red](40,0)(-40,0)
\pscircle[fillstyle=solid,fillcolor=white,linecolor=darkgreen](0,0){10}
\psline(0,12)(0,8)
\psline(0,-12)(0,-8)
\end{pspicture}
\caption{An original tangle end}\label{fig:BDHDoriginal}
\end{subfigure}
\quad
\begin{subfigure}{0.31\textwidth}\centering
\begin{pspicture}(-40,-30)(40,30)
\psframe(-40,-30)(40,30)

\psecurve[linecolor=red](-3,-20)(-3,-1)(-10,13.5)(0,22.55)(10,13.5)(3,-1)(3,-20)
\psecurve[linecolor=red](-60,0)(-40,0)(-5,-10)(0,-10)
\psecurve[linecolor=red](60,0)(40,0)(5,-10)(0,-10)

\pscircle[fillstyle=solid,fillcolor=white,linecolor=darkgreen](0,-10){10}
\psline(0,2)(0,-2)

\end{pspicture}
\caption{A type $\alpha$ end}\label{fig:BDHDtypeI}
\end{subfigure}
\quad
\begin{subfigure}{0.31\textwidth}\centering
\begin{pspicture}(-40,-30)(40,30)

\psframe(-40,-30)(40,30)
\pscircle[linecolor=darkgreen](0,12.5){5}

\psecurve[linecolor=red](-3,-30)(-3,-20)(-3,0)(-10,13.5)(0,22.55)(10,13.5)(3,0)(3,-20)(3,-30)
\psecurve[linecolor=red](-60,0)(-40,0)(-5,-10)(0,-10)
\psecurve[linecolor=red](60,0)(40,0)(5,-10)(0,-10)

\psframe*[linecolor=white](-20,-23)(20,-10)

\pscircle[fillstyle=solid,fillcolor=white,linecolor=darkgreen](0,-10){7}
\pscircle[linecolor=blue](0,-10){12}
\psline(0,-5)(0,-1)
\end{pspicture}
\caption{A type $\alpha\beta$ end}\label{fig:BDHDtypeII}
\end{subfigure}}
\vspace{0.3cm}
\psset{dimen=middle,viewpoint=1 1 0.75}
\begin{subfigure}[b]{\textwidth}\centering
\begin{pspicture}(-80,-61)(80,20)

\ThreeDput[embedangle=0, normal=0 1 0](60,0,0){%
\psline(5,-30)(-40,-30)(-40,30)(40,30)(40,-19)
\psline[linecolor=lightgray](5,-30)(40,-30)(40,-19)

\psecurve[linecolor=red](-3,-30)(-3,-20)(-3,0)(-10,13.5)(0,22.55)(10,13.5)(3,0)(3,-20)(3,-30)
\psecurve[linecolor=red](-60,0)(-40,0)(-9,0)(-9,-10)(-9,-20)
\psecurve[linecolor=red](60,0)(40,0)(9,0)(9,-10)(9,-20)

\psframe*[linecolor=white](-20,-20)(20,-5)

\psellipticarc[fillstyle=solid,fillcolor=white](0,-10)(20,10){15}{200}
\psellipticarc[linecolor=lightgray](0,-10)(20,10){200}{15}

\psellipticarc[linestyle=dashed,linecolor=darkgreen](0,-6)(29,18){-12}{74}
\psellipticarc[linestyle=dashed,linecolor=darkgreen](0,-6)(29,18){106}{-106}
\psellipticarc[linestyle=dashed,linecolor=darkgreen!30!white](0,-6)(29,18){-106}{-12}

\pscircle[linecolor=darkgreen](0,12.5){5}

\psline(0,6)(0,9)
\psline(0,16)(0,19)

}

\ThreeDput[embedangle=0, normal=1 0 0](0,60,0){%
\psline(-5,-30)(40,-30)(40,30)(-40,30)(-40,-19)
\psline[linecolor=lightgray](-5,-30)(-40,-30)(-40,-19)
\pscircle[linecolor=darkgreen](0,12.5){5}

\psecurve[linecolor=red](-3,-30)(-3,-20)(-3,0)(-10,13.5)(0,22.55)(10,13.5)(3,0)(3,-20)(3,-30)
\psecurve[linecolor=red](-60,0)(-40,0)(-9,0)(-9,-10)(-9,-20)
\psecurve[linecolor=red](60,0)(40,0)(9,0)(9,-10)(9,-20)

\psframe*[linecolor=white](-20,-20)(20,-5)

\psellipticarc[fillstyle=solid,fillcolor=white](0,-10)(20,10){-20}{165}
\psellipticarc[linecolor=lightgray](0,-10)(20,10){165}{-20}
}

\ThreeDput[embedangle=0, normal=1.732050 -1 0](30,51.96,0){%
\psellipticarc[linecolor=blue](0,-10)(19.8,9.9){-25}{160}
\psellipticarc[linecolor=lightblue](0,-10)(19.8,9.9){160}{-25}
}

\ThreeDput[normal=0 0 1](0,0,-5){
\psarc(0,0){42.7}{0}{90}
}
\ThreeDput[normal=0 0 1](0,0,-15.7){
\psarcn(0,0){76.5}{90}{0}
}

\ThreeDput[normal=0 0 1](0,0,-0.2){
\psarc[linecolor=red](0,0){63}{0}{90}
\psarc[linecolor=red](0,0){57}{0}{90}
}

\ThreeDput[normal=0 0 1](0,0,-1){
\psarc[linecolor=red](0,0){68.3}{0}{90}
\psarc[linecolor=red](0,0){51.7}{0}{90}
}

\end{pspicture}
\caption{A diagram before cutting/after glueing. It is a ladybug from figure~\ref{fig:ladybug}.}\label{fig:BDHDglued}
\end{subfigure}

\caption{The bordered sutured Heegaard diagram around the tangle ends. To get from (d) to (b) and (c), we cut along the dashed \textcolor{darkgreen}{green} line.}\label{fig:BDHD}
\end{figure}
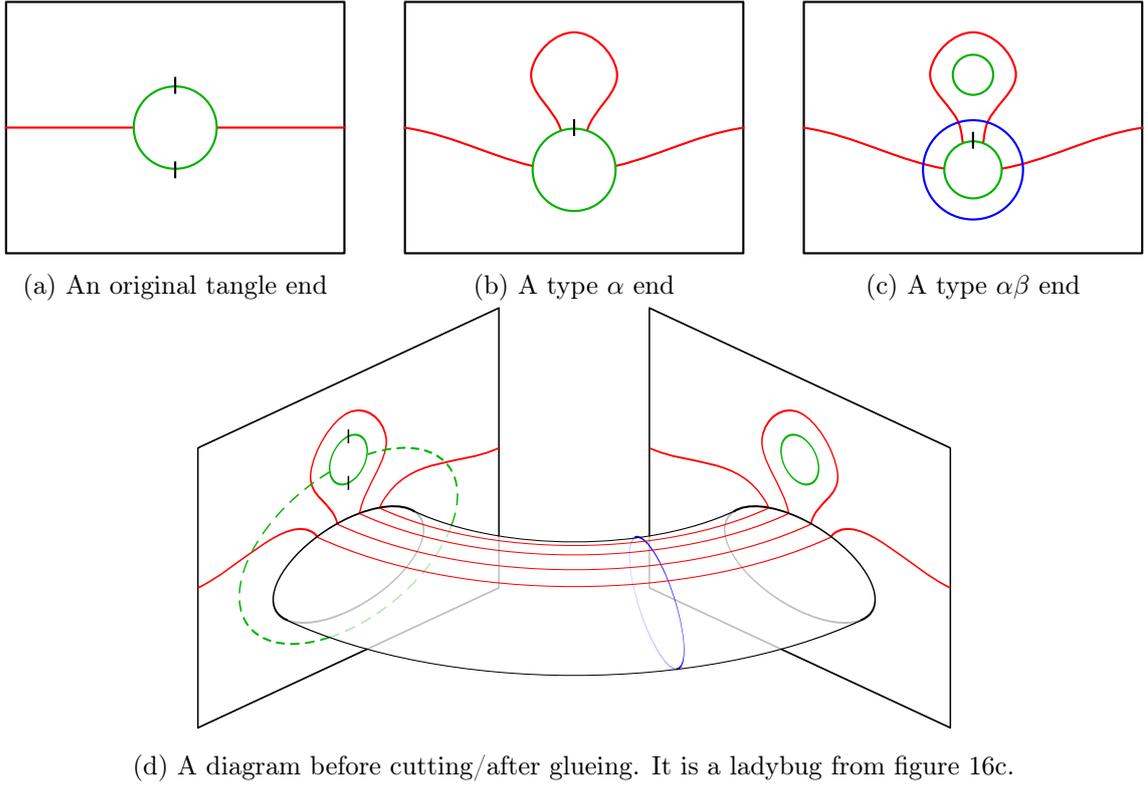

\subsection*{Glueing type $\boldsymbol{\alpha}$ and type $\boldsymbol{\alpha\beta}$ ends. } Figure~\ref{fig:BDHDglued} illustrates how a type~$\alpha$ and a type~$\alpha\beta$ end locally glue together along the arc diagram to form a Heegaard diagram for the glued manifold with two meridional sutures around the glued strand. Note that replacing the original tangle end by a type~$\alpha$ end does not change the generators of the Heegaard diagram. For a type $\alpha\beta$ end, the generators do change. However, when we glue a type $\alpha$ end to a type $\alpha\beta$ end, the new $\alpha$-circle only intersects the $\beta$-circle from the type $\alpha\beta$ end. So any generator of the type~$\alpha\beta$ side that does not occupy the silly $\alpha$-arc is killed in the pairing. If we calculate the type~A structure, we can indeed forget those generators and all arrows to and from it, and the bordered sutured pairing formula from equation (\ref{eqn:BSpairing}) still remains true. For the type~D side, this does not work in general, because in the box tensor product~$\boxtimes$, we also need to consider chains of arrows on the type~D side that could possibly go via some of the generators we want to forget (see definition~\ref{def:PairingTypeDandA}). We will avoid these complications by only taking $\alpha$ ends for the type~D structure and $\alpha\beta$ ends for the type~A structure. \pagebreak[3]\\
Unfortunately, when we glue two tangles together to obtain a knot or link, we cannot treat all tangle ends in the same way. This is for two reasons: First, the arcs need to parametrise the glueing surface, which in this case should be a $2n$-punctured 2-sphere. Second, the bordered sutured manifold should satisfy the homological linear independence condition~(\ref{eqn:homlinind}). Therefore, we have to remove some arcs and for this, we have lots of equally good or bad choices.

\subsection*{Glueing tangle complements. }
We introduce some notation for the next theorem: Let $s$ be a site of a tangle $T$ and $X_T$ the corresponding sutured manifold from section~\ref{sec:HFTviaSFH}. Let $X_{T_1}$ and $X_{T_2}$ be the complements of two tangles $T_1$ and $T_2$ obtained by splitting $X_T$ along some plane meeting $l$ tangle components, see figure \ref{fig:glueingCAT}. We can turn $X_{T_1}$ and $X_{T_2}$ into two bordered sutured manifolds as follows: First of all, the glueing surface is obviously the intersection of the cutting surface with $X_T$. It is parametrised by those arcs connecting the new tangle ends that we implicitly chose in defining $T_1$ and $T_2$. We replace the original tangle ends by type~$\alpha$ ends on $X_{T_2}$ and type~$\alpha\beta$ ends on $X_{T_1}$. Next, we choose the sutures on $X_{T_1}$ and $X_{T_2}$ to agree with those on $X_T$ on their respective common boundary.  Finally, if a suture cannot be homotoped away from the boundary of the cutting surface, we close and connect it with the remaining arcs from the parametrisation of the glueing surface, as illustrated in the upper part of figure~\ref{fig:glueingCAT}.\\
By construction, if we glue $X_{T_1}$ and $X_{T_2}$ together, we reobtain the original sutured manifold, except that there is now an extra pair of meridional sutures at those points of the tangles where we glue them together. We denote this new sutured manifold by $X_{T}'$. Let $A$ be the algebra corresponding to the glueing surface and $I$ the corresponding ring of idempotents. Let $I'\subset I$ be the subring generated by those idempotents $I(s)$ where $s$ ranges over those sets of arcs that contain the silly $\alpha$-arcs at the $\alpha$-ends. In other words, $I'$ consists of those idempotents that could belong to generators of $\BSD(X_{T_2})$. We are now ready to state the glueing theorem which can be viewed as a categorification of the glueing formula from proposition~\sref{prop:glueing}.

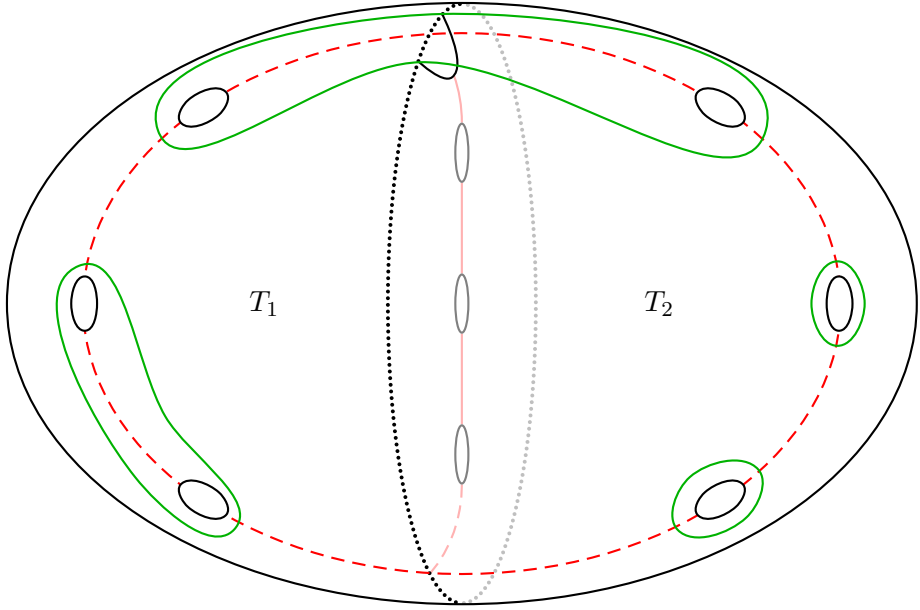
\begin{figure}[t]
\centering
\psset{unit=0.2}
\begin{pspicture}(-30,-20)(30,20)

\psellipticarc[linecolor=lightgray,linestyle=dotted,dotsep=1pt,linewidth=1.5pt](0,0)(5,20){-90}{90}

\psline[linecolor=lightred](0,2)(0,8)
\psline[linecolor=lightred](0,-2)(0,-8)
\psecurve[linecolor=lightred](-2.1,6.1)(0,12)(-2.1,17.9)(-6,20)
\psecurve[linecolor=lightred,linestyle=dashed](-2.1,-6.1)(0,-12)(-2.1,-17.9)(-6,-20)

\pscurve[fillcolor=white,fillstyle=solid,linecolor=lightdarkgreen](-1.3,19.2)(-0.65,15)(-2.8,16)
\pscircle*[linecolor=white](-2,18){0.5}


\psellipticarc[linestyle=dotted,dotsep=1pt,linewidth=1.5pt](0,0)(5,20){90}{-90}

\psellipticarc[linecolor=gray](0,10)(0.5,2){0}{360}
\psellipticarc[linecolor=gray](0,0)(0.5,2){0}{360}
\psellipticarc[linecolor=gray](0,-10)(0.5,2){0}{360}
\psellipticarc(0,0)(30,20){0}{360}

\psellipse[linecolor=red, linestyle=dashed](0,0)(25,18)

\psecurve[fillcolor=white,fillstyle=solid](-25.7,0)(-24.8,1.8)(-24,0)(-24.8,-1.8)(-25.7,0)(-24.8,1.8)(-24,0)
\psecurve[fillcolor=white,fillstyle=solid](-18.5,12)(-18,13.5)(-15.5,14)(-16,12.5)(-18.5,12)(-18,13.5)(-15.5,14)
\psecurve[fillcolor=white,fillstyle=solid](-18.5,-12)(-18,-13.5)(-15.5,-14)(-16,-12.5)(-18.5,-12)(-18,-13.5)(-15.5,-14)

\psecurve[fillcolor=white,fillstyle=solid](25.7,0)(24.8,1.8)(24,0)(24.8,-1.8)(25.7,0)(24.8,1.8)(24,0)
\psecurve[fillcolor=white,fillstyle=solid](18.5,12)(18,13.5)(15.5,14)(16,12.5)(18.5,12)(18,13.5)(15.5,14)
\psecurve[fillcolor=white,fillstyle=solid](18.5,-12)(18,-13.5)(15.5,-14)(16,-12.5)(18.5,-12)(18,-13.5)(15.5,-14)

\psecurve[linecolor=darkgreen](19.5,-11)(18.7,-14.2)(14.2,-15)(15,-11.5)(19.5,-11)(18.7,-14.2)(14.2,-15)
\psecurve[linecolor=darkgreen](26.5,0)(24.7,2.8)(23,0)(24.7,-2.8)(26.5,0)(24.7,2.8)(23,0)

\psecurve[linecolor=darkgreen](-1.3,19.2)(-20,11.5)(-2.8,16)(20,11.5)(-1.3,19.2)(-20,11.5)(-2.8,16)

\psecurve[linecolor=darkgreen](-25.2,2.5)(-19.5,-7.5)(-15,-15)(-21.5,-11.5)(-25.2,2.5)(-19.5,-7.5)(-15,-15)


\rput(-13,0){$T_1$}
\rput(13,0){$T_2$}

\end{pspicture}
\caption{An illustration for the glueing theorem \ref{thm:glueingCAT}}\label{fig:glueingCAT}
\end{figure}

\begin{theorem}\label{thm:glueingCAT}
With the notation from above, there exist vector space isomorphisms
\begin{eqnarray*}
\bigoplus_{s_2\cap O_2(T)= s\cap O_2(T)} \CFT(T_2,s_2)&\longrightarrow &\BSD(X_{T_2})\\
\bigoplus_{s_1\cap O_1(T)= s\cap O_1(T)} \CFT(T_1,s_1)\otimes V^{\otimes l}&\longrightarrow &\BSA(X_{T_1}).I'\\
\bigoplus_{\substack{s_1\cap s_2=\emptyset\\ (s_1\cup s_2)\cap O(T)=s}}\CFT(T_1,s_1)\otimes \CFT(T_2,s_2)\otimes V^{\otimes l}&\longrightarrow &\CFT(T,s)\otimes V^{\otimes (l-c)}
\end{eqnarray*}
where \(O(T)\) denotes the set of open regions of the tangle \(T\), \(O_1(T)=O(T)\smallsetminus O(T_2)\) and \(O_2(T)=O(T)\smallsetminus O(T_1)\), \(V\) is a vector space with two generators in grading \(t\) and \(h^{-1}t^{-1}\) and \(c\) is the difference between the number of closed tangle components before and after the splitting.
The first map is a chain isomorphism after setting all algebra elements with moving strands equal to zero. Furthermore, if one of \(\BSA(X_{T_1}).I'\) and \(\BSD(X_{T_2})\) is bounded,
\begin{equation}\label{eqn:BSGlueing}
\CFT(T,s)\otimes V^{\otimes(l-c)}= \BSA(X_{T_1}).I'\boxtimes \BSD(X_{T_2}).
\end{equation} 
\end{theorem}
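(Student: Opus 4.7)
The plan is to reduce the statement to Zarev's pairing theorem \seqref{eqn:BSpairing} by identifying each side of the decomposition $X_T = X_{T_1} \cup X_{T_2}$ in terms of $\CFT$, and then comparing the sutured manifold one obtains after glueing with the one we actually want.

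\textbf{Step 1: Type D side.} I first want to identify $\BSD(X_{T_2})$ (ignoring the type D structure, as a vector space) with $\bigoplus_{s_2} \CFT(T_2, s_2)$ where $s_2$ ranges over sites compatible with $s$ on $O_2(T)$. A Heegaard diagram for $X_{T_2}$ is obtained from a tangle Heegaard diagram for $T_2$ by replacing each cut end with a type $\alpha$ end (figure~\ref{fig:BDHDtypeI}). A generator of $\BSD(X_{T_2})$ whose idempotent lies in $I'$ must occupy every silly $\alpha$-arc, so up to the forced choice of intersection point on each silly arc, generators correspond precisely to generators of $\CFT(T_2, s_2)$ for those $s_2$ whose restriction to $O_2(T)$ agrees with $s$. (This restriction is exactly the condition that the non-silly arcs occupied by the generator form a valid site of $T_2$ with the right ``external'' part.) For the chain-level statement, restrict the type D differential to the summand that records no motion along the parametrising arcs: then only domains disjoint from the glueing surface contribute, and these are precisely the domains counted by the $\CFT$ differential.

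\textbf{Step 2: Type A side.} Next I want to identify $\BSA(X_{T_1}).I'$ with $\bigoplus_{s_1} \CFT(T_1, s_1) \otimes V^{\otimes l}$. Here each cut end is a type $\alpha\beta$ end (figure~\ref{fig:BDHDtypeII}). The new $\beta$-circle at each end together with the silly $\alpha$-arc form a local ``ladybug'' (cf.\ figure~\ref{fig:ladybug} and figure~\ref{fig:BDHDglued}), which contributes a free $V$-factor to the generating set, with the same bigrading computation as in the proof of lemma~\ref{lem:Eulercharagreeswithnabla}. The constraint that the idempotent lies in $I'$ selects the generators of $\BSA$ compatible with glueing against the $\alpha$-ends of $X_{T_2}$. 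After removing these forced tensor factors, the remaining generators match those of $\CFT(T_1, s_1)$ for $s_1$ compatible with $s$ on $O_1(T)$.

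\textbf{Step 3: Pairing.} Under the identifications in Steps 1 and 2, the first and second displayed maps in the theorem are just the claimed vector-space isomorphisms. Applying Zarev's pairing theorem~\seqref{eqn:BSpairing} to the splitting $X_T' = X_{T_1} \cup_F X_{T_2}$ gives
\[
\SFC(X_T') = \BSA(X_{T_1}) \boxtimes \BSD(X_{T_2}),
\]
provided the boundedness hypothesis holds. Since $\BSD(X_{T_2})$ is generated by $I'$, we may replace $\BSA(X_{T_1})$ by $\BSA(X_{T_1}).I'$ without changing the box tensor product.

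\textbf{Step 4: Extra sutures.} It remains to compare $\SFC(X_T')$ with $\CFT(T,s) \otimes V^{\otimes(l-c)}$. By construction, $X_T'$ differs from $X_T^s$ only by the presence of an additional pair of oppositely oriented meridional sutures on each of the $l$ tangle strands cut by the splitting surface. For a strand that remains open after glueing, such a pair of meridional sutures can be removed by a surface decomposition along a meridional disc (as in \cite[proposition~8.6]{SurfaceDecomposition}), contributing a stabilisation factor of $V$ to $\SFC$; this is precisely the same computation as in the proof of proposition~\ref{prop:fourendedHFT}. For each of the $c$ strands that become closed after glueing, the extra pair of meridional sutures is exactly the pair already prescribed by our sutured model $X_T^s$ for closed tangle components, so no stabilisation appears. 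This accounts for the factor $V^{\otimes(l-c)}$, and gives the third vector space isomorphism together with the pairing identity~\seqref{eqn:BSGlueing}.

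\textbf{Main obstacle.} Steps 1, 3, and 4 are essentially bookkeeping, modulo known results of Zarev and Juhász. The delicate point is Step 2: verifying on the nose that the extra generators from the type $\alpha\beta$ ends factor as a clean $V^{\otimes l}$ tensor summand in the type A structure, with the correct bigrading. This is the analogue for type A modules of the ladybug computation and requires a careful local analysis of holomorphic discs near each $\alpha\beta$ end to check that the new $\beta$-circle contributes no genuine $A_\infty$ operations beyond the identity-like ones giving the tensor factor.
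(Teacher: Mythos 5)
Your proposal follows essentially the same route as the paper's proof: a direct comparison of Heegaard diagrams at the type $\alpha$ and type $\alpha\beta$ ends for the first two identifications (with the two intersection points of each new $\beta$-circle with its silly arc supplying the $V^{\otimes l}$ factor), Juhász's surface decomposition formula to identify $\SFC(X_T')$ with $\CFT(T,s)\otimes V^{\otimes(l-c)}$, and Zarev's pairing theorem together with the $I'$-truncation for the final identity. Two small remarks: on the type D side the silly arcs at the $\alpha$ ends meet no $\beta$-curves at all, so the generators are literally unchanged and there is no ``forced intersection point'' (those occur only at the $\alpha\beta$ ends, which you handle correctly in Step 2), and your ``main obstacle'' overshoots the statement, since only a vector-space isomorphism is claimed on the type A side, so no $A_\infty$-level splitting of the $V^{\otimes l}$ factor is needed.
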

\begin{Remark}
The bordered sutured invariants for the $(2,-3)$-pretzel tangle that we compute in theorem~\sref{thm:2m3pt} are both bounded, so we can apply the glueing statement above. Moreover, the invariants that we compute in proposition~\ref{prop:BSDrattangle} for the (positive) rational tangles are all bounded, except those for the trivial tangles. This suggests that we might always be able to find a parametrisation of the glueing surface such that the type~D side is bounded, simply by introducing Dehn twists of the tangles ends at the glueing surface. 
\end{Remark}
\begin{proof}
The first two identifications follow directly from a comparison of the Heegaard diagrams involved and the discussion following definition~\ref{def:TypeAlphaAlphaBetaEnds}. In particular, each tensor factor on the left of the second identification corresponds to an $\alpha\beta$-end, since the $\beta$-circle at such an end intersects the silly $\alpha$-arc twice. The difference between the contributions of those intersection points to the Alexander and homological gradings are straightforward to compute. For the third relation, we argue similarly, using 
$$\SFC(X_{T}')=\CFT(T,s)\otimes V^{\otimes (l-c)}.$$
This identity follows from a small adaptation of the arguments of the proof of proposition~\ref{prop:HFTreverseorientI}: 
Consider the surface obtained from $\partial X'_{T}$ by deleting the component of $R_-$ that is contained in $\partial B^3$. Take an open collar neighbourhood $N$ of this surface in $X'_T$, so we can write $X'_T$ as a union of the closure of $N$ and the complement of $N$ in $X'_T$ along their common boundary. We can now apply the surface decomposition formula \cite[proposition~8.6]{SurfaceDecomposition}. It is straightforward to see that the sutured Floer homology of $N$ is equal to $V^{\otimes (l-c)}$.\\
The final statement follows from Zarev's general pairing theorem in bordered sutured theory~\cite[theorem~7.16]{Zarev}. 
\end{proof}

\begin{example}[glueing two 4-ended tangles]
Suppose $T$ is a 2-ended tangle and we cut it into two 4-ended tangles $T_1$ and $T_2$. The glueing surface is a 3-punctured disc parametrised by five arcs, of which three are silly arcs and the remaining two correspond to two adjacent sites, say $s_1$ and $s_2$. Then the proposition above tells us that the generators of the type D module $\BSD(T_2)$ are in one-to-one correspondence with the generators of 
$$
\CFT(T_2,\{s_1\})\oplus\CFT(T_2,\{s_2\})
$$
and the honest differentials in the type $D$ structure correspond to the differentials in the tangle Floer complexes. 
The statement for the generators is also true on the type~A side for~$T_1$, except that we obtain additional tensor factors. Then, after glueing, we get 
$$\HFT(T,\emptyset)\otimes V^{\otimes n}=\HFL(L)\otimes V^{\otimes n},$$
where $n=2$ or $n=3$, depending on whether glueing closes a component or not.
\end{example}

\begin{proposition}\label{prop:BSDrattangle}
Let \(T_{p/q}\) be the positive rational tangle corresponding to the fraction~\(\frac{p}{q}\) with \(p>q\geq0\) and \(\gcd(p,q)=1\). Let \(X_{T_{p/q}}\) be the bordered sutured manifold with the parametrisation specified by figure~\ref{fig:BSDXTpqMan}. We label the two arcs which specify two sites of the tangle by \(p\) and \(q\). 

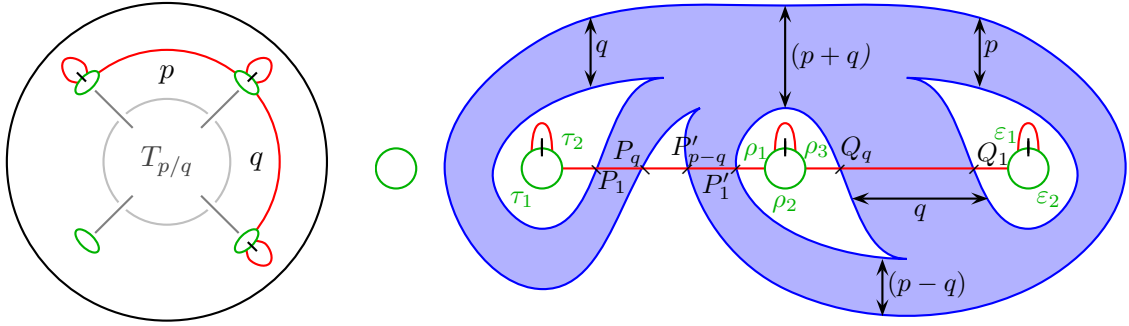
\begin{figure}[t]
\begin{subfigure}{0.3\textwidth}\centering
\psset{unit=0.212}
\begin{pspicture}(-10,-10)(10,10)
\pscircle(0,0){10}
\psarc[linecolor=red](0,0){7}{-45}{135}

\pscurve[linecolor=red](7;135)(8;130)(9;135)(8;140)(7;135)
\pscurve[linecolor=red](7;45)(8;50)(9;45)(8;40)(7;45)
\pscurve[linecolor=red](7;-45)(8;-40)(9;-45)(8;-50)(7;-45)

\pscircle[linecolor=lightgray](0,0){4}

\psline[linecolor=white,linewidth=4pt](3;45)(6.3;45)
\psline[linecolor=white,linewidth=4pt](3;135)(6.3;135)
\psline[linecolor=white,linewidth=4pt](3;-135)(6.3;-135)
\psline[linecolor=white,linewidth=4pt](3;-45)(6.3;-45)

\psline[linecolor=gray](3;45)(6.3;45)
\psline[linecolor=gray](3;135)(6.3;135)
\psline[linecolor=gray](3;-135)(6.3;-135)
\psline[linecolor=gray](3;-45)(6.3;-45)

{\psset{fillstyle=solid,fillcolor=white}
\rput{45}(7;45){\psellipse[linecolor=darkgreen](0,0)(0.5,1)}
\rput{135}(7;135){\psellipse[linecolor=darkgreen](0,0)(0.5,1)}
\rput{-135}(7;-135){\psellipse[linecolor=darkgreen](0,0)(0.5,1)}
\rput{-45}(7;-45){\psellipse[linecolor=darkgreen](0,0)(0.5,1)}
}

\psline(7.1;135)(7.9;135)
\psline(7.1;45)(7.9;45)
\psline(7.1;-45)(7.9;-45)

\rput[t](0,6){$p$}
\rput[r](6,0){$q$}

\rput(0,0){\textcolor{darkgray}{$T_{p/q}$}}

\end{pspicture}
\caption{The parametrisation of the boundary of the tangle complement $X_{T_{p/q}}$}\label{fig:BSDXTpqMan}
\end{subfigure}
\begin{subfigure}{0.69\textwidth}\centering
\psset{unit=0.4,xunit=0.8}
\begin{pspicture}(3.2,-5.1)(34.1,5.5)
\psecurve*[linewidth=0pt,linecolor=lightblue](30,0)(16.5,2)(10,-4)(6,0)(10,4.3)(20,5.4)(30,4.3)(34,0)(30,-4)(20,-4)(16,0)(16.5,2)(25,0)
{\psset{linecolor=blue}
\psecurve(30,0)(16.5,2)(10,-4)(6,0)(10,4.3)(20,5.4)(30,4.3)(34,0)(30,-4)(20,-4)(16,0)(16.5,2)(25,0)
\psecurve[fillcolor=white,fillstyle=solid](22,0)(15,3)(8,0)(10,-2)(15,3)(18,0)
\psecurve[fillcolor=white,fillstyle=solid](32,0)(25,-3)(18,0)(20,2)(25,-3)(28,0)
\psecurve[fillcolor=white,fillstyle=solid](18,0)(25,3)(32,0)(30,-2)(25,3)(22,0)
}
\small
\psline[linecolor=red](10,0)(30,0)

\psline{<->}(12,2.7)(12,5)
\psline{<->}(20,2)(20,5.4)
\psline{<->}(28,2.7)(28,5)

\psline{<->}(22.75,-1)(28.35,-1)
\psline{<->}(24,-3)(24,-4.9)

\rput[l](12.2,3.95){$q$}
\rput[l](20.2,3.7){$(p+q$)}
\rput[l](28.2,3.95){$p$}
\rput[t](25.55,-1.2){$q$}
\rput[l](24.2,-3.85){$(p-q)$}

\pscurve[linecolor=red](9.6,0)(10,1.5)(10.4,0)
\pscurve[linecolor=red](19.6,0)(20,1.5)(20.4,0)
\pscurve[linecolor=red](29.6,0)(30,1.5)(30.4,0)

{\psset{fillcolor=white, fillstyle=solid,linecolor=darkgreen}
\pscircle(4,0){0.7}
\pscircle(10,0){0.7}
\pscircle(20,0){0.7}
\pscircle(30,0){0.7}
}

\psline(10,1)(10,0.4)
\psline(20,1)(20,0.4)
\psline(30,1)(30,0.4)

{\psset{dotstyle=|}
\psdot[dotangle=45](12.2,0)\uput{0.1}[-45](12.2,0){$P_1$}
\psdot[dotangle=45](14.2,0)\uput{0.1}[135](14.2,0){$P_q$}
\psdot[dotangle=-45](16,0)\uput{0.1}[70](16,0){$P'_{p-q}$}
\psdot[dotangle=-45](18,0)\uput{0.15}[-130](18,0){$P'_1\,$}
\psdot[dotangle=-45](22.3,0)\uput{0.2}[55](22.3,0){$Q_q$}
\psdot[dotangle=-45](27.8,0)\uput{0.1}[45](27.8,0){$Q_1$}
}

\rput(10,0){
\uput{0.8}[-120](0,0){\textcolor{darkgreen}{$\tau_1$}}
\uput{0.9}[45](0,0){\textcolor{darkgreen}{$\tau_2$}}
\psline(0.5;90)(0.9;90)
}

\rput(20,0){
\uput{0.6}[145](0,0){\textcolor{darkgreen}{$\rho_1$}}
\uput{0.9}[-90](0,0){\textcolor{darkgreen}{$\rho_2$}}
\uput{0.7}[35](0,0){\textcolor{darkgreen}{$\rho_3$}}
\psline(0.5;90)(0.9;90)
}

\rput(30,0){
\uput{0.9}[120](0,0){\textcolor{darkgreen}{$\varepsilon_1$}}
\uput{0.8}[-60](0,0){\textcolor{darkgreen}{$\varepsilon_2$}}
\psline(0.5;90)(0.9;90)
}

\end{pspicture}
\caption{A genus 0 Heegaard diagram for $X_{T_{p/q}}$. The blue bands represent parallel $\beta$-curve segments, the number of which is specified by the label of the corresponding double arrow.}\label{fig:BSDXTpqHD}
\end{subfigure}
\caption{An illustration for proposition~\ref{prop:BSDrattangle}.}\label{fig:BSDXTpq}
\end{figure}

\noindent
Then the type~D structure for \(T_{p/q}\) has the form of a loop, which corresponds to the single \(\beta\)-curve in the Heegaard diagram in figure~\ref{fig:BSDXTpqHD} in the following sense: The generators are given by intersection points of the \(\beta\)-curve and the \(\alpha\)-arcs. Each generator of the type~D structure has exactly two incoming or outgoing arrows which correspond to paths on the \(\beta\)-curve from the corresponding intersection point to its two neighbours on the \(\beta\)-curve. The algebra elements picked up by the differentials correspond to those paths on the arc diagram after pulling the \(\beta\)-curve tight. \\
More explicitly, \(\BSD(X_{T_{p/q}})\) is obtained by glueing the following elementary ``puzzle pieces'' together in such a way that the integers at the puzzle ends match: 
\begin{center}
\begin{tikzpicture}[auto]
\node[left] (Px) at (5.5,0) {};
\node[left] (Px) at (-2,0) {$p+i$};
\node[right] (Py) at (2,0) {$p+1-i$};
\node[right] (range) at (-7,0) {$1\leq i\leq q$:};
\node (P) at (0,0) {$P_i$};
\draw[->] (-1.5,0) to node {$\rho_{13}\tau_{12}$} (P);
\draw[<-] (P) to node {$\rho_{13}$} (1.5,0);

\draw[xshift=-1.95cm, yshift=0.6cm] (0,0) -- (3.9,0);
\draw[xshift=-1.95cm, yshift=-0.6cm] (0,0) -- (3.9,0);
\draw[xshift=-1.8cm]  plot[smooth, tension=.5] coordinates {(-0.15,0.6) (-0.15,0.2) (0.05,0.2) (0.15,0.1) (0.15,-0.1) (0.05,-0.2) (-0.15,-0.2) (-0.15,-0.6)};
\draw[xshift=1.8cm]  plot[smooth, tension=.5] coordinates {(0.15,0.6) (0.15,0.2) (-0.05,0.2) (-0.15,0.1) (-0.15,-0.1) (-0.05,-0.2) (0.15,-0.2) (0.15,-0.6)};
\end{tikzpicture}
\vspace{5pt}\\
\begin{tikzpicture}[auto]
\node[left] (Px) at (5.5,0) {};
\node[left] (Px) at (-2,0) {$q+i$};
\node[right] (Py) at (2,0) {$q+1-i$};
\node[right] (range) at (-7,0) {$1\leq i\leq q$:};
\node (Q) at (0,0) {$Q_i$};
\draw[<-] (-1.5,0) to node {$\varepsilon_{12}$} (Q);
\draw[->] (P) to node {} (1.5,0);

\draw[xshift=-1.65cm, yshift=0.6cm] (0,0) -- (3.3,0);
\draw[xshift=-1.65cm, yshift=-0.6cm] (0,0) -- (3.3,0);
\draw[xshift=-1.8cm]  plot[smooth, tension=.5] coordinates {(0.15,0.6) (0.15,0.2) (-0.05,0.2) (-0.15,0.1) (-0.15,-0.1) (-0.05,-0.2) (0.15,-0.2) (0.15,-0.6)};
\draw[xshift=1.8cm]  plot[smooth, tension=.5] coordinates {(-0.15,0.6) (-0.15,0.2) (0.05,0.2) (0.15,0.1) (0.15,-0.1) (0.05,-0.2) (-0.15,-0.2) (-0.15,-0.6)};
\end{tikzpicture}
\vspace{5pt}\\
\begin{tikzpicture}[auto]
\node[left] (Px) at (5.5,0) {};
\node[left] (Px) at (-2,0) {$i+2q$};
\node[right] (Py) at (2,0) {$i$};
\node[right] (range) at (-7,0) {$1\leq i\leq p-q$:};
\node (P) at (0,0) {$P'_i$};
\draw[<-] (-1.5,0) to node {$\rho_2\varepsilon_{12}$} (P);
\draw[<-] (P) to node {$\rho_{13}$} (1.5,0);

\draw[xshift=-1.65cm, yshift=0.6cm] (0,0) -- (3.6,0);
\draw[xshift=-1.65cm, yshift=-0.6cm] (0,0) -- (3.6,0);
\draw[xshift=-1.8cm]  plot[smooth, tension=.5] coordinates {(0.15,0.6) (0.15,0.2) (-0.05,0.2) (-0.15,0.1) (-0.15,-0.1) (-0.05,-0.2) (0.15,-0.2) (0.15,-0.6)};
\draw[xshift=1.8cm]  plot[smooth, tension=.5] coordinates {(0.15,0.6) (0.15,0.2) (-0.05,0.2) (-0.15,0.1) (-0.15,-0.1) (-0.05,-0.2) (0.15,-0.2) (0.15,-0.6)};
\end{tikzpicture}
\end{center}
Our conventions for the algebra are explained in the proof, see also remark~\ref{rem:conventions3}.
If \(q>p\geq0\), the same holds, but for an explicit description in terms of such elementary pieces, we need to replace \(P'_i\) by 
\begin{center}
\begin{tikzpicture}[auto]
\node[left] (Pmax) at (5.5,0) {};
\node[left] (Px) at (-2,0) {$i+2p$};
\node[right] (Py) at (2,0) {$i$};
\node[right] (range) at (-7,0) {$1\leq i\leq q-p$:};
\node (P) at (0,0) {$Q'_i$};
\draw[<-] (1.5,0) to node[swap] {} (P);
\draw[<-] (P) to node[swap] {$\rho_{13}\tau_{12}\rho_2$} (-1.5,0);

\draw[xshift=-1.95cm, yshift=0.6cm] (0,0) -- (3.6,0);
\draw[xshift=-1.95cm, yshift=-0.6cm] (0,0) -- (3.6,0);
\draw[xshift=-1.8cm]  plot[smooth, tension=.5] coordinates {(-0.15,0.6) (-0.15,0.2) (0.05,0.2) (0.15,0.1) (0.15,-0.1) (0.05,-0.2) (-0.15,-0.2) (-0.15,-0.6)};
\draw[xshift=1.8cm]  plot[smooth, tension=.5] coordinates {(-0.15,0.6) (-0.15,0.2) (0.05,0.2) (0.15,0.1) (0.15,-0.1) (0.05,-0.2) (-0.15,-0.2) (-0.15,-0.6)};
\end{tikzpicture}
\end{center}
and to let the indices of the pieces \(P_i\) and \(Q_i\) run from \(1\) to \(p\). 
\end{proposition}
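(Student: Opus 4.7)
The proof will proceed in three main stages: first constructing and verifying the Heegaard diagram, then identifying the generators, and finally computing the differentials from the domains.

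The plan is to begin by verifying that the picture in figure~\ref{fig:BSDXTpqHD} really is a Heegaard diagram for $X_{T_{p/q}}$ with the parametrisation of figure~\ref{fig:BSDXTpqMan}. I would argue by induction on $p+q$ using the continued fraction expansion of $p/q$. The base case is the trivial tangle $T_{1/1}$ (or $T_{1/0}$), where the diagram reduces to the genus~0 diagram of figure~\ref{fig:HDforHDforRatTan} and can be checked directly. For the inductive step, recall from the discussion around figure~\ref{fig:HDforRatTan} that adding a positive twist to one side of a rational tangle corresponds to performing a Dehn twist of the $\beta$-curve in a genus~0 Heegaard diagram around a loop parallel to the boundary arc at the appropriate pair of tangle ends. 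I would check that such Dehn twists act on the picture of figure~\ref{fig:BSDXTpqHD} exactly by replacing $(p,q)$ by $(p+q,q)$ or $(p,p+q)$, matching the Euclidean algorithm on $p/q$. Since any positive rational tangle is obtained from the trivial tangle by such twists, this establishes the Heegaard diagram.

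Next, I would identify the generators and show admissibility. Since the Heegaard surface has genus $0$ and we have only one $\beta$-curve, a generator is simply an intersection point of $\beta$ with one of the $\alpha$-arcs; counting the intersections labelled in figure~\ref{fig:BSDXTpqHD} yields exactly the $p+q$ generators $P_i,P'_i,Q_i$ of the proposition (or $P_i, Q_i, Q'_i$ in the case $q > p$). Admissibility in the sense of \cite[definition~4.11]{Zarev} follows from lemma~\ref{lem:noadmissibilityissues} together with homological linear independence after a small isotopy, since rational tangles have no closed components.

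The central step is computing the differentials on $\BSD(X_{T_{p/q}})$. Because $\Sigma_g$ has genus~$0$ and there is a single $\beta$-curve, the connecting domains between any two generators are determined by combinatorics on the $\beta$-circle: the only index~$1$ domains avoiding the provincial region are the two bigons (embedded discs) cut out between two adjacent intersection points along $\beta$. For each pair of neighbouring intersection points on $\beta$, I would read off the sequence of arcs in $\mathcal{Z}$ that the boundary of such a bigon traverses; pulling the $\beta$-curve tight onto the graph of the arc diagram turns each such boundary segment into a concatenation of the elementary Reeb chords $\tau_\bullet$, $\rho_\bullet$, $\varepsilon_\bullet$ drawn in figure~\ref{fig:BSDXTpqHD}. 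A local inspection of the three types of neighbourhoods of intersection points in that figure produces exactly the three elementary puzzle pieces $P_i$, $Q_i$, $P'_i$ claimed, with the integer labels being the Alexander/homological grading counters that glue consistently when one walks around the $\beta$-loop. The matching condition on integers then reflects that successive bigons share an endpoint on $\beta$.

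The main technical obstacle I anticipate is the last bookkeeping step: making precise which monomial in $\mathcal{A}(\mathcal{Z})$ is picked up by each bigon after pulling the $\beta$-curve tight, and verifying that the labels on the puzzle pieces glue coherently all the way around the loop for every $(p,q)$ with $\gcd(p,q)=1$. I would handle this by isotoping $\beta$ so that it crosses each $\alpha$-arc minimally and intersects each basepoint-adjacent region in a standard way, then reducing to a finite case analysis (the three local models of figure~\ref{fig:BSDXTpqHD}) and invoking the inductive step to propagate the computation through all Dehn twists. The case $q > p$ is handled symmetrically by exchanging the roles of the $p$- and $q$-arcs, which swaps the puzzle pieces $P'_i \leftrightarrow Q'_i$ and adjusts the index range, completing the proof.
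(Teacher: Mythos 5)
Your route differs from the paper's in its central step, and that is where it needs more care. The paper does not attempt to count holomorphic curves on the genus-0 diagram of figure~\ref{fig:BSDXTpqHD} directly: it first niceifies that diagram (figure~\ref{fig:BSDTpqNiceHD}), so that by \cite[theorem~7.14]{Zarev} the structure map is a purely combinatorial count of empty bigons with at most one boundary region, then cancels the identity arrows (lemma~\ref{lem:AbstractCancellation}) coming from the extra generators created by the finger moves, and finally observes that the surviving pieces are exactly the three puzzle pieces; the case $q>p$ is handled by reflecting the Heegaard diagram, and the orientation/mirror bookkeeping is absorbed into remark~\ref{rem:conventions3}. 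You instead work on the un-niceified diagram and assert that the only index-$1$ domains are the two bigons between adjacent intersection points on the single $\beta$-curve, with the algebra element read off by ``pulling the $\beta$-curve tight''.

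The gap is in that assertion. For a bordered sutured diagram that is not nice, the type~D differential counts holomorphic curves with prescribed Reeb-chord asymptotics, and the contribution of a domain touching $\mathcal{Z}$ is not determined by its underlying two-chain alone: one must argue which orderings and groupings of the Reeb chords along the boundary admit representatives (this is what decides whether a bigon crossing several boundary regions contributes, say, $\rho_{13}\tau_{12}$ rather than some other product or several terms), and that each such bigon contributes with count one. Nothing in your proposal supplies this analysis, and ``a finite case analysis of the three local models'' does not substitute for it, because the issue is analytic (moduli of curves with multiple chords), not combinatorial. The cleanest repairs are either to carry out exactly the niceification-plus-cancellation argument the paper uses, or to prove a separate lemma justifying the direct bigon count with composite Reeb chords. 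Your preliminary induction on the continued-fraction expansion to verify the Heegaard diagram is fine, though heavier than necessary: the paper takes the diagram as given (it is the Dehn-twisted diagram of figure~\ref{fig:HDforRatTan}), and your admissibility remark via lemma~\ref{lem:noadmissibilityissues} is correct since there are no closed components.
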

\begin{Remark}
The ``puzzle piece'' notation above is inspired by Hanselman and Watson's notation for their loop calculus in~\cite{HanselmanWatson}. 
\end{Remark}

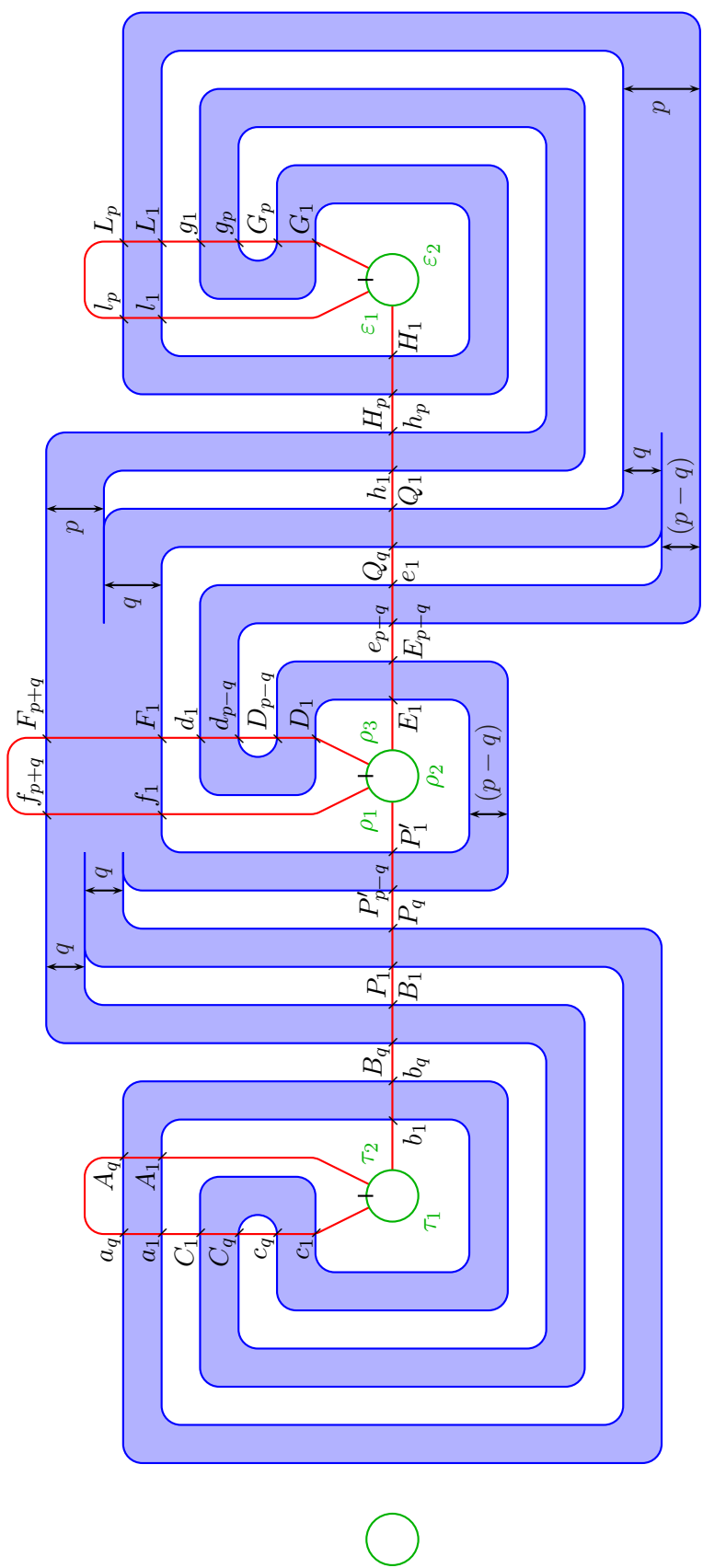
\begin{sidewaysfigure}
\vspace*{435pt}
\centering
\psset{unit=0.55,linearc=0.5}
\begin{pspicture}(-1.2,-8.2)(40.2,10.2)
\psline[linecolor=blue,fillcolor=lightblue,fillstyle=solid](18,7)(17,7)(17,-3)(23,-3)(23,3)(20.5,3)(20.5,4)(24,4)(24,-8)(40,-8)(40,7)(30,7)(30,-3)(36,-3)(36,3)(33.5,3)(33.5,4)(37,4)(37,-4)(29,-4)(29,9)(13,9)(13,-4)(5,-4)(5,4)(8.5,4)(8.5,3)(6,3)(6,-3)(12,-3)(12,7)(2,7)(2,-7)(16,-7)(16,7)(18,7)

\psline[linecolor=blue,fillcolor=white,fillstyle=solid](29,-7)(25,-7)(25,5)(19.5,5)(19.5,2)(22,2)(22,-2)(18,-2)(18,6)(26,6)(26,-7)(29,-7)

\psline[linecolor=blue,fillcolor=white,fillstyle=solid](18,8)(14,8)(14,-5)(4,-5)(4,5)(9.5,5)(9.5,2)(7,2)(7,-2)(11,-2)(11,6)(3,6)(3,-6)(15,-6)(15,8)(18,8)

\psline[linecolor=blue,fillcolor=white,fillstyle=solid](24,7.5)(28,7.5)(28,-5)(38,-5)(38,5)(32.5,5)(32.5,2)(35,2)(35,-2)(31,-2)(31,6)(39,6)(39,-6)(27,-6)(27,7.5)(24,7.5)

\psline[linecolor=red](9,0)(8,2)(8,8)(10,8)(10,2)(9,0)
\psline[linecolor=red](20,0)(19,2)(19,10)(21,10)(21,2)(20,0)
\psline[linecolor=red](33,0)(32,2)(32,8)(34,8)(34,2)(33,0)

\psline[linecolor=red](9,0)(33,0)

{\psset{fillcolor=white, fillstyle=solid, linecolor=darkgreen}
\pscircle(0,0){0.7}
\pscircle(9,0){0.7}
\pscircle(20,0){0.7}
\pscircle(33,0){0.7}
}

{\psset{dotstyle=|}
\psdot[dotangle=45](8,2)\uput{0.1}[135](8,2){$c_1$}
\psdot[dotangle=45](8,3)\uput{0.1}[135](8,3){$c_q$}
\psdot[dotangle=45](8,4)\uput{0.1}[135](8,4){$C_q$}
\psdot[dotangle=45](8,5)\uput{0.1}[135](8,5){$C_1$}
\psdot[dotangle=45](8,6)\uput{0.1}[135](8,6){$a_1$}
\psdot[dotangle=45](8,7)\uput{0.1}[135](8,7){$a_q$}
\psdot[dotangle=45](10,6)\uput{0.1}[135](10,6){$A_1$}
\psdot[dotangle=45](10,7)\uput{0.1}[135](10,7){$A_q$}

\psdot[dotangle=-45](19,6)\uput{0.1}[45](19,6){\,$f_1$}
\psdot[dotangle=-45](19,9)\uput{0.1}[45](19,9){\,$f_{p+q}$}
\psdot[dotangle=-45](21,9)\uput{0.1}[45](21,9){\,$F_{p+q}$}
\psdot[dotangle=-45](21,6)\uput{0.1}[45](21,6){\,$F_1$}
\psdot[dotangle=-45](21,5)\uput{0.1}[45](21,5){\,$d_1$}
\psdot[dotangle=-45](21,4)\uput{0.1}[45](21,4){\,$d_{p-q}$}
\psdot[dotangle=-45](21,3)\uput{0.1}[45](21,3){\,$D_{p-q}$}
\psdot[dotangle=-45](21,2)\uput{0.1}[45](21,2){\,$D_1$}
      
\psdot[dotangle=-45](32,6)\uput{0.1}[45](32,6){\,$l_1$}
\psdot[dotangle=-45](32,7)\uput{0.1}[45](32,7){\,$l_p$}
\psdot[dotangle=-45](34,7)\uput{0.1}[45](34,7){\,$L_p$}
\psdot[dotangle=-45](34,6)\uput{0.1}[45](34,6){\,$L_1$}
\psdot[dotangle=-45](34,5)\uput{0.1}[45](34,5){\,$g_1$}
\psdot[dotangle=-45](34,4)\uput{0.1}[45](34,4){\,$g_p$}
\psdot[dotangle=-45](34,3)\uput{0.1}[45](34,3){\,$G_p$}
\psdot[dotangle=-45](34,2)\uput{0.1}[45](34,2){\,$G_1$}

\psdot[dotangle=-45](11,0)\uput{0.1}[-135](11,-0.2){$b_1$}
\psdot[dotangle=45](12,0)\uput{0.1}[-45](12,-0.2){$b_q$}
\psdot[dotangle=45](13,0)\uput{0.1}[135](13,0){$B_q$}
\psdot[dotangle=45](14,0)\uput{0.1}[-45](14,-0.1){$B_1$}
\psdot[dotangle=45](15,0)\uput{0.1}[135](15,0){$P_1$}
\psdot[dotangle=45](16,0)\uput{0.1}[-45](16,-0.1){$P_q$}
\psdot[dotangle=45](17,0)\uput{0.1}[90](17,0){$P'_{p-q}$}
\psdot[dotangle=45](18,0)\uput{0.1}[-45](18,-0.1){$P'_1$}

\psdot[dotangle=-45](22,0)\uput{0.1}[-135](22,-0.1){$E_1$}
\psdot[dotangle=45](23,0)\uput{0.2}[-50](23,-0.1){$E_{p-q}$}
\psdot[dotangle=45](24,0)\uput{0.1}[100](24,0){$e_{p-q}$}
\psdot[dotangle=45](25,0)\uput{0.1}[-45](25,-0.2){$e_1$}
\psdot[dotangle=45](26,0)\uput{0.1}[135](26,0){$Q_q$}
\psdot[dotangle=45](27,0)\uput{0.1}[-45](27,-0.1){$Q_1$}
\psdot[dotangle=45](28,0)\uput{0.1}[135](28,0){$h_1$}
\psdot[dotangle=45](29,0)\uput{0.1}[-45](29,-0.2){$h_p$}
\psdot[dotangle=45](30,0)\uput{0.1}[135](30,0){$H_p$}
\psdot[dotangle=45](31,0)\uput{0.1}[-45](31,-0.1){$H_1$}
}


\rput(9,0){
\uput{0.9}[-120](0,0){\textcolor{darkgreen}{$\tau_1$}}
\uput{0.9}[30](0,0){\textcolor{darkgreen}{$\tau_2$}}
\psline(0.5;90)(0.9;90)
}

\rput(20,0){
\uput{0.9}[150](0,0){\textcolor{darkgreen}{$\rho_1$}}
\uput{0.9}[-90](0,0){\textcolor{darkgreen}{$\rho_2$}}
\uput{0.9}[30](0,0){\textcolor{darkgreen}{$\rho_3$}}
\psline(0.5;90)(0.9;90)
}

\rput(33,0){
\uput{0.9}[150](0,0){\textcolor{darkgreen}{$\varepsilon_1$}}
\uput{0.9}[-60](0,0){\textcolor{darkgreen}{$\varepsilon_2$}}
\psline(0.5;90)(0.9;90)
}

{\psset{arrowsize=1.5pt 2}
\psline{<->}(15,8)(15,9)\uput{0.3}[0](15,8.5){$q$}
\psline{<->}(17,8)(17,7)\uput{0.3}[0](17,7.5){$q$}
\psline{<->}(25,7.5)(25,6)\uput{0.3}[180](25,6.75){$q$}
\psline{<->}(27,7.5)(27,9)\uput{0.3}[180](27,8.25){$p$}
\psline{<->}(26,-8)(26,-7)\uput{0.3}[0](26,-7.5){$(p-q)$}
\psline{<->}(19,-3)(19,-2)\uput{0.3}[0](19,-2.5){$(p-q)$}
\psline{<->}(28,-6)(28,-7)\uput{0.3}[0](28,-6.5){$q$}
\psline{<->}(38,-8)(38,-6)\uput{0.3}[180](38,-7){$p$}
}

\end{pspicture}
\caption{A niceified Heegaard diagram for the $\frac{p}{q}$-rational tangle}\label{fig:BSDTpqNiceHD}
\end{sidewaysfigure}

\begin{Remark} \label{rem:conventions3}
In the following proof, and also in all other computations of bordered sutured invariants in this thesis, we actually compute the invariants of the mirrors of the underlying bordered sutured manifolds. This is due to a late change of conventions in an attempt to make them intrinsically consistent (we always use the right-hand rule to determine orientations) and to match those most commonly used in the literature, see remarks~\ref{rem:conventions1} and~\ref{rem:conventions2} and figure~\ref{fig:AlexCodesForOneCrossingsWithDelta}. It is quite natural to draw Heegaard diagrams of tangles such that the normal vector field points into the plane, see remark~\ref{rem:conventions1} and figure~\ref{fig:HDfor1crossing}. This is opposite to standard conventions.
However, the effect of reversing the orientation of the Heegaard surface is marginal. It simply corresponds to a reversal of all arrows and gradings as well as a reversal of the glueing algebra. 
\end{Remark}

\begin{proof}[Proof of proposition~\ref{prop:BSDrattangle}]
We can easily turn the Heegaard diagram from figure~\ref{fig:BSDXTpqHD} into a nice diagram, see figure~\ref{fig:BSDTpqNiceHD}. As noted in the previous remark, we now assume that the normal vector field points out of the plane, so we actually compute the invariant for the mirror image of $X_{T_{p/q}}$. \\
Since there is only one $\beta$-curve, the only domains that contribute to the type~D module are bigons with no or exactly one e-puncture, so we can compute the type~D module very easily from this nice diagram, using \cite[theorem~7.14]{Zarev}. The labelling of the additional generators is such that there is always an identity morphism from a capital letter to the corresponding lower-case letter with the same index. 

\begin{figure}[hb!]
\centering
\begin{subfigure}{0.45\textwidth}
$$
\begin{tikzcd}[row sep=0.65cm]
f_{p+i} & B_i\arrow[swap]{l}{\rho_1}\arrow{r}{\tau_1}\arrow{d}{1} & C_i\arrow{d}{1}\\
A_i\arrow{r}{\tau_2}\arrow{d}{1} & b_i\arrow{r}{\tau_1} & c_i\\
a_i & P_i \arrow{r}{\rho_1}\arrow[swap]{l}{\tau_1}  & f_{p+1-i}
\end{tikzcd}
$$
\vspace*{-0.4cm}
\caption{$1\leq i\leq q$}
\end{subfigure}
\quad
\begin{subfigure}{0.45\textwidth}
$$
\begin{tikzcd}[row sep=0.65cm] 
F_{q+1-i}\arrow{r}{\rho_3} & Q_i & L_i\arrow[swap]{l}{\varepsilon_2} \arrow{d}{1}\\
G_i\arrow{r}{\varepsilon_2}\arrow{d}{1} & H_i \arrow{r}{\varepsilon_1}\arrow{d}{1} & l_i\\
g_i\arrow{r}{\varepsilon_2} & h_i & F_{i+q}\arrow[swap]{l}{\rho_3}
\end{tikzcd}
$$
\vspace*{-0.4cm}
\caption{$1\leq i\leq q$}
\end{subfigure}
\\
\begin{subfigure}{0.9\textwidth}
$$
\begin{tikzcd}[row sep=0.65cm]
D_{i}\arrow{r}{\rho_3}\arrow[bend left,swap]{rr}{\rho_{23}}\arrow{d}{1} & E_{i}\arrow{r}{\rho_2}\arrow[bend left,swap]{rr}{\rho_{12}}\arrow{d}{1} & P'_{i}\arrow{r}{\rho_1} & f_{i}\\
d_{i}\arrow{r}{\rho_3} & e_{i} & L_{i+q} \arrow[swap]{l}{\varepsilon_2}\arrow{d}{1}\\
G_{i+q}\arrow{r}{\varepsilon_2}\arrow{d}{1} & H_{i+q} \arrow{r}{\varepsilon_1}\arrow{d}{1} & l_{i+q}\\
g_{i+q}\arrow{r}{\varepsilon_2} & h_{i+q} & F_{i+2q}\arrow[swap]{l}{\rho_3}
\end{tikzcd}
$$
\vspace*{-0.4cm}
\caption{$1\leq i\leq p-q$}
\end{subfigure}
\caption{The three basic components of the type D module for $T_{p/q}$}\label{fig:TpqComponents}
\end{figure}

\pagebreak[3]

\noindent
The regions near the sutures are labelled by Greek letters with indices. We also use them synonymously for the corresponding algebra elements. The arc diagram for the glueing surface agrees with that of figure~\ref{fig:2m3_pretzeltangleT_arc1}. There are also two algebra elements $\rho_{12}$ and $\rho_{23}$ which are picked up by rectangles with boundary regions ($\rho_1$ and $\rho_2$) and ($\rho_2$ and $\rho_3$), respectively. These, together with the idempotents are all generators of the algebra that appear in the type~D module. An illustration of these is shown in figure~\ref{fig:2m3_pretzeltangle_algebra1}, where the idempotents are represented by vertices and the algebra elements with moving strands by arrows connecting the corresponding starting and ending idempotents. However, the algebra itself is slightly larger, e.\,g., it contains an algebra element with a single moving strand from 8 to 10 whose differential is non-zero. For details, see~\cite[definitions~2.5 and~2.6]{Zarev}. \\
Our conventions about the order for the algebra multiplication are such that $\rho_2\rho_1$ has two moving strands, see also figure~\ref{fig:APPconventionsTypeD}. Note that $\tau_2\tau_1=0$ and similarly $\varepsilon_2\varepsilon_1=0$.
Finally,  
$$\tau_{12}:=\tau_1\tau_2,\quad\varepsilon_{12}:=\varepsilon_1\varepsilon_2\text{\quad and\quad}\rho_{13}:=\rho_1\rho_3.$$
Let us now calculate the type~D structure. For $1\leq i\leq p+q$, we have bigons from $F_i$ to $f_i$, which contribute honest differentials $F_i\rightarrow f_i$. If we ignore these for a moment, the (graph of the) type~D module breaks up into the connected components shown in figure~\ref{fig:TpqComponents}. 
By cancelling the identity morphisms (using lemma~\ref{lem:AbstractCancellation}), we can simplify the diagrams considerably and get the following:
$$
\begin{tikzcd}[row sep=tiny]  
(a)&1\leq i\leq q:&f_{p+i}&P_i\arrow{r}{\rho_1}\arrow[swap]{l}{\tau_1\tau_2\rho_1}& f_{p+1-i},\\
(b)&1\leq i\leq q:&F_{q+i}\arrow{r}{\rho_3\varepsilon_1\varepsilon_2}&Q_i& F_{q+1-i}\arrow[swap]{l}{\rho_3},\\
(c)&1\leq i\leq p-q:&F_{i+2q}\arrow{r}{\rho_3\varepsilon_1\varepsilon_2\rho_2} & P'_i\arrow{r}{\rho_1}& f_{i},
\end{tikzcd}
$$
noting that $\rho_3\varepsilon_1\varepsilon_2\rho_{12}=0$.
The complete type~D module can be obtained by connecting these pieces and cancelling all $F_i$ and $f_i$. From this, it is apparent that the type~D module is of the form described in the proposition, again, noting that we have actually computed the invariant of the mirror of $X_{T_{p/q}}$.\\
Suppose $q>p\geq0$. Then the corresponding Heegaard diagram is obtained by reflecting the Heegaard diagram from figure~\ref{fig:BSDTpqNiceHD} along a vertical line. This reverses the orientation of the diagram again. Hence the invariant is obtained from the rules above by relabelling the algebra ($\tau_1\leftrightarrow\varepsilon_1$, $\tau_2\leftrightarrow\varepsilon_2$, $\rho_1\leftrightarrow\rho_3$) and the generators ($P_i\leftrightarrow Q_i$, $P'_i\leftrightarrow Q'_i$). The pieces $P_i$ and $Q_i$ turn out to be identical to the ones in the proposition, but the index $i$ runs from 1 to $p$. Instead of the pieces $P'_i$, we get $Q'_i$.
\end{proof}
\begin{Remark}
$X_{T_{p/q}}$ is bounded, except when ($p=1$ and $q=0$) or ($p=0$ and $q=1$).
\end{Remark}
\begin{theorem}\label{thm:2m3pt}
Two knots or links that are related by mutation of the \((2,-3)\)-pretzel tangle, oriented as in figure~\ref{fig:2m3_pretzeltangleT_p1} have the same bigraded knot or link Floer homology, provided that the two open strands \(p\) and \(q\) of the \((2,-3)\)-pretzel tangle have the same colour. If the orientation of one of the two strands is reversed, then in general this statement only holds for \(\delta\)-graded knot or link Floer homology. 
\end{theorem}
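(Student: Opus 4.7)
The plan is to reduce the theorem to a bordered-sutured computation, exploiting the glueing formula of theorem~\ref{thm:glueingCAT}. Any knot or link $L$ obtained by inserting the $(2,-3)$-pretzel tangle $T_\pi$ into a larger tangle $T'$ has $\CFL(L)\otimes V^{\otimes k}\simeq \BSA(X_{T'}).I'\boxtimes\BSD(X_{T_\pi})$ for some stabilisation factor $k$. Hence, to compare $\CFL(L)$ with $\CFL(L')$ for the mutant $L'$, it suffices to exhibit a bigraded homotopy equivalence between $\BSD(X_{T_\pi})$ and $\BSD(X_{T_\pi'})$, both computed with respect to one and the same parametrisation of the glueing sphere $\partial B^3\smallsetminus\partial T_\pi$.

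The first step is therefore to choose an arc diagram on the glueing sphere that is as symmetric as possible under a $\pi$-rotation about one of the three mutation axes of figure~\ref{fig:MutationAxes}; concretely, a parametrisation analogous to the one used for rational tangles in figure~\ref{fig:BSDXTpqMan}, together with two distinguished arcs labelled $p$ and $q$ that get interchanged by the mutation. With this choice, the two mutants $X_{T_\pi}$ and $X_{T_\pi'}$ are bordered sutured manifolds over the \emph{same} arc diagram, so the comparison of their type~D structures makes sense as chain complexes of modules over the same algebra $\mathcal{A}(\mathcal{Z})$.

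The second step is the actual computation of $\BSD(X_{T_\pi})$ from an explicit admissible (preferably nice) Heegaard diagram, in the spirit of proposition~\ref{prop:BSDrattangle}, followed by the analogous computation for $X_{T_\pi'}$. Since the $(2,-3)$-pretzel tangle has six crossings, even after niceification the underlying diagram is far too large to tabulate by hand, so this step is carried out with the package~\cite{BSFH.m}. One then simplifies both resulting type~D modules by cancelling as many identity arrows as possible (lemma~\ref{lem:AbstractCancellation}) and verifies by inspection, or again with the help of the computer, that the two simplified complexes are isomorphic as bigraded type~D modules. Because the full bigrading is built into $\BSD$ and is preserved by Zarev's pairing, this isomorphism yields the bigraded statement of the theorem whenever the colours \emph{and} orientations of $p$ and $q$ match.

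The final step handles the orientation-reversing case. If the orientation of one of the two open strands of $T_\pi$ is reversed, then by proposition~\ref{prop:HFTreverseorientI} the effect on the underlying tangle Floer complex is to invert the corresponding Alexander variable; under mutation this gets identified with the other open strand's variable, so the Alexander bigrading can no longer be preserved and only the combination $\delta=\tfrac{1}{2}A^r-h$ survives on the nose. The hard part of the whole argument is really this computer computation: producing a sufficiently economical Heegaard diagram so that BSFH.m terminates, and organising the bigradings carefully enough to be certain that the resulting isomorphism is honestly bigraded (respectively $\delta$-graded in the orientation-reversed case), rather than just a chain homotopy equivalence after forgetting gradings.
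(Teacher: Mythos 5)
Your proposal is essentially the paper's proof: the paper realises your ``same arc diagram for both mutants'' by computing $\BSD$ of the one $(2,-3)$-pretzel tangle complement with two boundary parametrisations related by the mutation rotation (two computations with \cite{BSFH.m}, followed by cancellation and clean-up), exhibits an explicit identification of the simplified type~D modules, and feeds this into the glueing theorem~\ref{thm:glueingCAT}, with the remaining mutation axes handled by noting that rotation about the $x$-axis does not change the link and the $z$-axis rotation is the composite of the other two. One caveat on your verification step: the two computed modules are \emph{not} isomorphic as multivariate-bigraded objects (the generators' Alexander bigradings already differ), so the comparison must be made only after collapsing the two open-strand Alexander gradings to a single grading --- which is precisely where the same-colour hypothesis enters --- together with the $\delta$-grading; read this way, your argument matches the paper's, including your treatment of the orientation-reversed, $\delta$-graded case.
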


\begin{proof}
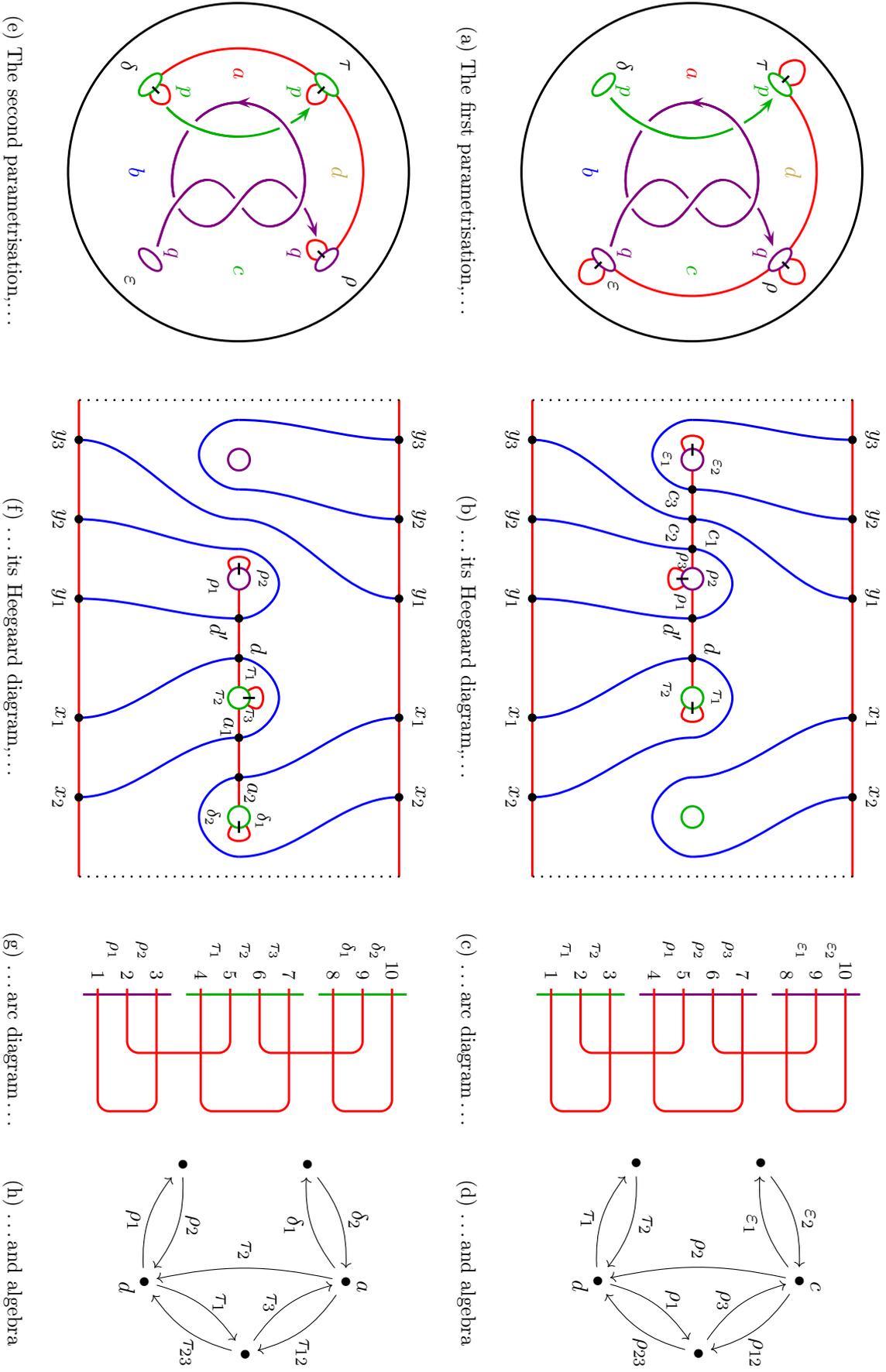
\begin{sidewaysfigure}[hp]
\vspace*{417pt}
\centering
\begin{subfigure}[b]{0.255\textwidth}\centering
\psset{unit=0.53, linewidth=1.1pt}
\begin{pspicture}[showgrid=false](-6.6,-6.4)(4.6,6.4)

\rput(-1,0){
\pscurve[linecolor=red](4;135)(4.5;130)(5;135)(4.5;140)(4;135)
\pscurve[linecolor=red](4;45)(4.5;50)(5;45)(4.5;40)(4;45)
\pscurve[linecolor=red](4;-45)(4.5;-40)(5;-45)(4.5;-50)(4;-45)
}

\psecurve[linecolor=violet](-2.5,1.5)(0,2)(0.75,1)(-0.75,-1)(0,-2)(1.4,-2.6)(6,-3.5)
\psecurve[linecolor=violet]{<-}(6,3.5)(1.4,2.6)(0,2)(-0.75,1)(0.75,-1)(0,-2)(-2.5,-1.5)(-3.25,0)(-2.5,1.5)(0,2)(0.75,1)
\psecurve[linecolor=darkgreen]{<-}(-10,6)(-3.4,2.6)(-2.5,1.5)(-2.1,0)(-2.5,-1.5)(-3.4,-2.6)(-10,-6)
\pscircle*[linecolor=white](-2.5,1.5){0.2}

\psecurve[linecolor=violet](0.75,-1)(0,-2)(-2.5,-1.5)(-3.25,0)(-2.5,1.5)

\pscircle*[linecolor=white](0,2){0.2}
\pscircle*[linecolor=white](0,0){0.2}
\pscircle*[linecolor=white](0,-2){0.2}

\psecurve[linecolor=violet](0.75,1)(-0.75,-1)(0,-2)(1.4,-2.6)(6,-3.5)
\psecurve[linecolor=violet](0,2)(-0.75,1)(0.75,-1)(0,-2)
\psecurve[linecolor=violet](-2.5,-1.5)(-3.25,0)(-2.5,1.5)(0,2)(0.75,1)(-0.75,-1)

\pscircle*[linecolor=white](-2.5,-1.5){0.2}
\psecurve[linecolor=darkgreen](-2.5,1.5)(-2.1,0)(-2.5,-1.5)(-3.4,-2.6)(-10,-6)

\psline[linecolor=violet]{<-}(-3.25,-0.1)(-3.25,0.1)

\rput(-1,0){
\uput{0.6}[135](4;-135){$\delta$}
\uput{0.6}[-135](4;135){$\tau$}
\uput{0.7}[-20](4;45){$\rho$}
\uput{0.7}[20](4;-45){$\varepsilon$}
}

\uput{0.2}[-90](1.6,2.6){$\textcolor{violet}{q}$}
\uput{0.2}[90](1.6,-2.6){$\textcolor{violet}{q}$}
\uput{0.2}[135](-3.3,1.85){$\textcolor{darkgreen}{p}$}
\uput{0.2}[-135](-3.3,-1.85){$\textcolor{darkgreen}{p}$}

\uput{3}[180](-1,0){$\red a$}
\uput{3}[-90](-1,0){$\blue b$}
\uput{3}[0](-1,0){$\darkgreen c$}
\uput{3}[90](-1,0){$\gold d$}

\psarc[linecolor=red](-1,0){4}{-45}{135}
\rput(-1,0){\psset{fillstyle=solid,fillcolor=white}
\rput{45}(4;45){\psellipse[linecolor=violet](0,0)(0.25,0.5)}
\rput{135}(4;135){\psellipse[linecolor=darkgreen](0,0)(0.25,0.5)}
\rput{-135}(4;-135){\psellipse[linecolor=darkgreen](0,0)(0.25,0.5)
}
\rput{-45}(4;-45){\psellipse[linecolor=violet](0,0)(0.25,0.5)}

\psline(4.1;135)(4.4;135)
\psline(4.1;45)(4.4;45)
\psline(4.1;-45)(4.4;-45)
}
\rput(-1,0){\pscircle(0,0){5.5}}

\end{pspicture}
\caption{The first parametrisation,\dots}\label{fig:2m3_pretzeltangleT_p1}
\end{subfigure}
\begin{subfigure}[b]{0.4\textwidth}\centering
\psset{unit=0.68, linewidth=1.1pt}
\begin{pspicture}[showgrid=false](-6.05,-5)(6.05,5)

\psline[linecolor=red](-6,-4)(6,-4)
\psline[linecolor=red](-6,4)(6,4)
\psline[linestyle=dotted](-6,-4)(-6,4)
\psline[linestyle=dotted](6,-4)(6,4)

\psline[linecolor=red](-4.5,0)(1.5,0)
\pscurve[linecolor=red](-1.5,0)(-1.75,-0.5)(-1.25,-0.5)(-1.5,0)
\pscurve[linecolor=red](-4.5,0)(-5,-0.25)(-5,0.25)(-4.5,0)
\pscurve[linecolor=red](1.5,0)(2,-0.25)(2,0.25)(1.5,0)

\pscircle[linecolor=violet,fillstyle=solid,fillcolor=white](-4.5,0){0.3}
\pscircle[linecolor=violet,fillstyle=solid,fillcolor=white](-1.5,0){0.3}
\pscircle[linecolor=darkgreen,fillstyle=solid,fillcolor=white](1.5,0){0.3}
\pscircle[linecolor=darkgreen,fillstyle=solid,fillcolor=white](4.5,0){0.3}

\psline(-4.6,0)(-4.9,0)
\psline(-1.5,-0.1)(-1.5,-0.4)
\psline(1.6,0)(1.9,0)

{
\psset{linecolor=blue}
\psecurve(-5,-4)(-5.5,0)(-5,4)(-5.5,8)
\psecurve(-4.5,1)(-5.5,0)(-4.625,-1)(-3.75,0)(-4.625,1)
\psecurve(-3,-4)(-3.75,0)(-3,4)(-3.75,8)

\psecurve(-3,-8)(-5,-4)(-3,0)(-5,4)
\psecurve(-1,-4)(-3,0)(-1,4)(-3,8)

\psecurve(-1,4)(-0.5,0)(-1,-4)(-0.5,-8)
\psecurve(-1.5,-1)(-0.5,0)(-1.375,1)(-2.25,0)(-1.375,-1)
\psecurve(-3,4)(-2.25,0)(-3,-4)(-2.25,-8)

\psecurve(0.5,-8)(2,-4)(0.5,0)(2,4)
\psecurve(1.5,-1)(2.5,0)(1.5,1)(0.5,0)(1.5,-1)
\psecurve(2.5,-8)(4,-4)(2.5,0)(4,4)

\psecurve(5.5,8)(4,4)(5.5,0)(4,-4)
\psecurve(4.5,1)(3.5,0)(4.5,-1)(5.5,0)(4.5,1)
\psecurve(3.5,8)(2,4)(3.5,0)(2,-4)
}

\psdots(-5,4)(-3,4)(-1,4)(2,4)(4,4)
\psdots(-3.75,0)(-3,0)(-2.25,0)(-0.5,0)(0.5,0)
\psdots(-5,-4)(-3,-4)(-1,-4)(2,-4)(4,-4)

\rput[c](-3.5,-0.5){$c_3$}
\rput[c](-2.65,-0.5){$c_2$}
\rput[c](-2.5,0.5){$c_1$}

\rput[c](-0.15,-0.5){$d'$}
\rput[c](0.25,0.5){$d$}

\rput[c](-5,4.5){$y_3$}
\rput[c](-3,4.5){$y_2$}
\rput[c](-1,4.5){$y_1$}
\rput[c](2,4.5){$x_1$}
\rput[c](4,4.5){$x_2$}

\rput[c](-5,-4.5){$y_3$}
\rput[c](-3,-4.5){$y_2$}
\rput[c](-1,-4.5){$y_1$}
\rput[c](2,-4.5){$x_1$}
\rput[c](4,-4.5){$x_2$}

\rput(-4.3,0.6){\footnotesize $\varepsilon_2$}
\rput(-4.5,-0.65){\footnotesize $\varepsilon_1$}

\rput(-1.95,-0.25){\footnotesize $\rho_3$}
\rput(-1.5,0.5){\footnotesize $\rho_2$}
\rput(-0.9,-0.3){\footnotesize $\rho_1$}

\rput(1.3,-0.65){\footnotesize $\tau_2$}
\rput(1.5,0.6){\footnotesize $\tau_1$}
\end{pspicture}
\caption{\dots its Heegaard diagram,\dots}
\label{fig:2m3_pretzeltangleHD_p1}
\end{subfigure}
\begin{subfigure}[b]{0.15\textwidth}\centering
\psset{unit=0.5, linewidth=1.1pt}
\begin{pspicture}[showgrid=false](-2,-7)(4.5,7)
\psline[linecolor=violet](0,5.5)(0,2.5)
\psline[linecolor=violet](0,2)(0,-2)
\psline[linecolor=darkgreen](0,-2.5)(0,-5.5)

\psline[linecolor=red,linearc=0.4](-0.2,5)(4,5)(4,3)(-0.2,3)
\psline[linecolor=red,linearc=0.4](-0.2,4)(2,4)(2,0.5)(-0.2,0.5)
\psline[linecolor=red,linearc=0.4](-0.2,1.5)(4,1.5)(4,-1.5)(-0.2,-1.5)
\psline[linecolor=red,linearc=0.4](-0.2,-4)(2,-4)(2,-0.5)(-0.2,-0.5)
\psline[linecolor=red,linearc=0.4](-0.2,-5)(4,-5)(4,-3)(-0.2,-3)

\footnotesize
\rput[r](-0.5,5){$10$}
\rput[r](-0.5,4){$9$}
\rput[r](-0.5,3){$8$}
\rput[r](-0.5,1.5){$7$}
\rput[r](-0.5,0.5){$6$}
\rput[r](-0.5,-0.5){$5$}
\rput[r](-0.5,-1.5){$4$}
\rput[r](-0.5,-3){$3$}
\rput[r](-0.5,-4){$2$}
\rput[r](-0.5,-5){$1$}

\rput[r](-1.2,4.5){$\varepsilon_2$}
\rput[r](-1.2,3.5){$\varepsilon_1$}
\rput[r](-1.2,1){$\rho_3$}
\rput[r](-1.2,0){$\rho_2$}
\rput[r](-1.2,-1){$\rho_1$}
\rput[r](-1.2,-3.5){$\tau_2$}
\rput[r](-1.2,-4.5){$\tau_1$}
\end{pspicture}
\caption{\dots arc diagram\,\dots}\label{fig:2m3_pretzeltangleT_arc1}
\end{subfigure}
\begin{subfigure}[b]{0.17\textwidth}\centering
\begin{tikzpicture}
\node (a) at (canvas polar cs:angle=144,radius=1.8cm) {$\bullet$};
\node (b) at (canvas polar cs:angle=72,radius=1.8cm) {$\bullet$};
\node (bl) at (canvas polar cs:angle=72,radius=2.1cm) {$c$};
\node (c) at (canvas polar cs:angle=0,radius=1.8cm) {$\bullet$};
\node (d) at (canvas polar cs:angle=-72,radius=1.8cm) {$\bullet$};
\node (dl) at (canvas polar cs:angle=-72,radius=2.1cm) {$d$};
\node (e) at (canvas polar cs:angle=-144,radius=1.8cm) {$\bullet$};
\node (white) at (0,-3.3) {\white$\bullet$};

\draw[->,below] (b) to[bend left=20] node{$\varepsilon_1$} (a);
\draw[<-,above] (b) to[bend right=20] node{$\varepsilon_2$} (a);

\draw[->,left,near start] (c) to[bend left=20] node{$\rho_3$} (b);
\draw[<-,right] (c) to[bend right=20] node{$\rho_{12}$} (b);

\draw[->,right] (c) to[bend left=20] node{$\rho_{23}$} (d);
\draw[<-,left,near start] (c) to[bend right=20] node{$\rho_1$} (d);

\draw[->,left] (b) to[bend right=12] node{$\rho_2$} (d);

\draw[->,below] (d) to[bend left=20] node{$\tau_1$} (e);
\draw[<-,above] (d) to[bend right=20] node{$\tau_2$} (e);
\end{tikzpicture}
\caption{\dots and algebra}\label{fig:2m3_pretzeltangle_algebra1}
\end{subfigure}
\\
\begin{subfigure}[b]{0.255\textwidth}\centering
\psset{unit=0.53, linewidth=1.1pt}
\begin{pspicture}[showgrid=false](-6.6,-6.4)(4.6,6.4)

\psecurve[linecolor=violet](-2.5,1.5)(0,2)(0.75,1)(-0.75,-1)(0,-2)(1.4,-2.6)(6,-3.5)
\psecurve[linecolor=violet]{<-}(6,3.5)(1.1,2.5)(0,2)(-0.75,1)(0.75,-1)(0,-2)(-2.5,-1.5)(-3.25,0)(-2.5,1.5)(0,2)(0.75,1)
\psecurve[linecolor=darkgreen]{<-}(-10,6)(-3.1,2.3)(-2.5,1.5)(-2.15,0)(-2.5,-1.5)(-3.1,-2.3)(-10,-6)
\pscircle*[linecolor=white](-2.5,1.5){0.2}

\psecurve[linecolor=violet](0.75,-1)(0,-2)(-2.5,-1.5)(-3.25,0)(-2.5,1.5)

\pscircle*[linecolor=white](0,2){0.2}
\pscircle*[linecolor=white](0,0){0.2}
\pscircle*[linecolor=white](0,-2){0.2}

\psecurve[linecolor=violet](0.75,1)(-0.75,-1)(0,-2)(1.4,-2.6)(6,-3.5)
\psecurve[linecolor=violet](0,2)(-0.75,1)(0.75,-1)(0,-2)
\psecurve[linecolor=violet](-2.5,-1.5)(-3.25,0)(-2.5,1.5)(0,2)(0.75,1)(-0.75,-1)

\pscircle*[linecolor=white](-2.5,-1.5){0.2}
\psecurve[linecolor=darkgreen](-2.5,1.5)(-2.15,0)(-2.5,-1.5)(-3.1,-2.3)(-10,-6)

\psline[linecolor=violet]{<-}(-3.25,-0.1)(-3.25,0.1)

\rput(-1,0){
\uput{0.6}[-135](4;-135){$\delta$}
\uput{0.6}[135](4;135){$\tau$}
\uput{0.7}[45](4;45){$\rho$}
\uput{0.7}[-45](4;-45){$\varepsilon$}
}

\uput{0.6}[-90](1.6,2.6){$\textcolor{violet}{q}$}
\uput{0.2}[90](1.6,-2.6){$\textcolor{violet}{q}$}
\uput{0.6}[-90](-3.6,2.6){$\textcolor{darkgreen}{p}$}
\uput{0.6}[90](-3.6,-2.6){$\textcolor{darkgreen}{p}$}

\uput{3}[180](-1,0){$\red a$}
\uput{3}[-90](-1,0){$\blue b$}
\uput{3}[0](-1,0){$\darkgreen c$}
\uput{3}[90](-1,0){$\gold d$}

\rput(-1,0){\pscircle(0,0){5.5}}

\rput(-1,0){
\pscurve[linecolor=red](4;135)(3.6;130)(3.2;135)(3.6;140)(4;135)
\pscurve[linecolor=red](4;45)(3.6;50)(3.2;45)(3.6;40)(4;45)
\pscurve[linecolor=red](4;-135)(3.6;-130)(3.2;-135)(3.6;-140)(4;-135)
}
\psarc[linecolor=red](-1,0){4}{45}{-135}
\rput(-1,0){\psset{fillstyle=solid,fillcolor=white}
\rput{45}(4;45){\psellipse[linecolor=violet](0,0)(0.25,0.5)}
\rput{135}(4;135){\psellipse[linecolor=darkgreen](0,0)(0.25,0.5)}
\rput{-135}(4;-135){\psellipse[linecolor=darkgreen](0,0)(0.25,0.5)
}
\rput{-45}(4;-45){\psellipse[linecolor=violet](0,0)(0.25,0.5)}

\psline(3.9;135)(3.6;135)
\psline(3.9;45)(3.6;45)
\psline(3.9;-135)(3.6;-135)
}

\end{pspicture}
\caption{The second parametrisation,\dots}\label{fig:2m3_pretzeltangleT_p2}
\end{subfigure}
\begin{subfigure}[b]{0.4\textwidth}\centering
\psset{unit=0.68, linewidth=1.1pt}
\begin{pspicture}[showgrid=false](-6.05,-5)(6.05,5)

\psline[linecolor=red](-6,-4)(6,-4)
\psline[linecolor=red](-6,4)(6,4)
\psline[linestyle=dotted](-6,-4)(-6,4)
\psline[linestyle=dotted](6,-4)(6,4)

\psline[linecolor=red](-1.5,0)(4.5,0)
\pscurve[linecolor=red](1.5,0)(1.75,0.5)(1.25,0.5)(1.5,0)
\pscurve[linecolor=red](-1.5,0)(-1.95,-0.25)(-1.95,0.25)(-1.5,0)
\pscurve[linecolor=red](4.5,0)(5,-0.25)(5,0.25)(4.5,0)

\pscircle[linecolor=violet,fillstyle=solid,fillcolor=white](-4.5,0){0.3}
\pscircle[linecolor=violet,fillstyle=solid,fillcolor=white](-1.5,0){0.3}
\pscircle[linecolor=darkgreen,fillstyle=solid,fillcolor=white](1.5,0){0.3}
\pscircle[linecolor=darkgreen,fillstyle=solid,fillcolor=white](4.5,0){0.3}

\psline(-1.6,0)(-1.9,0)
\psline(1.5,0.1)(1.5,0.4)
\psline(4.6,0)(4.9,0)

{
\psset{linecolor=blue}
\psecurve(-5,-4)(-5.5,0)(-5,4)(-5.5,8)
\psecurve(-4.5,1)(-5.5,0)(-4.625,-1)(-3.75,0)(-4.625,1)
\psecurve(-3,-4)(-3.75,0)(-3,4)(-3.75,8)

\psecurve(-3,-8)(-5,-4)(-3,0)(-5,4)
\psecurve(-1,-4)(-3,0)(-1,4)(-3,8)

\psecurve(-1,4)(-0.5,0)(-1,-4)(-0.5,-8)
\psecurve(-1.5,-1)(-0.5,0)(-1.375,1)(-2.25,0)(-1.375,-1)
\psecurve(-3,4)(-2.25,0)(-3,-4)(-2.25,-8)

\psecurve(0.5,-8)(2,-4)(0.5,0)(2,4)
\psecurve(1.5,-1)(2.5,0)(1.5,1)(0.5,0)(1.5,-1)
\psecurve(2.5,-8)(4,-4)(2.5,0)(4,4)

\psecurve(5.5,8)(4,4)(5.5,0)(4,-4)
\psecurve(4.5,1)(3.5,0)(4.5,-1)(5.5,0)(4.5,1)
\psecurve(3.5,8)(2,4)(3.5,0)(2,-4)
}

\psdots(-5,4)(-3,4)(-1,4)(2,4)(4,4)
\psdots(-0.5,0)(0.5,0)(2.5)(3.5)
\psdots(-5,-4)(-3,-4)(-1,-4)(2,-4)(4,-4)

\rput[c](3.9,0.3){$a_2$}
\rput[c](2.25,-0.3){$a_1$}
\rput[c](-0.15,-0.5){$d'$}
\rput[c](0.25,0.5){$d$}

\rput[c](-5,4.5){$y_3$}
\rput[c](-3,4.5){$y_2$}
\rput[c](-1,4.5){$y_1$}
\rput[c](2,4.5){$x_1$}
\rput[c](4,4.5){$x_2$}

\rput[c](-5,-4.5){$y_3$}
\rput[c](-3,-4.5){$y_2$}
\rput[c](-1,-4.5){$y_1$}
\rput[c](2,-4.5){$x_1$}
\rput[c](4,-4.5){$x_2$}

\rput(4.6,0.6){\footnotesize $\delta_1$}
\rput(4.5,-0.65){\footnotesize $\delta_2$}

\rput(1.95,0.25){\footnotesize $\tau_3$}
\rput(1.5,-0.5){\footnotesize $\tau_2$}
\rput(0.9,0.3){\footnotesize $\tau_1$}

\rput(-1.3,-0.65){\footnotesize $\rho_1$}
\rput(-1.5,0.6){\footnotesize $\rho_2$}

\end{pspicture}
\caption{\dots its Heegaard diagram,\dots}
\label{fig:2m3_pretzeltangleHD_p2}
\end{subfigure}
\begin{subfigure}[b]{0.15\textwidth}\centering
\psset{unit=0.5, linewidth=1.1pt}
\begin{pspicture}[showgrid=false](-2,-7)(4.5,7)
\psline[linecolor=darkgreen](0,5.5)(0,2.5)
\psline[linecolor=darkgreen](0,2)(0,-2)
\psline[linecolor=violet](0,-2.5)(0,-5.5)

\psline[linecolor=red,linearc=0.4](-0.2,5)(4,5)(4,3)(-0.2,3)
\psline[linecolor=red,linearc=0.4](-0.2,4)(2,4)(2,0.5)(-0.2,0.5)
\psline[linecolor=red,linearc=0.4](-0.2,1.5)(4,1.5)(4,-1.5)(-0.2,-1.5)
\psline[linecolor=red,linearc=0.4](-0.2,-4)(2,-4)(2,-0.5)(-0.2,-0.5)
\psline[linecolor=red,linearc=0.4](-0.2,-5)(4,-5)(4,-3)(-0.2,-3)

\footnotesize
\rput[r](-0.5,5){$10$}
\rput[r](-0.5,4){$9$}
\rput[r](-0.5,3){$8$}
\rput[r](-0.5,1.5){$7$}
\rput[r](-0.5,0.5){$6$}
\rput[r](-0.5,-0.5){$5$}
\rput[r](-0.5,-1.5){$4$}
\rput[r](-0.5,-3){$3$}
\rput[r](-0.5,-4){$2$}
\rput[r](-0.5,-5){$1$}

\rput[r](-1.2,4.5){$\delta_2$}
\rput[r](-1.2,3.5){$\delta_1$}
\rput[r](-1.2,1){$\tau_3$}
\rput[r](-1.2,0){$\tau_2$}
\rput[r](-1.2,-1){$\tau_1$}
\rput[r](-1.2,-3.5){$\rho_2$}
\rput[r](-1.2,-4.5){$\rho_1$}
\end{pspicture}
\caption{\dots arc diagram\,\dots}
\label{fig:2m3_pretzeltangleHD_arc2}
\end{subfigure}
\begin{subfigure}[b]{0.17\textwidth}\centering
\begin{tikzpicture}
\node (a) at (canvas polar cs:angle=144,radius=1.8cm) {$\bullet$};
\node (b) at (canvas polar cs:angle=72,radius=1.8cm) {$\bullet$};
\node (bl) at (canvas polar cs:angle=72,radius=2.1cm) {$a$};
\node (c) at (canvas polar cs:angle=0,radius=1.8cm) {$\bullet$};
\node (d) at (canvas polar cs:angle=-72,radius=1.8cm) {$\bullet$};
\node (dl) at (canvas polar cs:angle=-72,radius=2.1cm) {$d$};
\node (e) at (canvas polar cs:angle=-144,radius=1.8cm) {$\bullet$};
\node (white) at (0,-3.3) {\white$\bullet$};

\draw[->,below] (b) to[bend left=20] node{$\delta_1$} (a);
\draw[<-,above] (b) to[bend right=20] node{$\delta_2$} (a);

\draw[->,left,near start] (c) to[bend left=20] node{$\tau_3$} (b);
\draw[<-,right] (c) to[bend right=20] node{$\tau_{12}$} (b);

\draw[->,right] (c) to[bend left=20] node{$\tau_{23}$} (d);
\draw[<-,left,near start] (c) to[bend right=20] node{$\tau_1$} (d);

\draw[->,left] (b) to[bend right=12] node{$\tau_2$} (d);

\draw[->,below] (d) to[bend left=20] node{$\rho_1$} (e);
\draw[<-,above] (d) to[bend right=20] node{$\rho_2$} (e);
\end{tikzpicture}
\caption{\dots and algebra}\label{fig:2m3_pretzeltangle_algebra2}
\end{subfigure}
\caption{The first (a-d) and second (e-h) parametrisation for the $(2,-3)$-pretzel tangle. The Heegaard diagrams, arc diagrams and algebras are actually the ones for the mirror of the tangle, i.\,e.\ the $(-2,3)$-pretzel tangle, see remark~\ref{rem:conventions3}.}\label{fig:2m3_pretzeltangle_p2}
\end{sidewaysfigure}
We prove this result by calculating the type~D invariant for the $(2,-3)$-pretzel tangle with two different parametrisations. This computation is essentially the same as for rational tangles -- except that the nice diagrams are quite large: In one case, there are over 400 generators, in the other nearly 3000. Therefore, we use the  Mathematica package~\cite{BSFH.m} to compute the invariants, see notebooks~\cite{notebook1} and~\cite{notebook2}. The program first computes the type~D modules over a given strands algebra from nice diagrams. Then, it cancels generators in a certain order, namely such that the generators from the non-niceified Heegaard diagrams in Figures~\ref{fig:2m3_pretzeltangleHD_p1} and~\ref{fig:2m3_pretzeltangleHD_p2} survive. For more details, see appendix~\ref{app:manualBSFH}, where we explain how to use the package~\cite{BSFH.m}. Also note that in both calculations, we have performed some homotopies to simplify the results, using the clean-up lemma~\ref{lem:AbstractCleanUp}; for details, see the two notebooks. \\
Figure~\ref{fig:ProofOf2m3prop} shows the results of these computations (after reversing arrows, gradings and the algebras, see remark~\ref{rem:conventions3}): In both cases, we have arranged the generators in a grid according to their Alexander bigrading, where $\odot$ marks the origin $(0,0)$. The first Alexander grading (corresponding to the $\darkgreen p$-strand) increases from top to bottom and the second grading (corresponding to the $\textcolor{violet}{q}$-strand) increases from left to right. Also, the  $\delta$-grading of each generator is shown in the exponent of $\delta$ in front of each generator, compare with example~\ref{exa:pretzeltangle}. Furthermore, each generator is coloured corresponding to its site, see figures~\ref{fig:2m3_pretzeltangleT_p1} and~\ref{fig:2m3_pretzeltangleT_p2}. The labelled arrows connecting these generators show the differential of the type~D structures and the algebra elements picked up along those differentials. Here, we use the same conventions about the algebra as in the previous proposition.\\
Now, mutation about the $y$-axis corresponds to interchanging $\rho\leftrightarrow\tau$ and $\varepsilon\leftrightarrow\delta$ for algebra elements and the sites $\red a$ and $\textcolor{darkgreen}{c}$. Let us ignore the Alexander grading for a moment and just consider the directed graphs consisting of the vertices and the solid arrows. Both of them have four connected components: Two solitary generators, a loop of three generators and a loop with four generators. We can choose an isomorphism between them that interchanges generators of the sites $\red a$ and $\textcolor{darkgreen}{c}$, preserves the $\delta$-grading and changes the labelling of the arrows exactly as specified by mutation about the $y$-axis. In other words, the $\delta$-graded invariants are homotopic to each other. \\
\begin{figure}[t]
\begin{subfigure}[b]{\textwidth}\centering
$$
\begin{tikzcd}[column sep=0.8cm,row sep=0.6cm]   
%
%
\delta^{-2}\gold{x_2d'}
&
%
\delta^{-\frac{3}{2}}\darkgreen{x_2c_1}
\arrow[leftarrow,swap]{l}{\rho_2}
&
&
%
\delta^{-\frac{3}{2}}\darkgreen{x_2c_2} 
&
&
%
\delta^{-\frac{3}{2}}\darkgreen{x_2c_3} 
\\
%
%
&
&
&
\odot 
\\
%
%
&
%
\delta^{-\frac{3}{2}}\darkgreen{x_1c_1} 
\arrow[leftarrow]{luu}{\tau_{12}\rho_2}
&
%
\delta^{-1}\gold{dy_2} 
\arrow[leftarrow,swap]{luu}{\rho_{13}\tau_{12}}
\arrow[leftarrow,swap]{l}{\rho_{13}}
&
%
\delta^{-\frac{3}{2}}\darkgreen{x_1c_2} 
\arrow[leftarrow,bend left]{uu}{\rho_{13}\tau_{12}\rho_2\varepsilon_{12}}
&
%
\delta^{-1}\gold{dy_3}
\arrow[leftarrow,swap]{luu}{\rho_{13}\tau_{12}} 
\arrow[leftarrow,swap]{l}{\rho_{13}}
&
%
\delta^{-\frac{3}{2}}\darkgreen{x_1c_3} 
\end{tikzcd}
$$
\caption{The result from \cite{notebook1} for the first parametrisation}\label{fig:2m3ptTypeDResult}
\end{subfigure}
\begin{subfigure}[b]{\textwidth}\centering

$$
\begin{tikzcd}[column sep=0.54cm,row sep=0.6cm]   
%
%
\delta^{-2}\gold{x_2d'}
 \\
%
%
\delta^{-\frac{3}{2}}{\red a_1y_1}
\arrow[leftarrow]{u}{\tau_2}
 &
 &
%
\delta^{-\frac{3}{2}}{\red a_1y_2}~ \delta^{-\frac{3}{2}}{\red a_2y_1} 
\arrow[leftarrow,start anchor=150,swap]{ull}{\rho_{12}\tau_2}
&
\odot &
%
\delta^{-\frac{3}{2}}{\red a_1y_3}~ \delta^{-\frac{3}{2}}{\red a_2y_2} 
\arrow[leftarrow,bend right,swap]{ll}{\tau_{13}\rho_{12}\tau_2\delta_{12}}
&
&
%
\delta^{-\frac{3}{2}}{\red a_2y_3} 
\\
%
%
&
&
%
\delta^{-1}\gold{dy_2} 
\arrow[leftarrow,end anchor=-130,swap,pos=0.4]{u}{\tau_{13}}
\arrow[leftarrow]{ull}{\tau_{13}\rho_{12}}
&
&
%
\delta^{-1}\gold{dy_3} 
\arrow[leftarrow,end anchor=-140,swap,pos=0.4]{u}{\tau_{13}}
\arrow[leftarrow,end anchor=-20]{ull}{\tau_{13}\rho_{12}}
\end{tikzcd}
$$
\caption{The result from \cite{notebook2} for the second parametrisation}\label{fig:2m3ptmutTypeDResult}
\end{subfigure}
\caption{Two type~D modules for the $(2,-3)$-pretzel tangle from theorem~\ref{thm:2m3pt}. Note that we have reversed all arrows, gradings and algebras since the notebooks compute the invariants of the mirrors, see remark~\ref{rem:conventions3}.}\label{fig:ProofOf2m3prop}
\end{figure}
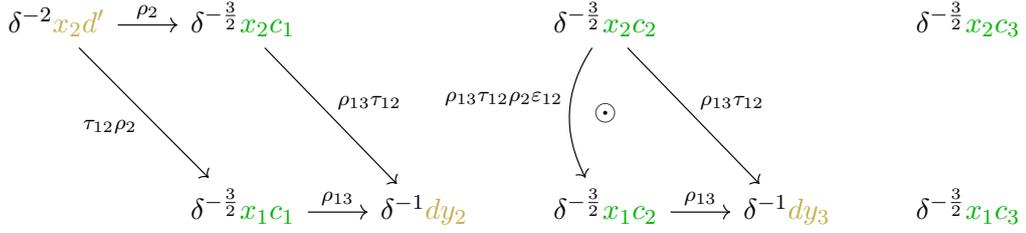
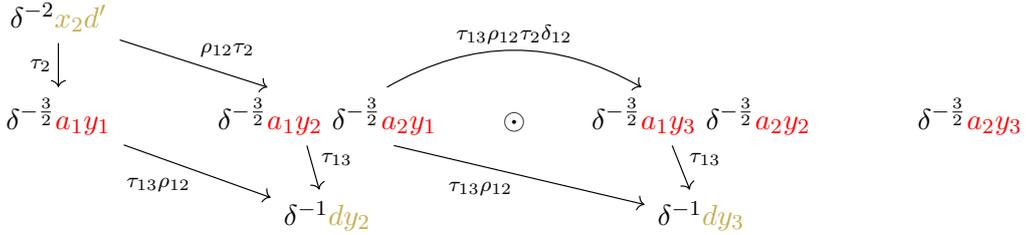
\noindent
Let us now consider the Alexander grading. Obviously, the bigraded invariants are different, already on the level of generators. This is to be expected, since the multivariate Alexander polynomial is only mutation invariant if the colours of the two open stands in the mutating tangle are the same, see theorem~\sref{thm:mutation}. So we want to collapse Alexander gradings, i.\,e.\ add the two Alexander gradings to a single $\mathbb{Z}$-grading. This means that those generators on the diagonals going from left to right and from bottom to top live in the same Alexander grading. It is easy to see that the identification above can be chosen such that it also preserves this single-variate Alexander grading. \\
Together with theorem~\ref{thm:glueingCAT} and the observation that mutation about the $x$-axis does not change the knot or link, this finishes the proof of theorem~\ref{thm:2m3pt}.
\end{proof}

\begin{example}\label{exa:counterexampleMUT}
Let us reverse the orientation of a single strand, say the $\darkgreen p$-strand, and see why bigraded mutation invariance fails. For example, this is the case for the Conway and Kinoshita-Terasaka knots used in the counterexample from \cite[theorems~1.1 and~1.2]{OSmutation}, see figure~\ref{fig:KTCcounterexample}.\\
Changing the orientation of the $\darkgreen p$-strand has the effect of reversing the Alexander grading ${\darkgreen p}\leftrightarrow {\darkgreen p}^{-1}$. In the two type~D structures of the proof above, this means that the first Alexander grading now increases from bottom to top. If we collapse the Alexander bigrading to a single $\mathbb{Z}$-grading, generators on the diagonals going from left to right and top to bottom now live in the same Alexander grading. Obviously, these two invariants are not identical.
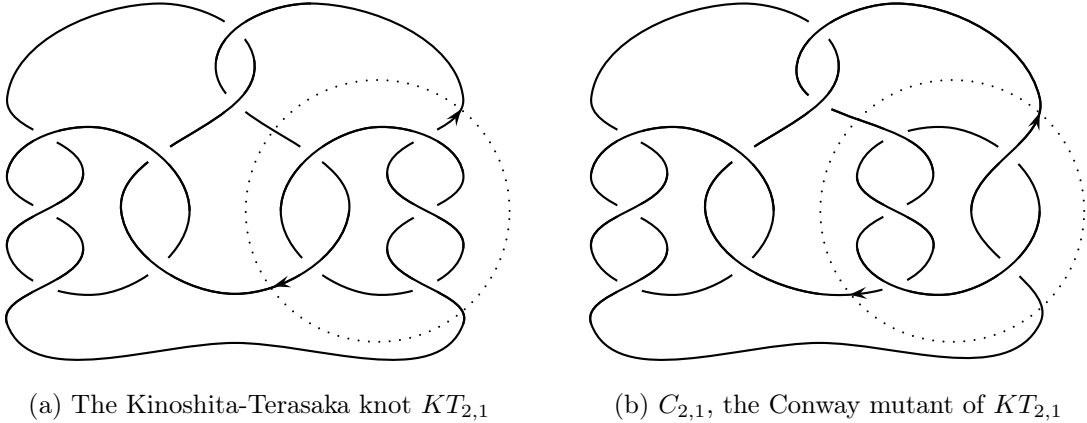
\begin{figure}[h]
\psset{unit=0.5}\centering
\begin{subfigure}[b]{0.47\textwidth}\centering
\begin{pspicture}[showgrid=false](-6.2,-4.3)(7.47,5.7)
\psecurve(-6,-1)(-4,1)(-6,3)(-2,5.5)(0.5,4)(-3,0)(0,-2.2)(3,0)(-0.5,4)(2,5.5)(6,3)(4,1)(6,-1)(3.5,-2.2)(1.2,0)(3.5,2.2)(6,1)(4,-1)(6,-3)(0,-3.5)(-6,-3)(-4,-1)(-6,1)(-3.5,2.2)(-1.2,0)(-3.5,-2.2)(-6,-1)(-4,1)(-6,3)
\psdots[dotsize=10pt,linecolor=white](-5,2)(-5,0)(-5,-2)(-2,1.5)(-2,-1.5)(2,1.5)(2,-1.5)(5,2)(5,0)(5,-2)(0,2.85)(0,4.8)
\psecurve(3,0)(-0.5,4)(2,5.5)(6,3)
\psecurve(0,-2.2)(-3,0)(0.5,4)(-2,5.5)
\psdot[dotsize=10pt,linecolor=white](-2,1.5)
\psecurve(0,-3.5)(-6,-3)(-4,-1)(-6,1)
\psecurve(-3.5,-2.2)(-6,-1)(-4,1)(-6,3)
\psecurve(-4,-1)(-6,1)(-3.5,2.2)(-1.2,0)(-3.5,-2.2)
\psecurve(-0.5,4)(3,0)(0,-2.2)(-3,0)(0.5,4)
\psecurve(3.5,-2.2)(1.2,0)(3.5,2.2)(6,1)(4,-1)
\psecurve(6,3)(4,1)(6,-1)(3.5,-2.2)
\psecurve(6,1)(4,-1)(6,-3)(0,-3.5)
\pscircle[linestyle=dotted,linecolor=black](3.75,0){3.5}
\psline[linecolor=black]{<-}(1,-2)(1.2,-1.94)
\psline[linecolor=black]{<-}(5.95,2.7)(5.805,2.5)
\end{pspicture}
\caption{The Kinoshita-Terasaka knot $KT_{2,1}$}\label{fig:KTknot}
\end{subfigure}
\quad
\begin{subfigure}[b]{0.46\textwidth}\centering
\begin{pspicture}[showgrid=false](-6.2,-4.3)(7.2,5.7)
\psecurve(-6,-1)(-4,1)(-6,3)(-2,5.5)(0.5,4)(-3,0)(0,-2.2)(3,-1)(1,1)(3.5,2.2)(5.8,0)(3.5,-2.2)(1,-1)(3,1)(-0.6,3.75)(2,5.5)(5.8,3)(4,0)(5.8,-3)(0,-3.5)(-6,-3)(-4,-1)(-6,1)(-3.5,2.2)(-1.2,0)(-3.5,-2.2)(-6,-1)(-4,1)(-6,3)
\psdots[dotsize=10pt,linecolor=white](-5,2)(-5,0)(-5,-2)(-2,1.5)(-2,-1.5)(5,1.5)(5,-1.5)(2,2)(2,0)(2,-2)(0,2.85)(0,4.8)
\psecurve(1,-1)(3,1)(-0.6,3.75)(2,5.5)
\psdot[dotsize=10pt,linecolor=white](0,2.85)
\psecurve(0,-2.2)(-3,0)(0.5,4)(-2,5.5)
\psdot[dotsize=10pt,linecolor=white](-2,1.5)
\psecurve(0,-3.5)(-6,-3)(-4,-1)(-6,1)
\psecurve(-3.5,-2.2)(-6,-1)(-4,1)(-6,3)
\psecurve(-4,-1)(-6,1)(-3.5,2.2)(-1.2,0)(-3.5,-2.2)
\psecurve(0.5,4)(-3,0)(0,-2.2)(3,-1)
\psecurve(3,1)(-0.6,3.75)(2,5.5)(5.8,3)(4,0)(5.8,-3)
\psecurve(3.5,2.2)(5.8,0)(3.5,-2.2)(1,-1)(3,1)
\psecurve(0,-2.2)(3,-1)(1,1)(3.5,2.2)
\pscircle[linestyle=dotted,linecolor=black](3.5,0){3.5}
\psline[linecolor=black]{<-}(0.8,-2.22)(1,-2.2)
\psline[linecolor=black]{<-}(5.8,2.6)(5.725,2.4)
\end{pspicture}
\caption{$C_{2,1}$, the Conway mutant of $KT_{2,1}$}\label{fig:Cknot}
\end{subfigure}
\caption{Ozsv\'{a}th and Szab\'{o}'s counterexample from \cite{OSmutation} for bigraded mutation invariance of $\HFK$}\label{fig:KTCcounterexample}
\end{figure} 
\end{example}
\begin{example}\label{exa:exampleMUT}
As a non-trivial example to which the proposition can be applied to give the same bigraded link Floer homologies, we can consider the mutant pair in figure~\ref{fig:BigradedMutInv}. Indeed, their link Floer homologies both live in two consecutive $\delta$-gradings and in each of them, the  Poincaré polynomial is
$$t^{-6}+3t^{-4}+3t^{-2}+3t^{\pm0}+3t^{+2}+3t^{+4}+t^{+6}.$$
See for example \cite[section~5.4]{OSHFLThurston} and \cite[section~3]{OSHDsandFloer} for explicit calculations. 
%
%
%
%
%
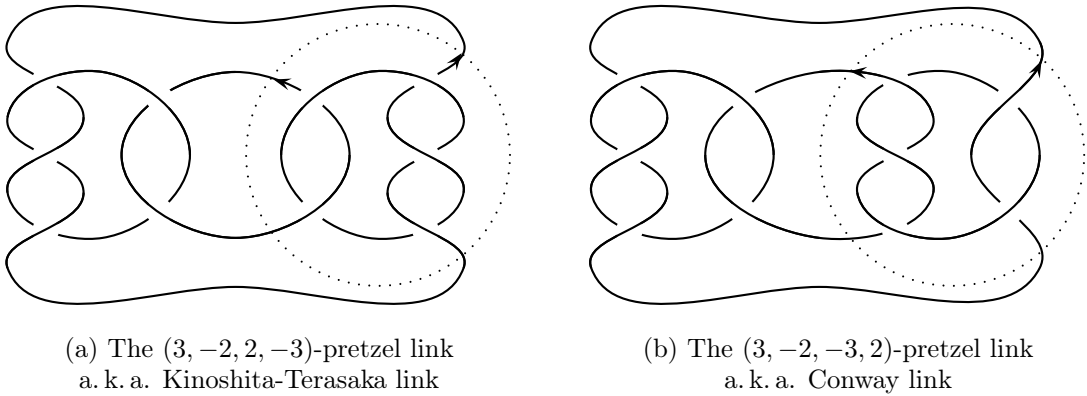
\begin{figure}[h]
\captionsetup{justification=centering}
\psset{unit=0.5}\centering
\begin{subfigure}[b]{0.47\textwidth}\centering
\begin{pspicture}[showgrid=false](-6.2,-4.3)(7.45,4.2)
\psecurve(-6,-1)(-4,1)(-6,3)(0,3.5)(6,3)(4,1)(6,-1)(3.5,-2.2)(1.2,0)(3.5,2.2)(6,1)(4,-1)(6,-3)(0,-3.5)(-6,-3)(-4,-1)(-6,1)(-3.5,2.2)(-1.2,0)(-3.5,-2.2)(-6,-1)(-4,1)(-6,3)
\psecurve(0,2.2)(-3,0)(0,-2.2)(3,0)(0,2.2)(-3,0)(0,-2.2)
\psdots[dotsize=10pt,linecolor=white](-5,2)(-5,0)(-5,-2)(-2,1.5)(-2,-1.5)(2,1.5)(2,-1.5)(5,2)(5,0)(5,-2)(0,2.85)(0,4.8)
\psecurve(0,-2.2)(-3,0)(0,2.2)(3,0)
\psdot[dotsize=10pt,linecolor=white](-2,1.5)
\psecurve(0,-3.5)(-6,-3)(-4,-1)(-6,1)
\psecurve(-3.5,-2.2)(-6,-1)(-4,1)(-6,3)
\psecurve(-4,-1)(-6,1)(-3.5,2.2)(-1.2,0)(-3.5,-2.2)
\psecurve(0,2.2)(3,0)(0,-2.2)(-3,0)(0,2.2)
\psecurve(3.5,-2.2)(1.2,0)(3.5,2.2)(6,1)(4,-1)
\psecurve(6,3)(4,1)(6,-1)(3.5,-2.2)
\psecurve(6,1)(4,-1)(6,-3)(0,-3.5)
\pscircle[linestyle=dotted,linecolor=black](3.75,0){3.5}
\psline[linecolor=black]{<-}(1,2)(1.2,1.94)
\psline[linecolor=black]{<-}(5.95,2.7)(5.805,2.5)
\end{pspicture}
\caption{The $(3,-2,2,-3)$-pretzel link\\ a.\,k.\,a. Kinoshita-Terasaka link}\label{fig:BigradedMutInv1}
\end{subfigure}
\quad
\begin{subfigure}[b]{0.46\textwidth}\centering
\begin{pspicture}[showgrid=false](-6.2,-4.3)(7.2,4.2)
\psecurve(-6,-1)(-4,1)(-6,3)(0,3.5)(5.8,3)(4,0)(5.8,-3)(0,-3.5)(-6,-3)(-4,-1)(-6,1)(-3.5,2.2)(-1.2,0)(-3.5,-2.2)(-6,-1)(-4,1)(-6,3)
\psecurve(-3,0)(0,-2.2)(3,-1)(1,1)(3.5,2.2)(5.8,0)(3.5,-2.2)(1,-1)(3,1)(0,2.2)(-3,0)(0,-2.2)(3,-1)
\psdots[dotsize=10pt,linecolor=white](-5,2)(-5,0)(-5,-2)(-2,1.5)(-2,-1.5)(5,1.5)(5,-1.5)(2,2)(2,0)(2,-2)(0,2.85)(0,4.8)
\psecurve(1,-1)(3,1)(0,2.2)(-3.0)
\psecurve(0,-3.5)(-6,-3)(-4,-1)(-6,1)
\psecurve(-3.5,-2.2)(-6,-1)(-4,1)(-6,3)
\psecurve(-4,-1)(-6,1)(-3.5,2.2)(-1.2,0)(-3.5,-2.2)
\psecurve(0,2.2)(-3,0)(0,-2.2)(3,-1)
\psecurve(0,3.5)(5.8,3)(4,0)(5.8,-3)
\psecurve(3.5,2.2)(5.8,0)(3.5,-2.2)(1,-1)(3,1)
\psecurve(0,-2.2)(3,-1)(1,1)(3.5,2.2)
\pscircle[linestyle=dotted,linecolor=black](3.5,0){3.5}
\psline[linecolor=black]{<-}(0.8,2.22)(1,2.2)
\psline[linecolor=black]{<-}(5.85,2.5)(5.755,2.3)
\end{pspicture}
\caption{The $(3,-2,-3,2)$-pretzel link\\ a.\,k.\,a. Conway link}\label{fig:BigradedMutInv2}
\end{subfigure}
\caption{An example for bigraded mutation invariance of $\HFL$}\label{fig:BigradedMutInv}
\end{figure}
\end{example}

\chapter{Peculiar invariants for 4-ended tangles}\label{chapter:HFTd}
In this chapter, we specialise to 4-ended tangles and reformulate the glueing structure from section~\ref{sec:TFHviaBSFH} in terms of algebraic invariants which we call peculiar modules. In section~\ref{sec:Pairing}, we prove a glueing formula for these invariants, based on a computer calculation of a bordered sutured type~AA bimodule, and give several applications thereof: For example, in section~\ref{sec:SkeinRelations}, we give new proofs of the oriented and the unoriented skein exact sequences for link Floer homology, due to Ozsváth and Szabó \cite{OSHFK} and Manolescu \cite{Manolescu}. We also compute the peculiar module for the $(2,-3)$-pretzel tangle. It has certain symmetries which, together with the glueing theorem, give a second proof of theorem~\sref{thm:2m3pt}. In general, we only obtain slightly weaker symmetry relations, see section~\ref{sec:SymRels}. Finally, in section~\ref{sec:LoopsAreLoops}, we explore the relationship between peculiar modules and the wrapped Fukaya category of the 4-punctured sphere. 


\section{\texorpdfstring{A glueing structure on $\HFT$ for 4-ended tangles}{A glueing structure on HFT for 4-ended tangles}}\label{sec:CFTd}

\begin{definition}
Let $T$ be a 4-ended tangle. A \textbf{peculiar Heegaard diagram} for $T$ is obtained from a tangle Heegaard diagram for $T$ by a local modification around the punctures, as illustrated in figure~\ref{fig:HD4ended}: We collapse the four boundary components of $\Sigma$ which meet the $\alpha$-arcs, thereby joining the four $\alpha$-arcs to a single $\alpha$-circle $\red S^1$. Then we add a marked point for each tangle end on either side of $\red S^1$, $p_i$ on the front and $q_i$ on the back, and connect these two points by an arc which intersects $\red S^1$ exactly once and no other curve. We also contract all other boundary components to points $z_j$, so we get a closed Heegaard surface. We call the union of the $p_i$, $q_i$ and $z_j$ \textbf{basepoints} of the Heegaard diagram.
Again, we need to restrict ourselves to \textbf{admissible} diagrams, i.\,e.\ diagrams whose non-zero periodic domains avoiding all basepoints have both negative and positive multiplicities. 
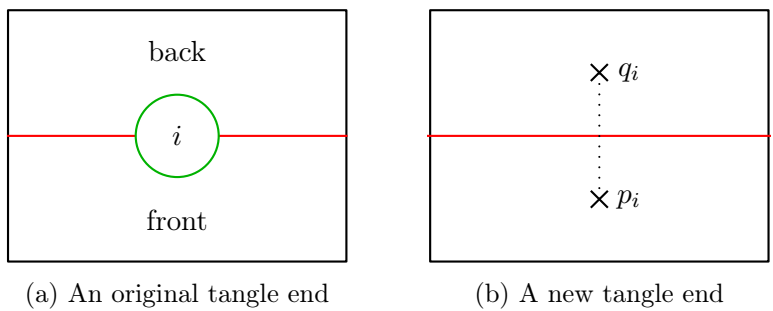
\begin{figure}[ht]
\psset{unit=0.07}
\centering
{\psset{unit=0.8}
\begin{subfigure}{0.33\textwidth}\centering
\begin{pspicture}(-40,-30)(40,30)
\psframe(-40,-30)(40,30)
\psline[linecolor=red](40,0)(-40,0)
\pscircle[fillstyle=solid,fillcolor=white,linecolor=darkgreen](0,0){10}
\rput(0,0){$i$}
\rput(0,20){back}
\rput(0,-20){front}
\end{pspicture}
\caption{An original tangle end}\label{fig:HD4endedoriginal}
\end{subfigure}
\quad
\begin{subfigure}{0.33\textwidth}\centering
\begin{pspicture}(-40,-30)(40,30)
\psframe(-40,-30)(40,30)
\psline[linecolor=red](-60,0)(60,0)
\psline(-2,17)(2,13)
\psline(-2,13)(2,17)
\psline(-2,-17)(2,-13)
\psline(-2,-13)(2,-17)
\psline[linestyle=dotted](0,15)(0,-15)
\rput(7,15){$q_i$}
\rput(7,-15){$p_i$}
\end{pspicture}
\caption{A new tangle end}\label{fig:HD4endednew}
\end{subfigure}
}
\caption{The lazy way to count holomorphic curves near the boundary: Peculiar Heegaard diagrams are obtained from those for tangles by local changes near the boundary of the Heegaard surface.}\label{fig:HD4ended}
\end{figure}
\end{definition}
\begin{Remark}\label{rem:PeculiarHDmoves}
It is obvious that we can go from a peculiar Heegaard diagram back to an ordinary tangle Heegaard diagram. The only reason for introducing peculiar Heegaard diagrams is to avoid any bordered Heegaard Floer theory, so the proof of invariance of the algebraic structures we are about to define is a minor adaptation of the one for link Floer homology. In particular, note that the number of $\alpha$-circles and $\beta$-circles in a peculiar Heegaard diagram is the same.\\
Obviously, the Heegaard moves from lemma~\sref{lem:HeegaardMoves} are equivalent to the following moves for peculiar Heegaard diagrams:
\begin{itemize}
\item isotopies of the $\alpha$- and $\beta$-curves away from the marked points and the arcs connecting these,
\item handleslides of $\B$-curves over $\B$-curves and handleslides of $\A$-curves over $\A$-curves other than $\red S^1$, and
\item stabilisation.
\end{itemize}
\end{Remark}
\begin{Remark}
The attribute ``peculiar'' should be considered as a homophone of ``$p$-$q$-liar'', a reference to the labels $p_i$ and $q_i$ we have chosen for the marked points, which is the notation used in \cite{Abouzaid} in the context of the wrapped Fukaya category of the 4-punctured sphere, see section~\ref{sec:LoopsAreLoops}.
\end{Remark}
In the following, we add differentials to $\CFT$ that count the number of holomorphic curves that have possibly non-zero multiplicities at the $p_i$ or $q_i$. Unfortunately, the resulting complex does not satisfy the relation $\partial^2=0$ any more. However, it satisfies a slightly modified $\partial^2$-relation which enables us to promote $\CFT$ to a more sophisticated homological invariant which we call a \emph{curved} type D module, see example~\ref{exa:HighBrowDefTypeDoverI}. Let us recall this definition here in slightly more down-to-earth terms.

\begin{definition}\label{def:curvedTypeDStructure}
Let $I$ be a ring of idempotents and $A$ a $\mathbb{Z}$-graded algebra over $I$. Also fix a central element $a_c\in Z(A)$ of degree $-2$. A \textbf{curved type D structure} over $A$ is a $\mathbb{Z}$-graded $I$-module $M$ together with an $I$-module homomorphism $\delta:~M\rightarrow A\otimes_I M$ of degree $-1$ satisfying 
$$(\mu\otimes 1_M)\circ(1_A\otimes \delta)\circ\delta=a_c\otimes 1_M,$$
where $\mu$ denotes composition in $A$. We call $a_c$ the \textbf{curvature} of $M$. A morphism between two curved type D structures $(M,\delta_M)$ and $(N,\delta_N)$ is an $I$-module homomorphism \linebreak$M\rightarrow A\otimes_I N$. For two such morphisms $f$ and $g$, their composition is defined as 
$$(g\circ f)=(\mu_2\otimes 1_M)\circ(1_A\otimes g)\circ f.$$
We endow the space of morphisms $\Mor(M,N)$ with a differential~$D$ defined by
$$D(f)= \delta_N\circ f+f\circ\delta_M.$$
Then indeed $D^2=0$, since we have chosen $a_c$ to be central. This gives us an enriched category over $\Com$, the category of ordinary chain complexes over $\mathbb{F}_2$. The underlying ordinary category is obtained by restricting the morphism spaces to degree 0 elements in the kernel of $D$, giving us the usual notions of chain homotopy and homotopy equivalence, see definition~\ref{def:UnderlyingOrdinaryCat} and example~\ref{exa:UnderlyingOrdCat}.
\end{definition}
\begin{Remark}\label{rem:MatFakAsCurvedComplexes}
It is interesting to compare curved type D structures to matrix factorisations as studied by Khovanov-Rozansky \cite{KhRoz}. Given an algebra~$A$ over some field $k$, a matrix factorisation of a potential $w\in A$ consists of two free $A$-modules $M_0$ and $M_1$ with two maps 
\begin{equation}\label{eqn:matrixfac}
\begin{tikzcd}[row sep=0.5cm, column sep=1cm]
M_0
\arrow[bend left=10]{r}[inner sep=1pt]{d_0}
&
M_1
\arrow[bend left=10]{l}[inner sep=1pt]{d_1}
\end{tikzcd}
\end{equation}
such that $d_1d_0=w.\id_{M_0}$ and $d_0d_1=w.\id_{M_1}$. If $\overline{M_0}$ and $\overline{M_1}$ denote the $k$-vector spaces generated by an $A$-basis of $M_0$ and $M_1$, respectively, we can regard $d_0$ and $d_1$ as maps 
$$\overline{d_0}:\overline{M_0}\rightarrow A\otimes_I\overline{M_1}\quad\text{ and }\quad\overline{d_1}:\overline{M_1}\rightarrow A\otimes_I\overline{M_0}.$$ 
Then $(\overline{M_0}\oplus \overline{M_1},\overline{d_0}+\overline{d_1})$ defines a curved type D structure over the $k$-algebra $A$. In general, we cannot go in the other direction. For example curved complexes associated to manifolds with torus boundary do, in general, not admit a splitting of the form~\eqref{eqn:matrixfac}. This is for the simple reason that the total number of generators can be odd, see for example~\cite[p.\,9 fig.\,6]{HRW}. However, for peculiar modules, such splittings exist, see remark~\ref{rem:MatFakForPecMod}.
\end{Remark}
\begin{definition}\label{def:Ad}
Let $\Ad$ be the path algebra of the quiver
\vspace{-9pt}
$$
\begin{tikzcd}[row sep=0.5cm, column sep=0.5cm]
&
\overset{4}{\bullet}
\arrow[bend left=10,leftarrow]{dr}[inner sep=1pt]{q_4}
\arrow[bend right=10,swap]{dr}[inner sep=1pt]{p_4}
&
\\
\!\!\!\raisebox{7pt}{$\underset{1}{~}$}\,\bullet
\arrow[bend left=10,leftarrow]{ur}[inner sep=1pt]{q_1}
\arrow[bend right=10,swap]{ur}[inner sep=1pt]{p_1}
&
&
\bullet\,\raisebox{7pt}{$\underset{3}{~}$}\!\!\!
\arrow[bend left=10,leftarrow]{dl}[inner sep=1pt]{q_3}
\arrow[bend right=10,swap]{dl}[inner sep=1pt]{p_3}
\\
&
\underset{2}{\bullet}
\arrow[bend left=10,leftarrow]{ul}[inner sep=1pt]{q_2}
\arrow[bend right=10,swap]{ul}[inner sep=1pt]{p_2}
\end{tikzcd}
\vspace{-9pt}
$$
with relations $p_iq_i=0=q_ip_i$ and $\Id$ the corresponding ring of idempotents. For an illustration, see figure~\ref{fig:nutshell}. We call $\Ad$ the \textbf{peculiar algebra}. \\
More explicitly, $\Ad$ can be described as follows. For $i=1,2,3,4$, let $I_i=\mathbb{F}_2[\iota_i]/(\iota_i^2=\iota_i)$ and $\mathcal{I}^\partial=I_1\times I_2\times I_3\times I_4$ the ring of idempotents, with one idempotent for each of the four sites $a$, $b$, $c$ and $d$ of a $4$-ended tangle. 
Let $\Ad$ be the following algebra: As an additive group, it is generated by closed oriented intervals on $\mathbb{R}$ with integer boundaries modulo overall shifts of multiples of 4, i.e.
$$[t_0,t_1]\sim[t_0+4n,t_1+4n]\quad \text{ for all } t_0, t_1,n\in \mathbb{Z}.$$
Given two algebra elements $a=[t_0,t_1]$ and $a'=[t'_0,t'_1]$, we define
\begin{equation*}
a\cdot a'=\begin{cases}
[t_0,t_1+t'_1-t'_0] & \text{if } (t_1\equiv t'_0 \mod 4) \quad\text{and}\quad(t_1-t_0)(t'_1-t'_0)\geq0\\
0 & \text{otherwise.}
\end{cases}
\end{equation*}
The second condition in the first case means that the composition of two oppositely oriented intervals is always zero. Furthermore, we define a bimodule action of $\mathcal{I}^\partial$ on $\Ad$ by
\begin{equation*}
\iota_i[t_0,t_1]\iota_j=\begin{cases}
[t_0,t_1] & \text{if } (t_0\equiv i \mod 4) \quad\text{and}\quad (t_1\equiv j \mod 4)\\
0 & \text{otherwise.}
\end{cases}
\end{equation*}
Thus, $[t,t]=\iota_{\overline{t}}$ for $t\equiv \overline{t}\in\{1,2,3,4\}\mod 4$.
For convenience, we usually write the elements $[t_0,t_1]$ with $t_0\neq t_1$ as $q_{(t_0+1)\cdots (t_1-1) t_1}$ or $p_{t_0 (t_0-1)\cdots (t_1+1)}$, depending on the orientation of the interval, where we take the indices modulo 4 with an offset of 1. For example, we write \linebreak[3] $q_{41}$ for $[3,5]$ and $p_4$ for $[0,-1]$. Furthermore, to simplify notation, we set 
$$
p=p_1+p_2+p_3+p_4
\quad\text{ and }\quad
q=q_1+q_2+q_3+q_4,
$$
so we can write for example $p^4=p_4p_3p_2p_1+p_3p_2p_1p_4+p_2p_1p_4p_3+p_1p_4p_3p_2$. 
\begin{figure}[t]
\centering
\psset{unit=1.7}
\begin{pspicture}(-1.52,-1.52)(1.52,1.52)

\psline[linecolor=red](-1,-1)(-1,1)(1,1)(1,-1)(-1,-1)

\rput(-1,0){\fcolorbox{white}{white}{\red $a$}}
\rput(0,-1){\fcolorbox{white}{white}{\red $b$}}
\rput(1,0){\fcolorbox{white}{white}{\red $c$}}
\rput(0,1){\fcolorbox{white}{white}{\red $d$}}

\rput(-1.25,0){$\iota_1$}
\rput(0,-1.25){$\iota_2$}
\rput(1.25,0){$\iota_3$}
\rput(0,1.25){$\iota_4$}

\pscircle[fillstyle=solid, fillcolor=white,linecolor=darkgreen](1,1){0.25}
\pscircle[fillstyle=solid, fillcolor=white,linecolor=darkgreen](1,-1){0.25}
\pscircle[fillstyle=solid, fillcolor=white,linecolor=darkgreen](-1,1){0.25}
\pscircle[fillstyle=solid, fillcolor=white,linecolor=darkgreen](-1,-1){0.25}

\rput[c](-1,1){1}
\rput[c](-1,-1){2}
\rput[c](1,-1){3}
\rput[c](1,1){4}

\rput(-1,1){
\psarcn{<-}(0,0){0.5}{-5}{-85}
\psarcn{<-}(0,0){0.5}{-95}{5}
\pscircle*[linecolor=white](0.5;-45){8pt}
\rput(0.5;-45){$p_1$}
\pscircle*[linecolor=white](0.5;135){8pt}
\rput(0.5;135){$q_1$}
}

\rput(-1,-1){
\psarcn{<-}(0,0){0.5}{85}{5}
\psarcn{<-}(0,0){0.5}{-5}{95}
\pscircle*[linecolor=white](0.5;45){8pt}
\rput(0.5;45){$p_2$}
\pscircle*[linecolor=white](0.5;-135){8pt}
\rput(0.5;-135){$q_2$}
}

\rput(1,-1){
\psarcn{<-}(0,0){0.5}{175}{95}
\psarcn{<-}(0,0){0.5}{85}{-175}
\pscircle*[linecolor=white](0.5;135){8pt}
\rput(0.5;135){$p_3$}
\pscircle*[linecolor=white](0.5;-45){8pt}
\rput(0.5;-45){$q_3$}
}

\rput(1,1){
\psarcn{<-}(0,0){0.5}{-95}{-175}
\psarcn{<-}(0,0){0.5}{175}{-85}
\pscircle*[linecolor=white](0.5;-135){8pt}
\rput(0.5;-135){$p_4$}
\pscircle*[linecolor=white](0.5;45){8pt}
\rput(0.5;45){$q_4$}
}

\end{pspicture}
\caption{An illustration of the algebra $\Ad$. An algebra element can ``walk'' along the front (the $p_i$s) or back (the $q_i$s) as far as it likes, but not around a puncture (the $p_i$s \textit{and} $q_i$s).}\label{fig:nutshell}
\end{figure}
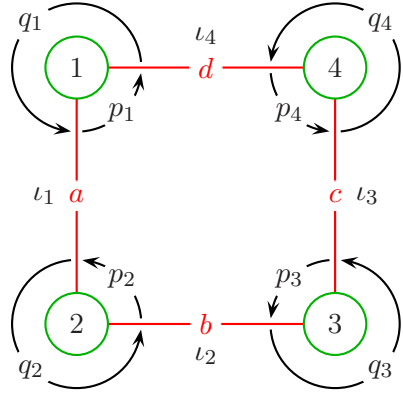
\end{definition}
\begin{definition}\label{def:AlexGradingOnAd}
We can define a \textbf{$\boldsymbol{\delta}$-grading} on $\Ad$ given by half the absolute length of the interval:
$$\delta([t_0,t_1]):=\tfrac{1}{2}\vert t_1-t_0\vert.$$
We can also define an \textbf{Alexander grading} $A$ on $\mathcal{A}$. However, some additional choices are necessary for this. An oriented 4-ended tangle induces an orientation (i.\,e.\ labelling by ``in''/``out'') and colouring of the four punctures in figure~\ref{fig:nutshell}. Note that the numbers of labels of the same colour labelled ``in'' and ``out'' agree. Then for any such choice, we obtain an Alexander grading on $\mathcal{A}$ as follows: Let $\vert p_i\vert=\vert q_i\vert= a^{\pm 1}$, where $a$ is the colour of the $i^\text{th}$ puncture, and choose ``$+$'' iff the $i^\text{th}$ puncture is labelled ``in''. Then we can extend this grading to the whole algebra such that $\vert a\cdot a'\vert=\vert a\vert \cdot\vert a'\vert$ unless $a\cdot a'=0$.\\
We sometimes denote the Alexander grading on generators by a superscript list of integers (or half-integers), like $a^\Red{+1}$ for the single-variate or $a^{(\frac{3}{2},-\frac{1}{2})}$ for the multivariate Alexander grading. Again, we can define a reduced Alexander grading $A^{r}$ by identifying both colours and a \textbf{homological grading} $h$ as the difference 
$$\tfrac{1}{2}A^{r}-\delta.$$
\end{definition}
\begin{definition}
	Let $\pqMod$ be the category of curved complexes over $\Ad$ with curvature $a_c:=p^4+q^4$. We call the objects of this category \textbf{peculiar modules}.
\end{definition}
\begin{Remark}\label{rem:MatFakForPecMod}
	Any homologically graded peculiar module $(M, \partial)$ admits a splitting like a matrix factorisation (see remark~\ref{rem:MatFakAsCurvedComplexes}), i.\,e.\ a splitting of $M$ into two summands $M_0$ and $M_1$ such that $\partial\vert_{M_0\rightarrow M_0}$ and $\partial\vert_{M_1\rightarrow M_1}$ vanish. To see this, let us write $M=\bigoplus \iota_i.M$ and pick a basis for each direct summand, so we get a basis for $M$. Consider two sequences of arrows in $\partial$ between two elements $x$ and $y$ of this basis. Suppose the labels of the arrows are given by basic algebra elements $a_1,\dots,a_n$ and $b_1,\dots,b_m$, respectively. 
	Since the homological grading decreases along each arrow by 1, we have
	$$
	h(x)-n=\sum_{i=1}^n h(a_i)+h(y)\quad\text{and}\quad h(x)-m=\sum_{j=1}^m h(b_j)+h(y),
	$$
	so
	\begin{equation}\label{eqn:MatFakForPecMod}
	m-n=\sum_{i=1}^n h(a_i)-\sum_{j=1}^m h(b_j).
	\end{equation}
	We want to show that this difference is even. The crucial observation is that the homological grading difference between any two algebra elements in $\iota_i.\Ad.\iota_j$ for fixed $i,j\in\{1,2,3,4\}$ is an even integer. Thus, in the two sequences $(a_1,\dots,a_n)$ and $(b_1,\dots,b_m)$, we can replace any algebra element which is a product of $q_i$s by a product of $p_i$s without changing the value of \eqref{eqn:MatFakForPecMod} modulo 2. But since the homological grading is compatible with the algebra multiplication, \eqref{eqn:MatFakForPecMod} is equal modulo 2 to 
	$$h(a_1\cdots a_n)-h(b_1\cdots b_m).$$
	Now we can use the same argument again to see that this difference is even, hence so is $m-n$. So we can split each connected component of the graph of $(M,\partial)$ by picking a base vertex and taking the union of all vertices of even and respectively odd distance to the base vertex.
\end{Remark}
\begin{definition}
Following the notation from chapter~\ref{chapter:categorification}, in particular definition~\sref{def:HDtoCFTbasics}, let $\phi\in\pi_2(\x,\y)$ be a domain between two generators $\x$ and $\y$ in $\mathbb{T}$. Let $n_{x}(\phi)$ denote the multiplicity of $\phi$ at a basepoint $x=p_i,q_i,z_j$. Let $\pi_2^+(\x,\y)$ denote the subset of domains in $\pi_2(\x,\y)$ for which $n_{x}(\phi)\geq0$ for $x=p_i, q_i$ and $n_{z_j}(\phi)=0$ for all $j$. Furthermore, set 
$$
n_p(\phi)=\sum_{i=1,2,3,4}n_{p_i}(\phi)
\quad\text{ and }\quad
n_q(\phi)=\sum_{i=1,2,3,4}n_{q_i}(\phi).
$$
\end{definition} 
\begin{definition}\label{defthm:CFTd}
Given a 4-ended tangle $T$ and an (admissible) peculiar Heegaard diagram for $T$, let $\CFTd(T)$ be the bigraded $\mathbb{F}_2$-module freely generated by elements in $\mathbb{T}(T)$, just as for $\CFT$, see definition~\sref{def:HFTiswelldefandinvariant}. We can turn it into a left $\mathcal{I}^\partial$-module as follows:
\begin{equation*}
\iota_i.\x =\begin{cases}
\x & \text{if $s(\x)=i$}\\
0 & \text{otherwise}
\end{cases}
\quad\text{for $\x\in \mathbb{T}$.}
\end{equation*}
We define an endomorphism $\partial$ on $\CFTd(T)$ by 
\begin{equation}\label{eqn:differentialOnCFTd}
\partial \x=\sum_{y\in\mathbb{T}}\sum_{\substack{\phi\in\pi_2^+(\x,\y)\\ \mu(\phi)=1}}\#\widehat{\mathcal{M}}(\phi) \cdot \iota_{s(\x)}.p^{n_p(\phi)}q^{n_q(\phi)}.\iota_{s(\y)}\otimes_{\Id} \y.
\end{equation}
As usual, $\#\widehat{\mathcal{M}}(\phi)$ denotes the modulo 2 count of holomorphic curves in the moduli space $\widehat{\mathcal{M}}(\phi)$ associated with~$\phi$. Note that we choose the Alexander grading on $\Ad$ that is induced by $T$. We call $(\CFTd(T),\partial)$ the \textbf{peculiar module of~$T$}.
\end{definition}
\begin{theorem}
	\(\CFTd(T)\) is indeed a well-defined peculiar module. Furthermore, its bigraded chain homotopy type is an invariant of the tangle~\(T\).
\end{theorem}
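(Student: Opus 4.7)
The plan is to verify the curvature relation and the invariance claim by importing the standard Heegaard Floer machinery, with the sole new ingredient being a careful bookkeeping of the multiplicities at the basepoints $p_i, q_i$. First I would check that $\partial$ is a well-defined bigraded $\Id$-module map of the required degree. Admissibility and the standard energy/area argument guarantee that, at a fixed Maslov index, only finitely many $\phi\in\pi_2^+(\x,\y)$ admit holomorphic representatives, so \eqref{eqn:differentialOnCFTd} is a finite sum. A direct domain calculation, extending the one in definitions~\sref{def:AlexGradingOnDomains}--\sref{def:AlexgradingonGenerators} across the basepoint arcs, shows that $A(\x)-A(\y)$ equals the Alexander grading of $\iota_{s(\x)} p^{n_p(\phi)} q^{n_q(\phi)} \iota_{s(\y)}$ under the conventions of definition~\ref{def:AlexGradingOnAd}; likewise $\mu(\phi)=1$ translates to $\delta$-degree $-1$.

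Second, the curvature identity $\partial^2=(p^4+q^4)\otimes\id$. As usual this is extracted by enumerating the boundary of the compactified $1$-dimensional moduli spaces $\overline{\mathcal M}(\phi)/\mathbb R$ for $\phi\in\pi_2^+(\x,\z)$ with $\mu(\phi)=2$. Their ends are of three types: (i) two-level broken flows, which give $\partial^2\x$; (ii) $\beta$-boundary degenerations, which cancel in pairs by the standard involution argument; and (iii) $\alpha$-boundary degenerations. The only $\alpha$-curve in a peculiar Heegaard diagram bounding a region that contains basepoints of type $p_i$ or $q_i$ is $\red S^1$, and the two such regions are the front disc (containing $p_1,p_2,p_3,p_4$) and the back disc (containing $q_1,q_2,q_3,q_4$). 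For each $\x$, the front degeneration is the unique disc with $n_{p_i}=1$ for all $i$ starting at the sector determined by $s(\x)$; it contributes the weight $\iota_{s(\x)}p^4 = p_{s(\x)+3}p_{s(\x)+2}p_{s(\x)+1}p_{s(\x)}$, and similarly the back gives $\iota_{s(\x)}q^4$. Any $\alpha$-boundary degeneration along an $\alpha$-circle other than $\red S^1$ bounds a region that contains at most $z_j$-basepoints and is therefore excluded from $\pi_2^+$, or else cancels by the standard doubly-periodic argument. Summing yields $\partial^2\x=(p^4+q^4)\otimes\x$.

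Third, invariance. Following remark~\ref{rem:PeculiarHDmoves}, I need to handle isotopies, handleslides never crossing $\red S^1$, and stabilisations. The standard constructions of~\cite{OSHFL} — continuation maps for isotopies, triangle maps for handleslides, and the local isomorphism for stabilisation — carry over essentially verbatim once one records the $p^{n_p}q^{n_q}$ weights coming from basepoint multiplicities. Since no permitted Heegaard move drags a curve across the arcs joining $p_i$ to $q_i$, the weights transform predictably, and the resulting chain maps are morphisms of curved type D structures by the usual degeneration analysis (with the same curvature correction as above now appearing on both sides of the intertwining relation). The compositions corresponding to a trivial sequence of moves are homotopic to the identity by the standard argument, with algebra weights again tracked by domain additivity. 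Finally, since all constructions factor through $\Id$-module maps with weights in $\Ad$, one obtains a well-defined homotopy equivalence in $\pqMod$. The main obstacle is the local model of step two: one has to rule out any further basepoint-degeneration contributions and verify that the two distinguished discs along $\red S^1$ each contribute with multiplicity one; once this local picture is settled, the remaining invariance statements are adaptations of arguments already in the literature.
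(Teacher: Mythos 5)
Your overall strategy---extracting the curvature from boundary degenerations in the ends of index-$2$ moduli spaces and then adapting the Ozsv\'ath--Szab\'o isotopy/handleslide/stabilisation arguments---is the same as the paper's, but step (ii) of your curvature argument contains a genuine error. You claim that $\beta$-boundary degenerations ``cancel in pairs by the standard involution argument''. They do not: the very result you need on the $\alpha$-side (Fact~\ref{fact:OSHFL_lemma_5_5}, i.e.\ \cite[theorem~5.5]{OSHFL}) says that the number of holomorphic boundary degenerations in a fixed nonnegative index-$2$ class is \emph{odd}, so no pairwise cancellation is available. The reason $\beta$-degenerations do not contribute is algebraic, not analytic: every component of $\Sigma\smallsetminus\B$ containing a basepoint $p_i$ also contains $q_i$, because the arc joining $p_i$ to $q_i$ meets only the $\alpha$-circle ${\red S^1}$ and no $\beta$-curve; hence such a degeneration would be weighted by a product involving both $p$'s and $q$'s, which vanishes in $\Ad$ since $p_iq_j=0$. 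This is exactly how the paper disposes of them, and without it your enumeration of ends of the index-$2$ moduli spaces is incomplete.

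Relatedly, you defer the ``local model''---that the only relevant $\alpha$-degenerations are the front and back discs of $\Sigma\smallsetminus\A$, that each contributes with multiplicity one, and that the remaining level is then forced to be constant---as ``the main obstacle''. This is precisely the content of the two analytic inputs the paper imports from \cite{OSHFL}: Fact~\ref{fact:OSHFL_lemma_5_4} shows a degeneration uses up Maslov index at least $2$, so the other level is constant, $\x=\z$, and the weight indeed starts and ends at $\iota_{s(\x)}$; Fact~\ref{fact:OSHFL_lemma_5_5} identifies the class with one of the two components of the complement of the attaching curves and gives the odd count. Citing these closes the gap; leaving them open leaves $\partial^2=p^4+q^4$ unproved. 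Finally, ``essentially verbatim'' undersells the invariance step: in the curved setting one still has to show that the handleslide triangle maps are isomorphisms, which the paper does via an area filtration on generators of a fixed site, Alexander and $\delta$-grading (using admissibility), and, for $\alpha$-handleslides, by first killing $p_i,q_i$ to identify the distinguished cycle before running the filtration argument; this extra bookkeeping is routine but not automatic.
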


\begin{lemma}\label{lem:CFTdGradings}
\(\partial\) increases the \(\delta\)-grading by 1 and preserves the Alexander grading. (As~usual, the grading on a tensor product is given by the sum of the gradings of the tensor factors.)
\end{lemma}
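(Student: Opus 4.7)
The plan is to reduce both grading statements to computations already carried out in chapter~\ref{chapter:categorification} together with a careful tracking of what the multiplicities $n_p(\phi)$ and $n_q(\phi)$ contribute when one passes from the tangle Heegaard diagram to the peculiar one. Both $\mu$ and the multiplicities $n_{p_i},n_{q_i}$ are additive under splicing domains, so it is enough to verify the claimed behaviour one domain at a time for each $\phi\in\pi_2^+(\x,\y)$ with $\mu(\phi)=1$.

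For the $\delta$-grading, I would split the argument according to whether $\phi$ touches the new basepoints. If $n_p(\phi)=n_q(\phi)=0$, then $\phi$ avoids the capping regions and can be identified with a domain in $\pi_2^\partial(\x,\y)$ in the tangle Heegaard diagram from section~\ref{sec:HDsForTangles}, so the equality $\delta(\y)-\delta(\x)=\mu(\phi)=1$ is exactly the content of Definition~\sref{def:homgrading}, and the tensor factor contributes nothing. For domains with $n_{p_i}>0$ or $n_{q_i}>0$, the idea is to write
$$\phi=\phi_0+\sum_i n_{p_i}(\phi)\,D_i^p+\sum_i n_{q_i}(\phi)\,D_i^q,$$
where $\phi_0$ avoids all new basepoints and $D_i^p,D_i^q$ are the elementary bigon regions inserted at the $i$-th tangle end (each bounded by an arc of $\red S^1$ meeting the identified $\alpha$-arc endpoints), then use additivity of $\mu$: a direct computation with the combinatorial formula of Definition~\sref{def:homgrading} shows that each $D_i^p$ and each $D_i^q$ has $\mu=\tfrac12$, so $\mu(\phi_0)=\mu(\phi)-\tfrac12(n_p(\phi)+n_q(\phi))$. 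Since $\mu(\phi)=1$, this gives $\delta(\y)-\delta(\x)=1-\tfrac12(n_p(\phi)+n_q(\phi))$, which matches $\delta(\x)+1=\delta(p^{n_p(\phi)}q^{n_q(\phi)})+\delta(\y)$ using $\delta(p_i)=\delta(q_i)=\tfrac12$ from Definition~\ref{def:AlexGradingOnAd}.

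For the Alexander grading, I would identify the contributions of the multiplicities at $p_i$ and $q_i$ with those of the front and back $\Z$-components from Definition~\sref{def:AlexGradingOnDomains}. Concretely, in the construction of the peculiar diagram the basepoint $p_i$ sits in the region that used to be bounded on one side by the front arc of the $i$-th circle of $\Z^\alpha$, and $q_i$ similarly corresponds to the back arc; so $n_{p_i}(\phi)$ equals the multiplicity along the front component and $n_{q_i}(\phi)$ equals the multiplicity along the back component. Combined with the definition of $A=A^f+A^b$ and the fact that the Alexander grading on $\Ad$ is set up so that $p_i$ and $q_i$ carry exactly the weight $a^{\pm1}$ attached to the $i$-th puncture of the appropriate colour and sign (depending on orientation), the Alexander degree of $p^{n_p(\phi)}q^{n_q(\phi)}$ cancels the shift $A(\y)-A(\x)$ coming from $\phi$, so $\partial$ is graded of degree $0$ in $A$.

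The main obstacle is the Euler-measure bookkeeping for the elementary regions $D_i^p,D_i^q$: one must verify, via the corner-count formula in Definition~\sref{def:homgrading}, that each capping bigon has Maslov index exactly $\tfrac12$ and that this accounts for all the discrepancy between $\mu$ computed in the peculiar diagram and the $\delta$-grading inherited from $\CFT$. Everything else is a straightforward translation between the two Heegaard diagrams and the definitions of the gradings on $\Ad$.
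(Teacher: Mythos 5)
Your treatment of the Alexander grading is fine and is essentially what the paper does: $n_{p_i}(\phi)$ and $n_{q_i}(\phi)$ are exactly the multiplicities along the front and back components of $\Z^\alpha$ entering $A^f$ and $A^b$ in definition~\sref{def:AlexGradingOnDomains}, and the grading on $\Ad$ is rigged so that these cancel $A(\y)-A(\x)$. The $\delta$-grading argument, however, has a genuine gap: the decomposition $\phi=\phi_0+\sum_i n_{p_i}(\phi)D_i^p+\sum_i n_{q_i}(\phi)D_i^q$ does not exist. The peculiar Heegaard diagram contains no ``elementary bigon regions inserted at the tangle ends'': the arc joining $p_i$ to $q_i$ is not an attaching curve (it meets no curve other than $S^1$, once), so the construction creates no new regions, and the basepoints simply sit inside ordinary, generally large regions of the diagram. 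More fundamentally, your $\phi_0$ is required to be a domain in $\pi_2(\x,\y)$ avoiding all basepoints, i.e.\ an element of $\pi_2^\partial(\x,\y)$, and by lemma~\sref{lem:pidxynonempty} such a domain exists only when $\x$ and $\y$ have the same site and the same Alexander grading --- which fails precisely in the cases you need, namely whenever the algebra element $p^{n_p(\phi)}q^{n_q(\phi)}$ is nontrivial (already for the index-$1$ discs in the one-crossing diagrams of figure~\ref{fig:AlexCodesForOneCrossingsWithDelta}). Subtracting copies of the regions containing the basepoints also destroys the boundary condition $d(d\phi\cap\B)=\x-\y$, so there is no way to repair the splitting.

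The numerical conclusion you are after, $\delta(\y)-\delta(\x)=1-\tfrac12\bigl(n_p(\phi)+n_q(\phi)\bigr)$, is correct, but it has to be obtained by comparing one and the same domain in the two diagrams rather than by splitting it. If $\varphi$ denotes the domain in the tangle Heegaard diagram of section~\ref{sec:HDsForTangles} corresponding to $\phi$, then by definition $\delta(\y)-\delta(\x)=\mu(\varphi)$, and replacing the punctures by basepoints changes the Euler-measure bookkeeping so that $\mu(\varphi)=\mu(\phi)-\tfrac12\bigl(n_p(\phi)+n_q(\phi)\bigr)-\sum_j n_{z_j}(\phi)$. Since the discs counted in the differential have $\mu(\phi)=1$ and vanishing multiplicity at the $z_j$, this gives the identity above, and the deficit $\tfrac12(n_p+n_q)$ is exactly the $\delta$-degree of $p^{n_p(\phi)}q^{n_q(\phi)}$ in $\Ad$. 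In other words, the ``$\tfrac12$ per basepoint'' is a feature of how the Maslov index changes when the boundary components are collapsed, not of auxiliary regions that can be split off from $\phi$; your case $n_p=n_q=0$ is the only one your argument actually covers, and there the claim is immediate anyway.
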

\begin{proof}
Recall that the relative $\delta$-grading on generators is defined via the Maslov index~$\mu$ of connecting domains, see definition~\sref{def:homgrading}. When using peculiar Heegaard diagrams, we replace punctures by basepoints. Of course, this does not change the Maslov index formula. However, it \emph{does} change the relationship between the Maslov index and the $\delta$-grading. If $\phi\in\pi_1(\x,\y)$ is a domain in a peculiar Heegaard diagram, let $\varphi$ be the corresponding domain in the usual tangle Heegaard diagram from section~\ref{sec:HDsForTangles}. Then 
$$\delta(\y)-\delta(\x)=\mu(\varphi)=\mu(\phi)-\tfrac{1}{2}\#\{p_i,q_i\}-\#\{z_j\}.$$
The holomorphic discs contributing to the differential  in (\ref{eqn:differentialOnCFTd}) have Maslov index 1 and have multiplicity 0 at the basepoints $z_j$. So, the first claim follows from the definition of the $\delta$-grading on $\Ad$. The second statement follows directly from the definitions of the Alexander gradings of generators and algebra elements. 
\end{proof}
\begin{lemma}
For each \(\x\in\CFTd\), the sum on the right-hand side of  \eqref{eqn:differentialOnCFTd} is finite.
\end{lemma}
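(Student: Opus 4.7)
My plan is to reduce the sum to a finite one in three steps, combining the grading control from Lemma~\ref{lem:CFTdGradings}, an algebraic relation in $\Ad$, and admissibility of the peculiar Heegaard diagram. Since $\mathbb{T}$ is finite, it suffices to fix $\y \in \mathbb{T}$ and show that only finitely many distinct nonzero terms arise in the $\y$-component of $\partial\x$.

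The first step uses the $\delta$-grading. By Lemma~\ref{lem:CFTdGradings}, any nonzero contribution from $\x$ to $\y$ carries an algebra element of $\delta$-grading equal to $\delta(\x)-\delta(\y)+1$. Since $\delta(p^{n_p}q^{n_q})=\tfrac{1}{2}(n_p+n_q)$, this pins down the total $n_p(\phi)+n_q(\phi)$ to a single value $N$.

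The second step uses the algebraic relation $pq=qp=0$ in $\Ad$, which is immediate from the ``oppositely oriented intervals'' rule in Definition~\ref{def:Ad}: any $\phi$ with both $n_p(\phi)>0$ and $n_q(\phi)>0$ contributes the zero algebra element, hence nothing at all. For the surviving $\phi$ we have $n_p(\phi)=N$ and $n_q(\phi)=0$, or vice versa, and bracketing by the idempotents $\iota_{s(\x)}$ and $\iota_{s(\y)}$ selects a unique monomial of the form $p_{j_1}\cdots p_{j_N}$ or $q_{j_1}\cdots q_{j_N}$, since starting at site $s(\x)$ and taking $N$ steps around the quiver of $\Ad$ determines the sequence of sites visited. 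The individual multiplicities $n_{p_i}(\phi)$ (respectively $n_{q_i}(\phi)$) are thus completely determined, and at most two distinct algebra elements can occur in the $\y$-component of $\partial\x$.

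For the final step, I would fix such a multiplicity vector at all basepoints (with $n_{z_j}=0$) and argue that only finitely many $\phi \in \pi_2^+(\x,\y)$ with $\phi \geq 0$ everywhere realise it --- the only ones that can contribute, since otherwise $\widehat{\mathcal{M}}(\phi) = \emptyset$. Any two such domains differ by a periodic domain with zero multiplicity at every basepoint, which by admissibility is either zero or has both positive and negative multiplicities. A standard Farkas-type separation argument then yields a strictly positive area function $A$ on the regions of $\Sigma$ vanishing on all such periodic domains; hence $A(\phi)$ is determined by the basepoint multiplicities alone, and positivity of $A$ on every region bounds every multiplicity of $\phi$, giving finiteness. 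The main obstacle is really the algebraic input in the second step: the $\delta$-grading by itself does not constrain the individual $n_{p_i}$, and without the relation $pq=0$ the differential could \textit{a priori} involve infinitely many monomials of a fixed $\delta$-grading.
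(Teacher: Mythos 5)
Your proof is correct and takes essentially the same route as the paper: after fixing $\y$, the paper also uses lemma~\ref{lem:CFTdGradings} to conclude that there are at most two possible algebra coefficients $a(\phi)$ (equivalently, fixed basepoint multiplicities), and then appeals to admissibility via the argument of \cite[lemma~4.13]{OSHF3mfds}. The area-function/separation argument you spell out, together with the observation that only domains with non-negative multiplicities everywhere can have $\#\widehat{\mathcal{M}}(\phi)\neq 0$, is exactly the content of that citation, so there is no gap.
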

\begin{proof}
The proof is essentially the same as in link Floer homology, see \cite[lemma~4.2]{OSHFL}. Since $\CFTd$ is finitely generated, it is sufficient to show that the coefficient of each $\y\in\CFTd$ is a finite sum. Note that by the previous lemma, the difference of the $\delta$-gradings of $\x$ and $\y$ determines the $\delta$-grading of $a(\phi)$. Thus, there are at most two choices for the coefficients $a(\phi)$. So let us also fix the multiplicities of $\phi$ at the basepoints. We can now argue as in the proof of \cite[lemma~4.13]{OSHF3mfds}, using admissibility of the underlying Heegaard diagram.
\end{proof}
In the following, we need two analytical facts from \cite{OSHFL}.
\begin{fact}\label{fact:OSHFL_lemma_5_4}
\cite[lemma~5.4]{OSHFL} Given a homology class \(\phi\) of \(\beta\)-injective boundary degenerations, write \(\phi\) as a linear combination of connected components of \(\Sigma\smallsetminus \B\). Then its Maslov index \(\mu(\phi)\) is equal to twice the sum of the coefficients. The same holds for \(\A\)-injective boundary degenerations.\hfill \qedsymbol
\end{fact}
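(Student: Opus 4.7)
The plan is to reduce the claim to a statement about individual connected components by additivity, and then to verify the coefficient 2 by a direct index computation. Specifically, I would first establish that $\mu$ is additive on boundary degenerations: if $\phi$ and $\psi$ are both $\beta$-injective boundary degenerations, then $\mu(\phi+\psi)=\mu(\phi)+\mu(\psi)$. This is the analogue of Lemma \ref{lem:muisadditive}, and it follows from the same type of intersection-theoretic argument (noting that the ``average intersection number'' terms that appear in the Sarkar-style additivity proof vanish because here $\partial\phi$ and $\partial\psi$ both lie on $\boldsymbol{\beta}$, and $\boldsymbol{\beta}\cdot\boldsymbol{\beta}=0$ in homology once we contract boundary components). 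Writing $\phi=\sum c_i R_i$, additivity then reduces the claim to showing $\mu(R_i)=2$ for each connected component $R_i$ of $\Sigma\smallsetminus\boldsymbol{\beta}$.

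Next, I would compute $\mu(R_i)=2$ directly. For this, I would first argue that each connected component $R_i$ of $\Sigma\smallsetminus\boldsymbol{\beta}$, viewed as a domain in the peculiar Heegaard diagram, is topologically a disc. This follows from the defining property of the $\boldsymbol{\beta}$-curves in definition \sref{def:HDsfortangles} (surgery along $\boldsymbol{\beta}$ yields a union of annuli) together with the observation that contracting the boundary components to basepoints in the peculiar setup turns these annuli into spheres, so the $\boldsymbol{\beta}$-complement consists precisely of discs, one around each basepoint. Hence $\chi(R_i)=1$ and, since $R_i$ has no corners (its boundary lies entirely on $\boldsymbol{\beta}$), its Euler measure equals its Euler characteristic: $e(R_i)=1$.

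Finally, I would invoke the appropriate combinatorial formula for the Maslov index of a boundary degeneration, which in the $\boldsymbol{\beta}$-injective case reads $\mu(R_i)=2\,e(R_i)$. Unlike the Lipshitz formula $\mu(\phi)=e(\phi)+m_{\boldsymbol{x}}(\phi)+m_{\boldsymbol{y}}(\phi)$ for Whitney discs between generators, here there are no intersection points to attach to, and the Fredholm index of the linearised $\overline{\partial}$-operator on a disc with totally real boundary conditions in the torus spanned by the $\boldsymbol{\beta}$-curves contributes an extra factor of $2$. Combining $e(R_i)=1$ with this formula gives $\mu(R_i)=2$, which completes the proof after summing over components. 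The case of $\boldsymbol{\alpha}$-injective degenerations is identical with the roles of $\boldsymbol{\alpha}$ and $\boldsymbol{\beta}$ exchanged.

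The main obstacle will be the careful verification of the combinatorial Maslov index formula for boundary degenerations, since the standard Lipshitz formula from definition \sref{def:homgrading} does not literally apply in the absence of intersection points. The cleanest route is probably to appeal directly to the Fredholm index computation, or to deform the boundary degeneration to a model situation (a standard disc in $\mathbb{C}$ with boundary on a circle) where the index 2 is evident.
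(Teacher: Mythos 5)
This statement is not proved in the thesis at all: it is quoted as a \emph{fact} from \cite{OSHFL} (lemma~5.4 there), so there is no in-paper argument to compare your proposal with. Judged on its own merits, your outline has a genuine gap, and it sits exactly at the point you flag as ``the main obstacle''.

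The problem is the claim that every connected component of $\Sigma\smallsetminus\B$ in a peculiar Heegaard diagram is a disc. The components are planar (capping them off along the $\beta$-circles must produce spheres, by the defining condition on $\B$), but they are almost never discs: there are $n+m$ components sharing $2(g+m+n-1)$ boundary circles, so as soon as $g\geq 1$ or the tangle has closed components, some component has two or more boundary circles. For instance, in the paper's own genus-one diagram for the $(2,-3)$-pretzel tangle (figure~\ref{fig:mutationpretzeltangleHD}) the two components of $\Sigma\smallsetminus\B$ have Euler characteristics summing to $0$, so they cannot both be discs. Consequently $e(R_i)\neq 1$ in general, and the formula $\mu=2e(\phi)$ you propose for boundary degenerations cannot be the right one: it would contradict the very statement being proved, which asserts index $2$ per component \emph{regardless} of the component's topology (e.g.\ the full $\alpha$-complement in a standard genus-$g$ diagram has Euler measure $2-2g-\dots$ but boundary-degeneration index $2$). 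The genuine content of Ozsváth--Szabó's lemma is precisely this index computation for a single, topologically arbitrary planar component, which your sketch defers rather than supplies; the model computation for the round disc in $\mathbb{C}$ only covers the disc case. A smaller but related issue: the additivity step is justified by mimicking lemma~\ref{lem:muisadditive}, but that lemma concerns the Whitney-disc index $e+m_{\x}+m_{\y}$ (two Lagrangian boundary conditions), whereas a boundary degeneration is a different Fredholm problem with boundary only on $\mathbb{T}_\beta$; additivity does hold, but for a different reason (linearity of the Maslov class of the boundary loop), so as written the Sarkar-type corner argument does not apply. In summary, your argument is complete only for genus-zero diagrams of tangles without closed components, where both components of $\Sigma\smallsetminus\B$ really are discs; in the generality in which the fact is used in the thesis, the key index computation is missing.
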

\begin{fact}\label{fact:OSHFL_lemma_5_5} \cite[theorem~5.5]{OSHFL} Given a surface \(\Sigma\) of genus \(g\), equipped with a set \(\A\) of \((g+1)\) attaching circles and a pseudo-holomorphic curve \(\psi\) of Maslov index \(\mu(\psi)=2\) and with non-negative multiplicities, then \(\psi\) is equal to one of the two connected components of \(\Sigma\smallsetminus \A\) and the number of pseudo-holomorphic boundary degenerations in the homology class of \(\psi\) is odd.\hfill\qedsymbol
\end{fact}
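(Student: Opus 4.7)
The first half of the statement is essentially combinatorial and follows from Fact~\ref{fact:OSHFL_lemma_5_4}. Since $\A$ consists of $g+1$ attaching circles and the analogous $\A$-injective statement holds, we can write the homology class of $\psi$ as a non-negative integer combination $\sum c_i R_i$, where the $R_i$ are the connected components of $\Sigma\smallsetminus\A$. By the $\A$-version of Fact~\ref{fact:OSHFL_lemma_5_4}, we have $\sum c_i = \mu(\psi)/2 = 1$. Because the $c_i$ are non-negative integers summing to $1$, exactly one $c_i=1$ and the rest vanish, so $\psi$ is (homologous to) a single component $R$ of $\Sigma\smallsetminus\A$. The fact that there are exactly two components follows from the hypothesis that $\A$ has one more curve than the genus, which forces at least one linear dependence among the $[\alpha_i]\in H_1(\Sigma)$, and from a rank-nullity argument applied to the map $H_1(\A)\to H_1(\Sigma)$.

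For the second half -- showing the $\mathbb{F}_2$-count of pseudo-holomorphic representatives in the class of $R$ is odd -- I would follow the strategy of Ozsv\'ath--Szab\'o \cite{OSHFL}. First, I would set up the moduli space $\mathcal{N}(R)$ of pseudo-holomorphic boundary degenerations in the class $R$. These are holomorphic maps from a disc with marked boundary arcs into $\mathrm{Sym}^N(\Sigma)$ (for $N$ the total multiplicity, here equal to the sum of the Euler measures realised) whose boundary sits on the totally real tori determined by $\A$, subject to an additional $\mathbb{R}$-translation action to quotient out. The expected dimension is $\mu(R)-1=1$ before dividing by $\mathbb{R}$, giving a $0$-dimensional quotient. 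After establishing transversality for a generic choice of almost-complex structure on $\mathrm{Sym}^N(\Sigma)$, I would prove compactness modulo boundary degenerations and disc bubbles, ruling out the latter by the Maslov index constraint.

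Next, I would show that the parity of $\#\mathcal{N}(R)$ is invariant under isotopy of the $\alpha$-curves and choice of almost-complex structure, via a standard cobordism argument: for a generic path, the parametrised moduli space is a compact $1$-manifold whose boundary enumerates the counts at the endpoints, so the parities agree. This reduces the computation to a model case, where $R$ can be taken to be a planar component of $\Sigma\smallsetminus\A$ bounded by a ``standard'' configuration. The plan is then to identify this model moduli space with a concrete moduli space of branched covers of the disc, for which the count has been done explicitly (e.g.\ by Ozsv\'ath--Szab\'o via a Lefschetz fibration / symmetric function argument showing it equals $1 \bmod 2$).

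The hardest step is the analytical core of the second half: establishing transversality, compactness, and the degeneration/gluing theorems for boundary degenerations in this setting, where the totally real boundary condition is degenerate in the sense that multiplicities can concentrate along $\A$. The standard holomorphic-disc machinery has to be supplemented with an analysis of codimension-one strata corresponding to $\alpha$-boundary degenerations splitting off further boundary degenerations, and one must verify that the contributions of such strata cancel in pairs mod $2$ so that the count is well-defined and cobordism-invariant. Once this analytic foundation is in place, the model computation is comparatively routine, and the statement follows.
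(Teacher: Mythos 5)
You should first note that the paper does not prove this statement at all: it is introduced as a \emph{fact}, stated with a \qedsymbol{} and a citation, i.e.\ it is imported verbatim from Ozsv\'ath--Szab\'o \cite[theorem~5.5]{OSHFL}, exactly like fact~\ref{fact:OSHFL_lemma_5_4}. So there is no in-paper argument to compare your write-up against; the thesis deliberately treats this as an analytic black box which feeds into the proof of theorem~\ref{defthm:CFTd}.

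Judged on its own terms, your first paragraph is correct and is the easy part: writing the class of $\psi$ as a non-negative integer combination of the components of $\Sigma\smallsetminus\A$, the $\A$-version of fact~\ref{fact:OSHFL_lemma_5_4} forces the coefficients to sum to $1$, hence exactly one component occurs, and the fact that $\Sigma\smallsetminus\A$ has precisely two components is the elementary homological observation you indicate. The genuine gap is the second half, which is the actual content of the theorem. Everything that makes the parity statement true --- setting up the moduli space of boundary degenerations with its degenerate totally real boundary condition on the $\alpha$-torus, achieving transversality there (generic perturbation arguments do not apply naively, since the relevant curves can be multiply covered or have components mapping into the boundary torus), proving the compactness and gluing results needed to show the mod-$2$ count is well defined and deformation invariant, and carrying out the model computation showing the count is $1\bmod 2$ --- appears in your proposal only as ``I would establish\dots''. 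Asserting that disc bubbles are excluded by the Maslov index and that the codimension-one strata cancel in pairs is a statement of what must be proved, not a proof; these are precisely the points for which the thesis cites \cite[theorem~5.5]{OSHFL}. As written, your proposal therefore reproves the combinatorial half and restates the analytic half as a plan, so it cannot be accepted as a proof of the fact, though it is a reasonable summary of the strategy one would expect to find in the cited source.
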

\begin{proof}[Proof of theorem~\ref{defthm:CFTd}]
Checking the $\partial^2$-identity is analogous to the link case; we can follow \cite[proof of lemma~4.3]{OSHFL} and count ends of moduli spaces of Maslov index 2 curves. We fix two generators $\x$ and $\z$ and consider the disjoint union of moduli spaces $\widehat{\mathcal{M}}(\phi)$, where $\phi$ varies over those curves in $\pi_2(\x,\z)$ with $\mu(\phi)=2$ and $a(\phi)=a$ for some fixed $a\in \Ad$. (In particular, this fixes the multiplicities of $\phi$ at the $p_i$ and $q_i$.)
If there are no boundary degenerations, there is an even number of ends, so the $a\otimes \z$-component of $\partial^2 \x$ vanishes. If there are boundary degenerations, then by fact~\ref{fact:OSHFL_lemma_5_4} above, they contribute at least 2 to the Maslov index, so the remaining curve has to be constant, hence $\x=\z$. By fact~\ref{fact:OSHFL_lemma_5_5}, we get a boundary degeneration for each component of $\Sigma\smallsetminus\A$ and $\Sigma\smallsetminus\B$. But since $p_iq_j=0$ for all $i,j$, $\beta$-injective boundary degenerations do not appear. $\alpha$-injective boundary degenerations do appear and they contribute exactly the terms $p^4+q^4$. All other ends appear in pairs again, so their contributions cancel. \\
It remains to show that the peculiar module is an invariant of the tangle $T$. Again, we adapt the arguments for link Floer homology, more precisely the proofs of \cite[theorems~4.4 and~4.7]{OSHFL}, which rely on the general machinery developed in \cite{OSHF3mfds}. We need to show  that the three Heegaard moves from remark~\ref{rem:PeculiarHDmoves} do not change the homotopy type of the peculiar module of $T$. 
\begin{itemize}
\item isotopies: Firstly, we need to study isotopies that do not change transversality of $\A$ and $\B$. In the case for closed manifolds, this is done in \cite[theorem~6.1]{OSHF3mfds}. To adapt the proof to our setting, we only need to replace the basepoint~$z$ by the marked points $p_i$ and $q_i$, and count multiplicities of holomorphic discs at those points in the chain maps. Boundary degenerations do not appear because of fact~\ref{fact:OSHFL_lemma_5_4}. In the case of closed components in $T$, admissibility ensures that all sums are finite. \\
Secondly, we need to check that exact Hamiltonian isotopies do not change the homotopy type of $\CFTd(T)$. However, we can apply the same arguments as above to adapt the proof for closed manifolds \cite[theorem~7.3]{OSHF3mfds} to ours.
\item handleslides: The main ingredient for proving handleslide invariance for Heegaard Floer homology of closed manifolds and links therein are holomorphic triangles. Only small modifications are necessary to adapt the proof from \cite[section~6.3]{OSHFL}, based on the arguments in \cite[section~9]{OSHF3mfds} to our setting.\\
Let us consider a Heegaard diagram $(\Sigma,\A,\B)$ and perform a handleslide of one curve in $\B$ over another, away from all basepoints. Let $\GG$ denote the union of the new $\beta$-curve with a small Hamiltonian translate of all unchanged $\beta$-curves. Also let $\D$ denote a small Hamiltonian translate of all $\beta$-curves. Then we consider the maps 
\begin{equation*}
F_{\A\B\GG}:~\CFTd(\A,\B)\otimes \CFTd(\B,\GG)\rightarrow \CFTd(\A,\GG),
\end{equation*}
defined by using a holomorphic triangle count, recording multiplicities of the triangles at the basepoints in the same way as in $\CFTd$. Note that such triangle maps preserve both Alexander and $\delta$-gradings. Since the basepoint $p_i$ is in the same region as $q_i$ for all $i$, $\CFTd(\B,\GG)=\HF(\B,\GG,\{p_i=q_i,z_j\})$. By the proof of \cite[proposition~6.10]{OSHFL}, the latter is equal to $\Lambda^\ast V$, where $V$ is some finite-dimensional vector space, with a distinguished element $\Theta_{\B\GG}$. Then $F_{\A\B\GG}$ induces a map
$$\Psi_{\A\B\GG}:~\CFTd(\A,\B)\rightarrow \CFTd(\A,\GG),$$
by sending a generator $x$ to $F_{\A\B\GG}(x\otimes\Theta_{\B\GG})$.\\
In the same way, we can define the maps $\Psi_{\A\GG\D}$ and $\Psi_{\A\B\D}$. Associativity of the triangle maps show that actually 
$$\Psi_{\A\GG\D}\circ\Psi_{\A\B\GG}=\Psi_{\A\B\D}.$$
Hence, it only remains to show that the map on the right is an isomorphism. For this we adapt the arguments from \cite[proof of proposition~9.8]{OSHF3mfds}. Let us consider generators in a fixed Alexander and $\delta$-grading and a fixed site. If this set is non-empty, let us fix a generator $\x_0$ in this set. Then, we can define a filtration as follows. Choose an area function on $\Sigma$ such that periodic domains avoiding basepoints have signed area~0. Then, we define $\mathcal{F}(\x):=-\text{Area}(\phi)$, where $\phi\in\pi_{2}^\partial(\x_0,\x)$. Note that for generators in the same site and Alexander grading, there is always such a domain $\phi$ by lemma~\sref{lem:pidxynonempty}. By the choice of our area function, $\mathcal{F}$ is independent of $\phi$. Now we can use the arguments from \cite[proof of proposition~9.8]{OSHF3mfds} to see that the restriction of $\Psi_{\A\B\D}$ to our fixed set of generators can be written as the sum of the identity map and a part which strictly lowers the filtration level. \\
We can now put all filtrations together to obtain a filtration on all generators. Indeed, note that in a fixed $\delta$-grading, there are only arrows between generators of the same Alexander grading and the same site. Furthermore, because the $\delta$-grading on $\Ad$ is non-negative, there are no arrows in $\Psi_{\A\B\D}$ along which the $\delta$-grading increases. Since there are only finitely many generators, we can argue inductively, starting at the lowest supported filtration level to see that $\Psi_{\A\B\D}$ is an isomorphism, as in \cite[lemma~9.10]{OSHF3mfds}.\\
Next, we look at handleslides of $\alpha$-curves. Let us consider a Heegaard diagram $(\Sigma,\A,\B)$ and perform a handleslide of one curve in $\A$ over another, away from all basepoints; as before let $\GG$ denote the union of the new $\alpha$-curve with a small Hamiltonian translate of all unchanged $\alpha$-curves. Also, let $\D$ denote a small Hamiltonian translate of all $\alpha$-curves. Then we consider the maps 
\begin{equation*}
F_{\GG\A\B}:~\CFTd(\GG,\A)\otimes \CFTd(\A,\B)\rightarrow \CFTd(\GG,\B),
\end{equation*}
as above. Since all $p_i$ lie in the same component of $\Sigma\smallsetminus(\A\cup\GG)$, and similarly the $q_i$,  $\CFTd(\GG,\A)/(p_i=0=q_i)=\HF(\Sigma,\GG,\A,\{p_1,q_1,z_j\})=\Lambda^\ast V$ as above. So we can restrict this chain map in the second component to the unique element in highest homological degree in $\Lambda^\ast V$, i.\,e.\ lowest in $\delta$-grading. Together with the algebra grading, this means that this restriction is still a chain map. Now we can argue as above.
\item stabilisation: For closed 3-manifolds, this is established in \cite[section~10]{OSHF3mfds}. The main ingredient is a glueing theorem for holomorphic discs which identifies the moduli spaces before and after the glueing. Thus, stabilisation not only leaves the generators unchanged, but also the structure maps, so the arguments carry over to our case without any alteration. \qedhere
\end{itemize}
\end{proof}

\begin{example}[rational tangles]\label{exa:CFTdRatTang}
Figure~\ref{fig:CFTdForSomeRatTangles} shows the peculiar modules of some very simple 4-ended tangles. As shown in example~\sref{exa:HDforonecrossing}, every rational tangle $T$ has a tangle Heegaard diagram with just a single $\beta$-curve. Thus, we only count bigons in the differential of the peculiar invariant, and only those that do not occupy both~$p_i$ and~$q_j$. By tightening the $\beta$-curve, we can assume that there are no honest differentials in $\CFTd(T)$, i.\,e.\ that every bigon covers some $p_i$ or $q_i$. Then $\CFTd(T)$ is a loop, and its representative in the 4-punctured sphere is exactly the $\beta$-curve in the Heegaard diagram; compare this to proposition~\sref{prop:BSDrattangle}. 
\end{example}

\begin{figure}
\centering
\psset{unit=0.15}
\begin{subfigure}[b]{0.26\textwidth}\centering
\psset{unit=7}
\begin{pspicture}(-1,-1)(1,1)
\psarc[linecolor=violet]{<-}(2,0){1.41432}{-225}{-135}
\psarc[linecolor=darkgreen]{->}(-2,0){1.41432}{-45}{45}
\end{pspicture}
\end{subfigure}
\quad
\begin{subfigure}[b]{0.32\textwidth}\centering
\psset{unit=7}
\begin{pspicture}(-1,-1)(1,1)
\psline[linecolor=violet]{->}(0.9,-0.9)(-0.9,0.9)
\pscircle*[linecolor=white](0,0){0.3}
\psline[linecolor=darkgreen]{->}(-0.9,-0.9)(0.9,0.9)
\end{pspicture}
\end{subfigure}
\quad
\begin{subfigure}[b]{0.34\textwidth}\centering
\psset{unit=7}
\begin{pspicture}(-1,-1)(1,1)
\psline[linecolor=violet]{->}(-0.9,-0.9)(0.9,0.9)
\pscircle*[linecolor=white](0,0){0.3}
\psline[linecolor=darkgreen]{->}(0.9,-0.9)(-0.9,0.9)
\end{pspicture}
\end{subfigure}
\\
\begin{subfigure}[b]{0.26\textwidth}\centering
\begin{pspicture}(-10.3,-10.3)(10.3,10.3)
\psrotate(0,0){90}{
\psecurve[linecolor=blue](7,0)(0,0)(-7,0)(-8.3,8.3)(0,10)(8.3,8.3)(7,0)(0,0)(-7,0)
}

\psarc[linecolor=red](0,0){7}{0}{360}

\rput(7;45){\pscircle*[linecolor=white]{1}\pscircle[linecolor=violet]{1}}
\rput(7;135){\pscircle*[linecolor=white]{1}\pscircle[linecolor=darkgreen]{1}}
\rput(7;225){\pscircle*[linecolor=white]{1}\pscircle[linecolor=darkgreen]{1}}
\rput(7;315){\pscircle*[linecolor=white]{1}\pscircle[linecolor=violet]{1}}

\psdot(0,7)
\psdot(0,-7)

\rput[bl](0,7.5){$d$}
\rput[tl](0,-7.5){$b$}
\end{pspicture}
\end{subfigure}
\quad
\begin{subfigure}[b]{0.32\textwidth}\centering
\begin{pspicture}(-10.3,-10.3)(10.3,10.3)
\psrotate(0,0){90}{

\psecurve[linecolor=blue](8,-0.2)(7,0)(2,2)(0,7)(-0.2,8)
\psecurve[linecolor=blue](-4,4)(0,7)(-8.3,8.3)(-7,0)(-4,4)
\psecurve[linecolor=blue](-8,0.2)(-7,0)(-2,-2)(0,-7)(0.2,-8)
\psecurve[linecolor=blue](4,-4)(0,-7)(8.3,-8.3)(7,0)(4,-4)

\psarc[linecolor=red](0,0){7}{0}{360}

\rput(7;45){\pscircle*[linecolor=white]{1}\pscircle[linecolor=violet]{1}}
\rput(7;135){\pscircle*[linecolor=white]{1}\pscircle[linecolor=darkgreen]{1}}
\rput(7;225){\pscircle*[linecolor=white]{1}\pscircle[linecolor=violet]{1}}
\rput(7;315){\pscircle*[linecolor=white]{1}\pscircle[linecolor=darkgreen]{1}}

}
\psdot(0,7)
\psdot(7,0)
\psdot(0,-7)
\psdot(-7,0)

\rput[br](0,7.5){$d$}
\rput[tl](7.5,0){$c$}
\rput[tl](0,-7.5){$b$}
\rput[br](-7.5,0){$a$}
\end{pspicture}
\end{subfigure}
\quad
\begin{subfigure}[b]{0.34\textwidth}\centering
\begin{pspicture}(-10.3,-10.3)(10.3,10.3)
\psecurve[linecolor=blue](8,-0.2)(7,0)(2,2)(0,7)(-0.2,8)
\psecurve[linecolor=blue](-4,4)(0,7)(-8.3,8.3)(-7,0)(-4,4)
\psecurve[linecolor=blue](-8,0.2)(-7,0)(-2,-2)(0,-7)(0.2,-8)
\psecurve[linecolor=blue](4,-4)(0,-7)(8.3,-8.3)(7,0)(4,-4)

\psarc[linecolor=red](0,0){7}{0}{360}

\rput(7;45){\pscircle*[linecolor=white]{1}\pscircle[linecolor=violet]{1}}
\rput(7;135){\pscircle*[linecolor=white]{1}\pscircle[linecolor=darkgreen]{1}}
\rput(7;225){\pscircle*[linecolor=white]{1}\pscircle[linecolor=violet]{1}}
\rput(7;315){\pscircle*[linecolor=white]{1}\pscircle[linecolor=darkgreen]{1}}

\psdot(0,7)
\psdot(7,0)
\psdot(0,-7)
\psdot(-7,0)

\rput[bl](0,7.5){$d$}
\rput[bl](7.5,0){$c$}
\rput[tr](0,-7.5){$b$}
\rput[tr](-7.5,0){$a$}
\end{pspicture}
\end{subfigure}
\\
\begin{subfigure}[b]{0.26\textwidth}\centering
$\begin{tikzcd}[row sep=1.4cm, column sep=0.5cm]
\delta^{0}b^{(\textcolor{darkgreen}{0},\textcolor{violet}{0})}
\arrow[leftarrow,bend left=12]{r}{p_{43}+q_{12}}
& 
\delta^{0}d^{(\textcolor{darkgreen}{0},\textcolor{violet}{0})}
\arrow[leftarrow,bend left=10]{l}{p_{21}+q_{34}}
\end{tikzcd}$
\end{subfigure}
\quad
\begin{subfigure}[b]{0.32\textwidth}\centering
$\begin{tikzcd}[row sep=1.4cm, column sep=0.5cm]
\delta^{0}a^{(\textcolor{darkgreen}{\frac{1}{2}},\textcolor{violet}{-\frac{1}{2}})}
\arrow[leftarrow,bend right=7,swap]{r}{p_{432}}
\arrow[leftarrow,bend left=12]{d}{q_{341}}
& 
\delta^{\frac{1}{2}}d^{(\textcolor{darkgreen}{\frac{1}{2}},\textcolor{violet}{\frac{1}{2}})}
\arrow[leftarrow,bend right=7,swap]{l}{p_1}
\arrow[leftarrow,bend left=12]{d}{q_4}
\\
\delta^{\frac{1}{2}}b^{(\textcolor{darkgreen}{-\frac{1}{2}},\textcolor{violet}{-\frac{1}{2}})}
\arrow[leftarrow,bend right=7,swap]{r}{p_3}
\arrow[leftarrow,bend left=12]{u}{q_2}
&
\delta^{0}c^{(\textcolor{darkgreen}{-\frac{1}{2}},\textcolor{violet}{\frac{1}{2}})}
\arrow[leftarrow,bend right=7,swap]{l}{p_{214}}
\arrow[leftarrow,bend left=12]{u}{q_{123}}
\end{tikzcd}$
\end{subfigure}
\quad
\begin{subfigure}[b]{0.34\textwidth}\centering
$\begin{tikzcd}[row sep=1.4cm, column sep=0.35cm]
\delta^{0}a^{(\textcolor{darkgreen}{\frac{1}{2}},\textcolor{violet}{-\frac{1}{2}})}
\arrow[leftarrow,bend left=5,pos=0.8]{r}{q_1}
\arrow[leftarrow,bend right=12,swap]{d}{p_2}
& 
\delta^{-\frac{1}{2}}d^{(\textcolor{darkgreen}{-\frac{1}{2}},\textcolor{violet}{-\frac{1}{2}})}
\arrow[leftarrow,bend left=5,pos=0.2]{l}{q_{234}}
\arrow[leftarrow,bend right=12,swap]{d}{p_{321}}
\\
\delta^{-\frac{1}{2}}b^{(\textcolor{darkgreen}{\frac{1}{2}},\textcolor{violet}{\frac{1}{2}})}
\arrow[leftarrow,bend left=5]{r}{q_{412}}
\arrow[leftarrow,bend right=12,swap]{u}{p_{143}}
&
\delta^{0}c^{(\textcolor{darkgreen}{-\frac{1}{2}},\textcolor{violet}{\frac{1}{2}})}
\arrow[leftarrow,bend left=7]{l}{q_3}
\arrow[leftarrow,bend right=12,swap]{u}{p_4}
\end{tikzcd}$
\end{subfigure}
\caption{Basic rational tangles, their Heegaard diagrams and peculiar modules. The superscripts of the generators specify the Alexander grading. }\label{fig:CFTdForSomeRatTangles}
\end{figure}
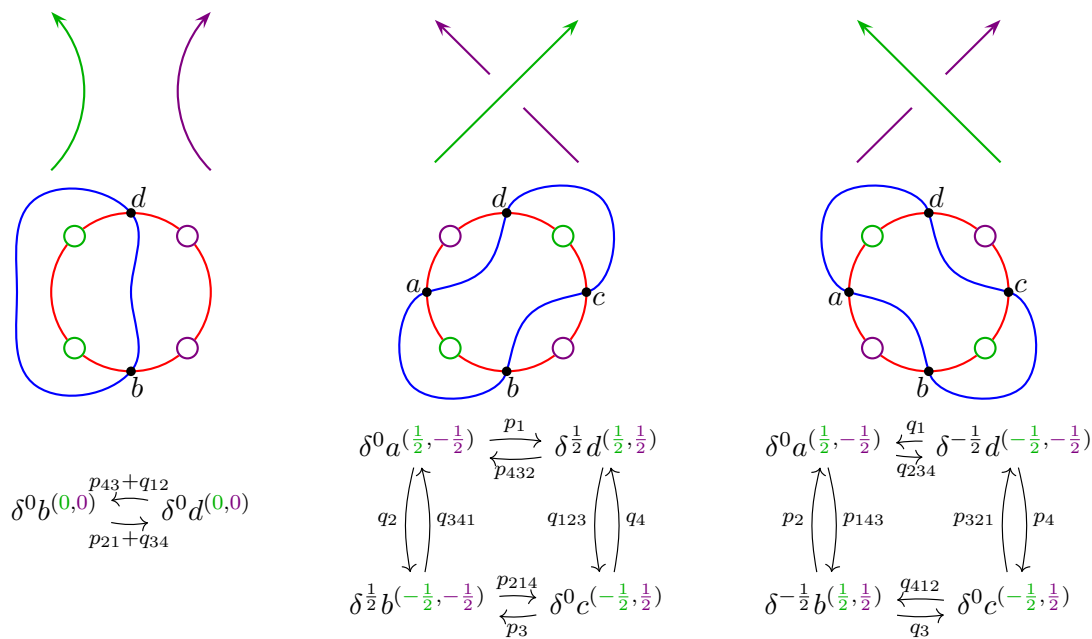

\begin{sidewaysfigure}[p]
\vspace*{435pt}
\centering
\begin{subfigure}[b]{0.3\textwidth}\centering
\psset{unit=0.8, linewidth=1.1pt}

\begin{pspicture}[showgrid=false](-5.2,-3.1)(3.2,3.1)
\psecurve[linecolor=violet](-2.5,1.5)(0,2)(0.75,1)(-0.75,-1)(0,-2)(0.97,-2.24)(2,-2)
\psecurve[linecolor=violet]{<-}(2,2)(0.97,2.24)(0,2)(-0.75,1)(0.75,-1)(0,-2)(-2.5,-1.5)(-3.25,0)(-2.5,1.5)(0,2)(0.75,1)
\psecurve[linecolor=darkgreen]{<-}(-6,1.5)(-3.3,1.85)(-2.5,1.5)(-1.85,0)(-2.5,-1.5)(-3.3,-1.85)(-6,-1.5)
\pscircle*[linecolor=white](-2.5,1.5){0.2}

\psecurve[linecolor=violet](0.75,-1)(0,-2)(-2.5,-1.5)(-3.25,0)(-2.5,1.5)

\pscircle*[linecolor=white](0,2){0.2}
\pscircle*[linecolor=white](0,0){0.2}
\pscircle*[linecolor=white](0,-2){0.2}

\psecurve[linecolor=violet](0.75,1)(-0.75,-1)(0,-2)(0.97,-2.24)(2,-2)
\psecurve[linecolor=violet](0,2)(-0.75,1)(0.75,-1)(0,-2)
\psecurve[linecolor=violet](-2.5,-1.5)(-3.25,0)(-2.5,1.5)(0,2)(0.75,1)(-0.75,-1)

\pscircle*[linecolor=white](-2.5,-1.5){0.2}
\psecurve[linecolor=darkgreen](-2.5,1.5)(-1.85,0)(-2.5,-1.5)(-3.3,-1.85)(-6,-1.5)

\psline[linecolor=violet]{<-}(-3.25,-0.1)(-3.25,0.1)
\pscircle[linestyle=dotted](-1,0){3.05}

\uput{0.2}[45](0.97,2.24){$\textcolor{violet}{q}$}
\uput{0.2}[-45](0.97,-2.24){$\textcolor{violet}{q}$}
\uput{0.2}[135](-3.3,1.85){$\textcolor{darkgreen}{p}$}
\uput{0.2}[-135](-3.3,-1.85){$\textcolor{darkgreen}{p}$}

\uput{2.5}[180](-1,0){$\textcolor{red}{a}$}
\uput{2.3}[-90](-1,0){$\textcolor{blue}{b}$}
\uput{2.1}[0](-1,0){$\textcolor{darkgreen}{c}$}
\uput{2.3}[90](-1,0){$\textcolor{gold}{d}$}

\end{pspicture}
\caption{The $(2,-3)$-pretzel tangle}\label{fig:HFTdmutationpretzeltangleT}
\end{subfigure}
\quad
\begin{subfigure}[b]{0.6\textwidth}\centering
\psset{unit=0.25}
\begin{pspicture}(-24,-11)(24,11)
\rput(-12,0){\psrotate(0,0){-90}{

\psarc[linecolor=red](0,0){8}{0}{360}

\SpecialCoor
\rput(8;45){\pscircle*[linecolor=white]{1}\pscircle[linecolor=violet]{1}}
\rput(8;135){\pscircle*[linecolor=white]{1}\pscircle[linecolor=violet]{1}}
\rput(8;225){\pscircle*[linecolor=white]{1}\pscircle[linecolor=darkgreen]{1}}
\rput(8;315){\pscircle*[linecolor=white]{1}\pscircle[linecolor=darkgreen]{1}}


\psecurve[linecolor=blue]%
(10;130)(10;140)(8;155)%
(8;-95)(11;-45)%
(10.5;45)(8;70)%
(8;-10)%
(10;-45)(8;-70)%
(8;120)(10;130)(10;140)(8;155)%

\psdot(8;155)%
\psdot(8;120)%
\psdot(8;70)%
\psdot(8;-10)%
\psdot(8;-70)%
\psdot(8;-95)%
}
\rput[b](8.7;65){$d$}
\rput[l](8;30){~$x_1$}
\rput[l](8;-20){~$x_2$}
\rput[b](7.3;-100){$b$}
\rput[l](8;-160){~$a_2$}
\rput[l](8;-185){~$a_1$}

\footnotesize
\rput[t](7.1;110){$p_1$}
\rput[t](9.8;110){$q_1$}
\rput[c](8.8;-123){$q_2$}
\rput[l](2;-90){$p_2$}
\rput[c](3.5;90){\texttt{4}}
\rput[c](7;-60){\texttt{2}}
\rput[c](9;-80){\texttt{1}}
\rput[c](9.5;42){\texttt{3}}
}

\rput(12,0){\psrotate(0,0){-90}{

\psarc[linecolor=red](0,0){8}{0}{360}

\SpecialCoor
\rput(8;45){\pscircle*[linecolor=white]{1}\pscircle[linecolor=violet]{1}}
\rput(8;135){\pscircle*[linecolor=white]{1}\pscircle[linecolor=violet]{1}}
\rput(8;225){\pscircle*[linecolor=white]{1}\pscircle[linecolor=violet]{1}}
\rput(8;315){\pscircle*[linecolor=white]{1}\pscircle[linecolor=violet]{1}}


\psecurve[linecolor=blue]%
(10;-130)(10;-140)(8;-155)%
(8;95)(11;45)%
(10.5;-45)(8;-80)%
(8;60)
(9.5;45)(8;20)
(8;-60)(9.5;-45)(10;45)
(8;75)%
(5.5;100)%
(8;-120)(10;-130)(10;-140)(8;-155)%

\psdot(8;-155)%
\psdot(8;-120)%
\psdot(8;-80)%
\psdot(8;-60)%
\psdot(8;20)%
\psdot(8;60)%
\psdot(8;75)%
\psdot(8;95)%
}
\rput[b](8.7;-245){$d'$}
\rput[r](8;-210){$y_1$~}
\rput[r](8;-170){$y_2$~}
\rput[b](7;-154){$y_3$}
\rput[b](7.3;-70){$b'$}
\rput[r](8;-29){$c_3$~}
\rput[r](8;-15){$c_2$~}
\rput[l](8;5){~$c_1$}

\footnotesize
\rput[t](7.5;70){$p_4$}
\rput[t](9.8;70){$q_4$}
\rput[t](3;-90){$p_3$}
\rput[l](9.5,-6){~$q_3$}
\rput[l](9.5,-7.5){~\texttt{1}}
\rput[l](9.5,-9){~\texttt{5}}
\psline[linestyle=dotted,dotsep=1pt](9.1;-45)(9.5,-6)
\psline[linestyle=dotted,dotsep=1pt](8.7;-80)(9.5,-7.5)
\psline[linestyle=dotted,dotsep=1pt](9.9;-80)(9.5,-9)
\rput[c](4;90){\texttt{4}}
\rput[c](1;90){\texttt{6}}
\rput[c](7;-120){\texttt{2}}
\rput[c](9.5;138){\texttt{3}}
}

\psline[linestyle=dashed](-6.25,5.65685424949236)(6.25,5.65685424949236)

\psline[linestyle=dashed](-6.25,-5.65685424949236)(6.25,-5.65685424949236)

\end{pspicture}
\caption{A Heegaard diagram for the tangle on the left}
\label{fig:HFTdmutationpretzeltangleHD}
\end{subfigure}
\\
\begin{subfigure}[b]{\textwidth}
$$
\begin{tikzcd}[column sep=0.99cm,row sep=2cm]   
%
%
\delta^{-2}\gold{x_2d'}
\arrow[leftarrow,swap,bend right=10]{r}
\arrow[leftarrow,swap,bend right=10]{d}{q_{234}(15)}&
%
\delta^{-\frac{3}{2}}\darkgreen{x_2c_1}
\arrow[leftarrow,swap,bend right=10]{l}{p_4} 
\arrow[leftarrow,swap,bend right=10,end anchor=170]{dr}{q_{23}(15)} &
%
\delta^{-1}{\blue by_1} 
\arrow[leftarrow,swap,bend right=10,end anchor=40]{d}{q_2} 
\arrow[leftarrow,swap,bend right=10,end anchor=140,dotted,pos=0.2]{d}{q_2(126)} 
\arrow[leftarrow,swap,bend right=10]{r}{p_3(26)} &
%
\delta^{-\frac{3}{2}}\darkgreen{x_2c_2} 
\arrow[leftarrow,swap,bend right=10,end anchor=170]{dr}{q_{23}(1)} 
\arrow[leftarrow,bend left,end anchor=80,dotted,pos=0.05]{dr}{q_{23}(1345)} 
\arrow[leftarrow,swap,bend right=10]{l}{p_{214}(4)} &
%
\delta^{-1}{\blue by_2} 
\arrow[leftarrow,swap,bend right=10,end anchor=40]{d}{q_2} 
\arrow[leftarrow,swap,bend right=10]{r}{p_3(2)} &
%
\delta^{-\frac{3}{2}}\darkgreen{x_2c_3} 
\arrow[leftarrow,swap,bend right=10]{l}{p_{214}(46)} 
\arrow[leftarrow,swap,bend right=5,pos=0.7]{r}{q_3} &
%
\delta^{-2}{\blue x_2b'}~~\delta^{-1}{\blue by_3} 
\arrow[leftarrow,swap,bend right=10,start anchor=-40]{d}{q_2}
\arrow[leftarrow,swap,bend right=5]{l}{}
\arrow[leftarrow,start anchor=-60,end anchor=-120,bend left]{lr}{1(2)}
\arrow[leftarrow,start anchor=140,end anchor=40,bend left]{lr}{p_{2143}(46)}
 \\
%
%
\delta^{-\frac{3}{2}}{\red a_1y_1}
\arrow[leftarrow,swap,bend right=10]{u}{q_1(3)}
\arrow[leftarrow,swap,bend right=10,end anchor=150]{d}{}
\arrow{dr}{p_{32}(2346)}
 &
 &
%
\delta^{-\frac{3}{2}}{\red a_1y_2}~~ \delta^{-\frac{3}{2}}{\red a_2y_1} 
\arrow[leftarrow,swap,bend right=10,start anchor=160,pos=0.2]{ul}{q_{41}(3)} 
\arrow[leftarrow,start anchor=-30,pos=0.85, end anchor=60,dotted]{d}{p_{432}(122466)} 
\arrow[leftarrow,swap,bend right=10,start anchor=-150]{d}{} 
\arrow[leftarrow,swap,bend right=10,start anchor=30]{u}{} 
\arrow[leftarrow,swap, bend right=10, start anchor=-20]{dr}{p_{32}(26)}
\arrow[leftarrow,swap,bend right=10,start anchor=50,dotted,pos=0.9]{ul}{q_{41}(1236)} 
&
\odot &
%
\delta^{-\frac{3}{2}}{\red a_1y_3}~~\delta^{-\frac{3}{2}}{\red a_2y_2} 
\arrow[leftarrow,swap,bend right=10,start anchor=160,pos=0.35]{ul}{q_{41}(35)} 
\arrow[leftarrow,swap,bend right=10,start anchor=-150]{d}{} 
\arrow[leftarrow,swap,bend right=10,start anchor=30]{u}{} 
\arrow[leftarrow,swap, bend right=10, start anchor=-20,pos=0.6]{dr}{p_{32}(2)} 
\arrow[leftarrow,bend right=10, start anchor=150, end anchor=-110,pos=0.8,dotted]{u}{q_{341}(133455)}
\arrow[leftarrow,swap,bend right=10, start anchor=-100, end anchor=180,pos=0.9,dotted]{dr}{p_{32}(2345)}
&
&
%
\delta^{-\frac{3}{2}}{\red a_2y_3} 
\arrow[leftarrow,swap,bend right=10,end anchor=-30]{u}{} 
\arrow[leftarrow,swap,bend right=10]{d}{p_2(2)}
\arrow{ul}{} 
 \\
%
%
\delta^{-1}\gold{dy_1}~~\delta^{-2}\gold{x_1d'} 
\arrow[leftarrow,swap,bend right=10,start anchor=140]{u}{p_1}
\arrow[leftarrow,swap,bend right=5]{r}{} 
\arrow[start anchor=60,end anchor=120,bend right,swap]{lr}{1(3)}
\arrow[start anchor=-140,end anchor=-40,bend right,swap]{lr}{q_{1234}(15)}
&
%
\delta^{-\frac{3}{2}}\darkgreen{x_1c_1} 
\arrow[leftarrow,swap,bend right=10]{r}{q_{123}(15)}
\arrow[leftarrow,swap,bend right=5,pos=0.7]{l}{p_4} 
&
%
\delta^{-1}\gold{dy_2} 
\arrow[leftarrow,swap,bend right=10,end anchor=-140]{u}{p_1} 
\arrow[leftarrow,swap,bend right=10]{l}{q_4(3)} 
&
%
\delta^{-\frac{3}{2}}\darkgreen{x_1c_2} 
\arrow[leftarrow,swap,bend right=10]{r}{q_{123}(1)} 
\arrow[leftarrow,swap, bend right=10, end anchor=-10]{ul}{p_{14}(4)}
\arrow[leftarrow,end anchor=-100,start anchor=170,dotted,pos=0.1]{ul}{p_{14}(1246)}
&
%
\delta^{-1}\gold{dy_3} 
\arrow[leftarrow,swap,bend right=10,end anchor=-140,pos=0.6]{u}{p_1} 
\arrow[leftarrow,swap,bend right=10]{l}{q_4(35)} 
\arrow[leftarrow,swap,start anchor=60,end anchor=-40,pos=0.2,dotted]{u}{p_1(345)}
&
%
\delta^{-\frac{3}{2}}\darkgreen{x_1c_3} 
\arrow[leftarrow,swap, bend right=10]{r}{q_3} 
\arrow[leftarrow,swap, bend right=10, end anchor=-10]{ul}{p_{14}(46)} &
%
\delta^{-2}{\blue x_1b'} 
\arrow[leftarrow,swap, bend right=10]{l}{} 
\arrow[leftarrow,swap, bend right=10]{u}{p_{143}(46)}
\end{tikzcd}
$$
\caption{The peculiar invariant for the pretzel tangle above}\label{fig:firstcomplex}
\end{subfigure}
\caption{A computation of a peculiar invariant for a non-rational tangle, see example~\ref{exa:HFTdpretzeltangle}}\label{fig:HFTdmutationexample}
\end{sidewaysfigure}
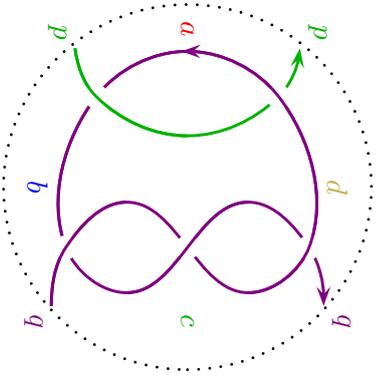
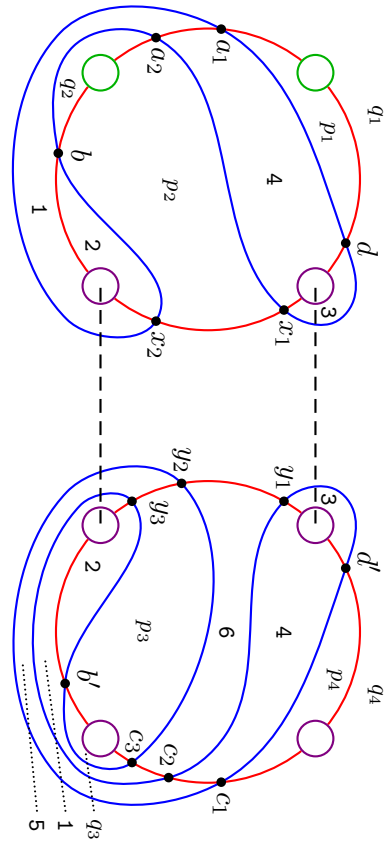
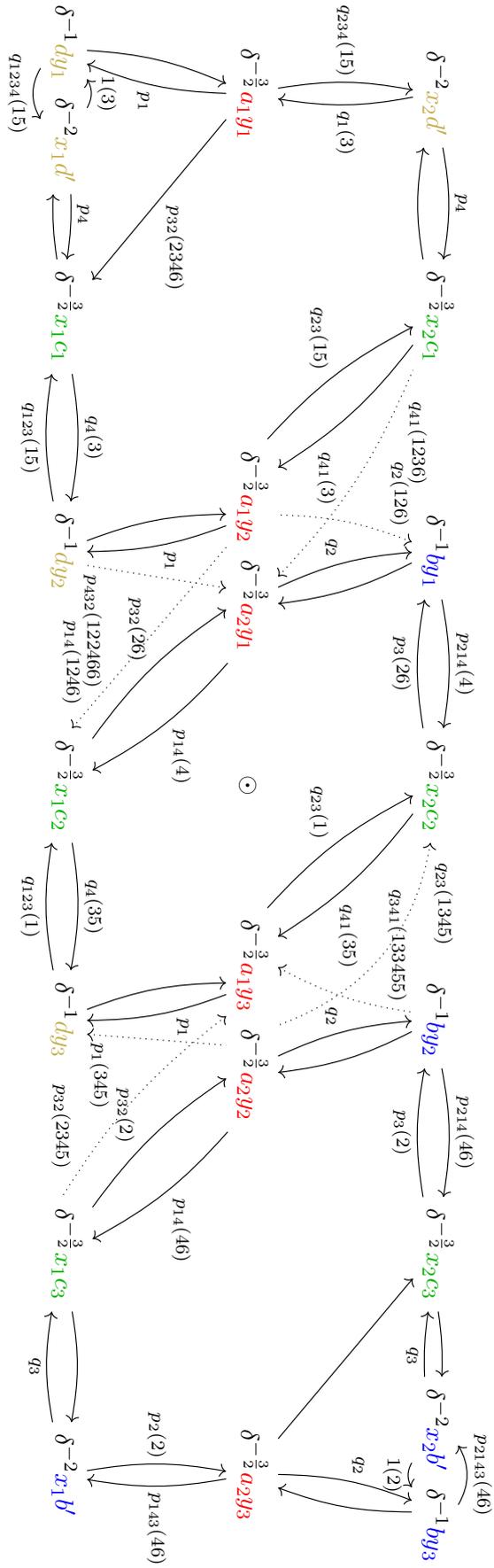

\begin{example}[the $(2,-3)$-pretzel tangle]\label{exa:HFTdpretzeltangle}
Figure~\ref{fig:HFTdmutationexample} shows the computation of $\CFTd(T)$ for the pretzel tangle from example~\sref{exa:pretzeltangle} and theorem~\sref{thm:2m3pt}. First, we only compute bigons and squares. Those are the labelled arrows in figure~\ref{fig:firstcomplex}. But there are also other contributing domains. Grading constraints tell us that we can only get additional morphisms between those generators which are connected by the other arrows, but in principle, those could point in both directions. However, in each case, the connecting domains in one direction either have negative multiplicities or occupy both $p_i$s and $q_i$s, so we can only get arrows in one direction. From this and the $\partial^2$-relation, we can deduce that all solid arrows contribute. There are only eight remaining arrows (the dotted ones) and they can only appear in pairs. But it is easy to see that we can homotope those dotted arrows away (using the clean-up lemma \ref{lem:AbstractCleanUp} for curved type D modules), so in any case, the complex is homotopic to the invariant consisting of the solid black arrows only. 
We can then apply the cancellation lemma~\ref{lem:AbstractCancellation}. We obtain a complex in which every arrow is paired with another one going in the opposite direction and every generator is connected along the arrows to exactly two other generators. A schematic picture of this complex is shown in figure~\ref{fig:examplesimplifiedgraph}, where these arrow pairs have been replaced by single unoriented edges, such that we obtain a collection of loops. This motivates definition~\ref{def:loop} below. Together with a glueing formula like the one in theorem~\ref{thm:CFTdGeneralGlueing}, mutation invariance can now be seen as the rotational symmetry of these loops. Moreover, if we take into account the bigrading, we recover theorem~\sref{thm:2m3pt}.\\
In figures~\ref{fig:mutationexamplefinalresult}b-d, the loops have been transferred onto separate 4-punctured spheres in such a way that the vertices lie on the four arcs that connect the punctures and the unoriented edges lie on the front or back of the spheres depending on whether they correspond to arrow pairs labelled by $p_i$s or $q_i$s. The meaning of these loops will be discussed briefly in the next section, see remark~\ref{rem:CFTdClosing}, and in section~\ref{sec:LoopsAreLoops} -- for the moment, they are just a convenient way to see the rotational symmetry.
\end{example}

\begin{figure}[t]\centering
\begin{subfigure}[b]{\textwidth}\centering
\psset{unit=0.47}
\begin{pspicture}(-16,-6.5)(16,6.5)

{\psset{dotsep=1pt,linestyle=dotted}
\psline(-15,5)(-10,5)
\psline(-5,5)(0,5)
\psline(5,5)(10,5)

\psline(-15,0)(-10,-5)
\psline(-4.5,0)(0,-5)
\psline(5.5,0)(10,-5)

\psline(-5.5,0)(-5,-5)
\psline(4.5,0)(5,-5)
\psline(15,0)(15,-5)
}

\psline(-15,5)(-15,0)
\psline(-10,5)(-5.5,0)
\psline(-10,-5)(-5,-5)
\psline(-5,5)(-4.5,0)
\psline(0,5)(4.5,0)
\psline(0,-5)(5,-5)
\psline(5,5)(5.5,0)
\psline(10,5)(15,0)
\psline(10,-5)(15,-5)

\rput(0,0){$\odot$}

\Mydot(90,-15,5,gold,$-2$)
\Mydot(90,-10,5,darkgreen,$-\frac{3}{2}$)
\Mydot(90,-5,5,blue,$-1$)
\Mydot(90,0,5,darkgreen,$-\frac{3}{2}$)
\Mydot(90,5,5,blue,$-1$)
\Mydot(90,10,5,darkgreen,$-\frac{3}{2}$)

\Mydot(30,-15,0,red,$-\frac{3}{2}$)
\Mydot(210,-5.5,0,red,$-\frac{3}{2}$)
\Mydot(30,-4.5,0,red,$-\frac{3}{2}$)
\Mydot(210,4.5,0,red,$-\frac{3}{2}$)
\Mydot(30,5.5,0,red,$-\frac{3}{2}$)
\Mydot(210,15,0,red,$-\frac{3}{2}$)

\Mydot(-90,-10,-5,darkgreen,$-\frac{3}{2}$)
\Mydot(-90,-5,-5,gold,$-1$)
\Mydot(-90,0,-5,darkgreen,$-\frac{3}{2}$)
\Mydot(-90,5,-5,gold,$-1$)
\Mydot(-90,10,-5,darkgreen,$-\frac{3}{2}$)
\Mydot(-90,15,-5,blue,$-2$)

\end{pspicture}
\caption{A schematic picture of the result. Generators correspond to vertices, arranged according to their Alexander grading and labelled by their $\delta$-grading. The dotted edges correspond to pairs of arrows labelled by $p_i$s, the solid ones to pairs of arrows labelled by $q_i$s.}\label{fig:examplesimplifiedgraph}
\end{subfigure}
\\
\psset{unit=1.3}
\begin{subfigure}[b]{0.3\textwidth}\centering
\begin{pspicture}(-1.5,-1.5)(1.5,1.5)
\psecurve(1.2,1)(0,1.4)(-1.2,1)(1.4,1)(0,0.6)(-1.4,1)(1.2,1)(0,1.4)(-1.2,1)

\psline*[linecolor=white](1,1)(1,-1)(-1,-1)(-1,1)
\psline[linestyle=dashed](1,1)(1,-1)
\psline[linestyle=dashed](1,-1)(-1,-1)
\psline[linestyle=dashed](-1,-1)(-1,1)
\psline[linestyle=dashed](-1,1)(1,1)

\psecurve[dotsep=1pt,linestyle=dotted](1.2,1)(0,1.4)(-1.2,1)(1.4,1)(0,0.6)(-1.4,1)(1.2,1)(0,1.4)(-1.2,1)

\psset{dotsize=5pt}

\psdot[linecolor=red](-1,0.8)
\psdot[linecolor=red](-1,0.663)

\psdot[linecolor=gold](0.125,1)
\psdot[linecolor=gold](-0.125,1)

\psdot[linecolor=darkgreen](1,0.8)
\psdot[linecolor=darkgreen](1,0.663)

\pscircle[fillstyle=solid, fillcolor=white](1,1){0.08}
\pscircle[fillstyle=solid, fillcolor=white](-1,1){0.08}
\pscircle[fillstyle=solid, fillcolor=white](1,-1){0.08}
\pscircle[fillstyle=solid, fillcolor=white](-1,-1){0.08}

\end{pspicture}
\caption{}\label{fig:loopbottom}
\end{subfigure}
\quad
\begin{subfigure}[b]{0.3\textwidth}\centering
\begin{pspicture}(-1.5,-1.5)(1.5,1.5)

\psecurve(-0.8,-1.2)(-1.2,-1.2)(1.2,1.1)(-1.2,0.9)(0,1.4)(1.4,1.2)(-0.8,-1.2)(-1.2,-1.2)(1.2,1.1)

\psline*[linecolor=white](1,1)(1,-1)(-1,-1)(-1,1)
\psline[linestyle=dashed](1,1)(1,-1)
\psline[linestyle=dashed](1,-1)(-1,-1)
\psline[linestyle=dashed](-1,-1)(-1,1)
\psline[linestyle=dashed](-1,1)(1,1)

\psecurve[dotsep=1pt,linestyle=dotted](-0.8,-1.2)(-1.2,-1.2)(1.2,1.1)(-1.2,0.9)(0,1.4)(1.4,1.2)(-0.8,-1.2)(-1.2,-1.2)(1.2,1.1)

\psset{dotsize=5pt}

\psdot[linecolor=red](-1,0.724)
\psdot[linecolor=red](-1,-0.635)

\psdot[linecolor=gold](-0.027,1)

\psdot[linecolor=darkgreen](1,0.51)
\psdot[linecolor=darkgreen](1,-0.03)

\psdot[linecolor=blue](-0.38,-1)

\pscircle[fillstyle=solid, fillcolor=white](1,1){0.08}
\pscircle[fillstyle=solid, fillcolor=white](-1,1){0.08}
\pscircle[fillstyle=solid, fillcolor=white](1,-1){0.08}
\pscircle[fillstyle=solid, fillcolor=white](-1,-1){0.08}

\end{pspicture}
\caption{}\label{fig:loopmiddle}
\end{subfigure}
\quad
\begin{subfigure}[b]{0.3\textwidth}\centering
\begin{pspicture}(-1.5,-1.5)(1.5,1.5)

\psrotate(0,0){180}{
\psecurve(1.2,1)(0,1.4)(-1.2,1)(1.4,1)(0,0.6)(-1.4,1)(1.2,1)(0,1.4)(-1.2,1)

\psline*[linecolor=white](1,1)(1,-1)(-1,-1)(-1,1)
\psline[linestyle=dashed](1,1)(1,-1)
\psline[linestyle=dashed](1,-1)(-1,-1)
\psline[linestyle=dashed](-1,-1)(-1,1)
\psline[linestyle=dashed](-1,1)(1,1)

\psecurve[dotsep=1pt,linestyle=dotted](1.2,1)(0,1.4)(-1.2,1)(1.4,1)(0,0.6)(-1.4,1)(1.2,1)(0,1.4)(-1.2,1)

\psset{dotsize=5pt}

\psdot[linecolor=darkgreen](-1,0.8)
\psdot[linecolor=darkgreen](-1,0.663)

\psdot[linecolor=blue](0.125,1)
\psdot[linecolor=blue](-0.125,1)

\psdot[linecolor=red](1,0.8)
\psdot[linecolor=red](1,0.663)

\pscircle[fillstyle=solid, fillcolor=white](1,1){0.08}
\pscircle[fillstyle=solid, fillcolor=white](-1,1){0.08}
\pscircle[fillstyle=solid, fillcolor=white](1,-1){0.08}
\pscircle[fillstyle=solid, fillcolor=white](-1,-1){0.08}
}
\end{pspicture}
\caption{}\label{fig:looptop}
\end{subfigure}
\caption{The final result of the computation from example~\ref{exa:HFTdpretzeltangle} and figure~\ref{fig:HFTdmutationexample}. (b)--(d) show the three loops from (a) separately on 4-punctured spheres.}\label{fig:mutationexamplefinalresult}
\end{figure}

\begin{proposition}\label{prop:CFTdCancelAll}
For any 4-ended tangle \(T\), \(\CFTd(T)\) is homotopic to a peculiar module without any identity components in the differential. 
\end{proposition}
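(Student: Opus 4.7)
The plan is to apply the abstract cancellation lemma (Lemma~\ref{lem:AbstractCancellation}) iteratively. Since $\CFTd(T)$ comes from a finite peculiar Heegaard diagram, its underlying $\Id$-module is finitely generated. Suppose $\partial$ contains an identity component: there exist $\x,\y\in\mathbb{T}$ with $s(\x)=s(\y)=i$ such that the coefficient of $\iota_i\otimes\y$ in $\partial\x$ is nonzero. Because $\iota_i$ is the identity in the corner algebra $\iota_i\Ad\iota_i$, the hypotheses of the cancellation lemma are met, and I would eliminate the pair $(\x,\y)$ to obtain a homotopy equivalent peculiar module with two fewer generators.

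I would then proceed by induction: if the resulting module still contains an identity component, repeat. Termination is immediate because the generator count strictly decreases at each step, so the procedure stops after at most $\tfrac{1}{2}|\mathbb{T}|$ iterations. New identity components can in principle appear during the process --- when cancelling $\x\xrightarrow{\iota_i}\y$ and composing an incoming $u\xrightarrow{a}\x$ with an outgoing $\y\xrightarrow{b}v$, the new label $ba$ is an idempotent precisely when both $a$ and $b$ already were --- but this is a bookkeeping issue only and does not obstruct termination.

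The one point that warrants care is compatibility with the curved structure: the curvature equation $(\mu\otimes 1)\circ(1\otimes\delta)\circ\delta = (p^4+q^4)\otimes 1$ must continue to hold after each cancellation step. This is not an obstacle because the curvature $a_c = p^4+q^4$ contains no idempotent summands, and the cancellation procedure only modifies differentials by composing algebra labels along the cancelled arrow; consequently no new contribution to $a_c$ can be introduced. This compatibility should be built into Lemma~\ref{lem:AbstractCancellation} in the form appropriate for curved type~D structures. The bigrading is preserved throughout, since $\iota_i$ has trivial $\delta$- and Alexander gradings and the cancellation lemma is a graded operation; hence the final peculiar module is bigraded homotopy equivalent to $\CFTd(T)$, as required.
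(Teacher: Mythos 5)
Your overall strategy---repeatedly applying the cancellation lemma to identity components and using the finite generator count for termination---is the same as the paper's. However, there is a gap at precisely the point that the paper's (one-sentence) proof addresses: before you can ``eliminate the pair $(\x,\y)$'' you must know that $\x\neq\y$, i.e.\ that no identity component is an arrow from a generator to itself. Lemma~\ref{lem:AbstractCancellation} requires the module to split as $Z\oplus Y_1\oplus Y_2$ with the identity-labelled component an isomorphism $f\colon Y_1\rightarrow Y_2$ between two \emph{distinct} summands; an identity self-arrow cannot be removed this way, and nothing in your write-up excludes one, either for $\CFTd(T)$ itself or for the intermediate modules produced by the iteration. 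The fix is exactly the grading observation you mention but never use: $\partial$ raises the $\delta$-grading by $1$ (lemma~\ref{lem:CFTdGradings}) while $\iota_i$ has $\delta$-grading $0$, so any identity component connects generators in different $\delta$-gradings and hence distinct generators; the paper in fact notes the stronger statement that there are no self-arrows at all, since the only elements of $\Ad$ of $\delta$-grading $1$, namely $p_ip_{i-1}$ and $q_iq_{i+1}$, connect different idempotents. Because cancellation is a graded operation, this property persists through your induction, and with this point added your argument goes through.

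A smaller remark: your concern about compatibility with the curved structure needs no separate argument, since lemma~\ref{lem:AbstractCancellation} is already stated for $\mathfrak{Cx}^{\ast}(\mathfrak{Mat}(\mathcal{C}))$ with arbitrary fixed central curvature, hence applies verbatim to peculiar modules; the relevant point in its proof is centrality of $a_c$, not the fact that $p^4+q^4$ has no idempotent summands.
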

\begin{proof}
The identity is the only algebra element of $\Ad$ with $\delta$-grading 0 and the only algebra elements with $\delta$-grading 1 are $p_ip_{i-1}$ and $q_iq_{i+1}$, so there cannot be any arrow from a generator to itself. So we can repeatedly apply the cancellation lemma~\ref{lem:AbstractCancellation}.
\end{proof}

\begin{definition}\label{def:loop}
Let $M$ be a peculiar module without any identity components. As usual, we can think of $M$ as a labelled directed graph whose vertices are the generators and whose arrows correspond to morphisms, labelled by $p_i$s and $q_i$s. $M$ is called a \textbf{loop}, if this graph is connected and every vertex is 4-valent. Note that we can represent any such loop as an immersed curve on the 4-punctured sphere in the same way as in the previous two examples.
In analogy to the torus boundary case from~\cite{HanselmanWatson}, we call $M$ \textbf{loop-type} if $M$ is homotopic to a collection of loops.
\end{definition}
\begin{questions}\label{que:IsEverythingALoop}
Is \(\CFTd(T)\) loop-type for every tangle \(T\)? Do homotopy classes of loop-type peculiar modules correspond to homotopy classes of loops on the 4-punctured sphere? If so, what does the number of components tell us about \(T\)?
\end{questions}
\begin{Remark}
We do not expect every peculiar module to be loop-type. Consider the following example:
$$
\begin{tikzcd}
b
\arrow[leftarrow,bend left=12]{r}{p_{43}+q_{12}}
\arrow[swap]{d}{p_{21}}
& 
d
\arrow[leftarrow,bend left=10]{l}{p_{21}+q_{34}}
\arrow{d}{p_{43}}
\\
d
\arrow[bend left=12]{r}{p_{43}+q_{12}}
& 
b
\arrow[bend left=10]{l}{p_{21}+q_{34}}
\end{tikzcd}
$$
In the Fukaya category setting, i.\,e.\ under the functor $\mathcal{L}$ from theorem~\ref{thm:TwFukpqModEquivalent}, this corresponds to the cone of the non-identity morphism between the loop for the trivial tangle from figure~\ref{fig:CFTdForSomeRatTangles} and a Hamiltonian push-off, which is a prototypical example of a local system from~\cite{Kontsevich}, see remark~\ref{rem:FUKlooptype}.
\end{Remark}
In proposition~\ref{prop:CFTdCancelAll}, we have seen that one can eliminate all identity components in a peculiar module. 
In view of questions~\ref{que:IsEverythingALoop}, the following question might also be interesting.
\begin{question}
Is every peculiar module homotopic to one with \(p\)-, \(p^2\)-, \(p^3\)-, \(q\)-, \(q^2\)- and \(q^3\)-components only? In other words, can we always eliminate all identity components and ``long'' algebra elements?
\end{question}
\begin{proposition}\label{prop:LazyClosing}
Let \(T\) be a 4-ended tangle and \(L\) the link obtained by closing \(T\) at the sites \(a\) and \(c\) like so:
$$
\psset{unit=0.3}
\begin{pspicture}(-8.01,-3.01)(8.01,3.01)
\pscircle[linestyle=dotted](0,0){3}
\pscircle[fillcolor=white,fillstyle=solid](0,0){1.5}
\rput(0,0){$T$}
\rput(2.25;180){$a$}
\rput(2.25;0){$c$}
\psecurve(0,0)(1.5;135)(4;135)(-4.5,0)(4;-135)(1.5;-135)(0,0)
\psecurve(0,0)(1.5;45)(4;45)(4.5,0)(4;-45)(1.5;-45)(0,0)
\end{pspicture}
$$
Then
\begin{align*}\SwapAboveDisplaySkip
\CFL(L)\otimes V^{i}&\cong\CFTd(T)/(p_1=p_2=q_3=q_4=1,q_1=q_2=p_3=p_4=0)\\
&\cong\CFTd(T)/(p_1=p_2=q_3=q_4=0,q_1=q_2=p_3=p_4=1),
\end{align*}
where \(i=0\)~or~\(1\), depending on whether \(L\) has two or one more closed component(s) than~\(T\), respectively. For the other two opposite sites, we obtain a similar formula by a cyclic permutation of the indices.
\end{proposition}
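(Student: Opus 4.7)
The strategy is to realize the link $L$ via an explicit Heegaard diagram $\mathcal{H}'$ obtained from a peculiar Heegaard diagram $\mathcal{H}$ for $T$ by a local modification near the closure sites $a$ and $c$. The two substitutions in the statement correspond to closing on the ``front'' or the ``back'' of the tangle disc; I would only treat the first substitution, the second being symmetric under the orientation reversal of the Heegaard surface discussed in Remark~\ref{rem:conventions3}.

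Concretely, starting from $\mathcal{H}$, at site $a$ I would erase the segment of the $\alpha$-circle $\red S^1$ lying on the front between $p_1$ and $p_2$, and at site $c$ the segment on the back between $q_3$ and $q_4$. After handleslides and a destabilization removing $\red S^1$ altogether, this yields a Heegaard diagram $\mathcal{H}'$ for (a sutured version of) the link complement $X_L$, in which $q_1, q_2, p_3, p_4$ persist as basepoints on the correct sides of the closed strands, while $p_1, p_2, q_3, q_4$ now lie in the region swept out by the closure arcs and carry no remaining basepoint data. Since the modification is local and does not introduce or destroy intersection points of $\A\cap\B$, there is a natural bijection between generators of $\CFTd(T)$ and generators of the link Floer complex of $\mathcal{H}'$.

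To compare differentials, observe that a Maslov-index-one domain $\phi$ contributing to $\partial$ on $\CFTd(T)$ satisfies $n_{z_j}(\phi)=0$ and $n_{p_i}(\phi),n_{q_i}(\phi)\geq 0$, and contributes the monomial $p^{n_p(\phi)}q^{n_q(\phi)}$. Setting $q_1=q_2=p_3=p_4=0$ restricts to those domains avoiding precisely the basepoints that remain in $\mathcal{H}'$ (i.e.\ the $\widehat{\CFL}$-``forbidden'' basepoints), while setting $p_1=p_2=q_3=q_4=1$ discards the weighting at the four basepoints of $\mathcal{H}$ that are absorbed by the closure. This matches the link Floer differential on $\mathcal{H}'$ exactly, and the bigrading conventions (Definitions~\ref{def:AlexGradingOnAd} and \sref{def:AlexgradingonGenerators}) guarantee compatibility of the Alexander and $\delta$-gradings once the variables corresponding to glued strands are identified. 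The stabilization factor $V^i$ arises from comparing the number of basepoint pairs of $\mathcal{H}'$ to a minimal link Heegaard diagram for $L$: when the closure produces two new closed components, each inherits exactly one basepoint pair and no stabilization is needed ($i=0$); when it produces only one new component, the four original pairs collapse onto fewer components, leaving precisely one redundant pair and one tensor factor of $V$ (compare Observation~\sref{obs:closedcomponentsgivefactor} and the proof of Theorem~\sref{thm:HFTfromSFH}).

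The main obstacle will be verifying the first step rigorously: checking that the surgery on $\mathcal{H}$ genuinely produces a valid Heegaard diagram for $L$ (possibly up to some auxiliary stabilizations and handleslides), and tracking admissibility of $\mathcal{H}'$ carefully enough to ensure that all sums in the differential remain finite under the substitution. Matching the Alexander bigrading under the collapse of colours at glued strands also requires some care, but should follow from the definition of the Alexander grading on $\Ad$ and its compatibility with the grading on $\CFTd(T)$ established in Lemma~\ref{lem:CFTdGradings}.
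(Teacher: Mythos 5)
There is a genuine gap, and it lies exactly at the step you yourself flag as the ``main obstacle'': the diagram surgery you propose is not a valid construction. Erasing two segments of the distinguished $\alpha$-circle (the one obtained from the four $\alpha$-arcs) is not a Heegaard move --- it leaves two arcs rather than closed attaching circles --- and the subsequent ``handleslides and a destabilisation removing'' this circle is not available in general: a destabilisation requires the circle to meet some $\beta$-circle in a single point and removes that $\beta$-circle as well, which contradicts your claim that no intersection points are created or destroyed, and removing the circle alone would destroy the balance between the numbers of $\alpha$- and $\beta$-circles that a closed multi-pointed diagram must have. Moreover, every generator of $\CFTd(T)$ occupies this circle in exactly one point, so once it is gone the asserted ``natural bijection'' between generators does not exist as stated. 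A smaller inaccuracy: the two substitutions in the proposition are not related by reversing the orientation of the Heegaard surface (that operation produces the mirror of $L$, cf.\ remark~\sref{rem:conventions3}); they correspond to pushing the two closure arcs to the front or to the back of the boundary sphere, which yields the same link $L$ by an isotopy in $S^3$.

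The point you are missing is that no modification of the curves is needed at all. A peculiar Heegaard diagram is already a closed surface with equally many $\alpha$- and $\beta$-circles, and if one simply \emph{deletes} the basepoints whose variables are set to $1$ (say $p_1,p_2,q_3,q_4$), the remaining data $(\Sigma,\A,\B,\{q_1,q_2,p_3,p_4,z_j\})$ is a multi-pointed Heegaard diagram, in the usual Ozsv\'ath--Szab\'o sense, for the closed-up link $L$; which basepoint survives at each tangle end records on which side of the boundary sphere the closure arc passes, and the two substitutions correspond to the two choices. Since the curves are untouched, the generators are literally the same, and the differential of $\CFL(L)$ counts exactly those holomorphic curves that avoid the remaining basepoints --- which is precisely the effect of setting those variables to $0$, while setting the deleted ones to $1$ removes any constraint or weight at the deleted points. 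Your count of the stabilisation factor $V^{i}$ (two basepoint pairs distributed over the one or two link components formed by the open strands) is correct and is all that is needed for the last claim; the appeal to observation~\sref{obs:closedcomponentsgivefactor} is unnecessary once the diagram is interpreted this way.
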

\begin{proof}
If we delete those basepoints in a peculiar Heegaard diagram of $T$ that correspond to the variables that we set equal to 1, we obtain a Heegaard diagram for $L$. In $\CFL(L)$, we only count those holomorphic curves that stay away from the remaining basepoints, so we need to set those algebra elements equal to 0. 
\end{proof}

\begin{Remark}\label{rem:LazyClosing}
One might hope to get the other flavours of link Floer homology from the same kind of idea as in the proof of the previous proposition. However, this does not work quite as well because of the relation $p_iq_i=0$. As soon as we do not set at least one of the two marked points at each tangle end equal to 0, we would have to count more holomorphic curves than we do for $\CFTd(T)$. \\
Suppose, the first two of questions~\ref{que:IsEverythingALoop} have a positive answer. Then, we might still hope to reconstruct from $\CFTd(T)$  the other flavours of the link Floer homology of the closure of~$T$ as follows: Consider the embedded loop(s) corresponding to $\CFTd(T)$ as a $\beta$-circle and the four $\alpha$-arcs as a single $\alpha$-circle as before. Then we can calculate a new complex from this by also counting those bigons which occupy both $p_i$ and $q_i$.
\end{Remark}
We end this section with some simple observations about $\CFTd$.

\begin{observation}\label{obs:AlexGradingOfCFTdLoops}
Let $T\subset B^3$ be a 4-ended tangle and suppose its peculiar module is a collection of loops. As in example~\ref{exa:HFTdpretzeltangle}/figure~\ref{fig:mutationexamplefinalresult}, we can transfer them onto the 4-punctured sphere $\partial B^3\smallsetminus \partial T$. By considering the Alexander grading, we see that every loop is in the kernel of 
$$\pi_1(\partial B^3\smallsetminus \partial T)\rightarrow\pi_1(X_{T})\rightarrow H_1(X_{T}),$$ 
where $X_T=B^3\smallsetminus \nu(T)$ is the complement of a tubular neighbourhood of the tangle~$T$.
\end{observation}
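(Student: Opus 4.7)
My plan is to realise the Alexander grading on $\Ad$ as the pullback of the map $\pi_1(\partial B^3\smallsetminus\partial T)\to H_1(X_T)$ along a natural geometric realisation of algebra elements as paths on the 4-punctured sphere $S:=\partial B^3\smallsetminus\partial T$, and then to deduce the observation from the grading-preservation of the peculiar differential.

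Concretely, each basic algebra element $p_i$ or $q_i$ is represented by a path on $S$ from one arc to an adjacent one, crossing near the $i$th puncture on the front or the back respectively; a general $a\in\Ad$ is represented by the concatenation of these basic paths. The first step is to check that, under the map $\pi_1(S)\to\pi_1(X_T)\to H_1(X_T)\cong\mathbb{Z}^{|T|}$, the image of a closed loop built by concatenating paths representing algebra elements $a_1,\dots,a_k$ equals the Alexander grading $\prod_j|a_j|$ from definition~\ref{def:AlexGradingOnAd}. This reduces to the basic case: a small meridional loop around the $i$th puncture can be realised as the composition of a $p_i$-path on the front with the reverse of a $q_i$-path on the back, and its image in $H_1(X_T)$ is the meridian of the $i$th tangle strand with its right-hand-rule orientation. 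By inspection, this matches the element $|p_i|=|q_i|\in\mathbb{Z}^{|T|}$ prescribed by the in/out convention.

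Given this identification, the proof is immediate. Let $\gamma$ be a loop summand of $\CFTd(T)$, transferred to a closed curve on $S$ as in figure~\ref{fig:mutationexamplefinalresult}. Choose a cyclic traversal of $\gamma$ by arrows $v_0\xrightarrow{a_1}v_1\to\cdots\to v_k=v_0$ in the peculiar module; such a traversal exists because at each vertex the loop must use one incoming and one outgoing arrow. By lemma~\ref{lem:CFTdGradings} the differential preserves Alexander grading, hence $\prod_j|a_j|=1$. By the identification above, the image of $[\gamma]$ under $\pi_1(S)\to H_1(X_T)$ equals $\prod_j|a_j|=1$, which is precisely the claim of the observation.

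The main subtlety is the sign bookkeeping in the identification: one must compare the orientation of a $p_i$-path on the front of the sphere with the right-hand-rule meridian of the $i$th strand, and verify that the resulting sign matches the in/out convention of definition~\ref{def:AlexGradingOnAd}. This is most easily verified on the rational tangles of example~\ref{exa:CFTdRatTang}, where the loops and their images in $H_1(X_T)$ are all explicit, and once it is settled the argument above goes through in general.
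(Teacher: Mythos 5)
Your overall strategy is exactly the one the paper has in mind (the paper offers no more than the phrase ``by considering the Alexander grading''): traverse the loop by arrows pointing in the direction of travel, use that the differential preserves the Alexander grading to get $\prod_j\vert a_j\vert=1$, and translate this into a statement about the class of the curve in $H_1(X_T)$. The telescoping step $\prod_j\vert a_j\vert=1$ is fine. The gap is in your key identification, and it shows up precisely in the ``basic case'' you claim to verify by inspection. With the conventions of definition~\ref{def:Ad}, $p_i\in\iota_i.\Ad.\iota_{i-1}$ and $q_i\in\iota_{i-1}.\Ad.\iota_i$, so the small loop around the $i$th puncture is the concatenation of the $p_i$-path with the $q_i$-path (both traversed forwards), not with the reverse of the $q_i$-path (with your reading the endpoints do not even match up). For that closed loop your formula ``image $=$ product of gradings'' would give $\vert p_i\vert\,\vert q_i\vert$, which is the \emph{square} of the meridian class, whereas the actual image in $H_1(X_T)$ is a single meridian; if instead one inserts $q_i^{-1}$ the product is trivial, which is also not the meridian. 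So the asserted identity is false as stated: since $\vert p_i\vert=\vert q_i\vert$ each carries full meridian weight (this is how the Alexander grading on $\Ad$ is normalised, matching $A=A^f+A^b$ in definition~\ref{def:AlexGradingOnDomains}), a full pass around a puncture picks up weight $2$, and the correct relation is that the total Alexander grading along a coherent traversal equals \emph{twice} the image of $[\gamma]$ in $H_1(X_T)$.

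The observation still follows, because $H_1(X_T)\cong\mathbb{Z}^{n+m}$ is torsion-free, so $2\cdot[\gamma]=0$ in $H_1(X_T)$ forces $[\gamma]=0$; but to make your argument correct you must (i) state and prove the factor-two identity rather than the one you wrote — e.g.\ by noting that front and back segments are determined up to homotopy rel endpoints by their endpoints and side (each of the two discs cut out by the four arcs is simply connected), so $\gamma$ is homotopic to the concatenation of the standard label-paths, and that every occurrence of $p_i$ or $q_i$ among the labels contributes half a turn around the $i$th puncture, all with the same sign — and (ii) add the torsion-freeness remark at the end. As written, the ``by inspection, this matches'' step is exactly the point where the bookkeeping fails.
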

\begin{observation}
By definition, the Alexander grading corresponding to each closed component vanishes on $\Ad$. Also, the differential of a peculiar module preserves the Alexander grading by lemma~\ref{lem:CFTdGradings}. Thus, $\CFT(T)$ decomposes into the direct sum over the Alexander gradings of closed components. 
\end{observation}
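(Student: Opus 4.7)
The plan is to observe that the Alexander grading is really a $\mathbb{Z}^{n+m}$-valued grading (where $n$ is the number of open components and $m$ the number of closed components), and that the decomposition follows by isolating the components of this grading that correspond to closed strands. The key point is that, by inspection of definition~\ref{def:AlexGradingOnAd}, the Alexander grading on $\Ad$ is determined entirely by the colours assigned to the four boundary punctures, and these punctures are in bijection with the open strands of $T$. Hence for every closed component $K$ of $T$ with colour $t_K$, every homogeneous algebra element $a\in\Ad$ satisfies $A_K(a)=0$, where $A_K$ denotes the component of the Alexander multigrading in the $t_K$-direction.

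Now I would combine this with lemma~\ref{lem:CFTdGradings}, which states that the differential of a peculiar module preserves the total Alexander grading (with the convention that the grading on $A\otimes_I N$ is the sum of the gradings of the two factors). Writing the differential in the form $\partial\x=\sum a(\phi)\otimes\y$, this translates into the equation $A(\x)=A(a(\phi))+A(\y)$ in $\mathbb{Z}^{n+m}$. Projecting onto the $K$-coordinate and using $A_K(a(\phi))=0$, we obtain $A_K(\x)=A_K(\y)$ for every pair $(\x,\y)$ connected by the differential. Therefore, for each fixed value $v\in\mathbb{Z}$, the subspace spanned by generators with $A_K=v$ is closed under $\partial$.

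Iterating this argument over all closed components $K_1,\dots,K_m$ yields a splitting
\[
\CFTd(T)=\bigoplus_{v\in\mathbb{Z}^{m}}\CFTd(T,v),
\]
where $\CFTd(T,v)$ is the submodule generated by those $\x$ whose Alexander multigrading has $(A_{K_1},\dots,A_{K_m})$-components equal to $v$. Each summand is itself a peculiar module since the $\mathcal{I}^\partial$-action and the curvature $p^4+q^4$ live in Alexander grading zero along every closed-component direction. There is essentially no obstacle here; the statement is a direct grading bookkeeping consequence of definition~\ref{def:AlexGradingOnAd} and lemma~\ref{lem:CFTdGradings}, so the only care needed is to state the bijection between closed components and trivial gradings on $\Ad$ cleanly.
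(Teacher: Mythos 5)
Your proposal is correct and follows exactly the paper's (very brief) justification: the Alexander grading of a closed component vanishes on $\Ad$ because the grading there is determined only by the four boundary punctures, and lemma~\ref{lem:CFTdGradings} then forces the differential to preserve each closed-component grading, giving the direct sum decomposition. You simply spell out the bookkeeping that the paper compresses into one sentence, so there is nothing to add or correct.
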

\begin{observation}
In \cite{OSHFKalt}, Ozsv\'{a}th and Szab\'{o} showed that the link Floer homology of alternating links is completely determined by the Alexander polynomial (and, to be precise, the signature, but this is only needed to fix the absolute grading). The proof generalises immediately to $\HFT$, using the generalised clock theorem~\sref{geclockt}. 
\end{observation}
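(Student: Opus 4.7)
The plan is to mimic Ozsv\'{a}th and Szab\'{o}'s argument from~\cite{OSHFKalt}. Fix a site $s$ of an alternating tangle diagram $T$. The generators of $\CFT(T,s)$ are in bijection with the Kauffman states $\mathbb{K}(T,s)$, and their $\delta$-gradings are computed locally from the Alexander codes of figure~\ref{fig:AlexCodesForOneCrossingsWithDelta}. By the generalised clock theorem (theorem~\sref{geclockt}), any two elements of $\mathbb{K}(T,s)$ are joined by a sequence of transposition moves, so to conclude that $\HFT(T,s)$ agrees with $\CFT(T,s)$ as bigraded modules it suffices to verify that transposition moves preserve the $\delta$-grading on an alternating diagram.

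The verification is entirely local at the two crossings participating in a transposition move. A direct inspection of figure~\ref{fig:AlexCodesForOneCrossingsWithDelta} shows that the regions contributing $\delta=\pm\tfrac12$ at a crossing are exactly those whose sign matches that of the crossing, while the remaining two regions contribute $\delta=0$; from this one computes that a transposition move changes $\delta$ by $\tfrac12(\varepsilon_1-\varepsilon_2)$, where $\varepsilon_i$ is the sign of the $i$-th crossing. Thus $\delta$ is preserved precisely when the two crossings have the same sign, and the combinatorial fact that two crossings sharing two regions in an alternating diagram always have matching signs---the standard checkerboard-colouring input from~\cite{OSHFKalt}---goes through unchanged in the tangle setting since it is purely a local statement about the diagram.

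Finally, the Heegaard differential on $\CFT(T,s)$ preserves the Alexander grading and lowers the homological grading by one, hence strictly changes the $\delta$-grading. With all generators concentrated in a single $\delta$-grading this forces the differential to vanish identically, so $\HFT(T,s)$ reduces bigradedly to the free module on $\mathbb{K}(T,s)$ and is therefore determined up to an overall grading shift by the Alexander polynomial $\nabla_T^s$ via lemma~\sref{lem:Eulercharagreeswithnabla}. The one point needing brief attention beyond the closed case is that the local $\delta$-contribution computation from the Alexander codes must be valid also when some of the participating regions of a transposition move are open, but this is immediate from the fact that the codes of figure~\ref{fig:AlexCodesForOneCrossingsWithDelta} are themselves purely local.
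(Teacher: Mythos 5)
Your overall strategy is exactly the intended one (Kauffman-state model, generalised clock theorem within each site, transposition moves preserve $\delta$ on alternating diagrams, hence thinness and a vanishing differential), but the key local verification is wrong. The change of $\delta$ under a transposition move is \emph{not} $\tfrac{1}{2}(\varepsilon_1-\varepsilon_2)$, and $\delta$-preservation is not governed by crossing signs: the sign of a crossing depends on the orientation of the strands, whereas the relative $\delta$-grading of Kauffman states is defined via Maslov indices of domains and is completely orientation-independent. Concretely, reversing the orientation of the vertical strand at one of the two crossings in the transposition picture flips that crossing's sign while changing neither the alternating property of the diagram nor any $\delta$-grading, so no criterion phrased purely in terms of $\varepsilon_1,\varepsilon_2$ can be correct; the standard alternating (and thin) diagram of the figure-eight knot already admits transposition moves between crossings of opposite signs. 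For the same reason the ``standard checkerboard fact'' you invoke --- that two crossings of an alternating diagram sharing two regions have equal signs --- is false and is not the input used in \cite{OSHFKalt}.

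What figure~\ref{fig:AlexCodesForOneCrossingsWithDelta} actually gives is an orientation-free rule: at each crossing, one pair of opposite quadrants, determined by the over/under data alone (the pair swept out when turning off the over-strand in a fixed rotational direction), has local $\delta$-contribution $\tfrac{1}{2}$ higher than the other pair. In the transposition configuration each marker moves between two \emph{adjacent} quadrants separated by the strand common to both crossings, so the $\delta$-jump at each crossing is $\pm\tfrac{1}{2}$, with sign determined by whether the common strand is the over- or the under-strand there; the two jumps cancel precisely when the common strand goes over at one crossing and under at the other. This is exactly what the alternating hypothesis supplies (and the parity is unaffected by crossings lying between the two along the common strand, since any strand entering the two shared regions must meet the common strand an even number of times for those regions to remain shared). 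With this corrected local statement the rest of your argument goes through: by theorem~\sref{geclockt} all states of a fixed site lie in a single $\delta$-grading, the differential preserves the site and the Alexander grading and shifts $\delta$ by one, hence vanishes, and $\HFT(T,s)$ is the free module on $\mathbb{K}(T,s)$, whose bigraded ranks are read off from $\nabla_T^s$ (with the extra $(t-t^{-1})$ factors for closed components), matching the intended generalisation of \cite{OSHFKalt}.
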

\begin{question}
Given an alternating tangle \(T\), is the bigraded chain homotopy type of \(\CFTd(T)\) determined by \(\nabla_T^s\)?
\end{question}


\section{Pairing 4-ended tangles}\label{sec:Pairing}
In this subsection, we prove a glueing formula for $\CFTd$: Given the peculiar modules of two 4-ended tangles $T_1$ and $T_2$, we compute the Heegaard Floer homology of the link $L$ obtained by glueing $T_1$ to $T_2$, up to at most three stabilisations. The proof is essentially a calculation of a bordered sutured type AA bimodule, for which we use a computer since the number of generators in the bimodule is rather large. As a warm-up, we prove a special case of the glueing formula, namely for $T_2$ being a trivial tangle, where we can do this calculation by hand. This corresponds to computing $\HFL$ of the closure of a tangle. Of course, we already know how to do this, see proposition~\ref{prop:LazyClosing}. There are three reasons for looking at this again. Firstly, it illustrates how the proof of the general glueing formula works. Secondly, the result that we get in this special case is as nice as one might hope, which, unfortunately, we cannot say about the general glueing formula in its present state, see conjecture~\ref{conj:CFTdBetterGlueing}. Thirdly, a comparison of the two methods for computing $\HFL$ of the closure might give us interesting symmetry relations on $\CFTd$.

\begin{figure}[b]
\centering
\begin{subfigure}[b]{\textwidth}
$$\begin{tikzcd}[row sep=2cm, column sep=3cm]
b_1
\arrow[leftarrow]{r}[description]{1+(p_{21},q_{12})}
\arrow[leftarrow]{d}[description]{(q_2)}
\arrow[leftarrow,pos=0.2]{rd}[description]{(p_1,q_{12})}
& 
b_2
\\
a_1
\arrow[leftarrow]{r}[description]{(p_1,q_1)}
\arrow[leftarrow,pos=0.2]{ur}[description]{(p_{21},q_1)}
&
a_2
\arrow[leftarrow]{u}[description]{(p_2)}
\end{tikzcd}$$
\vspace{-0.4cm}
\caption{The type A structure $\mathcal{C}(s)$ for closing a 4-ended tangle at the site $s=a$. The identity action is implicit. We similarly define $\mathcal{C}(s)$ for any of the other three sites of a 4-ended tangle by cyclic permutations of the sites and indices corresponding to rotations of figure~\ref{fig:nutshell}.}\label{fig:ClosingGraph}
\end{subfigure}
\begin{subfigure}[b]{\textwidth}\centering
\psset{unit=0.4}
\begin{pspicture}(-11,-11)(11,11)

\pscustom[fillstyle=solid,fillcolor=lightgray,linewidth=0pt,linecolor=white]{
\psline(0,-2.5)(0,-7.5)
\psline[liftpen=1,linearc=0.8](0,-7.5)(2.5,-7.5)(2.5,-2.5)(0,-2.5)
}

\pscustom[fillstyle=solid,fillcolor=lightgray,linewidth=0pt,linecolor=white]{
\psline[liftpen=1,linearc=1](0,10)(-10,10)(-10,-10)(0,-10)
\psline(0,-10)(0,-7.5)
\psline[liftpen=1,linearc=0.8](0,-7.5)(-7.5,-7.5)(-7.5,7.5)(-2.5,7.5)(-2.5,-2.5)(0,-2.5)
\psline(0,-2.5)(0,10)
}

\psframe[linecolor=blue,linearc=1,cornersize=absolute](-10,-10)(10,10)
\psline[linecolor=blue](0,10)(0,-10)
\psline[linecolor=blue](10,0)(-10,0)

\psframe[linecolor=red,linestyle=dotted,dotsep=1pt](-5,-5)(5,5)
\pscircle[fillstyle=solid](-5,-5){0.7}
\pscircle[fillstyle=solid](5,-5){0.7}
\pscircle[fillstyle=solid](-5,5){0.7}
\pscircle[fillstyle=solid](5,5){0.7}

\rput(5,5){\blue $4$}
\rput(5,-5){\blue $3$}
\rput(-5,-5){\blue $2$}
\rput(-5,5){\blue $1$}

\pscustom[linecolor=violet,linearc=0.8,cornersize=absolute]{
\psline(-7.5,0)(-7.5,7.5)(-2.5,7.5)(-2.5,-2.5)(2.5,-2.5)(2.5,-7.5)(-7.5,-7.5)(-7.5,0)
}

\pscircle[fillstyle=solid,linecolor=blue, fillcolor=white](0,0){0.7}
\rput(0,0){\blue $p$}
\rput(1.3;135){\blue $1$}
\rput(1.3;-135){\blue $2$}
\rput(1.3;-45){\blue $3$}
\rput(1.3;45){\blue $4$}

\rput(-10.3,10.3){\blue $q$}
\rput(10.3,-10.3){\blue $q$}
\rput(10.3,10.3){\blue $q$}
\rput(-10.3,-10.3){\blue $q$}
\rput(9.2,9.2){\blue $4$}
\rput(9.2,-9.2){\blue $3$}
\rput(-9.2,-9.2){\blue $2$}
\rput(-9.2,9.2){\blue $1$}

\uput{0.5}[135](-5,0){\blue $a$}
\uput{0.5}[-135](0,-5){\blue $b$}
\uput{0.5}[45](5,0){\blue $c$}
\uput{0.5}[45](0,5){\blue $d$}

\psdot(-7.5,0)
\uput{0.4}[-45](-7.5,0){$a_1$}
\psdot(-2.5,0)
\uput{0.4}[-135](-2.5,0){$a_2$}
\psdot(0,-2.5)
\uput{0.4}[-135](0,-2.5){$b_2$}
\psdot(0,-7.5)
\uput{0.4}[135](0,-7.5){$b_1$}

\end{pspicture}
\caption{A combinatorial model for pairing any Lagrangian in the wrapped Fukaya category of the 4-punctured sphere with the \textcolor{violet}{violet} loop respresenting a trivial tangle. The boundary of the picture is identified to a point. The {\blue blue} lines denote a 1-skeleton on which the Lagrangian lives.}\label{fig:ClosingFUK}
\end{subfigure}
\caption{Closing a 4-ended tangles at the site $s=a$}\label{fig:ClosingCAT}
\end{figure}
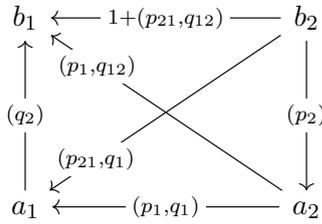
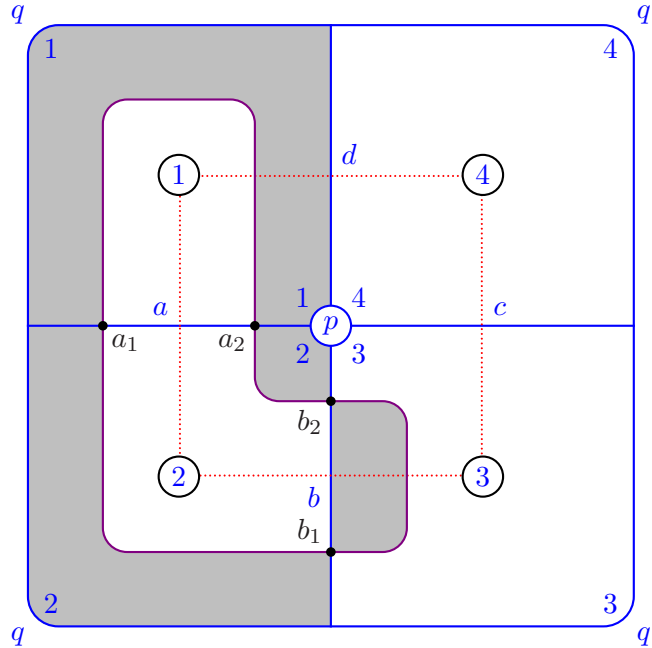

\begin{theorem}\label{thm:CFTdGlueingTrivial}
Let \(T\) be a 4-ended tangle and \(L\) the link obtained by closing \(T\) at the site \(a\) and \(c\) like so:
$$
\psset{unit=0.3}
\begin{pspicture}(-8.01,-3.01)(8.01,3.01)
\pscircle[linestyle=dotted](0,0){3}
\pscircle[fillcolor=white,fillstyle=solid](0,0){1.5}
\rput(0,0){$T$}
\rput(2.25;180){$a$}
\rput(2.25;0){$c$}
\psecurve(0,0)(1.5;135)(4;135)(-4.5,0)(4;-135)(1.5;-135)(0,0)
\psecurve(0,0)(1.5;45)(4;45)(4.5,0)(4;-45)(1.5;-45)(0,0)
\end{pspicture}
$$
For a site \(s\), let \(\mathcal{C}(s)\) be the strictly unital type A structure over \(\Ad\) defined by the labelled graph shown in figure~\ref{fig:ClosingGraph}. Then 
$$\CFL(L)\otimes V^{i}\cong\CFTd(T)\boxtimes\,\mathcal{C}(a)\cong\CFTd(T)\boxtimes\,\mathcal{C}(c)$$
where \(i=0\)~or~\(1\), depending on whether \(L\) has two or one more closed component(s) than~\(T\), respectively. For the other two opposite sites, we obtain similar formulas by cyclic permutations of the indices.
\end{theorem}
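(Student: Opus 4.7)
The plan is to derive the theorem from Proposition~\ref{prop:LazyClosing}, which already identifies $\CFL(L) \otimes V^i$ with the explicit quotient of $\CFTd(T)$ obtained by setting $p_1,p_2,q_3,q_4$ to $1$ and $q_1,q_2,p_3,p_4$ to $0$. It therefore suffices to show that pairing with $\mathcal{C}(a)$ implements exactly this quotient up to canonical isomorphism; no new holomorphic-curve counts are needed beyond those defining $\CFTd$.

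First I would verify that the data depicted in figure~\ref{fig:ClosingGraph} defines a well-formed, strictly unital type A structure over $\Ad$ that is compatible with the curvature $p^4+q^4$ of $\CFTd$. Since $\mathcal{C}(a)$ is supported only on the idempotents $\iota_1,\iota_2$ corresponding to sites $a,b$, the $A_\infty$-relations amount to a finite combinatorial check: one matches each pair of consecutive operations against the algebra relations $p_iq_i=q_ip_i=0$ in $\Ad$, and verifies that the iterated operations at $a_1$ and $b_1$ reproduce $p^4$ and $q^4$ respectively, so that the curvature is correctly cancelled in the paired complex. The two auxiliary generators $a_2,b_2$, together with the identity summand in the operation $1+(p_{21},q_{12})$, should precisely absorb these curvature terms and simultaneously carry the $V$-factor of stabilisation when the closure creates a new component.

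Next, I would expand $\CFTd(T)\boxtimes\mathcal{C}(a)$ on generators. Idempotent matching forces only the site-$a$ and site-$b$ parts of $\CFTd(T)$ to contribute, each paired with two generators of $\mathcal{C}(a)$. Direct expansion of the box differential shows that the outputs $p_1,q_1$ of $\CFTd(T)$ at site $a$ are consumed by the single-input operations of $\mathcal{C}(a)$ (effectively setting $p_1$ to $1$ and $q_1$ to $0$, after the $a_1\leftrightarrow a_2$ shuffle); the length-two outputs $p_{21},q_{12}$ are consumed by the $(p_{21},q_{12})$-operation; and outputs labelled $p_3,p_4,q_3,q_4$ are killed because $\mathcal{C}(a)$ carries no $c$- or $d$-side idempotents. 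The resulting complex should match exactly the quotient in Proposition~\ref{prop:LazyClosing}, tensored with a two-dimensional space coming from the $(a_1,a_2)$-pair, which reproduces the $V^i$ factor.

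The main obstacle will be the bookkeeping around \emph{long} algebra outputs of $\CFTd(T)$, i.e.\ products such as $p_{432}$ or $q_{123}$: each such output must be accounted for by iterated compositions of the finitely many operations present in $\mathcal{C}(a)$, possibly interacting with the curvature. One must check that every output of length $\geq 3$ either hits a site-$c$ or site-$d$ idempotent and vanishes, or is absorbed by the curvature relation, with no $A_\infty$-term double-counted. The equivalence $\CFTd(T)\boxtimes\mathcal{C}(a)\cong\CFTd(T)\boxtimes\mathcal{C}(c)$ should then follow by running the symmetric argument with cyclically permuted indices, or more cleanly from the observation that both sides compute the same chain homotopy invariant $\CFL(L)\otimes V^i$.
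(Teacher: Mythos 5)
Your pivot---that pairing with $\mathcal{C}(a)$ ``implements exactly'' the quotient of proposition~\ref{prop:LazyClosing}---is false in the form you state it and unproven in any corrected form. The quotient there has underlying vector space \emph{all} of $\CFTd(T)$: generators at all four sites survive, the letters $p_1,p_2,q_3,q_4$ are set to $1$ (not absorbed), so arrows into the site-$c$ and site-$d$ generators persist, and the result is already $\CFL(L)\otimes V^{i}$. By contrast, $\CFTd(T)\boxtimes\mathcal{C}(a)$ is built only from the site-$a$ and site-$b$ summands, each doubled; type-D chains passing through site $d$ are picked up by the two-input operations $(p_1,q_1)$, $(p_1,q_{12})$, $(p_{21},q_1)$, $(p_{21},q_{12})$, while chains through site $c$ contribute nothing at all, since no input letter accepted by $\mathcal{C}(a)$ touches $\iota_3$. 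Hence ``the quotient tensored with a two-dimensional space'' has the wrong rank and double-counts $V$, and your mechanism (``outputs labelled $p_3,p_4,q_3,q_4$ are killed'') contradicts the quotient, in which $q_3,q_4$ are set to $1$. At best the two complexes are chain homotopy equivalent, and producing that equivalence is essentially the content of the theorem rather than bookkeeping: if $N_0$ denotes the rank-four type~A module (one generator per idempotent, only $m_2$-actions, given by $p_1,p_2,q_3,q_4\mapsto 1$ and the remaining letters $\mapsto 0$) which realises the quotient as $\CFTd(T)\boxtimes N_0$, then $N_0$ and $\mathcal{C}(a)$ are \emph{not} homotopy equivalent as type~A modules over $\Ad$: a homotopy equivalence would induce quasi-isomorphisms of the complexes $(\,\cdot\,,m_1)$ idempotent by idempotent, but $H_\ast(\mathcal{C}(a),m_1)=\langle a_1,a_2\rangle$ lies entirely in $\iota_1$, whereas $H_\ast(N_0,m_1)=N_0$ has rank one in every idempotent; correspondingly the functors $-\boxtimes N_0$ and $-\boxtimes\mathcal{C}(a)$ already disagree on uncurved test objects such as a single generator in idempotent $\iota_3$. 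So any proof that $M\boxtimes\mathcal{C}(a)\simeq M\boxtimes N_0$ must use the curvature $p^4+q^4$ of the type-D side in an essential way, and neither your sketch nor the framework of appendix~\ref{appendix:AlgStructFromGDCats} supplies such an argument; deducing it for $M=\CFTd(T)$ from proposition~\ref{prop:LazyClosing} would be circular, since that is precisely the statement being proved.

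For comparison, the paper's proof does not pass through proposition~\ref{prop:LazyClosing} at all, and it does use input beyond the curve counts defining $\CFTd$: it computes the bordered sutured type~A module of the cap-off manifold $C(a)$ from the Heegaard diagram in figure~\ref{fig:ClosingTangleComputation} (including a non-polygonal domain $A+B$ whose contribution is pinned down by an auxiliary pairing argument), simplifies it, and pushes it forward along an algebra map onto the quotient $\Ad_{12}$ of $\Ad$; since every curvature monomial contains a letter with index in $\{3,4\}$, the curvature dies in $\Ad_{12}$, which is how the curved-pairing issue is handled---there is no step of the kind you propose in which iterated operations at $a_1$ and $b_1$ ``reproduce $p^4$ and $q^4$''. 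Zarev's pairing theorem then identifies the pairing with $\SFC$ of the glued manifold, i.e.\ with $\CFL(L)\otimes V^{i}$, and a moduli-space comparison lemma (adding or removing the silly $\alpha$-arcs) identifies the pushed-forward bordered type~D module of $T$ with the corresponding push-forward of $\CFTd(T)$. So the premise of your first paragraph, that no new holomorphic-curve counts are needed, is exactly where the difficulty sits: either you prove the missing algebraic equivalence above, or you need geometric input of the kind the paper uses.
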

\begin{Remark}\label{rem:CFTdClosing}
If $\CFTd$ is loop-type, we can interpret this pairing theorem in terms of Lagrangian intersection homology. Consider figure~\ref{fig:ClosingFUK}. The blue lines denote a 1-skeleton~$\blue \mathcal{R}$ with two 0-cells labelled~$\blue p$ and~$\blue q$ and four 1-cells $\blue a$, $\blue b$, $\blue c$ and $\blue d$, each of them connecting~$\blue p$ and~$\blue q$, such that $\blue \mathcal{R}$ is a deformation retract of the 4-punctured sphere. 
Given a collection of loops representing a loop-type peculiar module of a tangle~$T$, we write it as a cycle on $\blue \mathcal{R}$. By shortening this cycle as much as possible, we get a unique one. 
Now consider the violet loop~$\textcolor{violet}{\mathcal{C}}$, which is a Hamiltonian (=area-preserving) push-off of a loop on $\blue \mathcal{R}$ representing the trivial tangle. Then we see four intersection points of~$\textcolor{violet}{\mathcal{C}}$ with~$\blue \mathcal{R}$, which we label $a_1$, $a_2$, $b_1$ and $b_2$. 
These correspond to the generators in~$\mathcal{C}(a)$. The differentials in $\mathcal{C}(a)$ can also be computed from this picture; they come from counting convex bigons that connect the generators and avoid all punctures. The labelling can be computed from the multiplicities of the bigons in the regions near the 0-cells~$\blue p$ and~$\blue q$. 
Our \textbf{conventions} are such that the curve on \textcolor{blue}{$\mathcal{R}$} plays the role of a $\beta$-curve and \textcolor{violet}{$\mathcal{C}$} the one of an $\alpha$-curve. 
Furthermore, the normal vector field points into the plane, i.\,e.\ the sphere. For example, we see two bigons connecting $b_2$ to $b_1$: One of them stays away from the 0-cells~$\blue p$ and~$\blue q$, so it contributes~1, the other picks up $p_2$, $p_1$, $q_1$ and $q_2$.
\end{Remark}
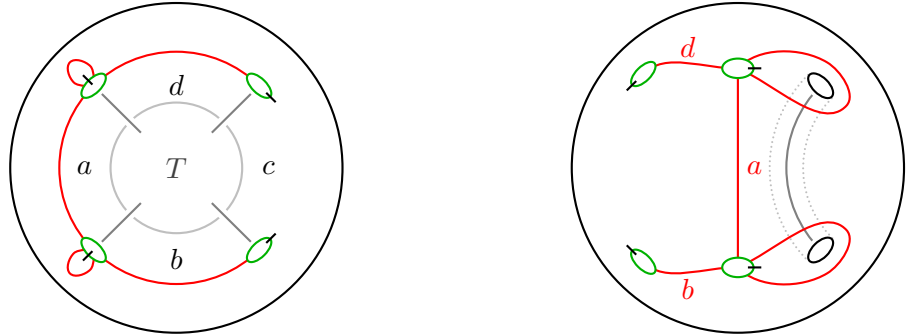
\begin{figure}[!ht]
\centering
\psset{unit=0.22}
\begin{subfigure}[b]{0.45\textwidth}\centering
\begin{pspicture}(-10,-10)(10,10)
\pscircle(0,0){10}
\pscircle[linecolor=lightgray](0,0){4}

\psline[linecolor=white,linewidth=4pt](3;45)(7;45)
\psline[linecolor=white,linewidth=4pt](3;135)(7;135)
\psline[linecolor=white,linewidth=4pt](3;-135)(7;-135)
\psline[linecolor=white,linewidth=4pt](3;-45)(7;-45)

\psline[linecolor=gray](3;45)(6.3;45)
\psline[linecolor=gray](3;135)(6.3;135)
\psline[linecolor=gray](3;-135)(6.3;-135)
\psline[linecolor=gray](3;-45)(6.3;-45)

\psarc[linecolor=red](0,0){7}{45}{-45}

\pscurve[linecolor=red](7;135)(8;130)(9;135)(8;140)(7;135)
\pscurve[linecolor=red](7;-135)(8;-130)(9;-135)(8;-140)(7;-135)

{\psset{fillstyle=solid,fillcolor=white}
\rput{45}(7;45){\psellipse[linecolor=darkgreen](0,0)(0.5,1)\psline(0,-0.6)(0,-1.4)}
\rput{135}(7;135){\psellipse[linecolor=darkgreen](0,0)(0.5,1)}
\rput{-135}(7;-135){\psellipse[linecolor=darkgreen](0,0)(0.5,1)}
\rput{-45}(7;-45){\psellipse[linecolor=darkgreen](0,0)(0.5,1)\psline(0,0.6)(0,1.4)}
}

\psline(7.1;135)(7.9;135)
\psline(7.1;-135)(7.9;-135)

\rput(-5.5,0){$a$}
\rput(0,-5.5){$b$}
\rput(5.5,0){$c$}
\rput(0,5){$d$}
\rput(0,0){\textcolor{darkgray}{$T$}}

\end{pspicture}
\caption{The bordered sutured structure on $X_T$}\label{fig:ClosingTangleT}
\end{subfigure}
\quad
\begin{subfigure}[b]{0.45\textwidth}\centering
\begin{pspicture}(-10,-10)(10,10)
\pscircle(0,0){10}

\psecurve[linecolor=red](8;-135)(8;135)(0,6)(2,6)
\psecurve[linecolor=red](8;135)(8;-135)(0,-6)(2,-6)
\psline[linecolor=red](0,-6)(0,6)

\psarc[linecolor=gray](9.89949493661167,0){7}{135}{-135}

\rput{45}(7;45){\psellipse*[linecolor=white](0,0)(0.75,1.2)}
\rput{-45}(7;-45){\psellipse*[linecolor=white](0,0)(0.75,1.2)}

\psarc[linecolor=lightgray,linestyle=dotted,dotsep=1pt](9.89949493661167,0){8}{135}{-135}
\psarc[linecolor=lightgray,linestyle=dotted,dotsep=1pt](9.89949493661167,0){6}{135}{-135}

\psecurve[linecolor=red](0,4)(0,6)(8.3;50)(7.5;30)(0,6)(-2,7)
\psecurve[linecolor=red](0,-4)(0,-6)(8.3;-50)(7.5;-30)(0,-6)(-2,-7)

{\psset{fillstyle=solid,fillcolor=white}
\rput{45}(7;45){\psellipse[linecolor=black](0,0)(0.55,1)}
\rput{-45}(7;-45){\psellipse[linecolor=black](0,0)(0.55,1)}
\rput{135}(8;135){\psellipse[linecolor=darkgreen](0,0)(0.5,1)\psline(0,0.6)(0,1.4)}
\rput{-135}(8;-135){\psellipse[linecolor=darkgreen](0,0)(0.5,1)\psline(0,-0.6)(0,-1.4)}
\rput{90}(6;90){\psellipse[linecolor=darkgreen](0,0)(0.65,1)}
\rput{-90}(6;-90){\psellipse[linecolor=darkgreen](0,0)(0.65,1)}
}

\psline(0.6,6)(1.4,6)
\psline(0.6,-6)(1.4,-6)

\rput(-3,-7.3){\red $b$}
\rput(1,0){\red $a$}
\rput(-3,7.3){\red $d$}

\end{pspicture}
\caption{The bordered sutured manifold $C(a)$}\label{fig:ClosingTangleClosure}
\end{subfigure}
\caption{An illustration for the proof of theorem~\ref{thm:CFTdGlueingTrivial}. Pairing these two bordered sutured manifolds gives the sutured manifold of the link obtained by closing $T$ at the site $a$, plus an extra pair of meridional sutures, if the two tangle ends adjacent to the arc $a$ do not belong to the same component of $T$.}\label{fig:ClosingTangle}
\end{figure}

\begin{proof}[Proof of theorem~\ref{thm:CFTdGlueingTrivial}]
The proof is essentially a computation of a bordered sutured type~A structure corresponding to closing a tangle, namely the one for the bordered sutured manifold $C(a)$ shown in figure~\ref{fig:ClosingTangleClosure}. A Heegaard diagram for it is shown in figure~\ref{fig:ClosingTangleHD}. We only compute the full subcomplex of the corresponding type A structure consisting of those generators which occupy exactly two arcs of $\{a,b,d\}$; this part is shown in figure~\ref{fig:ClosingTangleTypeARaw}. Its computation is more or less straightforward, noting that the multiplicities of all regions are either 0 or 1. The only domain which could contribute and which is not a polygon is $A+B$. To solve this problem, we can argue as follows: Pairing the type A structure with the type D structure consisting of a single generator in idempotent $\{d\}$ and no differential gives~0. By symmetry of $C(a)$, pairing with a single generator in idempotent $\{b\}$ should give 0 as well. However, if $A+B$ did not contribute, the homology of the paired complex would be 2-dimensional.\\
We can simplify this type A structure by cancelling the identity map between the generators $b_2y_1z_2$ and $b_1x_2z_1$ and then do some homotopies using lemma~\ref{lem:AbstractCleanUp} to obtain the following complex, which we denote by $\mathcal{C}$: 
$$\begin{tikzcd}[row sep=2cm, column sep=3cm]
b_1
\arrow[leftarrow]{r}[description]{1+(\delta_2\tau_2,\tau_{13}\delta_{13})}
\arrow[leftarrow]{d}[description]{(\delta_{13})}
\arrow[leftarrow,pos=0.3]{rd}[description]{(\tau_2,\tau_{13}\delta_{13})}
& 
b_2
\\
a_1
\arrow[leftarrow]{r}[description]{(\tau_2,\tau_{13})}
\arrow[leftarrow,pos=0.3]{ur}[description]{(\delta_2\tau_2,\tau_{13})}
&
a_2
\arrow[leftarrow]{u}[description]{(\delta_2)}
\end{tikzcd}$$\pagebreak[3]

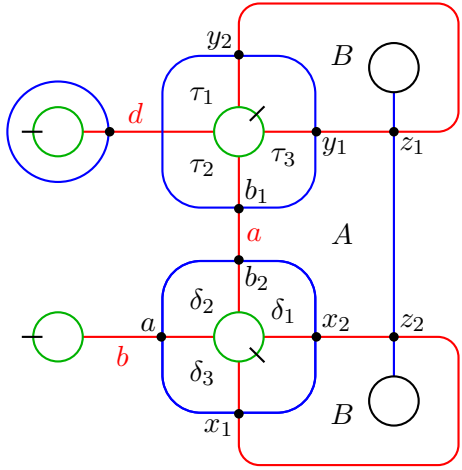
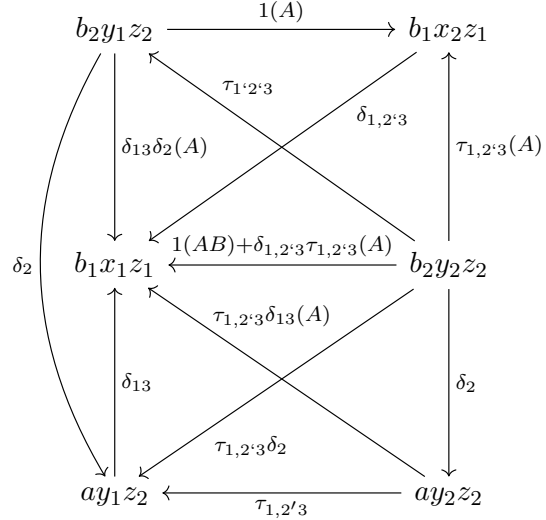
\begin{figure}[t]
\centering
\psset{unit=0.34}
\begin{subfigure}[b]{0.45\textwidth}\centering
\begin{pspicture}(-9,-10)(9,10)

\psline[linecolor=red,linearc=0.8](-7,4)(8.5,4)(8.5,9)(0,9)(0,-9)(8.5,-9)(8.5,-4)(-7,-4)

\psframe[linecolor=blue,framearc=0.5](-3,1)(3,7)
\psframe[linecolor=blue,framearc=0.5](-3,-1)(3,-7)
\psframe[linecolor=blue,framearc=0.5](-3,-1)(3,-7)
\psline[linecolor=blue](6,6.5)(6,-6.5)
\pscircle[linecolor=blue](-7,4){2}

\pscircle[fillstyle=solid,fillcolor=white,linecolor=darkgreen](0,4){1}
\pscircle[fillstyle=solid,fillcolor=white,linecolor=darkgreen](0,-4){1}
\pscircle[fillstyle=solid,fillcolor=white,linecolor=darkgreen](-7,4){1}
\pscircle[fillstyle=solid,fillcolor=white,linecolor=darkgreen](-7,-4){1}
\pscircle[fillstyle=solid,fillcolor=white](6,6.5){1}
\pscircle[fillstyle=solid,fillcolor=white](6,-6.5){1}

\psdot(-5,4)
\rput(0,1){\psdot(0,0)\uput{0.3}[45](0,0){$b_1$}}
\rput(0,7){\psdot(0,0)\uput{0.3}[135](0,0){$y_2$}}
\rput(3,4){\psdot(0,0)\uput{0.3}[-45](0,0){$y_1$}}
\rput(0,-1){\psdot(0,0)\uput{0.3}[-45](0,0){$b_2$}}
\rput(0,-7){\psdot(0,0)\uput{0.3}[-135](0,0){$x_1$}}
\rput(3,-4){\psdot(0,0)\uput{0.3}[45](0,0){$x_2$}}
\rput(-3,-4){\psdot(0,0)\uput{0.3}[135](0,0){$a$}}
\rput(6,4){\psdot(0,0)\uput{0.3}[-45](0,0){$z_1$}}
\rput(6,-4){\psdot(0,0)\uput{0.3}[45](0,0){$z_2$}}
\rput(4,0){$A$}
\rput(4,7){$B$}
\rput(4,-7){$B$}
\rput(0,4){
\rput(2;-30){$\tau_3$}
\rput(2;-135){$\tau_2$}
\rput(2;135){$\tau_1$}
\psline(0.6;45)(1.4;45)
}
\rput(0,-4){
\rput(2;-135){$\delta_3$}
\rput(2;135){$\delta_2$}
\rput(2;30){$\delta_1$}
\psline(0.6;-45)(1.4;-45)
}
\psline(-7.6,4)(-8.4,4)
\psline(-7.6,-4)(-8.4,-4)

\rput(-4.5,-4.75){\red $b$}
\rput(0.6,0){\red $a$}
\rput(-4,4.75){\red $d$}

\end{pspicture}
\caption{A Heegaard diagram for the bordered sutured manifold in figure~\ref{fig:ClosingTangleClosure}. The normal vector field points into the plane.}\label{fig:ClosingTangleHD}
\end{subfigure}
\quad
\begin{subfigure}[b]{0.49\textwidth}
$$\begin{tikzcd}[row sep=2.5cm, column sep=3cm]
b_2y_1z_2
\arrow[leftarrow,pos=0.25]{rd}{\tau_{1`2`3}}
&
b_1x_2z_1 
\arrow[leftarrow,swap]{l}{1(A)}
\arrow[leftarrow]{d}{\tau_{1,2`3}(A)}
\\
b_1x_1z_1 
\arrow[leftarrow]{r}{1(AB)+\delta_{1,2`3}\tau_{1,2`3}(A)}
\arrow[leftarrow]{d}{\delta_{13}}
\arrow[leftarrow,pos=0.25]{dr}{\!\!\!\tau_{1,2`3}\delta_{13}(A)}
\arrow[leftarrow,swap]{u}{\delta_{13}\delta_{2}(A)}
\arrow[leftarrow,swap,pos=0.75]{ur}{\delta_{1,2`3}}
& 
b_2y_2z_2
\\
ay_1z_2
\arrow[leftarrow,swap]{r}{\tau_{1,2'3}}
\arrow[leftarrow,pos=0.25,swap]{ru}{\tau_{1,2`3}\delta_{2}}
\arrow[leftarrow,bend left]{uu}{\delta_{2}}
& 
ay_2z_2
\arrow[leftarrow,swap]{u}{\delta_{2}}
\end{tikzcd}$$
\vspace{-0.4cm}
\caption{The type A structure computed from the Heegaard diagram on the left}\label{fig:ClosingTangleTypeARaw}
\end{subfigure}
\caption{The computation of the type A structure for the bordered sutured manifold in figure~\ref{fig:ClosingTangleClosure}. There are only two interior regions, labelled by $A$, $B$. If a domain containing one or both of these contributes to the structure map, we record it in brackets in the type A structure. A $`$ ($,$) in a subscript means that the corresponding $\alpha$-arc is (not) occupied. For more details, see \cite{BSAx2.nb}.
}\label{fig:ClosingTangleComputation}
\end{figure}

\noindent
Let $\Ad_{12}$ be the quotient of $\Ad$ by $\iota_3$ and all elements~$p_I$ and~$q_I$ where $I\cap\{3,4\}\neq\emptyset$. Let $\pi_{12}$ denote the quotient map. Note that $\Ad_{12}$ is generated by $\{p_1,p_2,p_{12},q_1,q_2,q_{12}\}$ as an $\Id/\iota_3$-module. Furthermore, if $\mathcal{B}$ denotes the dg algebra underlying $\mathcal{C}$, we can define an $\Id/\iota_3$-algebra homomorphism 
$$\pi:\mathcal{B}\rightarrow\Ad_{12}$$
by sending
$\tau_2\mapsto p_1$, 
$\delta_2\mapsto p_2$, 
$\delta_2\tau_2\mapsto p_{21}$, 
$\tau_{13}\mapsto q_1$, 
$\delta_{13}\mapsto q_2$, 
$\tau_{13}\delta_{13}\mapsto q_{12}$ and all other basic algebra elements to 0.
Note that this is a homomorphism of dg algebras, where we define the differential on $\Ad_{12}$ to vanish. Hence, by remark~\ref{rem:FunctorsBetweenDGCats}, $\pi$ induces a functor $\mathcal{F}_\pi$ from type A structures over $\mathcal{B}$ to those over $\Ad_{12}$, that respects chain homotopies. Note that $\mathcal{F}_\pi(\mathcal{C})=\mathcal{C}(a)$.\\
Next, let $\mathcal{D}(T)$ be the type D structure computed from a Heegaard diagram, where the silly $\alpha$-arcs do not intersect any $\beta$-curves. Then
$$\SFC(X_T\cup C(a))=\mathcal{D}(T)\boxtimes\mathcal{C}=\mathcal{F}_\pi(\mathcal{D}(T))\boxtimes\mathcal{C}(a).$$
By construction, $X_T\cup C(a)$ is equal to the link complements of $L$ with some meridional sutures. If the two open components of~$T$ are not joined up, each link component of~$L$ carries a single pair of meridional sutures, so $\SFC(X_T\cup C(a))$ agrees with $\CFL(L)$. Otherwise, there is an extra pair of meridional sutures on the single new closed component, which contributes the tensor factor $V$.\\
Since applying $(-\boxtimes\mathcal{C}(a))$ to $\mathcal{F}_{\pi_{12}}(\CFTd(T))$ and $\CFTd(T)$ gives the same result, it only remains to compare $\mathcal{F}_\pi(\mathcal{D}(T))$ to $\mathcal{F}_{\pi_{12}}(\CFTd(T))$. We claim that they are identical, provided the underlying Heegaard diagram for $\mathcal{D}(T)$ is obtained from the one for $\CFTd(T)$ by removing the arc for site $c$ and performing the obvious modifications near the tangle ends. Then, the underlying generator sets in sites $a$ and $b$ are the same. Furthermore, the moduli spaces for the algebra elements corresponding to the basic elements in $\Ad_{12}$ are the same by the next lemma, since the underlying domains are obtained from one another by adding/removing the silly $\alpha$-arcs. 
\end{proof}

\begin{lemma}
Let \(D\) and \(D'\) be two domains which only differ in a small region of multiplicity 1 as follows:
$$
{\raisebox{-0.9cm}{
\psset{unit=0.5}
\begin{pspicture}(-4,-0.1)(4,4.6)
\pscustom[fillstyle=solid,fillcolor=lightgray,linewidth=0pt,linecolor=white]{
\psline(-3,4)(-3,0)
\psline(-3,0)(3,0)
\psline(3,0)(3,4)
\psecurve(10,4.5)(3,4)(0,4.5)(-3,4)(-10,4.5)
}
\pscustom[fillstyle=solid,fillcolor=white,linewidth=0pt,linecolor=white]{
\psecurve(0,-1)(-1,0)(0,1.5)(1,0)(0,-1)
\psline(1,0)(1,-1)(-1,-1)(-1,0)
}
\rput(0,2.75){$D$}
\psline[linecolor=red](3,0)(3,4)
\psline[linecolor=red](-3,0)(-3,4)
\psecurve[linecolor=red](0,-1)(-1,0)(0,1.5)(1,0)(0,-1)
\psline[linecolor=darkgreen](-3.5,0)(-0.5,0)
\psline[linecolor=darkgreen](0.5,0)(3.5,0)
\end{pspicture}
}
\longleftrightarrow
\raisebox{-0.9cm}{
\psset{unit=0.5}
\begin{pspicture}(-4,-0.1)(4,4.6)
\pscustom[fillstyle=solid,fillcolor=lightgray,linewidth=0pt,linecolor=white]{
\psline(-3,4)(-3,0)
\psline(-3,0)(3,0)
\psline(3,0)(3,4)
\psecurve(10,4.5)(3,4)(0,4.5)(-3,4)(-10,4.5)
}
\rput(0,2){$D'$}
\psline[linecolor=red](3,0)(3,4)
\psline[linecolor=red](-3,0)(-3,4)
\psline[linecolor=darkgreen](-3.5,0)(3.5,0)
\end{pspicture}
}
}
$$
Then, for a suitable choice of complex structure, \(D\) contributes iff \(D'\) does. 
\end{lemma}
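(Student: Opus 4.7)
The two diagrams differ by a purely local modification near a segment of $\mathcal{Z}$: in $D$ a short piece of the suture is replaced by an embedded $\alpha$-arc meeting no $\beta$-curves (a ``silly'' arc), together with the creation of a small bigon region of multiplicity $1$ bounded by the silly arc and the now-missing suture segment. The plan is to show that under a one-parameter family of complex structures $j_\epsilon$ in which the extra bigon has area $\epsilon$, the moduli spaces $\widehat{\mathcal{M}}_{j_\epsilon}(D)$ and $\widehat{\mathcal{M}}(D')$ are in $\mathbb{Z}/2$-bijection.

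First, I would check that the Maslov indices agree: by the combinatorial formula from definition~\sref{def:homgrading}, the extra bigon contributes Euler measure $0$ (it has two acute corners at the endpoints of the silly arc on $\mathcal{Z}$), and neither $\mathbf{x}$ nor $\mathbf{y}$ has any point on the silly arc, so $m_{\mathbf{x}}$ and $m_{\mathbf{y}}$ are unchanged. Hence $\mu(D) = \mu(D')$. Next, in one direction, as $\epsilon \to 0$ I would apply a standard Gromov-compactness argument: any $j_\epsilon$-holomorphic representative $u$ of $D$ restricts over the shrinking bigon region to a branched cover with $\alpha$-boundary on the silly arc and the opposite boundary mapping into $\mathcal{Z}$. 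Because the silly arc carries no generators and has no $\beta$-intersections, Riemann--Hurwitz combined with the multiplicity-$1$ constraint forces this cover to be a trivial unbranched disc, which may be excised to yield an honest curve with underlying domain $D'$.

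In the opposite direction, the same local picture lets us glue: given any $u' \in \widehat{\mathcal{M}}(D')$, the unique trivial holomorphic disc over the bigon can be attached along a thin neck near the silly arc. Because no $\beta$-curves cross the silly arc, the gluing is unobstructed and produces, for all sufficiently small $\epsilon$, a unique $j_\epsilon$-holomorphic curve in $\widehat{\mathcal{M}}(D)$. These two constructions are mutual inverses, establishing the bijection and hence the claim mod~$2$. The main technical point will be verifying the neck-stretching and unobstructed-gluing steps in the bordered sutured setting with the mixed $\alpha$/$\mathcal{Z}$ boundary conditions; this is essentially the local invariance principle underlying \cite[section~4]{Zarev}, and since the modification takes place in a disc neighborhood of the silly arc containing no other curves, all of the analysis is purely local.
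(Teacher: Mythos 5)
Your overall strategy --- pinch the complex structure in the small region, then use Gromov compactness in one direction and gluing in the other --- is in the same spirit as what the paper intends (its proof is a one-line deferral to the analogous local comparison in \cite{Hanselman}, proposition~2.7). However, as written your argument analyses the wrong configuration, because you have the multiplicities reversed: in the figure it is \(D'\) that covers the small region cut off by the silly \(\alpha\)-arc with multiplicity \(1\), while \(D\) omits it entirely, its boundary running along the silly arc instead of along \(\mathcal{Z}\). Consequently a \(j_\epsilon\)-holomorphic representative of \(D\) has no portion over the shrinking bigon to restrict to or excise, and attaching the ``trivial disc over the bigon'' to a representative of \(D'\) would produce a curve whose domain is \(D'\) plus the bigon, not \(D\). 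So the excision/gluing step, as stated, does not relate the two moduli spaces in the lemma.

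More importantly, even after swapping the labels the mechanism is not the removal or attachment of a closed trivial disc covering the bigon: the two domains differ in their \emph{boundary conditions} in the small region --- \(D\) has a boundary arc on the silly \(\alpha\)-arc and multiplicity \(0\) on the bigon, whereas \(D'\) has no \(\alpha\)-boundary there and is adjacent to \(\mathcal{Z}\) with multiplicity \(1\). The actual content of the lemma is to match curves carrying a boundary component on the silly arc with curves carrying none; in the degeneration the piece that splits off is a half-disc with mixed \(\alpha\)-arc/\(\mathcal{Z}\) boundary (with the appropriate asymptotics at the boundary), and one must argue that its moduli space contributes exactly one point and that the identification is compatible with the algebra elements recorded at \(\mathcal{Z}\). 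This is precisely what the cited argument supplies and what your sketch does not address. A smaller slip: a bigon with two acute corners has Euler measure \(1/2\), not \(0\), and since the corner and boundary structure changes between the two diagrams, the index comparison should be done with the bordered sutured index formula (including the boundary contributions) rather than the closed-surface count you use.
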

\begin{proof}
This follows from the same arguments as \cite[proposition~2.7]{Hanselman}.
\end{proof}

\begin{Remark}\label{rem:BSAx2}
In the proof above, we could alternatively have also added two meridional sutures to the manifold, thereby making the Heegaard diagram planar and the computation purely combinatorial. This comes at the expense of getting an extra tensor factor after the pairing. However, this factor can be removed, since the type A structure is homotopic to a direct sum of two identical copies of $\mathcal{C}(s)$. This computation is done in the  notebook~\cite{BSAx2.nb}, see also appendix~\ref{app:manualBSFH}, in particular figure~\ref{fig:pictures}. For the general glueing formula, one might hope something similar would happen. However, this appears not to be the case. 
\end{Remark}

\begin{theorem}\label{thm:CFTdGeneralGlueing}
	Let \(T_1\) and \(T_2\) be two 4-ended tangles and \(L\) the link obtained by glueing them together according to the following picture
	$$
	\psset{unit=0.7}
	\begin{pspicture}(-2.5,-1.6)(2.5,1.6)
	\pscurve(-1.5,0)(-0.7,1)(2.3,1)(1.5,0)
	\pscircle*[linecolor=white](0,1.2){0.25}
	\pscurve(-1.5,0)(-2.3,1)(0.7,1)(1.5,0)
	
	\pscurve(-1.5,0)(-0.7,-1)(2.3,-1)(1.5,0)
	\pscircle*[linecolor=white](0,-1.2){0.25}
	\pscurve(-1.5,0)(-2.3,-1)(0.7,-1)(1.5,0)
	
	\pscircle*[linecolor=white](-1.5,0){1}
	\pscircle(-1.5,0){1}
	\rput[c](-1.5,0){$T_1$}
	
	\pscircle*[linecolor=white](1.5,0){1}
	\pscircle(1.5,0){1}
	\rput[c](1.5,0){$T_2$}
	\end{pspicture}
	$$
	Then there exists a strictly unital type AA structure \(\mathcal{P}\) over \(\Ad\) such that
	$$\CFL(L)\otimes V^{i}=\CFTd(T_1)\boxtimes\,\mathcal{P}\boxtimes\,\CFTd(T_2)$$
	where \(i=\vert T_1\vert+\vert T_2\vert-\vert L\vert\in\{2,3\}\). 
\end{theorem}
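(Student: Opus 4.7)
The plan is to mimic the strategy used in the proof of Theorem~\ref{thm:CFTdGlueingTrivial}, but now with a bordered sutured pair-of-pants region having two parametrised boundary components rather than one. Concretely, I would consider the bordered sutured manifold $P$ obtained from a thickened $2$-sphere minus two open balls around the two $4$-punctured spheres along which $T_1$ and $T_2$ get glued. Each boundary $4$-punctured sphere gets parametrised by the same type of arc diagram used in Theorem~\ref{thm:CFTdGlueingTrivial} (the one producing the $C(a)$-type picture of Figure~\ref{fig:ClosingTangleClosure}), suitably doubled. On the inner boundary I would place the parametrisation that matches the one used to form $\mathcal{D}(T_1)$ in that earlier proof; on the outer boundary I would place its mirror image so that, after gluing, the four $p_i$--$q_i$ pairs on either side match up to give the link $L$, up to some $V^i$ factor coming from extra meridional sutures created where boundary components get identified.

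Next I would compute the bordered sutured type~$AA$ bimodule $\BSA(P)$ from an explicit nice Heegaard diagram. Because the underlying $3$-manifold here is essentially $S^2\times[0,1]$ with the required parametrisations, this diagram is planar but has many generators; as the theorem statement indicates, I would carry out the calculation using the Mathematica package \cite{BSFH.m}. After the raw computation I would simplify the bimodule by iterated cancellation (lemma~\ref{lem:AbstractCancellation}) and homotopy (lemma~\ref{lem:AbstractCleanUp}), aiming for a form analogous to the complex~$\mathcal{C}$ in the earlier proof. Let $\mathcal{B}_L$ and $\mathcal{B}_R$ denote the two bordered sutured strands algebras acting on the left and right.

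Now I would define two dg algebra homomorphisms $\pi_L:\mathcal{B}_L\rightarrow\Ad$ and $\pi_R:\mathcal{B}_R\rightarrow\Ad$, each modelled on the map $\pi:\mathcal{B}\rightarrow\Ad_{12}$ in the proof of Theorem~\ref{thm:CFTdGlueingTrivial} (sending the ``short'' algebra elements $\tau_2,\delta_2,\tau_{13},\delta_{13}$, etc.\ to $p_i,q_i$ and killing all others). By remark~\ref{rem:FunctorsBetweenDGCats} these induce pullback functors on type~$A$ structures. I then define $\mathcal{P}:=\mathcal{F}_{\pi_L\otimes\pi_R}(\BSA(P))$, which is a type~$AA$ bimodule over $\Ad$ on both sides. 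Strict unitality should be automatic from strict unitality of $\BSA(P)$ together with the fact that $\pi_L,\pi_R$ send units to units.

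For the pairing, let $\mathcal{D}(T_j)$ be the bordered sutured type~$D$ structure computed from a tangle Heegaard diagram for $T_j$ in which the silly $\alpha$-arcs do not intersect any $\beta$-curves, as in the earlier proof. Zarev's pairing theorem~\cite[theorem~7.16]{Zarev} gives
\[
\SFC(X_{T_1}\cup P\cup X_{T_2})=\mathcal{D}(T_1)\boxtimes\BSA(P)\boxtimes\mathcal{D}(T_2),
\]
and, exactly as in the proof of Theorem~\ref{thm:CFTdGlueingTrivial}, the left-hand side is $\CFL(L)\otimes V^i$ where the extra factor $V^i$ records the (at most three) meridional suture pairs introduced by the gluing. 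Using the identity $\mathcal{F}_\pi(\mathcal{D}(T))\boxtimes N=\mathcal{D}(T)\boxtimes\mathcal{F}_\pi(N)$ together with the matching-domains lemma from the earlier proof (which identifies the relevant holomorphic domain counts with those defining $\CFTd$), both factors $\mathcal{F}_{\pi_L}(\mathcal{D}(T_1))$ and $\mathcal{F}_{\pi_R}(\mathcal{D}(T_2))$ collapse to $\CFTd(T_1)$ and $\CFTd(T_2)$ respectively, giving the claimed formula.

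The main obstacle will be the combinatorial size of $\BSA(P)$: unlike the one-sided case of Theorem~\ref{thm:CFTdGlueingTrivial}, here the bimodule is genuinely bilateral and the number of generators before simplification is large enough that one must rely on \cite{BSFH.m} both to enumerate them and to carry out the cancellations. A subsidiary difficulty is choosing boundary parametrisations that are compatible on both sides (so that the $\pi_L$ and $\pi_R$ targets agree with the $\Ad$-action on $\CFTd(T_j)$), and to verify boundedness of at least one factor so that the box tensor product is well-defined; this is precisely the point raised in the remark immediately following the theorem about whether $\mathcal{P}$ itself can be presented in a cleaner, more Fukaya-theoretic form, which the author leaves for Conjecture~\sref{conj:CFTdBetterGlueing}.
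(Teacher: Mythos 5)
Your proposal takes essentially the same route as the paper's proof: glue $T_1$ and $T_2$ along a bordered sutured thickened $4$-punctured sphere $P$, compute its type~AA bimodule by computer (the paper does this in~\cite{BSAAx4e.nb} using~\cite{BSFH.m}), homotope it until its algebra actions factor through suitable quotients of $\Ad$ (by $p_1,q_1$ on the $T_1$-side and $p_3,q_3$ on the $T_2$-side), and then rerun the argument of theorem~\ref{thm:CFTdGlueingTrivial} together with a count of the extra meridional sutures to obtain the factor $V^{i}$. This matches the paper's argument in both structure and detail.
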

\begin{Remark}\label{rem:CFTdGlueing}
	The type AA structure $\mathcal{P}$ looks almost like the direct sum of four copies of the complex $\mathcal{Q}$ shown in figure~\ref{fig:TypeAAGlueingOneBlock}. However, between those four identical blocks, there are structure maps which I do not seem to be able to get rid of. They also have a certain symmetry. Schematically, $\mathcal{P}$ looks as follows:
	$$\begin{tikzcd}[row sep=1.2cm, column sep=1.5cm]
	\mathcal{Q}
	\arrow[leftrightarrow]{r}[description]{g}
	\arrow[leftrightarrow]{d}[description]{f}
	\arrow[leftrightarrow]{rd}[description]{h}
	& 
	\mathcal{Q}
	\\
	\mathcal{Q}
	\arrow[leftrightarrow]{r}[description]{g}
	&
	\mathcal{Q}
	\arrow[leftrightarrow]{u}[description]{f}
	\end{tikzcd}$$
	
	We will not define the maps $f$, $g$ or $h$ since they are not particularly enlightening; the interested reader should look at the computation in~\cite{BSAAx4e.nb} for details. The complex $\mathcal{Q}$ itself does not look particularly simple either, meaning that I am at the moment unable to find an interpretation of $\mathcal{Q}$ in terms of the Lagrangian intersection homology as in the case of closing a tangle, see remark~\ref{rem:CFTdClosing} and figure~\ref{fig:ClosingFUK}. Instead, I had expected the type AA structure~$\mathcal{P}'$ from figure~\ref{fig:GlueingCAT}, which \emph{does} admit such an interpretation. Calculations for loop-type peculiar modules suggest that we can also use $\mathcal{P}'$, giving rise to the following conjecture; also compare this to conjecture~\sref{conj:GlueingCFTdFUK}.

	\begin{figure}[t]
		\centering
		\includegraphics[scale=1.45]{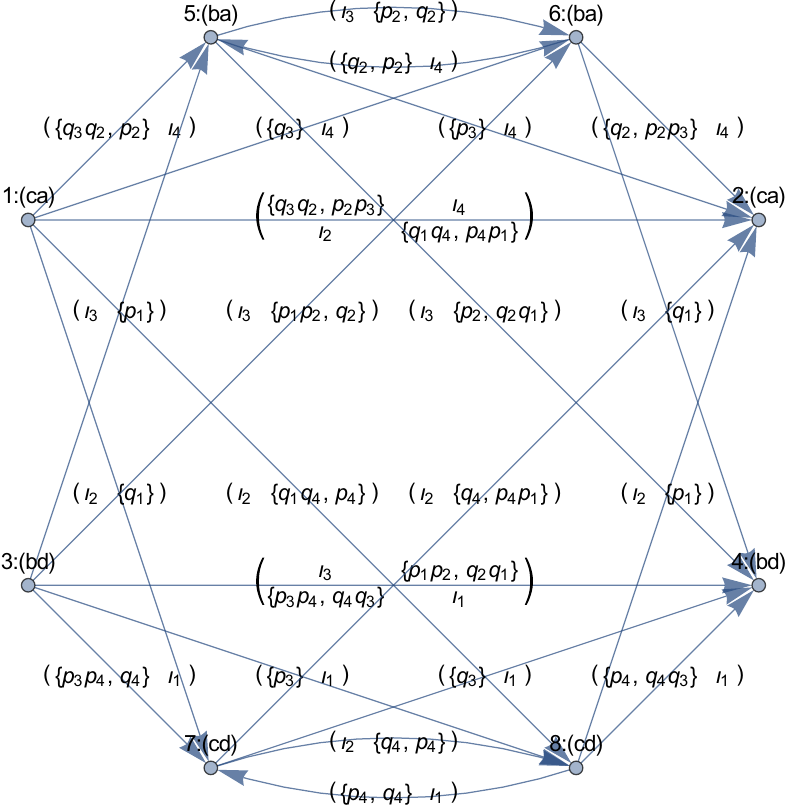}
		\caption{The complex $\mathcal{Q}$, one of four identical components of the type~AA structure~$\mathcal{P}$ from theorem~\ref{thm:CFTdGeneralGlueing}. This is an output of the notebook~\cite{BSAAx4e.nb}, so by remark~II.\ref{rem:conventions3}, we actually need to reverse all arrows and the algebra.}\label{fig:TypeAAGlueingOneBlock}
	\end{figure}
\end{Remark}

\begin{conjecture}\label{conj:CFTdBetterGlueing}
	Theorem~\ref{thm:CFTdGeneralGlueing} remains true, if we replace~\(\mathcal{P}\) by the type~AA structure~\(\mathcal{P}'\) defined by the labelled graph in figure~\ref{fig:GlueingCAT} and set \(i=\vert T_1\vert+\vert T_2\vert-\vert L\vert-2\in\{0,1\}\).
\end{conjecture}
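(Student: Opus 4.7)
The core discrepancy between $\mathcal{P}$ and the conjectural $\mathcal{P}'$ is a factor of $V^{\otimes 2}$, which accounts for the shift $i\in\{2,3\}$ versus $i\in\{0,1\}$. The plan is to establish an explicit homotopy equivalence of type~AA bimodules
$$\mathcal{P}\simeq\mathcal{P}'\otimes V^{\otimes 2}$$
over two copies of $\Ad$; the conjecture then follows immediately from theorem~\ref{thm:CFTdGeneralGlueing} by absorbing the extra $V$ factors into the link Floer complex on the left-hand side of the pairing formula. This $V^{\otimes 2}$ should be traceable geometrically to the bordered sutured decomposition used to compute $\mathcal{P}$: parametrising the 4-punctured glueing sphere so that all four tangle ends are simultaneously treated as type $\alpha$ ends on one side and type $\alpha\beta$ ends on the other introduces two additional pairs of meridional sutures, each contributing a tensor factor of $V$ via the surface decomposition argument preceding theorem~\sref{thm:glueingCAT}.

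The first, purely algebraic, step is to work from the four-block description of $\mathcal{P}$ recorded in remark~\ref{rem:CFTdGlueing}. I would search for an explicit sequence of cancellations and homotopies, using the clean-up and cancellation lemmas~\sref{lem:AbstractCleanUp} and~\sref{lem:AbstractCancellation} and automated through the package~\cite{BSFH.m}, that exhibits each block $\mathcal{Q}$ of figure~\ref{fig:TypeAAGlueingOneBlock} in the form $\mathcal{Q}_0\otimes V$ for a smaller complex~$\mathcal{Q}_0$. If such a splitting is compatible with the connecting maps $f$, $g$, $h$, then the four identical blocks combine to yield the second outer factor of $V$, and the residual bimodule $\mathcal{Q}_0\oplus\mathcal{Q}_0\oplus\mathcal{Q}_0\oplus\mathcal{Q}_0$ with induced connecting maps should coincide, after further homotopies, with the combinatorial pairing bimodule $\mathcal{P}'$ of figure~\ref{fig:GlueingCAT}.

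Complementarily, one may sidestep the computation of $\mathcal{P}$ altogether and compute $\mathcal{P}'$ directly from a more economical bordered sutured pairing. The combinatorial model of remark~\ref{rem:CFTdClosing} (figure~\ref{fig:ClosingFUK}), together with its conjectural generalisation to the full pairing setting in conjecture~\ref{conj:GlueingCFTdFUK}, suggests that $\mathcal{P}'$ is the pairing bimodule arising when the 4-punctured glueing sphere is parametrised by the 1-skeleton $\mathcal{R}$ with only two 0-cells, rather than by the arc diagram with four silly arcs used for $\mathcal{P}$. Implementing this would require either a mild extension of Zarev's bordered sutured theory to this more degenerate parametrisation (checking that homological linear independence and the pairing theorem still apply), or equivalently, a pre-cancellation on both sides of the pairing before taking the box tensor product. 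The special case of trivial $T_2$ handled by theorem~\ref{thm:CFTdGlueingTrivial} together with proposition~\ref{prop:LazyClosing} and the loop-calculations of example~\ref{exa:CFTdRatTang} provide the essential boundary behaviour that such a construction must reproduce.

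The principal obstacle in the algebraic approach is that the connecting maps $f$, $g$, $h$ of remark~\ref{rem:CFTdGlueing} mix the four blocks of $\mathcal{P}$ in a nontrivial way and appear not to respect any naive splitting $\mathcal{Q}\simeq\mathcal{Q}_0\otimes V$; exhibiting the splitting will almost certainly require a genuine homotopy rather than a direct cancellation, and finding this homotopy by hand looks prohibitively painful. In the geometric route, the obstacle is rather the absence of a proof of conjecture~\ref{conj:TwFukpqModEquivalent} identifying $\pqMod$ with $\TwFuk(S^2,4)$; without it, one cannot rigorously pin down $\mathcal{P}'$ as the Lagrangian intersection pairing. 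The most promising pragmatic strategy is hybrid: verify the identity $\mathcal{P}\simeq\mathcal{P}'\otimes V^{\otimes 2}$ on enough examples using~\cite{BSFH.m} that the resulting pattern reveals the explicit homotopy, then retro-fit this homotopy into a geometric argument built from the bordered sutured decomposition.
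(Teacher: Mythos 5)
The statement you are addressing is stated in the paper as a \emph{conjecture}: the author explicitly reports being unable to remove the cross-block structure maps $f$, $g$, $h$ of remark~\ref{rem:CFTdGlueing} and offers only computational evidence for loop-type peculiar modules. The paper therefore contains no proof for you to match, and your proposal does not supply one either. Its central step --- establishing a homotopy equivalence $\mathcal{P}\simeq\mathcal{P}'\otimes V^{\otimes 2}$ of type~AA bimodules (or, in the alternative route, showing directly that a more economical parametrisation of the glueing sphere yields $\mathcal{P}'$ up to the correct number of stabilisations) --- is precisely the open content of conjecture~\ref{conj:CFTdBetterGlueing}, and you acknowledge that you cannot carry it out: the proposed splitting $\mathcal{Q}\simeq\mathcal{Q}_0\otimes V$ compatible with $f$, $g$, $h$ is asserted as a hope, the homotopy is to be found ``by verifying on enough examples'', and the geometric route is conditioned on an extension of Zarev's theory to a degenerate parametrisation (or on conjecture~\ref{conj:TwFukpqModEquivalent}), neither of which is established. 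So the proposal is a reasonable research plan, but as a proof it has a gap exactly where the difficulty lies: it replaces the conjecture by an equivalent unproved statement.

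Two smaller points. First, your bookkeeping of the discrepancy is correct: the shift from $i\in\{2,3\}$ in theorem~\ref{thm:CFTdGeneralGlueing} to $i\in\{0,1\}$ in the conjecture is indeed a factor $V^{\otimes 2}$, and the two (or three) superfluous meridional suture pairs created by the bordered sutured manifold $P$ are the geometric source of the extra stabilisations; that part of your reduction is sound, and if the equivalence $\mathcal{P}\simeq\mathcal{P}'\otimes V^{\otimes 2}$ were proved, the conjecture would follow from theorem~\ref{thm:CFTdGeneralGlueing} essentially as you say (one should also note that replacing $\mathcal{P}$ by a homotopy-equivalent strictly unital type~AA structure does not change the paired complex up to homotopy, which is needed to absorb the $V$ factors). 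Second, be careful with the claim that the extra $V^{\otimes 2}$ ``should be traceable'' to the suture count alone: the obstruction reported in remark~\ref{rem:CFTdGlueing} is not about the number of generators or sutures but about the cross-block differentials, so even a correct suture count does not by itself produce the required splitting of $\mathcal{P}$ --- the homotopy you defer to future computation is where the actual mathematical work remains.
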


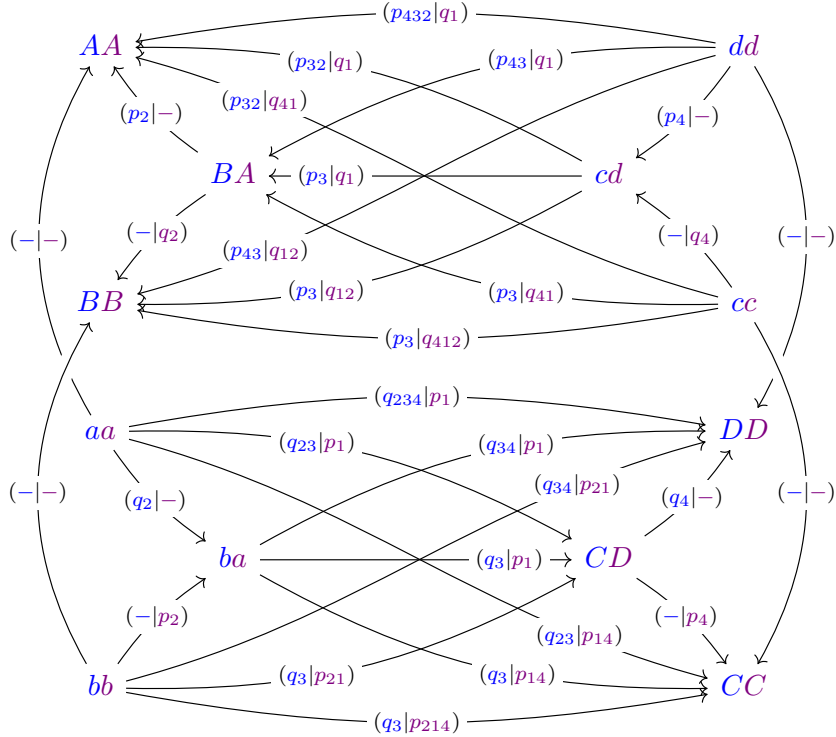
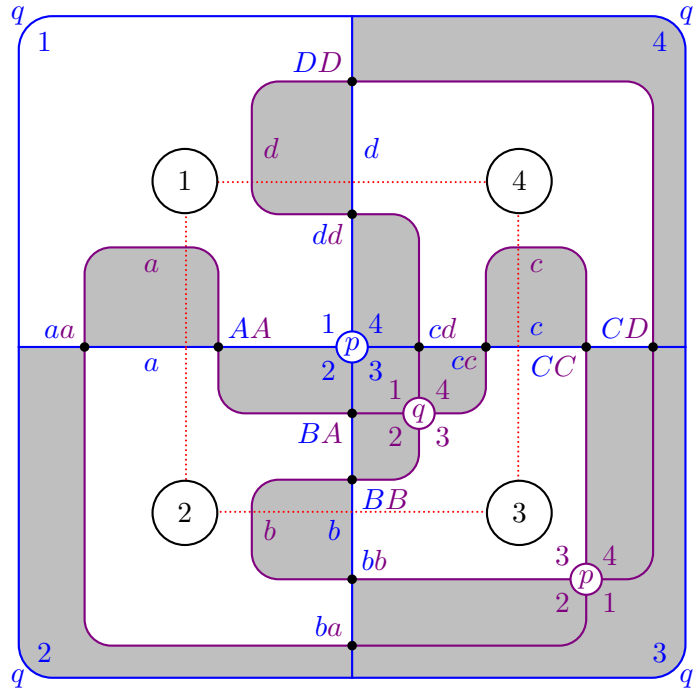
\begin{figure}[p]
	\centering
	{
		\begin{subfigure}[b]{\textwidth}\centering
			$$\begin{tikzcd}[row sep=1.15cm, column sep=0.8cm]
			\blue A\textcolor{violet}{A}
			\arrow[leftarrow,bend left=10]{rrrrrrr}[description]{\pq{432}{1}}
			\arrow[leftarrow,bend left=15,pos=0.4]{rrrrrrd}[description]{\pq{32}{1}}
			\arrow[leftarrow,bend right=10]{rd}[description]{\pn{2}}
			\arrow[leftarrow,bend right=30]{ddd}[description]{\nn}
			&
			&
			&
			&
			&
			&
			&
			\blue d\textcolor{violet}{d}
			\\
			&
			\blue B\textcolor{violet}{A}
			\arrow[leftarrow,bend left=15,pos=0.6]{rrrrrru}[description]{\pq{43}{1}}
			\arrow[leftarrow,pos=0.2]{rrrrr}[description]{\pq{3}{1}}
			\arrow[leftarrow,bend right=15,pos=0.6]{rrrrrrd}[description]{\pq{3}{41}}
			&
			&
			&
			&
			&
			\blue c\textcolor{violet}{d}
			\arrow[leftarrow,bend right=10]{ru}[description]{\pn{4}}
			\arrow[leftarrow,bend left=10]{rd}[description]{\nq{4}}
			\\
			\blue B\textcolor{violet}{B}
			\arrow[leftarrow,bend left=10]{ru}[description]{\nq{2}}
			\arrow[leftarrow,in=-165,out=15,pos=0.2]{rrrrrrruu}[description]{\pq{43}{12}}
			\arrow[leftarrow,bend right=15,pos=0.4]{rrrrrru}[description]{\pq{3}{12}}
			\arrow[leftarrow,bend right=10]{rrrrrrr}[description]{\pq{3}{412}}
			\arrow[leftarrow,crossing over, bend right=30]{ddd}[description]{\nn}
			&
			&
			&
			&
			&
			&
			&
			\blue c\textcolor{violet}{c}
			\arrow[out=165,in=-15,pos=0.8]{uulllllll}[description]{\pq{32}{41}}
			\\
			\blue a\textcolor{violet}{a}
			&
			&
			&
			&
			&
			&
			&
			\blue D\textcolor{violet}{D}
			\arrow[leftarrow,bend right=30]{uuu}[description]{\nn}
			\arrow[leftarrow,bend right=10]{lllllll}[description]{\qp{234}{1}}
			\arrow[leftarrow,bend right=15,pos=0.4]{lllllld}[description]{\qp{34}{1}}
			\arrow[leftarrow,bend left=10]{ld}[description]{\qn{4}}
			\\
			&
			\blue b\textcolor{violet}{a}
			\arrow[leftarrow,bend left=10]{lu}[description]{\qn{2}}
			\arrow[leftarrow,bend right=10]{ld}[description]{\np{2}}
			&
			&
			&
			&
			&
			\blue C\textcolor{violet}{D}
			\arrow[leftarrow,bend right=15,pos=0.6]{llllllu}[description]{\qp{23}{1}}
			\arrow[leftarrow,pos=0.2]{lllll}[description]{\qp{3}{1}}
			\arrow[leftarrow,bend left=15,pos=0.6]{lllllld}[description]{\qp{3}{21}}
			\\
			\blue b\textcolor{violet}{b}
			\arrow[in=-165,out=15,pos=0.8]{rrrrrrruu}[description]{\qp{34}{21}}
			&
			&
			&
			&
			&
			&
			&
			\blue C\textcolor{violet}{C}
			\arrow[leftarrow,crossing over, bend right=30]{uuu}[description]{\nn}
			\arrow[leftarrow,bend right=10]{lu}[description]{\np{4}}
			\arrow[leftarrow,out=165,in=-15,pos=0.2]{llllllluu}[description]{\qp{23}{14}}
			\arrow[leftarrow,bend left=15,pos=0.4]{llllllu}[description]{\qp{3}{14}}
			\arrow[leftarrow,bend left=10]{lllllll}[description]{\qp{3}{214}}
			\end{tikzcd}$$
			\vspace{-0.4cm}
			\caption{The expected type AA structure $\mathcal{P}'$. The identity action is implicit. Part of the actual type~AA structure used in theorem~\ref{thm:CFTdGeneralGlueing} is shown in figure~\ref{fig:TypeAAGlueingOneBlock}, see remark~\ref{rem:CFTdGlueing}.}\label{fig:CombGlueingGraph}
		\end{subfigure}
		\begin{subfigure}[b]{\textwidth}\centering
			\psset{unit=0.44}
			\begin{pspicture}(-10.5,-10.5)(10.5,10.5)
			
			\pscustom[fillstyle=solid,fillcolor=lightgray,linewidth=0pt,linecolor=white]{
				\psline(-8,0)(-4,0)
				\psline[liftpen=1,linearc=0.8](-4,0)(-4,3)(-8,3)(-8,0)
			}
			\pscustom[fillstyle=solid,fillcolor=lightgray,linewidth=0pt,linecolor=white]{
				\psline(0,8)(0,4)
				\psline[liftpen=1,linearc=0.8](0,4)(-3,4)(-3,8)(0,8)
			}
			\pscustom[fillstyle=solid,fillcolor=lightgray,linewidth=0pt,linecolor=white]{
				\psline(-4,0)(4,0)
				\psline[liftpen=1,linearc=0.8](4,0)(4,-2)(-4,-2)(-4,0)
			}
			\pscustom[fillstyle=solid,fillcolor=lightgray,linewidth=0pt,linecolor=white]{
				\psline(0,-4)(0,4)
				\psline[liftpen=1,linearc=0.8](0,4)(2,4)(2,-4)(0,-4)
			}
			\pscustom[fillstyle=solid,fillcolor=lightgray,linewidth=0pt,linecolor=white]{
				\psline(4,0)(7,0)
				\psline[liftpen=1,linearc=0.8](4,0)(4,3)(7,3)(7,0)
			}
			\pscustom[fillstyle=solid,fillcolor=lightgray,linewidth=0pt,linecolor=white]{
				\psline(0,-7)(0,-4)
				\psline[liftpen=1,linearc=0.8](0,-4)(-3,-4)(-3,-7)(0,-7)
			}
			\pscustom[fillstyle=solid,fillcolor=lightgray,linewidth=0pt,linecolor=white]{
				\psline[liftpen=1,linearc=1](-10,0)(-10,-10)(10,-10)(10,10)(0,10)
				\psline(0,10)(0,8)
				\psline[liftpen=1,linearc=0.8](0,8)(9,8)(9,0)
				\psline(9,0)(7,0)(7,-7)(0,-7)(0,-9)
				\psline[liftpen=1,linearc=0.8](0,-9)(-8,-9)(-8,0)
				\psline(-8,0)(-10,0)
			}
			
			\psframe[linecolor=blue,linearc=1,cornersize=absolute](-10,-10)(10,10)
			\psline[linecolor=blue](0,10)(0,-10)
			\psline[linecolor=blue](10,0)(-10,0)
			
			\psframe[linecolor=red,linestyle=dotted,dotsep=1pt](-5,-5)(5,5)
			\pscircle[fillstyle=solid](-5,-5){1}
			\pscircle[fillstyle=solid](5,-5){1}
			\pscircle[fillstyle=solid](-5,5){1}
			\pscircle[fillstyle=solid](5,5){1}
			
			\rput(-5,5){$1$}
			\rput(-5,-5){$2$}
			\rput(5,-5){$3$}
			\rput(5,5){$4$}
			
			\psline[linecolor=violet,linearc=0.8,cornersize=absolute]
			(7,-7)(-3,-7)(-3,-4)(2,-4)(2,4)(-3,4)(-3,8)(9,8)(9,-7)(7,-7)
			\psline[linecolor=violet,linearc=0.8,cornersize=absolute]
			(7,-7)(7,3)(4,3)(4,-2)(-4,-2)(-4,3)(-8,3)(-8,-9)(7,-9)(7,-7)
			
			\pscircle[fillstyle=solid,linecolor=blue, fillcolor=white](0,0){0.5}
			\rput(0,0){\blue $p$}
			\rput(1;135){\blue $1$}
			\rput(1;-135){\blue $2$}
			\rput(1;-45){\blue $3$}
			\rput(1;45){\blue $4$}
			
			\rput(-10,10){\blue $q$}
			\rput(10,-10){\blue $q$}
			\rput(10,10){\blue $q$}
			\rput(-10,-10){\blue $q$}
			\rput(9.2,9.2){\blue $4$}
			\rput(9.2,-9.2){\blue $3$}
			\rput(-9.2,-9.2){\blue $2$}
			\rput(-9.2,9.2){\blue $1$}
			
			\pscircle[fillstyle=solid,linecolor=violet, fillcolor=white](2,-2){0.5}
			\rput(2,-2){
				\rput(0,0){\textcolor{violet}{$q$}}
				\rput(1;135){\textcolor{violet}{$1$}}
				\rput(1;-135){\textcolor{violet}{$2$}}
				\rput(1;-45){\textcolor{violet}{$3$}}
				\rput(1;45){\textcolor{violet}{$4$}}}
			
			\pscircle[fillstyle=solid,linecolor=violet, fillcolor=white](7,-7){0.5}
			\rput(7,-7){
				\rput(0,0){\textcolor{violet}{$p$}}
				\rput(1;135){\textcolor{violet}{$3$}}
				\rput(1;-135){\textcolor{violet}{$2$}}
				\rput(1;-45){\textcolor{violet}{$1$}}
				\rput(1;45){\textcolor{violet}{$4$}}}
			
			\uput{0.35}[-90](-6,0){\blue $a$}
			\uput{0.35}[180](0,-5.5){\blue $b$}
			\uput{0.35}[90](5.5,0){\blue $c$}
			\uput{0.35}[0](0,6){\blue $d$}
			
			\uput{0.35}[-90](-6,3){\textcolor{violet}{$a$}}
			\uput{0.35}[0](-3,-5.5){\textcolor{violet}{$b$}}
			\uput{0.35}[-90](5.5,3){\textcolor{violet}{$c$}}
			\uput{0.35}[0](-3,6){\textcolor{violet}{$d$}}
			
			\psdot(-4,0)
			\uput{0.4}[45](-4,0){\blue $A$\textcolor{violet}{$A$}}
			\psdot(-8,0)
			\uput{0.4}[135](-8,0){\blue $a$\textcolor{violet}{$a$}}
			
			\psdot(0,-4)
			\uput{0.4}[-45](0,-4){\blue $B$\textcolor{violet}{$B$}}
			\psdot(0,-7)
			\uput{0.4}[45](0,-7){\blue $b$\textcolor{violet}{$b$}}
			\psdot(0,-2)
			\uput{0.4}[-135](0,-2){\blue $B$\textcolor{violet}{$A$}}
			\psdot(0,-9)
			\uput{0.4}[135](0,-9){\blue $b$\textcolor{violet}{$a$}}
			
			\psdot(7,0)
			\uput{0.4}[-135](7,0){\blue $C$\textcolor{violet}{$C$}}
			\psdot(4,0)
			\uput{0.4}[-135](4,0){\blue $c$\textcolor{violet}{$c$}}
			\psdot(9,0)
			\uput{0.3}[130](9,0){\blue $C$\textcolor{violet}{$D$}}
			\psdot(2,0)
			\uput{0.4}[45](2,0){\blue $c$\textcolor{violet}{$d$}}
			
			\psdot(0,8)
			\uput{0.4}[135](0,8){\blue $D$\textcolor{violet}{$D$}}
			\psdot(0,4)
			\uput{0.4}[-135](0,4){\blue $d$\textcolor{violet}{$d$}}
			
			\end{pspicture}
			\caption{The expected combinatorial model for pairing in the wrapped Fukaya category of the 4-punctured sphere; compare this to figure~\ref{fig:ClosingFUK}. Again, the boundary of the picture is identified to a point. The {\blue blue} curves denote a 1-skeleton and the \textcolor{violet}{violet} ones a Hamiltonian translate thereof, with the labelling $\textcolor{violet}{p}$ and $\textcolor{violet}{q}$ reversed.}\label{fig:GlueingFUK}
		\end{subfigure}
		\caption{The expected glueing structure in conjecture~\ref{conj:CFTdBetterGlueing}}\label{fig:GlueingCAT}
	}
\end{figure} 

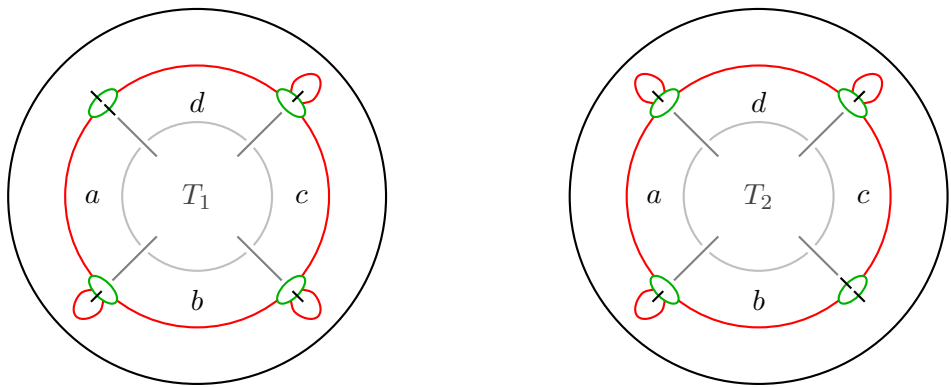
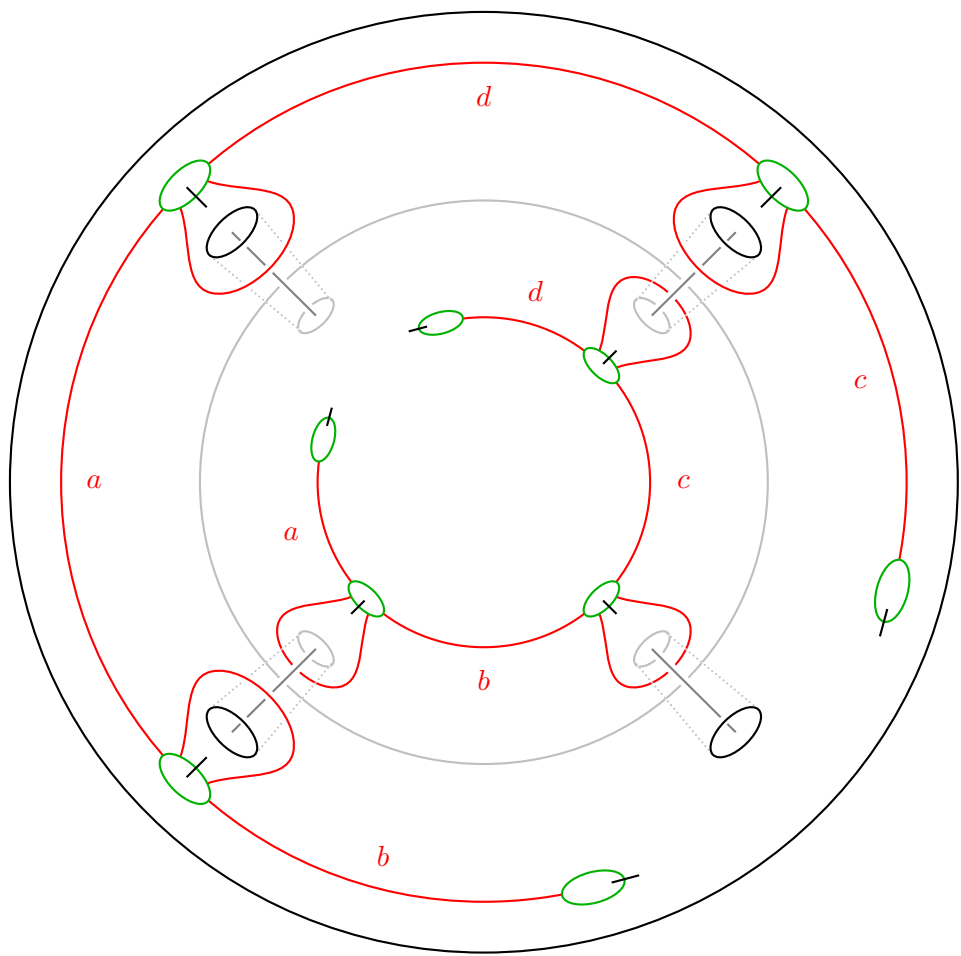
\begin{figure}[p]
\centering
\psset{unit=0.25}
\begin{subfigure}[b]{0.45\textwidth}\centering
\begin{pspicture}(-10,-10)(10,10)
\pscircle(0,0){10}
\pscircle[linecolor=lightgray](0,0){4}

\psline[linecolor=white,linewidth=4pt](3;45)(7;45)
\psline[linecolor=white,linewidth=4pt](3;135)(7;135)
\psline[linecolor=white,linewidth=4pt](3;-135)(7;-135)
\psline[linecolor=white,linewidth=4pt](3;-45)(7;-45)

\psline[linecolor=gray](3;45)(6.3;45)
\psline[linecolor=gray](3;135)(5.9;135)
\psline[linecolor=gray](3;-135)(6.3;-135)
\psline[linecolor=gray](3;-45)(6.3;-45)

\pscircle[linecolor=red](0,0){7}

\pscurve[linecolor=red](7;-135)(8;-130)(9;-135)(8;-140)(7;-135)
\pscurve[linecolor=red](7;45)(8;40)(9;45)(8;50)(7;45)
\pscurve[linecolor=red](7;-45)(8;-40)(9;-45)(8;-50)(7;-45)

{\psset{fillstyle=solid,fillcolor=white}
\rput{45}(7;45){\psellipse[linecolor=darkgreen](0,0)(0.5,1)}
\rput{135}(7;135){\psellipse[linecolor=darkgreen](0,0)(0.5,1)}
\rput{-135}(7;-135){\psellipse[linecolor=darkgreen](0,0)(0.5,1)}
\rput{-45}(7;-45){\psellipse[linecolor=darkgreen](0,0)(0.5,1)}
}

\psline(7.1;45)(7.9;45)
\psline(7.1;-45)(7.9;-45)
\psline(7.1;-135)(7.9;-135)
\psline(7.1;135)(7.9;135)
\psline(6.9;135)(6.1;135)

\rput(-5.5,0){$a$}
\rput(0,-5.5){$b$}
\rput(5.5,0){$c$}
\rput(0,5){$d$}
\rput(0,0){\textcolor{darkgray}{$T_1$}}

\end{pspicture}
\caption{The bordered sutured structure on $X_{T_1}$}\label{fig:GlueingTangleT1}
\end{subfigure}
\quad
\begin{subfigure}[b]{0.45\textwidth}\centering
\begin{pspicture}(-10,-10)(10,10)
\pscircle(0,0){10}
\pscircle[linecolor=lightgray](0,0){4}

\psline[linecolor=white,linewidth=4pt](3;45)(7;45)
\psline[linecolor=white,linewidth=4pt](3;135)(7;135)
\psline[linecolor=white,linewidth=4pt](3;-135)(7;-135)
\psline[linecolor=white,linewidth=4pt](3;-45)(7;-45)

\psline[linecolor=gray](3;45)(6.3;45)
\psline[linecolor=gray](3;135)(6.3;135)
\psline[linecolor=gray](3;-135)(6.3;-135)
\psline[linecolor=gray](3;-45)(5.9;-45)

\pscircle[linecolor=red](0,0){7}

\pscurve[linecolor=red](7;135)(8;130)(9;135)(8;140)(7;135)
\pscurve[linecolor=red](7;-135)(8;-130)(9;-135)(8;-140)(7;-135)
\pscurve[linecolor=red](7;45)(8;40)(9;45)(8;50)(7;45)

{\psset{fillstyle=solid,fillcolor=white}
\rput{45}(7;45){\psellipse[linecolor=darkgreen](0,0)(0.5,1)}
\rput{-45}(7;-45){\psellipse[linecolor=darkgreen](0,0)(0.5,1)}
\rput{-135}(7;-135){\psellipse[linecolor=darkgreen](0,0)(0.5,1)}
\rput{135}(7;135){\psellipse[linecolor=darkgreen](0,0)(0.5,1)}
}
\psline(6.9;-45)(6.1;-45)
\psline(7.1;-45)(7.9;-45)
\psline(7.1;45)(7.9;45)
\psline(7.1;135)(7.9;135)
\psline(7.1;-135)(7.9;-135)

\rput(-5.5,0){$a$}
\rput(0,-5.5){$b$}
\rput(5.5,0){$c$}
\rput(0,5){$d$}
\rput(0,0){\textcolor{darkgray}{$T_2$}}

\end{pspicture}
\caption{The bordered sutured structure on $X_{T_2}$}\label{fig:GlueingTangleT2}
\end{subfigure}
\\
\begin{subfigure}[b]{0.9\textwidth}\centering
\psset{unit=2.5}
\begin{pspicture}(-10.2,-10.2)(10.2,10.4)
\pscircle(0,0){10}

\pscircle[linecolor=lightgray](0,0){6}

\psecurve[linecolor=red](3;48)(3.5;48)(5.3;55)(5.3;35)(3.5;42)(3;42)
\psecurve[linecolor=red](3;-48)(3.5;-48)(5.3;-55)(5.3;-35)(3.5;-42)(3;-42)
\psecurve[linecolor=red](3;-138)(3.5;-138)(5.3;-145)(5.3;-125)(3.5;-132)(3;-132)

\psarc[linecolor=red](0,0){3.5}{165}{105}

\psarc[linecolor=red](0,0){8.9}{-15}{-75}

{\psset{fillstyle=solid,fillcolor=white}

\rput{-135}(3.5;-135){\psellipse[linecolor=darkgreen](0,0)(0.25,0.5)}
\rput{105}(3.5;105){\psellipse[linecolor=darkgreen](0,0)(0.25,0.5)\psline(0,0.3)(0,0.7)}
\rput{165}(3.5;165){\psellipse[linecolor=darkgreen](0,0)(0.25,0.5)\psline(0,-0.3)(0,-0.7)}
\rput{45}(3.5;45){\psellipse[linecolor=darkgreen](0,0)(0.25,0.5)}
\rput{-45}(3.5;-45){\psellipse[linecolor=darkgreen](0,0)(0.25,0.5)}

\rput{-135}(5;-135){\psellipse[linecolor=lightgray](0,0)(0.25,0.5)}
\rput{135}(5;135){\psellipse[linecolor=lightgray](0,0)(0.25,0.5)}
\rput{45}(5;45){\psellipse[linecolor=lightgray](0,0)(0.25,0.5)}
\rput{-45}(5;-45){\psellipse[linecolor=lightgray](0,0)(0.25,0.5)}

\psline[linecolor=white,linewidth=4pt](5;45)(7.5;45)
\psline[linecolor=gray](5;45)(7.05;45)
\psline[linecolor=lightgray,linestyle=dotted,dotsep=1pt](5;50.2)(7.5;50.2)
\psline[linecolor=lightgray,linestyle=dotted,dotsep=1pt](5;39.8)(7.5;39.8)

\psline[linecolor=white,linewidth=4pt](5;-45)(7.5;-45)
\psline[linecolor=gray](5;-45)(7.05;-45)
\psline[linecolor=lightgray,linestyle=dotted,dotsep=1pt](5;-50.2)(7.5;-50.2)
\psline[linecolor=lightgray,linestyle=dotted,dotsep=1pt](5;-39.8)(7.5;-39.8)

\psline[linecolor=white,linewidth=4pt](5;135)(7.5;135)
\psline[linecolor=gray](5;135)(7.05;135)
\psline[linecolor=lightgray,linestyle=dotted,dotsep=1pt](5;140.2)(7.5;140.2)
\psline[linecolor=lightgray,linestyle=dotted,dotsep=1pt](5;129.8)(7.5;129.8)

\psline[linecolor=white,linewidth=4pt](5;-135)(7.5;-135)
\psline[linecolor=gray](5;-135)(7.05;-135)
\psline[linecolor=lightgray,linestyle=dotted,dotsep=1pt](5;-140.2)(7.5;-140.2)
\psline[linecolor=lightgray,linestyle=dotted,dotsep=1pt](5;-129.8)(7.5;-129.8)

}

\psecurve[linecolor=white,linewidth=4pt](9.4;47)(8.9;47)(7;55)(7;35)(8.9;43)(9.4;43)
\psecurve[linecolor=white,linewidth=4pt](9.4;137)(8.9;137)(7;145)(7;125)(8.9;133)(9.4;133)
\psecurve[linecolor=white,linewidth=4pt](9.4;-137)(8.9;-137)(7;-145)(7;-125)(8.9;-133)(9.4;-133)

\psecurve[linecolor=red](9.4;47)(8.9;47)(7;55)(7;35)(8.9;43)(9.4;43)
\psecurve[linecolor=red](9.4;137)(8.9;137)(7;145)(7;125)(8.9;133)(9.4;133)
\psecurve[linecolor=red](9.4;-137)(8.9;-137)(7;-145)(7;-125)(8.9;-133)(9.4;-133)

{\psset{fillstyle=solid,fillcolor=white}

\rput{-135}(7.5;-135){\psellipse(0,0)(0.35,0.7)}
\rput{135}(7.5;135){\psellipse(0,0)(0.35,0.7)}
\rput{45}(7.5;45){\psellipse(0,0)(0.35,0.7)}
\rput{-45}(7.5;-45){\psellipse(0,0)(0.35,0.7)}

\rput{45}(8.9;45){\psellipse[linecolor=darkgreen](0,0)(0.35,0.7)}
\rput{-15}(8.9;-15){\psellipse[linecolor=darkgreen](0,0)(0.35,0.7)\psline(0,-0.4)(0,-1)}
\rput{-75}(8.9;-75){\psellipse[linecolor=darkgreen](0,0)(0.35,0.7)\psline(0,0.4)(0,1)}
\rput{-135}(8.9;-135){\psellipse[linecolor=darkgreen](0,0)(0.35,0.7)}
\rput{135}(8.9;135){\psellipse[linecolor=darkgreen](0,0)(0.35,0.7)}

}

\psline[linecolor=gray](7.25;-45)(7.5;-45)
\psline[linecolor=gray](7.25;45)(7.5;45)
\psline[linecolor=gray](7.25;135)(7.5;135)
\psline[linecolor=gray](7.25;-135)(7.5;-135)

\psline(3.55;45)(3.95;45)
\psline(3.55;-135)(3.95;-135)
\psline(3.55;-45)(3.95;-45)

\psline(8.85;45)(8.25;45)
\psline(8.85;135)(8.25;135)
\psline(8.85;-135)(8.25;-135)

\rput(4.2;195){$\red a$}
\rput(0,-4.2){$\red b$}
\rput(4.2,0){$\red c$}
\rput(4.2;75){$\red d$}

\rput(-8.2,0){$\red a$}
\rput(8.2;-105){$\red b$}
\rput(8.2;15){$\red c$}
\rput(0,8.2){$\red d$}

\end{pspicture}
\caption{The bordered sutured manifold $P$}\label{fig:GlueingAAP}
\end{subfigure}
\caption{Pairing the bordered sutured manifold for~$T_1$ to the inside of~$P$ and the one for~$T_2$ to the outside gives the sutured manifold of the link obtained by pairing~$T_1$ and~$T_2$ according to the picture in theorem~\ref{thm:CFTdGeneralGlueing}, plus some extra pairs of meridional sutures. There are three additional pairs of sutures if the four open components of~$T_1$ and~$T_2$ glue up to a single closed component; otherwise, there are only two superfluous meridional suture pairs.}\label{fig:GlueingTangleP}
\end{figure} 

\begin{proof}[Proof of theorem~\ref{thm:CFTdGeneralGlueing}]
	We follow the same line of argument as in the proof of theorem~\ref{thm:CFTdGlueingTrivial}, except that we let the computer do the main calculation, using the Mathematica notebook~\cite{BSAAx4e.nb}. We glue two bordered sutured manifolds associated with the tangles~$T_1$ and~$T_2$ together along a third bordered sutured manifold~$P$, which is topologically just a thickened 4-punctured sphere as illustrated in figure~\ref{fig:GlueingTangleP}. The entire calculation of the bordered sutured type AA structure for $P$, including cancellations and homotopies, are done in~\cite{BSAAx4e.nb} -- most of them in some automated fashion.\\
	Just as for closing a tangle, the main point of this proof is that we can homotope the type AA structure for~$P$, which is defined over some complicated bordered sutured algebras~$\mathcal{B}_1$ and~$\mathcal{B}_2$, to one where we can replace~$\mathcal{B}_1$ and~$\mathcal{B}_2$ by some suitable quotient of $\Ad$ without changing the result under any glueing. We quotient out by~$p_1$ and~$q_1$ for the first glueing surface, i.\,e.\ the one for~$T_1$ on the ``inside'' of~$P$ in figure~\ref{fig:GlueingAAP}. For the second glueing surface, we quotient out by~$p_3$ and~$q_3$. 
	Then, by using Heegaard diagrams for~$X_{T_1}$ and~$X_{T_2}$, where the silly arcs do not intersect any $\beta$-curves, the same argument as in the proof of theorem~\ref{thm:CFTdGlueingTrivial} tells us that
	$$
	\SFC(X_{T_1}\cup P\cup X_{T_2})=\CFTd(T_1)\boxtimes \mathcal{P}\boxtimes \CFTd(T_2).
	$$
	Now we just need to count the number of meridional sutures in $X_{T_1}\cup P\cup X_{T_2}$ to determine how often we need to stabilise the link Floer homology of $L$.
\end{proof}

\begin{theorem}\label{thm:CFTdDetectsRatTan}
A 4-ended tangle \(T\) is rational iff \(\CFTd(T)\) is homotopic to a single loop that corresponds to an embedded loop on the 4-punctured sphere.
\end{theorem}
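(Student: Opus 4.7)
The forward direction follows from example~\ref{exa:CFTdRatTang}: a rational tangle~$T$ admits a genus-0 Heegaard diagram with a single $\beta$-circle, so $\CFTd(T)$ is represented by this $\beta$-circle viewed as an embedded loop on the $4$-punctured sphere.

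For the reverse direction, suppose $\CFTd(T)$ is homotopic to a single embedded loop $\gamma$ on the $4$-punctured sphere. I would first argue that $\gamma$ must be essential: a null-homotopic embedded curve yields a peculiar module with vanishing homology, and an embedded curve enclosing a single puncture would contradict the homological constraint of observation~\ref{obs:AlexGradingOfCFTdLoops}. (Closed tangle components would contribute a non-trivial $V$-tensor factor incompatible with $\CFTd(T)$ being a single loop, so $T$ has no closed components.) By Conway's classification, essential simple closed curves on the $4$-punctured sphere are in bijection with slopes in $\mathbb{Q}\cup\{\infty\}$, each realised by a unique rational tangle~$T_0$; by example~\ref{exa:CFTdRatTang}, $\CFTd(T_0)$ is represented by the same loop~$\gamma$, hence $\CFTd(T)\simeq\CFTd(T_0)$ as peculiar modules.

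Next, I would apply the glueing theorem~\ref{thm:CFTdGeneralGlueing}: for any rational tangle~$T'$,
\[
\CFL(T\cup T')\otimes V^{i}\;=\;\CFTd(T)\boxtimes\mathcal{P}\boxtimes\CFTd(T')\;\simeq\;\CFTd(T_0)\boxtimes\mathcal{P}\boxtimes\CFTd(T')\;=\;\CFL(T_0\cup T')\otimes V^{i},
\]
where boundedness of the type~D module of a generic rational tangle~$T'$ (proposition~\ref{prop:BSDrattangle}) ensures that the pairing with $\mathcal{P}$ descends to homotopy equivalence classes. Since $T_0\cup T'$ is a $2$-bridge, hence alternating, link, $\HFL(T_0\cup T')$ is $\delta$-thin, and thus $\CFL(T\cup T')$ is $\delta$-thin as well. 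By Ozsv\'ath-Szab\'o's spectral sequence from $\HFL$ to $\HF$ of the double branched cover, this forces $\Sigma_{2}(T\cup T')$ to be an L-space.

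Now $\Sigma_{2}(T\cup T')$ is the Dehn filling of $\Sigma_{2}(T)$ along the slope determined by~$T'$; as $T'$ ranges over rational tangles, these slopes range over all of $\mathbb{Q}\cup\{\infty\}$. Hence every Dehn filling of $\Sigma_{2}(T)$ is an L-space, and by the Hanselman-Rasmussen-Watson classification of L-space slopes for $3$-manifolds with torus boundary, $\Sigma_{2}(T)$ must be a solid torus, equivalently $T$ is rational. The main obstacle is this final, geometric step, which relies on deep results in the bordered Floer theory of torus-boundary $3$-manifolds; a subtler point to verify is Conway's rigidity, namely that the homotopy class of the embedded loop (rather than its isotopy class) already pins down the rational tangle, so that distinct rational tangles cannot give homotopy-equivalent peculiar modules.
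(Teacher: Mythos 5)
Your forward direction and your first reduction---using observation~\ref{obs:AlexGradingOfCFTdLoops} to see that the embedded loop separates the punctures into two pairs, so that $\CFTd(T)\simeq\CFTd(T_0)$ for some rational tangle $T_0$ (example~\ref{exa:CFTdRatTang})---match the paper. The divergence is in what you do next, and two steps there fail. First, there is no spectral sequence from link Floer homology to $\HF$ of the double branched cover: the Ozsv\'ath--Szab\'o spectral sequence converging to $\HF(\Sigma_2(L))$ has $E_2$-page the reduced Khovanov homology of the mirror, not $\HFL(L)$, so $\delta$-thinness of $\CFL(T\cup T')$ gives no upper bound on $\operatorname{rk}\HF(\Sigma_2(T\cup T'))$ and you cannot conclude that these fillings are L-spaces. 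Second, even granting that every rational-homology-sphere filling of $\Sigma_2(T)$ is an L-space, this does not force $\Sigma_2(T)$ to be a solid torus: the twisted $I$-bundle over the Klein bottle has exactly this property (every filling other than the rational longitude is a lens space or prism manifold) and arises as the double branched cover of a 4-ended tangle; the results of \cite{HRW} characterise L-space slopes via immersed curves but do not single out the solid torus among such ``Heegaard Floer solid tori''. (Note also that the hypothesis can never literally read ``every Dehn filling is an L-space'', since the slope coming from the dual rational tangle yields a filling with $b_1>0$.)

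The paper's reverse direction is shorter and avoids branched covers entirely: having identified $\CFTd(T)\simeq\CFTd(T_0)$, glue $T$ to its mirror $\m(T)$. By theorem~\ref{thm:CFTdGeneralGlueing} the resulting link has the same stabilised link Floer homology as $T_0\cup\m(T_0)$, i.e.\ the two-component unlink, and since link Floer homology detects the Thurston norm \cite{OSHFLThurston}, the link $T\cup\m(T)$ \emph{is} the two-component unlink. A purely topological innermost-disc argument (lemma~\ref{lem:RatTanDet}) then shows that one of $T$, $\m(T)$---hence both---is rational. Incidentally, the ``Conway rigidity'' issue you raise at the end is not needed in either argument: one never needs the embedded loop to pin down $T_0$ uniquely, only that it is realised by \emph{some} rational tangle. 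To salvage your route you would need genuine rank control on $\HF(\Sigma_2)$ (which would come from Khovanov-type input that $\CFTd$ does not provide) and, even then, a way to exclude Heegaard Floer solid tori, so the mirror-pairing argument is both simpler and actually closes the proof.
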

\begin{proof}
The only-if direction is simply a calculation that we did in example~\ref{exa:CFTdRatTang}. The opposite direction follows essentially from the fact that link Floer homology detects the knot genus and a cut-and-paste argument. \\
Suppose $\CFTd(T)$ is a single loop which corresponds to an embedded loop on the 4-punctured sphere. It divides the sphere into two disc components, each of which has at least one puncture, since the loop is not nullhomotopic. By observation~\ref{obs:AlexGradingOfCFTdLoops}, there are exactly two punctures in each disc, so $\CFTd(T)$ agrees with $\CFTd(T')$ for some rational tangle $T'$. Then also $\CFTd(\m(T))$ agrees with $\CFTd(\m(T'))$. Let $L$ be the link obtained by pairing $T$ with its mirror $\m(T)$. If we glue $T'$ to $\m(T')$, we obtain the 2-component unlink $\bigcirc\amalg\bigcirc$, so by theorem~\ref{thm:CFTdGeneralGlueing}
\begin{align*}
	\HFL(L)\otimes V^{\otimes 2}&=\CFTd(T)\boxtimes\mathcal{P}\boxtimes\CFTd(\m(T))\\
	&=\CFTd(T')\boxtimes\mathcal{P}\boxtimes\CFTd(\m(T'))=\HFL(\bigcirc\amalg\bigcirc)\otimes V^{\otimes 2}.
\end{align*}
We now apply the fact that link Floer homology detects the link genus \cite{OSHFLThurston}, so $L$ is the 2-component unlink. The following lemma finishes the proof.
\end{proof}
\begin{lemma}\label{lem:RatTanDet}
Let \(T_1\) and \(T_2\) be two 4-ended tangles that glue together to the 2-component unlink. Then either \(T_1\) or \(T_2\) is a rational tangle. 
\end{lemma}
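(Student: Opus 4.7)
The plan is to pass to the double cover of $S^3$ branched over $U_2$. This cover is $\Sigma_2(S^3, U_2) = S^1 \times S^2$, and the glueing sphere $\partial B_1 = \partial B_2$, which meets $U_2$ transversely in four points, lifts to a torus $T^2 \subset S^1 \times S^2$, producing a decomposition
$$
S^1 \times S^2 \;=\; \tilde{B}_1 \cup_{T^2} \tilde{B}_2, \qquad \tilde{B}_i := \Sigma_2(B_i, T_i).
$$
By Montesinos' classical theorem, a $2$-string tangle $T_i$ in $B^3$ (with no closed components) is rational if and only if $\tilde{B}_i$ is a solid torus. I will therefore aim to show that one of the $\tilde{B}_i$ is a solid torus, and then verify that the corresponding $T_i$ carries no closed components.

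First I will observe that, since $\pi_1(S^1 \times S^2) = \mathbb{Z}$ contains no copy of $\mathbb{Z}^2$, the torus $T^2$ is compressible in $S^1 \times S^2$. Without loss of generality, a compressing disc $D$ lies in $\tilde{B}_1$, with $\partial D$ essential (hence non-separating) on $T^2$. Compressing yields $\tilde{B}_1' := \tilde{B}_1 \setminus N(D)$, a submanifold with $2$-sphere boundary $S$, whose complement is $\tilde{B}_2' := \tilde{B}_2 \cup N(D)$, obtained from $\tilde{B}_2$ by attaching a $2$-handle along the annulus $\partial D \times I \subset T^2$. The sphere $S$ is separating in the prime manifold $S^1 \times S^2$---whose only essential $2$-sphere is non-separating---so $S$ must bound a $3$-ball. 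If $\tilde{B}_1' \cong B^3$, then $\tilde{B}_1$ is a ball with a $1$-handle attached, hence a solid torus; if instead $\tilde{B}_2' \cong B^3$, then $\tilde{B}_2$ is a ball with a properly embedded $2$-handle excised, which is again a solid torus.

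The hard part will be to handle the edge case in which one of the $T_i$, say $T_1$, has a closed component. Counting components of $U_2$ shows that at most one closed component can appear in total, and it must be an unknot split from the rest of the link by an embedded $2$-sphere $S \subset B^3 \setminus T_1$ that misses $T_1$ entirely; its two unbranched lifts in $\tilde{B}_1$ give essential $2$-spheres, so $\tilde{B}_1$ is reducible and cannot be a solid torus. The compression argument above then forces $\tilde{B}_2$ to be the solid torus, and in this scenario $T_2$ automatically has no closed components, so Montesinos' theorem applies to conclude that $T_2$ is rational. I expect this case analysis to be straightforward once one sets up the double branched cover over a split link carefully, but it is the only step that goes beyond the main compression-and-sphere-bounds-ball argument.
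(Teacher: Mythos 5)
There is a genuine gap at the key step of your compression argument, in what you call the second case. If $\tilde B_2' = \tilde B_2\cup N(D)$ is a $3$-ball, then $\tilde B_2$ is the exterior in that ball of the core arc of the tunnel $N(D)$ (the cocore of your $2$-handle), and such an exterior is a solid torus \emph{only if that arc is unknotted} in the ball. Nothing in your argument rules out a knotted arc, and in fact the bare statement you are implicitly using -- that any separating torus in $S^1\times S^2$ has a solid torus on at least one side -- is false: take a ball $B\subset S^1\times S^2$ and a knotted properly embedded arc $\alpha\subset B$; then $X=B\smallsetminus \nu(\alpha)$ and $Y=(S^1\times S^2\smallsetminus B)\cup\nu(\alpha)$ give a decomposition along a compressible torus in which $\pi_1(X)$ is a nontrivial knot group and $\pi_1(Y)\cong F_2$, so neither side is a solid torus. (In this example the compressing disc is the meridian disc of $\nu(\alpha)$, compression produces $\partial B$, and one lands exactly in your case (b).) Consequently your later sentence ``the compression argument above then forces $\tilde B_2$ to be the solid torus'' does not follow either; this is precisely where the closed-component case leans on the flawed dichotomy. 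A secondary, smaller issue: when $T_1$ has a closed component you assert a splitting sphere for it inside $B^3\smallsetminus T_1$, which itself needs an innermost-circle argument; the cleaner route to ``$\tilde B_1$ is not a solid torus'' is Smith theory (the fixed set of an involution of a solid torus has total $\mathbb{Z}/2$-Betti number at most $2$, so it cannot consist of two arcs and a circle).

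To repair the branched-cover approach one needs more than the ordinary loop theorem: the standard fix is the \emph{equivariant} loop theorem/Dehn lemma, which produces a compressing disc for the lifted torus that is either disjoint from or invariant under the covering involution, hence descends to a disc in one of the two tangle balls meeting the tangle in at most one point; from there one can conclude rationality on that side (together with the Smith-theory observation for closed components). Note that the paper avoids branched covers altogether: it intersects the splitting sphere of the unlink with the gluing sphere and runs an innermost-disc induction downstairs, which is more elementary than what your route ends up requiring.
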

\begin{proof}
Let $S$ be the 4-punctured sphere along which we glue $T_1$ and $T_2$. Let $U$ be the sphere that separates the two unknot components and assume that $S$ and $U$ intersect transversely in a disjoint union of circles. We now proceed by induction on the number of circles in $S\cap U$. First of all, this intersection is non-empty, since $U$ is separating. So we can always find a curve $\gamma$ that bounds a disc $D$ in $U$. If $\gamma$ also bounds a disc $D'$ in $S$, $D\cup D'$ bounds a 3-ball, which we can use as a homotopy for $U$ to remove $\gamma$, so we are done by the induction hypothesis. If $\gamma$ does not bound a disc in $S$, it separates two punctures from the other two. So $D$ separates the two strands in $T_1$ or $T_2$. They must obviously be unknotted, since the connected sum of two knots is the unknot iff both knots are unknots. Thus either $T_1$ or $T_2$ is rational.
\end{proof}


\section{Skein relations}\label{sec:SkeinRelations}
We start with a slight generalisation of Ozsv\'{a}th and Szab\'{o}'s exact triangle \cite{OSHFK} which categorifies the oriented skein relation for the Alexander polynomial (lemma~\sref{onecolourskein}). Note, however, that we only get a twice stabilised version, due to the shortcomings of our glueing theorem~\ref{thm:CFTdGeneralGlueing}. 
\begin{theorem}[$n$-twist skein exact triangle]\label{thm:nTwistSkeinRelation}
Let \(T_n\) be the positive \(n\)-twist tangle, \(T_{-n}\) the negative \(n\)-twist tangle and \(T_0\) the trivial tangle, see figure~\ref{fig:OrientedSkeinRelation}. Furthermore, let \(V\) be a 2-dimensional vector space supported in degrees \(\delta^0 t^{n}\) and \(\delta^0 t^{-n}\). Then there is an exact triangle
$$\begin{tikzcd}[row sep=0.7cm, column sep=-0.5cm]
\delta^{-\frac{n}{2}}\CFTd(T_{n})
\arrow{rr}{\varphi_n}
& &
\delta^{\frac{n}{2}}\CFTd(T_{-n})
\arrow{dl}
\\
&
\CFTd(T_{0})\otimes V
\arrow{lu}
\end{tikzcd}$$
\(\varphi_n\) preserves the (single-variate) Alexander grading and changes \(\delta\)- and homological gradings by \(+1\) and \(-1\), respectively; the other two maps preserve the all three gradings. Moreover, given three links \(L_{n}\), \(L_{-n}\) and \(L_0\) in \(S^3\), which agree outside a closed 3-ball and in this closed 3-ball agree with the 4-ended tangles \(T_{n}\), \(T_{-n}\) and \(T_{0}\), respectively, then the above triangle together with the glueing theorem induces an exact triangle 
$$\begin{tikzcd}[row sep=0.7cm, column sep=-.6cm]
\HFL(L_{n})\otimes V^{l_{n}}
\arrow{rr}
& &
\HFL(L_{-n})\otimes V^{l_{-n}}
\arrow{dl}
\\
&
\HFL(L_{0})\otimes V^{l_0+1}
\arrow{lu}
\end{tikzcd}$$
where for \(i\in\{n,-n,0\}\), \(l_i\) is either \(2\) or \(3\), depending on whether the two strands in \(T_i\) belong to different or the same components in \(L_i\), respectively. 
\end{theorem}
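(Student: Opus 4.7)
The plan is to establish the triangle first at the level of peculiar modules, and then transport it to link Floer homology via the glueing theorem \ref{thm:CFTdGeneralGlueing}. Since $T_n$, $T_{-n}$, and $T_0$ are rational tangles, their peculiar modules are single loops on the 4-punctured sphere by example \ref{exa:CFTdRatTang}. The loop for $T_0$ has just two generators as in figure~\ref{fig:CFTdForSomeRatTangles}, while the loops for $T_{\pm n}$ can be computed from genus-zero Heegaard diagrams along the lines of figure~\sref{fig:HDforRatTan}, yielding cyclic sequences of roughly $2n+2$ generators. Geometrically, these loops arise from the $T_0$-loop by applying $n$ positive (resp.\ negative) Dehn twists around a pair of punctures, and I would write them down first in order to have concrete objects to work with.

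I would then construct the three morphisms in the triangle explicitly. The base case $n=1$ is essentially the peculiar-module version of Ozsv\'ath and Szab\'o's oriented skein triangle, which should be derivable by direct inspection of the four-generator loops for $T_{1}$ and $T_{-1}$ from figure~\ref{fig:CFTdForSomeRatTangles} and their comparison with $\CFTd(T_0)\otimes V$. For general $n$, my plan is to proceed by induction, applying the $n=1$ triangle to the outermost crossing of $T_n$ and identifying the resulting iterated cones with $\CFTd(T_0)\otimes V$; the grading shifts $\delta^{\mp n/2}$ reflect the cumulative contribution of the $n$ crossings, and $V$ lives in Alexander degrees $t^{\pm n}$ precisely because the two Dehn-twisted ``ends'' contribute $\pm n$ each. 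Alternatively, one can define all three maps directly on the loops and read off the degree shifts from the algebra labels. Verifying exactness reduces to showing that $\operatorname{Cone}(\varphi_n)\simeq\CFTd(T_0)\otimes V$ as peculiar modules: I would apply the cancellation lemma~\ref{lem:AbstractCancellation} and clean-up lemma~\ref{lem:AbstractCleanUp} to simplify the cone, using the decategorified skein relation from lemma~\sref{onecolourskein} as a sanity check on the resulting Poincar\'e polynomial.

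To obtain the triangle on link Floer homology, I apply the glueing theorem \ref{thm:CFTdGeneralGlueing}: each $L_i$ arises by pairing $T_i$ with some fixed complementary 4-ended tangle $T'$, and
$$\CFL(L_i)\otimes V^{\otimes l_i}=\CFTd(T_i)\boxtimes\mathcal{P}\boxtimes\CFTd(T'),$$
with $l_i\in\{2,3\}$ depending on whether the two twist-region strands belong to different components of $L_i$ or to the same one. Since $(-\boxtimes\mathcal{P}\boxtimes\CFTd(T'))$ is an additive functor, it preserves mapping cones and hence carries the peculiar-module triangle to the claimed triangle in link Floer homology; the exponent $l_0+1$ on $V$ reflects the $l_0$ stabilisations from glueing $T_0$ together with the extra tensor factor $V$ already present in the peculiar triangle. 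The main obstacle I anticipate is the mapping-cone simplification at the peculiar-module level for arbitrary $n$: while the $n=1$ case should follow quite directly, propagating the identification $\operatorname{Cone}(\varphi_n)\simeq\CFTd(T_0)\otimes V$ inductively requires careful bookkeeping of algebra elements and of the gradings introduced at each twist, and one must make sure that no spurious higher structure maps obstruct the simplification.
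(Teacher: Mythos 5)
Your overall architecture --- identify $\operatorname{Cone}(\varphi_n)$ with $\CFTd(T_0)\otimes V$ at the level of peculiar modules, then transport the triangle to $\HFL$ via theorem~\ref{thm:CFTdGeneralGlueing}, using that $(-\boxtimes\mathcal{P}\boxtimes\CFTd(T'))$ preserves mapping cones --- is the same as the paper's, and the last step is handled correctly. The gap is in your primary route for the peculiar-module statement, the induction on $n$ starting from the $1$-crossing triangle. Applying the $n=1$ skein triangle at the outermost crossing of $T_n$ relates $\CFTd(T_n)$, $\CFTd(T_{n-2})$ (crossing change) and $\CFTd(T_{n-1})\otimes V_1$ with $V_1$ supported in Alexander degrees $t^{\pm1}$; it does \emph{not} involve $T_{-n}$ or $T_0$. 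Iterating therefore produces an iterated cone (a twisted complex) built from tangles $T_{n-1},T_{n-2},\dots$ together with an accumulating pile of two-dimensional tensor factors, and collapsing all of that to a single three-term triangle with one two-dimensional $V$ sitting in degrees $\delta^0t^{\pm n}$ is precisely the content that needs proving; your proposal gives no mechanism for it (this is essentially the ``removing the large tensor factor'' problem flagged in remark~\ref{rem:singularcrossings}). Moreover, the map $\varphi_n$ is never actually defined in your induction scheme, and its existence is the crux of the statement.

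Your ``alternative'' route is the one the paper takes, and it is the one you should make primary: compute $\CFTd(T_{\pm n})$ explicitly from the genus-$0$ diagrams (each is a single loop with $2n+2$ generators, alternating between sites $a,c$ in the middle and with corner generators in sites $b,d$, as in figure~\ref{fig:OrientSkeinBoundaryMap}), and write $\varphi_n$ down concretely: it sends each generator of $\delta^{-n/2}\CFTd(T_n)$ to the corresponding generator of $\delta^{n/2}\CFTd(T_{-n})$ by an identity arrow, except at the two $b$/$d$ corners where the label is $p_{21}+q_{34}$, resp.\ $p_{43}+q_{12}$. One must check this is a degree-$0$ cycle in the morphism complex (your grading bookkeeping $\delta\mapsto\delta+1$, $h\mapsto h-1$ is the right check here). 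Then cancelling all identity components in $\operatorname{Cone}(\varphi_n)$ via lemma~\ref{lem:AbstractCancellation} leaves exactly two copies of the two-generator loop $\CFTd(T_0)$, in Alexander degrees $t^{\pm n}$, which is the identification $\operatorname{Cone}(\varphi_n)\simeq\CFTd(T_0)\otimes V$; no appeal to the clean-up lemma or to the decategorified skein relation is needed beyond this. With that explicit construction in place of the induction, your argument matches the paper's proof.
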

\begin{figure}[b]
\centering
\psset{unit=0.3}
\begin{subfigure}[b]{0.25\textwidth}\centering
$n\left\{\raisebox{-0.85cm}{
\begin{pspicture}(-1.1,-3)(1.1,3.4)
\psline{->}(-1,3)(-1.05,3.3)
\psline{->}(1,3)(1.05,3.3)
\psecurve(1,5)(-1,3)(1,1)(-1,-1)
\pscircle*[linecolor=white](0,2){0.3}
\psecurve(-1,5)(1,3)(-1,1)(1,-1)
\rput(0,0.3){$\vdots$}
\psecurve(-1,-5)(1,-3)(-1,-1)(1,1)
\pscircle*[linecolor=white](0,-2){0.3}
\psecurve(1,-5)(-1,-3)(1,-1)(-1,1)
\end{pspicture}}\right.\quad
$
\caption{$T_{n}$}\label{fig:OrientedSkeinRelationTn}
\end{subfigure}
\quad
\begin{subfigure}[b]{0.25\textwidth}\centering
$n\left\{\raisebox{-0.85cm}{
\begin{pspicture}(-1.1,-3)(1.1,3.4)
\psline{->}(-1,3)(-1.05,3.3)
\psline{->}(1,3)(1.05,3.3)
\psecurve(-1,5)(1,3)(-1,1)(1,-1)
\pscircle*[linecolor=white](0,2){0.3}
\psecurve(1,5)(-1,3)(1,1)(-1,-1)
\rput(0,0.3){$\vdots$}
\psecurve(1,-5)(-1,-3)(1,-1)(-1,1)
\pscircle*[linecolor=white](0,-2){0.3}
\psecurve(-1,-5)(1,-3)(-1,-1)(1,1)
\end{pspicture}}\right.\quad
$
\caption{$T_{-n}$}\label{fig:OrientedSkeinRelationTmn}
\end{subfigure}
\quad
\begin{subfigure}[b]{0.25\textwidth}\centering
\begin{pspicture}(-5,-3.5)(5,3.5)
\psecurve{<-}(-2,6)(-1,3)(-1,-3)(-2,-6)
\psecurve{<-}(2,6)(1,3)(1,-3)(2,-6)
\end{pspicture}
\caption{$T_0$}\label{fig:OrientedSkeinRelationT0}
\end{subfigure}
\caption{The basic tangles for the skein exact sequence from theorem~\ref{thm:nTwistSkeinRelation}}\label{fig:OrientedSkeinRelation}
\end{figure}
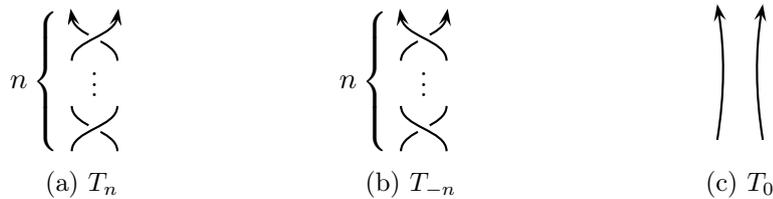
\begin{Remark}\label{rem:nTwistSkeinRelation}
Similar results hold for other orientations; for $n$ even, also multivariate Alexander gradings are preserved.
\end{Remark}

\begin{proof}
It is straightforward to compute $\CFTd(T_{n})$ and $\CFTd(T_{-n})$ from genus 0 Heegaard diagrams, they are both shown in figure~\ref{fig:OrientSkeinBoundaryMap}, the former on the left, the latter on the right. If $n$ is odd, the dashed lines denote a sequence of alternating generators in sites $a$ and $c$, connected by pairs of morphisms, labelled alternatingly by $p_i$s and $q_i$s. For even $n$, the two components are connected by similar sequences along the dotted lines. The horizontal arrows in figure~\ref{fig:OrientSkeinBoundaryMap} describe $\varphi_n$. \\

\begin{figure}[bh!]
\centering
$\begin{tikzcd}[row sep=0.3cm, column sep=1cm]
&
\delta^{-\frac{1}{2}}c^\Red{1-n}
\arrow[pos=0.35]{rrrr}{1}
\arrow[-, dotted,in=90,out=-90]{dddddddl}
\arrow[-, dashed,in=90,out=-90]{dddd}
&
&
&
&
\delta^{\frac{1}{2}}c^\Red{1-n}
\arrow[-, dashed,in=90,out=-90]{dddd}
\\
\delta^0 b^\Red{-n}
\arrow[crossing over,pos=0.65]{rrrr}{p_{21}+q_{34}}
\arrow[bend left=10,leftarrow]{ru}{p_3}
\arrow[bend right=10,swap,pos=0.55]{ru}{p_{214}}
\arrow[bend left=10,leftarrow]{dd}{q_{2}}
\arrow[bend right=10,swap]{dd}{q_{341}}
&
&
&
&
\delta^0 d^\Red{-n}
\arrow[bend left=10,leftarrow,pos=0.2]{ru}{p_{321}}
\arrow[bend right=10,swap]{ru}{p_{4}}
\arrow[bend left=10,leftarrow]{dd}{q_{234}}
\arrow[bend right=10,swap]{dd}{q_{1}}
\\
&
\phantom{\vdots}
&
&
&
&
\phantom{\vdots}
\\
\delta^{-\frac{1}{2}}a^\Red{1-n}
\arrow[-, dashed,in=90,out=-90]{dddd}
\arrow[crossing over,pos=0.65]{rrrr}{1}
\arrow[-, dotted]{dr}
&
&
&
&
\delta^{\frac{1}{2}}a^\Red{1-n}
\arrow[-, dotted]{dr}
\\
&
\delta^{-\frac{1}{2}}c^\Red{n-1}
\arrow[pos=0.35]{rrrr}{1}
&
&
&
&
\delta^{\frac{1}{2}}c^\Red{n-1}
\\
\phantom{\vdots}
&
&
&
&
\phantom{\vdots}
\\
&
\delta^0 d^\Red{n}
\arrow[bend left=10,leftarrow,pos=0.3]{ld}{p_1}
\arrow[bend right=10,swap,pos=0.3]{ld}{p_{432}}
\arrow[bend left=10,leftarrow]{uu}{q_{4}}
\arrow[bend right=10,swap]{uu}{q_{123}}
\arrow[pos=0.35]{rrrr}{p_{43}+q_{12}}
&
&
&
&
\delta^0 b^\Red{n}
\arrow[bend left=10,leftarrow]{ld}{p_{143}}
\arrow[bend right=10,swap,pos=0.7]{ld}{p_{2}}
\arrow[bend left=10,leftarrow]{uu}{q_{412}}
\arrow[bend right=10,swap]{uu}{q_{3}}
\\
\delta^{-\frac{1}{2}}a^\Red{n-1}
\arrow[pos=0.65]{rrrr}{1}
&
&
&
&
\delta^{\frac{1}{2}}a^\Red{n-1}
\arrow[-, dashed,in=-90,out=90,crossing over]{uuuu}
\arrow[-, dotted,in=-90,out=90]{uuuuuuur}
\end{tikzcd}$
\caption{The morphism $\varphi_n:\delta^{-\frac{n}{2}}\CFTd(T_{n})\rightarrow\delta^{\frac{n}{2}}\CFTd(T_{-n})$}\label{fig:OrientSkeinBoundaryMap}
\end{figure}
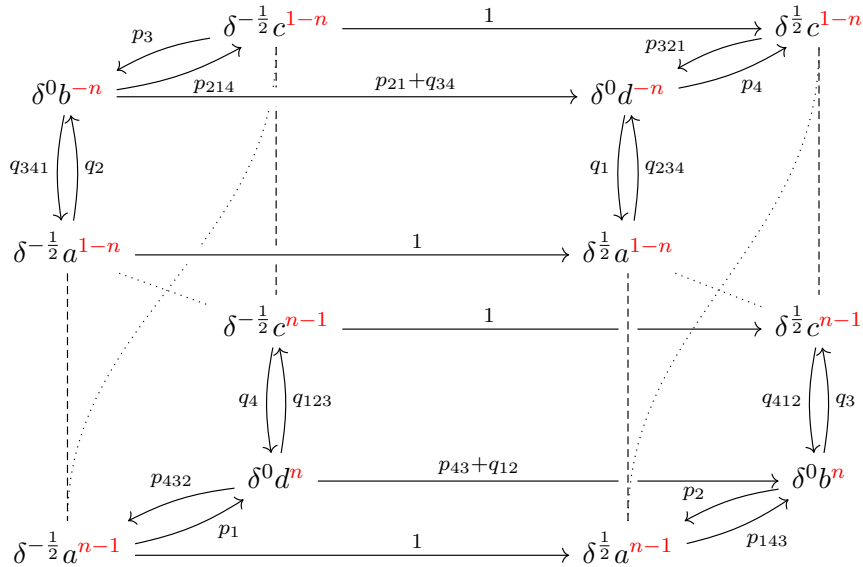

\noindent
By cancelling all identity components of the mapping cone of $\varphi_n$, we get two copies of 
$$\begin{tikzcd}[row sep=0.3cm, column sep=1cm]
\nmathphantom{\CFTd(T_0)=}\CFTd(T_0)=\delta^0b^\Red{0}
\arrow[bend left=10,leftarrow]{r}{p_{43}+q_{12}}
\arrow[bend right=10,swap,pos=0.55]{r}{p_{21}+q_{34}}
&
\delta^0d^\Red{0}.
\end{tikzcd}$$
Thus, we can write $\CFTd(T_0)\otimes V$ as a cone of $\CFTd(T_n)$ and $\CFTd(T_{-n})$, which gives rise to the exact triangle of the required form.
\end{proof}
Next, we give a new proof of a theorem by Manolescu \cite[theorem~1]{Manolescu}. Again, we only get a twice stabilised version. Like Manolescu's triangle, ours does not preserve any gradings. Note that Manolescu probably uses slightly different conventions from ours, because the two triangles only look the same after reversing the direction of the three arrows.
\begin{theorem}[resolution skein exact triangle]\label{thm:ResolutionExactTriangle}
There is an exact triangle
$$\begin{tikzcd}[row sep=0.7cm, column sep=0cm]
\CFTd(\!\!
\raisebox{-5pt}{
\psset{unit=0.2}
\begin{pspicture}(-1.05,-1.45)(1.05,1.05)
\psline(-0.9,-0.9)(0.9,0.9)
\psline(0.9,-0.9)(-0.9,0.9)
\pscircle*[linecolor=white](0,0){0.5}
\psarc(0.7071,0){0.5}{135}{225}
\psarc(-0.7071,0){0.5}{-45}{45}
\end{pspicture}
}
\!\!)
\arrow{rr}{\varphi}
& &
\CFTd(\!\!
\raisebox{-5pt}{
\psset{unit=0.2}
\begin{pspicture}(-1.05,-1.45)(1.05,1.05)
\psline(-0.9,-0.9)(0.9,0.9)
\psline(0.9,-0.9)(-0.9,0.9)
\pscircle*[linecolor=white](0,0){0.5}
\psarc(0,0.7071){0.5}{-135}{-45}
\psarc(0,-0.7071){0.5}{45}{135}
\end{pspicture}
}
\!\!)
\arrow{dl}
\\
&
\CFTd(\!\!
\raisebox{-5pt}{
\psset{unit=0.2}
\begin{pspicture}(-1.05,-1.45)(1.05,1.05)
\psline(-0.9,-0.9)(0.9,0.9)
\pscircle*[linecolor=white](0,0){0.3}
\psline(0.9,-0.9)(-0.9,0.9)
\end{pspicture}
}
\!\!)
\arrow{lu}
\end{tikzcd}$$
Moreover, given three links \(L_{0}\), \(L_{1}\) and \(L_X\) in \(S^3\), which agree outside a closed 3-ball and in this closed 3-ball agree with the 4-ended tangles 
\(T_0=\!\!\raisebox{-5pt}{
\psset{unit=0.2}
\begin{pspicture}(-1.05,-1.45)(1.05,1.05)
\psline(-0.9,-0.9)(0.9,0.9)
\psline(0.9,-0.9)(-0.9,0.9)
\pscircle*[linecolor=white](0,0){0.5}
\psarc(0.7071,0){0.5}{135}{225}
\psarc(-0.7071,0){0.5}{-45}{45}
\end{pspicture}
}\!\!,\) 
\(T_1=\!\!\raisebox{-5pt}{
\psset{unit=0.2}
\begin{pspicture}(-1.05,-1.45)(1.05,1.05)
\psline(-0.9,-0.9)(0.9,0.9)
\psline(0.9,-0.9)(-0.9,0.9)
\pscircle*[linecolor=white](0,0){0.5}
\psarc(0,0.7071){0.5}{-135}{-45}
\psarc(0,-0.7071){0.5}{45}{135}
\end{pspicture}
}\)\!\!
 and  
 \(T_X=
\!\!\raisebox{-5pt}{
\psset{unit=0.2}
\begin{pspicture}(-1.05,-1.45)(1.05,1.05)
\psline(-0.9,-0.9)(0.9,0.9)
\pscircle*[linecolor=white](0,0){0.3}
\psline(0.9,-0.9)(-0.9,0.9)
\end{pspicture}
}\!\!,\)
respectively, then the above triangle, together with the glueing theorem induces an exact triangle 
$$\begin{tikzcd}[row sep=0.7cm, column sep=-.6cm]
\HFL(L_{0})\otimes V^{l_{0}}
\arrow{rr}
& &
\HFL(L_{1})\otimes V^{l_{1}}
\arrow{dl}
\\
&
\HFL(L_{X})\otimes V^{l_X}
\arrow{lu}
\end{tikzcd}$$
where for \(i\in\{0,1,X\}\), \(l_i\) is either \(2\) or \(3\), depending on whether the two strands in \(T_i\) belong to different or the same components in \(L_i\), respectively. 
\end{theorem}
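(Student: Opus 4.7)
My plan is to prove the peculiar-module triangle directly by writing down the three peculiar modules explicitly and exhibiting a mapping cone identification; the link-level triangle then follows formally from the glueing theorem.

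First I would use Example~\ref{exa:CFTdRatTang} to record $\CFTd(T_0)$ and $\CFTd(T_1)$: both arise from genus~$0$ Heegaard diagrams with a single $\beta$-circle and are loops with two generators each (in opposite pairs of sites), with structure maps of the form $p_{i(i-1)}+q_{(i+1)(i+2)}$ up to cyclic permutation. For $\CFTd(T_X)$ I would use the Heegaard diagram from Example~\sref{exa:HDforonecrossing} (cf.\ Figure~\ref{fig:AlexCodesForOneCrossingsWithDelta}), whose four generators sit one in each site and are connected by eight arrows labelled by single $p_i$s, $q_i$s and triples $p_{(i-1)(i-2)(i-3)}$, $q_{(i+1)(i+2)(i+3)}$. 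All three modules are loops on the $4$-punctured sphere, and the loops for $T_0$ and $T_1$ are the two standard circles separating opposite pairs of punctures, while the loop for $T_X$ crosses between them once.

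Next I would construct an explicit morphism $\varphi\colon\CFTd(T_0)\rightarrow\CFTd(T_1)$ of curved type~D structures. Since the two underlying $\mathcal{I}^\partial$-modules live in disjoint idempotents, $\varphi$ is forced to act by multiplication by specific length-one algebra elements that send the generator in site $b$ (respectively $d$) to a combination of the generators in sites $a$ and $c$ with single $p_i$/$q_i$ coefficients; there is essentially only one such $\mathcal{I}^\partial$-equivariant map of the correct bigrading, and a short computation verifies $D\varphi=0$ using the relations $p_iq_i=q_ip_i=0$ and $p^4+q^4$-curvature. The core technical step is then to exhibit an explicit homotopy equivalence
\[
\mathrm{Cone}(\varphi)\;\simeq\;\CFTd(T_X)
\]
in $\pqMod$. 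Both sides have four generators (one in each site), and I would match them up and apply the cancellation lemma~\ref{lem:AbstractCancellation} and clean-up lemma~\ref{lem:AbstractCleanUp} to kill the one pair of identity arrows in $\mathrm{Cone}(\varphi)$ and identify the remaining $p$- and $q$-labelled arrows with those of $\CFTd(T_X)$ listed in Figure~\ref{fig:CFTdForSomeRatTangles}. The mapping cone structure then provides the two remaining edges of the exact triangle.

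For the link-level triangle, I would pair each vertex with the AA-bimodule $\mathcal{P}$ and with the peculiar module $\CFTd(T')$ of the complementary tangle~$T'$ obtained from $L_0$, $L_1$, $L_X$ by cutting out the relevant $4$-ball. Since $(-)\boxtimes\mathcal{P}\boxtimes\CFTd(T')$ is an exact functor on $\pqMod$ (box tensoring with a fixed type~AA bimodule sends mapping cones to mapping cones), the triangle survives; Theorem~\ref{thm:CFTdGeneralGlueing} then identifies each paired complex with $\CFL(L_i)\otimes V^{l_i}$, where $l_i\in\{2,3\}$ depending on whether the two strands of~$T_i$ belong to the same component of~$L_i$, exactly as in the statement. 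The main obstacle is the finite but finicky verification of the cone identification in step two: the pattern of which $p_i$'s and $q_i$'s must appear in $\varphi$ is forced by gradings and idempotents, but checking the higher-order cancellations in $\mathrm{Cone}(\varphi)$ requires some careful bookkeeping with the algebra relations.
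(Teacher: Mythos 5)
Your overall route is the same as the paper's: record the two-generator loops for $T_0$ and $T_1$ and the four-generator loop for $T_X$, define $\varphi$ by single $p_i$/$q_i$-labelled arrows (explicitly $b\mapsto q_3\otimes c+p_2\otimes a$ and $d\mapsto q_1\otimes a+p_4\otimes c$), identify the mapping cone of $\varphi$ with $\CFTd(T_X)$, and then transfer the triangle to links via the glueing theorem, exactly as the paper does (it invokes the same argument as for the oriented skein triangle, theorem~\ref{thm:nTwistSkeinRelation}, for the last step). One caveat: the resolution triangle preserves no gradings, so the bigrading does not by itself pin down $\varphi$; you simply verify $D\varphi=0$ for the explicit choice, which is a quick check using $p_iq_i=0=q_ip_i$.

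However, your description of the key verification contains a genuine slip. You propose to ``kill the one pair of identity arrows in $\operatorname{Cone}(\varphi)$'' via the cancellation lemma~\ref{lem:AbstractCancellation}, but — as you yourself observe — $\varphi$ maps between modules supported in complementary idempotents ($b,d$ versus $a,c$), so no identity-labelled component can occur in $\varphi$, and the internal differentials of the two loops carry none either; hence the cone has no identity arrows at all. Moreover, cancelling any generator pair would leave a two-generator complex, which cannot be $\CFTd(T_X)$, contradicting your own observation that both sides have four generators, one in each site. The identification of $\operatorname{Cone}(\varphi)$ with $\CFTd(T_X)$ is achieved purely by changes of basis, i.\,e.\ the clean-up lemma~\ref{lem:AbstractCleanUp}, which is precisely how the paper proceeds (this is in contrast to the oriented skein triangle, where identity components do appear and are cancelled — the likely source of the confusion). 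With that step corrected, the remainder of your argument (exactness of $-\boxtimes\mathcal{P}\boxtimes\CFTd(T')$ under mapping cones, and theorem~\ref{thm:CFTdGeneralGlueing} identifying the paired complexes with $\CFL(L_i)\otimes V^{l_i}$ with the stated values of $l_i$) goes through as in the paper.
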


\begin{proof}
The map $\varphi$ is given by (the horizontal arrows in) the following diagram on the left:
$$\begin{tikzcd}[row sep=2cm, column sep=2.8cm]
b
\arrow[bend left=10,leftarrow]{d}{p_{43}+q_{12}}
\arrow[bend right=10,swap]{d}{p_{21}+q_{34}}
\arrow{r}{q_3}
\arrow[pos=0.3]{rd}{p_2}
&
c
\arrow[bend left=10,leftarrow]{d}{p_{14}+q_{23}}
\arrow[bend right=10,swap]{d}{p_{32}+q_{41}}
\\
d
\arrow{r}{q_1}
\arrow[pos=0.3,swap]{ru}{p_4}
&
a
\end{tikzcd}
\quad\cong\quad
\begin{tikzcd}[row sep=2cm, column sep=2.8cm]
b
\arrow[bend left=10,leftarrow,pos=0.8]{dr}{p_{143}}
\arrow[bend right=10,swap,pos=0.2]{dr}{p_{2}}
\arrow[bend left=10,leftarrow]{r}{q_{412}}
\arrow[bend right=10,swap]{r}{q_{3}}
&
c
\\
d
\arrow[bend right=10,swap,leftarrow,pos=0.8]{ru}{p_{321}}
\arrow[bend left=10,pos=0.2]{ru}{p_{4}}
\arrow[bend right=10,swap,leftarrow]{r}{q_{234}}
\arrow[bend left=10]{r}{q_{1}}
&
a
\end{tikzcd}$$
Using lemma~\ref{lem:AbstractCleanUp}, we see that it is homotopic to the diagram on the right,
which is $\CFTd(\!\!\raisebox{-5pt}{
\psset{unit=0.2}
\begin{pspicture}(-1.05,-1.45)(1.05,1.05)
\psline(-0.9,-0.9)(0.9,0.9)
\pscircle*[linecolor=white](0,0){0.3}
\psline(0.9,-0.9)(-0.9,0.9)
\end{pspicture}
}\!\!)$. Now apply the same arguments as in the proof of theorem~\ref{thm:nTwistSkeinRelation}.
\end{proof}
\pagebreak[3]

\begin{proposition}\label{prop:singularcrossing}
There are two morphisms
$$
\delta^{-\frac{1}{2}}t^{\pm1}\CFTd\left(\raisebox{-5pt}{
\psset{unit=0.3}
\begin{pspicture}(-0.91,-0.91)(0.91,0.91)
\psline{->}(-0.9,-0.9)(0.9,0.9)
\psline{->}(0.9,-0.9)(-0.9,0.9)
\pscircle*[linecolor=white](0,0){0.5}
\psarc(0.7071,0){0.5}{135}{225}
\psarc(-0.7071,0){0.5}{-45}{45}
\end{pspicture}
}\right)\rightarrow
\CFTd\left(\raisebox{-5pt}{
\psset{unit=0.3}
\begin{pspicture}(-0.91,-0.91)(0.91,0.91)
\psline{->}(0.9,-0.9)(-0.9,0.9)
\pscircle*[linecolor=white](0,0){0.3}
\psline{->}(-0.9,-0.9)(0.9,0.9)
\end{pspicture}
}\right)
$$
whose mapping cones are homotopic to loop-type complexes, representing the following ``figure-8'' loops:\vspace*{-0.5em}
$$
{\psset{unit=1.5}
\raisebox{-2.2cm}{
\begin{pspicture}(-1.5,-1.5)(1.5,1.5)
\psrotate(0,0){180}{
\psecurve(-1.2,0.6)(-1.2,1.2)(1.2,0.6)(1.2,1.2)(-1.2,0.6)(-1.2,1.2)(1.2,0.6)

\psline*[linecolor=white](1,1)(1,-1)(-1,-1)(-1,1)
\psline[linestyle=dashed](1,1)(1,-1)
\psline[linestyle=dashed](1,-1)(-1,-1)
\psline[linestyle=dashed](-1,-1)(-1,1)
\psline[linestyle=dashed](-1,1)(1,1)

\psecurve[dotsep=1pt,linestyle=dotted](-1.2,0.6)(-1.2,1.2)(1.2,0.6)(1.2,1.2)(-1.2,0.6)(-1.2,1.2)(1.2,0.6)

\psset{dotsize=5pt}

\psdot[linecolor=darkgreen](-1,0.507)

\psdot[linecolor=blue](0.155,1)
\psdot[linecolor=blue](-0.155,1)

\psdot[linecolor=red](1,0.507)

\pscircle[fillstyle=solid, fillcolor=white](1,1){0.08}
\pscircle[fillstyle=solid, fillcolor=white](-1,1){0.08}
\pscircle[fillstyle=solid, fillcolor=white](1,-1){0.08}
\pscircle[fillstyle=solid, fillcolor=white](-1,-1){0.08}
}
\end{pspicture}
}
\text{ and }
\raisebox{-2.2cm}{
\begin{pspicture}(-1.5,-1.5)(1.5,1.5)

\psecurve(-1.2,0.6)(-1.2,1.2)(1.2,0.6)(1.2,1.2)(-1.2,0.6)(-1.2,1.2)(1.2,0.6)

\psline*[linecolor=white](1,1)(1,-1)(-1,-1)(-1,1)
\psline[linestyle=dashed](1,1)(1,-1)
\psline[linestyle=dashed](1,-1)(-1,-1)
\psline[linestyle=dashed](-1,-1)(-1,1)
\psline[linestyle=dashed](-1,1)(1,1)

\psecurve[dotsep=1pt,linestyle=dotted](-1.2,0.6)(-1.2,1.2)(1.2,0.6)(1.2,1.2)(-1.2,0.6)(-1.2,1.2)(1.2,0.6)

\psset{dotsize=5pt}

\psdot[linecolor=red](-1,0.507)

\psdot[linecolor=gold](0.155,1)
\psdot[linecolor=gold](-0.155,1)

\psdot[linecolor=darkgreen](1,0.507)

\pscircle[fillstyle=solid, fillcolor=white](1,1){0.08}
\pscircle[fillstyle=solid, fillcolor=white](-1,1){0.08}
\pscircle[fillstyle=solid, fillcolor=white](1,-1){0.08}
\pscircle[fillstyle=solid, fillcolor=white](-1,-1){0.08}
\end{pspicture}
}
}
$$
By the symmetry of these loops, taking the mirror gives us these loops as the mapping cone of maps from the negative crossing to the trivial tangle. 
\end{proposition}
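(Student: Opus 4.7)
The plan is a direct computation: explicitly exhibit the two morphisms and form their mapping cones. First, I would write down $\CFTd(T_+)$ for the positive crossing and $\CFTd(T_0)$ for the trivial tangle in terms of the four and two generators respectively given in example~\ref{exa:CFTdRatTang} (figure~\ref{fig:CFTdForSomeRatTangles}). Recall that $\CFTd(T_+)$ has one generator $a,b,c,d$ in each site with the explicit arrows shown there, while $\CFTd(T_0)$ has generators $b,d$ (both in $\delta$-grading $0$) with differentials $b\leftarrow d\colon p_{43}+q_{12}$ and $d\leftarrow b\colon p_{21}+q_{34}$.

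Next, I would identify the two morphisms $\varphi_\pm\colon \delta^{-\frac{1}{2}}t^{\pm 1}\CFTd(T_+)\to \CFTd(T_0)$. A degree-$0$ morphism between curved type~D structures is a sum of components $x\mapsto \alpha\otimes y$ with $|\alpha|_\delta=\delta(x)-\delta(y)$, $|\alpha|_t=A(x)-A(y)$ and $\iota_{s(y)}\cdot\alpha\cdot\iota_{s(x)}=\alpha$. After the shift, the grading and idempotent constraints cut the space of candidate morphisms down to an explicit finite list, and the closedness condition $D(\varphi_\pm)=0$ from definition~\ref{def:curvedTypeDStructure} then pins down $\varphi_\pm$ up to homotopy. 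I expect $\varphi_+$ to be the ``$p$-saddle'' and $\varphi_-$ its ``$q$-analogue'', each containing an identity component between one pair of matched generators.

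Having fixed $\varphi_\pm$, I would form the mapping cones $\mathrm{Cone}(\varphi_\pm)$, which are six-generator curved complexes assembled from $\partial_{T_+}$, $\partial_{T_0}$ and $\varphi_\pm$. Cancelling the identity component via lemma~\ref{lem:AbstractCancellation}, followed by at most one or two cleaning steps (lemma~\ref{lem:AbstractCleanUp}) to remove any remaining ``long'' identity-free arrows, should produce a four-generator complex in which every generator is $4$-valent with labels of the form $p_i, p_{ij\cdots}, q_i, q_{ij\cdots}$, i.\,e.\ a loop in the sense of definition~\ref{def:loop}. Translating this loop into an immersed curve on the $4$-punctured sphere following the recipe of example~\ref{exa:HFTdpretzeltangle} (paths on the front for $p$-arrows, on the back for $q$-arrows), one identifies it, up to isotopy rel.\ punctures, with the figure-$8$ pictured in the statement. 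The mirror statement follows from proposition~\sref{prop:HFTmirror}: mirroring corresponds to reversing all arrows and swapping $p_i\leftrightarrow q_i$, and the figure-$8$ loops drawn in the statement are visibly invariant under this front--back swap.

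The hard part will be step two: pinning down $\varphi_\pm$ so that $D(\varphi_\pm)=0$ in $\Ad$ (with the relations $p_iq_i=0$ taken into account) and so that the identity component in the cone sits on the ``correct'' pair of generators for the subsequent cancellation to yield a \emph{single} connected loop rather than two disjoint pieces. If a first guess fails either test, the remedy is to add higher-$\delta$ homotopy corrections in $\iota_i\cdot\Ad\cdot\iota_j$ until both conditions hold. By contrast, steps one, three and four are routine bookkeeping in a very small complex and can be carried out by hand without difficulty; note in particular that the curvature $p^4+q^4$ plays no role here, since the map $D(\varphi_\pm)$ is a composition of one degree-$0$ morphism with one degree-$(-1)$ differential and is therefore computed without invoking the curved $\partial^2$-relation.
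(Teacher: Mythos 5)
Your overall strategy is the paper's: write down the two small peculiar modules from example~\ref{exa:CFTdRatTang}, exhibit the morphisms explicitly, cancel the identity component with lemma~\ref{lem:AbstractCancellation}, read off the loop, and get the mirror statement from the arrow-reversal/$p_i\leftrightarrow q_i$ symmetry. However, there is a genuine gap in your step two: you have the morphisms going the wrong way, with the shift on the wrong factor. The statement (and the paper's proof) constructs $\varphi_\pm\colon\delta^{-\frac12}t^{\pm1}\CFTd(T_0)\rightarrow\CFTd(T_+)$, i.e.\ \emph{from} the shifted two-generator trivial-tangle complex \emph{into} the four-generator crossing complex; the first map has an identity component on $d$ together with $p_2$- and $q_3$-components out of $b$, the second an identity on $b$ together with $p_4$ and $q_1$ (so neither is a pure ``$p$-saddle''/``$q$-saddle''). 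You instead propose $\varphi_\pm\colon\delta^{-\frac12}t^{\pm1}\CFTd(T_+)\rightarrow\CFTd(T_0)$. This is not a harmless relabelling: every component of a cone differential must raise the $\delta$-grading by $1$ and preserve the Alexander grading (lemma~\ref{lem:CFTdGradings}), and with your source, target and shift the only site-matched pairs ($b\leftrightarrow b$, $d\leftrightarrow d$) fail this degree constraint, so no identity component exists at all. The cone is then a reduced six-generator complex, which cannot be homotopic to a four-generator loop, so your cancellation step never gets off the ground. More structurally, the triangle $\CFTd(T_0)\rightarrow\CFTd(T_+)\rightarrow\text{figure-8}$ does not rotate into one exhibiting the figure-8 as a cone of a map from the crossing to the trivial tangle; by the unoriented skein triangle (theorem~\ref{thm:ResolutionExactTriangle}) the natural map from a crossing to a trivial tangle has a (shifted) trivial-tangle complex as its cone, not a figure-8. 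Finally, even a correct computation in your direction would not prove the statement as written, nor its mirror addendum, which concerns maps from the \emph{negative} crossing to the trivial tangle obtained by mirroring the main claim.

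Two smaller points. Your assertion that gradings plus closedness ``pin down $\varphi_\pm$ up to homotopy'' is neither justified nor needed: the paper simply exhibits the maps and one checks $D(\varphi_\pm)=0$ directly; after cancelling the single identity arrow the result is already the figure-8 loop, with no clean-up homotopies required. And for the mirror step you should appeal to the symmetry of peculiar modules under mirroring (reverse all arrows, swap $p_i\leftrightarrow q_i$, reverse the Alexander grading), as the paper does by inspecting the cancelled complexes, rather than to proposition~\ref{prop:HFTmirror}, which is a statement about $\CFT$ via sutured Floer duality.
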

\begin{proof}
The two maps between the peculiar invariants of the two tangles look as follows:
$$
\begin{tikzcd}[row sep=0.5cm, column sep=0.22cm,crossing over clearance=3pt]
&
\delta^{-\frac{1}{2}}d^{\red +1}
\arrow[out=0,in=180]{rrrr}{1}
&&&&
\delta^{\frac{1}{2}}d^{\red +1}
\arrow[bend left=10,leftarrow,pos=0.5]{dd}{q_4}
\arrow[bend right=10,swap,pos=0.6]{dd}{q_{123}}
\\
&&&&
\delta^{0}a^{\red 0}
\arrow[crossing over, bend left=10,leftarrow,pos=0.35]{ur}{p_{432}}
\arrow[crossing over, bend right=10,swap,pos=0.4]{ur}{p_1}
\\
&&&&&
\delta^{0}c^{\red 0}
\\
\delta^{-\frac{1}{2}}b^{\red +1}
\arrow[out=30,in=180,pos=0.3]{rrrruu}{p_2}
\arrow[swap,out=20,in=180,pos=0.5]{rrrrru}{q_3}
\arrow[bend left=10,leftarrow,pos=0.5]{uuur}{p_{43}+q_{12}}
\arrow[bend right=10,swap,pos=0.85]{uuur}{p_{21}+q_{34}}
&&&&
\delta^{\frac{1}{2}}b^{\red -1}
\arrow[bend left=10,leftarrow,pos=0.65]{ur}{p_3}
\arrow[bend right=10,swap,pos=0.65]{ur}{p_{214}}
\arrow[crossing over, bend left=10,leftarrow,pos=0.55]{uu}{q_2}
\arrow[crossing over, bend right=10,swap,pos=0.75]{uu}{q_{341}}
\end{tikzcd}
\text{ and }
\begin{tikzcd}[row sep=0.5cm, column sep=0.22cm,crossing over clearance=3pt]
&
\delta^{-\frac{1}{2}}d^{\red -1}
\arrow[out=-20,in=180,pos=0.2]{rrrrdd}{p_4}
\arrow[crossing over, out=-30,in=180,swap,pos=0.3]{rrrd}{q_1}
&&&&
\delta^{\frac{1}{2}}d^{\red +1}
\arrow[bend left=10,leftarrow,pos=0.5]{dd}{p_4}
\arrow[bend right=10,swap,pos=0.6]{dd}{q_{123}}
\\
&&&&
\delta^{0}a^{\red 0}
\arrow[crossing over, bend left=10,leftarrow,pos=0.35]{ur}{p_{432}}
\arrow[crossing over, bend right=10,swap,pos=0.4]{ur}{p_1}
\\
&&&&&
\delta^{0}c^{\red 0}
\\
\delta^{-\frac{1}{2}}b^{\red -1}
\arrow[out=0,in=180,swap]{rrrr}{1}
\arrow[bend left=10,leftarrow,pos=0.5]{uuur}{p_{43}+q_{12}}
\arrow[bend right=10,swap,pos=0.5]{uuur}{p_{21}+q_{34}}
&&&&
\delta^{\frac{1}{2}}b^{\red -1}
\arrow[bend left=10,leftarrow,pos=0.65]{ur}{p_3}
\arrow[bend right=10,swap,pos=0.65]{ur}{p_{214}}
\arrow[crossing over, bend left=10,leftarrow,pos=0.55]{uu}{q_2}
\arrow[crossing over, bend right=10,swap,pos=0.75]{uu}{q_{341}}
\end{tikzcd}
$$
We now cancel the identity arrows in both mapping cones and respectively get
$$
\begin{tikzcd}[row sep=1.5cm, column sep=1.2cm,crossing over clearance=3pt]
\delta^{0}a^{\red 0}
\arrow[bend left=10,leftarrow,pos=0.5]{d}{q_{341}}
\arrow[bend right=10,swap,pos=0.5]{d}{q_2}
\arrow[bend left=10,leftarrow,pos=0.5]{r}{p_2}
\arrow[bend right=10,swap,pos=0.5]{r}{p_{143}}
&
\delta^{-\frac{1}{2}}b^{\red +1}
\\
\delta^{\frac{1}{2}}b^{\red -1}
&
\delta^{0}c^{\red 0}
\arrow[bend left=10,leftarrow,pos=0.5]{u}{q_{3}}
\arrow[bend right=10,swap,pos=0.5]{u}{q_{412}}
\arrow[bend left=10,leftarrow,pos=0.5]{l}{p_{214}}
\arrow[bend right=10,swap,pos=0.5]{l}{p_3}
\end{tikzcd}
\quad\text{ and }\quad
\begin{tikzcd}[row sep=1.5cm, column sep=1.2cm,crossing over clearance=3pt]
\delta^{0}a^{\red 0}
\arrow[bend left=10,leftarrow,pos=0.5]{d}{q_1}
\arrow[bend right=10,swap,pos=0.5]{d}{q_{234}}
\arrow[bend left=10,leftarrow,pos=0.5]{r}{p_{432}}
\arrow[bend right=10,swap,pos=0.5]{r}{p_1}
&
\delta^{\frac{1}{2}}d^{\red +1}
\\
\delta^{-\frac{1}{2}}d^{\red -1}
&
\delta^{}c^{\red 0}
\arrow[bend left=10,leftarrow,pos=0.5]{u}{q_{123}}
\arrow[bend right=10,swap,pos=0.5]{u}{q_4}
\arrow[bend left=10,leftarrow,pos=0.5]{l}{p_4}
\arrow[bend right=10,swap,pos=0.5]{l}{p_{321}}
\end{tikzcd}
$$
Obviously, these peculiar modules are loop-type. Also, reversing all arrows, swapping $p_i$s and $q_i$s and reversing the Alexander grading leaves both of them invariant. Doing this to the mapping cone gives us maps between the mirrors of the two tangles, but in the opposite direction.
\end{proof}
\begin{Remark}\label{rem:singularcrossings}
It is interesting to compare the ``figure-8'' curve to the local Heegaard diagram for a singular crossing $\!\!\raisebox{-5pt}{
\psset{unit=0.2}
\psset{arrowsize=1.5pt 2}
\begin{pspicture}(-1.05,-1.45)(1.05,1.05)
\psline{->}(0.9,-0.9)(-0.9,0.9)
\psline{->}(-0.9,-0.9)(0.9,0.9)
\psdot(0,0)
\end{pspicture}
}\!\!$ in \cite{OSSHFS}, as the number of generators agree for the second loop above, up to an additional tensor factor. Also note that the proposition above gives rise to an exact triangle similar to the one in \cite{OSrescube}. Moreover, we can write the $n$-twist tangle $T_{n}$ from figure~\ref{fig:OrientedSkeinRelationTn}, with both strands oriented upwards, as a complex in the objects $\!\!\raisebox{-5pt}{
\psset{unit=0.2}
\psset{arrowsize=1.5pt 2}
\begin{pspicture}(-1.05,-1.45)(1.05,1.05)
\psline{->}(0.9,-0.9)(-0.9,0.9)
\psline{->}(-0.9,-0.9)(0.9,0.9)
\psdot(0,0)
\end{pspicture}
}\!\!$ 
and
$
\!\!\raisebox{-5pt}{
\psset{unit=0.2}
\psset{arrowsize=1.5pt 2}
\begin{pspicture}(-1.05,-1.45)(1.05,1.05)
\psline{->}(-0.9,-0.9)(0.9,0.9)
\psline{->}(0.9,-0.9)(-0.9,0.9)
\pscircle*[linecolor=white](0,0){0.5}
\psarc(0.7071,0){0.5}{135}{225}
\psarc(-0.7071,0){0.5}{-45}{45}
\end{pspicture}
}\!\!
$. Indeed, cancelling the identity components in the following complex gives us a loop representing $T_{n}$.
$$\begin{tikzcd}[row sep=0.5cm, column sep=0.45cm,crossing over clearance=3pt]
&
b^{\red 2-n}
\arrow[bend left=10,leftarrow,pos=0.35]{dd}{q_{412}}
\arrow[bend right=10,swap,pos=0.75]{dd}{q_3}
&&&&
b^{\red 4-n}
\arrow[bend left=10,leftarrow,pos=0.35]{dd}{q_{412}}
\arrow[bend right=10,swap,pos=0.75]{dd}{q_3}
&&
\!\!\!\cdots\!\!\!
&&
b^{\red n}
\arrow[bend left=10,leftarrow,pos=0.65]{dd}{q_{412}}
\arrow[bend right=10,swap,pos=0.75]{dd}{q_3}
&&&&
d^{\red n}
\\
a^{\red 1-n}
\arrow[bend left=10,leftarrow]{ru}{p_2}
\arrow[bend right=10,swap,pos=0.15]{ru}{p_{143}}
\arrow[crossing over,in=180,out=0,swap,pos=0.1]{rrrrrd}{p_{14}}
&&&&
a^{\red 3-n}
\arrow[bend left=10,leftarrow]{ru}{p_2}
\arrow[bend right=10,swap,pos=0.15]{ru}{p_{143}}
\arrow[dotted,crossing over,in=180,out=0,swap,pos=0.1]{rrrrrd}{p_{14}}
&&
\!\!\!\cdots\!\!\!
&&
a^{\red n-1}
\arrow[bend left=10,leftarrow]{ru}{p_2}
\arrow[bend right=10,swap,pos=0.15]{ru}{p_{143}}
\arrow[crossing over,pos=0.9,in=180,out=0]{urrrrr}{p_1}
\\
&
c^{\red 1-n}
\arrow[bend left=10,leftarrow]{ld}{p_{214}}
\arrow[bend right=10,swap,pos=0.3]{ld}{p_3}
\arrow[in=180,out=0,pos=0.03,swap]{rrru}{q_{41}}
&&&&
c^{\red 3-n}
\arrow[bend left=10,leftarrow]{ld}{p_{214}}
\arrow[bend right=10,swap,pos=0.3]{ld}{p_3}
\arrow[dotted,in=180,out=0,pos=0.03,swap]{rrru}{q_{41}}
&&
\!\!\!\cdots\!\!\!
&&
c^{\red n-1}
\arrow[bend left=10,leftarrow]{ld}{p_{214}}
\arrow[bend right=10,swap,pos=0.4]{ld}{p_3}
\arrow[pos=0.65,in=-150,out=0,pos=0.6]{uurrrr}{q_4}
\\
b^{\red -n}
\arrow[crossing over,bend left=10,leftarrow,pos=0.5]{uu}{q_2}
\arrow[crossing over,bend right=10,swap,pos=0.55]{uu}{q_{341}}
&&&&
b^{\red 2-n}
\arrow[crossing over,bend left=10,leftarrow,pos=0.65]{uu}{q_2}
\arrow[crossing over,bend right=10,swap,pos=0.65]{uu}{q_{341}}
\arrow[swap,in=0,out=180,pos=0.8,leftarrow]{llluuu}{1}
&&
\!\!\!\cdots\!\!\!
&&
b^{\red n-2}
\arrow[crossing over,bend left=10,leftarrow,pos=0.65]{uu}{q_2}
\arrow[crossing over,bend right=10,swap,pos=0.65]{uu}{q_{341}}
\arrow[swap,dotted,in=0,out=180,pos=0.8,leftarrow]{llluuu}{1}
&&&&
b^{\red n}
\arrow[swap,in=0,out=180,pos=0.8,leftarrow]{llluuu}{1}
\arrow[bend left=10,leftarrow,pos=0.2]{uuur}{p_{43}+q_{12}}
\arrow[bend right=10,swap,pos=0.3]{uuur}{p_{21}+q_{34}}
\end{tikzcd}$$
Similarly, we can obtain a complex for $T_{-n}$ by applying the mirror operation. Furthermore, it is also easy to find such complexes for other orientations of $T_{-n}$ and $T_n$. Note that these complexes look very much like the ones we get in Bar-Natan's Khovanov homology of tangles \cite{BarNatanKhT}. We assume that every tangle can be written as a complex in the two objects $\!\!\raisebox{-5pt}{
\psset{unit=0.2}
\psset{arrowsize=1.5pt 2}
\begin{pspicture}(-1.05,-1.45)(1.05,1.05)
\psline{->}(0.9,-0.9)(-0.9,0.9)
\psline{->}(-0.9,-0.9)(0.9,0.9)
\psdot(0,0)
\end{pspicture}
}\!\!$ 
and
$
\!\!\raisebox{-5pt}{
\psset{unit=0.2}
\psset{arrowsize=1.5pt 2}
\begin{pspicture}(-1.05,-1.45)(1.05,1.05)
\psline{->}(-0.9,-0.9)(0.9,0.9)
\psline{->}(0.9,-0.9)(-0.9,0.9)
\pscircle*[linecolor=white](0,0){0.5}
\psarc(0.7071,0){0.5}{135}{225}
\psarc(-0.7071,0){0.5}{-45}{45}
\end{pspicture}
}\!\!
$, or the two objects 
$
\!\!\raisebox{-5pt}{
\psset{unit=0.2}
\psset{arrowsize=1.5pt 2}
\begin{pspicture}(-1.05,-1.45)(1.05,1.05)
\psline{-}(-0.9,-0.9)(0.9,0.9)
\psline{<->}(0.9,-0.9)(-0.9,0.9)
\pscircle*[linecolor=white](0,0){0.5}
\psarc(0.7071,0){0.5}{135}{225}
\psarc(-0.7071,0){0.5}{-45}{45}
\end{pspicture}
}\!\!
$
and 
$
\!\!\raisebox{-5pt}{
\psset{unit=0.2}
\psset{arrowsize=1.5pt 2}
\begin{pspicture}(-1.05,-1.45)(1.05,1.05)
\psrotate(0,0){90}{
\psline{<->}(-0.9,-0.9)(0.9,0.9)
\psline{-}(0.9,-0.9)(-0.9,0.9)
\pscircle*[linecolor=white](0,0){0.5}
\psarc(0.7071,0){0.5}{135}{225}
\psarc(-0.7071,0){0.5}{-45}{45}
}
\end{pspicture}
}\!\!
$, depending on the orientation. In fact, we can iteratively use the type AA glueing structure from theorem~\sref{thm:CFTdGeneralGlueing} together with the skein exact sequence from theorem~\sref{thm:nTwistSkeinRelation} to locally modify tangles until we obtain a complex of peculiar modules of trivial and 1-crossing tangles, up to a large number of tensor factors from glueing. Then, one (only) needs to get rid of these extra factors. For example, in the case of the $(2,-3)$-pretzel tangle, this is indeed possible; we may write it as a complex of elementary tangles. This is actually how I originally found the curves in figure~\ref{fig:mutationexamplefinalresult} in the Fukaya category setting (see section~\ref{sec:LoopsAreLoops}) before recovering them from tangle Heegaard diagrams.\\
It is also interesting to compare our ``figure-8'' curve to the curve that Hedden, Herald, Kirk associate with a trivial tangle in \cite[figure~10]{HHK13} in the context of instanton knot Floer homology in the pillowcase, which is the following:
$$
{\psset{unit=1.9}
\begin{pspicture}(-1.1,-1.1)(1.1,1.1)

\psecurve[linecolor=blue](1,6)(0.4,0.81)(0,0)(-0.4,-0.81)(-1,-6)
\psecurve[linecolor=blue](6,1)(0.825,0.4)(0,0)(-0.82,-0.4)(-6,-1)
\psecurve[linecolor=blue,linestyle=dotted,dotsep=1pt](0.5,0.5)(0.815,0.4)(0.4,0.815)(0.5,0.5)
\psecurve[linecolor=blue,linestyle=dotted,dotsep=1pt](-0.5,-0.5)(-0.815,-0.4)(-0.4,-0.815)(-0.5,-0.5)

{\psset{linewidth=1.5pt}
\psecurve(2,2)(1,1)(0.8,0)(1,-1)(2,-2)
\psrotate(0,0){90}{\psecurve(2,2)(1,1)(0.8,0)(1,-1)(2,-2)}
\psrotate(0,0){-90}{\psecurve(2,2)(1,1)(0.8,0)(1,-1)(2,-2)}
\psrotate(0,0){180}{\psecurve(2,2)(1,1)(0.8,0)(1,-1)(2,-2)}
}
\end{pspicture}
}$$
\end{Remark}
\section{\texorpdfstring{Symmetry relations for $\CFTd$}{Symmetry relations for CFTᵈ}}\label{sec:SymRels}

We start with a result which can be viewed as a categorification of the 4-term relations between the Alexander polynomials for different sites from corollary~\sref{cor:fourendedonecolour}.
\begin{proposition}\label{prop:CFTdAll4sites}
Let \(T\) be a 4-ended tangle and consider \(\CFTd(T)\). After passing to the quotient algebra \(\Ad/(q_1=q_2=q_3=q_4=p_1=0)\) and taking homology, we obtain the following chain complex
$$\begin{tikzcd}[row sep=1.5cm, column sep=2cm]
\HFT(T,d)
\arrow[swap]{d}{\Phi_{p_4}}
\arrow[pos=0.2,swap]{dr}{\Phi_{p_{43}\!\!\!\!\!\!}}
\arrow{r}{\Phi_{p_{432}}}
&
\HFT(T,a)
\\
\HFT(T,c)
\arrow{r}{\Phi_{p_3}}
\arrow[pos=0.8,swap]{ur}{\Phi_{p_{32}}}
&
\HFT(T,b)
\arrow[swap]{u}{\Phi_{p_2}}
\end{tikzcd}$$
where we regard the maps \(\Phi_{\cdot}\) as homomorphisms between the four vector spaces \(\HFT(T,s)\), \(s\in\{a,b,c,d\}\). This chain complex is null-homotopic. By symmetry, the corresponding statements also hold when we cyclically permute sites and algebra elements or swap the roles of the \(p_i\)s and \(q_i\)s.
\end{proposition}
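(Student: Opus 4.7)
\emph{Proof proposal.} The strategy is to promote the curvature relation $\delta^{2} = p^{4}+q^{4}$ on $\CFTd(T)$ into an explicit null-homotopy. First I would pass to the intermediate algebra $\Ad/(q_{1},\dots,q_{4})$, on which $\CFTd(T)$ still satisfies the matrix-factorization--like identity $\delta^{2} = p^{4}\otimes \id$. Write the resulting differential as $\delta = \delta_{0} + \Phi + K_{0}$, where $\delta_{0}$ collects the identity-labeled pieces (the internal differentials of each $\CFT(T,s)$), $\Phi$ collects the components whose labels lie in $\{p_{2},p_{3},p_{4},p_{32},p_{43},p_{432}\}$ (those that survive the further quotient by $p_{1}$), and $K_{0}$ gathers all remaining components, whose labels necessarily involve $p_{1}$. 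After taking homology with respect to $\delta_{0}$, each piece becomes an $\mathbb{F}_{2}$-linear endomorphism of $\bigoplus_{s}\HFT(T,s)$, and the induced $\Phi$ is exactly the differential of the complex in the statement.

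The main computation is to read off $\delta^{2} = p^{4}\otimes\id$ label by label. At any label not involving $p_{1}$ the right-hand side vanishes, and only compositions of $\Phi$-components can contribute; this yields $\Phi^{2}=0$ on homology, so the diagram really is a chain complex. At any label involving $p_{1}$ but not of the form $p_{i(i-1)(i-2)(i-3)}$ the right-hand side again vanishes, giving the higher compatibility relations between $\Phi$ and $K_{0}$. Finally, collecting the contributions at the four length-four loops $p_{i(i-1)(i-2)(i-3)}$, each of which appears in $p^{4}$ with coefficient $1$, assembles after summing over sites to the global operator identity
\[
[\Phi,\,K_{0}] \,+\, K_{0}^{\,2} \;=\; \id_{\bigoplus_{s}\HFT(T,s)}
\]
on homology, the right-hand side being the direct sum of the identity maps on each summand.

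The last step is to bootstrap $K_{0}$ into a genuine null-homotopy. The relation $[\Phi,K_{0}] = \id + K_{0}^{\,2}$, combined with the Leibniz rule $[\Phi,AB] = [\Phi,A]\,B + A\,[\Phi,B]$ (signs are invisible in characteristic two), gives a short induction showing $[\Phi,K_{0}^{\,2n}]=0$ and $[\Phi,K_{0}^{\,2n+1}]=K_{0}^{\,2n}+K_{0}^{\,2n+2}$ for all $n\ge 0$. Setting $K := \sum_{n\ge 0} K_{0}^{\,2n+1}$ then makes these right-hand sides telescope and yields $[\Phi,K] = \id$, the desired null-homotopy. This sum is finite because $K_{0}$ is nilpotent: each factor of $K_{0}$ strictly increases the total $p_{1}$-content of the composite label, which is bounded on the finite-dimensional complex $\CFTd(T)$.

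The main obstacle I anticipate is the careful bookkeeping needed to extract the global identity $[\Phi,K_{0}]+K_{0}^{\,2}=\id$ from the pointwise curvature relations: one must enumerate all factorizations of each length-four loop, verify that the contributions at non-loop labels really do cancel, and pin down a quantitative bound on the nilpotency of $K_{0}$. The cyclically permuted and $p\leftrightarrow q$-swapped variants of the proposition then follow from the identical argument applied to the quotients $\Ad/(q,p_{k})$ and $\Ad/(p,q_{k})$ for the appropriate $k$, using the evident symmetries of $\Ad$ and of the curvature $p^{4}+q^{4}$.
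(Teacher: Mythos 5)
Your route is genuinely different from the paper's and, in essence, it works. The paper argues topologically: it identifies the displayed complex with the type~D module of a particular bordered sutured structure on the tangle complement (a single $\alpha$-arc running past the sites $d,c,b,a$), observes that taking homology amounts to pairing with an explicit type~A module, and then notes that the glued sutured manifold is not taut, so its sutured Floer homology vanishes by Juh\'asz; null-homotopy follows since we are over a field. You instead extract the contraction algebraically from the curvature $p^4+q^4$ of $\CFTd(T)$, which is attractive because it needs no Floer-theoretic input at all and produces an explicit null-homotopy. Your skeleton is right: over $\Ad/(q_1=\dots=q_4=0)$ write $\delta=\delta_0+\Phi+K_0$, use the $p_1$-free part of the curved relation to see that $d:=\delta_0+\Phi$ squares to zero (this is the honest type~D structure over $\Ad/(q_i,p_1)$ whose reduction is the displayed complex), use the $p_1$-part to get a ``contraction up to $K_0^2$'', and then telescope odd powers of $K_0$.

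Two steps need repair, though neither is fatal. First, you cannot ``take homology with respect to $\delta_0$'' before deriving the key identity: $K_0$ and the length-$\geq 2$ components of $\Phi$ are not $\delta_0$-chain maps (e.g.\ $[\delta_0,\Phi_{p_{43}}]=\Phi_{p_3}\Phi_{p_4}$), so they do not induce endomorphisms of $\bigoplus_s\HFT(T,s)$. The identity should be established at the chain level, where summing the labelwise curved relations gives $(\delta_0+\Phi)^2=0$ and $[\,\delta_0+\Phi,\,K_0\,]+K_0^2=\mathrm{id}$ (note the $\delta_0$ must be included); for this summation one also needs the small but essential observation that in $\Ad/(q_i)$ two composable $p$-labels never multiply to zero, so the labelwise relations really add up to an operator identity. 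Your bootstrap then contracts the total complex of the quotient type~D structure, and the statement follows by transferring along the homotopy equivalence that cancels $\delta_0$ (the cancellation lemma). Second, nilpotency of $K_0$ does not follow from finite-dimensionality alone, and ``$p_1$-content of the composite label'' is not a function on the module; what does work is a grading argument: any composite of $n$ components of $K_0$ carries a label which is a $p$-interval containing $p_1$ at least $n$ times, hence of length at least $4n-3$, so by the degree of the structure map it shifts the relative $\delta$-grading by at most $\tfrac32-n$, and since the finitely many generators have bounded $\delta$-spread, $K_0^n=0$ for large $n$. With these two repairs your argument is a complete, purely algebraic proof, and the symmetry statements follow exactly as you say.
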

\begin{proof}
The above complex can be identified with the type D module of the following bordered sutured structure on the tangle complement.
\begin{center}
\psset{unit=0.22}
\begin{pspicture}(-10,-10)(10,10)
\pscircle(0,0){10}
\pscircle[linecolor=lightgray](0,0){4}

\psline[linecolor=white,linewidth=4pt](3;45)(6.3;45)
\psline[linecolor=white,linewidth=4pt](3;135)(6.3;135)
\psline[linecolor=white,linewidth=4pt](3;-135)(6.3;-135)
\psline[linecolor=white,linewidth=4pt](3;-45)(6.3;-45)

\psline[linecolor=gray](3;45)(6.3;45)
\psline[linecolor=gray](3;135)(5.9;135)
\psline[linecolor=gray](3;-135)(6.3;-135)
\psline[linecolor=gray](3;-45)(6.3;-45)

\psarc[linecolor=red](0,0){7}{45}{44}

{\psset{fillstyle=solid,fillcolor=white}
\rput{45}(7;45){\psellipse[linecolor=darkgreen](0,0)(0.5,1)}
\rput{135}(7;135){\psellipse[linecolor=darkgreen](0,0)(0.5,1)}
\rput{-135}(7;-135){\psellipse[linecolor=darkgreen](0,0)(0.5,1)}
\rput{-45}(7;-45){\psellipse[linecolor=darkgreen](0,0)(0.5,1)}
}

\psline(6.1;135)(6.9;135)
\psline(7.1;135)(7.9;135)
\psline(7.1;-135)(7.9;-135)
\psline(7.1;45)(7.9;45)
\psline(7.1;-45)(7.9;-45)

\rput(-5.5,0){\red $a$}
\rput(0,-5.5){\red $b$}
\rput(5.5,0){\red $c$}
\rput(0,5.5){\red $d$}
\rput(0,0){\textcolor{darkgray}{$T$}}

\end{pspicture}
\end{center}
Taking homology of this complex is the same as pairing this type D module with the type A-module computed from the following diagram.
\begin{center}
\psset{unit=0.3}
\begin{pspicture}(-10,-3.5)(10,3.5)
\psline[linecolor=red](-9,0)(9,0)
\psframe[linecolor=darkgreen](-9,-3)(9,3)

\pscircle[fillstyle=solid,fillcolor=white,linecolor=darkgreen](-5,0){1}
\pscircle[fillstyle=solid,fillcolor=white,linecolor=darkgreen](0,0){1}
\pscircle[fillstyle=solid,fillcolor=white,linecolor=darkgreen](5,0){1}
\pscircle[linecolor=blue](-5,0){2}
\pscircle[linecolor=blue](0,0){2}
\pscircle[linecolor=blue](5,0){2}

\rput[b](-8,0.5){\red $a$}
\rput[b](-2.5,0.5){\red $b$}
\rput[b](2.5,0.5){\red $c$}
\rput[b](8,0.5){\red $d$}

\psline(0,0.7)(0,1.3)
\psline(5,0.7)(5,1.3)
\psline(-5,0.7)(-5,1.3)

\psline(0,2.7)(0,3.3)
\psline(0,-2.7)(0,-3.3)

\end{pspicture}
\end{center}
However, if we glue these two bordered sutured manifolds together, we obtain a sutured manifold which is not taut, so its sutured Floer homology vanishes by \cite[proposition~9.18]{Juhasz}.
\end{proof}
The maps $\Phi_{p_i}$ and $\Phi_{q_i}$ are, by definition, invariants of the tangle $T$. We do not consider any naturality issues here, so this essentially just means that the ranks of all bigraded parts of the maps are invariants. For loop type $\CFTd(T)$, those ranks are simply the number of arrows labelled by the corresponding $p_i$s or $q_i$s. 
\begin{proposition}\label{prop:CFTdPeculiarRanks}
With the notation as in the previous proposition, let
$$\rk_\delta(p_1):=\rk\left(\Phi_{p_1}:\HFT_\delta(T,a)\rightarrow\HFT_{\delta+\frac{1}{2}}(T,d)\right)$$ 
and similarly for the other \(p_i\) and all \(q_i\). Then
$$
\rk_\delta(p_1)=\rk_\delta(q_2)=\rk_\delta(p_3)=\rk_\delta(q_4)
\quad\text{and}\quad
\rk_\delta(q_1)=\rk_\delta(p_2)=\rk_\delta(q_3)=\rk_\delta(p_4).
$$
\end{proposition}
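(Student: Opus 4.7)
The plan is to use Proposition~\ref{prop:CFTdAll4sites}, together with its cyclic shifts and the \(p_i \leftrightarrow q_i\) swap noted there, which supply eight null-homotopic chain complexes on \(\bigoplus_s \HFT(T,s)\): four for \(p\)-type maps (one for each choice of killed \(p_j\)) and four for \(q\)-type maps. Each such complex is a twisted complex over a quotient path algebra of a linear quiver on the four sites, with differentials given by the relevant \(\Phi_{p_i}\) (or \(\Phi_{q_j}\)) and higher maps \(\Phi_{p_{\cdot\cdot}},\Phi_{p_{\cdot\cdot\cdot}}\).

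Fixing a \(\delta\)-grading, I would interpret null-homotopy as a direct-sum decomposition of \(\bigoplus_s \HFT_\delta(T,s)\) into elementary contractible summands, each a pair \((x,y)\) in distinct sites connected by one of the basic algebra elements acting as the identity. The number of summands with connecting element \(p_i\) is then equal to \(\rk_\delta(p_i)\), and similarly for the higher \(\rk_\delta(p_{\cdot\cdot})\) and \(\rk_\delta(p_{\cdot\cdot\cdot})\). Running through the four cyclic shifts produces four such decompositions of the same vector space, each computing the invariant dimensions \(\dim \HFT_\delta(T,s)\); the \(p \leftrightarrow q\) swap gives four further decompositions for the ranks of the \(\Phi_{q_j}\).

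To bridge the \(p\)- and \(q\)-sides, which is what the cross-type equalities \(\rk_\delta(p_1)=\rk_\delta(q_2)\) require, I would exploit the curvature relation \(\partial^2 = p^4 + q^4\) on \(\CFTd(T)\). Decomposing \(\partial = \partial_\iota + \partial_p + \partial_q\) by algebra label, the identity \(p_i q_j = 0\) in \(\Ad\) forces the mixed compositions \(\partial_p \partial_q + \partial_q \partial_p\) to vanish; this couples the induced maps \(\Phi_{p_i}\) and \(\Phi_{q_j}\) on \(\HFT\) and, combined with the eight decompositions above, should produce the cross-type equalities of the proposition.

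The hard part will be the final combinatorial step: extracting the clean cyclic equalities from the overdetermined system of constraints provided by the eight decompositions and the curvature relation. I expect the cleanest route is a filtered-spectral-sequence argument, where the filtration of each null-homotopic complex by path length in its quotient path algebra produces a spectral sequence whose \(E_1\)-page is the honest four-term chain complex with differentials \(\Phi_{p_i}\), whose higher pages record the corrections coming from the \(\Phi_{p_{\cdot\cdot}}\) and \(\Phi_{p_{\cdot\cdot\cdot}}\), and whose convergence to zero at \(E_\infty\) pins down the individual ranks grade by grade. An alternative, more topological route would be to realise the \(\Phi_{p_i}\) and \(\Phi_{q_j}\) as glueing maps in bordered sutured Floer theory and derive the equalities from a symmetry of the 4-punctured sphere directly, but the algebraic route above seems more tractable.
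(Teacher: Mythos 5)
There is a genuine gap, and it sits exactly where the proposition is hard: the cross-type equalities such as $\rk_\delta(p_1)=\rk_\delta(q_2)$. First, the null-homotopy of the complexes in Proposition~\ref{prop:CFTdAll4sites} (and its cyclic/swapped variants) only constrains Euler-characteristic-type data; it does not determine, nor relate, the ranks of the individual components. For fixed dimensions of the four groups $\HFT_\delta(T,s)$ one can write down acyclic twisted complexes of the given shape in which, say, $\rk(\Phi_{p_3})$ takes different values (e.g.\ all the weight carried by the ``long'' maps $\Phi_{p_{43}},\Phi_{p_{432}}$ versus by the short ones), so your count of elementary contractible summands labelled by $p_i$ is not forced to equal $\rk_\delta(p_i)$, and the four cyclic shifts involve literally different maps with no mechanism identifying their ranks. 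Second, the proposed $p$--$q$ bridge via the curvature relation is vacuous: in $\Ad$ \emph{every} mixed product $p_iq_j$ and $q_jp_i$ vanishes, so the mixed components of $\partial^2$ are weighted by zero algebra elements and the identity $\partial^2=p^4+q^4$ imposes no condition coupling the $p$-labelled and $q$-labelled parts of the differential; hence no relation between $\rk_\delta(p_i)$ and $\rk_\delta(q_j)$ can be extracted from it. A useful sanity check: all of your inputs (acyclicity of those complexes, the curvature relation, the filtration spectral sequence) are compatible with the Alexander grading, yet the bigraded version of the claimed rank equalities is \emph{false} (see the remark following Proposition~\ref{prop:CFTdPeculiarRanks}, with the $(2,-3)$-pretzel tangle as counterexample). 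So some input that only survives the collapse to the $\delta$-grading is indispensable, and a purely formal argument of the kind you outline cannot supply it.

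The route you set aside as less tractable is in fact the one that works, and it is the paper's proof: one realises the two-term complex $\bigl(\HFT(T,a)\xrightarrow{\;\Phi_{p_1}\;}\HFT(T,d)\bigr)$ (and its analogues) as a bordered sutured type~D structure on the tangle complement for a suitable parametrisation, so that $\dim\ker\Phi+\dim\operatorname{coker}\Phi$ is computed by pairing with a fixed bordered sutured piece, i.e.\ by the sutured Floer homology of an explicit sutured manifold. For $\Phi_{p_1}$ versus $\Phi_{p_3}$ the resulting sutured manifolds coincide; for the $p$-versus-$q$ comparison they agree only after reversing the orientation of the sutures or adding/removing a pair of meridional sutures, which is handled by the duality statement of Friedl--Juh\'asz--Rasmussen and the surface decomposition formula -- precisely the steps that respect only the $\delta$-grading. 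Finally, equality of $\dim\ker+\dim\operatorname{coker}$ is converted into equality of ranks because the $\delta$-graded dimensions of the domains and codomains agree by Proposition~\ref{prop:fourendedHFT}. If you want to salvage an algebraic argument, you would need a substitute for these geometric identifications; the data you list does not contain one.
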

This result, together with the symmetry relations for generators, tempts us to conjecture the following.
\begin{conjecture}[$\delta$-graded mutation invariance]\label{conj:MutInvCFTd}
Let \(T\) be a 4-ended tangle and \(T'\) obtained from \(T\) by switching two or all four opposite sites of \(T\). Then,
$$\CFTd(T)\cong \CFTd(T')$$
as \(\delta\)-graded invariants.
\end{conjecture}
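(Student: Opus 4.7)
The plan is to promote the rank-level symmetries of propositions~\ref{prop:CFTdAll4sites} and~\ref{prop:CFTdPeculiarRanks} to a chain-level isomorphism. To each mutation involution $\mu$ of the $4$-punctured boundary sphere (rotation by $\pi$ switching either two or all four opposite sites), I would first associate an involution $\sigma=\sigma_\mu$ of the peculiar algebra $\Ad$. Reading off figure~\ref{fig:nutshell}, the rotation swapping sites $a\leftrightarrow c$ and fixing $b,d$ interchanges punctures $1\leftrightarrow 3$ while fixing punctures $2,4$, so $\sigma$ swaps $\iota_1\leftrightarrow\iota_3$ and permutes the generators according to $p_i\mapsto p_{\sigma(i)}$, $q_i\mapsto q_{\sigma(i)}$; the other mutations are analogous. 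Crucially, by the definition of the gradings in \ref{def:AlexGradingOnAd}, $\sigma$ preserves the $\delta$-grading on $\Ad$ unconditionally, while it preserves the Alexander grading only when the strand colours and orientations are compatible, accounting precisely for the discrepancy already visible in example~\ref{exa:HFTdpretzeltangle}.

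First I would verify the conjecture in the elementary cases and build the general argument upward. For rational tangles it is immediate: by example~\ref{exa:CFTdRatTang} the peculiar module is represented by a single embedded loop on the $4$-punctured sphere, and $\mu$ visibly preserves its isotopy class. The $(2,-3)$-pretzel computation of example~\ref{exa:HFTdpretzeltangle} exhibits the same phenomenon as the $\delta$-graded rotational symmetry of the three loops in figure~\ref{fig:mutationexamplefinalresult}. In each of these cases the conjecture reduces to the observation that $\mu$ is a symmetry of the collection of immersed curves representing $\CFTd(T)$, which would close the argument assuming a positive answer to questions~\ref{que:IsEverythingALoop}.

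Next I would attempt, given a peculiar Heegaard diagram $\mathcal{H}_T$ for $T$, to construct a diagram $\mathcal{H}_{T'}$ for the mutant such that the induced identification of generators and connecting domains realises $\sigma$. The natural candidate is to apply $\mu$ to a collar neighbourhood of the boundary (where the peculiar basepoints $p_i,q_i$ live) and glue the rotated collar back to the unchanged interior. Provided this can be carried out through admissible diagrams, the counts of holomorphic curves avoiding the interior basepoints $z_j$ transfer verbatim, with the multiplicities at the $p_i,q_i$ permuted by $\sigma$, yielding $\CFTd(T')\cong\sigma_*\CFTd(T)$; since $\sigma$ preserves the $\delta$-grading, this establishes the conjecture.

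The main obstacle is that this local-rotation construction genuinely alters the surface on the boundary side, so $\mathcal{H}_{T'}$ is not literally $\mathcal{H}_T$ with rotated basepoints; one must interpolate by the Heegaard moves of remark~\ref{rem:PeculiarHDmoves}, and verifying that these interpolating moves induce exactly the $\sigma$-twist at the chain level (rather than merely inducing some homotopy equivalence) appears to require more than the algebraic symmetry alone. I expect the cleanest resolution to route through the Fukaya category viewpoint of section~\ref{sec:LoopsAreLoops}: assuming the equivalence $(\mathcal{L},\mathcal{M})$ of conjecture~\sref{conj:TwFukpqModEquivalent}, mutation becomes literally the action of the mapping class element $\mu$ on $\TwFuk(S^2,4)$, and $\CFTd(T)$ is automatically transported along $\mu$ because the tangle complement is unchanged in the interior. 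Short of that, one would need to upgrade propositions~\ref{prop:CFTdAll4sites} and~\ref{prop:CFTdPeculiarRanks} from rank-level statements about the maps $\Phi_{p_I},\Phi_{q_I}$ to chain-level $\sigma$-equivariance, which I regard as the hard analytic step; the $4$-term null-homotopy of proposition~\ref{prop:CFTdAll4sites} is suggestive that these maps assemble into an $A_\infty$-structure whose $\sigma$-equivariance would supply exactly the missing ingredient.
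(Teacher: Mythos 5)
You should first note that the statement you were asked to prove is stated in the paper as a \emph{conjecture}: the paper offers no proof, and explicitly says that the symmetry relations of propositions~\ref{prop:CFTdAll4sites} and~\ref{prop:CFTdPeculiarRanks} ``support the mutation conjecture \dots\ but they do not seem to be sufficient to prove it.'' Your proposal does not close this gap; it is a programme whose every load-bearing step is itself an open problem. Concretely, the central difficulty is one your sketch conflates. Let $\sigma$ denote the algebra involution of $\Ad$ induced by the rotation $\mu$. Transporting the construction along the diffeomorphism of the tangle complement given by $\mu$ yields, essentially by naturality of the Heegaard-diagram construction, an identification $\CFTd(T')\simeq\sigma_*\CFTd(T)$ --- this is the content of your ``rotate a collar and transfer the curve counts'' step, and it is the easy half. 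The conjecture, however, asserts $\CFTd(T)\simeq\CFTd(T')$ over $\Ad$ \emph{with the identity on the algebra}, i.e.\ that $\CFTd(T)$ is $\delta$-graded chain homotopy equivalent to its own $\sigma$-twist $\sigma_*\CFTd(T)$. That self-symmetry is precisely what propositions~\ref{prop:CFTdAll4sites} and~\ref{prop:CFTdPeculiarRanks} only establish at the level of ranks of $\HFT$ and of the maps $\Phi_{p_i},\Phi_{q_i}$, and nothing in your argument upgrades it to the chain level; rank equalities of the generators and of the connecting maps do not determine the homotopy type of a curved complex.

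Your two proposed escape routes do not supply the missing step either. Appealing to question~\ref{que:IsEverythingALoop} assumes both that every $\CFTd(T)$ is loop-type and that loop-type modules are classified by their curves, and even granting both, one still has to show that the resulting collection of immersed curves is \emph{isotopic to its image under} $\mu$ --- that is again exactly the conjecture, verified so far only in examples such as \ref{exa:HFTdpretzeltangle} and theorem~\ref{thm:2m3pt}. Likewise, assuming conjecture~\ref{conj:TwFukpqModEquivalent} only lets you say that $\mathcal{L}(\CFTd(T'))$ is the image of $\mathcal{L}(\CFTd(T))$ under the mapping class $\mu$ acting on $\TwFuk(S^2,4)$; since $\mu$ acts nontrivially on that category (it permutes the generators $L_s$), this is once more the statement $\sigma_*\CFTd(T)\simeq\CFTd(T)$ in disguise, not a consequence of functoriality. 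Finally, your expectation that $\sigma$-equivariance should hold ``when colours and orientations are compatible'' must be treated with care: remark~\ref{Rem:mutationconj} shows that the natural bigraded strengthening of the conjecture is \emph{false}, so any eventual proof must make essential use of collapsing to the $\delta$-grading rather than of a geometric symmetry of the Heegaard diagram alone. In short, the proposal correctly identifies where the difficulty lies but does not resolve it, and the statement remains a conjecture.
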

Together with the glueing theorem, this would imply $\delta$-graded mutation invariance of link Floer homology (conjecture~\ref{conj:MutInvHFL}).
\begin{proof}[Proof of proposition~\ref{prop:CFTdPeculiarRanks}]
The proof is very similar to the previous one. By symmetry, we only need to prove the first set of equalities. Let us consider the identity $\rk_\delta(p_1)=\rk_\delta(p_3)$ first. The maps $\Phi_{p_1}$ and $\Phi_{p_3}$ can be computed from the bordered sutured structure on the tangle complement shown on the left; we either use the dotted $\alpha$-arcs or the solid ones:
\begin{center}
\psset{unit=0.22}
\begin{pspicture}(-23.5,-10)(23.5,10)
\rput(-12.5,0){
\pscircle(0,0){10}
\pscircle[linecolor=lightgray](0,0){4}

\psline[linecolor=white,linewidth=4pt](3;45)(5.8;45)
\psline[linecolor=white,linewidth=4pt](3;135)(6.3;135)
\psline[linecolor=white,linewidth=4pt](3;-135)(5.8;-135)
\psline[linecolor=white,linewidth=4pt](3;-45)(6.3;-45)

\psline[linecolor=gray](3;45)(5.8;45)
\psline[linecolor=gray](3;135)(6.3;135)
\psline[linecolor=gray](3;-135)(5.8;-135)
\psline[linecolor=gray](3;-45)(6.3;-45)

\psarc[linecolor=red](0,0){7}{-135}{45}
\psarc[linecolor=red,linestyle=dotted,dotsep=1pt](0,0){7}{45}{-135}

{\psset{fillstyle=solid,fillcolor=white}
\rput{45}(7;45){\psellipse[linecolor=darkgreen](0,0)(0.5,1)}
\rput{135}(7;135){\psellipse[linecolor=darkgreen](0,0)(0.5,1)}
\rput{-135}(7;-135){\psellipse[linecolor=darkgreen](0,0)(0.5,1)}
\rput{-45}(7;-45){\psellipse[linecolor=darkgreen](0,0)(0.5,1)}
}

\psline(6.1;-135)(6.9;-135)
\psline(6.1;45)(6.9;45)
\psline(7.1;135)(7.9;135)
\psline(7.1;-135)(7.9;-135)
\psline(7.1;45)(7.9;45)
\psline(7.1;-45)(7.9;-45)

\rput(-5.5,0){\red $a$}
\rput(0,-5.5){\red $b$}
\rput(5.5,0){\red $c$}
\rput(0,5.5){\red $d$}
\rput(0,0){\textcolor{darkgray}{$T$}}
}

\rput(0,0){$\longrightarrow$}

\rput(12.5,0){
\pscircle(0,0){10}
\pscircle[linecolor=lightgray](0,0){4}

\psline[linecolor=white,linewidth=4pt](3;45)(7;45)
\psline[linecolor=white,linewidth=4pt](3;135)(6.3;135)
\psline[linecolor=white,linewidth=4pt](3;-135)(7;-135)
\psline[linecolor=white,linewidth=4pt](3;-45)(6.3;-45)

\psline[linecolor=gray](3;45)(7;45)
\psline[linecolor=gray](3;135)(6.3;135)
\psline[linecolor=gray](3;-135)(7;-135)
\psline[linecolor=gray](3;-45)(6.3;-45)

\psecurve[linecolor=darkgreen](8;45)(1.5;135)(8;-135)(1.5;-45)(8;45)(1.5;135)(8;-135)

{\psset{fillstyle=solid,fillcolor=white}
\rput{135}(7;135){\psellipse[linecolor=darkgreen](0,0)(0.5,1)}
\rput{-45}(7;-45){\psellipse[linecolor=darkgreen](0,0)(0.5,1)}
}

\rput(0,0){\textcolor{darkgray}{$T$}}
}
\end{pspicture}
\end{center}
Computing the homology of $\Phi_{p_1}$, resp.\ $\Phi_{p_3}$, i.\,e.\ computing sum of the dimensions of the kernel and cokernel in each (bi)grading, corresponds to pairing these two bordered sutured manifolds with the one below. In both cases, the resulting sutured manifold is the same, so the (bi)graded sutured Floer homologies agree.
\begin{center}
\psset{unit=0.3}
\begin{pspicture}(-10,-3.5)(10,3.5)
\psline[linecolor=red](-9,0)(9,0)
\psframe[linecolor=darkgreen](-9,-3)(9,3)
\pscircle[fillstyle=solid,fillcolor=white,linecolor=darkgreen](0,0){1}
\pscircle[linecolor=blue](0,0){2}
\rput[b](-5,0.5){{\red $b$} resp. {\red $d$}}
\rput[b](5,0.5){{\red $c$} resp. {\red $a$}}
\psline(0,0.7)(0,1.3)
\psline(0,2.7)(0,3.3)
\psline(0,-2.7)(0,-3.3)
\end{pspicture}
\end{center}
We now use proposition~\sref{prop:fourendedHFT} to see that the dimensions of the $\delta$-graded parts of the domain and codomain agree, so the ranks are also the same. \\
Similarly, we can show $\rk_\delta(q_2)=\rk_\delta(q_4)$. Note that the two sutured manifolds for this identity and the previous one are very similar: In one case, we can push the meridional sutures through the tangle to see that we have the same set of sutures up to reversal of orientation. Then, the $\delta$-graded sutured Floer homologies agree by \cite[proposition~2.14]{DecatSFH}. In the other case, the two manifolds contain one pair of meridional sutures on the open tangle components and are obtained from one another by deleting one and adding the other. In this case, we can consider exactly the same decomposing surface as in the proof of proposition~\sref{prop:fourendedHFT} and use \cite[proposition~8.6]{SurfaceDecomposition} to see that, again, the $\delta$-graded sutured Floer homologies are the same. Now we can argue as before, by using proposition~\sref{prop:fourendedHFT}.
\end{proof}
\begin{Remark}
The symmetries in the previous proposition do not hold for bigraded ranks. (As a counterexample, consider the computation for the $(2,-3)$-pretzel tangle in example~\sref{exa:HFTdpretzeltangle}.) The homologies of the maps, considered as differentials, agree as bigraded vector spaces, but in order to identify the tangle Floer homologies of opposite sites, we need to reverse the Alexander grading. 
\end{Remark}
\begin{Remark}\label{Rem:mutationconj}
In view of the calculations for the $(2,-3)$-pretzel tangle (theorem~\sref{thm:2m3pt} and example~\sref{exa:HFTdpretzeltangle}) and the relation between the non-glueable tangle Floer homology for opposite sites \sref{prop:fourendedHFT}, one might be tempted to conjecture the following, bigraded version of conjecture~\ref{conj:MutInvCFTd}: 
\begin{quote}
\textit{Let \(T\) be a 4-ended tangle and \(T'\) obtained from \(T\) by switching opposite sites of \(T\) and reversing the orientation of all tangle strands. Then,
$$\CFTd(T)\cong \CFTd(T')$$
as bigraded invariants, where we identify the Alexander gradings of the two open tangle strands.}
\end{quote}
However, it turns out, this version is too strong. For this, consider a 4-ended tangle~$T$ oriented such that the outward pointing ends are next to one another. Then the conjecture above, together with a glueing theorem, would imply that mutation in the plane ($\text{Mut}_z$) about any such tangle $T$ leaves \textit{bigraded} knot Floer homology invariant. This is because for this particular orientation and mutation axis, the orientation of the open tangle strands of the mutating tangle needs to be reversed before we can glue it back in. However, we can write mutation about one axis as mutation about another, by introducing twists: 
\begin{equation*}\psset{unit=0.6}
\text{Mut}_z\left(
\raisebox{-1.1cm}{\begin{pspicture}(-2,-2)(2,2)
\psecurve{->}(4;165)(2;135)(0.5;-135)(4;-180)
\psecurve{->}(3;-135)(2;-135)(1.5;-135)(0;-135)
\psecurve{->}(0,0)(1.1;-110)(2;45)(5;50)
\psdot[dotsize=5pt,linecolor=white](1.02;128)
\psdot[dotsize=5pt,linecolor=white](0.75;8)
\psecurve{->}(0.5;-90)(1.1;-161.5)(1.05;95)(2;-45)(4;-40)
\pscircle[linestyle=dotted](0,0){2}
\pscircle[fillstyle=solid, fillcolor=white](1;-135){0.5}
\rput(1;-135){$\Gamma$}
\end{pspicture}}
\right)=
\raisebox{-1.1cm}{\begin{pspicture}(-2,-2)(2,2)
\psrotate(0,0){180}{
\psecurve{<-}(4;165)(2;135)(0.5;-135)(4;-180)
\psecurve{<-}(3;-135)(2;-135)(1.5;-135)(0;-135)
\psecurve{<-}(0,0)(1.1;-110)(2;45)(5;50)
\psdot[dotsize=5pt,linecolor=white](1.02;128)
\psdot[dotsize=5pt,linecolor=white](0.75;8)
\psecurve{<-}(0.5;-90)(1.1;-161.5)(1.05;95)(2;-45)(4;-40)
\pscircle[linestyle=dotted](0,0){2}
\pscircle[fillstyle=solid, fillcolor=white](1;-135){0.5}
\rput(1;-135){$\Gamma$}
}
\end{pspicture}}
=
\raisebox{-1.1cm}{
\begin{pspicture}(-2,-2)(2,2)
\psecurve{->}(4;165)(2;135)(0.5;-135)(4;-180)
\psecurve{->}(3;-135)(2;-135)(1.5;-135)(0;-135)
\psecurve{->}(0,0)(1.1;-110)(2;45)(5;50)
\psdot[dotsize=5pt,linecolor=white](1.02;128)
\psdot[dotsize=5pt,linecolor=white](0.75;8)
\psecurve{->}(0.5;-90)(1.1;-161.5)(1.05;95)(2;-45)(4;-40)
\pscircle[linestyle=dotted](0,0){2}
\pscircle[fillstyle=solid, fillcolor=white](1;-135){0.5}
\rput(1;-135){\reflectbox{$\Gamma$}}
\end{pspicture}}
\end{equation*}
So we deduce that mutation about the vertical axis also leaves bigraded knot Floer homology invariant if the orientation of the mutating tangle is such that the outward pointing ends are opposite each another. This, however, cannot be true, as the Kinoshita-Terasaka/Conway pair (example~\sref{exa:counterexampleMUT}) shows, so the conjecture above is indeed too strong.
\end{Remark}
\begin{question}
Let \(T\) be a 4-ended tangle which is symmetric about one mutation axis. Can we always find an orientation such that \(\CFTd\) is bigraded mutation invariant with respect to the other two axes?
\end{question}

\section{\texorpdfstring{$\CFTd$ and the wrapped Fukaya category of the 4-punctured sphere}{CFTᵈ and the wrapped Fukaya category of the 4-punctured sphere}}
\label{sec:LoopsAreLoops}
In this last section, we explore the relationship between the peculiar invariants of 4-ended tangles and the wrapped Fukaya category $\Fuk$ of the 4-punctured sphere. Much of this is still work in progress, so there are still plenty of open questions, some of which we discuss in an outlook at the end.
\subsection*{What is $\Fuk$?}
The category $\Fuk$ is an $A_\infty$-category which, roughly speaking, encodes the Lagrangian intersection theory of (exact) Lagrangians on a $4$-punctured sphere (equipped with a certain 1-form). We will not discuss the technical details of its construction here, but refer the interested reader to~\cite[section~4]{Abouzaid} instead. The objects in $\Fuk$ are the Lagrangians and the morphisms essentially correspond to intersection points of these. The $A_\infty$-composition maps are defined using certain holomorphic polygons connecting those intersection points. As usual in Lagrangian intersection theory, we need to perturb the Lagrangians by some Hamiltonian flow when computing morphisms. Here, we restrict ourselves to certain Hamiltonians which are ``quadratic at infinity'', which effectively means that a Lagrangian approaching a puncture is wrapped infinitely many times around the puncture by such a Hamiltonian perturbation -- hence the name \emph{wrapped} Fukaya category. This is illustrated in figure~\ref{fig:ML1}. \\
In \cite{Abouzaid}, Abouzaid et al. study the wrapped Fukaya category $\Fuk(S^2,n)$ of the $n$-punctured sphere for $n\geq 3$. They introduce a finite auxiliary $A_\infty$-category~$\mathcal{A}$ which generates $\Fuk(S^2,n)$ and describe some structure maps of~$\mathcal{A}$, namely all higher compositions up to length~$n$. Then, using a Hochschild homology argument, they show that these compositions uniquely characterise $\mathcal{A}$, and hence $\Fuk(S^2,n)$, up to homotopy, see~\cite[theorem~4.1]{Abouzaid}. 
\subsection*{Relationship with peculiar modules. }
For the case $n=4$, we will show that $\Fuk:=\Fuk(S^2,4)$ and the category of peculiar modules $\pqMod$ are closely related. In theorem~\ref{thm:TwFukpqModEquivalent}, we construct $A_\infty$-functors~$\mathcal{M}$ and~$\mathcal{L}$ between $\pqMod$ and the category $\TwFuk$, the triangulated enlargement of $\Fuk$, also known as the category of twisted complexes. For more background on the category $\TwFuk$, twisted complexes and $A_\infty$-categories in general, see~\cite[section~I.1a--d and~I.3l]{Seidel}, \cite[remark~4.2]{Abouzaid} and example~\ref{exa:twistedcomplexes}.\\
Given a 4-ended tangle $T$, $\mathcal{L}(\CFTd(T))$ is an object in $\TwFuk$ which is an invariant of~$T$ up to homotopy. Given also a site $s$ of $T$, we can interpret $\HFT(T,s)$ as the Lagrangian intersection homology of $\mathcal{L}(\CFTd(T))$ and a certain generator $L_s$ of $\TwFuk$ associated with $s$. Moreover, computations suggest that the two functors~$\mathcal{M}$ and~$\mathcal{L}$ actually set up an equivalence of categories.\\
For the construction of~$\mathcal{M}$ and~$\mathcal{L}$, it is essential that we understand the composition maps in $\TwFuk$. This is equivalent to understanding the finite category $\mathcal{A}$, since by the generation result from~\cite{Abouzaid}, $\TwFuk$ agrees with $\Tw\mathcal{A}$. 
\subsection*{The auxiliary category~$\mathcal{A}$. }
Following~\cite[section~2]{Abouzaid}, we build the $A_\infty$-category~$\mathcal{A}$ from an ordinary category $A$, which we define now. 
\begin{definition}
Let $A$ be a category with $n$ objects $L_1,\dots,L_n$. The morphism spaces between these objects are as follows:
\begin{equation*}
\Mor(L_i,L_j):=
\begin{cases}
k[x_i,y_i]/x_iy_i & \text{for }i=j\\
k[x_{i+1}]u_{i,i+1}=u_{i,i+1}k[y_i] & \text{for }j=i+1\\
k[y_{i-1}]v_{i,i-1}=v_{i,i-1}k[x_i] & \text{for }j=i-1\\
0 & \text{otherwise}
\end{cases},
\end{equation*}
where indices are taken modulo $n$. Each morphism space comes with a natural basis, namely 
$$\{1,x_i^l,y_i^l\vert l\geq1\},\quad \{x_{i+1}^lu_{i,i+1}=u_{i,i+1}y_i^l\vert l\geq0\}\quad\text{and}\quad\{y_{i-1}^lv_{i,i-1}=v_{i,i-1}x_i^l\vert l\geq0\},$$ 
respectively. In the following, when we talk about morphisms, we mean those basis elements. By sequences of morphisms of length $k$, we mean elements of the induced basis of the tensor space 
$$\Mor(L_{i_{k-1}},L_{i_k})\otimes\dots\otimes\Mor(L_{i_0},L_{i_1}).$$
For $k=2$, this is the domain of the composition map $\mu_2$ for $A$, which we describe later; for arbitrary $k$, this will be the domain for higher compositions in $\mathcal{A}$. We sometimes drop the index when it is clear from the context. We follow the convention in~\cite{Seidel} to read sequences of morphisms from right to left. 
\end{definition}

\subsection*{Graphical representation of morphisms in $A$ and $\mathcal{A}$. } For explicit calculations, it is helpful to depict morphisms and sequences thereof graphically. The basic objects $L_i$ are represented by infinitely many levels (i.\,e.\ horizontal lines, but we never actually draw them), numbered cyclically from 1 to $n$, each of them at a constant positive distance from its two neighbours -- like on a stave. A morphism from $L_i$ to $L_j$ is represented by a straight line from a level corresponding to $L_i$ to the closest one that corresponds to~$L_j$. So by definition, such a line either moves up by one, stays constant or moves down a level. Next, we label the line by the morphism, but the advantage now is that we can drop the indices, without loosing information.  Furthermore, if the level changes, we only need to record the exponent $l$ in $x^lu$ or $v x^l$. For simplicity, we often drop the label completely if this exponent is 0. For sequences of morphisms, we just concatenate the lines representing the morphisms in the sequence in the specified order. In this graphical representation, we read sequences from left to right.\\
We indicate the numbering of the levels by specifying the level of the starting point of the first morphism and use the convention that the numbering increases from bottom to top. Often, we even drop this information, since the multiplication maps are symmetric in the~$L_i$.
\begin{example}\label{exa:sequences}
The following table shows some morphisms and their graphical notation as described above. All morphisms start at the index 1.
\begin{center}
\begin{tabular}{c|c|c|c|c|c|c}
$u_{1,2}$ & 
$x_2u_{1,2}$ & 
$y_1^2$ & 
$1_1$ & 
$x_1^4$ & 
$v_{1,n}$ & 
$y_1^2v_{1,n}$ \\ 
\hline 
{\psset{unit=0.6}
\begin{pspicture}(-0.3,-0.3)(1.3,1.3)
\Wu(0,0)
\end{pspicture}
}
 &
{\psset{unit=0.6}
\begin{pspicture}(-0.3,-0.3)(1.3,1.3)
\wu{1}(0,0)
\end{pspicture}
} 
 & 
{\psset{unit=0.6}
\begin{pspicture}(-0.3,-0.3)(1.3,1.3)
\wi{$y^2$}(0,0.5)
\end{pspicture}
} 
  & 
{\psset{unit=0.6}
\begin{pspicture}(-0.3,-0.3)(1.3,1.3)
\wi{$1$}(0,0.5)
\end{pspicture}
}   
   & 
{\psset{unit=0.6}
\begin{pspicture}(-0.3,-0.3)(1.3,1.3)
\wi{$x^4$}(0,0.5)
\end{pspicture}
}      
    & 
 {\psset{unit=0.6}
\begin{pspicture}(-0.3,-0.3)(1.3,1.3)
\Wv(0,1)
\end{pspicture}
}    
     & 
   {\psset{unit=0.6}
\begin{pspicture}(-0.3,-0.3)(1.3,1.3)
\wv{$2$}(0,1)
\end{pspicture}
}     
      \\ 
\end{tabular} 
\end{center}
\pagebreak[3]
\noindent
The sequence $s=(x_3u_{2,3},u_{1,2},x_1,v_{2,1},v_{3,2},y_3)$ is represented by
$$
{\psset{unit=0.6}
\begin{pspicture}(-0.5,-0.3)(6.3,2.3)
\rput[r](-0.2,2){\scriptsize 3}
\wi{$y$}(0,2)
\Wv(1,2)
\Wv(2,1)
\wi{$x$}(3,0)
\Wu(4,0)
\wu{$1$}(5,1)
\end{pspicture}
}  
$$
In the case $n=4$, we will see below that $\mu$ of this sequence is the morphism $x_3$.
\end{example}

\begin{definition}[composition in $A$]
Most compositions are already implicit in the definition of the morphism spaces as rings and bimodules. The only remaining composition are
\begin{center}
\psset{unit=0.6}
$\mu\left(\!\!
\raisebox{-0.6cm}{
\begin{pspicture}(-0.3,-0.3)(2.3,2.3)
\wu{$\cdot$}(0,0)
\wu{$\cdot$}(1,1)
\end{pspicture}}
\right)=0$, 
$\mu\left(\!\!
\raisebox{-0.6cm}{
\begin{pspicture}(-0.3,-0.3)(2.3,2.3)
\wv{$\cdot$}(0,2)
\wv{$\cdot$}(1,1)
\end{pspicture}}
\right)=0$, \\
$\mu\left(\!\!
\raisebox{-0.3cm}{
\begin{pspicture}(-0.3,-0.3)(2.3,1.3)
\wu{$k$}(0,0)
\wv{$l$}(1,1)
\end{pspicture}}
\right)=y^{k+l+1}$\quad and \quad
$\mu\left(\!\!
\raisebox{-0.3cm}{
\begin{pspicture}(-0.3,-0.3)(2.3,1.3)
\wv{$k$}(0,1)
\wu{$l$}(1,0)
\end{pspicture}}
\right)=x^{k+l+1}$ \quad for $k,l\geq0$.
\end{center}
For completeness, we also describe the other obvious compositions graphically:
\begin{center}
\psset{unit=0.6}
$\mu\left(\!\!
\raisebox{-0.3cm}{
\begin{pspicture}(-0.3,-0.3)(2.3,1.3)
\wu{$k$}(0,0)
\wi{$x^l$}(1,1)
\end{pspicture}}
\right)=
\raisebox{-0.3cm}{
\begin{pspicture}(-0.3,-0.3)(1.3,1.3)
\wu{$k+l$}(0,0)
\end{pspicture}}$ 
\quad and \quad
$\mu\left(\!\!
\raisebox{-0.3cm}{
\begin{pspicture}(-0.3,-0.3)(2.3,1.3)
\wi{$x^l$}(0,1)
\wv{$k$}(1,1)
\end{pspicture}}
\right)=
\raisebox{-0.3cm}{
\begin{pspicture}(-0.3,-0.3)(1.3,1.3)
\wv{$k+l$}(0,1)
\end{pspicture}}
$ \quad
for $k,l\geq0$,\\
$\mu\left(\!\!
\raisebox{-0.3cm}{
\begin{pspicture}(-0.3,-0.3)(2.3,1.3)
\wi{$y^l$}(0,0)
\wu{$k$}(1,0)
\end{pspicture}}
\right)=
\raisebox{-0.3cm}{
\begin{pspicture}(-0.3,-0.3)(1.3,1.3)
\wu{$k+l$}(0,0)
\end{pspicture}}$ 
\quad and \quad
$\mu\left(\!\!
\raisebox{-0.3cm}{
\begin{pspicture}(-0.3,-0.3)(2.3,1.3)
\wv{$k$}(0,1)
\wi{$y^l$}(1,0)
\end{pspicture}}
\right)=
\raisebox{-0.3cm}{
\begin{pspicture}(-0.3,-0.3)(1.3,1.3)
\wv{$k+l$}(0,1)
\end{pspicture}}
$ \quad
for $k,l\geq0$.
\end{center}
\end{definition}
\begin{definition}[higher compositions in $\mathcal{A}$]
The $A_\infty$-category $\mathcal{A}$ is built from $A$ by adding higher composition maps. They are recursively defined by
\begin{equation*}
\begin{array}{rl}
\mu\left(\dots,\mu_2\left(b,
\raisebox{-0.3cm}{\psset{unit=0.6}
\begin{pspicture}(-0.1,-0.1)(1.1,1.1)
\Wu(0,0)
\end{pspicture}
}  
\right),\underbrace{
\raisebox{-0.3cm}{\psset{unit=0.6}
\begin{pspicture}(-0.1,-0.1)(1.1,1.1)
\Wu(0,0)
\end{pspicture}
}
,\dots,
\raisebox{-0.3cm}{\psset{unit=0.6}
\begin{pspicture}(-0.1,-0.1)(1.1,1.1)
\Wu(0,0)
\end{pspicture}
}}_{n-2}
,\mu_2\left(
\raisebox{-0.3cm}{\psset{unit=0.6}
\begin{pspicture}(-0.1,-0.1)(1.1,1.1)
\Wu(0,0)
\end{pspicture}
}  
,a\right),\dots\right)&:=\mu(\dots,b,a,\dots) \text{ and}\\
\mu\left(\dots,\mu_2\left(b,
\raisebox{-0.3cm}{\psset{unit=0.6}
\begin{pspicture}(-0.1,-0.1)(1.1,1.1)
\Wv(0,1)
\end{pspicture}
}  
\right),\underbrace{
\raisebox{-0.3cm}{\psset{unit=0.6}
\begin{pspicture}(-0.1,-0.1)(1.1,1.1)
\Wv(0,1)
\end{pspicture}
}
,\dots,
\raisebox{-0.3cm}{\psset{unit=0.6}
\begin{pspicture}(-0.1,-0.1)(1.1,1.1)
\Wv(0,1)
\end{pspicture}
}}_{n-2}
,\mu_2\left(
\raisebox{-0.3cm}{\psset{unit=0.6}
\begin{pspicture}(-0.1,-0.1)(1.1,1.1)
\Wv(0,1)
\end{pspicture}
}  
,a\right),\dots\right)&:=\mu(\dots,b,a,\dots),
\end{array}
\end{equation*}
where $a$ and $b$ are some morphisms, all sequences are composable and the two $\mu_2$-terms on the left hand sides are all non-zero. In other words, all non-zero $A_\infty$-terms are obtained by ``expanding'' non-zero 2-term sequences by $n$-term sequences of $u$s and $n$-term sequences of $v$s. In particular, the differential on $\mathcal{A}$ vanishes.
\end{definition}
\begin{proposition}\label{prop:muwelldefined}
The maps \(\mu\) above define a unital \(A_\infty\)-structure on \(\mathcal{A}\).
\end{proposition}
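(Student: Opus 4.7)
The plan is to verify the three things needed: (i) the recursive definition is consistent, i.e.\ $\mu$ is actually well-defined on every sequence; (ii) the $A_\infty$-relations hold; (iii) the identity morphisms $1_i$ are strict units. Throughout, I would keep in mind the graphical description: a length-$k$ sequence is a zig-zag of $k$ horizontal, up, or down segments, so that the non-zero input sequences for $\mu_k$ ($k\geq 3$) are exactly those zig-zags that become the input of a non-zero $\mu_2$ after collapsing a run of $n$ consecutive $u$'s (or $n$ consecutive $v$'s) into a single $1$, and iterating this collapsing procedure.

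First I would show well-definedness of $\mu$. Given a long sequence, two different applications of the expansion rule could in principle reduce it to two different two-term inputs for $\mu_2$. I would argue that any two such reductions give the same value: the expansion rule replaces a composable pair $\mu_2(b,\protect\raisebox{-0.2cm}{\psset{unit=0.4}\begin{pspicture}(-0.1,-0.1)(1.1,1.1)\Wu(0,0)\end{pspicture}})$ by $b$ provided there is a trailing pattern of $(n-2)$ further $u$'s and a closing $\mu_2(\protect\raisebox{-0.2cm}{\psset{unit=0.4}\begin{pspicture}(-0.1,-0.1)(1.1,1.1)\Wu(0,0)\end{pspicture}},a)$. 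A simple case distinction on how two potentially overlapping $u$-strings (or a $u$-string and a $v$-string) can sit inside a fixed sequence shows that they either occur disjointly, in which case the order of reduction does not matter, or they overlap in a way that forces both end terms to be $u$'s and $v$'s simultaneously, which is impossible. Thus the reduction procedure is confluent, and $\mu$ is unambiguous. The precise value of $\mu_k$ on an ``$n$-expanded'' sequence will agree with $\mu_2$ on the underlying collapsed pair, which is the key fact used in step (ii).

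Next I would check the $A_\infty$-relations
\begin{equation*}
\sum_{r,s,t} \mu_{r+1+t}\bigl(a_{r+s+t},\dots,\mu_s(a_{r+s},\dots,a_{r+1}),\dots,a_1\bigr)=0.
\end{equation*}
Since the differential vanishes, the case $s=1$ does not contribute. For $s=2$, this is just associativity of ordinary composition in $A$, which holds automatically because $A$ is a category. The remaining cases involve at least one higher $\mu_s$ with $s\geq 3$. Here I would use the observation from step (i) that every such $\mu_s$ equals a certain $\mu_2$ on collapsed inputs. The terms then organize into pairs that cancel: a contribution in which $\mu_s$ swallows a $u$-string together with the preceding $\mu_2$-pair on its right is cancelled by the contribution where it instead swallows the same $u$-string together with the $\mu_2$-pair on its left (and symmetrically for $v$-strings). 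The possible ``boundary'' cases, where the $u$- or $v$-string sits at the very end of the input sequence, are exactly the ones where an outer $\mu$ can also reduce them, and once again these match up in cancelling pairs. The main obstacle I anticipate is precisely this bookkeeping: enumerating the cases so that every higher-$\mu_s$ term is accounted for by exactly one partner. I would do this by fixing the ``collapsed'' underlying $\mu_2$-pair and showing that the set of expansions producing it always comes in matched pairs.

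Finally I would verify strict unitality. The identity $1_i\in\Mor(L_i,L_i)$ acts as a two-sided unit for $\mu_2$ directly from the ring structure. For higher $\mu_k$, I need $\mu_k(\dots,1_i,\dots)=0$ for $k\geq 3$. Since every non-zero higher composition is, by construction, obtained by expanding a non-zero $\mu_2$-pair by a pure $u$-string or pure $v$-string, and neither $u$'s nor $v$'s are equal to any $1_i$, no expanded sequence ever contains $1_i$. Hence all $\mu_k$ with $k\geq 3$ vanish on any sequence involving an identity, which is exactly strict unitality.
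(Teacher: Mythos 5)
First, a point of reference: the paper does not actually prove this proposition — it cites Bocklandt and the cancelling-pairs argument of Haiden--Katzarkov--Kontsevich — so the relevant comparison is with that cited proof, whose overall strategy (pair off the terms of the $A_\infty$-relations) your step (ii) shares, and your step (iii) is essentially right once the one-line reason is upgraded to an induction on the number of expansions (the entries of an expanded sequence are bare $u$'s/$v$'s or compositions with $u$ or $v$, never identities).

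The genuine gaps are in steps (i) and (ii). Your confluence dichotomy ``either the two blocks are disjoint or the overlap is impossible'' is false: two $u$-blocks can overlap, because the end terms $\mu_2(b,u)$ and $\mu_2(u,a)$ may themselves be bare $u$'s, namely when $b$ or $a$ is an identity. Identities must be allowed in the expansion rule, since otherwise $\mu_n(u,\dots,u)$ would be zero, whereas the intended structure — used explicitly later in the proof of theorem~\ref{thm:TwFukpqModEquivalent} — has $\mu_n(u,\dots,u)=\id$; consequently a run of more than $n$ consecutive $u$'s admits several distinct collapses, and checking that these agree passes through degenerate collapsed sequences containing identity entries, i.e.\ through unitality, so (i)--(iii) cannot be separated as cleanly as you propose. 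Relatedly, your matching rule for the $A_\infty$-relations (``swallow the string together with the $\mu_2$-pair on its right'' versus ``on its left'') does not describe the actual cancellation even in the simplest nontrivial instance: for the input $(v,u,\dots,u)$ with $n$ copies of $u$, the only two non-vanishing terms are $\mu_n\bigl(\mu_2(v,u),u,\dots,u\bigr)=v$ and $\mu_2\bigl(v,\mu_n(u,\dots,u)\bigr)=\mu_2(v,\id)=v$, so a ``swallow'' term is cancelled by an ``outer $\mu_2$ applied to an inner full turn producing an identity''; and when such a full turn sits in the interior of the input, the corresponding term is instead killed by strict unitality of the outer product. Specifying exactly which terms pair with which, and which vanish for which reason, is the whole content of the proof — it is what the cited argument supplies — and as written your proposal both defers this bookkeeping and mis-states it where it is made explicit.
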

\begin{proof}
To my knowledge, this was first proven in~\cite[proposition~A.11]{Bocklandt}. For a more concise and elegant proof, which simply exhibits cancelling pairs in the $A_\infty$-relations, see \cite[proposition~3.1]{Kontsevich}.
\end{proof}

\subsection*{Gradings on~$\mathcal{A}$. }
In \cite{Abouzaid}, the morphisms in $\mathcal{A}$ carry a $\mathbb{Z}$-grading $\deg$ defined by 
$$\deg(u_{i,i+1})=\deg\left(
\raisebox{-0.3cm}{\psset{unit=0.6}
\begin{pspicture}(-0.5,-0.3)(1.3,1.3)
\rput[r](-0.2,0){\scriptsize i}
\Wu(0,0)
\end{pspicture}
}  
\right) = p_{i+1}\quad\text{and}\quad
\deg(v_{i,i-1})=\deg\left(
\raisebox{-0.3cm}{\psset{unit=0.6}
\begin{pspicture}(-0.5,-0.3)(1.3,1.3)
\rput[r](-0.2,1){\scriptsize i}
\Wv(0,1)
\end{pspicture}
}  
\right) = q_i,$$ 
where $p_1,\dots,p_n$ and $q_1,\dots,q_n$ are some odd integers, satisfying the relations 
\begin{equation}\label{eqn:sumallpandq}
p_1+\dots+p_n=(n-2)=q_1+\dots+q_n.
\end{equation}
It is easy to see that this extends to a well-defined grading on all basic morphisms using
$$\deg(\mu(b,a))=\deg(a)+\deg(b).$$
The relations (\ref{eqn:sumallpandq}) imply that $\mu_k$ decreases the grading by $(k-2)$. As usual, we extend this grading to $\Tw\mathcal{A}$ by setting 
\begin{equation}\label{eqn:GradingOnMor}
\deg(f:L_i[x]\rightarrow L_j[y])=\deg(f)+y-x.
\end{equation}
for homogeneous $f$.\\
However, here, we replace $\deg$ by several, slightly different gradings which correspond to the gradings on our Heegaard Floer invariants. 
For this, let us treat $p_1, p_2, p_3, p_4, q_1, q_2, q_3$ and $q_4$ as formal variables in some free Abelian monoid.

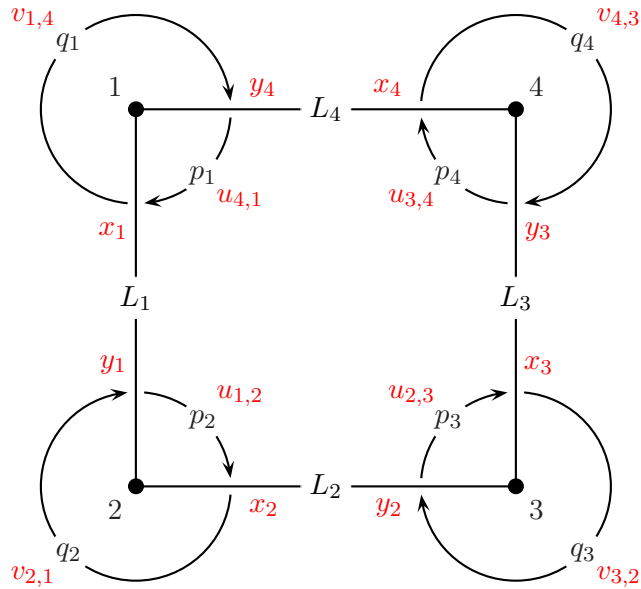
\begin{figure}[h]
\centering
\psset{unit=2.5}
\begin{pspicture}(-1.7,-1.7)(1.7,1.7)

\psline(-1,-1)(-1,1)(1,1)(1,-1)(-1,-1)

\psdots[dotsize=6pt](-1,1)(1,1)(1,-1)(-1,-1)
\rput(-1,0){\fcolorbox{white}{white}{$L_1$}}
\rput(0,-1){\fcolorbox{white}{white}{$L_2$}}
\rput(1,0){\fcolorbox{white}{white}{$L_3$}}
\rput(0,1){\fcolorbox{white}{white}{$L_4$}}
\uput{0.1}[135](-1,1){1}
\uput{0.1}[-135](-1,-1){2}
\uput{0.1}[-45](1,-1){3}
\uput{0.1}[45](1,1){4}

\rput(-1,1){
\psarcn{->}(0,0){0.5}{-5}{-85}
\psarcn{->}(0,0){0.5}{-95}{5}
\pscircle*[linecolor=white](0.5;-45){8pt}
\rput(0.5;-45){$p_1$}
\pscircle*[linecolor=white](0.5;135){8pt}
\rput(0.5;135){$q_1$}
}

\rput(-1,-1){
\psarcn{->}(0,0){0.5}{85}{5}
\psarcn{->}(0,0){0.5}{-5}{95}
\pscircle*[linecolor=white](0.5;45){8pt}
\rput(0.5;45){$p_2$}
\pscircle*[linecolor=white](0.5;-135){8pt}
\rput(0.5;-135){$q_2$}
}

\rput(1,-1){
\psarcn{->}(0,0){0.5}{175}{95}
\psarcn{->}(0,0){0.5}{85}{-175}
\pscircle*[linecolor=white](0.5;135){8pt}
\rput(0.5;135){$p_3$}
\pscircle*[linecolor=white](0.5;-45){8pt}
\rput(0.5;-45){$q_3$}
}

\rput(1,1){
\psarcn{->}(0,0){0.5}{-95}{-175}
\psarcn{->}(0,0){0.5}{175}{-85}
\pscircle*[linecolor=white](0.5;-135){8pt}
\rput(0.5;-135){$p_4$}
\pscircle*[linecolor=white](0.5;45){8pt}
\rput(0.5;45){$q_4$}
}

\red
\rput(-1,1){%
\uput{0.6}[-45](0,0){$u_{4,1}$}
\uput{0.6}[135](0,0){$v_{1,4}$}
\uput{0.6}[-100](0,0){$x_{1}$}
\uput{0.6}[10](0,0){$y_{4}$}
}

\rput(-1,-1){%
\uput{0.6}[45](0,0){$u_{1,2}$}
\uput{0.6}[-135](0,0){$v_{2,1}$}
\uput{0.6}[-10](0,0){$x_{2}$}
\uput{0.6}[100](0,0){$y_{1}$}
}

\rput(1,-1){%
\uput{0.6}[135](0,0){$u_{2,3}$}
\uput{0.6}[-45](0,0){$v_{3,2}$}
\uput{0.6}[80](0,0){$x_{3}$}
\uput{0.6}[190](0,0){$y_{2}$}
}

\rput(1,1){%
\uput{0.6}[-135](0,0){$u_{3,4}$}
\uput{0.6}[45](0,0){$v_{4,3}$}
\uput{0.6}[170](0,0){$x_{4}$}
\uput{0.6}[-80](0,0){$y_{3}$}
}
\end{pspicture}
\caption{The $A_\infty$-category of the $4$-punctured sphere in a nutshell. Compare this to figure~\ref{fig:nutshell}.}\label{fig:Ainftynutshell}
\end{figure}

\begin{definition}
Fix an orientation of the four punctures, as if they were endpoints of a 4-ended tangle; in other words, two punctures are labelled ``in'' and the other two ``out'', compare with definition~\sref{def:AlexGradingOnAd}. Define the \textbf{Alexander grading} by setting $A=\deg$ with $p_i=q_i=+1$ if the $i^\text{th}$ puncture is labelled ``out'' and $p_i=q_i=-1$ otherwise. As for $\CFTd$, we sometimes denote the Alexander grading of a basic object by $L_j^\Red{A(\ast)}$. Similarly, we can define a multivariate Alexander grading by separating the four punctures into two pairs of oppositely oriented punctures.
Note that
\begin{equation}\label{eqn:sumallpandqAlex}
p_1+p_2+p_3+p_4=0=q_1+q_2+q_3+q_4,
\end{equation}
so the structure maps $\mu_k$ preserve the Alexander grading.
\end{definition}

\begin{definition}
We define a $\frac{1}{2}\mathbb{Z}$-grading on morphisms in $\mathcal{A}$, which we call \textbf{$\delta$-grading}, by setting $\delta=\deg$ with $p_i=q_i=\frac{1}{2}$. In this case, 
\begin{equation}\label{eqn:sumallpandqDelta}
p_1+p_2+p_3+p_4=2=q_1+q_2+q_3+q_4,
\end{equation}
so the structure maps $\mu_k$ decrease grading by $(k-2)$. Given an orientation on the punctures, the homological grading $h$ is then defined by
$h=\frac{1}{2}A-\delta$. This is equivalent to setting $p_i=q_i=0$ if the $i^\text{th}$ strand is labelled ``out'' and $-1$ otherwise. 
\end{definition}
\begin{Remark}
We can interpret morphisms and sequences thereof in terms of paths in the 4-punctured sphere, see figure~\ref{fig:Ainftynutshell}. In fact, this is where the proofs of proposition~\ref{prop:muwelldefined} come from. 
\end{Remark}
\begin{definition}
Consider the \textbf{opposite algebra} $\Adop$ of $\Ad$, i.\,e.\ the algebra defined by
$$\Adop:=\{(a)^{-1}\vert a\in\Ad\},\quad (a)^{-1}.(b)^{-1}:=(b.a)^{-1},\quad \iota_1.(a)^{-1}.\iota_2:=(\iota_2.a.\iota_1)^{-1},$$
where $\iota_1,\iota_2\in\Id$. If $\deg$ is a grading on $\Ad$, it induces a grading on $\Adop$ by 
$$\deg((a)^{-1})=-\deg(a)$$
for homogeneous $a\in\Ad$. 
We also define two $\Id$-algebra homomorphisms
$$\Adop\times\Ad\rightarrow\Adop\quad\text{and}\quad\Ad\times\Adop\rightarrow\Adop.$$
If $a, b\in\Ad$ are paths in the quiver algebra, define
$$
((a)^{-1},b)
\mapsto
(a)^{-1}.b:=
\begin{cases}
(c)^{-1} &\text{if }\exists \text{path }c: bc=a\\
0	& \text{otherwise}
\end{cases}
\qquad
\raisebox{-12pt}{
	\begin{pspicture}(0,-0.4)(3,0.6)
	\psline{->}(0,0)(1,0)
	\psline{->}(1,0)(3,0)
	\psline{->}(0,0.2)(3,0.2)
	\rput(0.5,-0.2){$b$}
	\rput(2,-0.2){$c$}
	\rput(1.5,0.4){$a$}
	\end{pspicture}
}
$$
Note that if $c$ exists, it is unique. 
Similarly, let
$$
(b,(a)^{-1})
\mapsto
b.(a)^{-1}:=
\begin{cases}
(c)^{-1} &\text{if }\exists \text{path }c: cb=a\\
0	& \text{otherwise}
\end{cases}
\qquad
\raisebox{-12pt}{
	\begin{pspicture}(0,-0.4)(3,0.6)
	\psline{->}(0,0)(2,0)
	\psline{->}(2,0)(3,0)
	\psline{->}(0,0.2)(3,0.2)
	\rput(1,-0.2){$c$}
	\rput(2.5,-0.2){$b$}
	\rput(1.5,0.4){$a$}
	\end{pspicture}
}
$$
Then extend both maps linearly. Obviously, they define $\Id$-algebra homomorphisms.
\end{definition}
\begin{theorem}\label{thm:TwFukpqModEquivalent}
Let \(\TwFuk\) be the triangulated enlargement of the Fukaya category of the 4-punctured sphere. As discussed in the introduction of this section, we can identify \(\TwFuk\) with the \(A_\infty\)-category \(\Tw\mathcal{A}\) of twisted complexes of \(\mathcal{A}\), see example~\ref{exa:twistedcomplexes}. Let \(\pqMod\) be the dg category of peculiar modules, considered as an \(A_\infty\)-category. 
We define a covariant functor 
$$\mathcal{M}:\TwFuk\rightarrow \pqMod$$
as follows. Given a twisted complex \(L\in\ob\TwFuk\), \(\mathcal{M}(L)\) is defined as an \(\Id\)-module by
$$\mathcal{M}(L):= \bigoplus_{i=1,2,3,4} \iota_i.\Mor(L_i,L).$$
The structure map \(\partial_{\mathcal{M}(L)}:\mathcal{M}(L)\rightarrow\Ad\otimes \mathcal{M}(L)\), given by
$$
f\mapsto 1\otimes\mutw_1(f)+\sum_{n\geq 1}p^{n}\otimes\mutw_{n+1}(f,\underbrace{u,\dots,u}_{n})+q^{n}\otimes\mutw_{n+1}(f,\underbrace{v,\dots,v}_{n}),
$$
turns \((\mathcal{M}(L),\partial_{\mathcal{M}(L)})\) into a well-defined peculiar module. \pagebreak\\
Moreover, given a sequence of morphisms of twisted complexes 
\((a_r,\dots,a_1): L^{(0)}\!\rightarrow\!\cdots\!\rightarrow\! L^{(r)}\), we define a map 
\begin{gather*}
\SwapAboveDisplaySkip
\mathcal{M}_r(a_r,\dots,a_1):\mathcal{M}(L^{(0)})\rightarrow \Ad\otimes\mathcal{M}(L^{(r)}),\\
\begin{split}
f\mapsto
& 1\otimes\mutw_{r+1}(a_r,\dots,a_1,f)
+\sum_{n\geq1}p^{n}\otimes\mutw_{n+r+1}(a_r,\dots,a_1,f,\underbrace{u,\dots,u}_{n})\\
& \phantom{1\otimes\mutw_{r+1}(a_r,\dots,a_1,f)}
+\sum_{n\geq1}q^{n}\otimes\mutw_{n+r+1}(a_r,\dots,a_1,f,\underbrace{v,\dots,v}_{n}).
\end{split}
\end{gather*}
We also have a covariant functor 
$$\mathcal{L}: \pqMod\rightarrow\TwFuk,$$
defined as follows. Given a peculiar module \((M, \partial_M)\in\ob\pqMod\), we define a twisted complex
$$
\mathcal{L}(M):=\bigoplus_{j=1,2,3,4}\left(\iota_j.\Adop\otimes_{\Id}M\right)\otimes L_j
$$
with structure map \(\partial_{\mathcal{L}(M)}:\mathcal{L}(M)\rightarrow\mathcal{L}(M)\) given by
$$
b\otimes m\otimes L_j\mapsto
b.\partial_M(m)\otimes \id_{L_j}
+p.b\otimes m\otimes u_{j,j+1}
+q.b\otimes m\otimes v_{j,j-1}.
$$
Furthermore, if \(a:M\rightarrow \Ad\otimes M'\) is a morphism of peculiar modules, we define a morphism \(\mathcal{L}(a):\mathcal{L}(M)\rightarrow\mathcal{L}(M')\) by
$$
b\otimes m\otimes L_j\mapsto b.a(m)\otimes \id_{L_j}.
$$
We set \(\mathcal{L}(a_r,\dots,a_1)=0\) for any sequence of morphisms \((a_1,\dots,a_r)\) with \(r>1\).
\end{theorem}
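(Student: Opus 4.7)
The plan is to verify each of the four content claims of the theorem in turn — that $\mathcal{M}(L)$ is a peculiar module, that $(\mathcal{M},\mathcal{M}_r)$ satisfies the $A_\infty$-functor relations, that $\mathcal{L}(M)$ is a twisted complex, and that $\mathcal{L}$ extended by $\mathcal{L}_{r\geq 2} = 0$ is a (strict) $A_\infty$-functor. All four reduce, after unpacking the definitions, to comparing sides of the $A_\infty$-relations in $\mathcal{A}$ with the curvature relation on $M$, using in an essential way the algebraic identity $p_iq_i = q_ip_i = 0$ in $\Ad$.

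For the peculiar-module condition on $\mathcal{M}(L)$, I would expand $(\mu_\Ad \otimes 1)(1 \otimes \partial)\partial(f)$ as a double sum indexed by pairs $(n,m)\geq 0$ with algebra coefficients $p^{n+m}$, $p^nq^m$, $q^np^m$ or $q^{n+m}$; the mixed $pq$-cross terms vanish for $n,m \geq 1$ since $p_iq_i = q_ip_i = 0$. Collecting the coefficient of $p^N$ yields precisely those summands of the $A_\infty$-relation for the length-$(N{+}1)$ sequence $(f,u,\ldots,u)$ in which the inner $\mu$ includes $f$; by the $A_\infty$-relation this equals (up to sign) the ``internal'' terms in which the inner $\mu$ is of the form $\mu_j(u,\ldots,u)$, which vanish unless $j=4$ and the cyclic closure identity $\mu_4(u_{4,1},u_{3,4},u_{2,3},u_{1,2}) = \id_{L_1}$ applies. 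Strict unitality kills any residual contribution when $N > 4$, and $N < 4$ contributes nothing; thus only $N = 4$ survives and produces exactly $p^4 \otimes f$. The argument for the coefficient of $q^N$ is symmetric. The higher $A_\infty$-functor identities for $\mathcal{M}_r$ follow by an identical bookkeeping applied to the longer inputs $(a_r,\ldots,a_1,f,u^n)$ and their $v$-analogues, summed over $n\geq 0$; unitality and $\mu_1 = 0$ in $\mathcal{A}$ dispose of all auxiliary degenerate terms.

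For $\mathcal{L}(M)$, I would write the structure map as a sum $\partial_{\mathcal{L}(M)} = \delta_M + \delta_u + \delta_v$ of its three defining summands and distribute $\mutw_k(\partial_{\mathcal{L}(M)},\ldots,\partial_{\mathcal{L}(M)})$ over the $3^k$ choices. A typical summand is a $\mu_K$ of a sequence in $\mathcal{A}$ whose entries are $\id$'s (from $\delta_M$), $u$'s (from $\delta_u$), or $v$'s (from $\delta_v$), tensored with the appropriate $\Adop$- and $M$-parts. Pure-$u$ chains of length four contribute $p^4 \otimes m \otimes \id_{L_j}$ via the cyclic identity, pure-$v$ chains of length four contribute $q^4 \otimes m \otimes \id_{L_j}$, and pure chains of other lengths vanish for lack of a non-zero cyclic composition. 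Mixed $u$/$v$ chains die by $pq = 0$; chains containing both a differential piece and a $u$- or $v$-piece vanish for $K \geq 3$ by strict unitality and cancel in pairs over $\mathbb{F}_2$ for $K = 2$; and the pure-$\delta_M$ contribution is $\mu_2(\delta_M,\delta_M) = (p^4 + q^4)\cdot\id_{\mathcal{L}(M)}$ by the curvature relation on $M$. Adding, the three surviving pieces sum to zero modulo~2. Strict functoriality of $\mathcal{L}$ reduces, by the stipulation $\mathcal{L}_{r\geq 2} = 0$, to checking $\mathcal{L}_1(\mu_2(b,a)) = \mutw_2(\mathcal{L}_1(b),\mathcal{L}_1(a))$ and $\mutw_r(\mathcal{L}_1(a_r),\ldots,\mathcal{L}_1(a_1)) = 0$ for $r \geq 3$; the first is an associativity check for the $\Adop$-$\Ad$ bimodule structure, and the second holds because each $\mathcal{L}_1(a_i)$ acts as $\id$ on the $\mathcal{A}$-factor, so any $\mu_K$-term with $K \geq 3$ in the expansion of $\mutw_r$ is killed by unitality.

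The principal technical obstacle is the cyclic closure identity $\mu_4(u_{4,1},u_{3,4},u_{2,3},u_{1,2}) = \id_{L_1}$ together with its three cyclic rotates and its four $v$-analogues. This is the unique non-formal input in the argument and it does \emph{not} follow from the recursive rules for $\mu$ displayed in the excerpt, since that recursion is always seeded by a non-zero two-term composition and $\mu_2(u,u) = 0$. Geometrically the identity corresponds to the once-wrapping boundary-degeneration disk around a puncture of $S^2$, and in the combinatorial presentation used here it must either be imposed as a separate structural datum — with its compatibility with the $A_\infty$-relations verified directly along the lines of proposition~\ref{prop:muwelldefined} — or be extracted \emph{a posteriori} from \cite[theorem~4.1]{Abouzaid}, which identifies $\mathcal{A}$ with the wrapped Fukaya $A_\infty$-structure up to homotopy and in which this identity holds by construction.
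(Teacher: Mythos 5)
Your argument for all four verifications follows essentially the same route as the paper's proof: expand $\partial^2_{\mathcal{M}(L)}$ (respectively the Maurer--Cartan expression for $\partial_{\mathcal{L}(M)}$), invoke the $A_\infty$-relations of $\mathcal{A}$, kill all terms except the pure length-four $u$- and $v$-chains using strict unitality and $p_iq_j=q_jp_i=0$, and match the survivors against the curvature $p^4+q^4$; the bookkeeping for $\mathcal{M}_r$ and the reduction of the functoriality of $\mathcal{L}$ to low-order identities are likewise the paper's argument. One small omission: besides $\mathcal{L}_1(\mu_2(b,a))=\mutw_2(\mathcal{L}_1(b),\mathcal{L}_1(a))$ and the vanishing of $\mutw_r(\mathcal{L}_1(a_r),\dots,\mathcal{L}_1(a_1))$ for $r\geq 3$, you must also check the $r=1$ relation $\mutw_1(\mathcal{L}(a))=\mathcal{L}(D(a))$, which the paper lists as one of the two non-trivial relations; it is an equally routine computation.

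Your final paragraph, however, flags an obstacle that is not there. The cyclic identities $\mu_4(u_{4,1},u_{3,4},u_{2,3},u_{1,2})=\id_{L_1}$ (and their rotations and $v$-analogues) \emph{do} follow from the recursive definition of the higher compositions. The recursion is seeded by a pair $(b,a)$ subject only to $\mu_2(b,u)\neq 0\neq\mu_2(u,a)$, and identity morphisms are admissible values of $b$ and $a$ (they belong to the chosen basis of $\Mor(L_i,L_i)=k[x_i,y_i]/x_iy_i$). Taking $b=a=\id_{L_1}$ and $n=4$, the expansion rule reads
$$\mu_4\bigl(\mu_2(\id_{L_1},u_{4,1}),\,u_{3,4},\,u_{2,3},\,\mu_2(u_{1,2},\id_{L_1})\bigr)=\mu_2(\id_{L_1},\id_{L_1})=\id_{L_1},$$
i.e.\ exactly $\mu_4(u,u,u,u)=\id$. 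Your objection that the seed would have to be $\mu_2(u,u)=0$ conflates the seed $(b,a)$ with the inserted $u$'s: the two outermost $u$'s are absorbed into $b$ and $a$ via $\mu_2$, not composed with one another. Compatibility of this assignment with the $A_\infty$-relations is precisely the content of proposition~\ref{prop:muwelldefined} (due to \cite{Bocklandt}, see also \cite{Kontsevich}), so no extra structural datum and no appeal to \cite[theorem~4.1]{Abouzaid} is required; indeed, this is exactly the statement the paper's proof invokes when it says that $\mu_{j+1}(u,\dots,u)$ vanishes for all $j$ except $j=3$, where it equals the identity. With that correction, your proposal is a faithful rendering of the paper's argument.
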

\begin{Remark}
The two functors are compatible with the various gradings on both sides. As usual, the gradings on a tensor product are given by the sum of the gradings of all factors; the gradings on morphism spaces is as defined in equation~(\ref{eqn:GradingOnMor}).
\end{Remark}
\begin{example}
The image of $L_1$ under $\mathcal{M}$ is given by 
$$
\begin{tikzcd}[column sep=1.1cm]
\cdots
\arrow[dotted,bend left=10,leftarrow,pos=0.5]{r}{q_1}
\arrow[dotted,bend right=10,swap,pos=0.5]{r}{q_{234}}
&
d
\arrow[bend left=10,leftarrow,pos=0.5]{r}{p_1}
\arrow[bend right=10,swap,pos=0.5]{r}{p_{432}}
&
a
\arrow[bend left=10,leftarrow,pos=0.5]{r}{q_1}
\arrow[bend right=10,swap,pos=0.5]{r}{q_{234}}
&
d
\arrow[bend left=10,leftarrow,pos=0.5]{r}{p_1}
\arrow[bend right=10,swap,pos=0.5]{r}{p_{432}}
&
a
\arrow[bend left=10,leftarrow,pos=0.5]{r}{q_{341}}
\arrow[bend right=10,swap,pos=0.5]{r}{q_2}
&
b
\arrow[bend left=10,leftarrow,pos=0.5]{r}{p_{143}}
\arrow[bend right=10,swap,pos=0.5]{r}{p_{2}}
&
a
\arrow[bend left=10,leftarrow,pos=0.5]{r}{q_{341}}
\arrow[bend right=10,swap,pos=0.5]{r}{q_2}
&
b
\arrow[dotted,bend left=10,leftarrow,pos=0.5]{r}{p_{143}}
\arrow[dotted,bend right=10,swap,pos=0.5]{r}{p_{2}}
&
\cdots
\end{tikzcd}
$$
which corresponds to the curve in figure~\ref{fig:ML1}. Similarly, if we (formally) apply $\mathcal{L}$ to $(M, \partial)=(\langle a\rangle,0)$, we obtain
$$
\begin{tikzcd}[column sep=0.6cm]
\cdots
\arrow{r}{u}
&
L_1
\arrow{r}{u}
&
L_2
\arrow{r}{u}
&
L_3
\arrow{r}{u}
&
L_4
\arrow{r}{u}
&
L_1
&
L_2
\arrow[swap]{l}{v}
&
L_3
\arrow[swap]{l}{v}
&
L_4
\arrow[swap]{l}{v}
&
L_1
\arrow[swap]{l}{v}
&
\cdots
\arrow[swap]{l}{v}
\end{tikzcd}
$$
$\mutw_1$ of this complex is non-zero, but note that $(\langle a\rangle,0)$ is not a peculiar module either.

\begin{figure}[t]
\centering
\psset{unit=1.5}
\begin{pspicture}(-2,-1.55)(2,1.55)

\rput(-1,1){
\psrotate(0,0){90}{
\parametricplot[linecolor=red,plotstyle=curve,plotpoints=200]{360}{7200}{%
360 t div t 0.5 mul cos mul 1 mul
360 t div t 0.5 mul sin mul 1 mul
}
}}
\rput(-1,-1){
\psrotate(0,0){-90}{
\parametricplot[linecolor=red,plotstyle=curve,plotpoints=200]{360}{7200}{%
360 t div t 0.5 mul cos mul 1 mul
360 t div t 0.5 mul sin mul 1 mul
}}}

\psline(-1,-1)(-1,1)(1,1)(1,-1)(-1,-1)

\psdots[dotsize=6pt](-1,1)(1,1)(1,-1)(-1,-1)
\rput(-1.55,-0.4){\fcolorbox{white}{white}{$\red \mathcal{M}(L_1)$}}
\rput(-1,0.4){\fcolorbox{white}{white}{$L_1$}}
\rput(0,-1){\fcolorbox{white}{white}{$L_2$}}
\rput(1,0){\fcolorbox{white}{white}{$L_3$}}
\rput(0,1){\fcolorbox{white}{white}{$L_4$}}
\end{pspicture}
\caption{The curve representing the peculiar module $\mathcal{M}(L_1)$}\label{fig:ML1}
\end{figure}
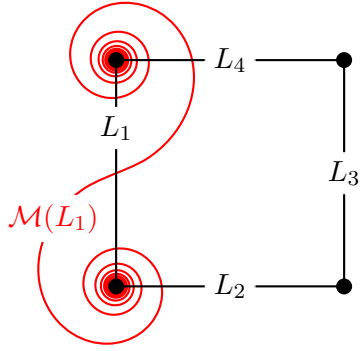

\end{example}

\begin{proof}[Proof of theorem~\ref{thm:TwFukpqModEquivalent}]
The infinite sums in the definition of $\mathcal{M}$ are actually finite, since by definition of the $A_\infty$-structure all terms for $n>3$ vanish. (However, we stick to the notation above as it is easier.) Let us check that $\partial_{\mathcal{M}(L)}$ satisfies the required $\partial^2$-relation. The coefficient of $1$ vanishes, since $\mutw_1$ is a differential. The coefficient of $p^n$ is given by
\begin{equation*}
\begin{split}
&\sum_{\substack{i+j=n\\i,j\geq0}}\mutw_{j+1}(\mutw_{i+1}(f,\underbrace{u,\dots,u}_i),\underbrace{u,\dots,u}_j)\\
=&
\sum_{\substack{i+j+k+1=n\\i,j,k\geq0}}\mutw_{i+k+2}(f,\underbrace{u,\dots,u}_i,\mutw_{j+1}(\underbrace{u,\dots,u}_{j+1}),\underbrace{u,\dots,u}_k).
\end{split}
\end{equation*}
The term $\mutw_{j+1}(u,\dots,u)=\mu_{j+1}(u,\dots,u)$ vanishes for all $j$ except $j=3$, in which case it is the identity. Since $\mu$ and hence also $\mutw$ is unital, this implies that the term above vanishes for all $n$ except $n=4$, in which case there is just one summand, namely 
$$\mutw_{2}(f,\mutw_{4}(u,u,u,u))=\mutw_{2}(f,1)=\mu_2(f,1)=f.$$
We can argue similarly for the coefficient of $q^n$. So $\mathcal{M}$ is well-defined on objects. \\
Next we need to check that $\mathcal{M}$ satisfies the $A_\infty$-relations
\begin{equation*}
\begin{split}
D(\mathcal{M}(a_r,\dots,a_1))
+\sum_{0<l<r}\mathcal{M}(a_{r},\dots,a_{l+1})\circ \mathcal{M}(a_l,\dots,a_1)\\
=
\sum_{1\leq i<j\leq r}\mathcal{M}(a_r,\dots,\mutw_{j-i+1}(a_j,\dots,a_i),\dots,a_1)
\end{split}
\end{equation*}
for all sequences of morphisms $(a_1,\dots,a_r)$ and $r\geq1$. This follows in the same way as above, except that there is no contributing term as
$$\mutw_{r+2}(a_r,\dots,a_1,g,\mutw_{4}(u,u,u,u))=\mutw_{r+2}(a_r,\dots,a_1,g,1)=0.$$
Next, consider $\mathcal{L}$. The only non-zero terms in $\mutw_1(\partial_{\mathcal{L}(M)})$ correspond to $\mu_2(\id,\id)$, $\mu_4(u,u,u,u)$, $\mu_4(v,v,v,v)$, $\mu_2(\id,u)$, $\mu_2(u,\id)$, $\mu_2(\id,v)$, $\mu_2(v,\id)$. The first three cases constitute the identity component $\id_{L_i}$, which vanishes as $\partial_M^2=p^4+q^4$. The terms in the fourth and fifth case cancel each other, and so do those in the last two. The $\delta$-grading defines a filtration on $\mathcal{L}(M)$ which ensures that the structure maps are upper-triangular.\\
Since $\pqMod$ has no higher multiplications and $\mathcal{L}$ vanishes on morphism sequences of length greater than 1, there are only two non-trivial $A_\infty$-relations to check:
$$
\mutw_1(\mathcal{L}(a))=\mathcal{L}(D(a))
\quad\text{and}\quad
\mutw_2(\mathcal{L}(a_1),\mathcal{L}(a_2))=\mathcal{L}(a_1\circ a_2),
$$
both of which are straightforward to check. 
\end{proof}

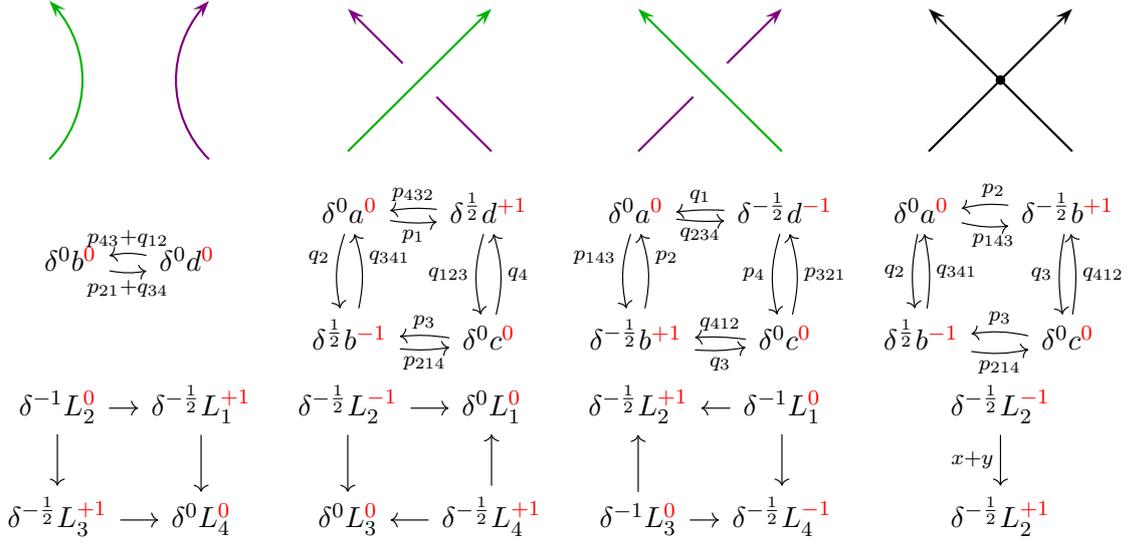
\begin{figure}[ht]
\centering
\psset{unit=0.15}
\begin{subfigure}[b]{0.25\textwidth}\centering
\psset{unit=7}
\begin{pspicture}(-1,-1)(1,1)
\psarc[linecolor=violet]{<-}(2,0){1.41432}{-225}{-135}
\psarc[linecolor=darkgreen]{->}(-2,0){1.41432}{-45}{45}
\end{pspicture}
\end{subfigure}\!\!
\begin{subfigure}[b]{0.25\textwidth}\centering
\psset{unit=7}
\begin{pspicture}(-1,-1)(1,1)
\psline[linecolor=violet]{->}(0.9,-0.9)(-0.9,0.9)
\pscircle*[linecolor=white](0,0){0.3}
\psline[linecolor=darkgreen]{->}(-0.9,-0.9)(0.9,0.9)
\end{pspicture}
\end{subfigure}\!\!
\begin{subfigure}[b]{0.25\textwidth}\centering
\psset{unit=7}
\begin{pspicture}(-1,-1)(1,1)
\psline[linecolor=violet]{->}(-0.9,-0.9)(0.9,0.9)
\pscircle*[linecolor=white](0,0){0.3}
\psline[linecolor=darkgreen]{->}(0.9,-0.9)(-0.9,0.9)
\end{pspicture}
\end{subfigure}\!\!
\begin{subfigure}[b]{0.25\textwidth}\centering
\psset{unit=7}
\begin{pspicture}(-1,-1)(1,1)
\psline{->}(-0.9,-0.9)(0.9,0.9)
\psline{->}(0.9,-0.9)(-0.9,0.9)
\psdot(0,0)
\end{pspicture}
\end{subfigure}
\\
\begin{subfigure}[b]{0.25\textwidth}
$$
\begin{tikzcd}[row sep=1.0cm, column sep=0.5cm]
\delta^{0}b^\Red{0}
\arrow[leftarrow,bend left=12]{r}{p_{43}+q_{12}}
& 
\delta^{0}d^\Red{0}
\arrow[leftarrow,bend left=10]{l}{p_{21}+q_{34}}
\end{tikzcd}
$$
\end{subfigure}\!\!
\begin{subfigure}[b]{0.25\textwidth}
$$
\begin{tikzcd}[row sep=1.0cm, column sep=0.5cm]
\delta^{0}a^\Red{0}
\arrow[leftarrow,bend left=7]{r}{p_{432}}
\arrow[leftarrow,bend left=15]{d}{q_{341}}
& 
\delta^{\frac{1}{2}}d^\Red{+1}
\arrow[leftarrow,bend left=7]{l}{p_1}
\arrow[leftarrow,bend left=15]{d}{q_4}
\\
\delta^{\frac{1}{2}}b^\Red{-1}
\arrow[leftarrow,bend left=7]{r}{p_3}
\arrow[leftarrow,bend left=15]{u}{q_2}
&
\delta^{0}c^\Red{0}
\arrow[leftarrow,bend left=7]{l}{p_{214}}
\arrow[leftarrow,bend left=15]{u}{q_{123}}
\end{tikzcd}
$$
\end{subfigure}\!\!
\begin{subfigure}[b]{0.25\textwidth}
$$
\begin{tikzcd}[row sep=1.0cm, column sep=0.4cm]
\delta^{0}a^\Red{0}
\arrow[leftarrow,bend left=5]{r}{q_1}
\arrow[leftarrow,bend left=15]{d}{p_2}
& 
\delta^{-\frac{1}{2}}d^\Red{-1}
\arrow[leftarrow,bend left=5]{l}{q_{234}}
\arrow[leftarrow,bend left=15]{d}{p_{321}}
\\
\delta^{-\frac{1}{2}}b^\Red{+1}
\arrow[leftarrow,bend left=5]{r}{q_{412}}
\arrow[leftarrow,bend left=15]{u}{p_{143}}
&
\delta^{0}c^\Red{0}
\arrow[leftarrow,bend left=7]{l}{q_3}
\arrow[leftarrow,bend left=15]{u}{p_4}
\end{tikzcd}
$$
\end{subfigure}\!\!
\begin{subfigure}[b]{0.25\textwidth}
$$
\begin{tikzcd}[row sep=1.0cm, column sep=0.5cm]
\delta^{0}a^{\red 0}
\arrow[bend left=10,leftarrow,pos=0.5]{d}{q_{341}}
\arrow[bend right=10,swap,pos=0.5]{d}{q_2}
\arrow[bend left=10,leftarrow,pos=0.7]{r}{p_2}
\arrow[bend right=10,swap,pos=0.7]{r}{p_{143}}
&
\delta^{-\frac{1}{2}}b^{\red +1}
\\
\delta^{\frac{1}{2}}b^{\red -1}
&
\delta^{0}c^{\red 0}
\arrow[bend left=10,leftarrow,pos=0.5]{u}{q_{3}}
\arrow[bend right=10,swap,pos=0.5]{u}{q_{412}}
\arrow[bend left=10,leftarrow,pos=0.5]{l}{p_{214}}
\arrow[bend right=10,swap,pos=0.5]{l}{p_3}
\end{tikzcd}
$$
\end{subfigure}
\\\vspace*{-0.3cm}
\begin{subfigure}[b]{0.25\textwidth}
$$
\begin{tikzcd}[column sep=0.25cm]    
\delta^{-1}L_2^\Red{0} \arrow{d}\arrow{r} & \delta^{-\frac{1}{2}}L_1^\Red{+1} \arrow{d}\\
\delta^{-\frac{1}{2}}L_3^\Red{+1} \arrow{r} & \delta^{0}L_4^\Red{0}
\end{tikzcd}
$$
\end{subfigure}\!\!
\begin{subfigure}[b]{0.25\textwidth}
$$
\begin{tikzcd}[column sep=0.25cm]    
\delta^{-\frac{1}{2}}L_2^\Red{-1} \arrow{r}\arrow{d} & \delta^0 L_1^\Red{0} \\
\delta^{0}L_3^\Red{0} & \delta^{-\frac{1}{2}}L_4^\Red{+1} \arrow{u}\arrow{l}
\end{tikzcd}
$$
\end{subfigure}\!\!
\begin{subfigure}[b]{0.25\textwidth}
$$
\begin{tikzcd}[column sep=0.25cm]    
\delta^{-\frac{1}{2}} L_2^\Red{+1} & \delta^{-1}L_1^\Red{0} \arrow{l}\arrow{d}\\
\delta^{-1} L_3^\Red{0} \arrow{u}\arrow{r} & \delta^{-\frac{1}{2}} L_4^\Red{-1}
\end{tikzcd}
$$
\end{subfigure}\!\!
\begin{subfigure}[b]{0.25\textwidth}
$$
\begin{tikzcd}[column sep=0.25cm]  
\delta^{-\frac{1}{2}}L_2^\Red{-1} \arrow[swap]{d}{x+y}\\
\delta^{-\frac{1}{2}}L_2^\Red{+1}
\end{tikzcd}
$$
\end{subfigure}
\caption{Some basic rational tangles, their peculiar modules and the images thereof under~$\mathcal{L}$. The superscripts of the generators specify the Alexander grading. The last column shows one of the two possible invariants for the singular crossing from proposition~\ref{prop:singularcrossing}.}\label{fig:MOY4basicobjects}
\end{figure}

\begin{example}
Figure~\ref{fig:MOY4basicobjects} shows the images of the peculiar modules corresponding to some basic tangles under the functor $\mathcal{L}$ after doing some cancellation. Conversely, we can recover the original peculiar modules by applying $\mathcal{M}$ to these twisted complexes and, again, doing some cancellation. This seems to be sufficient computational evidence to justify the following conjecture.
\end{example}

\begin{conjecture}\label{conj:TwFukpqModEquivalent}
The functors \(\mathcal{M}\) and \(\mathcal{L}\) give rise to an equivalence of \(A_\infty\)-categories.
\end{conjecture}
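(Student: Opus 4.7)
The plan is to prove the equivalence by the standard method of showing that the unit and counit of the would-be adjunction $(\mathcal{L}, \mathcal{M})$ are quasi-isomorphisms, combined with the generation result of Abouzaid et al.\ \cite{Abouzaid}. Since $\mathcal{A}$ (and hence its triangulated closure $\Tw\mathcal{A} \simeq \TwFuk$) is generated by the four objects $L_1, L_2, L_3, L_4$, it suffices to verify the essential content of the equivalence on these generators and then extend by the triangulated structure. Concretely, I would first compute $\mathcal{M}(L_i)$ explicitly — as illustrated in the example following the theorem, this is a loop-type peculiar module corresponding to the simple closed curve winding once around the $i$-th puncture. Then I would compute $\mathcal{L}(\mathcal{M}(L_i))$ and exhibit an explicit homotopy equivalence with $L_i$ using the cancellation lemma~\ref{lem:AbstractCancellation}: in the resulting (infinite) twisted complex, all identity morphisms $\id_{L_j}$ arising from the four-term relations $\mu_4(u,u,u,u) = \id = \mu_4(v,v,v,v)$ can be cancelled in pairs, collapsing everything to the single object $L_i$. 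The $\delta$-grading gives a filtration which ensures this iterated cancellation is well-defined.

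The second step is to show that $\mathcal{M} \circ \mathcal{L}$ is naturally quasi-isomorphic to the identity on $\pqMod$. For this, I would construct an explicit natural transformation $\eta_M : M \to \mathcal{M}(\mathcal{L}(M))$ built from the unit $1 \in \iota_j.\Adop.\iota_j$ and the identity morphisms $\id_{L_j}$, and show that its cone is acyclic. The key observation is that $\mathcal{M}(\mathcal{L}(M))$ has underlying space $\bigoplus_{i,j} \iota_i.\Mor(L_i, L_j) \otimes (\iota_j.\Adop \otimes_{\Id} M)$, and the pairing $\Mor(L_i, L_j) \otimes \iota_j.\Adop.\iota_i \to \iota_i.\Id.\iota_i$ — which is exactly the composition in $\mathcal{A}$ read through the identification of morphisms with paths on the 4-punctured sphere — induces an augmentation to $M$. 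The defining curvature relation $\partial_M^2 = p^4 + q^4$ on peculiar modules is precisely matched by the $A_\infty$-relation $\mu_4(u,\ldots,u) = \id = \mu_4(v,\ldots,v)$ in $\mathcal{A}$, which is what makes $\mathcal{M}(\mathcal{L}(M))$ into a genuine (curved) peculiar module and guarantees that $\eta_M$ is a chain map.

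The third step is the compatibility with the higher $A_\infty$-structure: one needs to verify that $\mathcal{M}_r$ (as defined in the statement) together with the trivial higher components of $\mathcal{L}$ assemble into $A_\infty$-functors whose composites are homotopic to the identity through $A_\infty$-natural transformations (not just on objects). For $\mathcal{M}$ this is essentially done in the proof of the theorem; for the composites, one invokes a formal lemma: an $A_\infty$-functor between pretriangulated $A_\infty$-categories which is a quasi-equivalence on a generating set is a quasi-equivalence. Combined with steps one and two, this finishes the argument.

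The main obstacle will be controlling the infinite sums appearing in $\mathcal{L}(M)$ (due to the tensor factor $\Adop$) and in the composite $\mathcal{M}(\mathcal{L}(M))$. Concretely, one must show that the relevant morphism complexes are quasi-isomorphic despite being infinite-dimensional. I expect this to be handled by introducing a careful filtration by the $\delta$-grading (which is bounded below on any fixed morphism complex between finitely generated peculiar modules) and invoking a standard spectral sequence or telescoping argument, exactly as Abouzaid et al.\ do in their Hochschild-cohomology characterisation of $\mathcal{A}$. A secondary difficulty is that $\pqMod$ contains objects that are not obviously loop-type (cf.\ the local-system example in the remark following definition~\ref{def:loop}); handling these requires showing that $\pqMod$ is generated (in the triangulated sense) by the objects $\mathcal{M}(L_i)$, which should follow from a skeleton-type decomposition of the underlying $\Id$-module of any peculiar module together with an induction on the number of generators.
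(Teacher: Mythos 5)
You should be aware that the paper does not prove this statement at all: conjecture~\ref{conj:TwFukpqModEquivalent} is left open, with only the remark that it ``should follow from studying functoriality properties of cancellation and a reformulation of these in the $A_\infty$-context'', and proposition~\ref{prop:CFTsiteFromFukCFTd} offered as a first step. Your outline — check the equivalence on the generators $L_1,\dots,L_4$, build a unit $\eta_M\colon M\to\mathcal{M}(\mathcal{L}(M))$, and control the infinite objects via the $\delta$-grading — is entirely in the spirit of that remark, so the approach is not wrong; but what you have written is a programme, not a proof, and the steps you defer are exactly the ones that keep this a conjecture.

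Concretely: (i) peculiar modules are curved, $\partial^2=p^4+q^4$, so ``quasi-isomorphism'' and ``acyclic cone'' are not defined for objects of $\pqMod$ — only morphism complexes carry honest differentials. Everything must be phrased as homotopy equivalence (contractibility of the cone of $\eta_M$ as a curved complex), and the formal lemma you invoke, that a functor which is a quasi-equivalence on a generating set is a quasi-equivalence, needs to be established in this curved setting rather than quoted. (ii) The assertion that $\pqMod$ is generated in the triangulated sense by the $\mathcal{M}(L_i)$ is essentially the essential-surjectivity statement you need, and your ``skeleton-type decomposition plus induction'' does not engage with the real issue: the $\mathcal{M}(L_i)$ are infinite-dimensional (the spiralling curves of figure~\ref{fig:ML1}), whereas $\CFTd(T)$ is finite-dimensional, and no finite iterated-cone expression of the latter in terms of the former is exhibited; modules with local systems, as in the remark after definition~\ref{def:loop}, make this delicate. (iii) Both $\mathcal{L}(\mathcal{M}(L_i))$ and the cone of $\eta_M$ require infinitely many cancellations: lemma~\ref{lem:AbstractCancellation} handles one at a time, and the convergence and, more importantly, the $A_\infty$-naturality of this infinite simplification in $M$ — precisely what the paper identifies as the missing ingredient — is asserted rather than argued (the bounded-below $\delta$-grading gives the inverse in proposition~\ref{prop:CFTsiteFromFukCFTd} for a fixed $T$ and site, but not a natural transformation of functors). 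As a smaller point, in $\mathcal{L}(\mathcal{M}(L_i))$ the identity-labelled arrows you propose to cancel come from the identity components of $\partial_{\mathcal{M}(L_i)}$, i.e.\ from $\mutw_1$ on $\Mor(L_i,-)$; the relations $\mu_4(u,u,u,u)=\id=\mu_4(v,v,v,v)$ account for the curvature matching, not for the cancellable arrows. So the ingredients are the right ones, but none of the gaps separating the conjecture from a theorem is actually closed.
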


As a first step towards a proof of this conjecture, we offer the following proposition. Its proof is an exercise in cancellation. The conjecture should follow from studying functoriality properties of cancellation and a reformulation of these in the $A_\infty$-context.

\begin{proposition}\label{prop:CFTsiteFromFukCFTd}
Let \(T\) be a 4-ended tangle. Then the tangle Floer complex \(\CFT(T,a)\) is bigraded chain homotopic to 
$$\left(\Mor\left(L_1,\mathcal{L}\left(\CFTd(T)\right)\right),\mutw_1\right).$$
By symmetry, the corresponding statement holds for all other three sites. 
\end{proposition}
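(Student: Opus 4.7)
The plan is to unfold the right-hand side completely and reduce it to the left-hand side by an explicit cancellation argument, working entirely at the level of the differential module structure.

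First I would unpack $(\Mor(L_1, \mathcal{L}(\CFTd(T))), \mutw_1)$. Using the description of morphism spaces between twisted complexes over $\mathcal{A}$ together with the explicit form of $\mathcal{L}$ from theorem~\ref{thm:TwFukpqModEquivalent}, it splits as
\[
\bigoplus_{j=1}^{4} \Mor_{\mathcal{A}}(L_1, L_j) \otimes_{\mathbb{F}_2} \bigl(\iota_j.\Adop \otimes_{\Id} \CFTd(T)\bigr).
\]
Since $\mu_1$ on $\mathcal{A}$ vanishes, $\mutw_1$ receives three kinds of contributions: (i) the differential $\partial_{\CFTd(T)}$ transported across the $\Adop$-factor via the right $\Ad$-action, (ii) composition with the internal $u$-arrows of $\mathcal{L}(\CFTd(T))$ (post-composing $f$ with a $u$-morphism in $\mathcal{A}$ and pre-multiplying the $\Adop$-factor by $p$), and (iii) the analogous $v$/$q$-terms. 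Both sides carry Alexander and $\delta$-gradings that are compatible by construction of $\mathcal{L}$ and by equation~(\ref{eqn:GradingOnMor}), so it is enough to match the two complexes up to a uniform grading shift.

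Next I would identify a distinguished set of generators in bijection with the generators of $\CFT(T,a)$: for each $x \in \iota_1.\CFTd(T) = \mathbb{T}^a$, let $c_x := 1_{L_1} \otimes \iota_1 \otimes x$, where $1_{L_1}$ is the identity in $\Mor_{\mathcal{A}}(L_1,L_1)$ and $\iota_1$ is the trivial idempotent path in $\iota_1.\Adop.\iota_1$. Their gradings match the gradings on $\CFT(T,a)$, and the $u$/$v$-contributions to $\mutw_1(c_x)$ vanish because the trivial path carries no $p_i$ or $q_i$ letters to peel off. Hence $\mutw_1(c_x) = \sum c_y$ summed over those $y$ for which the coefficient of $\iota_1 \otimes y$ in $\partial_{\CFTd(T)}(x)$ is non-zero, which by the defining formula~\eqref{eqn:differentialOnCFTd} is exactly the $n_p = n_q = 0$ part of the differential, and so equals the differential of $\CFT(T,a)$.

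The remaining task, which will be the main calculation, is to show that all other basic generators $f \otimes (a)^{-1} \otimes y$ with either $f \neq 1_{L_1}$ or $a \neq \iota_j$ can be cancelled in acyclic pairs. I would pair each such generator with a partner produced by a single $u$- or $v$-composition arising in $\mutw_1$, building an explicit null-homotopy on the complement of the canonical subspace; the compositions close up consistently after four steps because of the identities $\mu_4(u,u,u,u) = \mu_4(v,v,v,v) = 1_{L_1}$ in $\mathcal{A}$ and the curvature relation $\partial_{\CFTd(T)}^2 = p^4 + q^4$ in $\pqMod$, which together guarantee that after a full cycle of four $u$- or $v$-cancellations one returns precisely to the canonical subspace. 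The main obstacle is that $\Mor_{\mathcal{A}}(L_1,L_j)$ and $\Adop$ are both infinite-dimensional, so one needs a filtration argument to guarantee that the cancellation terminates and that the clean-up lemma~\ref{lem:AbstractCleanUp} applies in each graded piece; I would handle this by fixing an Alexander bigrading and a $\delta$-grading and noting that, since $\CFTd(T)$ is finitely generated and every non-identity basic morphism in $\mathcal{A}$ as well as every non-trivial basic element of $\Adop$ has non-zero $\delta$-grading, only finitely many basic generators contribute in each fixed bigrading. The cancellation therefore operates on finite-dimensional pieces one bigrading at a time and produces the desired bigraded chain homotopy equivalence $\CFT(T,a) \simeq (\Mor(L_1, \mathcal{L}(\CFTd(T))), \mutw_1)$.
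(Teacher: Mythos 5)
Your proposal is correct and is essentially the paper's proof: you embed $\CFT(T,a)$ as the span of the elements $1\otimes x\otimes 1_{L_1}$, observe that $\mutw_1$ preserves this subspace and restricts there to the $n_p=n_q=0$ part of $\partial_{\CFTd(T)}$, and cancel the complement using the single-$u$/$v$-composition terms of $\mutw_1$ as the leading bijection. The paper handles what you call the main obstacle by writing the off-subspace differential as $f_0+f_1$ and inverting it with a geometric series filtered by the bounded-below $\delta$-grading of the $\Mor(L_1,L_j)$-factor, then applying the cancellation lemma~\ref{lem:AbstractCancellation} once (rather than arguing bigrading by bigrading via the clean-up lemma), but this is the same mechanism.
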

\begin{proof}
Let $M:=\CFTd(T)$. On the one hand, $\CFT(T,a)$ is obtained from $\iota_1.M$ by setting $p_i=q_i=0$ for $i=1,2,3,4$. On the other hand, we can view $\iota_1.M$ as a subspace of 
\begin{equation}\label{eqn:CFTsiteFromFukCFTdModule}
\Mor(L_1,\mathcal{L}(M))=\bigoplus_{j=1,2,3,4}\iota_j.\Adop\otimes_{\Id}M\otimes\Mor(L_1,L_j)
\end{equation}
via
$$\iota_1.m\mapsto (1)^{-1}\otimes m\otimes 1_{L_1}.$$
The differential on this complex is given by
\begin{align*}
\mutw_1:~a\otimes m\otimes g\mapsto & ~a.\partial_M(m)\otimes g+ p.a\otimes m\otimes \mu_2(u,g)+p^3.a\otimes m\otimes \mu_4(u,u,u,g)\\
& \phantom{~a.\partial(m)\otimes g}+ q.a\otimes m\otimes \mu_2(v,g)+q^3.a\otimes m\otimes \mu_4(v,v,v,g)
\end{align*}
The idea is to cancel all other generators in this chain complex using the cancellation lemma~\ref{lem:AbstractCancellation}. For this, we split the vector space from (\ref{eqn:CFTsiteFromFukCFTdModule}) into the following three components:
\begin{align*}
Z =& \left\langle\left.(1)^{-1}\otimes m\otimes 1_{L_1}\right\vert m\in\iota_1.M\right\rangle
\end{align*}
\begin{eqnarray*}
Y_1\!\!\!\! &=&\!\!\!\! \left\langle\iota_1 .p^{-i}\otimes M\otimes 
\left.\raisebox{-0.36cm}{\psset{unit=0.6}
\begin{pspicture}(-0.3,-0.3)(1.3,1.3)
\wi{$y^j$}(0,0.5)
\end{pspicture}}
\right\vert i>0,j\geq0\right\rangle\oplus \left\langle\iota_4 .p^{-i}\otimes M\otimes 
\left.\raisebox{-0.36cm}{\psset{unit=0.6}
\begin{pspicture}(-0.3,-0.3)(1.3,1.3)
\wv{$j$}(0,1)
\end{pspicture}}
\right\vert i>0,j\geq0\right\rangle\\
&&\!\!\!\! \oplus \left\langle\iota_1 .q^{-i}\otimes M\otimes 
\left.\raisebox{-0.36cm}{\psset{unit=0.6}
\begin{pspicture}(-0.3,-0.3)(1.3,1.3)
\wi{$x^j$}(0,0.5)
\end{pspicture}}
\right\vert i>0,j\geq0\right\rangle\oplus \left\langle\iota_2 .q^{-i}\otimes M\otimes 
\left.\raisebox{-0.36cm}{\psset{unit=0.6}
\begin{pspicture}(-0.3,-0.3)(1.3,1.3)
\wu{$j$}(0,0)
\end{pspicture}}
\right\vert i>0,j\geq0\right\rangle\\
Y_2\!\!\!\! &=& \!\!\!\!\left\langle\iota_2 .p^{-i}\otimes M\otimes 
\left.\raisebox{-0.36cm}{\psset{unit=0.6}
\begin{pspicture}(-0.3,-0.3)(1.3,1.3)
\wu{$j$}(0,0)
\end{pspicture}}
\right\vert i\geq0,j\geq0\right\rangle\oplus \left\langle\iota_1 .p^{-i}\otimes M\otimes 
\left.\raisebox{-0.36cm}{\psset{unit=0.6}
\begin{pspicture}(-0.3,-0.3)(1.3,1.3)
\wi{$x^j$}(0,0.5)
\end{pspicture}}
\right\vert i\geq0,j>0\right\rangle\\
&&\!\!\!\! \oplus \left\langle\iota_4 .q^{-i}\otimes M\otimes 
\left.\raisebox{-0.36cm}{\psset{unit=0.6}
\begin{pspicture}(-0.3,-0.3)(1.3,1.3)
\wv{$j$}(0,1)
\end{pspicture}}
\right\vert i\geq0,j\geq0\right\rangle\oplus \left\langle\iota_1 .q^{-i}\otimes M\otimes 
\left.\raisebox{-0.36cm}{\psset{unit=0.6}
\begin{pspicture}(-0.3,-0.3)(1.3,1.3)
\wi{$y^j$}(0,0.5)
\end{pspicture}}
\right\vert i\geq0,j>0\right\rangle
\end{eqnarray*}
$\mutw_1(Z)\subseteq Z$, so in particular the map $c:Z\rightarrow Y_2$ vanishes. In fact, the restriction map $\mutw_1\vert_Z$ only counts the identity components of $\partial_M$, so it agrees with the differential on $\CFT(T,a)$. So if we can show that $f=\mutw_1\vert_{Y_1\rightarrow Y_2}$ is an isomorphism, we are done.\\
Let us write $f=f_0+f_1$, where $f_1$ is the first component of $\mutw_1$. We claim that $f_0$ sets up an identification of $Y_1$ with $Y_2$ as vector spaces. Indeed: the second component of $\mutw_1$ sets up a bijection on the level of generators between the first two summands of $Y_1$ and $Y_2$; similarly, the fourth component of $\mutw_1$ identifies the other two summands; finally, the third and fifth components of $\mutw_1$ vanish on $Y_1$.\\
Let $g_0$ be the inverse of $f_0$ and define
$$
g:=g_0+g_0f_1g_0+g_0f_1g_0f_1g_0+g_0f_1g_0f_1g_0f_1g_0+\dots
$$
$g$ is a well-defined homomorphism, since $g_0$ lowers the $\delta$-grading of the last tensor factor, $f_1$ preserves it and the $\delta$-grading on $\Mor(L_1,L_j)$ is bounded below. Obviously, $g$ is an inverse of $f$.
\end{proof}

\subsection*{Outlook. }
Ideally, we would like to reformulate the pairing theorem~\sref{thm:CFTdGeneralGlueing} for our peculiar invariants in terms of Lagrangian intersection homology as follows.
\begin{conjecture}\label{conj:GlueingCFTdFUK}
Let \(L\), \(T_1\) and \(T_2\) be as in theorem~\sref{thm:CFTdGeneralGlueing}. Then \(\CFL(L)\) is bigraded chain homotopic to
$$\left(\Mor\left(\mathcal{L}\left(\CFTd(\m(T_1))\right),\mathcal{L}\left(\CFTd(T_2)\right)\right),\mutw_1\right).$$
\end{conjecture}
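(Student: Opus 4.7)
The plan is to reduce the conjecture to the still-open Conjecture~\ref{conj:CFTdBetterGlueing} together with the dictionary between $\pqMod$ and $\TwFuk$ supplied by the functors $\mathcal{L}$ and $\mathcal{M}$ of Theorem~\ref{thm:TwFukpqModEquivalent}. The first step is to unpack the right-hand side. Writing $M_1:=\CFTd(\m(T_1))$ and $M_2:=\CFTd(T_2)$, the explicit definition of $\mathcal{L}$ gives
$$\mathcal{L}(M_k)=\bigoplus_{j=1}^{4}(\iota_j.\Adop\otimes_{\Id}M_k)\otimes L_j,$$
so the morphism complex in $\TwFuk$ splits over pairs of indices $(i,j)$ and, after using the mirror to switch $M_1$ from a right to a left $\Ad$-module, can be repackaged as a box tensor product
$$\CFTd(T_1)\boxtimes_{\Ad}\mathcal{P}''\boxtimes_{\Ad}\CFTd(T_2)$$
for a type AA bimodule $\mathcal{P}''$ over $\Ad$ whose underlying vector space is indexed by pairs of idempotents and a basic element of $\Mor(L_i,L_j)$ in $\mathcal{A}$. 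Its operations come from three sources: the internal differentials of $M_1$ and $M_2$, the higher products $\mu_{n+1}$ of $\mathcal{A}$ absorbing the $p^n$- and $q^n$-components of the $\partial_{\mathcal{L}(M_k)}$, and the basic compositions $\mu_2(u_{i,i+1},-)$ and $\mu_2(v_{j,j-1},-)$.

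The second step is a direct combinatorial comparison between $\mathcal{P}''$ and the expected Fukaya pairing bimodule $\mathcal{P}'$ from Figure~\ref{fig:CombGlueingGraph}. The degree-zero generators of $\Mor(L_i,L_j)$ in $\mathcal{A}$ are exactly the identities and the shortest $u_{i,j}$/$v_{i,j}$, producing the eight generators $AA,\,BA,\,BB,\,aa,\ldots,\,DD$ of $\mathcal{P}'$, while the $A_\infty$-compositions of $\mathcal{A}$ of length at most four (which are precisely the operations made explicit in \cite{Abouzaid}) generate exactly the arrows labelled by single $p_i$, $q_j$ and by the length-two products appearing in Figure~\ref{fig:CombGlueingGraph}. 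Once $\mathcal{P}''\simeq\mathcal{P}'$ is established, Conjecture~\ref{conj:CFTdBetterGlueing} yields
$$\Mor(\mathcal{L}(M_1),\mathcal{L}(M_2))\,\simeq\,\CFTd(T_1)\boxtimes\mathcal{P}'\boxtimes\CFTd(T_2)\,\simeq\,\CFL(L)\otimes V^{|T_1|+|T_2|-|L|-2},$$
so it remains to absorb this last stabilisation factor on the Fukaya side. The model case of Proposition~\ref{prop:CFTsiteFromFukCFTd} (which is the $T_2=\text{trivial}$ specialisation) shows how this already happens automatically: an extra unpaired open end increases the number of degree-zero generators of $\Mor(L_i,L_j)$ by exactly the required factor of two.

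The main obstacle is really the status of Conjecture~\ref{conj:CFTdBetterGlueing} itself. The only currently available glueing bimodule is the much larger $\mathcal{P}$ of Theorem~\ref{thm:CFTdGeneralGlueing}, consisting (schematically, see Remark~\ref{rem:CFTdGlueing}) of four copies of the complex $\mathcal{Q}$ of Figure~\ref{fig:TypeAAGlueingOneBlock} knitted together by the maps $f,g,h$ and sitting off by two stabilisations from~$\mathcal{P}'$; homotoping $\mathcal{P}$ to $\mathcal{P}'\otimes V^{\otimes 2}$ amounts to finding a global cancellation pattern in $\mathcal{P}$ that is compatible with every bimodule operation, and the automated search in \cite{BSAAx4e.nb} has not turned one up. A secondary, but still substantial, obstacle is making the second step rigorous: identifying $\mathcal{P}''$ with $\mathcal{P}'$ requires a precise understanding of morphism spaces between arbitrary twisted complexes in $\TwFuk$, which is essentially Conjecture~\ref{conj:TwFukpqModEquivalent}. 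Progress on either of these would already give strong evidence for the conjecture; settling both would give a complete proof.
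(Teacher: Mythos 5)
There is nothing in the paper to compare your argument against: the statement you are addressing is Conjecture~\ref{conj:GlueingCFTdFUK}, which the author explicitly leaves open in the outlook at the end of section~\ref{sec:LoopsAreLoops}. So the only question is whether your proposal actually closes it, and it does not --- as you acknowledge yourself, it is a conditional reduction to Conjecture~\ref{conj:CFTdBetterGlueing} (replacing the computed bimodule $\mathcal{P}$ of theorem~\ref{thm:CFTdGeneralGlueing} by the expected $\mathcal{P}'$) and, for your ``second step'', essentially to Conjecture~\ref{conj:TwFukpqModEquivalent} (control of morphism complexes between arbitrary twisted complexes over $\mathcal{A}$). Both of these are open in the paper, so the proposal is a plausible strategy, not a proof.

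Beyond the dependence on those two conjectures, two of your intermediate steps are asserted rather than argued and are genuine gaps in their own right. First, the repackaging of $\Mor\bigl(\mathcal{L}(\CFTd(\m(T_1))),\mathcal{L}(\CFTd(T_2))\bigr)$ as a box tensor product $\CFTd(T_1)\boxtimes\mathcal{P}''\boxtimes\CFTd(T_2)$ presumes that taking the mirror converts the curved left $\Ad$-module $\CFTd(\m(T_1))$ into something that can be paired from the right; but peculiar modules are always (curved) type~D structures over $\Ad$, and the only mirror statement in the paper is the duality for $\CFT$ in proposition~\ref{prop:HFTmirror} --- no analogous duality for $\CFTd$ is established, and without it your bimodule $\mathcal{P}''$ is not even defined. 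Matching $\mathcal{P}''$ with $\mathcal{P}'$ by counting short generators of $\Mor(L_i,L_j)$ also ignores the higher compositions $\mu_{\geq 3}$ of $\mathcal{A}$ (the wrapping terms), which contribute structure maps that must be shown to agree with figure~\ref{fig:CombGlueingGraph}. Second, the claim that the residual stabilisation $V^{\otimes i}$, $i\in\{0,1\}$, of Conjecture~\ref{conj:CFTdBetterGlueing} is ``absorbed automatically'' on the Fukaya side is unsupported: proposition~\ref{prop:CFTsiteFromFukCFTd} is not a model case for this, since there the output is $\CFT(T,s)$ for a single site with no stabilisation at all, and the conjecture you are trying to prove demands $\CFL(L)$ on the nose, with the count of closed components entering only through $i$. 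Until the mirror/duality issue, the identification $\mathcal{P}''\simeq\mathcal{P}'$, and the bookkeeping of the stabilisation factor are made precise, the proposal should be read as evidence for the conjecture, in line with how the paper itself presents the situation, rather than as a proof.
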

It would be interesting to see if one can define a functor similar to $\mathcal{L}$ for type AA structures and what the image of the glueing structure $\mathcal{P}$ from theorem~\sref{thm:CFTdGeneralGlueing} would look like under this functor.
\begin{question}\label{que:FUKlooptype}
Does \(\mathcal{L}\) send loop-type peculiar modules to twisted complexes representing closed curves and vice versa?
\end{question}
\begin{Remark}\label{rem:FUKlooptype}
According to a classification result from \cite[theorem~4.3]{Kontsevich}, any twisted complex in a (partially) wrapped Fukaya category of a punctured surface either represents a closed curve on this surface or an arc connecting two punctures (up to the issue of local systems). If such a classification is also true for our fully wrapped Fukaya category $\Fuk$ we might be able to show that all peculiar modules of tangles are loop-type (question~\sref{que:IsEverythingALoop}), using the following argument:\pagebreak[3] If a peculiar module $M$ corresponds to an arc, the intersection homology with some $L_i$ is an infinite-dimensional vector-space. If at the same time $M$ were homotopic to $\CFTd(T)$ for some 4-ended tangle, then, by proposition~\ref{prop:CFTsiteFromFukCFTd}, this vector space would be isomorphic to $\HFT(T,s_i)$ for the site $s_i$ corresponding to $L_i$. But the tangle Floer homology is always finite-dimensional, so we have a contradiction.
\end{Remark}

\setcounter{theorem}{0}
\renewcommand\thetheorem{\Alph{chapter}.\arabic{theorem}}

\appendix
\chapter{Algebraic structures from dg categories}\label{appendix:AlgStructFromGDCats}
In chapters~\ref{chapter:categorification} and~\ref{chapter:HFTd}, we often work in categories of various algebraic structures, namely type A, type AA, type D and curved type D structures. In all four settings, we often want to simplify these structures by replacing them by homotopy equivalent ones. The main goal of this appendix is to develop some tools for dealing with this problem, namely the cancellation lemma (\ref{lem:AbstractCancellation}) and the clean-up lemma (\ref{lem:AbstractCleanUp}). The former can be used to reduce the number of generators of an algebraic structure, the latter for making the structure maps ``look nicer'', essentially by changing the basis. \\
In the category of ordinary chain complexes, both tools will be familiar to the reader as easy exercises in linear algebra. So it might not be too surprising that they also work in quite general settings. We will spend the first part of this appendix explaining a general construction which turns any differential graded category into another such category in which the lemmas hold in some generality sufficient for our purposes, see definitions~\ref{def:CatOfMatrices} and~\ref{def:CatOfComplexes}. Next, we show that the various different algebraic structures mentioned above arise naturally from this general construction. Finally, we state and prove the cancellation and clean-up lemmas in this general framework.\\
For simplicity, we only work over the field $\mathbb{F}_2=\mathbb{Z}/2$, so we do not need to keep track of signs. However, with the correct sign conventions, all statements should also hold over fields of arbitrary characteristic.

\begin{definition}
Let $\Com$ be the category of $\mathbb{Z}$-graded chain complexes over $\mathbb{F}_2$ and grading preserving chain maps between them. A \textbf{differential graded (dg) category}~$\mathcal{C}$ over~$\mathbb{F}_2$ is an enriched category over $\Com$. To spell this out more explicitly, the hom-objects are $\mathbb{Z}$-graded $\mathbb{F}_2$-vector spaces,
$$\Mor(A,B)=\bigoplus_{i\in\mathbb{Z}}\Mor_i(A,B)$$
endowed with differentials
$$\partial_i:\Mor_i(A,B)\rightarrow \Mor_{i-1}(A,B),$$
i.\,e.\ vector space homomorphisms satisfying 
$\partial_{i-1}\partial_i=0$
and
\begin{equation}\label{eqn:CompatibleWithComposition}
\partial\circ m=m\circ(\partial\otimes \id+\id\otimes \partial),
\end{equation}
where
$$m: \Mor_i(A,B)\otimes\Mor_j(B,C)\rightarrow\Mor_{i+j}(A,C)$$
denotes composition in $\mathcal{C}$, which is associative and unital. For more details on enriched categories, see for example \cite{cathtpy}. Note that the identity morphisms have degree zero and lie in the kernel of~$\partial$.
\end{definition}
\begin{definition}\label{def:UnderlyingOrdinaryCat}\cite[definition~3.4.5]{cathtpy}. 
Given an enriched category $\mathcal{C}$ over some monoidal category $\mathcal{V}$, the \textbf{underlying ordinary category} $\mathcal{C}_0$ of $\mathcal{C}$ has the same objects as $\mathcal{C}$ and its hom-sets are defined by 
$$\mathcal{C}_0(A,B):=\Mor_\mathcal{V}(1_\mathcal{V},\mathcal{C}(A,B)).$$
\end{definition}
\begin{example}\label{exa:UnderlyingOrdCat}
Let $\mathcal{C}$ be a dg category. The unit in $\Com$ is the complex $0\rightarrow\mathbb{F}_2\rightarrow0$, supported in homological degree 0, and the morphisms in $\Com$ are grading preserving. Hence, the hom-sets of $\mathcal{C}_0$ are those elements in the kernel of $\partial_0$.\\
Consider the enriched category $H_\ast(\mathcal{C})$ over the category of graded vector spaces and grading preserving morphisms between them, obtained from $\mathcal{C}$ by replacing the hom-objects by their homologies with respect to the differential $\partial$. By passing to the underlying ordinary category, we pick out the degree~0 morphisms in $H_\ast(\mathcal{C})$. Therefore, we denote this category by $H_0(\mathcal{C})$.\\
Since the hom-sets in $H_0(\mathcal{C})$ are just quotients of those in $\mathcal{C}_0$, we now get the usual notions of chain homotopies between morphisms and objects. The reason why we need to pass to the underlying category is that otherwise, two objects could be (chain) isomorphic through grading shifting morphisms. 
\end{example}
\begin{definition}\label{def:CatOfMatrices}
(cp.~\cite[section~6]{BarNatanKhT})
Given a dg category $\mathcal{C}$, we define another dg category $\mathfrak{Mat}(\mathcal{C})$ as follows. Its objects are formal direct sums
$$\bigoplus_{i\in I}O_i[n_i],$$
where $I$ is some index set and $O_i[n_i]$ denotes the object $O_i\in\obj(\mathcal{C})$ with a formal grading shift by an integer $n_i$. Morphisms are given by
$$\Mor_n(\bigoplus_{i\in I}O_i[n_i],\bigoplus_{j\in J}O_j[n_j]):=\bigoplus_{(i,j)\in I\times J}\Mor_{n+n_i-n_j}(O_i,O_j).$$
Compositions and differentials in $\mathfrak{Mat}(\mathcal{C})$ are induced by those in $\mathcal{C}$.
\end{definition}
\begin{definition}\label{def:CatOfComplexes}
Given a differential graded category $\mathcal{C}$, we define an auxiliary category $\mathfrak{Cx}^{pre}(\mathcal{C})$, \textbf{the category of pre-complexes}, which is an enriched category over the category of $\mathbb{Z}$-graded vector spaces and grading preserving morphisms between them. Its objects are pairs $(O,d_O)$, where $O\in\obj(\mathcal{C})$ and $d_O\in\Mor_{-1}(O,O)$. 
The hom-objects are the same as in $\mathcal{C}$,\vspace*{-0.3cm}
$$\Mor((O,d_O),(O',d_{O'}))=\Mor(O,O'),$$
viewed as $\mathbb{Z}$-graded vector spaces. On these, we can define a map
$$D:\Mor_{i}((O,d_O),(O',d_{O'}))\rightarrow \Mor_{i-1}((O,d_O),(O',d_{O'}))$$
by setting 
$$D(f):=d_{O'}\circ f+f \circ d_O+\partial(f).$$
We would like $D$ to be a differential in order to turn $\mathfrak{Cx}^{pre}(\mathcal{C})$ into a dg category. However, this only works in general if we restrict ourselves to a full subcategory of $\mathfrak{Cx}^{pre}(\mathcal{C})$: \\
It is easy to check that $D$ is always compatible with multiplication in the sense of (\ref{eqn:CompatibleWithComposition}). So $D$ is a differential iff
$$D^2(f)=(d_{O'}^2+\partial(d_{O'}))\circ f
+f\circ (d_O^2+\partial(d_O))$$
vanishes. This is, of course, the case for the full subcategory $\mathfrak{Cx}^{0}(\mathcal{C})$ of $\mathfrak{Cx}^{pre}(\mathcal{C})$ consisting of those objects $(O,d_O)$ for which
\begin{equation} 
d^2_O+\partial(d_O) \tag{$\ast$}\label{eqn:d2term}
\end{equation}
vanishes. However, in some situations, other conditions on (\ref{eqn:d2term}) also  work. For example, if we replace $\Com$ by the category of $\mathbb{Z}/2$-graded chain complexes, we can restrict to those objects $(O,d_O)$ for which (\ref{eqn:d2term}) is equal to the identity. Also, if the hom-objects are bimodules over an algebra~$\mathcal{A}$, we can ask (\ref{eqn:d2term}) to be equal to $a.\id_O$ for a fixed central algebra element $a$ (of degree $-2$) which commutes with all morphisms~$f$. In both cases, $D$ will be a differential.\\
In any of these cases, the resulting category is a differential graded category again and we call any such full subcategory \textbf{a category of complexes}, denoted by $\mathfrak{Cx}^{\ast}(\mathcal{C})$, where $\ast\in\{0,1,a\}$ is the value of~(\ref{eqn:d2term}). 
\end{definition}

\pagebreak[3]

\begin{Remark}\label{exa:GraphsForAlgebraicStructures}
As usual, we can associate a directed graph to a category, where objects correspond to vertices and arrows to  morphisms. In the same way, we can think of complexes in $\mathfrak{Cx}^{\ast}(\mathfrak{Mat}(\mathcal{C}))$ as graphs. We often label the arrows by the morphisms. 
\end{Remark}
The point of the construction above is that after choosing a basis, we can interpret the categories of type~D, type~A, type~AA and curved type D structures as instances of $\mathfrak{Cx}^{\ast}(\mathfrak{Mat}(\mathcal{C}))$ for suitable choices of relatively simple differential graded categories $\mathcal{C}$. But let us start with an even simpler example: ordinary chain complexes.
\subsection*{Note of warning.} In the following examples, our definitions only coincide with the usual ones after passing to the underlying ordinary categories, see example~\ref{exa:UnderlyingOrdCat}. The advantage of our point of view is that the conditions we usually impose on morphism and chain homotopies for various algebraic structures arise naturally by viewing those morphisms as elements of chain complexes.  
\begin{example}[ordinary chain complexes over $\mathbb{F}_2$] \label{exa:HighBrowDefChainCxs}
Let $\mathcal{C}$ be the category with a single object $\bullet$ in grading 0, $\Mor(\bullet,\bullet)=\mathbb{F}_2$ and vanishing differential. Then (the underlying ordinary category of) $\mathfrak{Cx}^{0}(\mathfrak{Mat}(\mathcal{C}))$ is $\mathfrak{Com}$.
\end{example}
\begin{example}[type D modules over dg $\mathbb{F}_2$-algebras]\label{exa:HighBrowDefTypeDoverF2}
Let $\mathcal{A}$ be a differential graded algebra over $\mathbb{F}_2$. Let $\mathcal{C}$ be the category with a single object~$\bullet$ and morphisms being elements in $\mathcal{A}$. Composition is multiplication in $\mathcal{A}$ and the differential~$\partial$ is induced by the differential on $\mathcal{A}$. We define the category of type D modules by $\mathfrak{Cx}^{0}(\mathfrak{Mat}(\mathcal{C}))$. (Again, note that we need to pass to the underlying ordinary category to obtain the definitions in \cite{Zarev} and \cite{LOT}.)
\end{example}

\begin{example}[type D modules over dg $\mathcal{I}$-algebras]\label{exa:HighBrowDefTypeDoverI}
Let us assume that $\mathcal{A}$ is an algebra over some ring $\mathcal{I}\subseteq\mathcal{A}$ of idempotents and fix a basis $\{i_j\}_{j\in J}$ of idempotents of $\mathcal{I}$, where $J$ is some index set. Let $\mathcal{C}$ be the category with one object for each basis element of $\mathcal{I}$, and for any two such elements $i_1$ and $i_2$, let $\Mor(i_1,i_2):=i_1.\mathcal{A}.i_2$, viewed as a quotient of $\mathcal{A}$. 
Again, composition is multiplication in $\mathcal{A}$ and the differential $\partial$ is induced by the differential on $\mathcal{A}$. We define the category of type D modules over dg $\mathcal{I}$-algebras by $\mathfrak{Cx}^{0}(\mathfrak{Mat}(\mathcal{C}))$.  
\end{example}
\begin{Remark}
\textit{A priori}, the definition in the previous example depends on a choice of basis for~$\mathcal{I}$. In the examples that we see in chapters~\ref{chapter:categorification} and~\ref{chapter:HFTd}, there is a natural choice of such a basis, so this is not an issue.\\
However, we can replace $\mathcal{C}$ above by the enlarged category $\mathcal{C}_\mathcal{I}$, where there is an object for \textit{every} element in $\mathcal{I}$. 	Then $\mathcal{C}$ is a full subcategory of $\mathcal{C}_\mathcal{I}$ and it is not hard to see that $\mathfrak{Mat}(\mathcal{C})$ and $\mathfrak{Mat}(\mathcal{C}_\mathcal{I})$ are equivalent. Now, the construction of the category of complexes is functorial (in the category of dg categories), so after all, the definition above does not depend on a basis for $\mathcal{I}$. 
\end{Remark}
\begin{example}[curved type D modules over dg $\mathcal{I}$-algebras]\label{exa:HighBrowDefcurvedTypeD}
We start with the same category $\mathcal{C}$ as in the previous example, but we fix a central element $a_c\in Z(\mathcal{A})$, the curvature, and define the category of curved type D modules with curvature $a_c$ as $\mathfrak{Cx}^{a_c}(\mathfrak{Mat}(\mathcal{C}))$. For a more explicit, but less concise definition, see definition~\sref{def:curvedTypeDStructure}.
\end{example}
\begin{Remark}
In the Heegaard Floer community, the term ``curved'' seems to be the accepted attribute for algebraic structures for which some differential is non-vanishing; however, the first written reference (that I am aware of) in which this terminology is used is of very recent date \cite{Zemke}.
\end{Remark}
\begin{example}[type A structures over an $A_\infty$-algebra over $\mathcal{I}$]\label{exa:HighBrowDefTypeAoverI}
Let $\mathcal{A}$ be an $A_\infty$-algebra over a ring of idempotents $\mathcal{I}$ over $\mathbb{F}_2$. As in example~\ref{exa:HighBrowDefTypeDoverI}, fix a basis $\{i_k\}_{k\in I}$ of idempotents of $\mathcal{I}$, where $I$ is some index set. Let $\mathcal{C}$ be the category with one object for each basis element in $\mathcal{I}$, just as for type D structures. However, a morphism in a hom-object $\Mor(i_1,i_2)$ of $\mathcal{C}$ is given by a sequence of vector space homomorphisms 
$$(f_i: i_1.\mathcal{A}^{\otimes (i-1)}.i_2\rightarrow \mathbb{F}_2)_{i\geq1}$$
where composition is defined by 
$$(f\circ g)_i:= \sum_{j+k=i+1}f_j\circ(g_k\otimes\id_{\mathcal{A}^{\otimes (j-1)}}).$$
The differential $\partial$ is given by
$$(\partial(f))_i(a_1\otimes\cdots\otimes a_{i-1}):=\sum_{j+k=i+1}\sum_{l=1}^{i-k} f_j(a_1\otimes\dots\otimes \mu_k(a_l\otimes \dots \otimes a_{l+k-1})\otimes \dots \otimes a_{i-1}).$$
We define the category of type A structures by $\mathfrak{Cx}^{0}(\mathfrak{Mat}(\mathcal{C}))$. 
We define the category of strictly unital type A structures by restricting to those objects $(O,d_O)$ such that 
$$d_O(\cdot,1)=id_O$$
and 
$$d_O(\cdot,a_1\otimes\dots\otimes a_{i-1})=0\text{ if $i>2$ and $a_j=1$ for some $j=1,\dots,i-1$}$$
and morphisms to those satisfying
$$f(\cdot,a_1\otimes\dots\otimes a_{i-1})=0\text{ if $i>1$ and $a_j=1$ for some $j=1,\dots,i-1$}.$$
We say a type A structure $(O,d_O)$ is bounded if $(d_O)_i=0$ for sufficiently large $i$.
\end{example}

\pagebreak[3]

\begin{Remark}\label{exa:GraphsForTypeAstructures}
When we describe type A structures as directed graphs, it is useful to fix a basis of the algebra $\mathcal{A}$. Then, we label an arrow corresponding to a morphism $f$ by the formal sum of those tuples/tensor products of basis elements of the algebra $\mathcal{A}$ on which $f$ is non-zero. In this language, composition of two morphisms $f$ and $g$ can be described as the sum of all concatenations of labels for $f$ and $g$ (modulo~2).\\
To describe the differential in these terms, we introduce the following notation. For a tuple of basic algebra elements $a=(a_1,\dots,a_{i-1})$, define
$$M_i(a):=\sum_{j+k=i+1}\sum_{l=1}^{i-k} (a_1,\dots, \mu_k(a_l\otimes \dots \otimes a_{l+k-1}),\dots, a_{i-1}).$$
Now consider a morphism $f_a$ whose only label is $a$. Then the arrow of $\partial(f_a)$ is labelled by all tuples of basis algebra elements $b=(b_1,\dots,b_{j-1})$ for which the $a$-component of $M_j(b)$ is~1.
\end{Remark}

\begin{example}[type AA bimodules]\label{exa:HighBrowDefTypeAAoverI}
Let $\mathcal{A}$ and $\mathcal{B}$ be two $A_\infty$-algebra over rings of idempotents $\mathcal{I}$ and $\mathcal{J}$ over $\mathbb{F}_2$, respectively. Fix a basis $\{i_k\}_{k\in I}$ of $\mathcal{I}$ and $\{j_l\}_{l\in J}$ of $\mathcal{J}$, where $I$ and $J$ are some index sets. Let the objects in $\mathcal{C}$ be of the form $(i_k,j_l)$ for some $(i,j)\in I\times J$. A morphism in $\Mor((i_k,j_l),(i_{k'},j_{l'}))$ is given by a sequence of vector space homomorphisms 
$$(f_{m,n}: i_{k'}.\mathcal{A}^{\otimes (m-1)}.i_k\times j_l.\mathcal{B}^{\otimes (n-1)}.j_{l'}\rightarrow \mathbb{F}_2)_{m,n\geq1}$$
where composition is given by 
$$(f\circ g)_{m,n}:= \sum_{\substack{i+k=m+1\\j+l=n+1}}f_{i,j}\circ(\id_{\mathcal{A}^{\otimes (i-1)}}\otimes g_{k,l}\otimes\id_{\mathcal{B}^{\otimes (j-1)}}).$$
For $a=a_1\otimes\dots\otimes a_{m-1}$ and $b=b_1\otimes\dots\otimes b_{m-1}$, the differential $\partial$ is given by
\begin{eqnarray*}
(\partial(f))_{m,n}(a,b)&:=&\sum_{j+k=m+1}\sum_{l=1}^{i-k} f_{j,n}(a_1\otimes\dots\otimes \mu_k(a_l\otimes \dots \otimes a_{l+k-1})\otimes \dots \otimes a_{i-1},b)\\
&& +\sum_{j+k=n+1}\sum_{l=1}^{i-k} f_{m,j}(a,b_1\otimes\dots\otimes \mu_k(b_l\otimes \dots \otimes b_{l+k-1})\otimes \dots \otimes b_{i-1}).
\end{eqnarray*}
We define the category of type AA $\mathcal{A}$-$\mathcal{B}$-bimodules by $\mathfrak{Cx}^{0}(\mathfrak{Mat}(\mathcal{C}))$. We define the category of strictly unital type AA structures by restricting to those objects $(O,d_O)$ such that 
$$d_O(\cdot,1)=id_O=d_O(1,\cdot)$$
and 
$$d_O(a_1\otimes\dots\otimes a_{i-1},\cdot,b_1\otimes\dots\otimes b_{j-1})=0\text{ if $i+j>3$ and ($a_j=1$ or $b_j=1$) for some $j$}$$
and morphisms to those satisfying
$$f(a_1\otimes\dots\otimes a_{i-1},\cdot,b_1\otimes\dots\otimes b_{j-1})=0\text{ if $i+j>2$ and ($a_j=1$ or $b_j=1$) for some $j$}.$$
As in remark~\ref{exa:GraphsForAlgebraicStructures}, we can easily translate all of this into the language of labelled graphs after fixing a basis of $\mathcal{A}$ and $\mathcal{B}$.
\end{example}
\pagebreak[3]
\begin{Remark}\label{rem:FunctorsBetweenDGCats}
In the proofs of the glueing results in section~\ref{sec:Pairing} and also in section~\ref{sec:SymRels}, we sometimes need to change the underlying algebra of the algebraic structures that we are working with by some algebra homomorphism $\pi: A\rightarrow B$ (which, in most cases, is a quotient map). In terms of the categorical description above, such a homomorphism corresponds to a functor 
$$\tilde{\mathcal{F}}_\pi:\mathcal{C}_A\rightarrow\mathcal{C}_B,$$
where $\mathcal{C}_A$ and $\mathcal{C}_B$ are the two categories corresponding to the algebras $A$ and $B$. If $\pi$ respects the differentials on $A$ and $B$, then so does $\tilde{\mathcal{F}}_\pi$. 
Then $\tilde{\mathcal{F}}_\pi$ in turn induces a functor 
$$\mathcal{F}_\pi:\mathfrak{Cx}^{\ast}(\mathfrak{Mat}(\mathcal{C}))\rightarrow\mathfrak{Cx}^{\ast}(\mathfrak{Mat}(\mathcal{D}))$$
that likewise respects the differentials on both sides. In particular it sends homotopic complexes to homotopic ones. Also note that a curved type D structure might become an ordinary type D structure, if $\pi(a_{tw})=0$.
\end{Remark}
\begin{definition}[pairing type D and type A structures]\label{def:PairingTypeDandA}
Let $M$ be a type A structure and $N$ a type D structure over the same dg algebra $\mathcal{A}$ over a ring $\mathcal{I}$ of idempotents, together with a fixed basis of $\mathcal{I}$ and $\mathcal{A}$. We now reformulate the definition of the chain complex $(M\boxtimes N,\partial^\boxtimes)$ from \cite[definition~7.4]{Zarev} and \cite[section~2.4]{LOT} in terms of the graphs associated to $M$ and $N$. The generators of $M\boxtimes N$ are defined by pairs of vertices in $M$ and $N$ labelled by the same idempotents. Given two such pairs $(v_1,w_1)$ and $(v_2,w_2)$, the $(v_2,w_2)$-component of $\partial^\boxtimes((v_1,w_1))$ is equal to the number of pairs $(s_1,s_2)$ (modulo 2), where $s_2$ is a sequence of labels of consecutive arrows along a path from~$w_1$ to~$w_2$ in~$N$, $s_1$ is a label on an arrow from $v_1$ to $v_2$ and $s_1=s_2$. \\
If we start with a type AA module and a type D module, the pairing is defined in the same way, except that we compare the sequences of labels in the type D structure only to one component of the labels of the type AA structure and record the other component in the output, which is a type A structure. 
\end{definition}
\begin{example}[twisted complexes]\label{exa:twistedcomplexes}
Let $(\mathcal{C},\mu)$ be an $A_\infty$-category. Then we can define $\mathfrak{Mat}(\mathcal{C})$ just as in~\ref{def:CatOfMatrices}, with higher multiplications defined in the obvious way. We can also generalise the definition of $\mathfrak{Cx}^0(\mathcal{C})$ from~\ref{def:CatOfComplexes} to the $A_\infty$-setting by modifying the differential $D$ such that we use all $A_\infty$-operations, 
$$D(f):=\sum_{k,l\geq0} \mu_{k+l+1}(\underbrace{d_{O'},\dots,d_{O'}}_{k},f,\underbrace{d_{O},\dots,d_{O}}_{l}),$$
and similarly generalise higher multiplications, 
while restricting to upper triangular precomplexes to make sure that all sums are finite. We denote the $A_\infty$-category $\mathfrak{Cx}^0(\mathfrak{Mat}(\mathcal{C}))$ by $\Tw\mathcal{C}$ and call it the category of twisted complexes over $\mathcal{C}$. The differential $D$ above is usually denoted by $\mutw_1$, multiplication by $\mutw_2$, and higher multiplications by $\mutw_i$, $i>2$; for more details, see \cite[section~I.3l]{Seidel}. In this setting, the cancellation lemma and clean-up lemma only hold under certain assumptions that, again, are needed to ensure that all sums are finite.
\end{example}

\pagebreak
We now state and prove the two central lemmas mentioned in the introduction.
\begin{lemma}[Cancellation Lemma]\label{lem:AbstractCancellation}
Let \((X,\delta)\) be an object of \(\mathfrak{Cx}^{\ast}(\mathfrak{Mat}(\mathcal{C}))\) for some differential graded category \(\mathcal{C}\) and suppose it has the form
$$\begin{tikzcd}[row sep=1.5cm, column sep=0.6cm]
& (Z,\zeta) \arrow[bend left=12]{ld}{a} \arrow[bend left=12]{rd}{c} \\
(Y_1,\varepsilon_1)\arrow[bend left=12]{rr}{f} \arrow[bend left=12]{ru}{b}
& &
(Y_2,\varepsilon_2) \arrow[bend left=12]{ll}{e}\arrow[bend left=12]{lu}{d}
\end{tikzcd}$$
where \((Y_1,\varepsilon_1)\), \((Y_2,\varepsilon_2)\), \((Z,\zeta)\in\obj\mathfrak{Cx}^{pre}(\mathfrak{Mat}(\mathcal{C}))\), \(f\) is an isomorphism with inverse \(g\). Then \((X,\delta)\) is chain homotopic to \((Z,\zeta+bgc)\).
\end{lemma}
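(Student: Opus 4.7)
\medskip

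The plan is to exhibit explicit chain maps $\iota\colon (Z,\zeta+bgc)\to (X,\delta)$ and $\pi\colon (X,\delta)\to(Z,\zeta+bgc)$, together with a homotopy $h\colon X\to X$ of degree $+1$, realising the cancellation. Writing $X=Y_1\oplus Z\oplus Y_2$ so that $\delta$ becomes the $3\times3$ matrix with diagonal entries $(\varepsilon_1,\zeta,\varepsilon_2)$ and off-diagonal entries prescribed by $a,b,c,d,e,f$, the natural guesses (working over $\mathbb{F}_2$) are
\[
\iota=\begin{pmatrix}gc\\ \mathrm{id}_Z\\ 0\end{pmatrix},\qquad \pi=\begin{pmatrix}0 & \mathrm{id}_Z & bg\end{pmatrix},\qquad h=\begin{pmatrix}0&0&g\\ 0&0&0\\ 0&0&0\end{pmatrix}.
\]
The identity $\pi\iota=\mathrm{id}_Z$ is immediate. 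The verification of $D(\iota)=0$, $D(\pi)=0$, and $\mathrm{id}_X+\iota\pi=D(h)$ then reduces to matrix computations, provided we know enough relations between the components.

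First I would unpack the defining condition $\delta^2+\partial(\delta)=\ast\cdot\mathrm{id}_X$ into its nine matrix components; this yields the relations we need, for example
\[
\varepsilon_1 a+a\zeta+ed+\partial(a)=0,\qquad bf+fb\;(\text{and analogues})=0,\qquad fa+\varepsilon_2 e+e\varepsilon_1+\partial(e)=0,
\]
and the diagonal ones $\varepsilon_1^2+ab+ef+\partial(\varepsilon_1)=\ast$ etc. Together with the identities $fg=\mathrm{id}_{Y_2}$, $gf=\mathrm{id}_{Y_1}$ and their consequence $\partial(g)=g\,\partial(f)\,g$ (obtained by applying $\partial$ to $fg=\mathrm{id}$ and using Leibniz), these are the only algebraic inputs needed. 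Next I would substitute into $D(\iota)=\delta\iota+\iota(\zeta+bgc)+\partial(\iota)$ and check row by row: the $Z$-row vanishes on the nose in characteristic $2$, the $Y_2$-row collapses via $fgc=c$, and the $Y_1$-row reduces (after using Leibniz on $\partial(gc)$) to a linear combination of the structure equations above. The analogous check for $\pi$ is dual.

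Finally, I would verify the homotopy identity $D(h)=\delta h+h\delta+\partial(h)=\mathrm{id}_X+\iota\pi$. A direct computation shows that $\delta h+h\delta$ has matrix
\[
\begin{pmatrix} eg & gc & \varepsilon_1 g+g\varepsilon_2\\ 0 & bg & 0\\ 0 & 0 & fg\end{pmatrix}+(\text{terms killed by }\partial(h)),
\]
and, using $fg=\mathrm{id}$ and the $Y_1\to Y_2$ component of the $\delta^2+\partial(\delta)=\ast$ relation (which forces $\varepsilon_1 g+g\varepsilon_2+\partial(g)=eg g f+\ldots$), one checks that $D(h)$ equals the matrix of $\mathrm{id}_X+\iota\pi$.

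The main obstacle is bookkeeping: because $\mathcal{C}$ carries its own differential $\partial$, every identity obtained by rearranging matrix entries also picks up Leibniz corrections, and $g$ itself is \emph{not} in general closed under $\partial$. The crucial technical step is therefore the computation of $\partial(g)$ from $fg=\mathrm{id}$ and its insertion into the $Y_1$-row of $D(\iota)$ and into the off-diagonal entries of $D(h)$; once this substitution is carried out, all remaining terms pair off against the structure equations extracted in the first step. No finiteness assumption on $X$ is needed because all maps are literal matrices, so the argument is purely formal and will apply equally to the type~A, type~AA, type~D, and curved type~D specialisations catalogued in the preceding examples.
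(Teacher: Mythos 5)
Your proposal is correct and follows essentially the same route as the paper: your $\iota$, $\pi$ and homotopy $h=g$ are exactly the maps $F$, $G$, $H$ used there, and your inputs $\partial(g)=g\,\partial(f)\,g$ together with the $Y_1\to Y_2$ component of $\delta^2+\partial(\delta)=\ast$ (namely $cb=D(f)$) combine to the paper's key identity $D(g)=gcbg$, which makes $D(h)=\mathrm{id}_X+\iota\pi$. The only points to tidy are two slips in your displayed matrix for $\delta h+h\delta$ (its $(1,1)$ entry is $gf=\mathrm{id}_{Y_1}$, not $eg$, and $bg$ sits in the $(2,3)$ slot, not $(2,2)$) and the omitted verification that $(Z,\zeta+bgc)$ itself satisfies $(\zeta+bgc)^2+\partial(\zeta+bgc)=\ast$, which the paper checks first using the same identity.
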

\begin{Remark}
We usually apply this lemma to the case where $(Y_1,\varepsilon_1)=(Y_2,\varepsilon_2)$ and $f$ is the identity map.
\end{Remark}
\begin{proof}
First of all, let us check that $(Z,\zeta+bgc)$ is indeed an object of $\mathfrak{Cx}^{\ast}(\mathfrak{Mat}(\mathcal{C}))$:
\begin{align*}
(\zeta+bgc)^2+\partial(\zeta+bgc)=&~\zeta\zeta+\zeta bgc+bgc \zeta +bgcbgc+\partial(\zeta)+\partial(b)gc+b\partial(g)c+bg\partial(c)\\
=&~\zeta\zeta+(\zeta b +\partial(b))gc+bg(c \zeta +\partial(c)) +\partial(\zeta)+b\partial(g)c+bgcbgc\\
=&~\zeta\zeta+(b\varepsilon_1 +df)gc+bg(\varepsilon_2 c +fa) +\partial(\zeta)+b\partial(g)c+bgcbgc\\
=&~\zeta\zeta+dc+ba+\partial(\zeta)+\cancel{b\left(D(g)+gcbg\right)c}=\ast.
\end{align*}
For the last step, we observe that $$gcbg=gD(f)g=D(gfg)+D(g)fg+gfD(g)=D(g).$$
Next, we define two chain maps 
$$F:(Z,\zeta+bgc)\rightarrow(X,\delta)\qquad\text{and}\qquad G:(X,\delta)\rightarrow(Z,\zeta+bgc)$$
by
\begin{center}
$\begin{tikzcd}[row sep=1cm, column sep=-0.2cm]
(Z,\zeta+bgc)\arrow{rrr}{1}\arrow[bend right=12]{rrdd}{gc} &~~~~~~~ && (Z,\zeta) \arrow[bend left=12,pos=0.3]{ldd}{a} \arrow[bend left=12]{rrd}{c}\\
& &&&~& (Y_2,\varepsilon_2) \arrow[bend left=12]{llld}{e}\arrow[bend left=12]{llu}{d}\\
& &(Y_1,\varepsilon_1) \arrow[bend left=10,pos=0.7]{rrru}{f} \arrow[bend left=12]{ruu}{b}
\end{tikzcd}$
\!\!\! and \!\!\!\!\!\!\!\!\!
$\begin{tikzcd}[row sep=1cm, column sep=-0.2cm]
& (Z,\zeta) \arrow[bend left=12,pos=0.3]{ldd}{a} \arrow[bend left=12]{rrd}{c}\arrow{rrrr}{1}&&&~~~~~ &(Z,\zeta+bgc)\\
&&~& (Y_2,\varepsilon_2) \arrow[bend left=12]{llld}{e}\arrow[bend left=12]{llu}{d}\arrow[bend right=12]{rru}{bg}\\
(Y_1,\varepsilon_1) \arrow[bend left=10,pos=0.7]{rrru}{f} \arrow[bend left=12]{ruu}{b}
\end{tikzcd}$
\end{center}
respectively. One easily checks that indeed $D(F)=0$ and $D(G)=0$. Indeed, the only non-trivial terms we need to compute are
\begin{align*}
gc(\zeta+bgc)+\varepsilon_1gc+\partial(gc)+a=& ~g(c\zeta+\partial(c))+(\varepsilon_1g+\partial(g))c+a+gcbgc\\
=&~g(fa+\varepsilon_2c)+(\varepsilon_1g+\partial(g))c+a+gcbgc\\
=&~\left(D(g)+gcbg\right)c=0
\end{align*}
for the first identity and similarly
\begin{align*}
(\zeta+bgc)bg+bg\varepsilon_2+\partial(bg)+d=& ~(\zeta b+\partial(b))g+b(g\varepsilon_2+\partial(g))+d+bgcbg\\
=&~(df+b\varepsilon_1)g+b(g\varepsilon_2+\partial(g))+d+bgcbg\\
=&~b\left(D(g)+gcbg\right)=0.
\end{align*}
for the second identity. Now, $GF=1_{Z}$ and conversely, it is not hard to check that 
$$FG=1_{Z}+D(H),$$
where $H$ is the homotopy given by the dashed line in the following diagram:
$$\begin{tikzcd}[row sep=1cm, column sep=0cm]
& (Z,\zeta) \arrow[bend left=12,pos=0.3]{ldd}{a} \arrow[bend left=12]{rrd}{c}\arrow{rrrr}{1}&&&~~~~~ &(Z,\zeta+bgc)
\arrow{rrr}{1}\arrow[bend right=12]{rrdd}{gc} &~~~~~~~ && (Z,\zeta) \arrow[bend left=12,pos=0.3]{ldd}{a} \arrow[bend left=12]{rrd}{c}
\\
&&~& (Y_2,\varepsilon_2) \arrow[bend left=12]{llld}{e}\arrow[bend left=12]{llu}{d}\arrow[bend right=12]{rru}{bg}\arrow[dashed,swap, bend right=12]{rrrrd}{g}&&
& &&&~& (Y_2,\varepsilon_2) \arrow[bend left=12]{llld}{e}\arrow[bend left=12]{llu}{d}
\\
(Y_1,\varepsilon_1) \arrow[bend left=12,pos=0.7]{rrru}{f} \arrow[bend left=12]{ruu}{b}&&&&&
& &(Y_1,\varepsilon_1) \arrow[bend left=12,pos=0.7]{rrru}{f} \arrow[bend left=12]{ruu}{b}
\end{tikzcd}$$
\vspace*{-1cm}~\\
\end{proof}
\begin{lemma}[Clean-up Lemma]\label{lem:AbstractCleanUp}
Let \((O,d_O)\) be an object in \(\mathfrak{Cx}^{\ast}(\mathcal{C})\) for some differential graded category \(\mathcal{C}\).
Then for any morphism \(h\in\Mor_0((O,d_O),(O,d_O))\) for which 
$$h^2, \quad hD(h) \quad\text{ and } \quad D(h)h$$
vanish, \((O,d_O)\) is chain homotopic to \((O,d_O+D(h))\).
\end{lemma}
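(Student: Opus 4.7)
The natural candidates for the chain maps between $(O,d_O)$ and $(O,d_O+D(h))$ are $F=G=1+h$, both of degree $0$. Since $h^2=0$, we have $FG=GF=(1+h)^2=1+2h+h^2=1$ in characteristic $2$, so $F$ and $G$ are even strict inverses, not just homotopy inverses. The plan is therefore to verify three things: that $(O,d_O+D(h))$ is an honest object of $\mathfrak{Cx}^\ast(\mathcal{C})$; that $F=1+h$ is a chain map $(O,d_O)\to(O,d_O+D(h))$; and that $G=1+h$ is a chain map in the opposite direction.

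The key technical input is a Leibniz rule for $D$ on endomorphisms of $(O,d_O)$. Writing $D(f)=d_Of+fd_O+\partial f$, one computes
\[
D(fg)=d_Ofg+fgd_O+\partial(f)g+f\partial(g)=D(f)g+fD(g),
\]
the two $fd_Og$ terms cancelling in characteristic $2$. Applied to the hypothesis $hD(h)=0$ this gives
\[
0=D(hD(h))=D(h)D(h)+hD^2(h),
\]
so provided $D^2=0$, we deduce $D(h)^2=0$. A direct expansion shows $D^2(f)=(d_O^2+\partial d_O)f+f(d_O^2+\partial d_O)=\ast\cdot f+f\cdot\ast$, which vanishes in each of the three cases $\ast\in\{0,1,a_c\}$ permitted by definition~\ref{def:CatOfComplexes}: for $\ast=0$ trivially, for $\ast=1$ because $2f=0$, and for $\ast=a_c$ because $a_c$ is central and thus commutes with $f$ in the sense that $a_cf+fa_c=0$ in characteristic $2$.

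With these two facts in hand, the verification that $(O,d_O+D(h))$ lies in $\mathfrak{Cx}^\ast(\mathcal{C})$ is immediate:
\[
(d_O+D(h))^2+\partial(d_O+D(h))=(d_O^2+\partial d_O)+D(D(h))+D(h)^2=\ast+0+0=\ast.
\]
For the chain map property of $F=1+h$, one computes
\[
D(F)=(d_O+D(h))(1+h)+(1+h)d_O+\partial(1+h)=D(h)+D(h)h=D(h)h,
\]
which vanishes by the hypothesis $D(h)h=0$. The analogous computation for $G$ uses $hD(h)=0$ and yields $D(G)=hD(h)=0$.

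The only subtle step is the bookkeeping to confirm $D^2=0$ in the curved case $\ast=a_c$, where one must remember that centrality of $a_c$ really does give $a_cf=fa_c$ for morphisms $f$ between objects of $\mathfrak{Cx}^{a_c}(\mathcal{C})$; beyond that, the lemma is essentially a one-line computation once the Leibniz rule for $D$ is recorded.
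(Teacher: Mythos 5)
Your proposal is correct and takes essentially the same route as the paper's own proof: the same maps $1+h$ in both directions (which are strict inverses since $h^2=0$), the same use of $D^2=0$ and the Leibniz rule for $D$ to deduce $D(h)^2=0$ from $hD(h)=0$, and the same verification that $d_O+D(h)$ satisfies the defining condition $(\ast)$. The only blemish is the intermediate expression in your computation of $D(F)$, which should read $D(h)+D(h)+D(h)h$ (the two copies of $D(h)$ cancelling in characteristic $2$) rather than $D(h)+D(h)h$; the final identity $D(F)=D(h)h$, its vanishing by hypothesis, and the analogous computation for $G$ are unaffected.
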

\begin{proof}
We can easily check that $(O,d_O+D(h))$ is an object in $\mathfrak{Cx}^{\ast}(\mathfrak{Mat}(\mathcal{C}))$:
\begin{align*}
(d_O+D(h))^2+\partial(d_O+D(h)) = &\left(d_O^2+\partial(d_O)\right)+\left(d_OD(h)+D(h)d_O^{\phantom{2}\!\!}+\partial(D(h))\right)\\
&+D(h)D(h)
\end{align*}
The first term on the right gives ($\ast$) and the last term vanishes, which can be seen by applying the differential $D$ to $h D(h)=0$. The middle term also vanishes, which can be seen by expanding $D(h)=d_Oh+hd_O+\partial(h)$ and using the fact that a term ($\ast$) commutes with any morphism.
The chain isomorphisms between the two objects are given by 
\begin{center}
$\begin{tikzcd}[row sep=1.5cm, column sep=0.6cm]
(O,d_O) \arrow{rr}{1+h} && (O,d_O+D(h))
\end{tikzcd}$
\quad
and
\quad
$\begin{tikzcd}[row sep=1.5cm, column sep=0.6cm]
(O,d_O+D(h)) \arrow{rr}{1+h} && (O,d_O).
\end{tikzcd}$
\end{center}
Indeed, these two morphisms lie in the kernel of $D$, since $hD(h)$ and $D(h)h$ vanish. Their composition is equal to $1+h^2=1$.
\end{proof}

\setcounter{theorem}{0}
\chapter{Proof of the generalised clock theorem}\label{proofgct}

First of all, we introduce some terminology, some of which is inspired by \cite{Kauffman}: 
\begin{definition}
A \textbf{D-graph} is a planar graph embedded in the closed disc $D^2$ such that at least one vertex lies on $\partial D^2$. 
A \textbf{universe} is a D-graph such that all vertices in the interior of $D^2$ are 4-valent and the ones on $\partial D^2$ are 1-valent. A \textbf{face} of a graph embedded into $D^2$ is a connected component of the complement of this graph in $D^2$. 
\end{definition}
\begin{Remark} We define generalised Kauffman states of universes just as for tangles. In fact, a universe is obtained from a tangle diagram by forgetting the under/over information at each crossing, and conversely, any universe gives rise to some tangle diagram. 
\end{Remark}
\begin{definition}
A \textbf{clocked state} of a tangle/universe is a generalised Kauffman state with the property that one cannot perform any anticlockwise transposition move.
\end{definition}
\begin{proposition}\label{noninfmov}
 In any universe one cannot perform an infinite number of clockwise transposition moves in sequence.
\end{proposition}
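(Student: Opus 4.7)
My plan is to deduce the termination result from the existence of a strictly monotone $\mathbb{Z}$-valued potential function on $\mathbb{K}(U)$. The set $\mathbb{K}(U)$ is finite, since a universe has finitely many crossings and each crossing admits only four choices of marker position. Hence it suffices to show that the binary relation ``$x$ is obtained from $y$ by a single clockwise transposition move'' is acyclic on $\mathbb{K}(U)$; termination of any clockwise sequence then follows from the fact that a strictly decreasing sequence in a finite set must stabilise.

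I would construct the potential $W\colon \mathbb{K}(U)\to \mathbb{Z}$ in a local-to-global fashion. Assign to each crossing $c$ and each of its four incident regions $R$ a weight $w_c(R)\in\mathbb{Z}$, and set $W(x)=\sum_c w_c(x(c))$, where $x(c)$ denotes the region occupied by the marker of $x$ at $c$. Under a transposition move at crossings $(c_1,c_2)$ with shared regions $R,R'$ (marker at $c_1$ moving from $R$ to $R'$ and marker at $c_2$ from $R'$ to $R$), the change in $W$ equals
$$\Delta W = [w_{c_1}(R')-w_{c_1}(R)]+[w_{c_2}(R)-w_{c_2}(R')],$$
and the task is to choose the weights $w_c$ so that $\Delta W<0$ on every clockwise move.

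The hard part will be the global consistency of these weights. A naive attempt using the cyclic rank of the four quadrants at each crossing (valued in $\mathbb{Z}/4$) fails, because ``clockwise'' produces a rank shift of $+2\pmod 4$, which is not monotonic once lifted to $\mathbb{Z}$: depending on the starting rank, the same local move can be seen to change $W$ by either $+2$ or $-2$. Following the strategy of Kauffman's original proof of the clock theorem for knot universes, I would instead use the planar embedding of $U$ in $D^2$ in an essential way: fix a generic reference direction and define $w_c(R)$ via the real-valued angular position of $R$ at $c$ relative to this reference. The clockwise orientation of a transposition move then supplies a globally consistent sign. Verifying that the two summands in $\Delta W$ always combine to give a strictly negative quantity is where the main work lies, and I expect it to reduce to a planarity argument: applying the Jordan curve theorem to $\partial(R\cup R')$, the clockwise hypothesis forces the two markers to sweep their $180^\circ$ arcs on opposite sides of this boundary curve in the ambient disc, which pins down the required sign.

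The remaining details are minor modifications to accommodate open regions (so that $W$ is well-defined simultaneously for every site $s\in\mathbb{S}(U)$, noting that transposition moves preserve the site and so it suffices to work site by site) and the possibility of a disconnected universe (which reduces to the connected case componentwise since a transposition move involves two crossings lying in a common component). With the monovariant $W$ in hand, the proposition follows immediately: an infinite sequence of clockwise moves would produce an infinite strictly decreasing sequence in the finite set $W(\mathbb{K}(U))\subset\mathbb{Z}$, a contradiction.
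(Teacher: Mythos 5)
Your reduction of the statement to acyclicity of the clockwise-move relation on the finite set $\mathbb{K}(U)$ is fine, but the proof then rests entirely on the existence of a locally defined potential $W(x)=\sum_c w_c(x(c))$ that strictly decreases under every clockwise move, and that existence is never established: the step you defer (``verifying that the two summands in $\Delta W$ always combine to give a strictly negative quantity\dots is where the main work lies'') is precisely the content of the proposition, and the specific construction you propose does not survive scrutiny. If $w_c$ is the angular position of the occupied quadrant at $c$ relative to a fixed reference ray, then a clockwise move changes $W$ by a fixed negative amount \emph{except} when one of the two markers sweeps past the reference ray, in which case that crossing's contribution jumps up by a full turn and $\Delta W$ becomes positive; nothing in your set-up forces the partner crossing's contribution to compensate. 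This wrap-around is not a pathology you can legislate away by genericity of the reference direction: markers at crossings far from $\partial D^2$ genuinely do complete full clockwise cycles around their crossings in maximal clockwise sequences (the standard argument shows a marker at distance $n$ from an outermost crossing may make on the order of $2^n$ full cycles before the sequence terminates), and any full cycle crosses every ray, so some individual move in it has nonnegative local contribution at that crossing. Your Jordan-curve remark about the two markers sweeping arcs ``on opposite sides'' addresses the sign of a single non-wrapping move, not this issue; also, as a side point, in a transposition move each marker travels between the \emph{two shared regions} of the pair of crossings, which in the relevant configuration is a quarter-turn around its crossing, not a $180^\circ$ sweep.

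The paper's proof (following Gilmer--Litherland) takes a different and genuinely easier route, and it is worth seeing why it succeeds where the potential fails: it bounds, crossing by crossing, the number of full clockwise cycles a marker can make, by induction on the distance to an ``outermost'' crossing. Because transposition moves preserve the site, a marker at a crossing adjacent to an open unoccupied region can never complete a full cycle; a marker one edge away must transpose with that outermost marker during each of its cycles, so it completes fewer than two; and in general a marker $n$ edges away completes fewer than $2^n$ cycles, whence the total number of moves is finite. Note that site preservation is the engine of this argument, whereas in your sketch it appears only as a bookkeeping remark; any repair of your approach would have to feed the boundary constraint into the weights in an essential way (for instance, an abstract strictly decreasing potential exists once acyclicity is known, but that is circular), so as written the proposal has a genuine gap rather than a routine verification left to the reader.
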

\begin{proposition}\label{clockunique}
The set of Kauffman states of a fixed site of a universe has a unique clocked state if it is not empty.
\end{proposition}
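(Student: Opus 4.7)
The plan is to induct on the number of crossings of the universe $U$, generalising Kauffman's original clock-theorem argument to accommodate the site data. The base case of a universe with no crossings is trivial: $\mathbb{K}(U,s)$ is either empty or a singleton, so uniqueness is vacuous.

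For the inductive step, suppose for contradiction that $x$ and $y$ are two distinct clocked states in $\mathbb{K}(U,s)$. First I would isolate their disagreement by considering the set $\mathcal{D}$ of crossings at which the markers of $x$ and $y$ sit in distinct regions, together with the one-complex $\Gamma \subset D^2$ whose vertices are $\mathcal{D}$ and whose edges are those arcs of $U$ which separate a region occupied in $x$ from the adjacent region occupied in $y$ at the same crossing. Since every closed region of $U$ must carry exactly one marker in both $x$ and $y$, while the regions of $s$ carry one and the remaining open regions none, a parity/matching count across the two-colouring of $D^2 \setminus \Gamma$ by ``occupied in $x$ but not in $y$'' versus ``occupied in $y$ but not in $x$'' forces every connected component of $\Gamma$ disjoint from $\partial D^2$ to contain a cycle.

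Pick an innermost such cycle $C \subset \Gamma$, bounding a disk $\Delta \subset D^2$ whose interior meets $\Gamma$ only in edges incident to $C$. The interior of $\Delta$ contains only closed regions, and the configuration of $x$- and $y$-markers along $C$ alternates around $\partial \Delta$ in a way dictated by the fact that the two regions on either side of each edge of $C$ switch occupancy between $x$ and $y$. One then verifies, by inspecting the local picture at a pair of consecutive crossings on $C$, that somewhere along $C$ the marker pair of $x$ (say) sits in the configuration enabling an \emph{anticlockwise} transposition move; swapping them produces $y$'s marker pair at those two crossings, which contradicts clockedness of $x$. If instead every component of $\Gamma$ meets $\partial D^2$, an analogous boundary analysis delivers an anticlockwise move using the site data; alternatively, one can in this case delete a small neighbourhood of a boundary-adjacent crossing to produce a smaller universe $U'$ on which the induction hypothesis applies.

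The main obstacle I anticipate is the final local verification: showing that the innermost cycle $C$ genuinely forces an anticlockwise move in one of $x$ or $y$, rather than a clockwise one in both (which would be consistent with the hypothesis). This is really a bookkeeping problem about the four quadrants at each crossing on $C$ and the rotational sense in which markers of $x$ are displaced to markers of $y$ as one traverses $\partial\Delta$. It should reduce to a winding-number computation along $C$, but the presence of open regions cut out by $s$ means that the standard closed-diagram argument has to be corrected by a boundary term whenever $\Gamma$ reaches $\partial D^2$, and checking that this correction respects the parity that drives the contradiction will require care.
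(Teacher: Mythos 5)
Your overall strategy is genuinely different from the paper's. The paper does not compare two clocked states directly: it sets up a bijection between Kauffman states and D-forests in the chequerboard graph, proves a forced-marker result (for a carefully chosen pair of adjacent roots on $\partial D^2$, the marker at the first crossing along their common edge sits in the same quadrant in \emph{every} clocked state of the given site, established via an Euler-characteristic count and an iteration that otherwise produces an anticlockwise move), and then inducts on crossings by deleting a neighbourhood of that edge, so that the site of the smaller universe is determined and the induction hypothesis pins down the whole state. Your proposal instead assumes two distinct clocked states and tries to extract an anticlockwise move from their difference complex $\Gamma$. That could in principle work, but as written it has a genuine gap, and it sits exactly where you flag it: the claim that an innermost cycle of $\Gamma$ forces an anticlockwise transposition move in one of $x$ or $y$ is the entire content of the theorem and is not argued. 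Worse, it is not clear the difference data even produces the local template a transposition move needs. A transposition move requires two crossings sharing \emph{two} regions, with the markers of the state sitting in those shared regions and being displaced to the opposite quadrants; but at a crossing where $x$ and $y$ disagree, the two markers may occupy \emph{adjacent} rather than opposite quadrants, and consecutive crossings along your cycle $C$ need not share two regions at all. So the ``alternating occupancy'' picture along $\partial\Delta$ does not by itself manufacture a crossing pair on which either state can perform any transposition move, let alone an anticlockwise one; some global argument (in the paper, the D-forest/Euler-characteristic machinery) is needed to locate such a pair.

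Two further points would need repair even if the local verification were supplied. First, the parity claim that every component of $\Gamma$ disjoint from $\partial D^2$ contains a cycle is asserted, not proved; the counting of markers per region does not obviously rule out tree components in the interior (in the forest picture, the natural statement is about the union of the two forests, which requires the correspondence of the paper's Lemma on D-forests to even formulate). Second, your fallback in the boundary case --- delete a neighbourhood of a boundary-adjacent crossing and apply the induction hypothesis --- does not go through as stated: unless you first know that every clocked state of the given site places its marker at that crossing in the same quadrant, the two restrictions $x\vert$ and $y\vert$ may be clocked states of the smaller universe with \emph{different} sites, and the induction hypothesis gives no contradiction. Supplying that forced-marker statement is precisely the paper's Proposition on the special pair of adjacent roots, so without an argument for it your induction has no engine.
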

\begin{proof}[Proof of the generalised clock theorem~\sref{geclockt}]
If the set of Kauffman states for a given site is empty, there is nothing to show. Otherwise, given any two Kauffman states $x$ and $x'$ of a fixed site, proposition \ref{noninfmov} gives us two clocked states $y$ and $y'$ with $x\leq y$ and $x'\leq y'$. Proposition \ref{clockunique} tells us that $y=y'$, so the set of Kauffman states of a fixed site is a (finite) join-semilattice.\\
The duals of propositions \ref{noninfmov} and \ref{clockunique} follow from considering mirror diagrams and observing that the mirror of a clockwise transposition move is a anticlockwise transposition move and vice versa. Thus we see that the set of Kauffman states of a fixed site is also a (finite) meet-semilattice and the result follows.
\end{proof}
\begin{proof}[Proof of proposition~\ref{noninfmov}]
The argument from 
\cite[lemma~4]{ouka} carries over to the tangle case; for completeness, we recall it here: First of all, note that transposition moves do not change sites. Hence, the marker of an outermost vertex, i.\,e.\ one which meets an open unoccupied region, cannot make a complete cycle. A marker at a vertex which has a common edge with an outermost vertex cannot make two cycles because in the course of each cycle it has to interact with the marker at the outermost vertex and that one cannot make a full cycle. Similarly, a marker at a vertex which is $n=2$ edges away from an outermost vertex cannot make $2^n=2^2=4$ full cycles, and so on. 
\end{proof}
For proposition~\ref{clockunique}, we first generalise the correspondence used in \cite{ouka} between spanning trees and states, see also \cite[theorem~2.4]{Kauffman}. \\
Given a connected oriented universe $U$, partition the set of faces of $U$ into two subsets by shading each component which is separated from an arbitrarily fixed region
by an odd number of edges. Form a new D-graph $G(U)=G$ with one vertex
for each shaded region and one edge for each crossing shared by these shaded
regions, such that the vertices lie in their corresponding regions and those vertices of all open regions lie on $\partial D^2$. In a similar way, we obtain a D-graph $H$ from the unshaded regions. Note that $H$ is the dual graph of $G$. 
\begin{definition}
Given a D-graph $G$ and its dual $G^\ast$, a \textbf{D-forest} $F$ is the union of a maximal forest $F_G$ in $G$ and its dual $F_{G}^\ast$ in $G^\ast$ such that each forest has at least one point on $\partial D^2$, together with a specification of a root on $\partial D^2$ for each tree in $F$.
\end{definition}

\begin{lemma}\label{CorrespondenceKauffForest}
There is a one-to-one correspondence between D-forests in \(G(U)\) and Kauffman states of a (connected) universe \(U\).
\end{lemma}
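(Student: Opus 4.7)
The plan is to generalise Kauffman's bijection between spanning trees of the shaded graph and Kauffman states of a link diagram. The key local observation is that at each crossing $c$ of $U$, the four adjacent regions (two shaded, two unshaded) give rise to exactly one edge $e_c$ of $G=G(U)$ connecting the two shaded vertices and its dual edge $e_c^\ast$ of $H$ connecting the two unshaded vertices. A marker at $c$ in a shaded (resp. unshaded) region $v$ is naturally interpreted as recording the pair $(e_c,v)$ (resp. $(e_c^\ast,v)$), thereby picking out one of these edges and orienting it toward one of its endpoints.

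First I would construct the forward map. Given a Kauffman state $K$ of $U$ with site $s$, let $F_G$ consist of those edges $e_c$ whose marker at $c$ lies in a shaded region, and let $F_H$ consist of the dual edges coming from unshaded markers. Each vertex $v$ of $G$ corresponding to a closed shaded region receives, by the Kauffman condition, exactly one marker, which distinguishes one incident edge of $F_G$; interpret this as a ``parent pointer'' from $v$ along that edge. Vertices of $G$ on $\partial D^2$ which receive no marker are declared roots. The analogous construction yields roots on the unshaded side. Acyclicity of $F_G$ is then immediate: a cycle would force some vertex to lie at the head of two parent pointers, contradicting the one-marker-per-region property. So $F_G$ (and dually $F_H$) is a forest whose components each carry a canonical choice of root.

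Next I would verify maximality and the edge count. By construction, $|F_G|+|F_H|$ equals the number of crossings of $U$, while an Euler characteristic computation applied to $D^2$ with its cell decomposition coming from $U$ shows that a maximal pair of dual forests in $(G,H)$ has exactly this total size; the number of trees in $F_G$ then matches the number of shaded open regions unoccupied by $K$, so each tree contains precisely one boundary vertex eligible to be its root, and similarly for $F_H$. This establishes that the output is a D-forest in the sense of the definition, and the assignment $K\mapsto(F_G,F_H)$ is well defined.

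The inverse map proceeds by reversing the local recipe: given a D-forest with chosen roots, each crossing $c$ has exactly one of $e_c,e_c^\ast$ in the forest; deleting it splits its tree into two components, and the marker at $c$ is placed in the region corresponding to the component \emph{not} containing the root of that tree. Walking from any non-root vertex $v$ toward its root identifies the unique first edge, which supplies $v$ with exactly one marker, while root vertices (necessarily on $\partial D^2$) receive none; this recovers the Kauffman conditions on closed and open regions. The two maps are mutually inverse by direct inspection of the local model. The main obstacle I anticipate is the bookkeeping in the Euler characteristic argument ensuring that ``maximal forest with a root on $\partial D^2$ per tree'' corresponds exactly to the site data of a Kauffman state; once the relationship between boundary vertices of $G$, boundary vertices of $H$, and arcs of $\partial D^2\smallsetminus\partial U$ is set up carefully, the rest is essentially Kauffman's original argument adapted to the bordered setting.
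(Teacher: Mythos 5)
Your overall strategy is the same as the paper's (and Kauffman's): each crossing contributes exactly one edge, to either $G$ or its dual, and the marker picks out an endpoint; the forest-to-state direction via ``orient away from the root and mark the head'' matches the paper's proof verbatim. The problem is the state-to-forest direction, specifically your acyclicity claim. You assert that a cycle in $F_G$ ``would force some vertex to lie at the head of two parent pointers, contradicting the one-marker-per-region property.'' That is false: each edge of a cycle is the distinguished (parent-pointer) edge of exactly one of its two endpoints, and since each vertex carries at most one pointer, a pigeonhole count on a $k$-cycle forces each cycle vertex to distinguish exactly one cycle edge --- which is precisely a coherently directed cycle of parent pointers. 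Such a configuration is perfectly consistent with every closed region carrying exactly one marker; no vertex receives two pointers. In effect, your argument only rules out cycles in a component already known to contain a root, which is circular: that every component reaches a root is exactly what acyclicity is supposed to establish.

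What is actually needed, and what the paper supplies, is a global planarity argument. A simple cycle in $F_G$ (or in the dual) traces out a subdiagram $D'\subset D^2$; if the cycle has $n$ edges/vertices and encloses $m$ crossings of $U$, then counting cells gives $(m+n)$ vertices, $2(m+n)$ edges, hence $(m+n+1)$ faces of $D'$, of which $m+1$ are interior faces that must each be occupied --- but only the $m$ interior crossings can supply markers, a contradiction. (Equivalently, one can phrase this via the Jordan curve theorem: the unshaded regions trapped inside the cycle cannot all be hit.) Once this Euler-characteristic step replaces your local argument, the rest of your write-up is fine: the edge count and the identification of unoccupied open regions with roots (one per tree, since components of $F_G$ number exactly the unoccupied open shaded regions, and following pointers in an acyclic finite graph must terminate at an unoccupied boundary vertex) go through, and the two maps are inverse by the local model, as in the paper.
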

\begin{proof}
The argument from \cite[lemma~2.4]{Kauffman} carries over, but we spell this out explicitly nonetheless. 
Given a D-forest $F$, we orient its edges in the canonical way, that is arrows point away from the roots. For each edge of $F$, place a marker into the region that the arrow of the edge is pointing towards. We claim that this defines a valid Kauffman state. Indeed, for each vertex in $F$ except the roots there is exactly one arrow pointing towards this vertex. Since vertices in $F$ correspond one-to-one to faces of $U$, we are done. \\
Going from Kauffman states back to forests in $G$ is now straightforward: For each marker at a crossing, we draw an edge between the corresponding vertices, according to the rule used above. Note that at each crossing, there is exactly one edge, so we get two disjoint subgraphs, one in $G(U)$ and one in $G(U)^\ast$. There is no simple cycle in either of these subgraphs. Otherwise, there would be a subdiagram $D'$ traced out by the cycle in $D^2$. A simple Euler characteristic argument leads to a contradiction: Say, there are $n$ vertices in this cycle (each of which becomes a 2-valent vertex in $D'$) and $m$ crossings of $U$ in $D'$. Then we have a total of $(m+n)$ vertices, $2(m+n)$ edges and therefore $1+2(m+n)-(m+n)=(m+n+1)$ faces in $D'$. Only the inner faces need to be occupied by markers, which means $m$ markers have to occupy $(m+1)$ faces. Contradiction. \\
Finally, every unoccupied region of $U$ becomes a root in $F$. It is clear that every tree in $F$ has exactly one root.
\end{proof}

Next, we need to describe what a transposition move looks like in the language of D-forests. The following picture illustrates the clockwise transposition move. The dotted edges denote those edges in the graph $G$ and its dual $G^\ast$ that do not belong to the forest. The dashed edges denote edges in the graph that may or may not be in the forest.
\begin{center}
\psset{unit=0.8}
\begin{pspicture}(-7.5,-2.8)(7.5,2.8)
\rput(-4,1.5){
\pspolygon*[linecolor=lightgray](-3,-1)(-2,-1)(-2,0)(-3,0)
\pspolygon*[linecolor=lightgray](3,-1)(2,-1)(2,0)(3,0)
\pspolygon*[linecolor=lightgray](-2,0)(2,0)(2,1)(-2,1)
\psline(-2,1)(-2,-1)
\psline(2,1)(2,-1)
\psline(-3,0)(3,0)
\psline[linewidth=1pt](-3,-1)(-2,0)
\psecurve[linewidth=1pt]{->}(-3,-2)(-2,0)(0,1)(2,0)
\psline[linestyle=dotted,linewidth=1pt](-3,1)(-2,0)
\psecurve[linestyle=dotted,linewidth=1pt](-3,2)(-2,0)(0,-1)(2,0)
\psline[linewidth=1pt](3,1)(2,0)
\psecurve[linewidth=1pt]{->}(3,2)(2,0)(0,-1)(-2,0)
\psline[linestyle=dotted,linewidth=1pt](3,-1)(2,0)
\psecurve[linestyle=dotted,linewidth=1pt](3,-2)(2,0)(0,1)(-2,0)

\psecurve*[linecolor=white](-1,0)(0,0.5)(1,0)(0,-0.5)(-1,0)(0,0.5)(1,0)
\psline[linestyle=dotted](-1,0)(1,0)

\uput{0.4}[45](-2,0){\psdot[dotsize=5pt](0,0)}
\uput{0.4}[-135](2,0){\psdot[dotsize=5pt](0,0)}
\psarc[arrowsize=1.5pt 2]{<-}(-2,0){0.4}{-50}{45}
\psarc[arrowsize=1.5pt 2]{<-}(2,0){0.4}{130}{225}
}

\rput(0,1.5){$\longrightarrow$}

\rput(4,1.5){
\pspolygon*[linecolor=lightgray](-3,-1)(-2,-1)(-2,0)(-3,0)
\pspolygon*[linecolor=lightgray](3,-1)(2,-1)(2,0)(3,0)
\pspolygon*[linecolor=lightgray](-2,0)(2,0)(2,1)(-2,1)
\psline(-2,-1)(-2,1)
\psline(2,-1)(2,1)
\psline(-3,0)(3,0)
\psline[linewidth=1pt](-3,1)(-2,0)
\psecurve[linewidth=1pt]{->}(-3,2)(-2,0)(0,-1)(2,0)
\psline[linestyle=dotted,linewidth=1pt](-3,-1)(-2,0)
\psecurve[linestyle=dotted,linewidth=1pt](-3,-2)(-2,0)(0,1)(2,0)
\psline[linewidth=1pt](3,-1)(2,0)
\psecurve[linewidth=1pt]{->}(3,-2)(2,0)(0,1)(-2,0)
\psline[linestyle=dotted,linewidth=1pt](3,1)(2,0)
\psecurve[linestyle=dotted,linewidth=1pt](3,2)(2,0)(0,-1)(-2,0)

\psecurve*[linecolor=white](-1,0)(0,0.5)(1,0)(0,-0.5)(-1,0)(0,0.5)(1,0)
\psline[linestyle=dotted](-1,0)(1,0)

\uput{0.4}[135](2,0){\psdot[dotsize=5pt](0,0)}
\uput{0.4}[-45](-2,0){\psdot[dotsize=5pt](0,0)}

}

\rput(-4,-1.5){

\psline[linewidth=1pt](-3,-1)(-2,0)
\psecurve[linewidth=1pt]{->}(-3,-2)(-2,0)(0,1)(2,0)
\psline[linestyle=dotted,linewidth=1pt](-3,1)(-2,0)
\psecurve[linestyle=dotted,linewidth=1pt](-3,2)(-2,0)(0,-1)(2,0)
\psline[linewidth=1pt](3,1)(2,0)
\psecurve[linewidth=1pt]{->}(3,2)(2,0)(0,-1)(-2,0)
\psline[linestyle=dotted,linewidth=1pt](3,-1)(2,0)
\psecurve[linestyle=dotted,linewidth=1pt](3,-2)(2,0)(0,1)(-2,0)

\pscurve[linestyle=dashed](0,1)(0.8,0)(0,-1)
\pscurve[linestyle=dashed](0,1)(0.25,0)(0,-1)
\pscurve[linestyle=dashed](0,1)(-0.25,0)(0,-1)
\pscurve[linestyle=dashed](0,1)(-0.8,0)(0,-1)

\psecurve*[linecolor=white](-1,0)(0,0.3)(1,0)(0,-0.3)(-1,0)(0,0.3)(1,0)
}

\rput(0,-1.5){$\longrightarrow$}

\rput(4,-1.5){
\psline[linewidth=1pt](-3,1)(-2,0)
\psecurve[linewidth=1pt]{->}(-3,2)(-2,0)(0,-1)(2,0)
\psline[linestyle=dotted,linewidth=1pt](-3,-1)(-2,0)
\psecurve[linestyle=dotted,linewidth=1pt](-3,-2)(-2,0)(0,1)(2,0)
\psline[linewidth=1pt](3,-1)(2,0)
\psecurve[linewidth=1pt]{->}(3,-2)(2,0)(0,1)(-2,0)
\psline[linestyle=dotted,linewidth=1pt](3,1)(2,0)
\psecurve[linestyle=dotted,linewidth=1pt](3,2)(2,0)(0,-1)(-2,0)

\pscurve[linestyle=dashed](0,1)(0.8,0)(0,-1)
\pscurve[linestyle=dashed](0,1)(0.25,0)(0,-1)
\pscurve[linestyle=dashed](0,1)(-0.25,0)(0,-1)
\pscurve[linestyle=dashed](0,1)(-0.8,0)(0,-1)

\psecurve*[linecolor=white](-1,0)(0,0.3)(1,0)(0,-0.3)(-1,0)(0,0.3)(1,0)
}
\end{pspicture}
\end{center}

\noindent
We are now ready to give an outline of the proof of proposition \ref{clockunique}. The proof goes by induction on the number of crossings in a tangle diagram. The induction step relies on the fact that a certain edge of the graph $G$ belongs to any clocked forest. This is the content of the next proposition.

\begin{proposition}\label{whatssospecial}
For any site of a (connected) universe, there exists a pair of two adjacent roots with the following property: For all \(r\geq0\), the number of roots in the next \(2r\) boundary faces (moving anticlockwise along the boundary, that is to the right of the picture below, see also figure \ref{FigEulerCharForTreeNotOnBoundaryLemma}) contain at least \(r\) roots.\\
Furthermore, suppose we have a site of a connected universe together with two such roots. Since the diagram is connected, we can consider the first crossing that one reaches along their common edge from the boundary of the disc. Suppose the two other regions at this crossing are no roots, as in the following picture.
\begin{center}
\begin{pspicture}(-6,-1.2)(6,1)

\psline[linestyle=dotted,linewidth=1pt](-2.5,-1)(-2,-1)
\psline[linestyle=dotted,linewidth=1pt](2.5,-1)(2,-1)
\pscustom[linewidth=1pt]{\psline[linewidth=1pt](-2,-1)(0,-1)
\psline[linewidth=1pt](0,-1)(2,-1)}

\psline[linestyle=dotted](0,0.5)(0,1)
\psline[linestyle=dotted](-1,0)(-0.5,0)
\psline[linestyle=dotted](0.5,0)(1,0)
\psline(0,-1)(0,0.5)
\psline(-0.5,0)(0.5,0)

\rput[b](-1,-0.9){root}
\rput[b](1,-0.9){root}
\rput[r](-0.2,0.3){no root}
\rput[l](0.2,0.3){no root}

\rput[cr](-3,-1){$\partial D^2$}
\end{pspicture}
\end{center}
Then the marker of the crossing in the picture above sits in the upper left region for any clocked Kauffman state of this universe with respect to the fixed site.
\end{proposition}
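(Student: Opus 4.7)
\medskip

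The proof splits naturally into two essentially independent parts, so I plan to treat them separately.

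\textbf{Part 1: Existence of the special pair of adjacent roots.} This is a purely combinatorial statement about the cyclic sequence of the $2n$ open regions on the boundary, and I plan to deduce it from the \emph{cycle lemma} (Dvoretzky--Motzkin). Assign $+1$ to each root (unoccupied open region) and $-1$ to each occupied open region. A site places markers in $n-1$ open regions, leaving $n+1$ as roots, so the total sum around the cycle is $+2$. The cycle lemma then guarantees exactly two cyclic starting positions $p$ at which every partial sum $S_k^{(p)}$, $k=1,\ldots,2n$, is $\geq 1$. I will then observe that such a starting position automatically produces two adjacent roots: $S_1^{(p)}\geq 1$ forces $z_p=+1$, so position $p$ is a root; and $S_2^{(p)}\in\{0,2\}$ combined with $S_2^{(p)}\geq 1$ forces $S_2^{(p)}=2$, so $z_{p+1}=+1$ as well. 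A parity argument upgrades $S_k^{(p)}\geq 1$ at even $k=2+2r$ to $S_{2+2r}^{(p)}\geq 2$, which, after subtracting the contribution of the initial two roots, translates directly to ``at least $r$ roots among the next $2r$ boundary faces''.

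\textbf{Part 2: Forced position of the marker in a clocked state.} Here I plan to argue by contradiction: assume the marker of the distinguished crossing $X$ sits in the upper-right region $U'$, and then exhibit an anticlockwise transposition move, contradicting clockedness. I will first translate the data into the D-forest language of Lemma~\ref{CorrespondenceKauffForest}: with the shading convention so that $R_1,U'$ are both in $G$ and $R_2,U$ both in $G^*$, the hypothesis ``marker at $X$ in $U'$'' is equivalent to saying that the edge of the D-forest $F$ at the crossing $X$ is the $G$-edge from $R_1$ to $U'$ (rather than the $G^*$-edge from $R_2$ to $U$). The anticlockwise move I hope to produce is a local ``flip'' at $X$ and a suitable partner crossing, replacing this $G$-edge by the $G^*$-edge at $X$ (and making the corresponding flip at the partner).

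The crux is to locate the partner crossing. Starting at the top edge of $X$, I will trace a canonical search path through the diagram, at each step moving through a shared region of the current crossing to an adjacent crossing. The ballot-type condition from Part~1 is essential here: it should ensure that the search path, which accumulates roots as it winds around, cannot terminate in a ``bad'' way (i.e.\ get absorbed into a D-subtree rooted at some occupied boundary region on the wrong side). Concretely, I will set up a discrete invariant comparing the count of roots seen so far (controlled by the ballot inequality) against the count of markers the search has had to ``bypass'', and verify that the former strictly dominates, so that at some finite step the search locates a crossing whose marker configuration permits the desired anticlockwise flip. Together with the flip at $X$, this yields the contradicting anticlockwise transposition move.

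\textbf{Main obstacle.} Part~1 is genuinely routine once written out. The real content is Part~2, and specifically the construction and termination of the search path: the only control one has is the purely boundary-local ballot condition from Part~1, yet the obstruction one must rule out is a global one (that the search might circle back without ever meeting a flippable configuration). I expect the cleanest way to achieve this will be to phrase the search recursively (``at this crossing, the marker lies in one of two regions; depending on which, pass to such-and-such neighbour'') and then induct on the number of crossings met, using the ballot inequality to maintain the invariant that we are in a ``good'' part of the diagram. Should this run aground because the search cycles internally, my fallback is to appeal to the \emph{second} special starting point produced by the cycle lemma (reading the boundary in the opposite direction), which supplies a dual ballot condition and should guarantee that at least one of the two directions of search succeeds.
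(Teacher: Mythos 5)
Your Part~1 is correct and is essentially the paper's own argument: the paper likewise takes partial sums of $\pm1$ along the boundary (total $+2$) and picks the rightmost minimum, which is exactly the cycle-lemma argument you outline.

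The gap is in Part~2, and it starts with what you propose to exhibit. You plan a ``local flip at $X$ and a suitable partner crossing, replacing this $G$-edge by the $G^\ast$-edge at $X$'', i.e.\ an anticlockwise transposition move \emph{involving the crossing $X$ itself}. A transposition move only ever involves two crossings sharing two regions, with both markers in the shared regions, and there is no reason such a partner for $X$ exists: when the marker of $X$ sits in the upper-right region, the state can still be locally rigid at $X$, and the anticlockwise move whose existence contradicts clockedness typically lives far away from $X$. This is how the paper's proof actually runs: assuming the forest edge at $X$ is $e_0$ (pointing into the upper-right vertex $p_0$), one iterates ``take the unique incoming forest edge $e_i$ of $p_i$ and follow its dual to the left to obtain $p_{i+1}$'', and an anticlockwise move is only produced at the step where $p_{k+1}=p_{k-1}$, at the two crossings carrying $e_{k-1}$ and $e_k$ — which coincide with $X$ only in the lucky case $k=1$. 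A search that must terminate in a flip ``together with the flip at $X$'' cannot succeed in general.

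The second, deeper problem is that the step you defer to a hoped-for ``discrete invariant comparing roots seen against markers bypassed'' is precisely the content of the proposition, and a count along a one-dimensional search path is not the right shape of argument. In the paper the ballot hypothesis is used through a two-dimensional Euler-characteristic count (lemma~\ref{EulerCharArgTreeNotOnBoundary}): if the subtree swept out at $p_i$ reached a boundary vertex $q$, the disc it cuts off (figure~\ref{FigEulerCharForTreeNotOnBoundaryLemma}) would contain $m+1$ markers but, after discounting the $s+1$ unoccupied faces along the tree and the at least $r+1$ roots on the boundary arc — this is exactly where the ``at least $r$ roots among the next $2r$ faces'' enters, with $r$ determined only a posteriori by where $q$ sits — at most $m$ faces remain to receive them, a contradiction. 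That lemma is what makes each iteration step well defined (the next vertex is interior, hence not a root, hence has a unique incoming edge), and termination is a separate argument (the number of vertices enclosed strictly decreases). None of this is supplied or replaced by your path-based invariant, and your fallback of switching to the second cycle-lemma starting point does not help either: the statement is asserted for the \emph{given} pair of roots, so you cannot trade it for a different pair read in the opposite direction.
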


We postpone the proof of proposition \ref{whatssospecial} and first see how it implies proposition~\ref{clockunique}.

\begin{proof}[Proof of proposition~\ref{clockunique}]
The start of the induction is given by all those tangle diagrams that have exactly one Kauffman state. For these, the statement is trivially true. \\
For the induction step, consider a universe $U$ and a clocked Kaufman state $x$ of $U$. Without loss of generality, we may assume that the diagram is connected. Indeed: Otherwise we can consider its connected components by splitting the diagram at a region with more than one component on $\partial D^2$ and repeating this process as often as necessary. Observe that the restrictions $x_i$ of $x$ to the connected diagrams are also clocked and that the sites of the restrictions of any other (clocked) Kauffman state of the same site as $x$ agree with the sites of the $x_i$. Hence, if we can show the proposition for the $x_i$, then it also holds for $x$. \\
Now, consider two adjacent roots and the first crossing that one reaches along their common edge from the boundary of the disc. Obviously, not all four regions of this crossing can be roots. If there is exactly one additional root at this crossing, one can split the diagram into two parts like so: 
\begin{center}
\begin{pspicture}(-7,-1.2)(7,1.2)
\rput(-3,0){
\psline[linestyle=dotted,linewidth=1pt](-2.5,-1)(-2,-1)
\psline[linestyle=dotted,linewidth=1pt](2.5,-1)(2,-1)
\pscustom[linewidth=1pt]{\psline[linewidth=1pt](-2,-1)(0,-1)
\psline[linewidth=1pt](0,-1)(2,-1)}

\psline[linestyle=dotted](0,0.5)(0,1)
\psline[linestyle=dotted](-1,0)(-0.5,0)
\psline[linestyle=dotted](0.5,0)(1,0)
\psline(0,-1)(0,0.5)
\psline(-0.5,0)(0.5,0)

\psdot(0.2,0.2)
\psline[linestyle=dotted,linewidth=1pt](-2.5,1)(-2,1)
\psline[linestyle=dotted,linewidth=1pt](-0.5,1)(0,1)
\pscustom[linewidth=1pt]{\psline[linewidth=1pt](-2,1)(-1,1)
\psline[linewidth=1pt](-1,1)(-0.5,1)}

\rput(-0.05,0){
\psset{linestyle=dashed,linewidth=0.5pt}
\psecurve(-1,-1)(0,0)(-1,1)(-0.5,1.5)
\psline(0,0)(0,-1)
}
\rput(0.05,0){
\psset{linestyle=dashed,linewidth=0.5pt}
\psecurve(-1,-1)(0,0)(-1,1)(-0.5,1.5)
\psline(0,0)(0,-1)
}

\psset{linestyle=dashed}
\rput[b](-1,-0.9){root}
\rput[b](1,-0.9){root}
\rput[t](-1.5,0.9){root}

\rput[cr](-3,1){$\partial D^2$}
\rput[cr](-3,-1){$\partial D^2$}
}
\rput(0,0){$\longrightarrow$}

\rput(3,0){
\psline[linestyle=dotted,linewidth=1pt](-2.5,-1)(-2,-1)
\pscustom[linewidth=1pt]{
\psline[linewidth=1pt](-2,-1)(-0.5,-1)
\psarc[linewidth=1pt](-0.5,-0.5){0.5}{-90}{0}
\psline[linewidth=1pt](0,-0.5)(0,0.5)
\psarc[linewidth=1pt](-0.5,0.5){0.5}{0}{90}
\psline[linewidth=1pt](-2,1)(-0.5,1)}
\psline[linestyle=dotted,linewidth=1pt](-2.5,1)(-2,1)
\rput[b](-1.1,-0.9){root}
\rput[t](-1.1,0.9){root}

\psline[linestyle=dotted](-1,0)(-0.5,0)
\psline(-0.5,0)(0,0)

\rput(0.5,0){
\psline[linestyle=dotted,linewidth=1pt](2.5,-1)(2,-1)
\pscustom[linewidth=1pt]{
\psline[linewidth=1pt](2,-1)(0.5,-1)
\psarcn[linewidth=1pt](0.5,-0.5){0.5}{-90}{-180}
\psline[linewidth=1pt](0,-0.5)(0,0.5)
\psarcn[linewidth=1pt](0.5,0.5){0.5}{180}{90}
\psline[linewidth=1pt](2,1)(0.5,1)}
\psline[linestyle=dotted,linewidth=1pt](2.5,1)(2,1)
\rput[b](1.1,-0.9){root}
\rput[t](1.1,0.9){root}

\pscustom[linestyle=dotted]{
\psarc(2.167,0.666){0.333}{-90}{0}
\psline(2.5,0.666)(2.5,1)}
\psline(2.167,0.333)(0,0.333)
\psline[linestyle=dotted](1,-0.333)(0.5,-0.333)
\psline(0.5,-0.333)(0,-0.333)
\rput[l](0.2,0){new root}
}
}
\end{pspicture}
\end{center}
Note that the marker at the crossing has to be where it is for any Kauffman state. Also note that this splitting reduces the number of crossings by 1, so we can apply the induction hypothesis to both diagrams and we are done. So without loss of generality, we may now also assume that all adjacent roots locally look as in proposition \ref{whatssospecial}.\\
We consider the subdiagram obtained by removing a small neighbourhood of the edge from the boundary to this crossing. The clocked Kauffman state $x$ restricts to a clocked Kauffman state $x\vert$ of the subdiagram. 
$$
\begin{pspicture}(-6,-1.2)(6,1.5)

\pscustom*[linecolor=lightgray]{\psecurve(-1.4,-0.9)(-1,-1)(-0.6,-0.9)(-0.25,0.5)(0.25,0.5)(0.6,-0.9)(1,-1)(1.4,-0.9)}\psecurve[linestyle=dashed](-1.4,-0.9)(-1,-1)(-0.6,-0.9)(-0.25,0.5)(0.25,0.5)(0.6,-0.9)(1,-1)(1.4,-0.9)

\psline[linestyle=dotted,linewidth=1.5pt](-2.5,-1)(-2,-1)
\psline[linestyle=dotted,linewidth=1.5pt](2.5,-1)(2,-1)
\pscustom[linewidth=1.5pt]{\psline[linewidth=1.5pt](-2,-1)(0,-1)
\psline[linewidth=1.5pt](0,-1)(2,-1)}

\psline[linestyle=dotted](0,1)(0,1.5)
\psline[linestyle=dotted](-1,0)(-1.5,0)
\psline[linestyle=dotted](1,0)(1.5,0)
\psline(0,-1)(0,1)
\psline(-1,0)(1,0)

\rput[b](-1.3,-0.9){root}
\rput[b](1.3,-0.9){root}

\psdot(-0.2,0.2)
\rput[cr](-3,-1){$\partial D^2$}
\end{pspicture}
$$
The subdiagram has one crossing fewer and we can apply the induction hypothesis, i.\,e.\ $x\vert$ is the unique clocked Kauffman state of the subdiagram with respect to the site of $x$. Since this site is uniquely determined by the site of $x$ and the marker in the picture above, $x$ is the unique clocked Kauffman state of the universe $U$ with respect to the site of $x$.
\end{proof}
\begin{proof}[Proof of proposition~\ref{whatssospecial}]
The existence of a pair of roots with the special property required above follows from a standard argument: Consider the $2n$ line segments on $\partial D^2$, choose one as a starting point and walk along $\partial D^2$ in anticlockwise direction. Define a function from the set of segments to $\mathbb{Z}$ by adding $+1$ for each root segment and $-1$ for an occupied segment, see the illustration below. The value of the function at the very last segment will be $2$. Then consider the rightmost segment where the function takes its minimum. This will be a non-root, followed by two successive roots. By construction, these have the required property.

$${\psset{unit=0.4}
\begin{pspicture}(-1,-4.1)(30.5,4.1)

\psline{->}(-1,0)(22,0)
\psline{->}(0,-4)(0,4)
\psdot(0,0)
\psdot(21,2)
\psdot(11,-3)

{\psset{linewidth=1.5pt}
\psline(0,0)(1,1)(2,0)(3,-1)(4,0)
\psline[linestyle=dotted, dots=2pt](4,0)(5,-1)
\psline[linestyle=dotted](7,-1)(8,-2)
\psline(8,-2)(9,-3)(10,-2)(11,-3)(12,-2)(13,-1)(14,-2)
\psline[linestyle=dotted](14,-2)(15,-1)
\psline[linestyle=dotted](17,2)(18,3)
\psline(18,3)(19,2)(20,1)(21,2)
}
\psline(1,-0.3)(1,0.3)
\psline(2,-0.3)(2,0.3)
\psline(3,-0.3)(3,0.3)
\psline(4,-0.3)(4,0.3)
\psline(8,-0.3)(8,0.3)
\psline(9,-0.3)(9,0.3)
\psline(10,-0.3)(10,0.3)
\psline(11,-0.3)(11,0.3)
\psline(12,-0.3)(12,0.3)
\psline(13,-0.3)(13,0.3)
\psline(14,-0.3)(14,0.3)
\psline(18,-0.3)(18,0.3)
\psline(19,-0.3)(19,0.3)
\psline(20,-0.3)(20,0.3)
\psline(21,-0.3)(21,0.3)

\psline(-0.3,1)(0.3,1)
\psline(-0.3,2)(0.3,2)
\psline(-0.3,3)(0.3,3)
\psline(-0.3,-1)(0.3,-1)
\psline(-0.3,-2)(0.3,-2)
\psline(-0.3,-3)(0.3,-3)

\psline[linestyle=dashed](0,-3)(21,-3)

\psline{->}(12.5,0.8)(12.1,0.4)
\psline{->}(12.5,0.8)(12.9,0.4)
\psline(12.5,0.8)(12.5,2)
\rput[c](12.5,2.5){root pair}

\rput[c](1,-1){$1$}
\rput[c](21,-1){$2n$}

\rput[l](22.4,0){segments on $\partial D^2$}
\end{pspicture}}
$$
Let us denote the vertex corresponding to the upper right (resp. left) region of the picture in proposition \ref{whatssospecial} by $p_0$ (resp. $p_1$). Suppose the edge from the right root to $p_1$ does not belong to the $D$-forest $F$ corresponding to a clocked Kauffman state. We want to show that somewhere in the diagram, we can perform a anticlockwise transposition move, so $x$ cannot be a clocked state. Let us call its dual (which is in the D-forest) $e_0$. Then $e_0$ points away from the left root:
$$
\begin{pspicture}(-6,-1.2)(6,1.5)

\psline[linestyle=dotted,linewidth=1.5pt](-2.5,-1)(-2,-1)
\psline[linestyle=dotted,linewidth=1.5pt](2.5,-1)(2,-1)
\pscustom[linewidth=1.5pt]{\psline[linewidth=1.5pt](-2,-1)(0,-1)
\psline[linewidth=1.5pt](0,-1)(2,-1)}

\psline[linestyle=dotted](0,0.5)(0,1)
\psline[linestyle=dotted](-1,0)(-0.5,0)
\psline[linestyle=dotted](0.5,0)(1,0)
\psline(0,-1)(0,0.5)
\psline(-0.5,0)(0.5,0)

\rput[b](-1.5,-0.9){root}
\rput[b](1.5,-0.9){root}

\psline[linestyle=dotted](1,-1)(-0.5,0.5)
\psdot(-0.5,0.5)
\psdot(0.5,0.5)
\psdot(1,-1)
\psdot(-1,-1)
\rput[r](-0.6,0.5){$p_1$}
\rput[l](0.6,0.5){$p_0$}
\rput[l](-0.4,1){$e_1$}
\rput[r](-0.6,-0.5){$e_0$}
\psline{->}(-1,-1)(0.5,0.5)
\psline{->}(-0.5,1.5)(-0.5,0.5)

\rput[cr](-3,-1){$\partial D^2$}
\end{pspicture}
$$

\noindent
Note that $p_1$ has (exactly) one incoming edge $e_1$ of $F$, since $p_1$ is not a root. Consider the edges in $F$ that the dual $e_0^\ast$ of $e_0$ meets on its way to $e_1$ during a anticlockwise rotation around $p_1$, see the picture below. (Note that there might be no such edge.)
$$
\begin{pspicture}(-4,-1.5)(4,1.7)
\SpecialCoor
\pscustom*[linecolor=lightgray]{
\psline(0,0)(1.7;-45)
\psarc(0,0){1.7}{-45}{90}}
\rput{-45}(0,0){\psline[linestyle=dotted](0,0)(1.5,0)}

\rput{-5}(0,0){\psline{->}(0,0)(1.5,0)}
\rput{35}(0,0){\psline{->}(0,0)(1.5,0)}
\rput{55}(0,0){\psline[linestyle=dotted](0,0)(1.5,0)}
\rput{75}(0,0){\psline[linestyle=dotted](0,0)(1.5,0)}
\rput{15}(0,0){\psline[linestyle=dotted](0,0)(1.5,0)}
\rput{-25}(0,0){\psline[linestyle=dotted](0,0)(1.5,0)}

\psarc[linestyle=dashed]{->}(0,0){1}{-45}{90}

\psdot(0,0)

\rput[r](-0.1,0){$p_1$}
\rput[r](-0.1,1.4){$e_1$}
\rput[l](0.75,-1.3){$e_0^\ast$}
\psline{->}(0,1.5)(0,0)
\end{pspicture}
$$

\noindent
Let $F'$ be the tree consisting of $p_1$, these edges and all vertices and edges in $F$ that can be reached from these edges when following the arrows. Then the special property of our chosen pair of adjacent edges enables us to prove the following:
\begin{lemma}\label{EulerCharArgTreeNotOnBoundary}
Any vertex \(q\) that belongs to \(F'\) does not lie on \(\partial D^2\).
\end{lemma}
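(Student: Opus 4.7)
The plan is to derive a contradiction by a closed-curve / Euler-characteristic argument, in the same spirit as the counting argument used in the proof of Lemma~\ref{CorrespondenceKauffForest}.

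First I would suppose for contradiction that some $q \in F' \cap \partial D^2$ exists, and let $P$ denote the unique directed path in $F' \subset F_G$ from $p_1$ to $q$. By construction of $F'$, the first edge of $P$ lies strictly between $e_0^\ast$ and $e_1$ in the anticlockwise rotation around $p_1$, so $P$ leaves $p_1$ on the ``anticlockwise side'' of the local picture at the initial crossing. Concatenating $P$, the boundary arc of $\partial D^2$ from $q$ back to the right root (traversed so that it stays on the anticlockwise side of the chosen root pair), and the edge $e_0^\ast$ together with a short arc through the initial crossing from the right root vertex to $p_1$, I obtain a simple closed curve $\gamma$ bounding a disc region $R \subset D^2$. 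By construction $R$ contains exactly the boundary segments lying strictly anticlockwise of the chosen root pair up to and including the segment containing $q$.

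Next I would establish a containment claim: every tree of $F_G$ and every tree of $F_{G^\ast}$ whose specified root lies strictly in the interior of the boundary arc of $R$ is entirely contained in $R$. For a tree of $F_G$, any edge escaping $R$ would have to cross $P \subset F_G$ (the other pieces of $\gamma$ being $\partial D^2$ and $e_0^\ast$, which no edge of $G$ can cross), producing a cycle in $F_G$ --- a contradiction. For a tree of $F_{G^\ast}$, duality prevents its edges from crossing edges of $F_G$, and the only edge of $G^\ast$ that can cross the short arc through the initial crossing is $e_0$, which by construction lies \emph{outside} $R$. Hence all such trees are confined to $R$.

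Finally I would close the argument by counting. If the boundary arc of $R$ consists of $2r$ segments strictly between the right root and $q$, the special property chosen in the first half of Proposition~\ref{whatssospecial} bounds the number of roots on that arc by at most $r$. On the other hand, an Euler-characteristic count inside $R$ (exploiting that each interior crossing of $R$ carries a marker and each such marker is distributed among the faces of $R$ via the trees of $F_G$ and $F_{G^\ast}$, exactly as in the proof of Lemma~\ref{CorrespondenceKauffForest}) forces the number of roots in the interior of the boundary arc of $R$ to be at least $r+1$, because the presence of $p_1$ inside $R$ together with the extra closing edge $e_0^\ast$ contributes one surplus face that must be accounted for by an additional root. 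This contradiction completes the proof. The main obstacle will be the containment claim for trees of $F_{G^\ast}$, since it requires a careful local analysis at the initial crossing to ensure that $e_0$ is indeed the unique dual edge threatening to exit $R$, and that the anticlockwise-rotation convention placing the first edge of $P$ between $e_0^\ast$ and $e_1$ is precisely what pushes $e_0$ to the outside.
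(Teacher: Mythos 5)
Your overall strategy is the same as the paper's: cut out the subdisc bounded by the directed path in $F'$ from $p_1$ to $q$, the arc of $\partial D^2$ back to the chosen root pair, and the non-forest edge $e_0^\ast$, and then run an Euler-characteristic/marker count on that subdisc. However, your final step misquotes Proposition~\ref{whatssospecial} and the contradiction you announce does not exist. The special property of the chosen root pair is a \emph{lower} bound: among the next $2r$ boundary faces there are \emph{at least} $r$ roots; it gives no upper bound whatsoever (nothing prevents many more roots there). Meanwhile, the Euler-characteristic/pigeonhole count you sketch also produces a \emph{lower} bound on the number of roots inside $R$ (non-root faces of $R$ must absorb the markers of the crossings of $R$, so roots are forced to exist), namely at least $r+1$ roots in $R$, equivalently at least $r$ among the $2r$ faces strictly between the root pair and $q$ once the right root is discounted. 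Two lower bounds pointing in the same direction are perfectly compatible, so your ``at least $r+1$ versus at most $r$'' contradiction evaporates.

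The contradiction has to be set up the other way round, as in the paper: use the special property to guarantee that \emph{at least} $r+1$ faces of the enclosed region are roots, hence unavailable to markers; observe that the $s+1$ faces met by the path $P$ are already accounted for by the markers at the $s$ path crossings (and by $e_1$, which lies outside); conclude that the remaining $m+1$ markers --- one for each of the $m$ interior crossings plus the one at the initial crossing, all of which must sit in faces of the region --- have at most $(r+s+2+m)-(s+1)-(r+1)=m$ faces to occupy, which is impossible. Your containment claim for the trees of $F_G$ and $F_{G}^\ast$ is extra machinery the argument does not need once the count is organised this way, and as written it does not repair the reversed inequality; so as it stands the proof is incomplete at its decisive step.
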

\begin{proof}
Suppose there is such a vertex $q$. Note that $p_1=q$ is also allowed. We have the following situation:

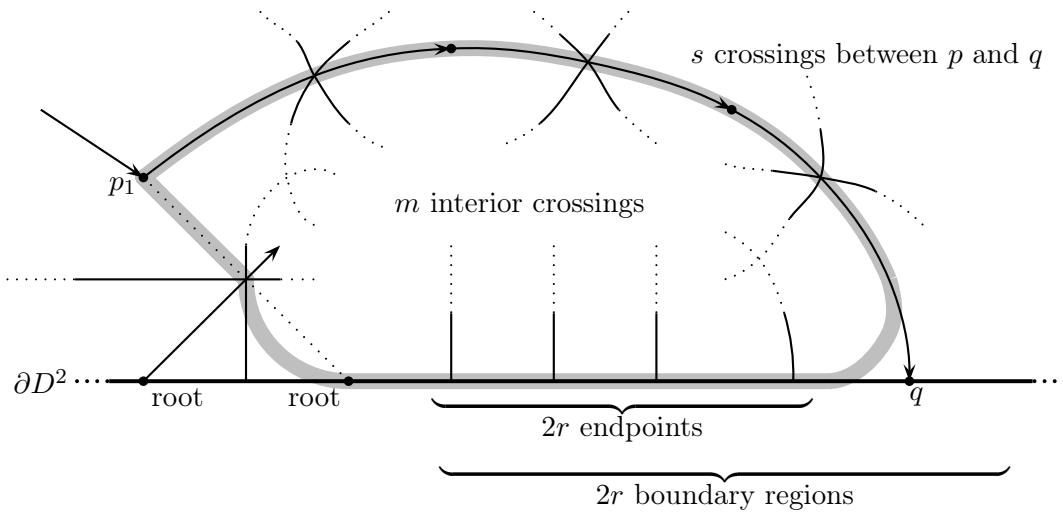
\begin{figure}[ht]\centering
\psset{unit=0.9}
\begin{pspicture}(-3.5,-3.7)(12.1,4)
\pscustom[linewidth=6pt, linecolor=lightgray, linejoin=1]{
\psecurve(7.6,-0.8)(8.5,-1.5)(9.4,-0.8)(9.4,0)(8.4,1.5)
\psecurve(9.7,-1.5)(9.4,0)(8.4,1.5)(7.1,2.5)(5,3.2)(3,3.4)(1,3)(-1.5,1.5)(-2,1)
\psline[linestyle=dotted](0,0)(-1.5,1.5)
\psarc(1.5,0){1.5}{180}{270}
\psline(1.5,-1.5)(8.5,-1.5)
}




\psecurve[linestyle=dotted](0.5,-0.75)(0,0.5)(0.6,1.5)(1.3,1.55)(1.5,1)
\psecurve[linestyle=dotted](1,3)(0.66,2.3)(0.6,1.5)(1,0.8)(1.5,1)

\psecurve(2.1,3.95)(1.5,3.5)(1,3)(0.66,2.3)(0.6,1.5)
\psecurve[linestyle=dotted](2.4,4.3)(2.1,3.95)(1.5,3.5)(1,3)
\psecurve(0.2,3.95)(0.75,3.5)(1,3)(1.5,2.3)(2,1.95)
\psecurve[linestyle=dotted](-0.4,5)(0.2,3.95)(0.75,3.5)(1,3)
\psecurve[linestyle=dotted](1,3)(1.5,2.3)(2,1.95)(12.5,1.7)

\psecurve(3.8,2)(4.3,2.3)(5,3.2)(5.3,3.6)(5.6,3.95)
\psecurve[linestyle=dotted](3.3,1.9)(3.8,2)(4.3,2.3)(5,3.2)
\psecurve[linestyle=dotted](5,3.2)(5.3,3.6)(5.6,3.95)(6,4.2)

\psecurve(4.2,3.95)(4.6,3.6)(5,3.2)(5.45,2.5)(6,2)
\psecurve[linestyle=dotted](3.7,4.2)(4.2,3.95)(4.6,3.6)(5,3.2)
\psecurve[linestyle=dotted](5,3.2)(5.45,2.5)(6,2)(6.5,1.6)

\psecurve(8.2,3)(8.4,2.2)(8.4,1.5)(7.95,0.9)(7.5,0.3)
\psecurve[linestyle=dotted](8,3.4)(8.2,3)(8.4,2.2)(8.4,1.5)
\psecurve[linestyle=dotted](8.4,1.5)(7.95,0.9)(7.5,0.3)(7,0)(6.5,0)
\psecurve(7,1.7)(7.7,1.6)(8.4,1.5)(9.2,1.3)(9.9,0.8)
\psecurve[linestyle=dotted](6.5,2)(7,1.7)(7.7,1.6)(8.4,1.5)
\psecurve[linestyle=dotted](8.4,1.5)(9.2,1.3)(9.9,0.8)(10.5,0)

\psecurve[linestyle=dotted](6,1)(7,0.7)(7.5,0.3)(7.87,-0.5)(8,-1.5)
\psecurve(7.5,0.3)(7.87,-0.5)(8,-1.5)(7.8,-2.5)


\psline(6,-1.5)(6,-0.5)
\psline[linestyle=dotted](6,-0.5)(6,0.5)

\psline(4.5,-1.5)(4.5,-0.5)
\psline[linestyle=dotted](4.5,-0.5)(4.5,0.5)

\psline(3,-1.5)(3,-0.5)
\psline[linestyle=dotted](3,-0.5)(3,0.5)
\psline[linestyle=dotted,linewidth=1.5pt](-2.5,-1.5)(-2,-1.5)
\psline[linestyle=dotted,linewidth=1.5pt](12,-1.5)(11.5,-1.5)
\pscustom[linewidth=1.5pt]{\psline[linewidth=1.5pt](-2,-1.5)(0,-1.5)
\psline[linewidth=1.5pt](0,-1.5)(11.5,-1.5)}

\psline[linestyle=dotted](-2.5,0)(-3.5,0)
\psline[linestyle=dotted](0.5,0)(1,0)
\psline(0,-1.5)(0,0.5)
\psline(-2.5,0)(0.5,0)

\rput[t](-1,-1.6){root}
\rput[t](1,-1.6){root}

\rput[c](4,1.1){$m$ interior crossings}


\rput(5.5,-2){$\underbrace{\hspace{5cm}}_{\phantom{x}}$}
\rput(5.5,-2.2){$2r$ endpoints}

\rput(7,-3){$\underbrace{\hspace{7.5cm}}_{\phantom{x}}$}
\rput(7,-3.2){$2r$ boundary regions}

\psdot(1.5,-1.5)
\psdot(-1.5,-1.5)
\psline[linestyle=dotted](1.5,-1.5)(-1.5,1.5)
\psdot(-1.5,1.5)
\rput[tr](-1.6,1.5){$p_1$}
\psdot(9.7,-1.5)
\rput[tl](9.7,-1.6){$q$}    
\psline{->}(-1.5,-1.5)(0.5,0.5)
\psline{->}(-3,2.5)(-1.5,1.5)

\psecurve{->}(-2,1)(-1.5,1.5)(1,3)(3,3.4)(5,3.2)
\psecurve{->}(1,3)(3,3.4)(5,3.2)(7.1,2.5)(8.4,1.5)
\psecurve{->}(5,3.2)(7.1,2.5)(8.4,1.5)(9.4,0)(9.7,-1.5)(9.4,-3)

\psdots(3,3.4)(7.1,2.5)

\rput[l](6.5,3.3){$s$ crossings between $p$ and $q$}
\rput[cr](-3,-1.5){$\partial D^2$}
\end{pspicture}
\caption{The Euler characteristic argument for the proof of lemma \ref{EulerCharArgTreeNotOnBoundary}} \label{FigEulerCharForTreeNotOnBoundaryLemma}
\end{figure}
\noindent
We compute the number of faces of the disc enclosed by the grey curve, using an Euler characteristic argument. There are $m$ interior crossings, say, $2r$ endpoints on the boundary between q and the right root, and $s$ crossings between $p_1$ and $q$. (Again, $s=0$ is allowed.) Then we have
\begin{eqnarray*}
\#\{\text{vertices}\} & = & 2r+s+2+m\\
\#\{\text{edges}\} & = & (2r-1)+s+3+\tfrac{1}{2}(4m+2r+2s+2) \\
& = & 3r+2s+3+2m\\
\Rightarrow\quad \#\{\text{faces}\}&=&1+\#\{\text{edges}\}-\#\{\text{vertices}\}=r+s+2+m
\end{eqnarray*}
Hence the number of occupied faces is
$$\#\{\text{faces}\}-(s+1)-(\geq r+1)\leq m.$$
However,
$$\#\{\text{markers}\}=m+1.$$
Contradiction!
\end{proof}
\begin{corollary}\label{CorEulerArg}
The vertex \(p_2\) that we reach by following the dual edge of \(e_1\) to the left of \(e_1\) is in the full subtree starting at \(p_0\), see figure \ref{FigClockedItertionStep}.
\end{corollary}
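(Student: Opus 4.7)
The plan is to argue by contradiction, running an Euler characteristic count that exactly parallels the one in lemma~\ref{EulerCharArgTreeNotOnBoundary}, but applied to the other side of the edge~$e_1$.

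First I would set up the geometric picture around the crossing hosting~$e_1$. Since $p_1$ has $e_1$ as its unique incoming edge in~$F$, the other endpoint of~$e_1$ (call it $p_3$) lies upstream of~$p_1$ in the same tree of~$F$, so following the arrows backwards from~$p_3$ reaches a boundary root~$\rho_1$. The dual edge $e_1^{\ast}$, which is \emph{not} in~$F$, joins the two regions on either side of~$e_1$; one of them is~$p_2$, lying to the left of~$e_1$. I would assume for contradiction that $p_2$ does not belong to the full subtree of~$F$ starting at~$p_0$. Then $p_2$ either lies in a completely different tree of~$F$ (rooted at some boundary vertex~$\rho_2\neq\rho_1$) or still lies in the tree rooted at~$\rho_1$ but not below~$p_0$; in either case, the unique backward path in~$F$ from~$p_2$ to its root can be followed explicitly.

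The main step is the analogue of the Euler characteristic count from lemma~\ref{EulerCharArgTreeNotOnBoundary}. I would consider the closed region $R\subset D^2$ whose boundary consists of an arc of~$\partial D^2$ to the right of the distinguished pair of roots, the backward-arrow path from~$p_3$ (through $e_1$) up to its boundary root, the non-$F$ edge~$e_1^{\ast}$ from~$p_1$ to~$p_2$, and the backward-arrow path from~$p_2$ up to its boundary root. Crucially, the lemma just proved guarantees that the subtree~$F'$ extending forward from~$p_1$ stays away from~$\partial D^2$, so these paths do not re-enter the excluded region and~$R$ is bounded by a simple closed curve. Counting vertices, edges, and faces inside~$R$ exactly as in the lemma, and invoking the ``at least $r$ roots in the next $2r$ boundary faces'' property of the chosen pair of adjacent roots to bound the number of roots available on the outer boundary arc, shows that the number of faces of~$R$ strictly exceeds the number of markers available to fill them, contradicting the Kauffman state condition.

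The hard part will be checking that the bounded region~$R$ is actually a topological disc, i.e., that the two backward-arrow paths (from $p_3$ and from $p_2$) do not cross~$F'$ or one another, and that using both the dual edge $e_1^{\ast}$ and the edge~$e_1$ simultaneously does not create degeneracies at the crossing itself. Once these configurational checks are in place, the arithmetic of the Euler characteristic count is essentially identical to that of lemma~\ref{EulerCharArgTreeNotOnBoundary}, and the contradiction forces $p_2$ to lie in the full subtree rooted at~$p_0$, as claimed by the corollary and illustrated in figure~\ref{FigClockedItertionStep}.
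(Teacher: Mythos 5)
Your proposal takes a genuinely different route from the paper, and as it stands it has a real gap. The paper's proof is a short direct construction: since lemma~\ref{EulerCharArgTreeNotOnBoundary} guarantees that the forward tree $F'$ (the subtrees hanging off $p_1$ in the sector between $e_0^\ast$ and $e_1$) misses $\partial D^2$, one can take a small regular neighbourhood of $F'$ and travel along its boundary all the way around $F'$; entering just anticlockwise of $e_0^\ast$ this boundary starts next to the region of $p_0$, and leaving just clockwise of $e_1$ it ends next to the region of $p_2$, giving a path in $D^2$ from the region of $p_0$ to the region of $p_2$ that is disjoint from $F$ -- which is exactly what forces $p_2$ into the subtree below $p_0$. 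Your argument, by contrast, never uses $F'$ or the sector at all (your appeal to the lemma, ``these paths do not re-enter the excluded region'', does not connect to anything: the lemma constrains the \emph{forward} subtree of $p_1$, whereas your curve is built from \emph{ancestor} paths of $p_1$ and $p_2$).

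The decisive problem is that the region $R$ you propose to count in exists whether or not $p_2$ lies below $p_0$: the backward path from $p_1$ through $e_1$ to its root, the backward path from $p_2$ to its root, the connecting arc and a boundary arc can always be drawn. You never specify which crossings' markers are forced to lie inside $R$, nor why the face count comes up short \emph{precisely} under the contradiction hypothesis; if the ``essentially identical'' arithmetic gave a contradiction from this data alone, it would also rule out the true configuration. Moreover, the root-counting property of the distinguished pair concerns boundary faces adjacent to \emph{that} pair, while your $R$ is anchored at the roots of $p_1$'s and $p_2$'s trees, about which nothing is known, so the property cannot be invoked for your outer arc. There are also structural slips that matter for setting up $R$: the dual edge $e_1^\ast$ does not join $p_1$ to $p_2$ (it joins $p_2$ to the region on the other side of $e_1$, crossing $e_1$ at its crossing), and $p_2$ can never lie in the tree rooted at your $\rho_1$, since $p_2$ and $p_1$ belong to the two different forests $F_G$ and $F_G^\ast$ making up the $D$-forest. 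In the lemma the contradiction works because the hypothetical boundary vertex of $F'$, the edge $e_0^\ast$ and the distinguished roots pin down a region that must contain the marker of the original crossing plus all interior markers while also containing many root faces; your set-up reproduces none of these forcing mechanisms, so the proposed count does not establish the corollary.
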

\begin{proof}
Choose a small contractible neighbourhood of the tree $F'$. Then its boundary gives us a path in $D^2$ from the region corresponding to the vertex $p_0$ to the region containing $p_2$ that is disjoint from the $D$-forest $F$. 
\end{proof}
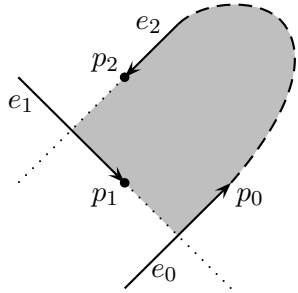
\begin{figure}[h]
\centering
{\psset{unit=0.7}
\begin{pspicture}(-2.5,-2)(4.5,3.5)
\pscustom*[linecolor=lightgray]{
\psecurve(1,1)(1,3)(3,3)(2,0)(-2,-2)
\psline(2,0)(1,-1)
\psline(-1,1)(1,-1)
\psline(-1,1)(1,3)
}
\psdot(0,0)
\psline{->}(-2,2)(0,0)
\psline[linestyle=dotted](0,0)(2,-2)
\psline{->}(0,-2)(2,0)
\psline[linestyle=dotted](-2,0)(0,2)
\psline{->}(1,3)(0,2)
\psecurve[linestyle=dashed](1,1)(1,3)(3,3)(2,0)(-2,-2)
\psdot(0,2)
\rput[tl](2.1,-0.1){$p_0$}
\rput[tr](-0.1,-0.1){$p_1$}
\rput[br](-0.1,2.1){$p_2$}
\rput[l](0.5,-1.7){$e_0$}
\rput[r](-1.7,1.5){$e_1$}
\rput[r](0.7,3){$e_2$}
\end{pspicture}}
\caption{An illustration of corollary~\ref{CorEulerArg}}\label{FigClockedItertionStep}
\end{figure}
\noindent 
We can now finish our proof of proposition \ref{whatssospecial}. The corollary above tells us in particular that $p_2$ is not a root, so there is exactly one incoming edge $e_2$. Note that if $p_0=p_2$ (or equivalently $e_0=e_2$), we are done, because we can perform a anticlockwise transposition move. If $p_0\neq p_2$, we can repeat the argument for $p_2$ in place of $p_1$. It is now clear that $p_3$ is in the subtree starting at $p_1$ and therefore has an incoming edge. This is where we needed the lemma in the first iteration step. \\
This algorithm terminates because there are only finitely many vertices in the shaded region in figure~\ref{FigClockedItertionStep} and the number of vertices in the corresponding shaded area in each iteration step strictly decreases unless the algorithm terminates during this step.
\end{proof}

\setcounter{theorem}{0}
\chapter{Manual for \texttt{APT.m}}\label{app:manualAPT}
\vspace{-12pt}
\begin{center}
	version 1.1, written in Mathematica 10.3.1.0 for Linux x86 (64-bit)
\end{center}
\vspace{36pt}

\noindent
The Mathematica package \cite{APT.m} provides a simple tool for calculating the polynomial tangle invariants~$\nabla_T^s$ from definition~\sref{def:basic}. It can also be used to compute the generators of $\CFT$ from the ``standard'' Heegaard diagrams in example~\sref{exa:HDforonecrossing}. This manual should be read alongside the notebook \cite{APT.nb}. APT stands for ``Alexander Polynomial for Tangles''.

\subsection*{Input preparation. }  We start by loading the package \cite{APT.m} as follows:
$$\texttt{<\relax< APT.m}$$
Given an oriented $2n$-ended tangle $T$, we choose a connected tangle diagram with at least one crossing. To translate this data into computer-speak, we enumerate all crossings and give a unique name to each region. Usually, we use letters \texttt{a}, \texttt{b}, \texttt{c},\dots for this and label the open regions first, in anticlockwise order, as illustrated in the example in figure~\ref{fig:labelling}. Finally, we choose colours of the tangle components, for which we usually use the letters \texttt{p}, \texttt{q}, \texttt{r},\dots

\begin{figure}[bh]
\centering
\psset{unit=0.8, linewidth=1.1pt, arrowsize=2pt 3}
\begin{pspicture}[showgrid=false](-5.2,-3.1)(3.2,3.1)
\psecurve[linecolor=violet](-2.5,1.5)(0,2)(0.75,1)(-0.75,-1)(0,-2)(0.97,-2.24)(2,-2)
\psecurve[linecolor=violet]{<-}(2,2)(0.97,2.24)(0,2)(-0.75,1)(0.75,-1)(0,-2)(-2.5,-1.5)(-3.25,0)(-2.5,1.5)(0,2)(0.75,1)
\psecurve[linecolor=darkgreen](-3.3,1.85)(-2.5,1.5)(-1.85,0)(-2.5,-1.5)(-3.3,-1.85)(-6,-1.5)
\pscircle*[linecolor=white](-2.5,-1.5){0.2}

\psecurve[linecolor=violet](0.75,-1)(0,-2)(-2.5,-1.5)(-3.25,0)(-2.5,1.5)

\pscircle*[linecolor=white](0,2){0.2}
\pscircle*[linecolor=white](0,0){0.2}
\pscircle*[linecolor=white](0,-2){0.2}

\psecurve[linecolor=violet](0.75,1)(-0.75,-1)(0,-2)(0.97,-2.24)(2,-2)
\psecurve[linecolor=violet](0,2)(-0.75,1)(0.75,-1)(0,-2)
\psecurve[linecolor=violet](-3.25,0)(-2.5,1.5)(0,2)(0.75,1)(-0.75,-1)

\pscircle*[linecolor=white](-2.5,1.5){0.2}
\psecurve[linecolor=darkgreen]{<-}(-6,1.5)(-3.3,1.85)(-2.5,1.5)(-1.85,0)(-2.5,-1.5)

\psline[linecolor=violet]{->}(0.75,1.1)(0.75,1.2)
\psline[linecolor=violet]{->}(-0.75,1.11)(-0.75,1.2)
\psline[linecolor=violet]{->}(-0.75,-0.9)(-0.75,-0.8)
\psline[linecolor=violet]{->}(0.75,-0.9)(0.75,-0.8)

\pscircle[linestyle=dotted](-1,0){3.05}

\psline[linecolor=violet]{->}(-0.9,2.12)(-1.05,2.11)

\psline[linecolor=darkgreen]{->}(-1.85,0.05)(-1.85,0.15)
\psline[linecolor=violet]{->}(-3.25,-0.05)(-3.25,-0.15)

\psline[linecolor=violet]{->}(-1.5,-2.015)(-1.4,-2.05)

\rput[c](0,-1.65){\texttt{1}}
\rput[c](0.5,0){\texttt{2}}
\rput[c](0,1.65){\texttt{3}}

\rput[c](-2.5,-1){\texttt{5}}
\rput[c](-1.9,1.4){\texttt{4}}

\rput(-1,0){
\rput[c](3.2;145){$\textcolor{darkgreen}{p}$}
\rput[c](3.2;50){$\textcolor{violet}{q}$}
\rput[c](2.7;180){\texttt{a}}
\rput[c](2.7;270){\texttt{b}}
\rput[c](2.7;0){\texttt{c}}
\rput[c](2.7;90){\texttt{d}}

\rput[c](1.3;180){\texttt{e}}
\rput[c](0,0){\texttt{f}}
\rput[c](1,1){\texttt{g}}
\rput[c](1,-1){\texttt{h}}
}

\end{pspicture}
\caption{A labelling of regions and crossings of a tangle diagram}\label{fig:labelling}
\end{figure}


\subsection*{Input data. }
The data for a tangle diagram is entered into a Mathematica notebook as a list
$$\texttt{v=\{v$_0$,v$_1$,v$_2$,\dots,v$_m$\}},$$
where \texttt{v$_0$} is a list of open regions of the tangle (in anticlockwise order),\pagebreak[3] $m$ is the number of crossings and, for $1\leq i\leq m$, \texttt{v$_i$} is a list with the following seven entries:
\begin{itemize}
\item The first four entries \texttt{(v$_i$)$_1$} to \texttt{(v$_i$)$_4$} are the labels of the regions in the four quadrants of the $i^\text{th}$ crossing. The first entry is the label of the region between the two outward-pointing arrows and then we go in anticlockwise direction, as illustrated in figure~\ref{fig:APPcrossing}. 
\item The fifth entry \texttt{(v$_i$)$_5$} is either \texttt{L} or \texttt{R}, depending on whether $i$ is the label of a positive (left) or negative (right) crossing, see figure \ref{fig:APPcrossingL} and \ref{fig:APPcrossingR}.
\item The entries \texttt{(v$_i$)$_6$} and \texttt{(v$_i$)$_7$} are the colours of the over- and under-strands, respectively.  
\end{itemize}
It is not hard to see that the tangle is uniquely determined by \texttt{v}. 

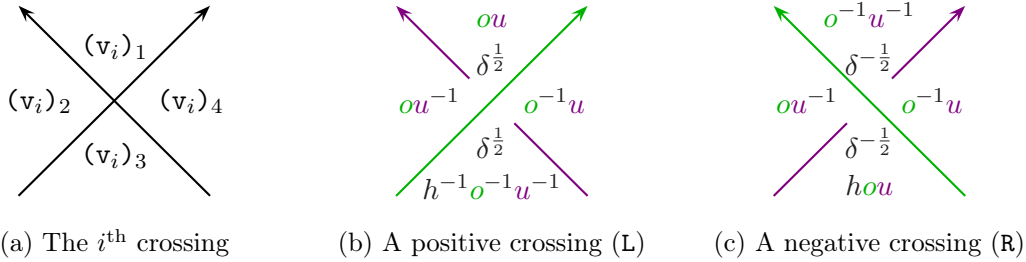
\begin{figure}[t]
\centering
\psset{unit=1.4, arrowsize=3pt 2}
\begin{subfigure}[b]{0.3\textwidth}\centering
\begin{pspicture}(-1.05,-1.05)(1.05,1.05)
\psline{->}(-0.9,-0.9)(0.9,0.9)
\psline{->}(0.9,-0.9)(-0.9,0.9)
\uput{0.4}[90](0,0){\texttt{(v$_i$)$_1$}}
\uput{0.4}[180](0,0){\texttt{(v$_i$)$_2$}}
\uput{0.4}[-90](0,0){\texttt{(v$_i$)$_3$}}
\uput{0.4}[0](0,0){\texttt{(v$_i$)$_4$}}
\end{pspicture}
\caption{The $i^\text{th}$ crossing}\label{fig:APPcrossing}
\end{subfigure}\quad
\begin{subfigure}[b]{0.3\textwidth}\centering
\begin{pspicture}(-1.05,-1.05)(1.05,1.05)
\psline[linecolor=violet]{->}(0.9,-0.9)(-0.9,0.9)
\pscircle*[linecolor=white](0,0){0.3}
\psline[linecolor=darkgreen]{->}(-0.9,-0.9)(0.9,0.9)
\uput{0.7}[90](0,0){$\textcolor{darkgreen}{o} \textcolor{violet}{u}$}
\uput{0.3}[180](0,0){$\textcolor{darkgreen}{o} \textcolor{violet}{u}^{-1}$}
\uput{0.7}[-90](0,0){$h^{-1}\textcolor{darkgreen}{o}^{-1} \textcolor{violet}{u}^{-1}$}
\uput{0.3}[0](0,0){$\textcolor{darkgreen}{o}^{-1} \textcolor{violet}{u}$}
\uput{0.25}[90](0,0){$\delta^{\frac{1}{2}}$}
\uput{0.25}[-90](0,0){$\delta^{\frac{1}{2}}$}
\end{pspicture}
\caption{A positive crossing (\texttt{L})}\label{fig:APPcrossingL}
\end{subfigure}\quad
\begin{subfigure}[b]{0.3\textwidth}\centering
\begin{pspicture}(-1.05,-1.05)(1.05,1.05)
\psline[linecolor=violet]{->}(-0.9,-0.9)(0.9,0.9)
\pscircle*[linecolor=white](0,0){0.3}

\psline[linecolor=darkgreen]{->}(0.9,-0.9)(-0.9,0.9)
\uput{0.7}[90](0,0){$\textcolor{darkgreen}{o}^{-1} \textcolor{violet}{u}^{-1}$}
\uput{0.3}[180](0,0){$\textcolor{darkgreen}{o} \textcolor{violet}{u}^{-1}$}
\uput{0.7}[-90](0,0){$h\textcolor{darkgreen}{o} \textcolor{violet}{u}$}
\uput{0.3}[0](0,0){$\textcolor{darkgreen}{o}^{-1} \textcolor{violet}{u}$}
\uput{0.25}[90](0,0){$\delta^{-\frac{1}{2}}$}
\uput{0.25}[-90](0,0){$\delta^{-\frac{1}{2}}$}
\end{pspicture}
\caption{A negative crossing (\texttt{R})}\label{fig:APPcrossingR}
\end{subfigure}
\caption{(a) shows the order of the four quadrants \texttt{v$_i$} at an oriented crossing. (b) and (c) show the Alexander codes used in \cite{APT.m}. They coincide with those from figure~\ref{figAlexCodesForNabla} after substituting $\textcolor{darkgreen}{o}\mapsto \textcolor{darkgreen}{o}^{1/2}$ and $\textcolor{violet}{u}\mapsto \textcolor{violet}{u}^{1/2}$. The over-strand is coloured by~$\textcolor{darkgreen}{o}$ and the under-strand by~$\textcolor{violet}{u}$.}\label{fig:AlexCodesForNabla}
\end{figure}

\begin{example}\label{exa:APT}
Let $T$ be the tangle diagram shown in figure \ref{fig:labelling}. Then the list $\texttt{v}=\texttt{v}(T)$ is given by
\begin{center}
\begin{minipage}{127pt}
$\texttt{v}=$ \texttt{\{\{a,b,c,d\},\\
\phantom{$\texttt{v}=$ }\{h,f,b,c,R,q,q\},\\
\phantom{$\texttt{v}=$ }\{g,f,h,c,R,q,q\},\\
\phantom{$\texttt{v}=$ }\{d,f,g,c,R,q,q\},\\
\phantom{$\texttt{v}=$ }\{a,e,f,d,L,p,q\},\\
\phantom{$\texttt{v}=$ }\{f,e,a,b,L,q,p\}\}}.
\end{minipage}
\end{center}
\end{example}

\pagebreak[2]

\subsection*{Calculation of $\nabla_T^s$. }
To explain the main function \texttt{AlexpolyGdh[v]} of this package, we introduce the following notation: As in definition~\sref{def:basic}, let $c(x)$ be the labelling of a Kauffman state~$x$ using the Alexander codes from figure~\ref{fig:AlexCodesForNabla}. For a site $s$ of $T$, let $\texttt{s}(s)$ be the word obtained by concatenating all labels of regions in $s$ in alphabetical order; we set $\texttt{s}(\emptyset)=1$. Similarly, let $\texttt{g}(x)$ be the concatenation of all labels of regions occupied by $x$ in the order of the crossings. Then \texttt{AlexpolyGdh[v]} computes
$$\sum_{\substack{s\in\mathbb{S}(T)\\x\in\mathbb{K}(T,s)}}c(x)\cdot\texttt{s}(s)\cdot\texttt{g}(x).$$
The terms $\texttt{g}(x)$ uniquely determine the Kauffman states and thus the generators of the tangle Floer complex $\CFT$ from the ``standard'' Heegaard diagrams in example~\sref{exa:HDforonecrossing}.\\
There is also a function \texttt{AlexpolyPdh[v]}, which is the same as \texttt{AlexpolyGdh[v]} except that the terms $\texttt{g}(s)$ are omitted. There are six other variants of \texttt{AlexpolyPdh} and \texttt{AlexpolyGdh} which ``forget'' the homological grading and/or the $\delta$-grading by setting $h=-1$ and/or $\delta=1$. The names of the functions are obtained from \texttt{AlexpolyPdh} and \texttt{AlexpolyGdh} simply by dropping the letters \texttt{h} and/or~\texttt{d}, respectively. 
In particular, the polynomial invariants $\nabla_T^s$ are equal to the coefficients of~$\texttt{s}(s)$ in \texttt{AlexpolyP[v]} -- up to substitution of all colours of $T$ by their square roots, see figure~\ref{fig:AlexCodesForNabla}. 

\begin{example}\label{exa:APPpolys}
In the example from above, we obtain
\begin{align*}
\texttt{AlexpolyPdh[v]}=& ~
\texttt{d}~ \delta^{-1} q^{-6} p^{-2} h^{-1} + 
\texttt{b}~ p^2 \delta^{-1} q^{-6} + 
\texttt{d}~ p^2 \delta^{-1} q^{-6} + 
\texttt{d}~ \delta^{-1} q^{-2} p^{-2}\\
& + 
\texttt{b}~ h p^2 \delta^{-1} q^{-2} + 
\texttt{d}~ h q^2 \delta^{-1} p^{-2} + 
\texttt{b}~ h^2 p^2 q^2 \delta^{-1} + 
\texttt{b}~ h^2 q^6 \delta^{-1} p^{-2}\\
& + 
\texttt{d}~ h^2 q^6 \delta^{-1} p^{-2} + 
\texttt{b}~ h^3 p^2 q^6 \delta^{-1} + 
\texttt{c}~ p^{-2} \delta^{-\frac{1}{2}} + 
\texttt{c}~ h p^2 \delta^{-\frac{1}{2}}\\
& + 
\texttt{a}~ q^{-6} h^{-1} \delta^{-\frac{1}{2}} + 
\texttt{c}~ q^{-4} p^{-2} h^{-1} \delta^{-\frac{1}{2}} + 
\texttt{c}~ p^2 q^{-4} \delta^{-\frac{1}{2}} + 
2 \texttt{a}~ q^{-2} \delta^{-\frac{1}{2}}\\
& + 
2 \texttt{a}~ h q^2 \delta^{-\frac{1}{2}} + 
\texttt{c}~ h q^4 p^{-2} \delta^{-\frac{1}{2}} + 
\texttt{c}~ h^2 p^2 q^4 \delta^{-\frac{1}{2}} + 
\texttt{a}~ h^2 q^6 \delta^{-\frac{1}{2}}
\end{align*}
and
\begin{align*}
\texttt{AlexpolyGh[v]}=& ~
\texttt{c}~ \texttt{hcgfe}~ p^{-2} + 
\texttt{c}~ h~ \texttt{hcgef}~ p^2 + 
\texttt{a}~ \texttt{hgfea}~ q^{-6} h^{-1}+
\texttt{d}~ \texttt{hgdfe}~ q^{-6} p^{-2} h^{-1}\\
&  +  
\texttt{d}~ \texttt{hgdef}~ p^2 q^{-6} + 
\texttt{b}~ \texttt{hgfeb}~ p^2 q^{-6} +
\texttt{c}~ \texttt{hgcfe}~ q^{-4} p^{-2} h^{-1}\\
&  + 
\texttt{c}~ \texttt{hgcef}~ p^2 q^{-4} + 
\texttt{a}~ \texttt{hfgea}~ q^{-2} +
\texttt{a}~ \texttt{hgfae}~ q^{-2} + 
\texttt{d}~ \texttt{hgfde}~ q^{-2} p^{-2}\\
&  + 
\texttt{b}~ h~ \texttt{hfgeb}~ p^2 q^{-2} +
\texttt{a}~ \texttt{fhgea}~ h q^2 + 
\texttt{a}~ h~ \texttt{hfgae}~ q^2 + 
\texttt{d}~ h~ \texttt{hfgde}~ q^2 p^{-2}\\
&  + 
\texttt{b}~ \texttt{fhgeb}~ h^2 p^2 q^2 + 
\texttt{c}~ \texttt{chgfe}~ h q^4 p^{-2} + 
\texttt{c}~ \texttt{chgef}~ h^2 p^2 q^4+ 
\texttt{a}~ \texttt{fhgae}~ h^2 q^6\\
&  + 
\texttt{b}~ \texttt{bhgfe}~ h^2 q^6 p^{-2} + 
\texttt{d}~ \texttt{fhgde}~ h^2 q^6 p^{-2} + 
\texttt{b}~ \texttt{bhgef}~ h^3 p^2 q^6
\end{align*}

\end{example}

\begin{figure}[t]
\centering
\includegraphics[scale=1.2]{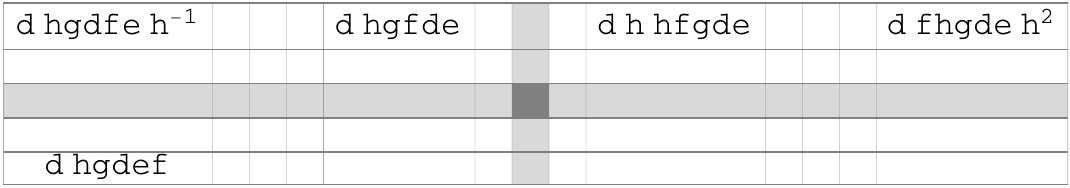}
\caption{The output of \texttt{SimpleGrid} applied to the polynomial \texttt{AlexpolyGh[v]} from example \ref{exa:APPpolys}, after setting the variables \texttt{a}, \texttt{b} and \texttt{c} equal to 0. The heavily shaded entry has Alexander grading~$p^0q^0$. The lightly shaded entries indicate the axes $p^0q^n$ (horizontal, pointing right) and $p^nq^0$ (vertical, pointing down).}\label{fig:GraphicaloutputExample}
\end{figure}


\subsection*{Basic graphical output.}
As example~\ref{exa:APPpolys} illustrates, it can be quite hard to understand polynomials with many terms just from their algebraic expressions. Fortunately, the package \cite{APT.m} also provides some graphical tools for dealing with this.
\begin{description}
\item[\texttt{SimpleGrid[poly]}] rearranges the monomials of a Laurent polynomial \texttt{poly} in a grid with two axes, one for the colour $p$ and one for $q$; see figure~\ref{fig:GraphicaloutputExample} for an example. 
\item[\texttt{\texttt{AlexpolytableGdh[v,poly]}}] is based on \texttt{SimpleGrid[poly]}. Its input is a tangle \texttt{v} and a Laurent polynomial \texttt{poly} which is usually the output of the function \texttt{AlexpolyGdh} or one of its variants. The output of \texttt{\texttt{AlexpolytableGdh}} is a list of all sites~$s$ of~\texttt{v} and the corresponding outputs of \texttt{SG} applied to the coefficient of $\texttt{s}(s)$ in \texttt{poly}. 
\item[\texttt{AlexpolytablePdh[v,poly]}] is similar to the previous function, but it sets all terms~$\texttt{g}(x)$ in~\texttt{poly} equal to~1.
\end{description}
Just as for \texttt{AlexpolyGdh} and \texttt{AlexpolyPdh}, there are variants of the functions \texttt{AlexpolytableGdh} and \texttt{AlexpolytablePdh} which ``forget'' the homological and/or $\delta$-grading in the output. Again, the names of these functions are obtained from \texttt{AlexpolytableGdh} and \texttt{AlexpolytablePdh} by dropping the letters~\texttt{h} and/or~\texttt{d}. 

\subsection*{More functions.}
\begin{description}
\item[\texttt{DConfig[v]}] graphically represents the boundary configuration of the tangle~\texttt{v}, i.\,e.\ the open regions of the tangle diagram and the open tangle components together with their orientations and colours, see figure~\ref{fig:DConfig}.
\begin{figure}[t]
\centering
\includegraphics[scale=0.9]{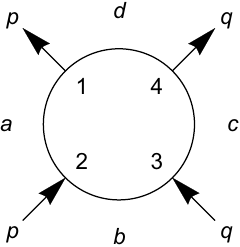}
\caption{The output of \texttt{DConfig[v]} for example~\ref{exa:APT}}\label{fig:DConfig}
\end{figure}
\item[\texttt{Twist[v,openregion,twist]}] computes the new tangle obtained from \texttt{v} by adding a single crossing at the region \texttt{openregion}, where \texttt{twist} determines if the new crossing is a positive (\texttt{L}) or a negative (\texttt{R}) crossing. (Note: This function only works correctly if there are fewer than 26 regions in the new diagram -- otherwise we run out of letters to label the new region with.)
\item[\texttt{Closure[v,openregion]}] computes the Alexander polynomial of the link obtained from the tangle~\texttt{v} by pairing it with a tangle of parallel strands which caps off the region \texttt{openregion}. 
\item[\texttt{DotGrid[v]}] is a function for 4-ended tangles \texttt{v} whose open regions are labelled (according to our conventions) by $a$, $b$, $c$ and $d$. It returns a convenient graphical output, which represents Kauffman states of all sites by dots in a single 2-dimensional grid according to their Alexander gradings, where we use the same conventions as for the function~\texttt{SimpleGrid}, see figure~\ref{fig:QDots}. The dots are labelled by their $\delta$-grading. They are also coloured according to the site they belong to  with the following colour conventions: 
$${\red a}, {\blue b}, \textcolor{darkgreen}{c}, \textcolor{gold}{d}.$$ 
\end{description}

\begin{figure}[b]
\centering
\includegraphics[scale=0.9]{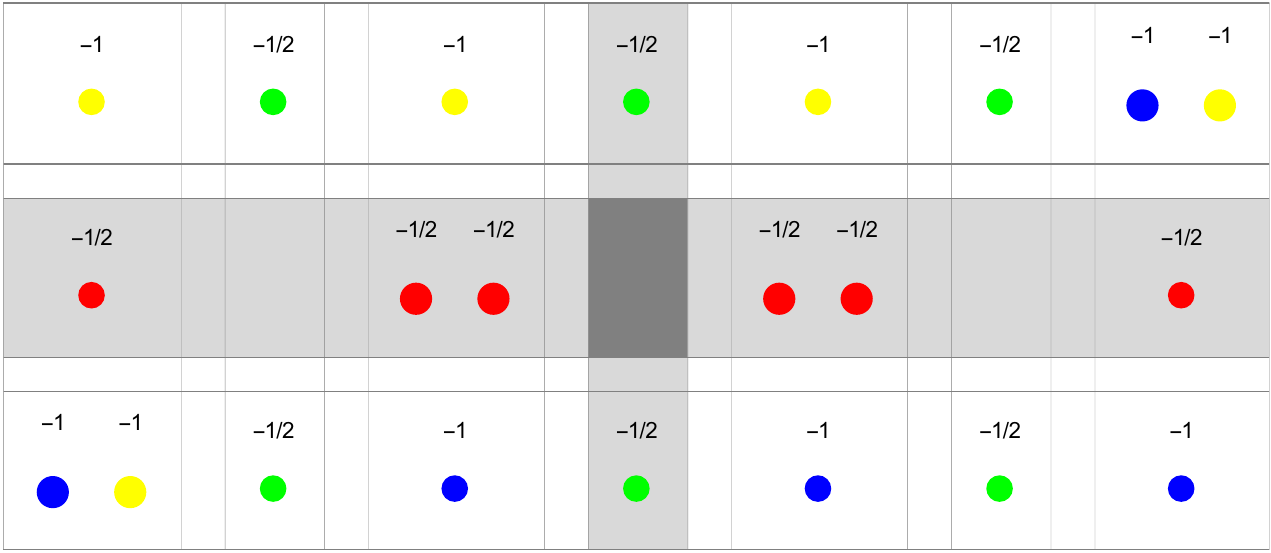}
\caption{The output of \texttt{DotGrid[v]} for example~\ref{exa:APT}}\label{fig:QDots}
\end{figure}

\subsection{Change log: versions $\mathbf{1.0\rightarrow 1.1}$}
\begin{itemize}
\item Changed $\delta$-grading in Alexander codes to match new conventions. 
\item Defined new functions \texttt{Twist}, \texttt{Closure}, \texttt{DConfig} and \texttt{DotGrid}. \item Moved code to a Mathematica package file \texttt{APT.m}.
\end{itemize}
\setcounter{theorem}{0}
\chapter{Manual for \texttt{BSFH.m}}\label{app:manualBSFH}
\vspace{-12pt}
\begin{center}
	version 1.0, written in Mathematica 10.3.1.0 for Linux x86 (64-bit)
\end{center}
\vspace{36pt}

\noindent
The Mathematica package \cite{BSFH.m} provides a tool for calculating Zarev's bordered sutured Floer homology for any bordered sutured manifold~\cite{Zarev}. Both type~A and type~D structures can be computed with this program, as well as the various bimodule invariants. Tools for manipulating the invariants are also provided, namely for cancellation and for performing basic homotopies. This manual only covers the main functionality of the package \cite{BSFH.m} and should be read alongside the notebook~\cite{BSAx2.nb}, where we compute the bordered sutured type A structure from remark~\sref{rem:BSAx2} and explain some more advanced features.

\subsection*{Input preparation.}
Our program \cite{BSFH.m} implements an algorithm due to Zarev~\cite[theorems~7.14 and~7.15]{Zarev}, which allows us to compute bordered sutured invariants combinatorially from nice Heegaard diagrams. So, given a Heegaard diagram for a bordered sutured manifold, we first need to niceify it, using \cite[proposition~4.17]{Zarev}. How we do this can have a significant impact on the run time of the computation for large diagrams, so one should keep the number of generators as low as possible. Figure~\ref{fig:HD} shows the niceified Heegaard diagram that we use for the computation in~\cite{BSAx2.nb}. We now explain how to translate a picture like this into the input data for our program. \\
First, we label the intersection points of $\alpha$-curves and $\beta$-curves by integers ({\footnotesize 1}, {\footnotesize 2}, {\footnotesize 3}, {\footnotesize\dots}). Similarly, we label the regions in the Heegaard diagram that are not adjacent to any basepoints (\texttt{1}, \texttt{2}, \texttt{3}, \dots). Furthermore, we enumerate the $\alpha$-arcs and $\alpha$-curves and, separately, all $\beta$-curves. We also draw an arc diagram for each glueing surface. In~\cite{BSAx2.nb}, we only have one arc diagram, which is shown in figure~\ref{fig:AD}. The orientation of the arc diagram is as on the Heegaard surface and we draw it such that the $\alpha$-arcs are to the right of the line segments. We enumerate all the endpoints of the $\alpha$-arcs from bottom to top. We also label each component of the line segments between two endpoints by the (index of the) region adjacent to it and its corresponding algebra element.

\begin{figure}[t]
\centering
\psset{unit=0.4}
\begin{subfigure}[b]{0.55\textwidth}\centering
\begin{pspicture}(-9,-10)(11,10)

\psline[linecolor=red,linearc=0.8](-7,5)(10.5,5)(10.5,10)(0,10)(0,1)(11,1)(11,-1)(0,-1)(0,-10)(10.5,-10)(10.5,-5)(-7,-5)

\psframe[linecolor=blue,framearc=0.5](-3,2)(3,8)
\psframe[linecolor=blue,framearc=0.5](-3,-2)(3,-8)
\psframe[linecolor=blue,framearc=0.5](-3,-2)(3,-8)

\psframe[linecolor=blue,framearc=0.5](5,-9)(9,9)

\pscircle[linecolor=blue](-7,5){2}

\pscircle[fillstyle=solid,fillcolor=white,linecolor=darkgreen](0,5){1}
\pscircle[fillstyle=solid,fillcolor=white,linecolor=darkgreen](0,-5){1}
\pscircle[fillstyle=solid,fillcolor=white,linecolor=darkgreen](-7,5){1}
\pscircle[fillstyle=solid,fillcolor=white,linecolor=darkgreen](-7,-5){1}
\pscircle[fillstyle=solid,fillcolor=white,linecolor=darkgreen](7,7){1}
\pscircle[fillstyle=solid,fillcolor=white,linecolor=darkgreen](7,-7){1}

\rput(-7,5){$\rho$}
\rput(0,5){$\tau$}
\rput(0,-5){$\delta$}
\rput(-7,-5){$\varepsilon$}

{\scriptsize
\rput(-5,5){\psdot(0,0)\uput{0.3}[-45](0,0){1}}
\rput(-3,5){\psdot(0,0)\uput{0.3}[-135](0,0){2}}
\rput(0,2){\psdot(0,0)\uput{0.3}[-135](0,0){3}}
\rput(5,1){\psdot(0,0)\uput{0.3}[-135](0,0){4}}
\rput(9,1){\psdot(0,0)\uput{0.3}[45](0,0){5}}
\rput(9,-1){\psdot(0,0)\uput{0.3}[-45](0,0){6}}
\rput(5,-1){\psdot(0,0)\uput{0.3}[135](0,0){7}}
\rput(0,-2){\psdot(0,0)\uput{0.3}[135](0,0){8}}
\rput(-3,-5){\psdot(0,0)\uput{0.3}[135](0,0){9}}
\rput(0,8){\psdot(0,0)\uput{0.3}[-45](0,0){10}}
\rput(9,5){\psdot(0,0)\uput{0.3}[135](0,0){11}}
\rput(5,5){\psdot(0,0)\uput{0.3}[45](0,0){12}}
\rput(3,5){\psdot(0,0)\uput{0.3}[135](0,0){13}}
\rput(3,-5){\psdot(0,0)\uput{0.3}[-135](0,0){14}}
\rput(5,-5){\psdot(0,0)\uput{0.3}[-45](0,0){15}}
\rput(9,-5){\psdot(0,0)\uput{0.3}[-135](0,0){16}}
\rput(0,-8){\psdot(0,0)\uput{0.3}[45](0,0){17}}
}

\rput(0,5){
\rput(2;-45){\texttt{1}($\tau_1$)}
\rput(2;-135){\texttt{2}($\tau_2$)}
\rput(2;135){\texttt{3}($\tau_3$)}
\psline(0.6;45)(1.4;45)
}
\rput(0,-5){
\rput(2;-135){\texttt{4}$(\delta_1$)}
\rput(2;135){\texttt{5}$(\delta_2$)}
\rput(2;45){\texttt{6}$(\delta_3$)}
\psline(0.6;-45)(1.4;-45)
}
\rput(4,8){\texttt{7}}
\rput(4,3){\texttt{8}}
\rput(7,3){\texttt{9}}
\rput(7,0){\texttt{10}}
\rput(7,-3){\texttt{11}}
\rput(4,-3){\texttt{12}}
\rput(4,-8){\texttt{13}}
\rput(10,0){\texttt{14}}

\psline(-7.6,5)(-8.4,5)
\psline(-7.6,-5)(-8.4,-5)

\rput(-4,5.5){\red $\alpha_1$}
\rput(-4.25,6.5){\red $(d)$}
\rput(2,0){\red $\alpha_2 (a)$}
\rput(-4.5,-5.75){\red $\alpha_3 (b)$}
\rput(2,9.25){\red $\alpha_4$}
\rput(2,-9.25){\red $\alpha_5$}

\rput(-6,2.5){\blue $\beta_1$}
\rput(-3,2){\blue $\beta_2$}
\rput(-3,-2){\blue $\beta_3$}
\rput(9.75,3){\blue $\beta_4$}

\end{pspicture}
\caption{A nice Heegaard diagram with many labels}\label{fig:HD}
\end{subfigure}
\quad
\begin{subfigure}[b]{0.3\textwidth}\centering
\psset{unit=1.5}
\begin{pspicture}[showgrid=false](-2,-1)(4,11.5)

\psline[linecolor=darkgreen](0,10)(0,11)
\psline[linecolor=darkgreen](0,5.5)(0,9.5)
\psline[linecolor=darkgreen](0,1)(0,5)
\psline[linecolor=darkgreen](0,-0.5)(0,0.5)

\psline[linecolor=red,linearc=0.4](-0.2,0)(2,0)(2,2.5)(-0.2,2.5)
\psline[linecolor=red,linearc=0.4](-0.2,1.5)(3,1.5)(3,4.5)(-0.2,4.5)
\psline[linecolor=red,linearc=0.4](-0.2,3.5)(2,3.5)(2,7)(-0.2,7)
\psline[linecolor=red,linearc=0.4](-0.2,6)(3,6)(3,9)(-0.2,9)
\psline[linecolor=red,linearc=0.4](-0.2,8)(2,8)(2,10.5)(-0.2,10.5)

{\footnotesize
\rput[r](-0.5,0){$1$}
\rput[r](-0.5,1.5){$2$}
\rput[r](-0.5,2.5){$3$}
\rput[r](-0.5,3.5){$4$}
\rput[r](-0.5,4.5){$5$}
\rput[r](-0.5,6){$6$}
\rput[r](-0.5,7){$7$}
\rput[r](-0.5,8){$8$}
\rput[r](-0.5,9){$9$}
\rput[r](-0.5,10.5){$10$}
}

\rput[r](-1.2,8.5){$\delta_1$}
\rput[r](-1.2,7.5){$\delta_2$}
\rput[r](-1.2,6.5){$\delta_3$}
\rput[r](-1.2,4){$\tau_1$}
\rput[r](-1.2,3){$\tau_2$}
\rput[r](-1.2,2){$\tau_3$}

\rput[r](0.75,8.5){\texttt{4}}
\rput[r](0.75,7.5){\texttt{5}}
\rput[r](0.75,6.5){\texttt{6}}
\rput[r](0.75,4){\texttt{1}}
\rput[r](0.75,3){\texttt{2}}
\rput[r](0.75,2){\texttt{3}}

\rput[l](2.25,0.75){\red $\alpha_1 (d)$}
\rput[l](2.25,5.25){\red $\alpha_2 (a)$}
\rput[l](2.25,9.75){\red $\alpha_3 (b)$}
\rput[l](3.25,3){\red $\alpha_4$}
\rput[l](3.25,7.5){\red $\alpha_5$}

\end{pspicture}
\caption{An arc diagram for the Heegaard diagram on the left}\label{fig:AD}
\end{subfigure}
\caption{The starting point of any calculation is a nice Heegaard diagram and an arc diagram. Here, we have drawn them for the computation in \cite{BSAx2.nb}.}\label{fig:pictures}
\end{figure}
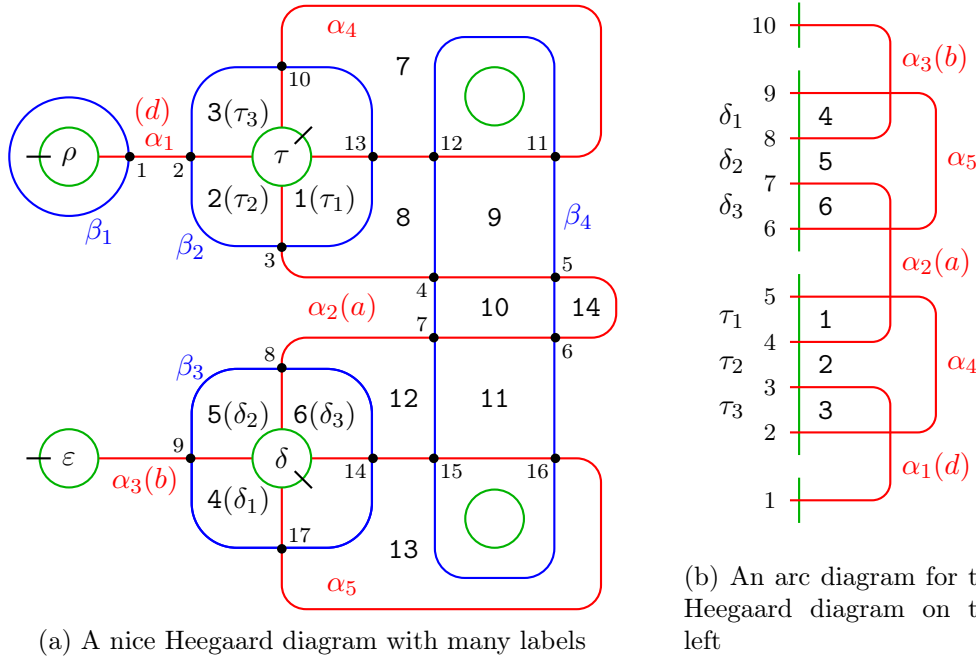

\subsection*{Basic input data. } 
We start by loading the package \cite{BSFH.m} as follows: 
$$\texttt{<\relax< BSFH.m}$$
Next, we enter the following data into the notebook. Note that for each calculation, a~separate notebook should be used.
\begin{description}
\item[\texttt{regionsInput}] is a table whose rows are indexed by the regions of our Heegaard diagram. The first entry of the $i^\text{th}$ row is equal to $i$, then follow the vertices of that region. These are ordered by the boundary orientation of the region, i.\,e.\ anticlockwise, starting with a vertex which is the start of a segment of an $\alpha$-curve or $\alpha$-arc in the boundary of the region. For bigons, the last two entries remain empty, i.\,e.\ occupied by the symbol~$\lightgray\square$.
\item[\texttt{alphaarcsInput}]  is a table whose rows are indexed by the $\alpha$-arcs. Again, the first entry should be the index of the arc; the second is a list of (the indices of) all intersection points on this arc.
\texttt{alphacurvesInput} and \texttt{betacurvesInput} are the corresponding tables for $\alpha$- and $\beta$-curves.
\item[\texttt{CancellationSortListInput}] is a table of (indices of) intersection points with two columns, so each row corresponds to a pair of intersection points. This table has an effect on the order of the generators and is important for the initial cancellation after computing the invariant. \\
Usually, we start with a Heegaard diagram which is not necessarily nice and then niceify it using finger moves, thereby creating lots of new intersection points and generators. In the initial cancellation step, the program attempts to cancel such generators, thereby reversing the effect of niceification. The order in which this is done is determined by the order of the generators, and this, in turn, is determined by \texttt{CancellationSortListInput}.\\
So the pairs of intersection points should be those that can be removed by a reversed finger move. The order of the intersection points for each pair is the same as for \texttt{regionsInput}, using the bigon which connects these two points and which is removed by the reversed finger move. This bigon accounts for the identity component in the differential which the program will attempt to cancel.
\item[\texttt{RelevantGeneratorQ[v]}] is a function used in the generation of generators. It takes a generator $\texttt{v}$ of the Heegaard diagram and decides if it should be used in the calculation of the invariant. For type A structures or bimodules, it can make sense to forget some generators in certain idempotents (see for example theorem~\sref{thm:glueingCAT}), but this functionality should be used with care. As a default, set equal to \texttt{True}. 
\item[\texttt{RelevantGeneratorGenerationQ[v]}] also plays an important role in the generation of generators. It takes a list of $\alpha$-curves and decides if generators occupying these curves should be used in the calculation of the invariant. This is a quick check used for speeding up the generator generation. Like \texttt{RelevantGeneratorQ}, it should be used with care. As a default, set equal to \texttt{True}.
\item[\texttt{S1BoundaryInput}] is used for the calculation of the bordered sutured algebra: It is a list of (indices of) regions for each line segment in the arc diagram for the first glueing surface S1, ordered from bottom to top. If S1 plays the role of a type A side, this ordering is reversed internally, compare with figure~\ref{fig:APPconventions}. This is opposite to Zarev's conventions. Similarly for \texttt{S2BoundaryInput}.
\item[\texttt{TypeS1}] should be set to either \texttt{TypeA} or \texttt{TypeD}, depending on which structure we would like to compute for the first glueing surface \texttt{S1}. Similar for \texttt{S2}; if there is no second glueing surface \texttt{S2}, either inputs are fine.
\item[\texttt{ArcDiagram1MatchingsInput}] is a list of the lists of (the indices of) endpoints of the $\alpha$-arcs in the arc diagram for \texttt{S1}, ordered by the indices of the $\alpha$-arcs. Each entry itself should be ordered. Same for~\texttt{S2}.
\item[\texttt{S1AlwaysOccupiedAlphaArcs}] is a list of indices of $\alpha$-arcs for~\texttt{S1} that are occupied by all generators in the set of relevant generators determined by \texttt{RelevantGeneratorQ[v]} above. Similarly, \texttt{S1NeverOccupiedAlphaArcs} is a list of indices of $\alpha$-arcs for~\texttt{S1} that are cannot be occupied by any relevant generator. As a default, set both equal to~\texttt{\{\}}. 
\texttt{S1PotentiallyOccupiedAlphaArcs} is the list of remaining indices of $\alpha$-arcs for~\texttt{S1}. Same for~\texttt{S2}.
\item[\texttt{NS1oaa}] is equal to the total number of occupied $\alpha$-arcs for~\texttt{S1}. Same for~\texttt{S2}. 
\item[\texttt{NR}] and \texttt{NI} are the total number of regions and intersection points, respectively.
\end{description}

\subsection*{Running a calculation.} 
We run the program in several separate steps and substeps by evaluating the variables called 
$$
\texttt{Step<$i$><}\text{name of step $i$}\texttt{>}\quad\text{and}\quad\texttt{Step<$i$>x<$j$><}\text{name of step $i$}\texttt{>}.
$$
They are modules defined in the package \cite{BSFH.m} which group together blocks of code that compute the various components of the invariant, like for example the generators, the domains, parts of the algebra, etc. 

\subsection*{Step 0: Double-checking basic input data.}
Before we start the actual calculation, we make sure that the input data that we have entered so far is consistent and does indeed correspond to our Heegaard diagram. At the end of the cell containing the basic input data above, we call \texttt{Step0Preparation}. This returns a list of results of automated sanity checks and some additional outputs, which one should check manually. Perhaps the most useful of these are the lists named \texttt{0 vertices}, \texttt{+1 vertices}, etc., which partition the set of intersection points into five subsets as follows. An intersection point is in \texttt{<$n$> vertices} if the sum of unlabelled regions (i.\,e. those adjacent to basepoints) in the four quadrants around it equals $n$, where each region contributes as follows: Walking around an intersection point in anticlockwise direction, an unlabelled region that we enter through a $\beta$-curve counts as~$+1$, one that we exit through a $\beta$-curve as~$-1$.

\subsection*{Backups.} At various stages throughout the program, backups of preliminary results are made. For this, we need to specify a path to the directory where the backups should be stored. They can then be recalled for example like this:
$$
\texttt{<\relax< (BackupFilePath <> "\_BeforeCancellation.mx");}
$$

\subsection*{Step 1: The algebra. }
In the first step of the calculation, we compute the glueing algebra. This is done in substeps \texttt{Step1x1Algebra} to \texttt{Step1x5Algebra}. After each of the first two substep, a few more inputs are needed. \texttt{Step1x1Algebra} returns a list of idempotents for each glueing surface. We can then choose names for these in \texttt{S1IdemNames} and \texttt{S2IdemNames}.\\
Furthermore, after evaluating \texttt{Step1x2Algebra} in \cite{BSAx2.nb}, there two inputs called \texttt{S1SubalgebraGensInput} and \texttt{S2SubalgebraGensInput}. Here, we can specify names of generators for a suitable subalgebra of each glueing algebra. The point of this is the following: algebra elements are internally represented by integers. If a generator lies in the specified subalgebra, it will later be represented in any graphical output by the corresponding product of names for the generators of the subalgebra. As a default, one might want to give a name to each algebra element with a single moving strand between consecutive starting and ending points. Also, if this functionality is not required, one can simply set both variables equal to~\texttt{\{\}}. \\
\texttt{S1SubalgebraGensInput} and \texttt{S2SubalgebraGensInput} are two matrices where each row corresponds to a generator of the respective subalgebra. The first entry is a string that represents this generator. The second entry is a \LaTeX-friendly version of this string. The third and fifth entries are the starting and ending idempotent indices; the fourth a list of start- and endpoints of the moving strands.\\
After this, we evaluate the remaining substeps \texttt{Step1x3Algebra} to \texttt{Step1x5Algebra}.
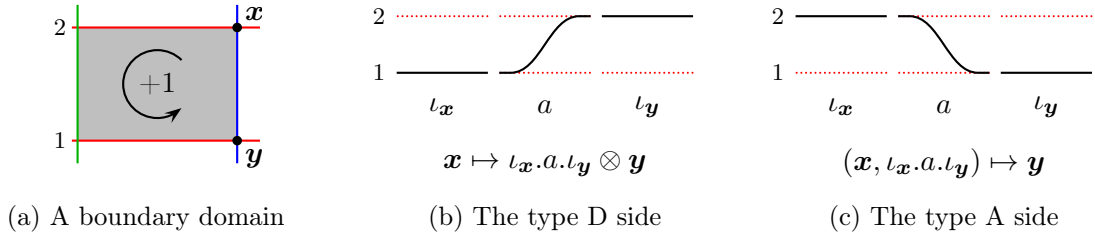
\begin{figure}[t]
\centering
\psset{unit=1.5}
\begin{subfigure}[b]{0.31\textwidth}\centering
\begin{pspicture}(-0.4,-0.4)(1.6,1.2)
\psframe*[linecolor=lightgray,linewidth=0pt](1.4,1)

\psline[linecolor=red](-0.05,0)(1.6,0)
\psline[linecolor=red](-0.05,1)(1.6,1)

\psline[linecolor=darkgreen](0,-0.2)(0,1.2)
\psline[linecolor=blue](1.4,-0.2)(1.4,1.2)

\psdot(1.4,0)\uput{0.1}[45](1.4,1){$\x$}
\psdot(1.4,1)\uput{0.1}[-45](1.4,0){$\y$}

{\footnotesize
\uput{0.1}[180](0,0){$1$}
\uput{0.1}[180](0,1){$2$}
}

\psarc{->}(0.7,0.5){0.3}{45}{-45}
\rput(0.7,0.5){+1}
\end{pspicture}
\caption{A boundary domain}\label{fig:APPconventionsdomain}
\end{subfigure}
\quad
\begin{subfigure}[b]{0.31\textwidth}\centering
\begin{pspicture}(-1.5,-1)(1.5,0.6)

\rput(0,-0.8){$\x\mapsto \iota_{\x}.a.\iota_{\y}\otimes\y$}

\rput(-0.9,0){
\psline[linestyle=dotted,dotsep=1pt,linecolor=red](-0.4,0.5)(0.4,0.5)
\psline(-0.4,0)(0.4,0)
\rput(0,-0.3){$\iota_{\x}$}
}

\psline[linestyle=dotted,dotsep=1pt,linecolor=red](-0.4,0.5)(0.4,0.5)
\psline[linestyle=dotted,dotsep=1pt,linecolor=red](-0.4,0)(0.4,0)
\pscustom{
\psline(-0.4,0)(-0.3,0)
\psecurve(-0.9,0.5)(-0.3,0)(0.3,0.5)(0.9,0)
\psline(0.3,0.5)(0.4,0.5)
}
\rput(0,-0.3){$a$}

\rput(0.9,0){
\psline(-0.4,0.5)(0.4,0.5)
\psline[linestyle=dotted,dotsep=1pt,linecolor=red](-0.4,0)(0.4,0)
\rput(0,-0.3){$\iota_{\y}$}
}

{\footnotesize
\uput{0.2}[180](-1.2,0){$1$}
\uput{0.2}[180](-1.2,0.5){$2$}
}

\end{pspicture}
\caption{The type D side}\label{fig:APPconventionsTypeD}
\end{subfigure}
\quad
\begin{subfigure}[b]{0.31\textwidth}\centering
\begin{pspicture}(-1.5,-1)(1.5,0.6)

\rput(0,-0.8){$(\x,\iota_{\x}.a.\iota_{\y})\mapsto\y$}

\rput(0.9,0){
\psline[linestyle=dotted,dotsep=1pt,linecolor=red](-0.4,0.5)(0.4,0.5)
\psline(-0.4,0)(0.4,0)
\rput(0,-0.3){$\iota_{\y}$}
}

\psline[linestyle=dotted,dotsep=1pt,linecolor=red](-0.4,0.5)(0.4,0.5)
\psline[linestyle=dotted,dotsep=1pt,linecolor=red](-0.4,0)(0.4,0)
\pscustom{
\psline(0.4,0)(0.3,0)
\psecurve(0.9,0.5)(0.3,0)(-0.3,0.5)(-0.9,0)
\psline(-0.3,0.5)(-0.4,0.5)
}
\rput(0,-0.3){$a$}

\rput(-0.9,0){
\psline(-0.4,0.5)(0.4,0.5)
\psline[linestyle=dotted,dotsep=1pt,linecolor=red](-0.4,0)(0.4,0)
\rput(0,-0.3){$\iota_{\x}$}
}

{\footnotesize
\uput{0.2}[180](-1.2,0){$1$}
\uput{0.2}[180](-1.2,0.5){$2$}
}

\end{pspicture}
\caption{The type A side}\label{fig:APPconventionsTypeA}
\end{subfigure}
\caption{Our conventions for counting domains and the algebras of moving strands for type D and type A sides. For type A sides, the program internally reverses the labelling of the endpoints of the $\alpha$-arcs. This is opposite to Zarev's conventions. }\label{fig:APPconventions}
\end{figure}

\subsection*{Steps 2--3: Generators and domains. }
In step~\texttt{Step2Generators} and substeps \texttt{Step3x1Domains} to \texttt{Step3x4Domains}, we compute the generators and the structure map of the invariant, respectively. Our conventions for counting domains are shown in figure~\ref{fig:APPconventions}. Some of the steps offer intermediate results which can be used as additional sanity checks. Basically, the invariant is fully computed after evaluating substep \texttt{Step3x4Domains}; the generators are stored in \texttt{generators} and the structure map in \texttt{StructureMapsSparse}. However, usually there are quite a lot of generators, so the result at this stage is not particularly useful. Therefore, as mentioned earlier, we offer some tools for manipulating the result. \enlargethispage{0.2cm}

\subsection*{Step 4: Cancellation.}
\texttt{Step4CancellationPreparation} provides the tools for cancellation. For the initial cancellation, one should always use
$$
\texttt{CancellationSparse[StructureMapsSparse, generators]}.
$$
The output of this function is a list 
$$
\texttt{\{<}\text{cancelled structure map}\texttt{>,<}\text{remaining generators}\texttt{>\}}.
$$
The number of remaining generators should be the same as for the initial non-niceified (admissible) Heegaard diagram -- provided we have set up \texttt{CancellationSortListInput} correctly. Note that \texttt{CancellationSparse} uses sparse arrays for the structure maps, but converts them into their normal form for the output. \\
One might want to perform some further cancellation. There are two tools for this:
\begin{description}
\item[\texttt{Cancellation[StructureMaps, gens, cancel]}] works very similar to the function \texttt{CancellationSparse}. \texttt{gens} is the set of generators underlying \texttt{StructureMaps}. \texttt{cancel} is an optional third argument
where one can specify a list of generator pairs 
$$
\texttt{\{<}\text{end of cancelling arrow}\texttt{>,<}\text{start of cancelling arrow}\texttt{>\}}.
$$
The iteration will stop at the first instance the conditions for cancellation are not satisfied, or if it has run through the complete list. If the third argument is missing, any identity morphism will be cancelled (if possible). In this case, there is a third entry in the output, which is a list of all cancelled generator pairs.
\item[\texttt{CarefulCancellation[StructureMaps, generators]}] is very similar to the function \texttt{Cancellation}, but it only performs those cancellations which do not result in a complex with non-trivial maps from a generator to itself (loops) or differentials between two generators in either directions (double arrows). 
\end{description}

\subsection*{Step 5: Post-Processing. } The last step \texttt{Step5PostProcessing} loads two main functionalities of the package, namely tools for displaying the result graphically and for performing basic homotopies. 

\subsection*{Basic graphical outputs.}
There are two main output formats for the whole invariant, which are provided in \texttt{Step5OutputPreparation}.
\begin{description}
\item[\texttt{PlotGraph[StructureMaps]}] is a very basic graphical representation. It shows a schematic picture of the matrix \texttt{StructureMaps}, where each entry is represented by a coloured box in a grid. If the colour is green, the entry is empty. 
If the entry is non-empty, the colour is a shade of grey, where white represents a (relatively) short entry, black a (relatively) long one.
\item[\texttt{ShowGraph[StructureMaps, GeneratorSet]}] displays the full invariant as a labelled graph. If one only wants to see  a portion of the graph, one can use the variant 
$$
\texttt{ShowSubGraph[StructureMaps, GeneratorSet, Subgraph]},
$$
where \texttt{Subgraph} is a list of (indices of) those generators forming the vertices of the  subgraph. An output of this is shown in figure~\ref{fig:TypeAAGlueingOneBlock}.
\end{description}
There are also functions for a graphical representation of algebra elements, the most useful one being \texttt{S1H} for the glueing algebra \texttt{S1} and \texttt{S2H} for \texttt{S2}. Given an (index of an) algebra element for \texttt{S1}, \texttt{S1H} returns the sum of its basic algebra elements in the (larger) moving strands algebra. Each such element is represented by a grid with a row for each moving strand displaying its starting point on the left and its endpoint on the right. For type A sides, note that the indices of the start- and endpoints are reversed, see figure~\ref{fig:APPconventions}.

\subsection*{Performing basic homotopies. }
There are several functions for ``adding homotopies'' in the sense of the clean-up lemma (lemma~\ref{lem:AbstractCleanUp}). The easiest to use is the following:
\begin{description}
\item[\texttt{AutoCleanPosns[StructureMaps, gens, pos, max]}] tries to simplify the matrix \texttt{StructureMaps} at positions \texttt{pos} by a sequence of at most \texttt{max} homotopies. The output is of the form 
$$
\texttt{\{<}\text{new structure map}\texttt{>,<}\text{record of added homotopies}\texttt{>\}}.
$$
Note that this function does not always give the same output, because it uses the pseudo-random Mathematica function \texttt{RandomSample}. 
\end{description}
We also mention two other, more basic functions which are quite useful if the previous automated function does not give quite the desired result, see for example the notebook~\cite{BSAAx4e.nb}.
\begin{description}
\item[\texttt{Homotopy[StructureMaps, gens, homotopy, start, end]}] calculates the structure map obtained by adding the homotopy to \texttt{StructureMaps} which is labelled by an algebra element \texttt{homotopy} and goes from the generator with index \texttt{start} to the generator with index \texttt{end} in the underlying set of generators specified by \texttt{gens}.
\item[\texttt{AllHomotopies[StructureMaps, pos]}] computes a list of all homotopies that can remove an arrow label specified by \texttt{pos} in \texttt{StructureMaps}. Here, a homotopy has the following form
$$
\texttt{\{<}\text{algebra element}\texttt{>,<}\text{start}\texttt{>,<}\text{end}\texttt{>\}}.
$$
\end{description}

\backmatter
\pagebreak


\begin{thebibliography}{BSFH.m}
\footnotesize\raggedright\setlength{\itemsep}{0mm}
\bibitem[AAEKO]{Abouzaid} M.\,Abouzaid, D.\,Auroux, A.\,I.\,Efimov, L.\,Katzarkov, D.\, Orlov, \href{http://arxiv.org/abs/1103.4322v2}{\textit{Homological mirror symmetry for punctured spheres}}, J. Amer. Math. Soc. 26 (2013), 1051-1083 (arXiv:~1103.4322)
\bibitem[Ada94]{Adams} C.\,C.\,Adams, \textit{The Knot Book: An Elementary Introduction to the Mathematical Theory of Knots}, W.\,H.\,Freeman \& Company (1994) 
\bibitem[Ale28]{Alexander} J.\,W.\,Alexander, \href{http://www.maths.ed.ac.uk/~aar/papers/alex.pdf}{\textit{Topological invariants of knots and links}}, Trans. Amer. Math. Soc. 30 (1928), 275--306
\bibitem[Alt13]{Altman} I.\,Altman, \href{https://arxiv.org/abs/1304.2606v1}{\textit{Introduction to sutured Floer homology}}, arXiv:~1304.2606v1
\bibitem[Arc10]{Archibald} J.\,Archibald, \textit{The multivariable Alexander polynomial on tangles}, PhD thesis (2010), University of Toronto, available at \url{http://www.math.toronto.edu/jfa/jana_thesis.pdf}
\bibitem[Aur10]{Auroux}  D.\,Auroux, 
\href{http://arxiv.org/abs/1001.4323v3}{\textit{Fukaya categories of symmetric products and bordered Heegaard-Floer homology}}, J. Gökova Geom. Topol. 4 (2010), 1--54 (arXiv:~1001.4323v3)

\bibitem[Bar02]{BarNatanIntroKh} D.\,Bar-Natan, \href{http://arxiv.org/abs/math/0201043v3}{\textit{On Khovanov's categorification of the Jones polynomial}}, Algebr. Geom. Topol. 2 (2002), 337--370 (arXiv:~0201043v3)
\bibitem[Bar04]{BarNatanKhT} \rule{1cm}{0.7pt}, \href{http://arxiv.org/abs/math/0410495v2}{\textit{Khovanov's homology for tangles and cobordisms}}, Geom. Topol. 9 (2005), 1443--1499 (arXiv:~0410495v2)
\bibitem[BL11]{BaldwinLevine} J.\,A.\,Baldwin, A.\,S.\,Levine, \href{http://arxiv.org/abs/1105.5199v2}{\textit{A combinatorial spanning tree model for knot Floer homology}}, Adv. Math. 231 (2012), 1886--1939 (arXiv:~1105.5199v2)
\bibitem[Big12]{Bigelow} S.\,Bigelow, \href{http://arxiv.org/abs/1203.5457v1}{\textit{A diagrammatic Alexander invariant of tangles}}, arXiv:~1203.5457v1
\bibitem[BCF12]{Florens} S.\,Bigelow, A.\,Cattabriga, V.\,Florens, \href{http://arxiv.org/abs/1203.4590v2}{\textit{Alexander representation of tangles}}, arXiv:~1203.4590v2
\bibitem[Boc07]{Bocklandt} R.\,Bocklandt, \href{http://arxiv.org/abs/1111.3392v2}{\textit{Noncommutative mirror symmetry for punctured surfaces}}, arXiv:~1111.3392v2 \textit{(brought to my attention by M.\,Abouzaid)}
\bibitem[DV16]{Damiani} C.\,Damiani, V.\,Florens, \href{http://arxiv.org/abs/1602.06191v1}{\textit{Alexander invariants of ribbon tangles and planar algebras}}, arXiv:~1602.06191v1

\bibitem[EPV15]{DecatCTFH} A.\,P.\,Ellis, I\,Petkova, V.\,Vértesi: \href{http://arxiv.org/abs/1510.03483v1}{\textit{Quantum $\mathfrak{gl}(1|1)$ and tangle Floer homology}}, arXiv:~1510.03483v1
\bibitem[FJR09]{DecatSFH} S.\,Friedl, A.\,Juh\'{a}sz, J.\,A.\,Rasmussen, \href{http://arxiv.org/abs/0903.5287v4}{\textit{The decategorification of sutured Floer homology}}, J.~Topol. 4 (2011), no.~2, 431--478 (arXiv:~0903.5287v4)

\bibitem[GL86]{ouka} P.\,M.\,Gilmer, R.\,A.\,Litherland, \textit{The duality conjecture in formal knot theory}, Osaka J.~Math.~23 (1986), 229--247

\bibitem[HKK14]{Kontsevich} F.\,Haiden, L.\,Katzarkov, M.\,Kontsevich, \href{http://arxiv.org/abs/1409.8611v2}{\textit{Flat surfaces and stability structures}}, arXiv:~1409.8611v2
\bibitem[Han13]{Hanselman} J.\,Hanselman,  \href{https://arxiv.org/abs/1310.6696}{\textit{Bordered Heegaard Floer homology and graph manifolds}}, arXiv:~1310.6696
\bibitem[HRW16]{HRW} J.\,Hanselman, J.\,A.\,Rasmussen, L.\,Watson, \href{http://arxiv.org/abs/1604.03466v1}{\textit{Bordered Floer homology for manifolds with torus boundary via immersed curves}}, arXiv:~1604.03466v1
\bibitem[HW15]{HanselmanWatson} J.\,Hanselman, L.\,Watson, \href{http://arxiv.org/abs/1508.05445v1}{\textit{A calculus for bordered Floer homology}}, arxiv:~1508.05445v1
\bibitem[Har83]{hartley} R.\,Hartley, \textit{The Conway potential function for links}, Comment. Math. Helv. 58 (1983), 365--378
\bibitem[HHK13]{HHK13} M.\,Hedden, C.\,Herald, P.\,Kirk, \href{http://arxiv.org/abs/1301.0164v1}{\textit{The pillowcase and perturbations of traceless representations of knot groups}}, Geom. Topol. 18 (2014), 211--287 (arXiv:~1301.0164v1)
\bibitem[HHK15]{HHK15} \rule{1cm}{0.7pt}, \href{http://arxiv.org/abs/1501.00028v1}{\textit{The pillowcase and traceless representations of knot groups II: a Lagrangian-Floer theory in the pillowcase}}, arXiv:~1501.00028v1
\bibitem[HR11]{absolutegrading1} Y.\,Huang, V.\,G.\,B.\,Ramos, \href{http://arxiv.org/abs/1112.0290v2}{\textit{An absolute grading on Heegaard Floer homology by homotopy classes of oriented 2-plane fields}}, arXiv:~1112.0290v2
\bibitem[HR12]{absolutegrading2} \rule{1cm}{0.7pt}, \href{http://arxiv.org/abs/1211.7367v2}{\textit{A topological grading on bordered Heegaard Floer homology}}, Quantum Topol. 6 (2015), 403--449, (arXiv:~1211.7367v2)

\bibitem[Jia14]{jiang14} B.\ Jiang, \href{http://arxiv.org/abs/1407.3081v2}{\textit{On Conway's potential function for colored links}}, Acta Math. Sinica (English Series) 32 (2016), no.\,1, 25--39 (arXiv:~1407.3081v2)
\bibitem[Juh06a]{Juhasz} A.\,Juhász, \href{http://arxiv.org/abs/math/0601443v3}{\textit{Holomorphic discs and sutured manifolds}}, Algebr. Geom. Topol. 6 (2006), 1429--1457 (arXiv:~0601443v3)
\bibitem[Juh06b]{SurfaceDecomposition} \rule{1cm}{0.7pt}, \href{http://arxiv.org/abs/math/0609779}{\textit{Floer homology and surface decompositions}}, Geom. Topol. 12 (2008), 299--350 (arXiv:~0601443v3)
\bibitem[Juh08]{polytope} \rule{1cm}{0.7pt}, \href{http://arxiv.org/abs/0802.3415v3}{\textit{ The sutured Floer homology polytope}}, Geom. Topol. 14 (2010), 1303--1354 (arXiv:~0802.3415v3)

\bibitem[Kau83]{Kauffman} L.\ Kauffman, \textit{Formal Knot Theory}, Princeton University Press (1983)
\bibitem[Ken12]{Kennedy} K.\,G.\,Kennedy, \href{http://arxiv.org/abs/1205.5781v2}{\textit{A diagrammatic multivariate Alexander invariant of tangles}}, arXiv:~1205.5781v2
\bibitem[Kho99]{Khovanov} M.\,Khovanov, \href{https://arxiv.org/abs/math/9908171v2}{\textit{A categorification of the Jones polynomial}}, Duke Math. J. 101 (2000), no.\,3, 359--426 (arXiv:~9908171v2)
\bibitem[KM10]{KM} P.\,Kronheimer, T.\,Mrowka, \href{http://www.math.harvard.edu/~kronheim/alexander}{\textit{Instanton Floer homology and the Alexander polynomial}}, Algebr. Geom. Topol.~10 (2010), no.~3, 1715--1738
\bibitem[KR04]{KhRoz} M.\,Khovanov, L.\,Rozansky, \href{http://arxiv.org/abs/math/0401268v2}{\textit{Matrix factorizations and link homology}}, arXiv:~0401268v2

\bibitem[L16]{LambertCole1} P.\,Lambert-Cole, \href{https://arxiv.org/abs/1608.02011}{\textit{Twisting, mutation and knot Floer homology}}, arXiv:~1608.02011
\bibitem[L17]{LambertCole2} \rule{1cm}{0.7pt}, \href{https://arxiv.org/abs/1701.00880}{\textit{On Conway mutation and link homology}}, arXiv:~1701.00880
\bibitem[Lic97]{Lickorish} W.\,B.\,R.\,Lickorish, \textit{An Introduction to Knot Theory}, Springer (1997)
\bibitem[LM87]{LickorishMillett} W.\,B.\,R.\,Lickorish, K.\,C.\,Millett, \href{http://www.math.ucsb.edu/~millett/Papers/1987Millett9LickorishTopology.pdf}{\textit{A polynomial invariant of oriented links}}, Topol. 26 (1987), 107--141
\bibitem[Lip05]{LipshitzCyl} R.\,Lipshitz, \href{http://arxiv.org/abs/math/0502404v2}{\textit{A cylindrical reformulation of Heegaard Floer homology}}, Geom. Topol. 10 (2006), 955--1096 (arXiv:~0502404v2)
\bibitem[LOT08]{LOT} R.\,Lipshitz, P.\,Ozsv\'{a}th, D.\,Thurston, \href{http://arxiv.org/abs/0810.0687v5}{\textit{Bordered Heegaard Floer homology: Invariance and pairing}}, arXiv:~0810.0687v5
\bibitem[LOT10]{LOTMor} \rule{1cm}{0.7pt}, \href{http://arxiv.org/abs/1005.1248v2}{\textit{Heegaard Floer homology as morphism spaces}}, Quantum Topol. 2 (2011), no.~4, 381--449, (arXiv:~1005.1248v2)

\bibitem[Man06]{Manolescu} C.\,Manolescu, \href{https://arxiv.org/abs/math/0609531v3}{\textit{An unoriented skein exact triangle for knot Floer homology}}, Math. Res. Lett.~14 (2007), 839--852 (arXiv:~0609.531v3)

\bibitem[OS01]{OSHF3mfds} P.\,Ozsv\'{a}th, Z.\,Szab\'{o}, \href{https://arxiv.org/abs/math/0101206v4}{\textit{Holomorphic discs and topological invariants of closed 3-manifolds}}, Ann. Math. 159 (2004), 1027--1158 (arXiv:~0101206v4)
\bibitem[OS02]{OSHFKalt} \rule{1cm}{0.7pt}, \href{https://arxiv.org/abs/math/0209149v3}{\textit{Heegaard Floer homology and alternating knots}}, Geom. Topol.~7 (2003), 225--254 (arXiv:~0209149v3)
\bibitem[OS03a]{OSHFK} \rule{1cm}{0.7pt}, \href{http://arxiv.org/abs/math/0209056v4}{\textit{Holomorphic disks and knot invariants}}, Adv. Math. 186 (2004), no.\,1, 58--116 (arXiv:~0209056v4)
\bibitem[OS03b]{OSmutation} \rule{1cm}{0.7pt}, \href{http://arxiv.org/abs/math/0303225v2}{\textit{Knot Floer homology, genus bounds, and mutation}}, arXiv:~0303225v2
\bibitem[OS05]{OSHFL} \rule{1cm}{0.7pt}, \href{http://arxiv.org/abs/math/0512286v2}{\textit{Holomorphic disks and link invariants}}, Algebr. Geom. Topol. 8 (2008), 615--692 (arXiv:~0512286v2)
\bibitem[OS06a]{OSHFLThurston}\rule{1cm}{0.7pt}, \href{http://arxiv.org/abs/math/0601618v3}{\textit{Link Floer homology and the Thurston norm}}, arXiv:~0601618v3
\bibitem[OS06b]{OSHDsandFloer}\rule{1cm}{0.7pt}, \href{http://arxiv.org/abs/math/0602232v1}{\textit{Heegaard diagrams and Floer homology}}, Proc. ICM 2 (2006), no.\,51, 1083--1099 (arXiv:~0602232v1)
\bibitem[OS07]{OSrescube}\rule{1cm}{0.7pt}, \href{http://arxiv.org/abs/0705.3852v1}{\textit{A cube of resolutions for knot Floer homology}},  J.~Topol. 2 (2009), no.\,4, 865--910 (arXiv:~0705.3852v1)
\bibitem[OS16]{OSKauffmanStates}\rule{1cm}{0.7pt}, \href{http://arxiv.org/abs/1603.06559v1}{\textit{Kauffman states, bordered algebras, and a bigraded knot invariant}}, arXiv:~1603.06559v1

\bibitem[OSS07]{OSSHFS} P.\,Ozsv\'{a}th, A.\,I.\,Stipsicz, Z.\,Szab\'{o}, \href{http://arxiv.org/abs/0705.2661v3}{\textit{Floer homology and singular knots}}, J.~Topol.~2 (2009), 380--404 (arXiv:~0705.2661v3)

\bibitem[Pol10]{Polyak} M.\,Polyak, \href{http://arxiv.org/abs/1011.6200v1}{\textit{Alexander-Conway invariants of tangles}}, arXiv:~1011.6200v1
\bibitem[PV14]{cHFT} I.\,Petkova, V.\,V\'{e}rtesi, \href{http://arxiv.org/abs/1410.2161v2}{\textit{Combinatorial tangle Floer homology}}, arXiv:~1410.2161v2

\bibitem[Ras03]{Jake} J.\,A.\,Rasmussen, \href{http://arxiv.org/abs/math/0306378v1}{\textit{Floer homology and knot complements}}, PhD thesis (2003), Harvard (arXiv:~0306378v1)
\bibitem[Ras05]{JakeComparisonKhHFK} \rule{1cm}{0.7pt}, \href{http://arxiv.org/abs/math/0504045v1}{\textit{Knot polynomials and knot homologies}}, arXiv:~0504045v1
\bibitem[Rie14]{cathtpy} E.\,Riehl, \href{http://www.math.jhu.edu/~eriehl/cathtpy}{\textit{Categorical homotopy theory}}, Cambridge University Press (2014)

\bibitem[Srk06]{Sarkar06} S.\,Sarkar, \href{http://arxiv.org/abs/math/0609673v4}{\textit{Maslov index formulas for Whitney $n$-gons}}, J.~Sympl. Geom. 9 (2011), no.\,2, 251--270 (arXiv:~0609673v4)
\bibitem[Srt13]{Sartori14} A.\,Sartori, \href{http://arxiv.org/abs/1308.2047v2}{\textit{The Alexander polynomial as quantum invariant of links}}, Ark. Mat. 53 (2015), 177--202 (arXiv:~1308.2047v2)
\bibitem[Sei08]{Seidel} P.\,Seidel, \textit{Fukaya Categories and Picard-Lefschetz Theory}, Zürich Lectures in Advanced Mathematics, European Math. Soc. (2008)

\bibitem[Weh09]{WehrliKhMutInv} S.\,M.\,Wehrli, \href{http://arxiv.org/abs/0904.3401v1}{\textit{Mutation invariance of Khovanov homology over $\mathbb{F}_2$}}, arXiv:~0904.3401v1

\bibitem[Zar09]{Zarev} R.\,Zarev, \href{http://arxiv.org/abs/0908.1106v2}{\textit{Bordered Floer homology for sutured manifolds}}, arXiv:~0908.1106v2
\bibitem[Zem16]{Zemke} I.\,Zemke, \href{https://arxiv.org/abs/1610.05207v1}{\textit{ Link cobordisms and functoriality in link Floer homology}}, arXiv:~1610.05207v1
\bibitem[Zib16]{essay} C.\,B.\,Zibrowius, \href{https://arxiv.org/abs/1601.04915v1}{\textit{On a polynomial Alexander invariant for tangles and its categorification}}, arXiv:~1601.04915v1

\bibitem[nLab2]{nLabCurved} entry for curved dg-algebra in nLab: \url{https://ncatlab.org/nlab/show/curved+dg-algebra}
\bibitem[nLab1]{nLabEnrichedCat} entry for enriched categories in nLab: \url{https://ncatlab.org/nlab/show/enriched+category}

\vspace*{\fill}
\noindent\hfil\rule{0.5\textwidth}{.4pt}\hfil\\
\medskip
This thesis is accompanied by the following ancillary files:\bigskip
\bibitem[APT.m]{APT.m} Mathematica package \texttt{APT.m} for computing~$\nabla_T^s$
\bibitem[BSFH.m]{BSFH.m} Mathematica package \texttt{BSFH.m} for computing bordered sutured Floer homology
\bibitem[APT.nb]{APT.nb} Mathematica notebook \texttt{APT.nb} 
\bibitem[nb1]{notebook1} Mathematica notebook \texttt{2m3ptBSD.nb}
\bibitem[nb2]{notebook2} Mathematica notebook \texttt{2m3ptmutBSD.nb}
\bibitem[nb3]{BSAx2.nb} Mathematica notebook \texttt{ClosingBSAx2.nb}
\bibitem[nb4]{BSAAx4e.nb} Mathematica notebook \texttt{GlueingBSAAx4e.nb}\bigskip\\

\noindent
Furthermore, scans of hand-drawn Heegaard diagrams used for the computations in the Mathematica notebooks \cite{notebook1}, \cite{notebook2} and \cite{BSAAx4e.nb} can be found under the address
\url{http://www.dpmms.cam.ac.uk/~cbz20/documents/research/HDs.pdf}.
\bigskip\\

This is a printer-friendly version of my thesis. The original can be found at 
\url{http://www.dpmms.cam.ac.uk/~cbz20/documents/research/thesis.pdf}.
\\
\noindent\hfil\rule{0.5\textwidth}{.4pt}\hfil

\vspace*{\fill}

\end{thebibliography}
\end{document}